\providecommand{\U}[1]{\protect\rule{.1in}{.1in}}
\theoremstyle{plain}
\newtheorem{corollary}{Corollary}
\newtheorem{definition}{Definition}
\newtheorem{example}{Example}
\newtheorem{lemma}{Lemma}
\newtheorem{notation}{Notation}
\newtheorem{proposition}{Proposition}
\newtheorem{remark}{Remark}
\newtheorem{theorem}{Theorem}
\numberwithin{equation}{section}
\begin{document}
\frontmatter
\title[Periodic binary signals and flows]{Binary periodic signals and flows}
\author[Serban E. Vlad]{Serban E. Vlad}
\address{Primaria Oradea, P-ta Unirii, Nr. 1\\
410100, Oradea, Romania}
\curraddr{Str. Zimbrului, Nr. 3, Ap. 11 \\
410430, Oradea, Romania}
\email{serban\_e\_vlad@yahoo.com}
\urladdr{http://www.serbanvlad.ro}
\author{}
\thanks{}
\dedicatory{ }\maketitle
\tableofcontents

\chapter*{Preface}

The boolean autonomous deterministic regular asynchronous systems have been
defined for the first time in our work \textit{Boolean dynamical systems},
ROMAI Journal, Vol. 3, Nr. 2, 2007, pp 277-324 and a deeper study of such
systems can be found in \cite{bib10}. The concept has its origin in switching
theory, the theory of modeling the switching circuits from the digital
electrical engineering. The attribute boolean vaguely refers to the Boole
algebra with two elements; autonomous means that there is no input;
determinism means the existence of a unique (state) function; and regular
indicates the existence of a function $\Phi:\{0,1\}^{n}\rightarrow
\{0,1\}^{n},\Phi=(\Phi_{1},...,\Phi_{n})$ that 'generates' the system. Time is
discrete: $\{-1,0,1,...\}$ or continuous: $\mathbf{R}$. The system, which is
analogue to the (real, usual) dynamical systems, iterates (asynchronously) on
each coordinate $i\in\{1,...,n\},$ one of

- $\Phi_{i}:$ we say that $\Phi$ is computed, at that time instant, on that coordinate;

- $\{0,1\}^{n}\ni(\mu_{1},...,\mu_{i},...,\mu_{n})\longmapsto\mu_{i}%
\in\{0,1\}:$ we use to say that $\Phi$ is not computed, at that time instant,
on that coordinate.

The flows are these that result by analogy with the dynamical systems.

The 'nice' discrete time and real time functions that the (boolean)
asynchronous systems work with are called signals and periodicity is a very
important feature in Nature.

In the first two Chapters we give the most important concepts concerning the
signals and periodicity. The periodicity properties are used to characterize
the eventually constant signals in Chapter 3 and the constant signals in
Chapter 4. Chapters 5,...,8 are dedicated to the eventually periodic points,
eventually periodic signals, periodic points and periodic signals.

Chapter 9 shows constructions that, given an (eventually) periodic point, by
changing some values of the signal, change the periodicity properties of the point.

The monograph continues with flows. Chapter 10 is dedicated to the computation
functions, i.e. to the functions that show when and how the function $\Phi$ is
iterated (asynchronously). Chapter 11 introduces the flows and Chapter 12
gives a wider point of view on the flows, which are interpreted as
deterministic asynchronous systems. Chapters 13,...,18 restate the topics from
Chapters 3,...,8 in the special case when the signals are flows and the main
interest is periodicity.

In order to point out our source of inspiration, we give the example of the
circuit from Figure \ref{fig31},
\begin{figure}
[ptb]
\begin{center}
\fbox{\includegraphics[
height=4.3267in,
width=4.0439in
]%
{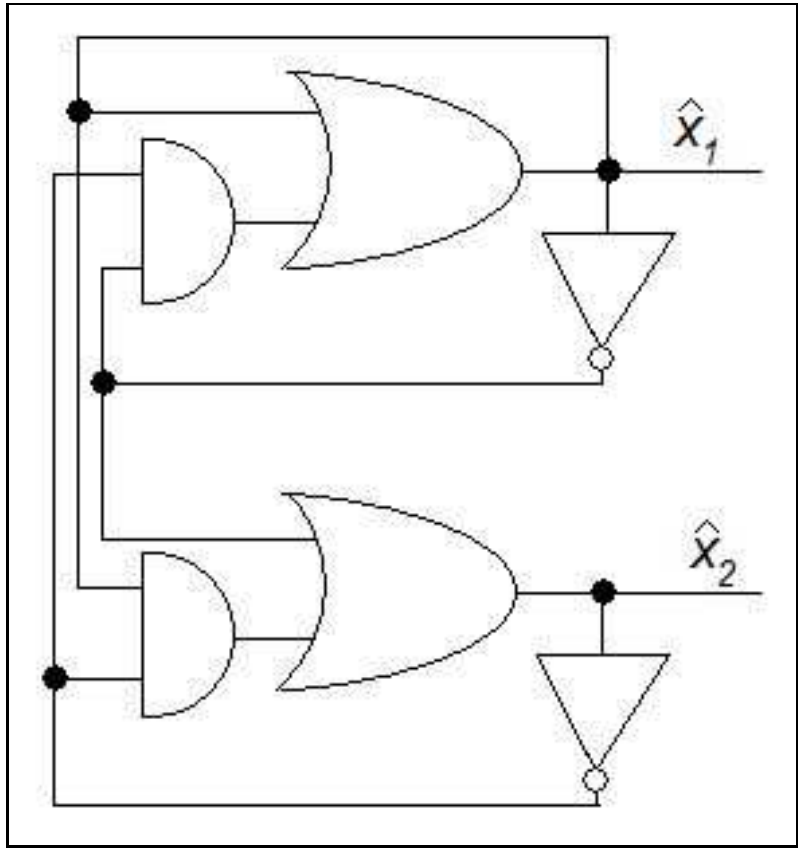}%
}\caption{Asynchronous circuit.}%
\label{fig31}%
\end{center}
\end{figure}
where $\widehat{x}:\{-1,0,1,...\}\longrightarrow\{0,1\}^{2}$ is the signal
representing the state of the system, and the initial state is $(0,0).$ The
function that generates the system is $\Phi:\{0,1\}^{2}\longrightarrow
\{0,1\}^{2},$ $\forall\mu\in\{0,1\}^{2},$%
\[
\Phi(\mu)=(\mu_{1}\cup\overline{\mu_{1}}\cdot\overline{\mu_{2}},\overline
{\mu_{1}}\cup\mu_{1}\cdot\overline{\mu_{2}}).
\]
The evolution of the system is given by its state diagram from Figure
\ref{fig1_21},%
\begin{figure}
[ptb]
\begin{center}
\fbox{\includegraphics[
height=0.8466in,
width=1.1294in
]%
{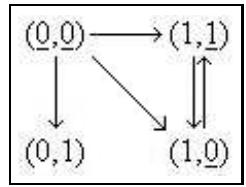}%
}\caption{The state diagram of the circuit from Figure \ref{fig31}.}%
\label{fig1_21}%
\end{center}
\end{figure}
where the arrows indicate the time increase and we have underlined these
coordinates $\mu_{i},i=\overline{1,2}$ that, by the computation of $\Phi,$
change their value: $\Phi_{i}(\mu)=\overline{\mu_{i}}.$ Let $\alpha
:\{0,1,2,...\}\longrightarrow\{0,1\}^{2}$ be the computation function whose
values $\alpha_{i}^{k}$ show that $\Phi_{i}$ is computed at the time instant
$k$ if $\alpha_{i}^{k}=1,$ respectively that it is not computed at the time
instant $k$ if $\alpha_{i}^{k}=0,$ where $i=\overline{1,2}$ and $k\in
\{0,1,2,...\}.$ The uncertainty related with the circuit, depending in general
on the technology, the temperature, etc. manifests in the fact that the order
and the time of computation of each coordinate function $\Phi_{i}$ are not
known. If the second coordinate is computed at the time instant $0,$ then
$\alpha^{0}=(0,1)$ indicates the transfer from $(0,0)$ to $(0,1),$ where the
system remains indefinitely long for any values of $\alpha^{1},\alpha
^{2},\alpha^{3},...$, since $\Phi(0,1)=(0,1).$ Such a signal $\widehat{x}$ is
called eventually constant and it corresponds to a stable system. The
eventually constant discrete time signals are eventually periodic with an
arbitrary period $p\geq1.$

Another possibility is that the first coordinate of $\Phi$ is computed at the
time instant $0,$ thus $\alpha^{0}=(1,0).$ Figure \ref{fig1_2} indicates the
transfer from $(0,0)$ in $(1,0),$ while $\alpha^{0}=(1,1)$ indicates the
transfer from $(0,0)$ to $(1,1),$ as resulted by the simultaneous computation
of $\Phi_{1}(0,0)$ and $\Phi_{2}(0,0).$ And if $\alpha^{k}=(1,1),k\in
\{0,1,2,...\},$ then $\widehat{x}$ is eventually periodic with the period
$p\in\{2,4,6,...\},$ as it switches from $(1,1)$ to $(1,0)$ and from $(1,0)$
to $(1,1)$. This last possibility represents an unstable system.

The bibliography consists in works of (real, usual) dynamical systems and we
use analogies.

The book ends with a list of notations, an index of notions and an appendix
with Lemmas. These Lemmas are frequently used in the exposure and some of them
are interesting by themselves.

The book is structured in Chapters, each Chapter consists in several Sections
and each Section is structured in paragraphs. The Chapters begin with an
abstract. The paragraphs are of the following kinds: Definitions, Notations,
Remarks, Theorems, Corollaries, Lemmas, Examples and Propositions. Each kind
of paragraph is numbered separately on the others. Inside the paragraphs, the
equations and, more generally, the most important statements are numbered
also. When we refer to the statement $(x,y)$ this means the $y-th$ statement
of the $x-th$ Section of the current Chapter.

We refer to a Definition, Theorem, Example,... by indicating its number and,
when necessary, its page. When we refer to the statement $(x,y)$ we indicate
sometimes the page where it occurs as an inferior index.

The book addresses to researchers in systems theory and computer science, but
it is also interesting to those that study periodicity itself. From this last
perspective, the binary signals may be thought of as functions with finitely
many values.

\mainmatter

\chapter{\label{Cha1}Preliminaries}

The signals from digital electrical engineering ale modeled by 'nice' discrete
time and real time functions, also called signals and their introduction is
the purpose of this Chapter. We define the left and the right limits of the
real time signals, the initial and the final values of the signals, the
initial and the final time of the signals, the forgetful function and finally
we define the orbits, the omega limit sets and the support sets.

\section{The definition of the signals}

\begin{notation}
\label{Not1}We denote by $\mathbf{B}=\{0,1\}$ the binary Boole algebra. Its
laws are the usual ones:%
\begin{align*}
&
\begin{array}
[c]{cc}%
\overline{\ \ \ } & \\
0 & 1\\
1 & 0
\end{array}
,\
\begin{array}
[c]{ccc}%
\cdot & 0 & 1\\
0 & 0 & 0\\
1 & 0 & 1
\end{array}
,\
\begin{array}
[c]{ccc}%
\cup & 0 & 1\\
0 & 0 & 1\\
1 & 1 & 1
\end{array}
,\
\begin{array}
[c]{ccc}%
\oplus & 0 & 1\\
0 & 0 & 1\\
1 & 1 & 0
\end{array}
\\
&
\;\;\;\;\;\;\;\;\;\;\;\;\;\;\;\;\;\;\;\;\;\;\;\;\;\;\;\;\;\;\;\;\;\;\;\;Table\;1
\end{align*}
and they induce laws that are denoted with the same symbols on $\mathbf{B}%
^{n},n\geq1.$
\end{notation}

\begin{definition}
Both sets $\mathbf{B}$ and $\mathbf{B}^{n}$ are organized as topological
spaces by the discrete topology.
\end{definition}

\begin{notation}
\label{Not2}$\mathbf{N},$ $\mathbf{Z},$ $\mathbf{R}$ denote the sets of the
non negative integers, of the integers and of the real numbers. $\mathbf{N}%
_{\_}=\mathbf{N}\cup\{-1\}$ is the notation of the discrete time set.
\end{notation}

\begin{notation}
\label{Not3}We denote%
\[
\widehat{Seq}=\{(k_{j})|k_{j}\in\mathbf{N}_{\_},j\in\mathbf{N}_{\_}\text{ and
}k_{-1}<k_{0}<k_{1}<...\},
\]%
\[
Seq=\{(t_{k})|t_{k}\in\mathbf{R},k\in\mathbf{N}\text{ and }t_{0}<t_{1}%
<t_{2}<...\text{ superiorly unbounded}\}.
\]

\end{notation}

\begin{example}
A typical example of element of $\widehat{Seq}$ is the sequence $k_{j}%
=j,j\in\mathbf{N}_{\_}$ and typical examples of elements of $Seq$ are given by
the sequences $z,z+1,z+2,...,z\in\mathbf{Z}$.
\end{example}

\begin{proposition}
\label{Lem3}Let $(t_{k})\in Seq$ and $t\in\mathbf{R}$ be arbitrary. Then%
\[
\exists\varepsilon>0,\{k|k\in\mathbf{N},t_{k}\in(t-\varepsilon,t+\varepsilon
)\}=\left\{
\begin{array}
[c]{c}%
\{k^{\prime}\},if\;t=t_{k^{\prime}},\\
\varnothing,if\;\forall k\in\mathbf{N},t\neq t_{k}.
\end{array}
\right.
\]

\end{proposition}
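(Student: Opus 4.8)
The plan is to exploit the fact that $(t_k)\in Seq$ is a strictly increasing, superiorly unbounded sequence of reals indexed by $\mathbf{N}$, so that the set $\{t_k \mid k\in\mathbf{N}\}$ is a discrete, closed, and locally finite subset of $\mathbf{R}$. The key observation is that in any bounded interval only finitely many terms $t_k$ can occur: since $t_0<t_1<t_2<\dots$ and the sequence is superiorly unbounded, for any bound $M$ there is an index $K$ with $t_K>M$, hence at most the terms $t_0,\dots,t_{K-1}$ lie below $M$. I would split into the two cases of the statement.

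First, suppose $t=t_{k'}$ for some $k'\in\mathbf{N}$ (and this $k'$ is then unique by strict monotonicity). The two neighbours $t_{k'-1}$ (if $k'\geq 1$) and $t_{k'+1}$ satisfy $t_{k'-1}<t_{k'}<t_{k'+1}$, so I would set $\varepsilon$ to be any positive number smaller than both $t_{k'}-t_{k'-1}$ and $t_{k'+1}-t_{k'}$ (dropping the first constraint if $k'=0$). Then $(t-\varepsilon,t+\varepsilon)$ contains $t_{k'}$ but, by strict monotonicity, contains no $t_k$ with $k<k'$ (all such are $\le t_{k'-1}\le t-\varepsilon$) and none with $k>k'$ (all such are $\ge t_{k'+1}\ge t+\varepsilon$); hence the index set is exactly $\{k'\}$.

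Second, suppose $t\neq t_k$ for every $k\in\mathbf{N}$. Using superior unboundedness, pick $K\in\mathbf{N}$ with $t_K>t$; then only the finitely many values $t_0,t_1,\dots,t_{K-1}$ can possibly be $<t$, and each of them differs from $t$. Set $\delta=\min\{\,|t-t_k|\mid 0\le k\le K-1\,\}>0$ (if $K=0$, simply take $\delta=1$, or indeed any positive number, since no $t_k$ is $\le t$ and we may additionally shrink below $t_K-t$), and also ensure $\varepsilon\le t_K-t$. Choosing $\varepsilon$ with $0<\varepsilon<\min\{\delta,\;t_K-t\}$ guarantees that $(t-\varepsilon,t+\varepsilon)$ excludes $t_0,\dots,t_{K-1}$ and also excludes $t_K,t_{K+1},\dots$ (all $\ge t_K\ge t+\varepsilon$), so the index set is empty.

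The only mild subtlety—the "main obstacle," such as it is—is the bookkeeping at the boundary index $k'=0$ (no left neighbour) and the degenerate sub-case $K=0$ (no term below $t$); in both instances one simply omits the vacuous constraint on $\varepsilon$ and the argument goes through unchanged. Everything else is immediate from strict monotonicity and superior unboundedness, so no delicate estimate is required.
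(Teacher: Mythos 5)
Your proof is correct and follows essentially the same route as the paper's: in both arguments $\varepsilon$ is chosen smaller than the distance from $t$ to the nearest terms of the strictly increasing, superiorly unbounded sequence. You merely organize the argument into two cases (hit or miss) instead of the paper's four, and in the miss case you take a minimum over the finitely many distances $|t-t_k|$ with $t_k<t_K$ rather than locating $t$ in a specific gap $(t_{k'-1},t_{k'})$; the substance is identical.
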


\begin{proof}
We have the following possibilities.

Case $t<t_{0};$ we take $\varepsilon\in(0,t_{0}-t),$ for which $\{k|k\in
\mathbf{N},t_{k}\in(t-\varepsilon,t+\varepsilon)\}=\varnothing.$

Case $t=t_{0};$ for $\varepsilon\in(0,t_{1}-t)$ we have $\{k|k\in
\mathbf{N},t_{k}\in(t-\varepsilon,t+\varepsilon)\}=\{t_{0}\}.$

Case $t\in(t_{k^{\prime}-1},t_{k^{\prime}}),k^{\prime}\geq1;$ $\varepsilon
\in(0,\min\{t-t_{k^{\prime}-1},t_{k^{\prime}}-t\})$ gives $\{k|k\in
\mathbf{N},t_{k}\in(t-\varepsilon,t+\varepsilon)\}=\varnothing.$

Case $t=t_{k^{\prime}},k^{\prime}\geq1;$ in this situation any $\varepsilon
\in(0,\min\{t-t_{k^{\prime}-1},t_{k^{\prime}+1}-t\})$ gives $\{k|k\in
\mathbf{N},t_{k}\in(t-\varepsilon,t+\varepsilon)\}=\{t_{k^{\prime}}\}.$
\end{proof}

\begin{remark}
The previous $\varepsilon$ obviously depends on $t$. We consider for example
the sequence $t_{k}=\frac{1}{0+1}+\frac{1}{1+1}+...+\frac{1}{k+1}%
,k\in\mathbf{N.}$ We have $(t_{k})\in Seq$ and
\[
\forall\varepsilon>0,\exists t\in\mathbf{R},card(\{k|k\in\mathbf{N},t_{k}%
\in(t-\varepsilon,t+\varepsilon)\})>1
\]
holds.
\end{remark}

\begin{notation}
\label{Not4}$\chi_{A}:\mathbf{R}\rightarrow\mathbf{B}$ is the notation of the
characteristic function of the set $A\subset\mathbf{R}:\forall t\in
\mathbf{R},$%
\[
\chi_{A}(t)=\left\{
\begin{array}
[c]{c}%
1,if\;t\in A,\\
0,otherwise
\end{array}
\right.  .
\]

\end{notation}

\begin{definition}
\label{Def2}The \textbf{discrete time signals} are by definition the functions
$\widehat{x}:\mathbf{N}_{\_}\rightarrow\mathbf{B}^{n}.$ Their set is denoted
with $\widehat{S}^{(n)}.$

The \textbf{continuous time signals} are the functions $x:\mathbf{R}%
\rightarrow\mathbf{B}^{n}$ of the form $\forall t\in\mathbf{R},$%
\begin{equation}
x(t)=\mu\cdot\chi_{(-\infty,t_{0})}(t)\oplus x(t_{0})\cdot\chi_{\lbrack
t_{0},t_{1})}(t)\oplus...\oplus x(t_{k})\cdot\chi_{\lbrack t_{k},t_{k+1}%
)}(t)\oplus... \label{per189}%
\end{equation}
where $\mu\in\mathbf{B}^{n}$ and $(t_{k})\in Seq.$ Their set is denoted by
$S^{(n)}.$
\end{definition}

\begin{example}
The constant functions $\widehat{x}\in\widehat{S}^{(1)},x\in S^{(1)}$ equal
with $\mu\in\mathbf{B}:$%
\begin{equation}
\forall k\in\mathbf{N}_{\_},\widehat{x}(k)=\mu, \label{per521}%
\end{equation}%
\begin{equation}
\forall t\in\mathbf{R},x(t)=\mu\label{per522}%
\end{equation}
are typical examples of signals. Here are some other examples:%
\begin{equation}
\forall k\in\mathbf{N}_{\_},\widehat{x}(k)=\left\{
\begin{array}
[c]{c}%
1,if\;k\;is\;odd,\\
0,\;if\;k\;is\;even
\end{array}
\right.  , \label{per518}%
\end{equation}%
\begin{equation}
\forall t\in\mathbf{R},x(t)=\chi_{\lbrack0,\infty)}(t), \label{per519}%
\end{equation}%
\begin{equation}
\forall t\in\mathbf{R},x(t)=\chi_{\lbrack0,1)}(t)\oplus\chi_{\lbrack
2,3)}(t)\oplus...\oplus\chi_{\lbrack2k,2k+1)}(t)\oplus... \label{per520}%
\end{equation}
The signal from (\ref{per519}) is called the (unitary) step function (of Heaviside).
\end{example}

\begin{remark}
At Definition \ref{Def2} a convention of notation has occurred for the first
time, namely a hat\ ' $\widehat{\ }$ ' is used to show that we have discrete
time. The hat will make the difference between, for example, the notation of
the discrete time signals $\widehat{x},\widehat{y},..$ and the notation of the
real time signals $x,y,...$
\end{remark}

\begin{remark}
The discrete time signals are sequences. The real time signals are piecewise
constant functions.
\end{remark}

\begin{remark}
As we shall see in the rest of the book, the study of the periodicity of the
signals does not use essentially the fact that they take values in
$\mathbf{B}^{n},$ but the fact that they take finitely many values. For
example, instead of using $^{\prime}\cdot^{\prime}$ and $^{\prime}%
\oplus^{\prime}$ in (\ref{per189}), we can write equivalently%
\[
x(t)=\left\{
\begin{array}
[c]{c}%
\mu,t<t_{0},\\
x(t_{0}),t\in\lbrack t_{0},t_{1}),\\
...\\
x(t_{k}),t\in\lbrack t_{k},t_{k+1}),\\
...
\end{array}
\right.
\]

\end{remark}

\begin{remark}
The signals model the electrical signals of the circuits from the digital
electrical engineering.
\end{remark}

\section{Left and right limits}

\begin{theorem}
\label{The1}For any $x\in S^{(n)}$ and any $t\in\mathbf{R},$ there exist
$x(t-0),x(t+0)\in\mathbf{B}^{n}$ with the property%
\begin{equation}
\exists\varepsilon>0,\forall\xi\in(t-\varepsilon,t),x(\xi)=x(t-0),
\label{per488}%
\end{equation}%
\begin{equation}
\exists\varepsilon>0,\forall\xi\in(t,t+\varepsilon),x(\xi)=x(t+0).
\label{per489}%
\end{equation}

\end{theorem}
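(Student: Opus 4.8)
The plan is to use the explicit piecewise-constant structure of $x \in S^{(n)}$ together with Proposition \ref{Lem3}, which controls how the sequence $(t_k)$ accumulates near a point $t$. Fix $x \in S^{(n)}$ with representation (\ref{per189}), determined by $\mu \in \mathbf{B}^n$ and $(t_k) \in Seq$, and fix $t \in \mathbf{R}$. Apply Proposition \ref{Lem3} to obtain $\varepsilon_0 > 0$ such that the interval $(t - \varepsilon_0, t + \varepsilon_0)$ contains at most one term of the sequence $(t_k)$, namely $t$ itself in case $t = t_{k'}$ for some $k'$, and no term otherwise. This single $\varepsilon_0$ will serve (possibly after shrinking) for both (\ref{per488}) and (\ref{per489}).

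For the right limit (\ref{per489}): on the punctured right interval $(t, t + \varepsilon_0)$ no $t_k$ occurs, so this whole open interval lies inside a single constancy interval of the form $(-\infty, t_0)$ or $[t_j, t_{j+1})$ from the defining expression (\ref{per189}). Hence $x$ is constant there; define $x(t+0)$ to be that constant value. Concretely, if $t < t_0$ then $(t, t+\varepsilon_0) \subseteq (-\infty, t_0)$ and $x(t+0) = \mu$; if $t \in [t_{k'-1}, t_{k'})$ or $t = t_{k'}$, then for sufficiently small $\varepsilon$ the interval $(t, t+\varepsilon)$ sits in $[t_{k'}, t_{k'+1})$ (respectively $[t_{k'-1}, t_{k'})$), giving $x(t+0) = x(t_{k'})$ (respectively $x(t_{k'-1})$). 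For the left limit (\ref{per488}): on the punctured left interval $(t - \varepsilon_0, t)$ again no $t_k$ lies strictly inside, and since such an interval is disjoint from $[t, \infty)$ it is entirely contained in one constancy block $(-\infty, t_0)$ or $[t_j, t_{j+1})$; set $x(t-0)$ equal to that value. One then takes $\varepsilon = \varepsilon_0$ (or the smaller of the two $\varepsilon$'s produced above) to satisfy both displayed conditions simultaneously.

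The only mildly delicate point — and the step I would be most careful with — is the bookkeeping that identifies, for each location of $t$ relative to the sequence $(t_k)$, exactly which constancy block the one-sided punctured neighbourhood falls into; this is a finite case split: $t < t_0$, $t = t_0$, $t \in (t_{k'-1}, t_{k'})$ with $k' \geq 1$, and $t = t_{k'}$ with $k' \geq 1$, mirroring the cases in the proof of Proposition \ref{Lem3}. In each case the argument is the same — the relevant open subinterval avoids all jump points and so lies in a single half-open block on which $x$ is constant by (\ref{per189}) — so there is no real obstacle, only the need to present the cases cleanly. It is worth remarking that $x(t+0)$ and $x(t-0)$ need not equal $x(t)$; they agree with $x(t)$ precisely away from the jump points $t_k$, but the theorem as stated only asserts existence of the one-sided limits, which the above delivers.
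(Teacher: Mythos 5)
Your proposal is essentially the paper's own proof: invoke Proposition \ref{Lem3} to get an $\varepsilon$ isolating $t$ from the jump points, then split into the four cases $t<t_{0}$, $t=t_{0}$, $t\in(t_{k^{\prime}-1},t_{k^{\prime}})$, $t=t_{k^{\prime}}$ and read off the constant value on each one-sided punctured interval. One bookkeeping slip to fix: in your ``respectively'' clause for the right limit the two cases are transposed --- for $t\in[t_{k^{\prime}-1},t_{k^{\prime}})$ the interval $(t,t+\varepsilon)$ lies in $[t_{k^{\prime}-1},t_{k^{\prime}})$ and gives $x(t+0)=x(t_{k^{\prime}-1})$, while for $t=t_{k^{\prime}}$ it lies in $[t_{k^{\prime}},t_{k^{\prime}+1})$ and gives $x(t+0)=x(t_{k^{\prime}})$, exactly as in the paper's case analysis.
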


\begin{proof}
We presume that $x,t$ are arbitrary and fixed and that $x$ is of the form
\begin{equation}
x(t)=\mu\cdot\chi_{(-\infty,t_{0})}(t)\oplus x(t_{0})\cdot\chi_{\lbrack
t_{0},t_{1})}(t)\oplus...\oplus x(t_{k})\cdot\chi_{\lbrack t_{k},t_{k+1}%
)}(t)\oplus... \label{p95}%
\end{equation}
with $\mu\in\mathbf{B}^{n}$ and $(t_{k})\in Seq.$ We take $\varepsilon>0$
small enough, see Proposition \ref{Lem3}, page \pageref{Lem3} such that%
\[
\{k|k\in\mathbf{N},t_{k}\in(t-\varepsilon,t+\varepsilon)\}=\left\{
\begin{array}
[c]{c}%
\{k^{\prime}\},if\;t=t_{k^{\prime}},\\
\varnothing,if\;\forall k\in\mathbf{N},t\neq t_{k}.
\end{array}
\right.
\]
We have the following possibilities:

Case $t<t_{0};$%
\[
\forall\xi\in(t-\varepsilon,t),x(\xi)=\mu,
\]%
\[
\forall\xi\in(t,t+\varepsilon),x(\xi)=\mu.
\]

Case $t=t_{0};$%
\[
\forall\xi\in(t-\varepsilon,t),x(\xi)=\mu,
\]%
\[
\forall\xi\in(t,t+\varepsilon),x(\xi)=x(t_{0}).
\]

Case $t\in(t_{k^{\prime}-1},t_{k^{\prime}}),k^{\prime}\geq1;$%
\[
\forall\xi\in(t-\varepsilon,t),x(\xi)=x(t_{k^{\prime}-1}),
\]%
\[
\forall\xi\in(t,t+\varepsilon),x(\xi)=x(t_{k^{\prime}-1}).
\]

Case $t=t_{k^{\prime}},k^{\prime}\geq1;$%
\[
\forall\xi\in(t-\varepsilon,t),x(\xi)=x(t_{k^{\prime}-1}),
\]%
\[
\forall\xi\in(t,t+\varepsilon),x(\xi)=x(t_{k^{\prime}}).
\]

\end{proof}

\begin{definition}
\label{Def8}The functions $\mathbf{R}\ni t\rightarrow x(t-0)\in\mathbf{B}%
^{n},\mathbf{R}\ni t\rightarrow x(t+0)\in\mathbf{B}^{n}$ are called the
\textbf{left limit} function of $x$ and the \textbf{right limit} function of
$x$.
\end{definition}

\begin{remark}
Theorem \ref{The1} states that the signals $x\in S^{(n)}$ have a left limit
function $x(t-0)$ and a right limit function $x(t+0)$. Moreover, if
(\ref{p95}) is true$,$ then%
\begin{equation}
x(t-0)=\mu\cdot\chi_{(-\infty,t_{0}]}(t)\oplus x(t_{0})\cdot\chi_{(t_{0}%
,t_{1}]}(t)\oplus...\oplus x(t_{k})\cdot\chi_{(t_{k},t_{k+1}]}(t)\oplus...,
\label{per425}%
\end{equation}%
\begin{equation}
x(t+0)=x(t) \label{per426}%
\end{equation}
hold, meaning in particular that $x(t-0)$ is not a signal and that $x(t+0)$
coincides with $x(t).$
\end{remark}

\begin{remark}
The property (\ref{per426}) stating in fact that the real time signals $x$ are
right continuous will be used later under the form
\begin{equation}
\forall t\in\mathbf{R},\exists\varepsilon>0,\forall\xi\in\lbrack
t,t+\varepsilon),x(\xi)=x(t). \label{per741}%
\end{equation}

\end{remark}

\section{Initial and final values, initial and final time}

\begin{definition}
\label{Def6}The \textbf{initial value} of $\widehat{x}\in\widehat{S}^{(n)}$ is
$\widehat{x}(-1)\in\mathbf{B}^{n}.$

For $x\in S^{(n)},$%
\begin{equation}
x(t)=\mu\cdot\chi_{(-\infty,t_{0})}(t)\oplus x(t_{0})\cdot\chi_{\lbrack
t_{0},t_{1})}(t)\oplus...\oplus x(t_{k})\cdot\chi_{\lbrack t_{k},t_{k+1}%
)}(t)\oplus..., \label{pre745}%
\end{equation}
where $\mu\in\mathbf{B}^{n}$ and $(t_{k})\in Seq,$ the \textbf{initial value}
is $\mu.$
\end{definition}

\begin{notation}
\label{Not14}There is no special notation for the initial value of
$\widehat{x}$.

The initial value of $x$ has two usual notations, $x(-\infty+0)$ and
$\underset{t\rightarrow-\infty}{\lim}x(t).$
\end{notation}

\begin{definition}
\label{Def29}By definition, the \textbf{initial time} (\textbf{instant}) of
$\widehat{x}$ is $k=-1.$

The \textbf{initial time} (\textbf{instant}) of $x$ is any number $t_{0}%
\in\mathbf{R}$ that fulfills%
\begin{equation}
\forall t\leq t_{0},x(t)=x(-\infty+0). \label{p121}%
\end{equation}

\end{definition}

\begin{notation}
\label{Not15}The set of the initial time instants of $x$ is denoted by
$I^{x}.$
\end{notation}

\begin{definition}
\label{Def7}The \textbf{final value} $\mu\in\mathbf{B}^{n}$ of $\widehat{x}%
\in\widehat{S}^{(n)}$ is defined by $\exists k^{\prime}\in\mathbf{N}_{\_}, $%
\begin{equation}
\forall k\geq k^{\prime},\widehat{x}(k)=\mu\label{pre746}%
\end{equation}
and the \textbf{final value} $\mu\in\mathbf{B}^{n}$ of $x\in S^{(n)}$ is
defined by $\exists t^{\prime}\in\mathbf{R},$%
\begin{equation}
\forall t\geq t^{\prime},x(t)=\mu. \label{pre747}%
\end{equation}

\end{definition}

\begin{notation}
\label{Not16}The usual notations for $\mu$ in (\ref{pre746}) are $\widehat
{x}(\infty-0)$ and $\underset{k\rightarrow\infty}{\lim}\widehat{x}(k).$

The final value $\mu$ from (\ref{pre747}) is denoted with either of
$x(\infty-0)$ and $\underset{t\rightarrow\infty}{\lim}x(t).$
\end{notation}

\begin{definition}
\label{Def44}If the final value $\mu$ of $\widehat{x}$ exists, then any
$k^{\prime}\in\mathbf{N}_{\_}$ like in (\ref{pre746}) is called \textbf{final
time} (\textbf{instant}) of $\widehat{x}$.

Similarly, if the final value $\mu$ of $x$ exists and (\ref{pre747}) holds,
then any such $t^{\prime}\in\mathbf{R}$ is called \textbf{final time}
(\textbf{instant}) of $x$.
\end{definition}

\begin{notation}
\label{Not17}The set of the final time instants of $\widehat{x}$ is denoted by
$\widehat{F}^{\widehat{x}}.$

The set of the final time instants of $x$ has the notation $F^{x}.$
\end{notation}

\begin{example}
The signals from (\ref{per521}), (\ref{per522}) fulfill $\underset
{k\rightarrow\infty}{\lim}\widehat{x}(k)=\underset{t\rightarrow-\infty}{\lim
}x(t)=\underset{t\rightarrow\infty}{\lim}x(t)=\mu,$ $\widehat{F}^{\widehat{x}%
}=\mathbf{N}_{\_},I^{x}=F^{x}=\mathbf{R}$ and the signal from (\ref{per519})
fulfills $\underset{t\rightarrow-\infty}{\lim}x(t)=0,$ $\underset
{t\rightarrow\infty}{\lim}x(t)=1,I^{x}=(-\infty,0),F^{x}=[0,\infty).$ The
signals (\ref{per518}), (\ref{per520}) have no final value: $\widehat
{F}^{\widehat{x}}=F^{x}=\varnothing.$
\end{example}

\begin{remark}
For arbitrary $\widehat{x},x$ the initial value exists and it is unique; the
initial time of $\widehat{x}$ is unique and the initial time of $x$ is not unique.

The final value of $\widehat{x},x$ might not exist, but if it exists, it is
unique. The final time of $\widehat{x},x$ might not exist, but if it exists,
it is not unique.
\end{remark}

\begin{theorem}
\label{The143}a) Let $\widehat{x}\in\widehat{S}^{(n)}$ and $k_{0}\in
\mathbf{N}_{\_}.$ The following equivalencies hold:%
\begin{equation}
\left\{
\begin{array}
[c]{c}%
\forall k\geq k_{0},\widehat{x}(k)=\widehat{x}(\infty-0),\\
k_{0}\geq0\Longrightarrow\widehat{x}(k_{0}-1)\neq\widehat{x}(\infty-0)
\end{array}
\right.  \Longleftrightarrow\widehat{F}^{\widehat{x}}=\{k_{0},k_{0}%
+1,k_{0}+2,...\}, \label{p257}%
\end{equation}%
\begin{equation}
\forall k\in\mathbf{N}_{\_},\widehat{x}(k)=\widehat{x}(\infty
-0)\Longleftrightarrow\widehat{F}^{\widehat{x}}=\mathbf{N}_{\_}. \label{p258}%
\end{equation}

b) Let $x\in S^{(n)},t_{0}\in\mathbf{R}.$ The following equivalencies take
place:%
\begin{equation}
\left\{
\begin{array}
[c]{c}%
\forall t<t_{0},x(t)=x(-\infty+0),\\
x(t_{0})\neq x(-\infty+0)
\end{array}
\right.  \Longleftrightarrow I^{x}=(-\infty,t_{0}), \label{p259}%
\end{equation}%
\begin{equation}
\left\{
\begin{array}
[c]{c}%
\forall t\geq t_{0},x(t)=x(\infty-0),\\
x(t_{0}-0)\neq x(t_{0})
\end{array}
\right.  \Longleftrightarrow F^{x}=[t_{0},\infty), \label{p260}%
\end{equation}%
\begin{equation}
\forall t\in\mathbf{R},x(t)=x(-\infty+0)\Longleftrightarrow I^{x}=\mathbf{R},
\label{p261}%
\end{equation}%
\begin{equation}
\forall t\in\mathbf{R},x(t)=x(\infty-0)\Longleftrightarrow F^{x}=\mathbf{R}.
\label{p262}%
\end{equation}

\end{theorem}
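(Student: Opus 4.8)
The plan is to prove each of the six equivalences by splitting it into its two implications and, in every case, reducing everything to the defining conditions for $\widehat{F}^{\widehat{x}}$, $I^{x}$, $F^{x}$ (Definitions \ref{Def29}, \ref{Def44}), to the uniqueness of the initial and final values, and -- only in the continuous-time cases -- to the existence and uniqueness of the left limit $x(t_{0}-0)$ furnished by Theorem \ref{The1}. Before starting, I would record three trivial monotonicity remarks, each a one-line consequence of the definitions: $k'\in\widehat{F}^{\widehat{x}}$ implies $k'+1\in\widehat{F}^{\widehat{x}}$; $t'\in F^{x}$ and $t''\geq t'$ imply $t''\in F^{x}$; $t'\in I^{x}$ and $t''\leq t'$ imply $t''\in I^{x}$.

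I would dispatch the ``whole line'' equivalences (\ref{p258}), (\ref{p261}), (\ref{p262}) first, since they are little more than a rephrasing of the definitions. For instance $\widehat{F}^{\widehat{x}}=\mathbf{N}_{\_}$ holds exactly when $-1\in\widehat{F}^{\widehat{x}}$, which by Definition \ref{Def44} means $\widehat{x}(k)=\widehat{x}(\infty-0)$ for every $k\geq-1$; conversely, if $\widehat{x}$ is constant on $\mathbf{N}_{\_}$ then that constant is its final value and every instant is a final instant. The same pattern, using that membership of $t$ in $I^{x}$ (resp. $F^{x}$) already forces the value $x(-\infty+0)$ (resp. $x(\infty-0)$) at $t$, settles (\ref{p261}) and (\ref{p262}).

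Next I would treat (\ref{p257}) and (\ref{p259}), which need no limit argument at all. For (\ref{p257}), with $\mu:=\widehat{x}(\infty-0)$: assuming the two left-hand conditions, every $k\geq k_{0}$ has $\widehat{x}(j)=\mu$ for all $j\geq k$, so $\{k_{0},k_{0}+1,\dots\}\subseteq\widehat{F}^{\widehat{x}}$, while (when $k_{0}\geq0$) $\widehat{x}(k_{0}-1)\neq\mu$ gives $k_{0}-1\notin\widehat{F}^{\widehat{x}}$, and monotonicity upgrades the inclusion to an equality; conversely, $k_{0}\in\widehat{F}^{\widehat{x}}$ yields the first condition, and (when $k_{0}\geq0$) $k_{0}-1\notin\widehat{F}^{\widehat{x}}$ combined with it forces $\widehat{x}(k_{0}-1)\neq\mu$. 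Equivalence (\ref{p259}) runs the same way with $\mu:=x(-\infty+0)$: any $t'<t_{0}$ lies in $I^{x}$ because every $s\leq t'$ satisfies $s<t_{0}$, hence $x(s)=\mu$, while $x(t_{0})\neq\mu$ excludes $t_{0}$ and, by monotonicity, everything beyond it; the reverse implication simply reads the two conditions off $I^{x}=(-\infty,t_{0})$.

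The only step I expect to require genuine care is (\ref{p260}), where the closed endpoint at $t_{0}$ is controlled by the left limit. For ``$\Leftarrow$'' I would note that $t_{0}\in F^{x}$ and monotonicity give $[t_{0},\infty)\subseteq F^{x}$, and that if some $t'<t_{0}$ were in $F^{x}$ then $x$ would equal $x(\infty-0)$ throughout $[t',\infty)$, in particular on a left neighbourhood of $t_{0}$, so the uniqueness clause of Theorem \ref{The1} would give $x(t_{0}-0)=x(\infty-0)=x(t_{0})$, against the hypothesis; hence $F^{x}=[t_{0},\infty)$. For ``$\Rightarrow$'', $t_{0}=\min F^{x}$ yields $x(t)=x(\infty-0)$ for $t\geq t_{0}$, and if moreover $x(t_{0}-0)=x(t_{0})$ then Theorem \ref{The1} supplies $\varepsilon>0$ with $x(\xi)=x(t_{0})$ for all $\xi\in(t_{0}-\varepsilon,t_{0})$, so $x$ equals $x(\infty-0)$ on all of $(t_{0}-\varepsilon,\infty)$ and $t_{0}-\varepsilon/2\in F^{x}$, contradicting minimality; therefore $x(t_{0}-0)\neq x(t_{0})$. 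The crux, then, is exactly this translation between ``$x(t_{0}-0)=x(t_{0})$'' and ``$x$ is constant on a two-sided neighbourhood of $t_{0}$'' via Theorem \ref{The1}; the rest is routine unwinding of the definitions.
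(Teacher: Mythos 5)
Your proposal is correct and follows essentially the same route as the paper: each equivalence is read off directly from the defining conditions of $\widehat{F}^{\widehat{x}}$, $I^{x}$, $F^{x}$ together with the obvious monotonicity of these sets. The only difference is that you spell out the left-limit argument for (\ref{p260}) via Theorem \ref{The1}, a step the paper dismisses as ``obvious now''; your version is the more careful one, and the argument you give there is sound.
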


\begin{proof}
a) Two possibilities exist.

Case $k_{0}=-1$

The statements $\forall k\in\mathbf{N}_{\_},\widehat{x}(k)=\widehat{x}%
(\infty-0)$ and $\{k^{\prime}|\forall k\geq k^{\prime},\widehat{x}%
(k)=\widehat{x}(\infty-0)\}=\mathbf{N}_{\_}$ are equivalent indeed and this
special case of (\ref{p257}) coincides with (\ref{p258})$.$

Case $k_{0}\geq0$

We have that $(\forall k\geq k_{0},\widehat{x}(k)=\widehat{x}(\infty-0)$ and
$\widehat{x}(k_{0}-1)\neq\widehat{x}(\infty-0))$ is equivalent with
$\{k^{\prime}|\forall k\geq k^{\prime},\widehat{x}(k)=\widehat{x}%
(\infty-0)\}=\{k_{0},k_{0}+1,k_{0}+2,...\}.$

b) The statement $(\forall t<t_{0},x(t)=x(-\infty+0)$ and $x(t_{0})\neq
x(-\infty+0))$ is equivalent with $\{t^{\prime}|\forall t\leq t^{\prime
},x(t)=x(-\infty+0)\}=(-\infty,t_{0}).$ This coincides with (\ref{p259}). The
statement $\forall t\in\mathbf{R},x(t)=x(-\infty+0)$ is equivalent with
$\{t^{\prime}|\forall t\leq t^{\prime},x(t)=x(-\infty+0)\}=\mathbf{R},$ giving
the truth of (\ref{p261}). (\ref{p260}) and (\ref{p262}) are obvious now.
\end{proof}

\begin{remark}
Versions of Theorem \ref{The143} exist, stating that $\widehat{x}$ is constant
iff $\widehat{F}^{\widehat{x}}=\mathbf{N}_{\_}$ and non constant otherwise,
respectively the statements:

i) $x$ is not constant;

ii) $t_{0}\in\mathbf{R}$ exists with
\[
\forall t<t_{0},x(t)=x(-\infty+0),
\]%
\[
x(t_{0})\neq x(-\infty+0);
\]

iii) $t_{0}\in\mathbf{R}$ exists with $I^{x}=(-\infty,t_{0})$

are equivalent etc.
\end{remark}

\begin{theorem}
\label{The92}Let the signal $x\in S^{(n)}$ from (\ref{pre745}). We define
$\widehat{x}\in\widehat{S}^{(n)}$ by%
\[
\widehat{x}(-1)=\mu,
\]%
\[
\forall k\in\mathbf{N},\widehat{x}(k)=x(t_{k}).
\]
The following statements hold.

a) $\underset{k\rightarrow\infty}{\lim}\widehat{x}(k)$ exists if and only if
$\underset{t\rightarrow\infty}{\lim}x(t)$ exists and in case that the previous
limits exist we have $\underset{k\rightarrow\infty}{\lim}\widehat
{x}(k)=\underset{t\rightarrow\infty}{\lim}x(t).$

b) We suppose that $\underset{k\rightarrow\infty}{\lim}\widehat{x}%
(k),\underset{t\rightarrow\infty}{\lim}x(t)$ exist. Then $-1$ is final time of
$\widehat{x}$ if and only if any $t^{\prime}<t_{0}$ is final time of $x$ and
$\forall k^{\prime}\geq0,$ $k^{\prime}$ is final time of $\widehat{x}$ if and
only if $t_{k^{\prime}}$ is final time of $x.$
\end{theorem}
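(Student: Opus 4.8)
The plan is to unwind the definitions of $\lim_{t\to\infty}x(t)$ and of the final time of a signal, reducing both parts to statements about the discrete sequence $(x(t_k))_{k\in\mathbf{N}}$, which by construction is essentially $\widehat{x}$ on $\mathbf{N}$. The key observation is that, because $x$ is piecewise constant with $x(t)=x(t_k)$ on each interval $[t_k,t_{k+1})$ and $x(t)=\mu=\widehat{x}(-1)$ on $(-\infty,t_0)$, the set of values taken by $x$ on any tail $[t',\infty)$ is determined by the values $x(t_k)=\widehat{x}(k)$ for $k$ large, together with $\mu$ if $t'<t_0$. This is the bridge between the continuous and discrete pictures.

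For part a), I would argue both implications. Suppose $\lim_{k\to\infty}\widehat{x}(k)=\nu$ exists, so there is $k'\in\mathbf{N}_\_$ with $\widehat{x}(k)=\nu$ for all $k\geq k'$; taking $t'=t_{\max\{k',0\}}$ (or any $t'\geq t_0$ if $k'=-1$), every $t\geq t'$ lies in some $[t_k,t_{k+1})$ with $k\geq k'$, hence $x(t)=x(t_k)=\widehat{x}(k)=\nu$, so $\lim_{t\to\infty}x(t)=\nu$. Conversely, if $\lim_{t\to\infty}x(t)=\nu$ with $x(t)=\nu$ for $t\geq t'$, then since $(t_k)\in Seq$ is superiorly unbounded there is $k'$ with $t_{k'}\geq t'$, and for every $k\geq k'$ we get $\widehat{x}(k)=x(t_k)=\nu$, so $\lim_{k\to\infty}\widehat{x}(k)=\nu$. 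Equality of the two limits falls out of either direction, since $\nu$ is the common value.

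For part b), assuming both limits exist and equal some $\mu'$, I would use the characterization of final time: $k'$ is a final time of $\widehat{x}$ iff $\widehat{x}(k)=\mu'$ for all $k\geq k'$, and $t'$ is a final time of $x$ iff $x(t)=\mu'$ for all $t\geq t'$. For $k'\geq0$: if $k'$ is final time of $\widehat{x}$, then as in part a) every $t\geq t_{k'}$ satisfies $x(t)=\widehat{x}(k)=\mu'$ for the appropriate $k\geq k'$, so $t_{k'}$ is final time of $x$; conversely, if $t_{k'}$ is final time of $x$, then for every $k\geq k'$, $\widehat{x}(k)=x(t_k)=\mu'$ since $t_k\geq t_{k'}$, so $k'$ is final time of $\widehat{x}$. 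For $k'=-1$: $-1$ is final time of $\widehat{x}$ iff $\widehat{x}(k)=\mu'$ for all $k\in\mathbf{N}_\_$, i.e. $\mu=\mu'$ and $x(t_k)=\mu'$ for all $k\in\mathbf{N}$; combined with $x(t)=\mu$ on $(-\infty,t_0)$ this is exactly the statement that $x(t)=\mu'$ for all $t<t_0$ together with all $t\geq t_0$ lying in intervals where the value is $\mu'$ — equivalently, every $t'<t_0$ is a final time of $x$.

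The main obstacle I anticipate is the bookkeeping at the boundary case $k'=-1$, where one must be careful that "final time $-1$ of $\widehat{x}$" incorporates the initial value $\mu$, and match this against the family of final times $t'<t_0$ of $x$ rather than a single one; this asymmetry (a single $k'$ versus an interval of $t'$) is the one genuinely delicate point, and it is handled by noting that $I^x$-type reasoning forces $x$ to be constantly $\mu'$ on $(-\infty,t_0)$ precisely when $\mu=\mu'$. Everything else is a routine translation between "for all $k\geq k'$" and "for all $t\geq t_{k'}$" using the piecewise-constant structure and the unboundedness of $(t_k)$.
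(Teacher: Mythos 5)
Your proposal is correct and follows essentially the same route as the paper: the paper encodes the bridge as the set identity $\{\widehat{x}(k)\mid k\geq k'\}=\{x(t)\mid t\geq t_{k'}\}$ and characterizes existence of the limit by that set being a singleton, while you carry out the same tail-to-tail translation pointwise; the handling of the boundary case $k'=-1$ versus the family of final times $t'<t_0$ also matches the paper's argument. No gaps.
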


\begin{proof}
a) From the hypothesis we infer that for any $k^{\prime}\in\mathbf{N}$ we can
write%
\begin{equation}
\{\widehat{x}(k)|k\geq k^{\prime}\}=\{x(t)|t\geq t_{k^{\prime}}\}.
\label{pre973}%
\end{equation}
Then%
\[
\underset{k\rightarrow\infty}{\lim}\widehat{x}(k)\text{ exists}%
\Longleftrightarrow\exists k^{\prime}\in\mathbf{N},card(\{\widehat{x}(k)|k\geq
k^{\prime}\})=1\Longleftrightarrow
\]%
\[
\Longleftrightarrow\exists k^{\prime}\in\mathbf{N},card(\{x(t)|t\geq
t_{k^{\prime}}\})=1\Longleftrightarrow\underset{t\rightarrow\infty}{\lim
}x(t)\text{ exists}%
\]
and if one of the previous equivalent statements is true, we obtain the
existence of $\mu\in\mathbf{B}^{n},k^{\prime}\in\mathbf{N}$ such that%
\[
\{\widehat{x}(k)|k\geq k^{\prime}\}=\{\mu\}=\{x(t)|t\geq t_{k^{\prime}}\}
\]
i.e.
\begin{equation}
\underset{k\rightarrow\infty}{\lim}\widehat{x}(k)=\mu=\underset{t\rightarrow
\infty}{\lim}x(t). \label{pre974}%
\end{equation}

b) Let us presume that (\ref{pre974}) holds$.$ We have%
\[
-1\in\widehat{F}^{\widehat{x}}\Longleftrightarrow\forall k\in\mathbf{N}%
_{\_},\widehat{x}(k)=\mu\Longleftrightarrow\forall t\in\mathbf{R}%
,x(t)=\mu\Longleftrightarrow
\]%
\[
\Longleftrightarrow\forall t^{\prime}<t_{0},\forall t\geq t^{\prime}%
,x(t)=\mu\Longleftrightarrow\forall t^{\prime}<t_{0},t^{\prime}\in F^{x}%
\]
and similarly for any $k^{\prime}\geq0,$%
\[
k^{\prime}\in\widehat{F}^{\widehat{x}}\Longleftrightarrow\forall k\geq
k^{\prime},\widehat{x}(k)=\mu\Longleftrightarrow\forall t\geq t_{k^{\prime}%
},x(t)=\mu\Longleftrightarrow t_{k^{\prime}}\in F^{x}.
\]

\end{proof}

\section{The forgetful function}

\begin{definition}
\label{Def26}The \textbf{discrete time} \textbf{forgetful function}
$\widehat{\sigma}^{k^{\prime}}:\widehat{S}^{(n)}\rightarrow\widehat{S}^{(n)}$
is defined for $k^{\prime}\in\mathbf{N}$ by
\[
\forall\widehat{x}\in\widehat{S}^{(n)},\forall k\in\mathbf{N}_{\_}%
,\widehat{\sigma}^{k^{\prime}}(\widehat{x})(k)=\widehat{x}(k+k^{\prime})
\]
and the\textbf{\ real time forgetful function} $\sigma^{t^{\prime}}%
:S^{(n)}\rightarrow S^{(n)}$ is defined for $t^{\prime}\in\mathbf{R}$ in the
following manner%
\[
\forall x\in S^{(n)},\forall t\in\mathbf{R},\sigma^{t^{\prime}}(x)(t)=\left\{
\begin{array}
[c]{c}%
x(t),t\geq t^{\prime},\\
x(t^{\prime}-0),t<t^{\prime}%
\end{array}
\right.  .
\]

\end{definition}

\begin{theorem}
\label{Pro2}The signals $\widehat{x}\in\widehat{S}^{(n)},x\in S^{(n)}$ are
given. The following statements hold:

a) $\widehat{\sigma}^{0}(\widehat{x})=\widehat{x};$ if $I^{x}=\mathbf{R},$
then $\forall t^{\prime}\in\mathbf{R},\sigma^{t^{\prime}}(x)=x$ and if
$\exists t_{0}\in\mathbf{R},$ $I^{x}=(-\infty,t_{0}),$ then $\forall
t^{\prime}\leq t_{0},\sigma^{t^{\prime}}(x)=x;$

b) for $k^{\prime},k^{\prime\prime}\in\mathbf{N}$ we have $(\widehat{\sigma
}^{k^{\prime}}\circ\widehat{\sigma}^{k^{\prime\prime}})(\widehat{x}%
)=\widehat{\sigma}^{k^{\prime}+k^{\prime\prime}}(\widehat{x});$ for any
$t^{\prime},t^{\prime\prime}\in\mathbf{R}$ we have $(\sigma^{t^{\prime}}%
\circ\sigma^{t^{\prime\prime}})(x)=\sigma^{\max\{t^{\prime},t^{\prime\prime
}\}}(x).$
\end{theorem}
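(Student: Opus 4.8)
The plan is to verify each of the claimed identities directly from the definitions of $\widehat{\sigma}^{k^{\prime}}$ and $\sigma^{t^{\prime}}$ in Definition \ref{Def26}, treating part a) (the ``unit''-type statements) first and part b) (the ``composition''-type statements) second, and in each case splitting the real-time claim into the discrete-time analogue (easy) and the continuous-time analogue (where one must keep track of left limits).

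For part a), the identity $\widehat{\sigma}^{0}(\widehat{x})=\widehat{x}$ is immediate: for every $k\in\mathbf{N}_{\_}$ one has $\widehat{\sigma}^{0}(\widehat{x})(k)=\widehat{x}(k+0)=\widehat{x}(k)$. For the real-time part, fix $t^{\prime}$ and evaluate $\sigma^{t^{\prime}}(x)(t)$ at an arbitrary $t$. If $t\ge t^{\prime}$ the value is $x(t)$ and there is nothing to do; if $t<t^{\prime}$ the value is $x(t^{\prime}-0)$, so I must show $x(t^{\prime}-0)=x(t)$ under the hypothesis on $I^{x}$. When $I^{x}=\mathbf{R}$, (\ref{p261}) gives $x\equiv x(-\infty+0)$, hence $x(t^{\prime}-0)=x(-\infty+0)=x(t)$ by the description (\ref{per425}) of the left-limit function, or simply because a constant function has constant left limit. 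When $I^{x}=(-\infty,t_{0})$ and $t^{\prime}\le t_{0}$, apply (\ref{p121}): every $s\le t_{0}$ satisfies $x(s)=x(-\infty+0)$; in particular $x(t)=x(-\infty+0)$ since $t<t^{\prime}\le t_{0}$, and $x(t^{\prime}-0)=x(-\infty+0)$ since all $\xi$ slightly below $t^{\prime}$ lie below $t_{0}$ too (use (\ref{per488})). Thus $\sigma^{t^{\prime}}(x)(t)=x(t)$ in every case.

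For part b), the discrete identity $(\widehat{\sigma}^{k^{\prime}}\circ\widehat{\sigma}^{k^{\prime\prime}})(\widehat{x})=\widehat{\sigma}^{k^{\prime}+k^{\prime\prime}}(\widehat{x})$ follows by unwinding: for all $k$, $(\widehat{\sigma}^{k^{\prime}}\circ\widehat{\sigma}^{k^{\prime\prime}})(\widehat{x})(k)=\widehat{\sigma}^{k^{\prime\prime}}(\widehat{x})(k+k^{\prime})=\widehat{x}(k+k^{\prime}+k^{\prime\prime})=\widehat{\sigma}^{k^{\prime}+k^{\prime\prime}}(\widehat{x})(k)$, using that $k^{\prime},k^{\prime\prime}\ge 0$ so each argument stays in $\mathbf{N}_{\_}$. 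For the real-time identity $(\sigma^{t^{\prime}}\circ\sigma^{t^{\prime\prime}})(x)=\sigma^{\max\{t^{\prime},t^{\prime\prime}\}}(x)$, write $m=\max\{t^{\prime},t^{\prime\prime}\}$ and, fixing $t$, distinguish the cases $t\ge m$, $t^{\prime}\le t<t^{\prime\prime}$ (possible only when $t^{\prime}<t^{\prime\prime}$), $t^{\prime\prime}\le t<t^{\prime}$, and $t<\min\{t^{\prime},t^{\prime\prime}\}$. In the first case both sides equal $x(t)$. In the remaining cases both sides should equal $x(m-0)$; the only subtlety is to compute $\sigma^{t^{\prime\prime}}(x)(\,\cdot\,)$ near the threshold $t^{\prime}$ and take its left limit, and symmetrically with the roles swapped. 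Concretely, for $\xi<t^{\prime}$ one has $\sigma^{t^{\prime\prime}}(x)(\xi)$ equal to $x(\xi)$ if $\xi\ge t^{\prime\prime}$ and to $x(t^{\prime\prime}-0)$ if $\xi<t^{\prime\prime}$, and letting $\xi\uparrow t^{\prime}$ and invoking (\ref{per488}) one identifies the left limit of $\sigma^{t^{\prime\prime}}(x)$ at $t^{\prime}$ with $x(t^{\prime}-0)$ when $t^{\prime}>t^{\prime\prime}$ and with $x(t^{\prime\prime}-0)$ when $t^{\prime}\le t^{\prime\prime}$ — in both subcases this is $x(m-0)$. Matching this against $\sigma^{m}(x)(t)=x(m-0)$ for $t<m$ closes the argument.

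The main obstacle is purely bookkeeping: making sure that the left-limit operation commutes correctly with the truncation $\sigma^{t^{\prime\prime}}$ at the point $t^{\prime}$, i.e. that $\bigl(\sigma^{t^{\prime\prime}}(x)\bigr)(t^{\prime}-0)=x(\max\{t^{\prime},t^{\prime\prime}\}-0)$. This is where one genuinely uses that $x$ is a signal — so that it is locally constant to the left of any point by Theorem \ref{The1} — rather than an arbitrary function; everything else is substitution into the definitions and a finite case split on the relative order of $t$, $t^{\prime}$, $t^{\prime\prime}$.
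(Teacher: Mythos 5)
Your proposal is correct and follows essentially the same route as the paper: part a) reduces to the two cases $I^{x}=\mathbf{R}$ (where $x$ is constant) and $I^{x}=(-\infty,t_{0})$ with $t^{\prime}\leq t_{0}$, and part b) comes down to computing the left limit $\bigl(\sigma^{t^{\prime\prime}}(x)\bigr)(t^{\prime}-0)=x(\max\{t^{\prime},t^{\prime\prime}\}-0)$, which is exactly the calculation the paper performs in its two cases $t^{\prime\prime}\leq t^{\prime}$ and $t^{\prime\prime}>t^{\prime}$. The only difference is that you organize the case split by the position of $t$ rather than by the order of $t^{\prime}$ and $t^{\prime\prime}$, which is immaterial.
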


\begin{proof}
a) The discrete time statement is obvious. In order to prove the real time
statement, we notice that $I^{x}=\mathbf{R}$ is true if $x$ is constant, see
Theorem \ref{The143}, page \pageref{The143}, so that we can suppose now that
$x$ is not constant and some $t_{0}$ exists such that $I^{x}=(-\infty,t_{0}):$%
\[
\forall t<t_{0},x(t)=x(-\infty+0),
\]%
\[
x(t_{0})\neq x(-\infty+0).
\]
Let $t^{\prime}\leq t_{0}$ arbitrary. We have $\forall t\in\mathbf{R},$%
\[
\sigma^{t^{\prime}}(x)(t)=\left\{
\begin{array}
[c]{c}%
x(t),t\geq t^{\prime}\\
x(t^{\prime}-0),t<t^{\prime}%
\end{array}
\right.  =\left\{
\begin{array}
[c]{c}%
x(t),t\geq t^{\prime}\\
x(-\infty+0),t<t^{\prime}%
\end{array}
\right.  =x(t).
\]

b) We fix arbitrarily $k^{\prime},k^{\prime\prime}\in\mathbf{N}$. We can write
for any $k\in\mathbf{N}$ that%
\[
(\widehat{\sigma}^{k^{\prime}}\circ\widehat{\sigma}^{k^{\prime\prime}%
})(\widehat{x})(k)=\widehat{x}(k+k^{\prime}+k^{\prime\prime})=\widehat{\sigma
}^{k^{\prime}+k^{\prime\prime}}(\widehat{x})(k).
\]
Let us take now $t^{\prime},t^{\prime\prime}\in\mathbf{R}$ arbitrarily. We get
the existence of the next possibilities.

Case $t^{\prime\prime}\leq t^{\prime}$

For any $t\in\mathbf{R}$ we infer%
\[
(\sigma^{t^{\prime}}\circ\sigma^{t^{\prime\prime}})(x)(t)=\sigma^{t^{\prime}%
}(\sigma^{t^{\prime\prime}}(x))(t)=\left\{
\begin{array}
[c]{c}%
\sigma^{t^{\prime\prime}}(x)(t),t\geq t^{\prime}\\
\sigma^{t^{\prime\prime}}(x)(t^{\prime}-0),t<t^{\prime}%
\end{array}
\right.
\]%
\[
=\left\{
\begin{array}
[c]{c}%
x(t),t\geq t^{\prime}\\
x(t^{\prime}-0),t<t^{\prime}%
\end{array}
\right.  =\sigma^{t^{\prime}}(x)(t).
\]

Case $t^{\prime\prime}>t^{\prime}$

We get for arbitrary $t\in\mathbf{R}$ that%
\[
(\sigma^{t^{\prime}}\circ\sigma^{t^{\prime\prime}})(x)(t)=\sigma^{t^{\prime}%
}(\sigma^{t^{\prime\prime}}(x))(t)=\left\{
\begin{array}
[c]{c}%
\sigma^{t^{\prime\prime}}(x)(t),t\geq t^{\prime}\\
\sigma^{t^{\prime\prime}}(x)(t^{\prime}-0),t<t^{\prime}%
\end{array}
\right.
\]%
\[
=\left\{
\begin{array}
[c]{c}%
\sigma^{t^{\prime\prime}}(x)(t),t\geq t^{\prime}\\
x(t^{\prime\prime}-0),t<t^{\prime}%
\end{array}
\right.  =\sigma^{t^{\prime\prime}}(x)(t).
\]

\end{proof}

\begin{remark}
Let us give $\widehat{x}$ by its values $\widehat{x}=x^{-1},x^{0},x^{1},...$
where $x^{k}\in\mathbf{B}^{n},k\in\mathbf{N}_{\_}.$ Then $\widehat{\sigma}%
^{1}(\widehat{x})=x^{0},x^{1},...$ i.e. $\widehat{x}$ has forgotten its first
value. Furthermore, $\widehat{\sigma}^{0}(\widehat{x})$ makes $\widehat{x}$
forget nothing and $\widehat{\sigma}^{k^{\prime}}(\widehat{x}) $ makes
$\widehat{x}$ forget its first $k^{\prime}\geq1$ values.
\end{remark}

\begin{remark}
$\sigma^{t^{\prime}}(x)$ makes $x$ forget its values prior to $t^{\prime}: $
no value if $\forall t<t^{\prime},x(t)=x(-\infty+0)$ and some values otherwise.
\end{remark}

\section{Orbits, omega limit sets and support sets}

\begin{definition}
\label{Def9}The \textbf{orbits} of $\widehat{x}\in\widehat{S}^{(n)},x\in
S^{(n)}$ are the sets of the values of these functions:%
\[
\widehat{Or}(\widehat{x})=\{\widehat{x}(k)|k\in\mathbf{N}_{\_}\},
\]%
\[
Or(x)=\{x(t)|t\in\mathbf{R}\}.
\]

\end{definition}

\begin{definition}
\label{Def10_}The \textbf{omega limit set} $\widehat{\omega}(\widehat{x})$ of
$\widehat{x}$ is defined as%
\[
\widehat{\omega}(\widehat{x})=\{\mu|\mu\in\mathbf{B}^{n},\exists(k_{j}%
)\in\widehat{Seq},\forall j\in\mathbf{N}_{\_},\widehat{x}(k_{j})=\mu\}
\]
and the \textbf{omega limit set} $\omega(x)$ of $x$ is defined by%
\[
\omega(x)=\{\mu|\mu\in\mathbf{B}^{n},\exists(t_{k})\in Seq,\forall
k\in\mathbf{N},x(t_{k})=\mu\}.
\]
The points of $\widehat{\omega}(\widehat{x}),\omega(x)$ are called
\textbf{omega limit points}.\footnote{In a real time construction, in
\cite{bib10}, when $x$ represents the state of a (control, nondeterministic,
asynchronous) system, the value $\mu$ of $x $ is called (accessible) recurrent
if $\forall t_{0}\in\mathbf{R},\exists t>t_{0},x(t)=\mu,$ i.e. if $\mu
\in\omega(x).$}
\end{definition}

\begin{example}
We define $\widehat{x}\in\widehat{S}^{(2)}$ by%
\[
\widehat{x}(k)=\left\{
\begin{array}
[c]{c}%
(0,0),k=-1,\\
(0,1),k=3k^{\prime}+1,k^{\prime}\geq0,\\
(1,0),k=3k^{\prime}+2,k^{\prime}\geq0,\\
(1,1),k=3k^{\prime},k^{\prime}\geq0
\end{array}
\right.
\]
and $x\in S^{(2)}$ by%
\[
x(t)=\widehat{x}(-1)\cdot\chi_{(-\infty,0)}(t)\oplus\widehat{x}(0)\cdot
\chi_{\lbrack0,1)}(t)\oplus...\oplus\widehat{x}(k)\cdot\chi_{\lbrack
k,k+1)}(t)\oplus...
\]
We see that $\widehat{Or}(\widehat{x})=Or(x)=\mathbf{B}^{2}$ and
$\widehat{\omega}(\widehat{x})=\omega(x)=\{(0,1),(1,0),$ $(1,1)\}.$
\end{example}

\begin{definition}
\label{Not5}For $\widehat{x}\in\widehat{S}^{(n)},x\in S^{(n)}$ and $\mu
\in\mathbf{B}^{n},$ we define the \textbf{support sets} of $\mu$ by%
\[
\widehat{\mathbf{T}}_{\mu}^{\widehat{x}}=\{k|k\in\mathbf{N}_{\_},\widehat
{x}(k)=\mu\},
\]%
\[
\mathbf{T}_{\mu}^{x}=\{t|t\in\mathbf{R},x(t)=\mu\}.
\]

\end{definition}

\begin{remark}
The previous Definition allows us to express the fact that $t$ is an initial
time instant of $x$, $t\in I^{x}$ under the equivalent form $(-\infty
,t]\subset\mathbf{T}_{x(-\infty+0)}^{x}.$ We shall use sometimes this
possibility in the rest of the exposure.
\end{remark}

\begin{theorem}
\label{The12}Let $\widehat{x}\in\widehat{S}^{(n)},x\in S^{(n)}$. We have that

a) $\widehat{\omega}(\widehat{x})=\{\mu|\mu\in\mathbf{B}^{n},\widehat
{\mathbf{T}}_{\mu}^{\widehat{x}}$ is infinite$\},$ $\omega(x)=\{\mu|\mu
\in\mathbf{B}^{n},\mathbf{T}_{\mu}^{x}$ is unbounded from above$\};$

b) $\widehat{\omega}(\widehat{x})\neq\varnothing,\omega(x)\neq\varnothing;$

c) for any $\widetilde{k}\in\mathbf{N},\widetilde{t}\in\mathbf{R}$ the
following diagrams commute%
\[%
\begin{array}
[c]{ccc}%
\widehat{Or}(\widehat{\sigma}^{\widetilde{k}}(\widehat{x})) & \subset &
\widehat{Or}(\widehat{x})\\
\cup &  & \cup\\
\widehat{\omega}(\widehat{\sigma}^{\widetilde{k}}(\widehat{x})) & = &
\widehat{\omega}(\widehat{x})
\end{array}
,\;%
\begin{array}
[c]{ccc}%
Or(\sigma^{\widetilde{t}}(x)) & \subset & Or(x)\\
\cup &  & \cup\\
\omega(\sigma^{\widetilde{t}}(x)) & = & \omega(x)
\end{array}
\]

\end{theorem}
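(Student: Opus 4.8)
The statement has three parts, and I would handle them in the order a), b), c), since b) is essentially a corollary of a) together with the observation that the orbit is a nonempty finite set, and c) reduces to a) applied to the forgetful function. Throughout I would work with the discrete and the real time cases in parallel, since the arguments are formally identical once one replaces ``$\widehat{\mathbf{T}}_{\mu}^{\widehat{x}}$ is infinite'' by ``$\mathbf{T}_{\mu}^{x}$ is unbounded from above''.

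For part a), the key is to unwind Definition \ref{Def10_}. In the discrete case, suppose $\mu\in\widehat{\omega}(\widehat{x})$, so there is $(k_{j})\in\widehat{Seq}$ with $\widehat{x}(k_{j})=\mu$ for all $j\in\mathbf{N}_{\_}$; since a strictly increasing sequence in $\mathbf{N}_{\_}$ is unbounded, the set $\{k_{j}\mid j\in\mathbf{N}_{\_}\}\subset\widehat{\mathbf{T}}_{\mu}^{\widehat{x}}$ is infinite, hence so is $\widehat{\mathbf{T}}_{\mu}^{\widehat{x}}$. Conversely, if $\widehat{\mathbf{T}}_{\mu}^{\widehat{x}}$ is infinite, I would enumerate its elements in strictly increasing order to produce an element of $\widehat{Seq}$ witnessing $\mu\in\widehat{\omega}(\widehat{x})$ (strictly speaking one starts the enumeration at index $-1$, which is possible since the set is infinite). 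The real time case is the same: an element of $Seq$ is by definition a strictly increasing superiorly unbounded real sequence, so $\mu\in\omega(x)$ exactly when one can extract such a sequence from $\mathbf{T}_{\mu}^{x}$, which is possible iff $\mathbf{T}_{\mu}^{x}$ is unbounded from above. The only point requiring a little care is extracting a \emph{superiorly unbounded} sequence, but unboundedness from above of $\mathbf{T}_{\mu}^{x}$ makes this routine (pick $t_{k}\in\mathbf{T}_{\mu}^{x}$ with $t_{k}>\max\{t_{k-1},k\}$).

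For part b), I would argue by the pigeonhole principle: $\widehat{Or}(\widehat{x})\subset\mathbf{B}^{n}$ is finite and nonempty, and $\mathbf{N}_{\_}=\bigcup_{\mu\in\widehat{Or}(\widehat{x})}\widehat{\mathbf{T}}_{\mu}^{\widehat{x}}$ is infinite, so at least one $\widehat{\mathbf{T}}_{\mu}^{\widehat{x}}$ must be infinite; by part a) this $\mu$ lies in $\widehat{\omega}(\widehat{x})$, which is therefore nonempty. For the real time case the same decomposition $\mathbf{R}=\bigcup_{\mu\in Or(x)}\mathbf{T}_{\mu}^{x}$ with $Or(x)$ finite forces one of the finitely many sets to be unbounded from above (a finite union of bounded-above sets is bounded above), and part a) again concludes $\omega(x)\neq\varnothing$.

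For part c), the two inclusions $\widehat{Or}(\widehat{\sigma}^{\widetilde{k}}(\widehat{x}))\subset\widehat{Or}(\widehat{x})$ and $\widehat{\omega}(\widehat{\sigma}^{\widetilde{k}}(\widehat{x}))\subset\widehat{\omega}(\widehat{x})$, and likewise $\widehat{\omega}(\widehat{\sigma}^{\widetilde{k}}(\widehat{x}))\subset\widehat{Or}(\widehat{\sigma}^{\widetilde{k}}(\widehat{x}))$, $\widehat{\omega}(\widehat{x})\subset\widehat{Or}(\widehat{x})$, are immediate from the definitions once one notes that $\widehat{\sigma}^{\widetilde{k}}(\widehat{x})(k)=\widehat{x}(k+\widetilde{k})$, so its set of values is a subset of that of $\widehat{x}$; the vertical $\cup$'s (omega limit set contained in orbit) follow from part a) since an infinite support set is in particular nonempty. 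The real content is the bottom equality $\widehat{\omega}(\widehat{\sigma}^{\widetilde{k}}(\widehat{x}))=\widehat{\omega}(\widehat{x})$, which I would prove via part a): $\widehat{\mathbf{T}}_{\mu}^{\widehat{\sigma}^{\widetilde{k}}(\widehat{x})}=\{k\mid k+\widetilde{k}\in\widehat{\mathbf{T}}_{\mu}^{\widehat{x}}\}$, and this differs from $\widehat{\mathbf{T}}_{\mu}^{\widehat{x}}$ only by a shift by $\widetilde{k}$ and the removal of finitely many small elements, hence one is infinite iff the other is. For the real time case, $\sigma^{\widetilde{t}}(x)$ agrees with $x$ on $[\widetilde{t},\infty)$ and is constant (equal to $x(\widetilde{t}-0)$) on $(-\infty,\widetilde{t})$, so $\mathbf{T}_{\mu}^{\sigma^{\widetilde{t}}(x)}$ and $\mathbf{T}_{\mu}^{x}$ have the same intersection with $[\widetilde{t},\infty)$; since unboundedness from above depends only on the set's trace on $[\widetilde{t},\infty)$, the two support sets are simultaneously unbounded from above, and part a) gives the equality of the omega limit sets. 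I expect part c)'s bottom equality --- specifically keeping track of the index shift and the truncation of the support set under $\widehat{\sigma}^{\widetilde{k}}$ and $\sigma^{\widetilde{t}}$ --- to be the only mildly delicate point; everything else is a direct unwinding of definitions.
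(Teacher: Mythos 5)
Your proposal is correct and follows essentially the same route as the paper: part a) by unwinding the definitions of $\widehat{\omega},\omega$ into statements about $\widehat{\mathbf{T}}_{\mu}^{\widehat{x}},\mathbf{T}_{\mu}^{x}$, part b) by the pigeonhole argument on the finite decomposition of $\mathbf{N}_{\_}$ and $\mathbf{R}$ into support sets, and part c) by comparing $\widehat{\mathbf{T}}_{\mu}^{\widehat{\sigma}^{\widetilde{k}}(\widehat{x})}$ and $\mathbf{T}_{\mu}^{\sigma^{\widetilde{t}}(x)}$ with the original support sets. Your explicit bookkeeping of the index shift in $\widehat{\mathbf{T}}_{\mu}^{\widehat{\sigma}^{\widetilde{k}}(\widehat{x})}=\{k\mid k+\widetilde{k}\in\widehat{\mathbf{T}}_{\mu}^{\widehat{x}}\}$ is in fact slightly more careful than the paper's formulation, but the argument is the same.
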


\begin{proof}
a) Indeed, for any $\mu\in\mathbf{B}^{n},$ the fact that $\mu\in
\widehat{\omega}(\widehat{x})$ is equivalent with any of:%
\[
\text{a sequence }k_{-1}<k_{0}<k_{1}<...\text{ exists such that }\forall
j\in\mathbf{N}_{\_},\widehat{x}(k_{j})=\mu,
\]%
\[
\text{the set }\{k|k\in\mathbf{N}_{\_},\widehat{x}(k)=\mu\}\text{ is infinite}%
\]
and the fact that $\mu\in\omega(x)$ is equivalent with any of%
\[
\text{an unbounded from above sequence }t_{0}<t_{1}<t_{2}<...\text{ exists
with }\forall j\in\mathbf{N},x(t_{j})=\mu,
\]%
\[
\text{the set }\{t|t\in\mathbf{R},x(t)=\mu\}\text{ is unbounded from above.}%
\]

b) The sets $\widehat{\mathbf{T}}_{\mu}^{\widehat{x}},$ $\mu\in\mathbf{B}^{n}$
are either empty, or finite non-empty, or infinite. We put $\mathbf{B}^{n}$
under the form $\mathbf{B}^{n}=\{\mu^{1},\mu^{2},...,\mu^{2^{n}}\}.$ Because
in the equation%
\[
\widehat{\mathbf{T}}_{\mu^{1}}^{\widehat{x}}\cup...\cup\widehat{\mathbf{T}%
}_{\mu^{2^{n}}}^{\widehat{x}}=\mathbf{N}_{\_}%
\]
the right hand set is infinite, we infer that infinite sets $\widehat
{\mathbf{T}}_{\mu^{i}}^{\widehat{x}}$ always exist, let them be, without
loosing the generality, $\widehat{\mathbf{T}}_{\mu^{1}}^{\widehat{x}%
},...,\widehat{\mathbf{T}}_{\mu^{p}}^{\widehat{x}}.$ We have from a)%
\[
\widehat{\omega}(\widehat{x})=\{\mu^{1},...,\mu^{p}\}.
\]

Similarly, we consider the equation%
\[
\mathbf{T}_{\mu^{1}}^{x}\cup...\cup\mathbf{T}_{\mu^{2^{n}}}^{x}=\mathbf{R}%
\]
where the right hand set is unbounded from above. We infer that the left hand
term contains sets $\mathbf{T}_{\mu^{i}}^{x}$ which are unbounded from above
and let them be, without loosing the generality, $\mathbf{T}_{\mu^{1}}%
^{x},...,\mathbf{T}_{\mu^{p}}^{x}.$ We infer from a) that%
\[
\omega(x)=\{\mu^{1},...,\mu^{p}\}.
\]

c) We prove that $\widehat{\omega}(\widehat{x})\subset\widehat{Or}(x).$ Some
sets $\widehat{\mathbf{T}}_{\mu^{i}}^{\widehat{x}}$ may exist which are finite
non-empty, let them be without loosing the generality $\widehat{\mathbf{T}%
}_{\mu^{p+1}}^{\widehat{x}},...,\widehat{\mathbf{T}}_{\mu^{s}}^{\widehat{x}},$
where $p\leq s\leq2^{n}.$ Then%
\[
\widehat{\omega}(\widehat{x})=\{\mu^{1},...,\mu^{p}\}\subset\{\mu^{1}%
,...,\mu^{s}\}=\widehat{Or}(\widehat{x}).
\]
The previous inclusion is true as equality if finite non-empty sets
$\widehat{\mathbf{T}}_{\mu^{i}}^{\widehat{x}}$ do not exist and $p=s.$

The proof of $\omega(x)\subset Or(x)$ is similar, we presume that the
non-empty, bounded sets $\mathbf{T}_{\mu^{i}}^{x}$ are $\mathbf{T}_{\mu^{p+1}%
}^{x},...,\mathbf{T}_{\mu^{s}}^{x},$ with $p\leq s\leq2^{n}.$ Then%
\[
\omega(x)=\{\mu^{1},...,\mu^{p}\}\subset\{\mu^{1},...,\mu^{s}\}=Or(x)
\]
and the previous inclusion holds as equality in the situation when all the
non-empty sets $\mathbf{T}_{\mu^{i}}^{x}$ are unbounded from above, i.e. when
$p=s.$

$\widehat{\omega}(\widehat{\sigma}^{\widetilde{k}}(\widehat{x}))=\widehat
{\omega}(\widehat{x})$ is a consequence of the fact that for any $\mu
\in\mathbf{B}^{n},$ the sets $\widehat{\mathbf{T}}_{\mu}^{\widehat{x}}$ and
$\widehat{\mathbf{T}}_{\mu}^{\widehat{\sigma}^{\widetilde{k}}(\widehat{x}%
)}=\widehat{\mathbf{T}}_{\mu}^{\widehat{x}}\cap\{\widetilde{k}-1,\widetilde
{k},\widetilde{k}+1,...\}$ are both finite (the empty sets are in this
situation) or infinite.

$\omega(\sigma^{\widetilde{t}}(x))=\omega(x)$ results from the fact that for
any $\mu\in\mathbf{B}^{n},$ the sets $\mathbf{T}_{\mu}^{x}$ and $\mathbf{T}%
_{\mu}^{\sigma^{\widetilde{t}}(x)}\supset\mathbf{T}_{\mu}^{x}\cap
\lbrack\widetilde{t},\infty)\footnote{If $\mu=x(\widetilde{t}-0)$ then the the
inclusion $\mathbf{T}_{\mu}^{\sigma^{\widetilde{t}}(x)}\supset\mathbf{T}_{\mu
}^{x}\cap\lbrack\widetilde{t},\infty)$ is strict, otherwise it takes place as
equality.}$ are both superiorly bounded (including the empty sets, that are
considered to have this property) or superiorly unbounded.

We prove $\widehat{Or}(\widehat{\sigma}^{\widetilde{k}}(\widehat{x}%
))\subset\widehat{Or}(x),Or(\sigma^{\widetilde{t}}(x))\subset Or(x)$ in the
following way:%
\[
\widehat{Or}(\widehat{\sigma}^{\widetilde{k}}(\widehat{x}))=\{\widehat{\sigma
}^{\widetilde{k}}(\widehat{x})(k)|k\in\mathbf{N}_{\_}\}=\{\widehat
{x}(k+\widetilde{k})|k\in\mathbf{N}_{\_}\}=
\]%
\[
=\{\widehat{x}(k)|k\geq\widetilde{k}-1\}\subset\{\widehat{x}(k)|k\in
\mathbf{N}_{\_}\}=\widehat{Or}(x)
\]
and on the other hand let $\varepsilon>0$ with $\forall\xi\in(\widetilde
{t}-\varepsilon,\widetilde{t}),x(\xi)=x(\widetilde{t}-0);$ then%
\[
Or(\sigma^{\widetilde{t}}(x))=\{\sigma^{\widetilde{t}}(x)(t)|t\in
\mathbf{R}\}=\{x(t)|t>\widetilde{t}-\varepsilon\}\subset\{x(t)|t\in
\mathbf{R}\}=Or(x).
\]

\end{proof}

\begin{theorem}
\label{The114}The signals $\widehat{x},x$ are given and we suppose that the
sequence $(t_{k})\in Seq$ exists such that%
\begin{equation}
x(t)=\widehat{x}(-1)\cdot\chi_{(-\infty,t_{0})}(t)\oplus\widehat{x}%
(0)\cdot\chi_{\lbrack t_{0},t_{1})}(t)\oplus...\oplus\widehat{x}(k)\cdot
\chi_{\lbrack t_{k},t_{k+1})}(t)\oplus... \label{per102}%
\end{equation}

a) We have $\widehat{Or}(\widehat{x})=Or(x)$ and $\widehat{\omega}(\widehat
{x})=\omega(x)$.

b) For any $\widetilde{k}\in\mathbf{N},\widetilde{t}\in\mathbf{R}$ we infer
$\widehat{\omega}(\widehat{\sigma}^{\widetilde{k}}(\widehat{x}))=\omega
(\sigma^{\widetilde{t}}(x));$ if either $\widetilde{k}=0,\widetilde{t}\leq
t_{0},$ or $\widetilde{k}\geq1,$ $\widetilde{t}\in(t_{\widetilde{k}%
-1},t_{\widetilde{k}}]$, then $\widehat{Or}(\widehat{\sigma}^{\widetilde{k}%
}(\widehat{x}))=Or(\sigma^{\widetilde{t}}(x))$.
\end{theorem}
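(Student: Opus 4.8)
The statement relates a discrete time signal $\widehat{x}$ and a continuous time signal $x$ that is its ``piecewise-constant interpolation'' along a fixed sequence $(t_{k})\in Seq$, as in (\ref{per102}). The plan is to reduce everything to the equality of support sets and to the combinatorial description of $\widehat{\omega},\omega$ provided by Theorem \ref{The12}. First I would observe the basic dictionary: for each $\mu\in\mathbf{B}^{n}$, the defining formula (\ref{per102}) gives $x(t)=\widehat{x}(k)$ precisely on $[t_{k},t_{k+1})$ and $x(t)=\widehat{x}(-1)$ on $(-\infty,t_{0})$, hence
\[
\mathbf{T}_{\mu}^{x}=\bigcup_{k\in\widehat{\mathbf{T}}_{\mu}^{\widehat{x}}}J_{k},
\]
where $J_{-1}=(-\infty,t_{0})$ and $J_{k}=[t_{k},t_{k+1})$ for $k\geq0$. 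In particular $\mathbf{T}_{\mu}^{x}=\varnothing$ iff $\widehat{\mathbf{T}}_{\mu}^{\widehat{x}}=\varnothing$, and $\mathbf{T}_{\mu}^{x}$ is unbounded from above iff $\widehat{\mathbf{T}}_{\mu}^{\widehat{x}}$ is infinite (using that $(t_{k})$ is superiorly unbounded). This immediately yields part a): $\widehat{Or}(\widehat{x})=Or(x)$ follows from $\widehat{\mathbf{T}}_{\mu}^{\widehat{x}}=\varnothing\Leftrightarrow\mathbf{T}_{\mu}^{x}=\varnothing$, and $\widehat{\omega}(\widehat{x})=\omega(x)$ follows from Theorem \ref{The12}(a) together with the ``infinite $\Leftrightarrow$ unbounded above'' equivalence.

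For part b), the first claim $\widehat{\omega}(\widehat{\sigma}^{\widetilde{k}}(\widehat{x}))=\omega(\sigma^{\widetilde{t}}(x))$ is handled by combining Theorem \ref{The12}(c) — which says $\widehat{\omega}(\widehat{\sigma}^{\widetilde{k}}(\widehat{x}))=\widehat{\omega}(\widehat{x})$ and $\omega(\sigma^{\widetilde{t}}(x))=\omega(x)$ — with part a) just proved; so this needs no new work beyond citing those two. The second claim, that $\widehat{Or}(\widehat{\sigma}^{\widetilde{k}}(\widehat{x}))=Or(\sigma^{\widetilde{t}}(x))$ under the stated position of $\widetilde{t}$ relative to $\widetilde{k}$, is the one point requiring care. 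Here I would compute both orbits explicitly: from the proof of Theorem \ref{The12}(c), $\widehat{Or}(\widehat{\sigma}^{\widetilde{k}}(\widehat{x}))=\{\widehat{x}(k)\mid k\geq\widetilde{k}-1\}$, and $Or(\sigma^{\widetilde{t}}(x))=\{x(t)\mid t\geq\widetilde{t}\}\cup\{x(\widetilde{t}-0)\}$. Then I would match these using (\ref{per102}): if $\widetilde{k}=0$ and $\widetilde{t}\leq t_{0}$, then $x(\widetilde{t}-0)=\widehat{x}(-1)$ and $\{x(t)\mid t\geq\widetilde{t}\}=\{x(t)\mid t\geq t_{0}\}\cup\{\widehat{x}(-1)\}=\{\widehat{x}(k)\mid k\geq-1\}$; if $\widetilde{k}\geq1$ and $\widetilde{t}\in(t_{\widetilde{k}-1},t_{\widetilde{k}}]$, then $x(\widetilde{t}-0)=\widehat{x}(\widetilde{k}-1)$ (since $(t_{\widetilde{k}-1},t_{\widetilde{k}}]\subset$ the closure of $J_{\widetilde{k}-1}$ on the relevant side) and $\{x(t)\mid t\geq\widetilde{t}\}$ together with that extra value equals $\{\widehat{x}(k)\mid k\geq\widetilde{k}-1\}$.

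The main obstacle — really the only subtlety — is the careful bookkeeping of half-open intervals and left limits at the breakpoints $t_{k}$: one must check that $x(\widetilde{t}-0)$ picks up exactly the value $\widehat{x}(\widetilde{k}-1)$ and no more when $\widetilde{t}$ sits at the right endpoint $t_{\widetilde{k}}$ of $J_{\widetilde{k}-1}$, and that no value with index $<\widetilde{k}-1$ sneaks into $Or(\sigma^{\widetilde{t}}(x))$. This is exactly why the hypothesis restricts $\widetilde{t}$ to $(t_{\widetilde{k}-1},t_{\widetilde{k}}]$ rather than a larger interval; I would invoke (\ref{per425})/(\ref{per426}) and Proposition \ref{Lem3} for the left-limit computation, as in the proof of Theorem \ref{The12}(c). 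Everything else is routine set manipulation, so I would keep the write-up short and lean on the already-established Theorems \ref{The12} and \ref{The1}.
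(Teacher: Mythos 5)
Your proposal is correct and follows essentially the same route as the paper: part a) via the value/support-set dictionary induced by (\ref{per102}) (the paper constructs the witnessing sequences directly where you invoke the support-set characterization of Theorem \ref{The12}a, which amounts to the same thing), and part b) via Theorem \ref{The12}c plus the explicit computation $\widehat{Or}(\widehat{\sigma}^{\widetilde{k}}(\widehat{x}))=\{\widehat{x}(k)\mid k\geq\widetilde{k}-1\}=\{x(t)\mid t\geq t_{\widetilde{k}-1}\}=Or(\sigma^{\widetilde{t}}(x))$. Your bookkeeping of $x(\widetilde{t}-0)=\widehat{x}(\widetilde{k}-1)$ for $\widetilde{t}\in(t_{\widetilde{k}-1},t_{\widetilde{k}}]$ is exactly the point the paper's hypothesis is designed for, so no gap remains.
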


\begin{proof}
a) We have%
\[
\widehat{Or}(\widehat{x})=\{\widehat{x}(k)|k\in\mathbf{N}_{\_}\}\overset
{(\ref{per102})}{=}\{x(t)|t\in\mathbf{R}\}=Or(x).
\]

In order to prove the second equality, let some arbitrary $\mu\in
\widehat{\omega}(\widehat{x}),$ thus the sequence $(k_{j})\in\widehat{Seq}$
exists with the property that%
\[
\forall j\in\mathbf{N}_{\_},\widehat{x}(k_{j})=\mu.
\]
For $x$ given by (\ref{per102}), we can define the unbounded from above
sequence%
\[
\forall j\in\mathbf{N}_{\_},t_{j+1}^{\prime}\overset{def}{=}t_{k_{j}},
\]
for which we get%
\[
\forall j\in\mathbf{N}_{\_},x(t_{j+1}^{\prime})=x(t_{k_{j}})=\widehat{x}%
(k_{j})=\mu,
\]
thus $\mu\in\omega(x)$ and $\widehat{\omega}(\widehat{x})\subset\omega(x).$
The inverse inclusion is proved similarly.

b) We fix $\widetilde{k}\in\mathbf{N},\widetilde{t}\in\mathbf{R}$ arbitrarily.
The first statement results from%
\[
\widehat{\omega}(\widehat{\sigma}^{\widetilde{k}}(\widehat{x}))\overset
{Theorem\;\ref{The12}}{=}\widehat{\omega}(\widehat{x})\overset{a)}{=}%
\omega(x)\overset{Theorem\;\ref{The12}}{=}\omega(\sigma^{\widetilde{t}}(x)).
\]

We prove the second statement. If $\widetilde{k}=0,\widetilde{t}\leq t_{0},$
then $\widehat{\sigma}^{\widetilde{k}}(\widehat{x})=\widehat{x}$ and
$\sigma^{\widetilde{t}}(x)=x,$ thus
\[
\widehat{Or}(\widehat{\sigma}^{\widetilde{k}}(\widehat{x}))=\widehat
{Or}(\widehat{x})\overset{a)}{=}Or(x)=Or(\sigma^{\widetilde{t}}(x)).
\]
We suppose from this moment that $\widetilde{k}\geq1,$ $\widetilde{t}%
\in(t_{\widetilde{k}-1},t_{\widetilde{k}}]$ hold. We conclude that
\[
\widehat{Or}(\widehat{\sigma}^{\widetilde{k}}(\widehat{x}))=\{\widehat
{x}(k)|k\geq\widetilde{k}-1\}\overset{(\ref{per102})}{=}\{x(t)|t\geq
t_{\widetilde{k}-1}\}=\{\sigma^{\widetilde{t}}(x)(t)|t\in\mathbf{R}%
\}=Or(\sigma^{\widetilde{t}}(x)).
\]

\end{proof}

\begin{theorem}
\label{The12_}For any $\widehat{x}\in\widehat{S}^{(n)},x\in S^{(n)}$ we have%
\[
\exists k^{\prime}\in\mathbf{N}_{\_},\forall k^{\prime\prime}\geq k^{\prime
},\widehat{\omega}(\widehat{x})=\{\widehat{x}(k)|k\geq k^{\prime\prime}\},
\]%
\[
\exists t^{\prime}\in\mathbf{R},\forall t^{\prime\prime}\geq t^{\prime}%
,\omega(x)=\{x(t)|t\geq t^{\prime\prime}\}.
\]

\end{theorem}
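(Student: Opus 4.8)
The plan is to reduce the statement to the support-set description of the omega limit set given in Theorem \ref{The12} a), together with the dichotomy that each support set is finite or infinite (discrete time), respectively bounded or unbounded from above (real time). First I would recall that $\mathbf{B}^{n}=\{\mu^{1},\dots,\mu^{2^{n}}\}$ is finite, so only finitely many support sets $\widehat{\mathbf{T}}_{\mu^{i}}^{\widehat{x}}$ are involved. Reorder them so that $\widehat{\mathbf{T}}_{\mu^{1}}^{\widehat{x}},\dots,\widehat{\mathbf{T}}_{\mu^{p}}^{\widehat{x}}$ are the infinite ones and $\widehat{\mathbf{T}}_{\mu^{p+1}}^{\widehat{x}},\dots,\widehat{\mathbf{T}}_{\mu^{s}}^{\widehat{x}}$ are the finite non-empty ones (the rest being empty); by Theorem \ref{The12} a) we then have $\widehat{\omega}(\widehat{x})=\{\mu^{1},\dots,\mu^{p}\}$, exactly as in the proof of Theorem \ref{The12} b)--c).

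Next I would choose $k^{\prime}\in\mathbf{N}_{\_}$ large enough to be strictly past every element of the finitely many finite support sets: since $\widehat{\mathbf{T}}_{\mu^{p+1}}^{\widehat{x}}\cup\dots\cup\widehat{\mathbf{T}}_{\mu^{s}}^{\widehat{x}}$ is a finite subset of $\mathbf{N}_{\_}$, it has a maximum $m$ (or is empty), and I take $k^{\prime}=m+1$ (or $k^{\prime}=-1$ if there are no finite non-empty support sets). Then for every $k^{\prime\prime}\ge k^{\prime}$ and every $k\ge k^{\prime\prime}$ we have $\widehat{x}(k)\in\{\mu^{1},\dots,\mu^{p}\}$, because $k$ lies in no finite support set and in no empty one; this gives $\{\widehat{x}(k)|k\ge k^{\prime\prime}\}\subset\widehat{\omega}(\widehat{x})$. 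Conversely each $\mu^{i}$ with $i\le p$ has an infinite support set, so $\widehat{\mathbf{T}}_{\mu^{i}}^{\widehat{x}}\cap\{k^{\prime\prime},k^{\prime\prime}+1,\dots\}$ is non-empty, which yields the reverse inclusion $\widehat{\omega}(\widehat{x})\subset\{\widehat{x}(k)|k\ge k^{\prime\prime}\}$; hence equality for all $k^{\prime\prime}\ge k^{\prime}$.

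The real time case is handled the same way with the order-type replaced by boundedness from above: reorder so that $\mathbf{T}_{\mu^{1}}^{x},\dots,\mathbf{T}_{\mu^{p}}^{x}$ are unbounded from above and $\mathbf{T}_{\mu^{p+1}}^{x},\dots,\mathbf{T}_{\mu^{s}}^{x}$ are the non-empty bounded ones, so $\omega(x)=\{\mu^{1},\dots,\mu^{p}\}$ by Theorem \ref{The12} a). The finite union $\mathbf{T}_{\mu^{p+1}}^{x}\cup\dots\cup\mathbf{T}_{\mu^{s}}^{x}$ is bounded from above, say by $M$, and I set $t^{\prime}=M$ (or $t^{\prime}$ arbitrary, e.g. $t^{\prime}=0$, if there are no bounded non-empty support sets). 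For $t^{\prime\prime}\ge t^{\prime}$ every $t\ge t^{\prime\prime}$ satisfies $x(t)\in\{\mu^{1},\dots,\mu^{p}\}$, and each unbounded-from-above support set still meets $[t^{\prime\prime},\infty)$, giving $\omega(x)=\{x(t)|t\ge t^{\prime\prime}\}$. I do not anticipate a genuine obstacle here; the only point requiring a little care is the bookkeeping that guarantees $k^{\prime}$ (resp.\ $t^{\prime}$) is past \emph{all} of the finitely many "transient" support sets simultaneously, and that the empty support sets cause no trouble since they contribute nothing to any $\{\widehat{x}(k)|k\ge k^{\prime\prime}\}$.
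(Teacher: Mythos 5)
Your proposal is correct and follows essentially the same route as the paper: both use the classification from Theorem \ref{The12} a) of the support sets into infinite (resp. unbounded from above) versus finite (resp. bounded) ones, choose $k^{\prime}$ (resp. $t^{\prime}$) just past the finitely many transient support sets, and then verify the two inclusions. The only cosmetic difference is that the paper writes the conclusion as a chain of set equalities over the union of the persistent support sets, while you phrase it as two separate inclusions.
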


\begin{proof}
We denote once again the elements of $\mathbf{B}^{n}$ with $\mu^{1}%
,...,\mu^{2^{n}}.$ From the proof of Theorem \ref{The12}, if $\widehat
{\mathbf{T}}_{\mu^{1}}^{\widehat{x}},...,\widehat{\mathbf{T}}_{\mu^{p}%
}^{\widehat{x}}$ are the infinite sets $\widehat{\mathbf{T}}_{\mu^{i}%
}^{\widehat{x}},i=\overline{1,2^{n}}$ then $\widehat{\omega}(\widehat
{x})=\{\mu^{1},...,\mu^{p}\}.$ The number
\[
k^{\prime}=\left\{
\begin{array}
[c]{c}%
1+\max\{k|k\in\mathbf{N}_{\_},\widehat{x}(k)\in\widehat{Or}(\widehat
{x})\smallsetminus\widehat{\omega}(\widehat{x})\},\;if\;\widehat{Or}%
(\widehat{x})\smallsetminus\widehat{\omega}(\widehat{x})\neq\varnothing\\
-1,\;otherwise
\end{array}
\right.
\]
satisfies the property that $\forall i\in\{1,...,p\},\forall k^{\prime\prime
}\geq k^{\prime},\{k^{\prime\prime},k^{\prime\prime}+1,k^{\prime\prime
}+2,...\}\cap\widehat{\mathbf{T}}_{\mu^{i}}^{\widehat{x}}\neq\varnothing,$
thus $\forall k^{\prime\prime}\geq k^{\prime},$%
\[
\{\mu^{1},...,\mu^{p}\}=\{\widehat{x}(k)|k\in\widehat{\mathbf{T}}_{\mu^{1}%
}^{\widehat{x}}\cup...\cup\widehat{\mathbf{T}}_{\mu^{p}}^{\widehat{x}}\}
\]%
\[
=\{\widehat{x}(k)|k\in(\widehat{\mathbf{T}}_{\mu^{1}}^{\widehat{x}}\cup
...\cup\widehat{\mathbf{T}}_{\mu^{p}}^{\widehat{x}})\cap\{k^{\prime\prime
},k^{\prime\prime}+1,k^{\prime\prime}+2,...\}\}=\{\widehat{x}(k)|k\geq
k^{\prime\prime}\}.
\]

Similarly, if $\mathbf{T}_{\mu^{1}}^{x},...,\mathbf{T}_{\mu^{p}}^{x}$ are the
unbounded from above sets $\mathbf{T}_{\mu^{i}}^{x},i=\overline{1,2^{n}} $
then $\omega(x)=\{\mu^{1},...,\mu^{p}\}$ and, with the notation
\[
t^{\prime}=\left\{
\begin{array}
[c]{c}%
\sup\{t|t\in\mathbf{R},x(t)\in Or(x)\smallsetminus\omega
(x)\},\;if\;Or(x)\smallsetminus\omega(x)\neq\varnothing\\
0,\;otherwise
\end{array}
\right.
\]
we have that $\forall i\in\{1,...,p\},\forall t^{\prime\prime}\geq t^{\prime
},[t^{\prime\prime},\infty)\cap\mathbf{T}_{\mu^{i}}^{x}\neq\varnothing,$ thus
$\forall t^{\prime\prime}\geq t^{\prime},$%
\[
\{\mu^{1},...,\mu^{p}\}=\{x(t)|t\in\mathbf{T}_{\mu^{1}}^{x}\cup...\cup
\mathbf{T}_{\mu^{p}}^{x}\}
\]%
\[
=\{x(t)|t\in(\mathbf{T}_{\mu^{1}}^{x}\cup...\cup\mathbf{T}_{\mu^{p}}^{x}%
)\cap\lbrack t^{\prime\prime},\infty)\}=\{x(t)|t\geq t^{\prime\prime}\}.
\]

\end{proof}

\begin{remark}
Let the signals $\widehat{x}$ and $x$. If $\widehat{Or}(\widehat{x}%
)\neq\widehat{\omega}(\widehat{x}),$ the time instant $k^{\prime}\in
\mathbf{N}_{\_}$ exists that determines two time intervals for $\widehat{x}:$
$\{-1,0,...,k^{\prime}\}$ when $\widehat{x}$ can take values in any of
$\widehat{Or}(\widehat{x})\smallsetminus\widehat{\omega}(\widehat{x}%
),\widehat{\omega}(\widehat{x})$ and $\{k^{\prime}+1,k^{\prime}+2,...\}$ when
$\widehat{x}$ takes values in $\widehat{\omega}(\widehat{x})$ only. Similarly
for $x$, if $Or(x)\neq\omega(x),$ the time instant $t^{\prime}\in\mathbf{R}$
exists that determines two time intervals for $x:$ $(-\infty,t^{\prime})$ when
$x$ can take values in both sets $Or(x)\smallsetminus\omega(x),\omega(x)$ and
$[t^{\prime},\infty)$ when $x$ takes values in $\omega(x)$ only.
\end{remark}

\chapter{The main definitions on periodicity}

In this Chapter we list the main definitions on periodicity, that are
necessary in order to understand the rest of the exposure: the eventually
periodic points and the eventually periodic signals, the periodic points and
the periodic signals.

\section{Eventually periodic points}

\begin{definition}
\label{Def19}In case that, for $\mu\in\widehat{Or}(\widehat{x}),$ $p\geq1, $
some $k^{\prime}\in\mathbf{N}_{\_}$ exists such that we have%
\begin{equation}
\left\{
\begin{array}
[c]{c}%
\widehat{\mathbf{T}}_{\mu}^{\widehat{x}}\cap\{k^{\prime},k^{\prime
}+1,k^{\prime}+2,...\}\neq\varnothing\text{ and }\forall k\in\widehat
{\mathbf{T}}_{\mu}^{\widehat{x}}\cap\{k^{\prime},k^{\prime}+1,k^{\prime
}+2,...\},\\
\{k+zp|z\in\mathbf{Z}\}\cap\{k^{\prime},k^{\prime}+1,k^{\prime}+2,...\}\subset
\widehat{\mathbf{T}}_{\mu}^{\widehat{x}},
\end{array}
\right.  \label{pre738}%
\end{equation}
then $\mu$ is said to be \textbf{eventually periodic} (an \textbf{eventually
periodic point of} $\widehat{x},$ or \textbf{of} $\widehat{Or}(\widehat{x})$)
with the \textbf{period} $p$ and with the \textbf{limit of periodicity}
$k^{\prime}.$

Let $\mu\in Or(x)$ and $T>0$ such that $t^{\prime}\in\mathbf{R}$ exists with%
\begin{equation}
\mathbf{T}_{\mu}^{x}\cap\lbrack t^{\prime},\infty)\neq\varnothing\text{ and
}\forall t\in\mathbf{T}_{\mu}^{x}\cap\lbrack t^{\prime},\infty),\{t+zT|z\in
\mathbf{Z}\}\cap\lbrack t^{\prime},\infty)\subset\mathbf{T}_{\mu}^{x}.
\label{pre740}%
\end{equation}
Then $\mu$ is said to be \textbf{eventually periodic} (an \textbf{eventually
periodic point of} $x,$ or \textbf{of} $Or(x)$) with the \textbf{period} $T$
and with the \textbf{limit of periodicity} $t^{\prime}.$
\end{definition}

\begin{definition}
\label{Def45}The least $p,T$ that fulfill (\ref{pre738}), (\ref{pre740}) are
called \textbf{prime periods} (of $\mu$). For any $p,T,$ the least $k^{\prime
},t^{\prime}$ that fulfill (\ref{pre738}), (\ref{pre740}) are called
\textbf{prime limits of periodicity} (of $\mu$).
\end{definition}

\begin{notation}
\label{Not12}We use the notation $\widehat{P}_{\mu}^{\widehat{x}}$ for the set
of the periods of $\mu\in\widehat{Or}(\widehat{x}):$%
\[
\widehat{P}_{\mu}^{\widehat{x}}=\{p|p\geq1,\exists k^{\prime}\in
\mathbf{N}_{\_},(\ref{pre738})\text{ holds}\}\text{.}%
\]

The notation $P_{\mu}^{x}$ is used for the analogue set of the periods of
$\mu\in Or(x):$%
\[
P_{\mu}^{x}=\{T|T>0,\exists t^{\prime}\in\mathbf{R},(\ref{pre740})\text{
holds}\}.
\]

\end{notation}

\begin{notation}
\label{Not18}We denote with $\widehat{L}_{\mu}^{\widehat{x}}$ the set of the
limits of periodicity of $\mu\in\widehat{Or}(\widehat{x}):$%
\[
\widehat{L}_{\mu}^{\widehat{x}}=\{k^{\prime}|k^{\prime}\in\mathbf{N}%
_{\_},\exists p\geq1,(\ref{pre738})\text{ holds}\}
\]
and $L_{\mu}^{x}$ denotes the set of the limits of periodicity of $\mu\in
Or(x):$%
\[
L_{\mu}^{x}=\{t^{\prime}|t^{\prime}\in\mathbf{R},\exists T>0,(\ref{pre740}%
)\text{ is true}\}.
\]

\end{notation}

\begin{remark}
The eventual periodicity of $\mu\in\widehat{Or}(\widehat{x})$ with the period
$p$ and the limit of periodicity $k^{\prime}$ means a periodic behavior that
starts from $k^{\prime}$: for any $k\in\widehat{\mathbf{T}}_{\mu}^{\widehat
{x}}\cap\{k^{\prime},k^{\prime}+1,k^{\prime}+2,...\},$ we can go upwards and
downwards with multiples of $p$ to $k+zp,z\in\mathbf{Z}$ without getting out
of the 'final' time set $\{k^{\prime},k^{\prime}+1,k^{\prime}+2,...\}$ and we
still remain in $\widehat{\mathbf{T}}_{\mu}^{\widehat{x}}.$ In other words%
\[
\mu=\widehat{x}(k)=\widehat{x}(k-p)=\widehat{x}(k-2p)=...=\widehat{x}%
(k-k_{1}p),
\]
where $k_{1}\in\mathbf{N},$ fulfills $k-k_{1}p\geq k^{\prime},$ $k-(k_{1}%
+1)p<k^{\prime}$ and%
\[
\mu=\widehat{x}(k)=\widehat{x}(k+p)=\widehat{x}(k+2p)=...
\]

\end{remark}

\begin{remark}
The requirement $\widehat{\mathbf{T}}_{\mu}^{\widehat{x}}\cap\{k^{\prime
},k^{\prime}+1,k^{\prime}+2,...\}\neq\varnothing$ is one of non-triviality. It
is necessary, because for any point $\mu\in\widehat{Or}(\widehat
{x})\smallsetminus\widehat{\omega}(\widehat{x}),$ the set $\widehat
{\mathbf{T}}_{\mu}^{\widehat{x}}$ is finite, some $k^{\prime}\in
\mathbf{N}_{\_}$ exists such that $\widehat{\mathbf{T}}_{\mu}^{\widehat{x}%
}\cap\{k^{\prime},k^{\prime}+1,k^{\prime}+2,...\}=\varnothing$ and%
\[
\left\{
\begin{array}
[c]{c}%
\forall k\in\widehat{\mathbf{T}}_{\mu}^{\widehat{x}}\cap\{k^{\prime}%
,k^{\prime}+1,k^{\prime}+2,...\},\\
\{k+zp|z\in\mathbf{Z}\}\cap\{k^{\prime},k^{\prime}+1,k^{\prime}+2,...\}\subset
\widehat{\mathbf{T}}_{\mu}^{\widehat{x}}%
\end{array}
\right.  ,
\]
equivalent with%
\[
\forall k,k\in\varnothing\Longrightarrow\{k+zp|z\in\mathbf{Z}\}\cap
\{k^{\prime},k^{\prime}+1,k^{\prime}+2,...\}\subset\widehat{\mathbf{T}}_{\mu
}^{\widehat{x}},
\]
is true, $\forall p\geq1.$
\end{remark}

\begin{remark}
The eventually periodic points $\mu\in\widehat{Or}(\widehat{x})$ are omega
limit points $\mu\in\widehat{\omega}(\widehat{x})$ because the set
$\widehat{\mathbf{T}}_{\mu}^{\widehat{x}}$ is necessarily infinite.
\end{remark}

\begin{remark}
Definition \ref{Def19} avoids the triviality expressed by the possibility
$\widehat{\mathbf{T}}_{\mu}^{\widehat{x}}\cap\{k^{\prime},k^{\prime
}+1,k^{\prime}+2,...\}=\varnothing$, but a way of obtaining the same result is
to ask $\mu\in\widehat{\omega}(\widehat{x})$ instead of $\mu\in\widehat
{Or}(\widehat{x}),$ see Lemma \ref{Lem37}, page \pageref{Lem37}, since in that
case we have that $\widehat{\mathbf{T}}_{\mu}^{\widehat{x}}$ is infinite and
$\forall k^{\prime}\in\mathbf{N}_{\_},\widehat{\mathbf{T}}_{\mu}^{\widehat{x}%
}\cap\{k^{\prime},k^{\prime}+1,k^{\prime}+2,...\}\neq\varnothing.$ With this
note, the discrete time part of Definition \ref{Def19} becomes, equivalently:
$\mu\in\widehat{\omega}(\widehat{x})$ is eventually periodic with the period
$p$ and the limit of periodicity $k^{\prime}$ if%
\[
\forall k\in\widehat{\mathbf{T}}_{\mu}^{\widehat{x}}\cap\{k^{\prime}%
,k^{\prime}+1,k^{\prime}+2,...\},\{k+zp|z\in\mathbf{Z}\}\cap\{k^{\prime
},k^{\prime}+1,k^{\prime}+2,...\}\subset\widehat{\mathbf{T}}_{\mu}%
^{\widehat{x}}.
\]

\end{remark}

\begin{remark}
The eventual periodicity of $\mu\in Or(x)$ with the period $T$ and the limit
of periodicity $t^{\prime}$ means periodicity that starts from $t^{\prime}%
\in\mathbf{R}$: for any $t\in\mathbf{T}_{\mu}^{x}\cap\lbrack t^{\prime}%
,\infty)$ we can go arbitrarily upwards and downwards with multiples of $T,$
to $t+zT,z\in\mathbf{Z}$ without leaving the 'final' time set $[t^{\prime
},\infty)$ and we still remain in $\mathbf{T}_{\mu}^{x}.$
\end{remark}

\begin{remark}
The requirement $\mathbf{T}_{\mu}^{x}\cap\lbrack t^{\prime},\infty
)\neq\varnothing$ in (\ref{pre740}) is one of non-triviality. An equivalent
way of obtaining non-triviality is to ask $\mu\in\omega(x)$ and to replace
(\ref{pre740}) with%
\[
\forall t\in\mathbf{T}_{\mu}^{x}\cap\lbrack t^{\prime},\infty),\{t+zT|z\in
\mathbf{Z}\}\cap\lbrack t^{\prime},\infty)\subset\mathbf{T}_{\mu}^{x}.
\]

\end{remark}

\begin{remark}
The eventual periodicity of $\mu\in Or(x)$ obviously implies that $\mu
\in\omega(x),$ because the set $\mathbf{T}_{\mu}^{x}$ is superiorly unbounded.
\end{remark}

\begin{remark}
We have $\widehat{P}_{\mu}^{\widehat{x}}\neq\varnothing\Longleftrightarrow
\widehat{L}_{\mu}^{\widehat{x}}\neq\varnothing$ and $P_{\mu}^{x}%
\neq\varnothing\Longleftrightarrow L_{\mu}^{x}\neq\varnothing.$
\end{remark}

\begin{example}
The signal $\widehat{x}\in\widehat{S}^{(2)}$ with $\widehat{\mathbf{T}%
}_{(1,1)}^{\widehat{x}}=\{1,3,5,...\}$ fulfills the property that $(1,1)$ is
eventually periodic with the period $2$ and the limit of periodicity
$k^{\prime}=0.$
\end{example}

\begin{example}
Let $\widehat{x}\in\widehat{S}^{(2)}$ arbitrary with $(1,1)\notin\widehat
{Or}(\widehat{x})$ and $\widehat{x}(-1)\neq\widehat{x}(0).$ $y\in S^{(2)}$ is
defined like this:%
\[
y(t)=\widehat{x}(-1)\cdot\chi_{(-\infty,0)}(t)\oplus\widehat{x}(0)\cdot
\chi_{\lbrack0,1)}(t)\oplus(1,1)\cdot\chi_{\lbrack1,2)}(t)\oplus\widehat
{x}(2)\cdot\chi_{\lbrack2,3)}(t)\oplus
\]%
\[
\oplus(1,1)\cdot\chi_{\lbrack3,4)}(t)\oplus\widehat{x}(4)\cdot\chi
_{\lbrack4,5)}(t)\oplus(1,1)\cdot\chi_{\lbrack5,6)}(t)\oplus...
\]
The point $(1,1)$ is an eventually periodic point of $y$ with the period $T=2
$ and any $t^{\prime}\in\lbrack0,\infty)$ is a limit of periodicity. The
situation $\widehat{x}(-1)=\widehat{x}(0)$ generates a special case called
periodicity, that will be analyzed later and the situation $(1,1)\in
\widehat{Or}(\widehat{x})$ might generate several possibilities, for example
$y$ has the period $p=1$ or $y$ changes its limit of periodicity.
\end{example}

\section{Eventually periodic signals}

\begin{definition}
\label{Def28}For $p\geq1$ and $k^{\prime}\in\mathbf{N}_{\_},$ if%
\begin{equation}
\forall k\geq k^{\prime},\widehat{x}(k)=\widehat{x}(k+p), \label{pre742}%
\end{equation}
we say that $\widehat{x}$ is \textbf{eventually periodic} with the
\textbf{period} $p$ and the \textbf{limit of periodicity} $k^{\prime}.$

Let $T>0.$ If $t^{\prime}\in\mathbf{R}$ exists such that%
\begin{equation}
\forall t\geq t^{\prime},x(t)=x(t+T) \label{pre744}%
\end{equation}
is true, we say that $x$ is \textbf{eventually periodic} with the
\textbf{period} $T$ and the \textbf{limit of periodicity} $t^{\prime}.$
\end{definition}

\begin{definition}
\label{Def46}The least $p,T$ that fulfill (\ref{pre742}), (\ref{pre744}) are
called \textbf{prime periods} (of $\widehat{x},x$) and the least $k^{\prime
},t^{\prime}$ that fulfill (\ref{pre742}), (\ref{pre744}) are called
\textbf{prime limits of periodicity} (of $\widehat{x},x).$
\end{definition}

\begin{notation}
\label{Not19}We use the notation $\widehat{P}^{\widehat{x}}$ for the set of
the periods of $\widehat{x}:$%
\[
\widehat{P}^{\widehat{x}}=\{p|p\geq1,\exists k^{\prime}\in\mathbf{N}%
_{\_},(\ref{pre742})\text{ holds}\}
\]
and also the notation $P^{x}$ for the set of the periods of $x:$%
\[
P^{x}=\{T|T>0,\exists t^{\prime}\in\mathbf{R},(\ref{pre744})\text{ holds}\}.
\]

\end{notation}

\begin{notation}
\label{Not20}We use the notations%
\[
\widehat{L}^{\widehat{x}}=\{k^{\prime}|k^{\prime}\in\mathbf{N}_{\_},\exists
p\geq1,(\ref{pre742})\text{ holds}\},
\]%
\[
L^{x}=\{t^{\prime}|t^{\prime}\in\mathbf{R},\exists T>0,(\ref{pre744})\text{
holds}\}.
\]

\end{notation}

\begin{remark}
The eventual periodicity of $\widehat{x}$ with the period $p$ and the limit of
periodicity $k^{\prime}$ means that all the values $\mu\in\widehat{\omega
}(\widehat{x})$ are eventually periodic with the same period $p$ and with the
same limit of periodicity $k^{\prime}.$
\end{remark}

\begin{remark}
The signal $x$ is eventually periodic with the period $T$ and the limit of
periodicity $t^{\prime}$ if all the values $\mu\in\omega(x)$ are eventually
periodic with the same period $T$ and with the same limit of periodicity
$t^{\prime}.$
\end{remark}

\begin{remark}
We see that $\widehat{P}^{\widehat{x}}\neq\varnothing\Longleftrightarrow
\widehat{L}^{\widehat{x}}\neq\varnothing$ and $P^{x}\neq\varnothing
\Longleftrightarrow L^{x}\neq\varnothing.$
\end{remark}

\begin{example}
\label{Exa17}The signal $\widehat{x}\in\widehat{S}^{(1)}$ defined by
$\widehat{x}=0,1,1,1,...$ is eventually constant with $\widehat{F}%
^{\widehat{x}}=\mathbf{N}$. It is eventually periodic with the period $p=1$
and the limit of periodicity $k^{\prime}=0.$
\end{example}

\begin{example}
\label{Exa18}The real time analogue of the previous example is given by $x\in
S^{(1)},x(t)=\chi_{\lbrack0,\infty)}(t).$ The signal $x$ is eventually
constant and eventually periodic, with the arbitrary period $T>0.$ We have
$I^{x}=(-\infty,0)$ and $F^{x}=L^{x}=[0,\infty).$
\end{example}

\section{Periodic points}

\begin{definition}
\label{Def47}We consider the signals $\widehat{x}\in\widehat{S}^{(n)},x\in
S^{(n)}.$

Let $\mu\in\widehat{Or}(\widehat{x})$ and $p\geq1.$ If%
\begin{equation}
\forall k\in\widehat{\mathbf{T}}_{\mu}^{\widehat{x}},\{k+zp|z\in
\mathbf{Z}\}\cap\mathbf{N}_{\_}\subset\widehat{\mathbf{T}}_{\mu}^{\widehat{x}%
}, \label{pre737}%
\end{equation}
we say that $\mu$ is \textbf{periodic} (a \textbf{periodic point of}
$\widehat{x}$, or \textbf{of} $\widehat{Or}(\widehat{x})$) with the
\textbf{period} $p$.

Let $\mu\in Or(x)$ and $T>0$ such that $t^{\prime}\in I^{x}$ exists with%
\begin{equation}
\forall t\in\mathbf{T}_{\mu}^{x}\cap\lbrack t^{\prime},\infty),\{t+zT|z\in
\mathbf{Z}\}\cap\lbrack t^{\prime},\infty)\subset\mathbf{T}_{\mu}^{x}\text{.}
\label{pre739}%
\end{equation}
Then $\mu~$is called \textbf{periodic} (a \textbf{periodic point of} $x$, or
\textbf{of} $Or(x)$) with the \textbf{period} $T.$
\end{definition}

\begin{remark}
\label{Rem18}The periodicity of $\mu\in\widehat{Or}(\widehat{x})$ with the
period $p\geq1$ means eventual periodicity that starts at the limit of
periodicity $k^{\prime}=-1.$ The property is non-trivial since $\mu\in
\widehat{Or}(\widehat{x})$ implies $\varnothing\neq\widehat{\mathbf{T}}_{\mu
}^{\widehat{x}}=\widehat{\mathbf{T}}_{\mu}^{\widehat{x}}\cap\{-1,0,1,...\}.$
\end{remark}

\begin{remark}
The periodicity of $\mu\in Or(x)$ with the period $T>0$ means eventual
periodicity with the property that the limit of periodicity $t^{\prime}$ is an
initial time instant of $x$ also. The property is non-trivial as far as
$\mathbf{T}_{\mu}^{x}\cap\lbrack t^{\prime},\infty)\neq\varnothing$ results
from Lemma \ref{Lem36}, page \pageref{Lem36}.
\end{remark}

\begin{remark}
Because the periodicity of $\mu$ is a special case of eventual periodicity,
the concepts of prime period, prime limit of periodicity and the notations
$\widehat{P}_{\mu}^{\widehat{x}},P_{\mu}^{x},\widehat{L}_{\mu}^{\widehat{x}%
},L_{\mu}^{x}$ are used for the periodic points also, with the remark that
$\widehat{L}_{\mu}^{\widehat{x}}=\mathbf{N}_{\_},$ $L_{\mu}^{x}\cap I^{x}%
\neq\varnothing.$
\end{remark}

\begin{remark}
\label{Rem27}The periodic points are omega limit points. On one hand even if
there is a periodic point, omega limit points might exist that are not
periodic and on the other hand when stating periodicity we must not ask
$\mu\in\widehat{\omega}(\widehat{x}),\mu\in\omega(x)$ because triviality is impossible.
\end{remark}

\begin{remark}
Mentioning the limit of periodicity in case of periodicity is not necessary:
in the discrete time case because $k^{\prime}=-1$ is always clear and in the
real time case because the property of periodicity does not depend on the
choice of $t^{\prime},$ as we shall see later.
\end{remark}

\begin{example}
Let $\widehat{x}\in\widehat{S}^{(n)},$ $\mu\in\widehat{Or}(\widehat{x})$ with
$\widehat{\mathbf{T}}_{\mu}^{\widehat{x}}=\{-1,1,3,5,...\},$ thus the point
$\mu$ is periodic with the period $p=2.$

We define $x\in S^{(n)}$ by%
\[
x(t)=\widehat{x}(-1)\cdot\chi_{(-\infty,0)}(t)\oplus\widehat{x}(0)\cdot
\chi_{\lbrack0,1)}(t)\oplus...\oplus\widehat{x}(k)\cdot\chi_{\lbrack
k,k+1)}(t)\oplus...
\]
We have $x(-\infty+0)=\mu$ and $\mathbf{T}_{x(-\infty+0)}^{x}=\mathbf{T}_{\mu
}^{x}=(-\infty,0)\cup\lbrack1,2)\cup\lbrack3,4)\cup...$ For any $t^{\prime}%
\in\lbrack-1,0),$ we infer the truth of $(-\infty,t^{\prime}]\subset
\mathbf{T}_{x(-\infty+0)}^{x},$ $\mathbf{T}_{\mu}^{x}\cap\lbrack t^{\prime
},\infty)=[t^{\prime},0)\cup\lbrack1,2)\cup\lbrack3,4)\cup...$ and
\[
\forall t\in\lbrack t^{\prime},0)\cup\lbrack1,2)\cup\lbrack3,4)\cup...,
\]%
\[
\{t+z2|z\in\mathbf{Z}\}\cap\lbrack t^{\prime},\infty)\subset(-\infty
,0)\cup\lbrack1,2)\cup\lbrack3,4)\cup...
\]
$\mu$ has the period $T=2$.
\end{example}

\section{Periodic signals}

\begin{definition}
\label{Def18}Let $\widehat{x}\in\widehat{S}^{(n)},x\in S^{(n)}$ and
$p\geq1,T>0.$

If%
\begin{equation}
\forall k\in\mathbf{N}_{\_},\widehat{x}(k)=\widehat{x}(k+p), \label{pre741}%
\end{equation}
we say that $\widehat{x}$ is \textbf{periodic} with the \textbf{period} $p$.

In case that $\exists t^{\prime}\in I^{x}$,%
\begin{equation}
\forall t\geq t^{\prime},x(t)=x(t+T) \label{pre743}%
\end{equation}
holds, we say that $x$ is \textbf{periodic} with the \textbf{period} $T.$
\end{definition}

\begin{remark}
If $\widehat{x}$ is periodic with the period $p$ then all its values $\mu
\in\widehat{Or}(\widehat{x})$ are periodic with the period $p.$ This means in
particular that the periodicity of $\widehat{x}$ implies $\widehat
{Or}(\widehat{x})=\widehat{\omega}(\widehat{x}).$
\end{remark}

\begin{remark}
If the signal $x$ is periodic with the period $T$ then all the values $\mu\in
Or(x)$ are periodic with the same period $T$. Note that $Or(x)=\omega(x).$
\end{remark}

\begin{remark}
The periodic signals are special cases of eventually periodic signals when
$k^{\prime}=-1$ instead of $k^{\prime}\in\widehat{L}^{\widehat{x}},$
respectively when $t^{\prime}\in I^{x}\cap L^{x},$ instead of $t^{\prime}\in
L^{x}.$ In particular the concepts of prime period, prime limit of periodicity
and the notations $\widehat{P}^{\widehat{x}},P^{x},\widehat{L}^{\widehat{x}%
},L^{x}$ are used for the periodic signals too. We have $\widehat{L}%
^{\widehat{x}}=\mathbf{N}_{\_},L^{x}\cap I^{x}\neq\varnothing.$
\end{remark}

\begin{remark}
Mentioning the limit of periodicity $k^{\prime},t^{\prime}$ in Definition
\ref{Def18} is not necessary, since the property itself does not depend on the
choice of $k^{\prime},t^{\prime}.$
\end{remark}

\begin{example}
The signal $\widehat{x}\in\widehat{S}^{(1)}$ given by $\widehat{x}%
=1,0,1,0,1,...$ is periodic with the period $2$. $\widehat{Or}(\widehat
{x})=\{0,1\}$ and both points $0,1$ are periodic with the period $2$.
\end{example}

\begin{example}
The signal $x\in S^{(1)}$ that is defined in the following way:%
\[
x(t)=\chi_{(-\infty,0)}(t)\oplus\chi_{\lbrack1,2)}(t)\oplus\chi_{\lbrack
3,4)}(t)\oplus...
\]
has the period $2$ if we take the initial time=limit of periodicity
$t^{\prime}\in\lbrack-1,0).$ If we take $t^{\prime}<-1$ then (\ref{pre743})
does not hold, i.e. $t^{\prime}$ is not limit of periodicity; if we take
$t^{\prime}\geq0,$ then $t^{\prime}$ is not initial time.
\end{example}

\chapter{\label{Cha3}Eventually constant signals}

The purpose of the Chapter is that of giving properties that are equivalent
with the eventual constancy of the signals, a concept that is anticipated in
Chapter 1, Definition \ref{Def7}, page \pageref{Def7} and the following
paragraphs and in Chapter 2, Example \ref{Exa17} and Example \ref{Exa18}, page
\pageref{Exa17}. The importance of eventual constancy is that of being related
with the stability of the asynchronous systems\footnote{It is not the purpose
of this monograph to address the stability of the systems.}.

The first group of eventual constancy properties of Section 1 does not involve
periodicity. The groups 2 and 3 are related with the eventual periodicity of
the points and they are introduced in Sections 3, 4 and 5. The group 4 of
eventual constancy properties is related with the eventual periodicity of the
signals and it is introduced in Section 6. Section 7 shows the connection
between discrete time and continuous time as far as eventual constancy is
concerned and Section 8 contains a discussion.

\section{The first group of eventual constancy properties}

\begin{remark}
The first group of eventual constancy properties of the signals contains these
properties that are not related with periodicity.
\end{remark}

\begin{theorem}
\label{The15}Let the signals $\widehat{x}\in\widehat{S}^{(n)},x\in S^{(n)}.$

a) The statements%
\begin{equation}
\exists\mu\in\mathbf{B}^{n},\exists k^{\prime}\in\mathbf{N}_{\_},\forall k\geq
k^{\prime},\widehat{x}(k)=\mu, \label{per48}%
\end{equation}%
\begin{equation}
\exists\mu\in\mathbf{B}^{n},\exists k^{\prime}\in\mathbf{N}_{\_},\{k^{\prime
},k^{\prime}+1,k^{\prime}+2,...\}\subset\widehat{\mathbf{T}}_{\mu}%
^{\widehat{x}}, \label{per127}%
\end{equation}%
\begin{equation}
\exists\mu\in\mathbf{B}^{n},\widehat{\omega}(\widehat{x})=\{\mu\}
\label{per80}%
\end{equation}
are equivalent.

b) The statements%
\begin{equation}
\exists\mu\in\mathbf{B}^{n},\exists t^{\prime}\in\mathbf{R},\forall t\geq
t^{\prime},x(t)=\mu, \label{per49}%
\end{equation}%
\begin{equation}
\exists\mu\in\mathbf{B}^{n},\exists t^{\prime}\in\mathbf{R},[t^{\prime}%
,\infty)\subset\mathbf{T}_{\mu}^{x}, \label{per128}%
\end{equation}%
\begin{equation}
\exists\mu\in\mathbf{B}^{n},\omega(x)=\{\mu\} \label{per79}%
\end{equation}
are also equivalent.
\end{theorem}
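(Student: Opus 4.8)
The statement is a pair of cycles of equivalences, one for discrete time (\ref{per48})$\Leftrightarrow$(\ref{per127})$\Leftrightarrow$(\ref{per80}) and one for continuous time (\ref{per49})$\Leftrightarrow$(\ref{per128})$\Leftrightarrow$(\ref{per79}). I would prove each as a short cycle of implications. For part a), the implication (\ref{per48})$\Rightarrow$(\ref{per127}) is a direct unfolding of the support set notation from Definition \ref{Not5}: for the same $\mu$ and $k'$, saying $\widehat{x}(k)=\mu$ for all $k\geq k'$ is literally the statement $\{k',k'+1,\dots\}\subset\widehat{\mathbf T}_\mu^{\widehat x}$. The converse (\ref{per127})$\Rightarrow$(\ref{per48}) is the same observation read backwards.

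The only implication with any content is (\ref{per127})$\Rightarrow$(\ref{per80}) and its converse, and these I would route through Theorem \ref{The12}. Assume (\ref{per127}): then $\widehat{\mathbf T}_\mu^{\widehat x}\supset\{k',k'+1,\dots\}$ is infinite, so by Theorem \ref{The12}a) we get $\mu\in\widehat\omega(\widehat x)$; moreover, for every other $\nu\in\mathbf B^n$ with $\nu\neq\mu$ we have $\widehat{\mathbf T}_\nu^{\widehat x}\subset\{-1,0,\dots,k'-1\}$, which is finite, so $\nu\notin\widehat\omega(\widehat x)$ again by Theorem \ref{The12}a); combined with $\widehat\omega(\widehat x)\neq\varnothing$ (Theorem \ref{The12}b)) this gives $\widehat\omega(\widehat x)=\{\mu\}$. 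Conversely, assume (\ref{per80}), i.e. $\widehat\omega(\widehat x)=\{\mu\}$. By Theorem \ref{The12_} there exists $k'\in\mathbf N_{\_}$ with $\widehat\omega(\widehat x)=\{\widehat x(k)\mid k\geq k'\}$, hence $\{\widehat x(k)\mid k\geq k'\}=\{\mu\}$, which is exactly $\{k',k'+1,\dots\}\subset\widehat{\mathbf T}_\mu^{\widehat x}$, i.e. (\ref{per127}). This closes the discrete cycle.

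Part b) is the same argument transported to continuous time. The equivalence (\ref{per49})$\Leftrightarrow$(\ref{per128}) is again just the definition of $\mathbf T_\mu^x$. For (\ref{per128})$\Rightarrow$(\ref{per79}): if $[t',\infty)\subset\mathbf T_\mu^x$ then $\mathbf T_\mu^x$ is unbounded from above, so $\mu\in\omega(x)$ by Theorem \ref{The12}a); and for $\nu\neq\mu$ the set $\mathbf T_\nu^x\subset(-\infty,t')$ is bounded from above, so $\nu\notin\omega(x)$; with $\omega(x)\neq\varnothing$ from Theorem \ref{The12}b) this yields $\omega(x)=\{\mu\}$. For the converse (\ref{per79})$\Rightarrow$(\ref{per128}): by Theorem \ref{The12_} there is $t'\in\mathbf R$ with $\omega(x)=\{x(t)\mid t\geq t'\}=\{\mu\}$, i.e. $[t',\infty)\subset\mathbf T_\mu^x$.

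**Where the difficulty lies.** There is essentially no obstacle here; the proof is a bookkeeping exercise in translating between the three formulations (pointwise eventual constancy, containment of a final ray in a support set, and singleton omega limit set), and all the real work has already been done in Theorems \ref{The12} and \ref{The12_}. The one point that deserves a careful sentence is the use of Theorem \ref{The12_}: it guarantees a \emph{single} $k'$ (resp.\ $t'$) past which $\widehat x$ (resp.\ $x$) takes only values in $\widehat\omega(\widehat x)$ (resp.\ $\omega(x)$), and when that set is a singleton this is precisely eventual constancy — so one must not conflate "each value in $\widehat\omega$ recurs infinitely often past every time" with "the signal is eventually constant"; it is the finiteness of $\mathbf B^n$, packaged into Theorem \ref{The12_}, that bridges the gap. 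I would present the discrete case in full and then remark that the continuous case is identical with $\widehat{\mathbf T}_\mu^{\widehat x}$, $\widehat\omega$, "infinite" replaced by $\mathbf T_\mu^x$, $\omega$, "unbounded from above".
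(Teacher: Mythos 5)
Your proof is correct and follows essentially the same route as the paper's: both parts reduce to bookkeeping with the support sets plus Theorems \ref{The12} and \ref{The12_}. The only cosmetic difference is that for (\ref{per127})$\Rightarrow$(\ref{per80}) the paper invokes Theorem \ref{The12_} and takes $k_{1}=\max\{k^{\prime},k^{\prime\prime}\}$, whereas you argue via the finiteness/boundedness characterization of Theorem \ref{The12}~a) together with b); both are equally valid.
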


\begin{proof}
a) (\ref{per48})$\Longrightarrow$(\ref{per127}) $\mu\in\mathbf{B}^{n}$ and
$k^{\prime}\in\mathbf{N}_{\_}$ exist with the property%
\[
\forall k\geq k^{\prime},\widehat{x}(k)=\mu.
\]
Then
\begin{equation}
\{k^{\prime},k^{\prime}+1,k^{\prime}+2,...\}\subset\{k|k\in\mathbf{N}%
_{\_},\widehat{x}(k)=\mu\} \label{pre516}%
\end{equation}
holds.

(\ref{per127})$\Longrightarrow$(\ref{per80}) $\mu\in\mathbf{B}^{n}$ and
$k^{\prime}\in\mathbf{N}_{\_}$ exist such that (\ref{pre516}) holds. We
suppose, see Theorem \ref{The12_}, page \pageref{The12_}, that $k^{\prime
\prime}\in\mathbf{N}_{\_}$ fulfills%
\begin{equation}
\{\widehat{x}(k)|k\geq k^{\prime\prime}\}=\widehat{\omega}(\widehat{x}).
\label{pre517}%
\end{equation}
For $k_{1}=\max\{k^{\prime},k^{\prime\prime}\}$ we can write that%
\[
\{\mu\}\overset{(\ref{pre516})}{=}\{\widehat{x}(k)|k\geq k_{1}\}\overset
{(\ref{pre517})}{=}\widehat{\omega}(\widehat{x}).
\]

(\ref{per80})$\Longrightarrow$(\ref{per48}) From (\ref{per80}) and Theorem
\ref{The12_}, page \pageref{The12_} we have the existence of $k^{\prime}%
\in\mathbf{N}$ such that%
\[
\{\mu\}=\widehat{\omega}(\widehat{x})=\{\widehat{x}(k)|k\geq k^{\prime}\},
\]
wherefrom the truth of (\ref{per48}).

b) (\ref{per49})$\Longrightarrow$(\ref{per128}) We suppose that $\mu
\in\mathbf{B}^{n}$ and $t^{\prime}\in\mathbf{R}$ exist such that $\forall
t\geq t^{\prime},x(t)=\mu.$ Then $[t^{\prime},\infty)\subset\{t|t\in
\mathbf{R},x(t)=\mu\}.$

(\ref{per128})$\Longrightarrow$(\ref{per79}) Some $t_{1}\in\mathbf{R}$ exists
satisfying $\{x(t)|t\geq t_{1}\}=\omega(x)$ and, from the hypothesis, $\mu
\in\mathbf{B}^{n}$ and $t^{\prime}\in\mathbf{R}$ exist such that $[t^{\prime
},\infty)\subset\{t|t\in\mathbf{R},x(t)=\mu\}.$ We use the notation
$t^{\prime\prime}=\max\{t_{1},t^{\prime}\}$ and we have%
\[
\{\mu\}=\{x(t)|t\geq t^{\prime\prime}\}=\omega(x).
\]

(\ref{per79})$\Longrightarrow$(\ref{per49}) The hypothesis (\ref{per79}) and
Theorem \ref{The12_} show the existence of $\mu\in\mathbf{B}^{n},t^{\prime}%
\in\mathbf{R}$ with%
\[
\{\mu\}=\omega(x)=\{x(t)|t\geq t^{\prime}\},
\]
wherefrom the truth of (\ref{per49}).
\end{proof}

\section{Eventual constancy}

\begin{definition}
\label{Def15}If $\widehat{x}\in\widehat{S}^{(n)}$ fulfills one of
(\ref{per48}),...,(\ref{per80}), it is called \textbf{eventually constant} and
if $x\in S^{(n)}$ fulfills one of (\ref{per49}),...,(\ref{per79}), it is
called \textbf{eventually constant. }In (\ref{per48}), (\ref{per127}),
$k^{\prime}\in\mathbf{N}_{\_}$ is called the \textbf{limit of constancy}, or
\textbf{limit of equilibrium}, or \textbf{final time} of $\widehat{x}$.
Similarly in (\ref{per49}), (\ref{per128}), $t^{\prime}\in\mathbf{R}$ is
called the \textbf{limit of constancy}, or \textbf{limit of equilibrium}, or
\textbf{final time} of $x$.
\end{definition}

\begin{theorem}
a) If $\widehat{x}$ is constant, it is eventually constant.

b) If $x$ is constant, it is eventually constant.
\end{theorem}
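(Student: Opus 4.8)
The statement to prove is elementary: a constant signal is eventually constant. This is essentially an immediate consequence of the definitions together with the characterisations of eventual constancy established in Theorem \ref{The15}.

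The plan is to verify, in each of the two cases, that a constant signal satisfies one of the equivalent conditions from Theorem \ref{The15}, whence it is eventually constant by Definition \ref{Def15}. For part a), suppose $\widehat{x}\in\widehat{S}^{(n)}$ is constant, so there is $\mu\in\mathbf{B}^{n}$ with $\widehat{x}(k)=\mu$ for all $k\in\mathbf{N}_{\_}$. Then in particular $\forall k\geq -1,\ \widehat{x}(k)=\mu$, so condition (\ref{per48}) holds with $k^{\prime}=-1$. By Theorem \ref{The15} a), $\widehat{x}$ satisfies (\ref{per48}),...,(\ref{per80}), hence by Definition \ref{Def15} it is eventually constant.

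For part b), suppose $x\in S^{(n)}$ is constant, i.e.\ there is $\mu\in\mathbf{B}^{n}$ with $x(t)=\mu$ for all $t\in\mathbf{R}$. Then $\forall t\geq t^{\prime},\ x(t)=\mu$ holds for every choice of $t^{\prime}\in\mathbf{R}$ (say $t^{\prime}=0$), so condition (\ref{per49}) is satisfied. By Theorem \ref{The15} b), $x$ then satisfies (\ref{per49}),...,(\ref{per79}), and by Definition \ref{Def15} it is eventually constant.

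I expect no real obstacle here; the only thing to be careful about is to cite the correct numbered conditions and to note explicitly that "constant" means the values agree everywhere (cf.\ Example equations (\ref{per521}), (\ref{per522})), so that the quantifier "$\exists k^{\prime}$" respectively "$\exists t^{\prime}$" in (\ref{per48}), (\ref{per49}) is trivially witnessed. One could alternatively phrase the proof via the orbit/omega-limit characterisations (\ref{per80}), (\ref{per79}): a constant signal has $\widehat{\omega}(\widehat{x})=\{\mu\}$ and $\omega(x)=\{\mu\}$ as in the Example following Definition \ref{Not17}, but invoking (\ref{per48}), (\ref{per49}) directly is the shortest route.
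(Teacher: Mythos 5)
Your proof is correct and follows essentially the same route as the paper: the paper also observes that constancy is just eventual constancy witnessed by the limit of constancy $k^{\prime}=-1$ in the discrete case and $t^{\prime}\in I^{x}$ in the real-time case, i.e.\ by verifying (\ref{per48}) and (\ref{per49}) directly. Your added care in citing Theorem \ref{The15} and Definition \ref{Def15} explicitly is harmless but not a different argument.
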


\begin{proof}
a) The constancy of $\widehat{x}$ means the eventual constancy of $\widehat
{x}$ with the limit of constancy $k^{\prime}=-1.$

b) The constancy of $x$ is its eventual constancy with the limit of constancy
$t^{\prime}\in I^{x}.$
\end{proof}

\begin{remark}
The eventual constancy of a signal coincides with the existence of the final
value, Definition \ref{Def7}, page \pageref{Def7}. This is the reason why in
Definition \ref{Def15} $k^{\prime}$ and $t^{\prime}$ are also called final time.
\end{remark}

\begin{remark}
Eventual constancy is important in systems theory since it is associated with
stability: if modeling is deterministic and the signal is an asynchronous
flow, then stability means exactly the eventual constancy of that flow; and if
modeling is non-deterministic and we have a set of deterministic flows, then
stability means the eventual constancy of all these flows.
\end{remark}

\section{The second group of eventual constancy properties}

\begin{remark}
This group of eventual constancy properties of the signals involves eventual
periodicity of all the points $\mu$ of the orbit, i.e. in (\ref{pre781}%
),...,(\ref{pre538}), (\ref{pre553}),...,(\ref{pre600}) to follow we ask
$\forall\mu\in\widehat{Or}(\widehat{x}),\forall\mu\in Or(x).$
\end{remark}

\begin{remark}
In order to understand better the way that these properties were written, to
be noticed the existence of the following symmetries:

- (\ref{pre781})-\{(\ref{pre553}),(\ref{pre597})\}; (\ref{pre514}%
)-\{(\ref{pre554}),(\ref{pre598})\}; (\ref{pre524})-\{(\ref{pre555}%
),(\ref{pre599})\}; (\ref{pre538})-\{(\ref{pre556}), (\ref{pre600})\};

- (\ref{pre781})-(\ref{pre514}), (\ref{pre524})-(\ref{pre538}) and
(\ref{pre553})-(\ref{pre597})-(\ref{pre554})-(\ref{pre598}), (\ref{pre555}%
)-(\ref{pre599})-(\ref{pre556})-(\ref{pre600});

- (\ref{pre781})-(\ref{pre524}), (\ref{pre514})-(\ref{pre538}) and
(\ref{pre553})-(\ref{pre597})-(\ref{pre555})-(\ref{pre599}), (\ref{pre554}%
)-(\ref{pre598})-(\ref{pre556})-(\ref{pre600}).
\end{remark}

\begin{theorem}
\label{The97}Let the signals $\widehat{x}\in\widehat{S}^{(n)},x\in S^{(n)}.$

a) The following statements are equivalent with the eventual constancy of
$\widehat{x}:$%
\begin{equation}
\left\{
\begin{array}
[c]{c}%
\forall p\geq1,\forall\mu\in\widehat{Or}(\widehat{x}),\exists k^{\prime}%
\in\mathbf{N}_{\_},\forall k\in\widehat{\mathbf{T}}_{\mu}^{\widehat{x}}%
\cap\{k^{\prime},k^{\prime}+1,k^{\prime}+2,...\},\\
\{k+zp|z\in\mathbf{Z}\}\cap\{k^{\prime},k^{\prime}+1,k^{\prime}+2,...\}\subset
\widehat{\mathbf{T}}_{\mu}^{\widehat{x}},
\end{array}
\right.  \label{pre781}%
\end{equation}%
\begin{equation}
\left\{
\begin{array}
[c]{c}%
\forall p\geq1,\forall\mu\in\widehat{Or}(\widehat{x}),\exists k^{\prime\prime
}\in\mathbf{N},\forall k\in\widehat{\mathbf{T}}_{\mu}^{\widehat{\sigma
}^{k^{\prime\prime}}(\widehat{x})},\\
\{k+zp|z\in\mathbf{Z}\}\cap\mathbf{N}_{\_}\subset\widehat{\mathbf{T}}_{\mu
}^{\widehat{\sigma}^{k^{\prime\prime}}(\widehat{x})},
\end{array}
\right.  \label{pre514}%
\end{equation}%
\begin{equation}
\left\{
\begin{array}
[c]{c}%
\forall p\geq1,\forall\mu\in\widehat{Or}(\widehat{x}),\exists k^{\prime}%
\in\mathbf{N}_{\_},\forall k\geq k^{\prime},\widehat{x}(k)=\mu\Longrightarrow
\\
\Longrightarrow(\widehat{x}(k)=\widehat{x}(k+p)\text{ and }k-p\geq k^{\prime
}\Longrightarrow\widehat{x}(k)=\widehat{x}(k-p)),
\end{array}
\right.  \label{pre524}%
\end{equation}%
\begin{equation}
\left\{
\begin{array}
[c]{c}%
\forall p\geq1,\forall\mu\in\widehat{Or}(\widehat{x}),\exists k^{\prime\prime
}\in\mathbf{N},\forall k\in\mathbf{N}_{\_},\widehat{\sigma}^{k^{\prime\prime}%
}(\widehat{x})(k)=\mu\Longrightarrow\\
\Longrightarrow(\widehat{\sigma}^{k^{\prime\prime}}(\widehat{x})(k)=\widehat
{\sigma}^{k^{\prime\prime}}(\widehat{x})(k+p)\text{ and }\\
\text{and }k-p\geq-1\Longrightarrow\widehat{\sigma}^{k^{\prime\prime}%
}(\widehat{x})(k)=\widehat{\sigma}^{k^{\prime\prime}}(\widehat{x})(k-p)).
\end{array}
\right.  \label{pre538}%
\end{equation}

b) The following statements are equivalent with the eventual constancy of $x$:%
\begin{equation}
\left\{
\begin{array}
[c]{c}%
\forall T>0,\forall\mu\in Or(x),\exists t^{\prime}\in I^{x},\\
\exists t_{1}^{\prime}\geq t^{\prime},\forall t\in\mathbf{T}_{\mu}^{x}%
\cap\lbrack t_{1}^{\prime},\infty),\{t+zT|z\in\mathbf{Z}\}\cap\lbrack
t_{1}^{\prime},\infty)\subset\mathbf{T}_{\mu}^{x},
\end{array}
\right.  \label{pre553}%
\end{equation}%
\begin{equation}
\left\{
\begin{array}
[c]{c}%
\forall T>0,\forall\mu\in Or(x),\exists t_{1}^{\prime}\in\mathbf{R},\\
\forall t\in\mathbf{T}_{\mu}^{x}\cap\lbrack t_{1}^{\prime},\infty
),\{t+zT|z\in\mathbf{Z}\}\cap\lbrack t_{1}^{\prime},\infty)\subset
\mathbf{T}_{\mu}^{x},
\end{array}
\right.  \label{pre597}%
\end{equation}%
\begin{equation}
\left\{
\begin{array}
[c]{c}%
\forall T>0,\forall\mu\in Or(x),\exists t^{\prime\prime}\in\mathbf{R},\exists
t^{\prime}\in I^{\sigma^{t^{\prime\prime}}(x)},\\
\forall t\in\mathbf{T}_{\mu}^{\sigma^{t^{\prime\prime}}(x)}\cap\lbrack
t^{\prime},\infty),\{t+zT|z\in\mathbf{Z}\}\cap\lbrack t^{\prime}%
,\infty)\subset\mathbf{T}_{\mu}^{\sigma^{t^{\prime\prime}}(x)},
\end{array}
\right.  \label{pre554}%
\end{equation}%
\begin{equation}
\left\{
\begin{array}
[c]{c}%
\forall T>0,\forall\mu\in Or(x),\exists t^{\prime\prime}\in\mathbf{R},\exists
t^{\prime}\in\mathbf{R},\\
\forall t\in\mathbf{T}_{\mu}^{\sigma^{t^{\prime\prime}}(x)}\cap\lbrack
t^{\prime},\infty),\{t+zT|z\in\mathbf{Z}\}\cap\lbrack t^{\prime}%
,\infty)\subset\mathbf{T}_{\mu}^{\sigma^{t^{\prime\prime}}(x)},
\end{array}
\right.  \label{pre598}%
\end{equation}%
\begin{equation}
\left\{
\begin{array}
[c]{c}%
\forall T>0,\forall\mu\in Or(x),\exists t^{\prime}\in I^{x},\exists
t_{1}^{\prime}\geq t^{\prime},\forall t\geq t_{1}^{\prime},x(t)=\mu
\Longrightarrow\\
\Longrightarrow(x(t)=x(t+T)\text{ and }t-T\geq t_{1}^{\prime}\Longrightarrow
x(t)=x(t-T)),
\end{array}
\right.  \label{pre555}%
\end{equation}%
\begin{equation}
\left\{
\begin{array}
[c]{c}%
\forall T>0,\forall\mu\in Or(x),\exists t_{1}^{\prime}\in\mathbf{R},\forall
t\geq t_{1}^{\prime},x(t)=\mu\Longrightarrow\\
\Longrightarrow(x(t)=x(t+T)\text{ and }t-T\geq t_{1}^{\prime}\Longrightarrow
x(t)=x(t-T)),
\end{array}
\right.  \label{pre599}%
\end{equation}%
\begin{equation}
\left\{
\begin{array}
[c]{c}%
\forall T>0,\forall\mu\in Or(x),\exists t^{\prime\prime}\in\mathbf{R},\exists
t^{\prime}\in I^{\sigma^{t^{\prime\prime}}(x)},\\
\forall t\geq t^{\prime},\sigma^{t^{\prime\prime}}(x)(t)=\mu\Longrightarrow
(\sigma^{t^{\prime\prime}}(x)(t)=\sigma^{t^{\prime\prime}}(x)(t+T)\text{
and}\\
\text{and }t-T\geq t^{\prime}\Longrightarrow\sigma^{t^{\prime\prime}%
}(x)(t)=\sigma^{t^{\prime\prime}}(x)(t-T)),
\end{array}
\right.  \label{pre556}%
\end{equation}%
\begin{equation}
\left\{
\begin{array}
[c]{c}%
\forall T>0,\forall\mu\in Or(x),\exists t^{\prime\prime}\in\mathbf{R},\exists
t^{\prime}\in\mathbf{R},\forall t\geq t^{\prime},\sigma^{t^{\prime\prime}%
}(x)(t)=\mu\Longrightarrow\\
\Longrightarrow(\sigma^{t^{\prime\prime}}(x)(t)=\sigma^{t^{\prime\prime}%
}(x)(t+T)\text{ and}\\
\text{and }t-T\geq t^{\prime}\Longrightarrow\sigma^{t^{\prime\prime}%
}(x)(t)=\sigma^{t^{\prime\prime}}(x)(t-T)).
\end{array}
\right.  \label{pre600}%
\end{equation}

\end{theorem}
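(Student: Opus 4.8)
The strategy is to prove everything by establishing a chain of implications among the eight conditions in part b) (and the corresponding four in part a)), anchored at the already-proven Theorem \ref{The15}, which characterizes eventual constancy via $\omega(x)=\{\mu\}$ (resp. $\widehat{\omega}(\widehat{x})=\{\mu\}$). The key observation that makes the whole theorem tractable is that each displayed condition is, once the quantifier $\forall\mu\in Or(x)$ is unwound, essentially the assertion "$\mu$ is an eventually periodic point of $x$ with \emph{every} period $T>0$" — possibly phrased after applying a forgetful function $\sigma^{t''}$, and possibly with the base time $t'$ required to lie in $I^x$ or in $I^{\sigma^{t''}(x)}$. So the real content is the equivalence

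$$x\text{ eventually constant}\ \Longleftrightarrow\ \forall\mu\in Or(x),\ \forall T>0,\ \mu\text{ is eventually periodic of period }T.$$

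First I would prove the forward direction of that equivalence. If $x$ is eventually constant then by Theorem \ref{The15}(b), $\omega(x)=\{\mu_0\}$ for some $\mu_0$, and there is $t'$ with $[t',\infty)\subset\mathbf{T}_{\mu_0}^x$; enlarging $t'$ if needed we may take $t'\in I^x$ (the final value is attained on a right-unbounded interval, and by Theorem \ref{The143} we can push the "initial" marker to the left of everything, or simply note that if $x$ is both eventually constant \emph{and} we only care about $I^x$ being nonempty, $I^x$ is always a nonempty set of the form $(-\infty,t_0)$ or $\mathbf{R}$ — we choose $t'$ to the left of $t_0$ if $Or(x)=\{\mu_0\}$, else we use that a genuine $t'\in I^x$ exists and separately a $t_1'\ge t'$ with $[t_1',\infty)\subset\mathbf{T}_{\mu_0}^x$, which is exactly the shape of $(\ref{pre553})$ and $(\ref{pre555})$). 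For $\mu=\mu_0$: for $t\ge t_1'$, $\{t+zT\mid z\in\mathbf{Z}\}\cap[t_1',\infty)\subset[t_1',\infty)\subset\mathbf{T}_{\mu_0}^x$, and $x(t)=x(t+T)$ trivially since both equal $\mu_0$. For $\mu\ne\mu_0$: since $\mu\notin\omega(x)$, $\mathbf{T}_\mu^x$ is bounded above, so $\mathbf{T}_\mu^x\cap[t_1',\infty)=\varnothing$ for $t_1'$ large, and the conditions hold vacuously. This gives $(\ref{pre553})$–$(\ref{pre600})$ directly, choosing $t''\le$ everything so that $\sigma^{t''}(x)=x$ (Theorem \ref{Pro2}(a)) wherever a $\sigma^{t''}$ appears, and noting $I^{\sigma^{t''}(x)}=I^x$ in that case.

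Next, the reverse direction: assume one of the conditions holds; I want to conclude $x$ is eventually constant, i.e.\ (by Theorem \ref{The15}) that $\omega(x)$ is a singleton. Here is the crux, and the main obstacle: I must exploit the quantifier "$\forall T>0$" to rule out any two distinct omega-limit points. Suppose $\mu_1\ne\mu_2$ both lie in $\omega(x)$, hence both in $Or(x)$, so the condition applies to each. Using Theorem \ref{The12_} fix $t^*$ with $\omega(x)=\{x(t)\mid t\ge t^*\}$; then every value of $x$ past $t^*$ is an omega-limit point. Because $x$ is piecewise constant, on $[t^*,\infty)$ it switches between finitely many values at a sequence $(t_k)$; let $\delta$ be \emph{smaller} than the gap $t_{k+1}-t_k$ for some switch occurring after $\max(t^*,t',t'')$ where $x$ changes from $\mu_1$ to $\mu_2$ (such a switch exists: $x$ revisits $\mu_1$ infinitely often by $\mu_1\in\omega(x)$ and likewise $\mu_2$, so it must pass between them infinitely often after any time). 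Apply the chosen condition with $T=\delta$ (and with $\sigma^{t''}$ absorbed away, or tracked — note $\mathbf{T}_\mu^{\sigma^{t''}(x)}$ agrees with $\mathbf{T}_\mu^x$ on $[t'',\infty)$): take $t\in\mathbf{T}_{\mu_1}^x$ just before the switch with $t+\delta$ landing strictly after the switch, so $x(t+\delta)=\mu_2\ne\mu_1$; but $t+\delta=t+zT$ with $z=1$ lies in $[t_1',\infty)$ (resp.\ $[t',\infty)$), so the condition forces $t+\delta\in\mathbf{T}_{\mu_1}^x$, i.e.\ $x(t+\delta)=\mu_1$ — contradiction. (In the $(\ref{pre555})$/$(\ref{pre599})$/$(\ref{pre556})$/$(\ref{pre600})$ formulations the same contradiction comes out of "$x(t)=x(t+T)$" failing.) Hence $\omega(x)$ has a single element, so $x$ is eventually constant.

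Finally, the discrete-time part a) is entirely parallel but easier: there is no left-limit bookkeeping, $\widehat\sigma^{k''}$ shifts cleanly with $\widehat{\mathbf{T}}_\mu^{\widehat\sigma^{k''}(\widehat x)}=(\widehat{\mathbf{T}}_\mu^{\widehat x}-k'')\cap\mathbf{N}_{\_}$, and the role of "$T=\delta$ small" is played by "$p$ chosen so that between two consecutive visits to $\mu_1$ (all eventual values being omega-limit points by Theorem \ref{The12_}) there is a visit to $\mu_2$" — concretely, if $\mu_1,\mu_2\in\widehat\omega(\widehat x)$ are distinct, pick $k$ with $\widehat x(k)=\mu_1$ and $\widehat x(k+1)=\mu_2$ past all thresholds, take $p=1$, and derive $\widehat x(k)=\widehat x(k+1)$, a contradiction. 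The equivalences among $(\ref{pre781})$–$(\ref{pre538})$ themselves (before connecting to eventual constancy) are routine translations using the definition of $\widehat\sigma^{k''}$ and the set-identity $\widehat{\mathbf{T}}_\mu^{\widehat x}\cap\{k',k'+1,\dots\}=\varnothing$ for $\mu\notin\widehat\omega(\widehat x)$ with $k'$ large; I would state these as a short preliminary lemma-free paragraph and then give the two-way argument above. The main obstacle, to repeat, is the reverse implication: squeezing a contradiction out of "$\forall T>0$" by choosing $T$ (or $p$) finer than a switching gap that must exist whenever $|\omega(x)|\ge 2$.
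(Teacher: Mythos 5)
Your architecture (a chain of equivalences anchored at Theorem \ref{The15}, vacuous truth of the conditions for $\mu\notin\omega(x)$, absorbing $\sigma^{t''}$ by taking $t''$ small or tracking that $\mathbf{T}_{\mu}^{\sigma^{t''}(x)}$ agrees with $\mathbf{T}_{\mu}^{x}$ on $[t'',\infty)$) matches the paper, and your forward direction and the discrete-time part a) are sound; in particular the $p=1$ argument for a) is exactly what the paper does for $(\ref{pre538})\Longrightarrow(\ref{per48})$.

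The gap is in the real-time reverse implication, which you correctly identify as the crux. Your plan is to find a switch out of $\mu_1$ occurring after ``$\max(t^{*},t',t'')$'' and then choose $T=\delta$ smaller than the adjacent gap. But $t'$ and $t''$ are the thresholds \emph{supplied by the hypothesis for the period} $T=\delta$: you cannot locate the switch before choosing $\delta$, and you cannot choose $\delta$ before locating the switch. This circularity is not cosmetic. The definition of $Seq$ only requires $(t_{k})$ to be strictly increasing and unbounded, so the switching gaps may tend to $0$ (e.g.\ switch points at $\sqrt{k}$); for such a signal, given any fixed $\delta>0$, every switch beyond the threshold $t_{1}'(\delta)$ may have adjacent intervals of total length $<\delta$, and then no $t\in\mathbf{T}_{\mu_1}^{x}\cap[t_1',\infty)$ with $x(t+\delta)\neq\mu_1$ is produced by your construction, even though the signal is not eventually constant and the conclusion must still be reached. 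The paper's proof of $(\ref{pre600})\Longrightarrow(\ref{per79})$ avoids smallness of $T$ altogether: it applies the hypothesis \emph{twice}. First, with an arbitrary $T$, it obtains $t_{1},t_{2}$ past all current thresholds with $x(t_{1}+kT)=\mu$ and $x(t_{2}+kT)=\mu'$ for all $k\in\mathbf{N}$ --- the point being that these witnesses propagate along arithmetic progressions and therefore exceed any threshold produced later. Second, it applies the hypothesis with $T'=t_{2}-t_{1}$, picks $k$ with $t_{1}+kT$ beyond the new threshold, and computes $\mu=x(t_{1}+kT)=x(t_{1}+kT+T')=x(t_{2}+kT)=\mu'$, a contradiction. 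To repair your argument you would need this forward-propagation step (or an equivalent device) so that the witness times survive the change of period; the ``$\delta$ finer than a gap'' idea alone does not close the loop. (A second, minor, slip: with $\mathrm{card}(\omega(x))\geq 3$ the signal need never switch \emph{directly} from $\mu_1$ to $\mu_2$; you only need a switch from $\mu_1$ to some value $\neq\mu_1$, so this is harmless, but as written the existence claim is too strong.)
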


\begin{proof}
a) (\ref{per80})$\Longrightarrow$(\ref{pre781}) From Theorem \ref{The12_},
page \pageref{The12_}, $k^{\prime}\in\mathbf{N}_{\_}$ exists such that
$\{\widehat{x}(k)|k\geq k^{\prime}\}=\widehat{\omega}(\widehat{x})$ and, if we
take into account (\ref{per80}) also, $\mu\in\mathbf{B}^{n}$ exists with%
\begin{equation}
\{\widehat{x}(k)|k\geq k^{\prime}\}=\{\mu\}(=\widehat{\omega}(\widehat{x})).
\label{pre518}%
\end{equation}
Let $p\geq1$ and $\mu^{\prime}\in\widehat{Or}(\widehat{x})$ arbitrary. We have
two possibilities.

Case $\mu^{\prime}\neq\mu$

This corresponds to the situation when $\mu^{\prime}\in\widehat{Or}%
(\widehat{x})\setminus\widehat{\omega}(\widehat{x})$ and $\widehat{\mathbf{T}%
}_{\mu^{\prime}}^{\widehat{x}}\cap\{k^{\prime},k^{\prime}+1,k^{\prime
}+2,...\}=\varnothing.$ The statement%
\begin{equation}
\left\{
\begin{array}
[c]{c}%
\forall k\in\widehat{\mathbf{T}}_{\mu^{\prime}}^{\widehat{x}}\cap\{k^{\prime
},k^{\prime}+1,k^{\prime}+2,...\},\\
\{k+zp|z\in\mathbf{Z}\}\cap\{k^{\prime},k^{\prime}+1,k^{\prime}+2,...\}\subset
\widehat{\mathbf{T}}_{\mu^{\prime}}^{\widehat{x}}%
\end{array}
\right.
\end{equation}
takes place trivially.

Case $\mu^{\prime}=\mu$

In this case $\widehat{\mathbf{T}}_{\mu}^{\widehat{x}}\cap\{k^{\prime
},k^{\prime}+1,k^{\prime}+2,...\}\neq\varnothing$ and let $k\in\widehat
{\mathbf{T}}_{\mu}^{\widehat{x}}\cap\{k^{\prime},k^{\prime}+1,k^{\prime
}+2,...\},$ $z\in\mathbf{Z}$ arbitrary such that $k+zp\geq k^{\prime}.$ Then
from (\ref{pre518}) we get $\widehat{x}(k+zp)=\mu,$ thus $k+zp\in
\widehat{\mathbf{T}}_{\mu}^{\widehat{x}}.$

(\ref{pre781})$\Longrightarrow$(\ref{pre514}) Let $p\geq1,\mu\in\widehat
{Or}(\widehat{x})$ be arbitrary. From (\ref{pre781}) we have the existence of
$k^{\prime}\in\mathbf{N}_{\_}$ with%
\begin{equation}
\left\{
\begin{array}
[c]{c}%
\forall k\in\widehat{\mathbf{T}}_{\mu}^{\widehat{x}}\cap\{k^{\prime}%
,k^{\prime}+1,k^{\prime}+2,...\},\\
\{k+zp|z\in\mathbf{Z}\}\cap\{k^{\prime},k^{\prime}+1,k^{\prime}+2,...\}\subset
\widehat{\mathbf{T}}_{\mu}^{\widehat{x}}.
\end{array}
\right.  \label{pre681}%
\end{equation}
We define $k^{\prime\prime}=k^{\prime}+1$ and there are two possibilities.

Case $\widehat{\mathbf{T}}_{\mu}^{\widehat{\sigma}^{k^{\prime\prime}}%
(\widehat{x})}=\varnothing$

This situation occurs because $\widehat{Or}(\widehat{\sigma}^{k^{\prime\prime
}}(\widehat{x}))\subset\widehat{Or}(\widehat{x}),$ in the situation when
$\mu\in\widehat{Or}(\widehat{x})\setminus\widehat{Or}(\widehat{\sigma
}^{k^{\prime\prime}}(\widehat{x})).$ The statement%
\begin{equation}
\forall k\in\widehat{\mathbf{T}}_{\mu}^{\widehat{\sigma}^{k^{\prime\prime}%
}(\widehat{x})},\{k+zp|z\in\mathbf{Z}\}\cap\mathbf{N}_{\_}\subset
\widehat{\mathbf{T}}_{\mu}^{\widehat{\sigma}^{k^{\prime\prime}}(\widehat{x})}%
\end{equation}
takes place trivially.

Case $\widehat{\mathbf{T}}_{\mu}^{\widehat{\sigma}^{k^{\prime\prime}}%
(\widehat{x})}\neq\varnothing$

In this case $\mu\in\widehat{Or}(\widehat{\sigma}^{k^{\prime\prime}}%
(\widehat{x})).$ We take $k\in\widehat{\mathbf{T}}_{\mu}^{\widehat{\sigma
}^{k^{\prime\prime}}(\widehat{x})},z\in\mathbf{Z}$ arbitrary such that
$k+zp\geq-1.$ We conclude%
\[
\widehat{\sigma}^{k^{\prime\prime}}(\widehat{x})(k)=\mu=\widehat
{x}(k+k^{\prime\prime})=\widehat{x}(k+k^{\prime}+1),
\]
in other words $k+k^{\prime}+1\in\widehat{\mathbf{T}}_{\mu}^{\widehat{x}%
},k+k^{\prime}+1\geq k^{\prime}.$ Furthermore, $k+zp+k^{\prime}+1\geq
-1+k^{\prime}+1=k^{\prime},$ thus we can apply (\ref{pre681}), wherefrom
$k+zp+k^{\prime}+1\in\widehat{\mathbf{T}}_{\mu}^{\widehat{x}}.$ This means
that
\[
\widehat{x}(k+zp+k^{\prime}+1)=\mu=\widehat{\sigma}^{k^{\prime}+1}(\widehat
{x})(k+zp)=\widehat{\sigma}^{k^{\prime\prime}}(\widehat{x})(k+zp)
\]
i.e. $k+zp\in\widehat{\mathbf{T}}_{\mu}^{\widehat{\sigma}^{k^{\prime\prime}%
}(\widehat{x})}.$

(\ref{pre514})$\Longrightarrow$(\ref{pre524}) Let $p\geq1,\mu\in\widehat
{Or}(\widehat{x})$ arbitrary. From (\ref{pre514}) we have the existence of
$k^{\prime\prime}\in\mathbf{N}$ such that%
\begin{equation}
\forall k\in\widehat{\mathbf{T}}_{\mu}^{\widehat{\sigma}^{k^{\prime\prime}%
}(\widehat{x})},\{k+zp|z\in\mathbf{Z}\}\cap\mathbf{N}_{\_}\subset
\widehat{\mathbf{T}}_{\mu}^{\widehat{\sigma}^{k^{\prime\prime}}(\widehat{x})}.
\label{pre682}%
\end{equation}
We define $k^{\prime}=k^{\prime\prime}-1$ and we have two possibilities.

Case $\forall k\geq k^{\prime},\widehat{x}(k)\neq\mu$

Then $\widehat{\mathbf{T}}_{\mu}^{\widehat{\sigma}^{k^{\prime\prime}}%
(\widehat{x})}=\varnothing$ and (\ref{pre682}) is trivially fulfilled, as well
as the statement%
\[
\left\{
\begin{array}
[c]{c}%
\forall k\geq k^{\prime},\widehat{x}(k)=\mu\Longrightarrow\\
\Longrightarrow(\widehat{x}(k)=\widehat{x}(k+p)\text{ and }k-p\geq k^{\prime
}\Longrightarrow\widehat{x}(k)=\widehat{x}(k-p)).
\end{array}
\right.
\]

Case $\exists k\geq k^{\prime},\widehat{x}(k)=\mu$

We take $k\geq k^{\prime}$ arbitrary, such that $\widehat{x}(k)=\mu.$ Then
$k-k^{\prime\prime}=k-k^{\prime}-1\geq-1$ and%
\[
\widehat{\sigma}^{k^{\prime\prime}}(\widehat{x})(k-k^{\prime\prime}%
)=\widehat{x}(k-k^{\prime\prime}+k^{\prime\prime})=\widehat{x}(k)=\mu,
\]
thus $k-k^{\prime\prime}\in\widehat{\mathbf{T}}_{\mu}^{\widehat{\sigma
}^{k^{\prime\prime}}(\widehat{x})}.$ Furthermore%
\[
k-k^{\prime\prime}+p\in\{k-k^{\prime\prime}+zp|z\in\mathbf{Z}\}\cap
\mathbf{N}_{\_}\overset{(\ref{pre682})}{\subset}\widehat{\mathbf{T}}_{\mu
}^{\widehat{\sigma}^{k^{\prime\prime}}(\widehat{x})},
\]
meaning that
\[
\widehat{\sigma}^{k^{\prime\prime}}(\widehat{x})(k-k^{\prime\prime}%
+p)=\mu=\widehat{x}(k-k^{\prime\prime}+p+k^{\prime\prime})=\widehat{x}(k+p).
\]
If $k-p\geq k^{\prime},$ then $k-p-k^{\prime\prime}=k-p-k^{\prime}-1\geq-1,$
thus%
\[
k-k^{\prime\prime}-p\in\{k-k^{\prime\prime}+zp|z\in\mathbf{Z}\}\cap
\mathbf{N}_{\_}\overset{(\ref{pre682})}{\subset}\widehat{\mathbf{T}}_{\mu
}^{\widehat{\sigma}^{k^{\prime\prime}}(\widehat{x})}%
\]
and finally
\[
\widehat{\sigma}^{k^{\prime\prime}}(\widehat{x})(k-k^{\prime\prime}%
-p)=\mu=\widehat{x}(k-k^{\prime\prime}-p+k^{\prime\prime})=\widehat{x}(k-p).
\]

(\ref{pre524})$\Longrightarrow$(\ref{pre538}) We take $p\geq1,\mu\in
\widehat{Or}(\widehat{x})$ arbitrarily and we infer from (\ref{pre524}) that
$k^{\prime}\in\mathbf{N}_{\_}$ exists with%
\begin{equation}
\forall k\geq k^{\prime},\widehat{x}(k)=\mu\Longrightarrow\widehat
{x}(k)=\widehat{x}(k+p), \label{pre683}%
\end{equation}%
\begin{equation}
\forall k\geq k^{\prime},(\widehat{x}(k)=\mu\text{ and }k-p\geq k^{\prime
})\Longrightarrow\widehat{x}(k)=\widehat{x}(k-p). \label{pre684}%
\end{equation}
We define $k^{\prime\prime}=k^{\prime}+1$ and there are two possibilities.

Case $\forall k\in\mathbf{N}_{\_},\widehat{\sigma}^{k^{\prime\prime}}%
(\widehat{x})(k)\neq\mu$

This corresponds to the situation when $\mu\in\widehat{Or}(\widehat
{x})\setminus\widehat{Or}(\widehat{\sigma}^{k^{\prime\prime}}(\widehat{x})). $
The statement%
\[
\left\{
\begin{array}
[c]{c}%
\forall k\in\mathbf{N}_{\_},\widehat{\sigma}^{k^{\prime\prime}}(\widehat
{x})(k)=\mu\Longrightarrow\\
\Longrightarrow(\widehat{\sigma}^{k^{\prime\prime}}(\widehat{x})(k)=\widehat
{\sigma}^{k^{\prime\prime}}(\widehat{x})(k+p)\text{ and }\\
\text{and }k-p\geq-1\Longrightarrow\widehat{\sigma}^{k^{\prime\prime}%
}(\widehat{x})(k)=\widehat{\sigma}^{k^{\prime\prime}}(\widehat{x})(k-p))
\end{array}
\right.
\]
is true in a trivial manner.

Case $\exists k\in\mathbf{N}_{\_},\widehat{\sigma}^{k^{\prime\prime}}%
(\widehat{x})(k)=\mu$

We take $k\in\mathbf{N}_{\_}$ arbitrarily with $\widehat{\sigma}%
^{k^{\prime\prime}}(\widehat{x})(k)=\mu,$ thus $\widehat{x}(k+k^{\prime\prime
})=\mu=\widehat{x}(k+k^{\prime}+1).$ We have $k+k^{\prime}+1\geq k^{\prime}$
and then%
\[
\widehat{\sigma}^{k^{\prime\prime}}(\widehat{x})(k)=\widehat{x}(k+k^{\prime
\prime})=\widehat{x}(k+k^{\prime}+1)\overset{(\ref{pre683})}{=}\widehat
{x}(k+k^{\prime}+1+p)=
\]%
\[
=\widehat{x}(k+k^{\prime\prime}+p)=\widehat{\sigma}^{k^{\prime\prime}%
}(\widehat{x})(k+p).
\]
If in addition $k-p\geq-1,$ as $k-p+k^{\prime}+1\geq k^{\prime},$ we can write
that%
\[
\widehat{\sigma}^{k^{\prime\prime}}(\widehat{x})(k)=\widehat{x}(k+k^{\prime
\prime})=\widehat{x}(k+k^{\prime}+1)\overset{(\ref{pre684})}{=}\widehat
{x}(k-p+k^{\prime}+1)=
\]%
\[
=\widehat{x}(k-p+k^{\prime\prime})=\widehat{\sigma}^{k^{\prime\prime}%
}(\widehat{x})(k-p).
\]

(\ref{pre538})$\Longrightarrow$(\ref{per48}) We write (\ref{pre538}) for $p=1$
and for an arbitrary $\mu\in\widehat{\omega}(\widehat{x})$ (we have
$\widehat{\omega}(\widehat{x})\neq\varnothing$). Some $k^{\prime\prime}%
\in\mathbf{N}$ exists then with%
\begin{equation}
\left\{
\begin{array}
[c]{c}%
\forall k\in\mathbf{N}_{\_},\widehat{x}(k+k^{\prime\prime})=\mu\Longrightarrow
(\widehat{x}(k+k^{\prime\prime})=\widehat{x}(k+k^{\prime\prime}+1)\text{ and
}\\
\text{and }k\geq0\Longrightarrow\widehat{x}(k+k^{\prime\prime})=\widehat
{x}(k+k^{\prime\prime}-1))
\end{array}
\right.  \label{pre824}%
\end{equation}
and, whichever $k^{\prime\prime}$ might be, some $k\in\mathbf{N}_{\_}$ exists
such that $\widehat{x}(k+k^{\prime\prime})=\mu$ (from the hypothesis that
$\mu\in\widehat{\omega}(\widehat{x})$). We get from (\ref{pre824}) that%
\[
\mu=\widehat{x}(k^{\prime\prime}-1)=\widehat{x}(k^{\prime\prime})=\widehat
{x}(k^{\prime\prime}+1)=...
\]
i.e. (\ref{per48}) holds with $k^{\prime}=k^{\prime\prime}-1$.

b) (\ref{per79})$\Longrightarrow$(\ref{pre553}) We have the existence of
$\mu\in\mathbf{B}^{n}$ and $t_{1}\in\mathbf{R}$ with $\{x(t)|t\geq
t_{1}\}=\omega(x)=\{\mu\}$ and let $T>0,\mu^{\prime}\in Or(x)$
arbitrary\footnote{The fact that we can take $t^{\prime}\in I^{x}$ arbitrary
shows that we prove at this moment a statement that is stronger than
(\ref{pre553}).}. Let $t^{\prime}\in I^{x}$ arbitrary$.$ We take
$t_{1}^{\prime}\geq\max\{t^{\prime},t_{1}\}$ arbitrarily also and we have two possibilities.

Case $\mu^{\prime}\neq\mu$

Then $\mathbf{T}_{\mu^{\prime}}^{x}\cap\lbrack t_{1}^{\prime},\infty
)=\varnothing$ and the statement%
\[
\forall t\in\mathbf{T}_{\mu^{\prime}}^{x}\cap\lbrack t_{1}^{\prime}%
,\infty),\{t+zT|z\in\mathbf{Z}\}\cap\lbrack t_{1}^{\prime},\infty
)\subset\mathbf{T}_{\mu^{\prime}}^{x}%
\]
takes place trivially.

Case $\mu^{\prime}=\mu$

We have $\mathbf{T}_{\mu}^{x}\cap\lbrack t_{1}^{\prime},\infty)=[t_{1}%
^{\prime},\infty)$ thus let $t\in\mathbf{T}_{\mu}^{x}\cap\lbrack t_{1}%
^{\prime},\infty)$ and $z\in\mathbf{Z}$ with the property that $t+zT\geq
t_{1}^{\prime}.$ As $t+zT\geq t_{1},$ we have $t+zT\in\mathbf{T}_{\mu}^{x}.$

(\ref{pre553})$\Longrightarrow$(\ref{pre597}) Obvious.

(\ref{pre597})$\Longrightarrow$(\ref{pre554}) Let $T>0,\mu\in Or(x)$
arbitrary. From (\ref{pre597}) we have the existence of $t_{1}^{\prime}%
\in\mathbf{R}$ with%
\begin{equation}
\forall t\in\mathbf{T}_{\mu}^{x}\cap\lbrack t_{1}^{\prime},\infty
),\{t+zT|z\in\mathbf{Z}\}\cap\lbrack t_{1}^{\prime},\infty)\subset
\mathbf{T}_{\mu}^{x}. \label{pre701}%
\end{equation}
We take $t^{\prime\prime}>t_{1}^{\prime}$ arbitrary and let $\varepsilon>0$
having the property that%
\[
\forall t\in(t^{\prime\prime}-\varepsilon,t^{\prime\prime}),x(t)=x(t^{\prime
\prime}-0).
\]
We take $t^{\prime}\in(t^{\prime\prime}-\varepsilon,t^{\prime\prime}%
)\cap\lbrack t_{1}^{\prime},\infty)$ arbitrary and we notice that%
\begin{equation}
\sigma^{t^{\prime\prime}}(x)(t)=\left\{
\begin{array}
[c]{c}%
x(t),t>t^{\prime\prime}-\varepsilon,\\
x(t^{\prime\prime}-0),t<t^{\prime\prime}.
\end{array}
\right.  \label{pre702}%
\end{equation}
Obviously $t^{\prime}\in(-\infty,t^{\prime\prime})\subset I^{\sigma
^{t^{\prime\prime}}(x)}$ and we have also%
\begin{equation}
\forall t\in\mathbf{T}_{\mu}^{x}\cap\lbrack t^{\prime},\infty),\{t+zT|z\in
\mathbf{Z}\}\cap\lbrack t^{\prime},\infty)\subset\mathbf{T}_{\mu}^{x},
\label{pre703}%
\end{equation}
from Lemma \ref{Lem30}, page \pageref{Lem30}, (\ref{pre701}) and taking into
account the fact that $t^{\prime}\geq t_{1}^{\prime}$.

The truth of%
\begin{equation}
\forall t\in\mathbf{T}_{\mu}^{\sigma^{t^{\prime\prime}}(x)}\cap\lbrack
t^{\prime},\infty),\{t+zT|z\in\mathbf{Z}\}\cap\lbrack t^{\prime}%
,\infty)\subset\mathbf{T}_{\mu}^{\sigma^{t^{\prime\prime}}(x)}%
\end{equation}
results from (\ref{pre703}) and from the fact that $\forall t\geq t^{\prime
},\sigma^{t^{\prime\prime}}(x)(t)=x(t),$ see (\ref{pre702}).

(\ref{pre554})$\Longrightarrow$(\ref{pre598}) Obvious.

(\ref{pre598})$\Longrightarrow$(\ref{pre555}) We take arbitrarily $T>0,\mu\in
Or(x).$ From (\ref{pre598}) we have the existence of $t^{\prime\prime}%
\in\mathbf{R}$ and $t^{\prime\prime\prime}\in\mathbf{R}$ such that%
\begin{equation}
\forall t\in\mathbf{T}_{\mu}^{\sigma^{t^{\prime\prime}}(x)}\cap\lbrack
t^{\prime\prime\prime},\infty),\{t+zT|z\in\mathbf{Z}\}\cap\lbrack
t^{\prime\prime\prime},\infty)\subset\mathbf{T}_{\mu}^{\sigma^{t^{\prime
\prime}}(x)}. \label{pre705}%
\end{equation}
Let $t^{\prime}\in I^{x}$ arbitrary\footnote{The statement that we prove is
stronger than (\ref{pre555}).} and we take $t_{1}^{\prime}\geq\max\{t^{\prime
},t^{\prime\prime},t^{\prime\prime\prime}\}$ arbitrarily also. From
$t_{1}^{\prime}\geq t^{\prime\prime\prime},$ from (\ref{pre705}) and from
Lemma \ref{Lem30} we infer%
\begin{equation}
\forall t\in\mathbf{T}_{\mu}^{\sigma^{t^{\prime\prime}}(x)}\cap\lbrack
t_{1}^{\prime},\infty),\{t+zT|z\in\mathbf{Z}\}\cap\lbrack t_{1}^{\prime
},\infty)\subset\mathbf{T}_{\mu}^{\sigma^{t^{\prime\prime}}(x)}.
\label{pre706}%
\end{equation}
Let $t\geq t_{1}^{\prime}$ arbitrary and we have two possibilities.

Case $\forall t\geq t_{1}^{\prime},x(t)\neq\mu$

Then the implication%
\begin{equation}
\forall t\geq t_{1}^{\prime},x(t)=\mu\Longrightarrow(x(t)=x(t+T)\text{ and
}t-T\geq t_{1}^{\prime}\Longrightarrow x(t)=x(t-T))
\end{equation}
is trivially true.

Case $\exists t\geq t_{1}^{\prime},x(t)=\mu$

We take $t\geq t_{1}^{\prime}$ arbitrarily such that $x(t)=\mu.$ Because
$t\geq t^{\prime\prime},$ we have $\sigma^{t^{\prime\prime}}(x)(t)=x(t)=\mu,$
thus $t\in\mathbf{T}_{\mu}^{\sigma^{t^{\prime\prime}}(x)}\cap\lbrack
t_{1}^{\prime},\infty).$ We have%
\[
t+T\in\{t+zT|z\in\mathbf{Z}\}\cap\lbrack t_{1}^{\prime},\infty)\overset
{(\ref{pre706})}{\subset}\mathbf{T}_{\mu}^{\sigma^{t^{\prime\prime}}(x)},
\]
i.e. $\sigma^{t^{\prime\prime}}(x)(t+T)=\mu.$ On the other hand $\sigma
^{t^{\prime\prime}}(x)(t+T)=x(t+T),$ thus $x(t+T)=\mu=x(t).$

We suppose now that we have in addition $t-T\geq t_{1}^{\prime}.$ In a similar
way with the previous situation,%
\[
t-T\in\{t+zT|z\in\mathbf{Z}\}\cap\lbrack t_{1}^{\prime},\infty)\overset
{(\ref{pre706})}{\subset}\mathbf{T}_{\mu}^{\sigma^{t^{\prime\prime}}(x)},
\]
i.e. $\sigma^{t^{\prime\prime}}(x)(t-T)=\mu.$ As $\sigma^{t^{\prime\prime}%
}(x)(t-T)=x(t-T),$ we have obtained that $x(t-T)=\mu=x(t).$

(\ref{pre555})$\Longrightarrow$(\ref{pre599}) Obvious.

(\ref{pre599})$\Longrightarrow$(\ref{pre556}) Let $T>0,\mu\in Or(x)$
arbitrary. From (\ref{pre599}) we have the existence of $t_{1}^{\prime}%
\in\mathbf{R}$ such that the property%
\begin{equation}
\forall t\geq t_{1}^{\prime},x(t)=\mu\Longrightarrow(x(t)=x(t+T)\text{ and
}t-T\geq t_{1}^{\prime}\Longrightarrow x(t)=x(t-T)) \label{pre708}%
\end{equation}
holds. We take $t^{\prime\prime}>t_{1}^{\prime}$ arbitrary. Some
$\varepsilon>0$ exists with $\forall t\in(t^{\prime\prime}-\varepsilon
,t^{\prime\prime}),x(t)=x(t^{\prime\prime}-0).$ We take $t^{\prime}%
\in(t^{\prime\prime}-\varepsilon,t^{\prime\prime})\cap\lbrack t_{1}^{\prime
},\infty)$ arbitrary, for which obviously%
\begin{equation}
\sigma^{t^{\prime\prime}}(x)(t)=\left\{
\begin{array}
[c]{c}%
x(t),t>t^{\prime\prime}-\varepsilon,\\
x(t^{\prime\prime}-0),t<t^{\prime\prime}.
\end{array}
\right.  \label{pre709}%
\end{equation}
We have $t^{\prime}\in(-\infty,t^{\prime\prime})\subset I^{\sigma
^{t^{\prime\prime}}(x)}.$ On the other hand $t^{\prime}\geq t_{1}^{\prime},$
(\ref{pre708}) and Lemma \ref{Lem30}, page \pageref{Lem30} imply the truth of%
\begin{equation}
\forall t\geq t^{\prime},x(t)=\mu\Longrightarrow(x(t)=x(t+T)\text{ and
}t-T\geq t^{\prime}\Longrightarrow x(t)=x(t-T)). \label{pre710}%
\end{equation}
As $\forall t\geq t^{\prime},\sigma^{t^{\prime\prime}}(x)(t)=x(t),$
(\ref{pre710}) implies that%
\[
\left\{
\begin{array}
[c]{c}%
\forall t\geq t^{\prime},\sigma^{t^{\prime\prime}}(x)(t)=\mu\Longrightarrow
(\sigma^{t^{\prime\prime}}(x)(t)=\sigma^{t^{\prime\prime}}(x)(t+T)\text{
and}\\
\text{and }t-T\geq t^{\prime}\Longrightarrow\sigma^{t^{\prime\prime}%
}(x)(t)=\sigma^{t^{\prime\prime}}(x)(t-T))
\end{array}
\right.
\]
is true.

(\ref{pre556})$\Longrightarrow$(\ref{pre600}) Obvious.

(\ref{pre600})$\Longrightarrow$(\ref{per79}) We suppose against all reason
that $\mu,\mu^{\prime}\in\omega(x)$ exist, $\mu\neq\mu^{\prime}.$ We write
(\ref{pre600}) for an arbitrary $T>0,$ thus $t_{1}^{\prime},t_{2}^{\prime
},t_{1}^{\prime\prime},t_{2}^{\prime\prime}\in\mathbf{R} $ exist such that%
\begin{equation}
\left\{
\begin{array}
[c]{c}%
\forall t\geq t_{1}^{\prime},\sigma^{t_{1}^{\prime\prime}}(x)(t)=\mu
\Longrightarrow(\sigma^{t_{1}^{\prime\prime}}(x)(t)=\sigma^{t_{1}%
^{\prime\prime}}(x)(t+T)\text{ and}\\
\text{and }t-T\geq t_{1}^{\prime}\Longrightarrow\sigma^{t_{1}^{\prime\prime}%
}(x)(t)=\sigma^{t_{1}^{\prime\prime}}(x)(t-T)),
\end{array}
\right.  \label{pre825}%
\end{equation}%
\begin{equation}
\left\{
\begin{array}
[c]{c}%
\forall t\geq t_{2}^{\prime},\sigma^{t_{2}^{\prime\prime}}(x)(t)=\mu^{\prime
}\Longrightarrow(\sigma^{t_{2}^{\prime\prime}}(x)(t)=\sigma^{t_{2}%
^{\prime\prime}}(x)(t+T)\text{ and}\\
\text{and }t-T\geq t_{2}^{\prime}\Longrightarrow\sigma^{t_{2}^{\prime\prime}%
}(x)(t)=\sigma^{t_{2}^{\prime\prime}}(x)(t-T)).
\end{array}
\right.  \label{pre826}%
\end{equation}
Let $t_{3}^{\prime}\geq\max\{t_{1}^{\prime},t_{2}^{\prime},t_{1}^{\prime
\prime},t_{2}^{\prime\prime}\}.$ From (\ref{pre825}), (\ref{pre826}) and Lemma
\ref{Lem30}, page \pageref{Lem30} we get%
\begin{equation}
\forall t\geq t_{3}^{\prime},x(t)=\mu\Longrightarrow x(t)=x(t+T),
\label{pre827}%
\end{equation}%
\begin{equation}
\forall t\geq t_{3}^{\prime},x(t)=\mu^{\prime}\Longrightarrow x(t)=x(t+T).
\label{pre828}%
\end{equation}
As $\mu,\mu^{\prime}\in\omega(x),$ some $t_{1}\geq t_{3}^{\prime}$ exists such
that $x(t_{1})=\mu$ and some $t_{2}\geq t_{3}^{\prime}$ also exists such that
$x(t_{2}^{\prime})=\mu^{\prime}$ and, from (\ref{pre827}), (\ref{pre828}) we
infer%
\begin{equation}
\mu=x(t_{1})=x(t_{1}+T)=x(t_{1}+2T)=... \label{pre829}%
\end{equation}%
\begin{equation}
\mu^{\prime}=x(t_{2})=x(t_{2}+T)=x(t_{2}+2T)=... \label{pre830}%
\end{equation}
We suppose without loss that $t_{1}<t_{2}.$ We write (\ref{pre600}) for
$T^{\prime}=t_{2}-t_{1},$ thus $t_{4}^{\prime},t_{4}^{\prime\prime}%
\in\mathbf{R}$ exist with%
\begin{equation}
\left\{
\begin{array}
[c]{c}%
\forall t\geq t_{4}^{\prime},\sigma^{t_{4}^{\prime\prime}}(x)(t)=\mu
\Longrightarrow(\sigma^{t_{4}^{\prime\prime}}(x)(t)=\sigma^{t_{4}%
^{\prime\prime}}(x)(t+T^{\prime})\text{ and}\\
\text{and }t-T^{\prime}\geq t_{4}^{\prime}\Longrightarrow\sigma^{t_{4}%
^{\prime\prime}}(x)(t)=\sigma^{t_{4}^{\prime\prime}}(x)(t-T^{\prime})).
\end{array}
\right.  \label{pre831}%
\end{equation}
For $t_{5}^{\prime}\geq\max\{t_{4}^{\prime},t_{4}^{\prime\prime}\}$ we infer
from (\ref{pre831}) that%
\begin{equation}
\forall t\geq t_{5}^{\prime},x(t)=\mu\Longrightarrow x(t)=x(t+T^{\prime}).
\label{pre832}%
\end{equation}
Let now $k\in\mathbf{N}$ having the property that $t_{1}+kT\geq t_{5}^{\prime
}.$ As $t_{2}+kT-T^{\prime}=t_{1}+kT,$ we have%
\[
\mu\overset{(\ref{pre829})}{=}x(t_{1}+kT)=x(t_{2}+kT-T^{\prime})\overset
{(\ref{pre832})}{=}x(t_{2}+kT-T^{\prime}+T^{\prime})=x(t_{2}+kT)\overset
{(\ref{pre830})}{=}\mu^{\prime},
\]
contradiction. We have obtained that $\omega(x)$ has a single point $\mu$,
thus (\ref{per79}) is true.
\end{proof}

\section{The third group of eventual constancy properties}

\begin{remark}
The third group of eventual constancy properties involves eventual periodicity
properties of some point $\mu$ of the orbit. These properties result one by
one from the properties of the second group, by the replacement of $\forall
\mu\in\widehat{Or}(\widehat{x}),\forall\mu\in Or(x)$ with $\exists\mu
\in\widehat{Or}(\widehat{x}),\exists\mu\in Or(x). $ We notice that we have
avoided each time the trivialities by requests of the kind $\widehat
{\mathbf{T}}_{\mu}^{\widehat{x}}\cap\{k^{\prime},k^{\prime}+1,k^{\prime
}+2,...\}\neq\varnothing,\mathbf{T}_{\mu}^{x}\cap\lbrack t_{1}^{\prime}%
,\infty)\neq\varnothing.$ The possibility of replacing the universal
quantifier with the existential quantifier when passing from Theorem
\ref{The97}, page \pageref{The97} to Theorem \ref{The98} is given by the fact
that the final value, if it exists, is unique.
\end{remark}

\begin{theorem}
\label{The98}Let the signals $\widehat{x}\in\widehat{S}^{(n)},x\in S^{(n)}.$

a) The following statements are equivalent with the eventual constancy of
$\widehat{x}:$%
\begin{equation}
\left\{
\begin{array}
[c]{c}%
\forall p\geq1,\exists\mu\in\widehat{Or}(\widehat{x}),\exists k^{\prime}%
\in\mathbf{N}_{\_},\\
\widehat{\mathbf{T}}_{\mu}^{\widehat{x}}\cap\{k^{\prime},k^{\prime
}+1,k^{\prime}+2,...\}\neq\varnothing\text{ and }\forall k\in\widehat
{\mathbf{T}}_{\mu}^{\widehat{x}}\cap\{k^{\prime},k^{\prime}+1,k^{\prime
}+2,...\},\\
\{k+zp|z\in\mathbf{Z}\}\cap\{k^{\prime},k^{\prime}+1,k^{\prime}+2,...\}\subset
\widehat{\mathbf{T}}_{\mu}^{\widehat{x}},
\end{array}
\right.  \label{pre785}%
\end{equation}%
\begin{equation}
\left\{
\begin{array}
[c]{c}%
\forall p\geq1,\exists\mu\in\widehat{Or}(\widehat{x}),\exists k^{\prime\prime
}\in\mathbf{N},\widehat{\mathbf{T}}_{\mu}^{\widehat{\sigma}^{k^{\prime\prime}%
}(\widehat{x})}\neq\varnothing\text{ and }\forall k\in\widehat{\mathbf{T}%
}_{\mu}^{\widehat{\sigma}^{k^{\prime\prime}}(\widehat{x})},\\
\{k+zp|z\in\mathbf{Z}\}\cap\mathbf{N}_{\_}\subset\widehat{\mathbf{T}}_{\mu
}^{\widehat{\sigma}^{k^{\prime\prime}}(\widehat{x})},
\end{array}
\right.  \label{pre782}%
\end{equation}%
\begin{equation}
\left\{
\begin{array}
[c]{c}%
\forall p\geq1,\exists\mu\in\widehat{Or}(\widehat{x}),\exists k^{\prime}%
\in\mathbf{N}_{\_},\exists k_{1}\geq k^{\prime},\widehat{x}(k_{1})=\mu\text{
and }\\
\text{and }\forall k\geq k^{\prime},\widehat{x}(k)=\mu\Longrightarrow\\
\Longrightarrow(\widehat{x}(k)=\widehat{x}(k+p)\text{ and }k-p\geq k^{\prime
}\Longrightarrow\widehat{x}(k)=\widehat{x}(k-p)),
\end{array}
\right.  \label{pre783}%
\end{equation}%
\begin{equation}
\left\{
\begin{array}
[c]{c}%
\forall p\geq1,\exists\mu\in\widehat{Or}(\widehat{x}),\exists k^{\prime\prime
}\in\mathbf{N},\exists k_{1}\in\mathbf{N}_{\_},\widehat{\sigma}^{k^{\prime
\prime}}(\widehat{x})(k_{1})=\mu\text{ and}\\
\text{and }\forall k\in\mathbf{N}_{\_},\widehat{\sigma}^{k^{\prime\prime}%
}(\widehat{x})(k)=\mu\Longrightarrow(\widehat{\sigma}^{k^{\prime\prime}%
}(\widehat{x})(k)=\widehat{\sigma}^{k^{\prime\prime}}(\widehat{x})(k+p)\text{
and }\\
\text{and }k-p\geq-1\Longrightarrow\widehat{\sigma}^{k^{\prime\prime}%
}(\widehat{x})(k)=\widehat{\sigma}^{k^{\prime\prime}}(\widehat{x})(k-p)).
\end{array}
\right.  \label{pre784}%
\end{equation}

b) The following statements are equivalent with the eventual constancy of $x$:%
\begin{equation}
\left\{
\begin{array}
[c]{c}%
\forall T>0,\exists\mu\in Or(x),\exists t^{\prime}\in I^{x},\exists
t_{1}^{\prime}\geq t^{\prime},\mathbf{T}_{\mu}^{x}\cap\lbrack t_{1}^{\prime
},\infty)\neq\varnothing\text{ and}\\
\text{and }\forall t\in\mathbf{T}_{\mu}^{x}\cap\lbrack t_{1}^{\prime}%
,\infty),\{t+zT|z\in\mathbf{Z}\}\cap\lbrack t_{1}^{\prime},\infty
)\subset\mathbf{T}_{\mu}^{x},
\end{array}
\right.  \label{pre786}%
\end{equation}%
\begin{equation}
\left\{
\begin{array}
[c]{c}%
\forall T>0,\exists\mu\in Or(x),\exists t_{1}^{\prime}\in\mathbf{R}%
,\mathbf{T}_{\mu}^{x}\cap\lbrack t_{1}^{\prime},\infty)\neq\varnothing\text{
and }\\
\text{and }\forall t\in\mathbf{T}_{\mu}^{x}\cap\lbrack t_{1}^{\prime}%
,\infty),\{t+zT|z\in\mathbf{Z}\}\cap\lbrack t_{1}^{\prime},\infty
)\subset\mathbf{T}_{\mu}^{x},
\end{array}
\right.  \label{pre787}%
\end{equation}%
\begin{equation}
\left\{
\begin{array}
[c]{c}%
\forall T>0,\exists\mu\in Or(x),\exists t^{\prime\prime}\in\mathbf{R},\exists
t^{\prime}\in I^{\sigma^{t^{\prime\prime}}(x)},\mathbf{T}_{\mu}^{\sigma
^{t^{\prime\prime}}(x)}\cap\lbrack t^{\prime},\infty)\neq\varnothing\text{
and}\\
\text{and }\forall t\in\mathbf{T}_{\mu}^{\sigma^{t^{\prime\prime}}(x)}%
\cap\lbrack t^{\prime},\infty),\{t+zT|z\in\mathbf{Z}\}\cap\lbrack t^{\prime
},\infty)\subset\mathbf{T}_{\mu}^{\sigma^{t^{\prime\prime}}(x)},
\end{array}
\right.  \label{pre788}%
\end{equation}%
\begin{equation}
\left\{
\begin{array}
[c]{c}%
\forall T>0,\exists\mu\in Or(x),\exists t^{\prime\prime}\in\mathbf{R},\exists
t^{\prime}\in\mathbf{R},\\
\mathbf{T}_{\mu}^{\sigma^{t^{\prime\prime}}(x)}\cap\lbrack t^{\prime}%
,\infty)\neq\varnothing\text{ and}\\
\text{and }\forall t\in\mathbf{T}_{\mu}^{\sigma^{t^{\prime\prime}}(x)}%
\cap\lbrack t^{\prime},\infty),\{t+zT|z\in\mathbf{Z}\}\cap\lbrack t^{\prime
},\infty)\subset\mathbf{T}_{\mu}^{\sigma^{t^{\prime\prime}}(x)},
\end{array}
\right.  \label{pre789}%
\end{equation}%
\begin{equation}
\left\{
\begin{array}
[c]{c}%
\forall T>0,\exists\mu\in Or(x),\exists t^{\prime}\in I^{x},\\
\exists t_{1}^{\prime}\geq t^{\prime},\exists t_{2}^{\prime}\geq t_{1}%
^{\prime},x(t_{2}^{\prime})=\mu\text{ and }\forall t\geq t_{1}^{\prime
},x(t)=\mu\Longrightarrow\\
\Longrightarrow(x(t)=x(t+T)\text{ and }t-T\geq t_{1}^{\prime}\Longrightarrow
x(t)=x(t-T)),
\end{array}
\right.  \label{pre790}%
\end{equation}%
\begin{equation}
\left\{
\begin{array}
[c]{c}%
\forall T>0,\exists\mu\in Or(x),\exists t_{1}^{\prime}\in\mathbf{R},\exists
t_{2}^{\prime}\geq t_{1}^{\prime},x(t_{2}^{\prime})=\mu\text{ and }\\
\text{and }\forall t\geq t_{1}^{\prime},x(t)=\mu\Longrightarrow\\
\Longrightarrow(x(t)=x(t+T)\text{ and }t-T\geq t_{1}^{\prime}\Longrightarrow
x(t)=x(t-T)),
\end{array}
\right.  \label{pre791}%
\end{equation}%
\begin{equation}
\left\{
\begin{array}
[c]{c}%
\forall T>0,\exists\mu\in Or(x),\exists t^{\prime\prime}\in\mathbf{R},\exists
t^{\prime}\in I^{\sigma^{t^{\prime\prime}}(x)},\exists t^{\prime\prime\prime
}\geq t^{\prime},\\
\sigma^{t^{\prime\prime}}(x)(t^{\prime\prime\prime})=\mu\text{ and }\forall
t\geq t^{\prime},\sigma^{t^{\prime\prime}}(x)(t)=\mu\Longrightarrow\\
\Longrightarrow(\sigma^{t^{\prime\prime}}(x)(t)=\sigma^{t^{\prime\prime}%
}(x)(t+T)\text{ and }\\
\text{and }t-T\geq t^{\prime}\Longrightarrow\sigma^{t^{\prime\prime}%
}(x)(t)=\sigma^{t^{\prime\prime}}(x)(t-T)),
\end{array}
\right.  \label{pre792}%
\end{equation}%
\begin{equation}
\left\{
\begin{array}
[c]{c}%
\forall T>0,\exists\mu\in Or(x),\exists t^{\prime\prime}\in\mathbf{R},\exists
t^{\prime}\in\mathbf{R},\exists t^{\prime\prime\prime}\geq t^{\prime}%
,\sigma^{t^{\prime\prime}}(x)(t^{\prime\prime\prime})=\mu\text{ and}\\
\text{and }\forall t\geq t^{\prime},\sigma^{t^{\prime\prime}}(x)(t)=\mu
\Longrightarrow(\sigma^{t^{\prime\prime}}(x)(t)=\sigma^{t^{\prime\prime}%
}(x)(t+T)\text{ and}\\
\text{and }t-T\geq t^{\prime}\Longrightarrow\sigma^{t^{\prime\prime}%
}(x)(t)=\sigma^{t^{\prime\prime}}(x)(t-T)).
\end{array}
\right.  \label{pre793}%
\end{equation}

\end{theorem}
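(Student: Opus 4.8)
The plan is to run, for each part, a single cycle of implications through all the listed conditions together with one of the known characterizations of eventual constancy, closely following the proof of Theorem \ref{The97}. For part a) I would prove $(\ref{per80})\Longrightarrow(\ref{pre785})\Longrightarrow(\ref{pre782})\Longrightarrow(\ref{pre783})\Longrightarrow(\ref{pre784})\Longrightarrow(\ref{per48})$, and for part b) the analogous cycle $(\ref{per79})\Longrightarrow(\ref{pre786})\Longrightarrow(\ref{pre787})\Longrightarrow(\ref{pre788})\Longrightarrow(\ref{pre789})\Longrightarrow(\ref{pre790})\Longrightarrow(\ref{pre791})\Longrightarrow(\ref{pre792})\Longrightarrow(\ref{pre793})\Longrightarrow(\ref{per79})$. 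Since (\ref{per48})--(\ref{per80}) and (\ref{per49})--(\ref{per79}) already characterize eventual constancy by Theorem \ref{The15}, this yields the theorem.

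For the opening implications $(\ref{per80})\Longrightarrow(\ref{pre785})$ and $(\ref{per79})\Longrightarrow(\ref{pre786})$ I would take $\mu$ to be the unique element of $\widehat{\omega}(\widehat{x})$, respectively $\omega(x)$; by Theorem \ref{The12_} it satisfies $\{\widehat{x}(k)\mid k\geq k'\}=\{\mu\}$ (resp.\ $\{x(t)\mid t\geq t_{1}'\}=\{\mu\}$) for suitable $k'$, $t_{1}'$. This single $\mu$, together with these thresholds and, in the real-time case, an arbitrary $t'\in I^{x}$, witnesses both the non-triviality clause and the periodicity containment, exactly as in the computations already carried out for $(\ref{per80})\Longrightarrow(\ref{pre781})$ and $(\ref{per79})\Longrightarrow(\ref{pre553})$ in Theorem \ref{The97}, now restricted to the one value $\mu$ rather than to all of $\widehat{Or}(\widehat{x})$, $Or(x)$. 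The intermediate implications $(\ref{pre785})\Longrightarrow(\ref{pre782})\Longrightarrow(\ref{pre783})\Longrightarrow(\ref{pre784})$ and $(\ref{pre786})\Longrightarrow\cdots\Longrightarrow(\ref{pre793})$ are the forgetful-function bookkeeping steps: they repeat the index/time shifts from the proof of Theorem \ref{The97} (the substitutions $k''=k'+1$ and $k'=k''-1$; relating $\widehat{\mathbf{T}}_{\mu}^{\widehat{\sigma}^{k''}(\widehat{x})}$ to $\widehat{\mathbf{T}}_{\mu}^{\widehat{x}}$; in the real-time steps, choosing $t'$ in a small left neighbourhood $(t''-\varepsilon,t'')$ of $t''$ on which $x$ equals its left limit, so that $t'\in I^{\sigma^{t''}(x)}$, and invoking Lemma \ref{Lem30}). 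The only new point is to transport the non-triviality witness alongside, i.e.\ the integer $k_{1}$, respectively the times $t_{2}'$, $t'''$, and the non-emptiness conditions such as $\widehat{\mathbf{T}}_{\mu}^{\widehat{\sigma}^{k''}(\widehat{x})}\neq\varnothing$; since these witnesses survive the shifts, the ``trivial case'' branchings needed in Theorem \ref{The97} do not reappear here.

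The closing implications carry the real content. The discrete one, $(\ref{pre784})\Longrightarrow(\ref{per48})$, is easy: instantiate (\ref{pre784}) at $p=1$, obtain $\mu$, $k''$ and a witness $k_{1}$ with $\widehat{\sigma}^{k''}(\widehat{x})(k_{1})=\mu$, and an upward induction on $k\geq k_{1}$ using $\widehat{\sigma}^{k''}(\widehat{x})(k)=\mu\Longrightarrow\widehat{\sigma}^{k''}(\widehat{x})(k+1)=\mu$ gives $\widehat{x}(k)=\mu$ for all $k\geq k_{1}+k''$, which is (\ref{per48}). The real-time closing step $(\ref{pre793})\Longrightarrow(\ref{per79})$ is the main obstacle, since $\mu$ is only existentially quantified and a single period does not determine a piecewise-constant function the way $p=1$ determines a sequence. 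My plan here: for every $T>0$, after reducing $\sigma^{t''}(x)$ to $x$ by Lemma \ref{Lem30}, (\ref{pre793}) yields $\mu(T)\in\omega(x)$ and a threshold $s(T)$ with $x(t)=\mu(T)\Longrightarrow x(t+T)=\mu(T)$ for all $t\geq s(T)$; since $\mu(T)\in\omega(x)$, the value $\mu(T)$ is taken on some interval $[a,b)\subset[s(T),\infty)$, hence $x=\mu(T)$ on the whole family $\bigcup_{k\geq0}[a+kT,b+kT)$. As $T$ ranges over the uncountable $(0,\infty)$ while $\omega(x)$ is finite, a pigeonhole gives a fixed $\mu^{*}$ with $\mu(T)=\mu^{*}$ for uncountably many $T$, hence for two periods $T_{1},T_{2}$ with $T_{1}/T_{2}\notin\mathbf{Q}$; taking an interval $[a,b)$ on which $x=\mu^{*}$ with $a\geq\max\{s(T_{1}),s(T_{2})\}$ and chaining the two stabilizations yields $x=\mu^{*}$ on $\bigcup_{k,l\geq0}[a+kT_{1}+lT_{2},b+kT_{1}+lT_{2})$. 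Because $T_{1}/T_{2}$ is irrational, the consecutive gaps of $\{kT_{1}+lT_{2}\mid k,l\geq0\}$ eventually drop below $b-a$, so this union contains a ray $[c,\infty)$; thus $x$ is eventually constant and $\omega(x)=\{\mu^{*}\}$, i.e.\ (\ref{per79}). (Alternatively, one could mimic the contradiction argument of Theorem \ref{The97} for $(\ref{pre600})\Longrightarrow(\ref{per79})$, assuming two distinct omega-limit points and using an auxiliary period equal to a difference of two times carrying those values; I expect the density argument to be the cleaner route.)
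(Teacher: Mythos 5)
Your cycle of implications is the same as the paper's (the paper runs $(\ref{per48})\Rightarrow(\ref{pre785})\Rightarrow\cdots\Rightarrow(\ref{pre784})\Rightarrow(\ref{per48})$ and $(\ref{per49})\Rightarrow(\ref{pre786})\Rightarrow\cdots\Rightarrow(\ref{pre793})\Rightarrow(\ref{per79})$; starting from $(\ref{per80})$ or $(\ref{per79})$ instead is immaterial by Theorem \ref{The15}), and your treatment of the opening and intermediate steps, including the transport of the non-triviality witnesses $k_{1},t_{2}^{\prime},t^{\prime\prime\prime}$, matches the paper's bookkeeping. The one place where you genuinely diverge is the closing implication $(\ref{pre793})\Rightarrow(\ref{per79})$. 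The paper's route is your parenthetical alternative: assuming $\omega(x)\neq\{\mu\}$ it extracts a maximal block $[t_{1},t_{2})\subset\mathbf{T}_{\mu}^{x}$ with $x(t_{1}-0)\neq\mu$, $x(t_{2})\neq\mu$, propagates it by Lemmas \ref{Lem28} and \ref{Lem25}, applies $(\ref{pre793})$ a second time with $T^{\prime}\in(0,t_{2}-t_{1})$, forces $\mu=\mu^{\prime}$ via the interval-intersection Lemma \ref{Lem29}, and then kills all five possible endpoint configurations by case analysis, concluding $[t_{1},\infty)\subset\mathbf{T}_{\mu}^{x}$. Your primary route instead pigeonholes the map $T\mapsto\mu(T)$ over the uncountable set of periods against the finite set $\omega(x)$ to get one value $\mu^{*}$ realized for two periods $T_{1},T_{2}$ with $T_{1}/T_{2}$ irrational, and then argues that the gaps of $\{kT_{1}+lT_{2}\mid k,l\geq0\}$ eventually fall below the length of a $\mu^{*}$-interval, so that the forward stabilizations fill a ray. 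This is correct and avoids the endpoint case analysis entirely, but it imports an equidistribution-type fact about $\mathbf{N}T_{1}+\mathbf{N}T_{2}$ that the paper neither states nor proves, so in a self-contained write-up you would have to supply that density argument (a pigeonhole on the fractional parts $lT_{2}\bmod T_{1}$ suffices); the paper's version buys self-containment within its own interval lemmas at the price of the five-case check.
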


\begin{proof}
a) (\ref{per48})$\Longrightarrow$(\ref{pre785}) Let $p\geq1$ arbitrary. From
(\ref{per48}) we have the existence of $\mu\in\mathbf{B}^{n}$ and $k^{\prime
}\in\mathbf{N}_{\_}$ with the property%
\begin{equation}
\forall k\geq k^{\prime},\widehat{x}(k)=\mu. \label{pre833}%
\end{equation}
We have that $\widehat{\mathbf{T}}_{\mu}^{\widehat{x}}\cap\{k^{\prime
},k^{\prime}+1,k^{\prime}+2,...\}\neq\varnothing$ and let $k\in\widehat
{\mathbf{T}}_{\mu}^{\widehat{x}},z\in\mathbf{Z}$ arbitrary such that $k\geq
k^{\prime},k+zp\geq k^{\prime}.$ We get from (\ref{pre833}) that
$k+zp\in\widehat{\mathbf{T}}_{\mu}^{\widehat{x}}.$

(\ref{pre785})$\Longrightarrow$(\ref{pre782}) Let $p\geq1$ arbitrary.
(\ref{pre785}) shows the existence of $\mu\in\widehat{Or}(\widehat{x})$ and
$k^{\prime}\in\mathbf{N}_{\_}$ such that%
\begin{equation}
\widehat{\mathbf{T}}_{\mu}^{\widehat{x}}\cap\{k^{\prime},k^{\prime
}+1,k^{\prime}+2,...\}\neq\varnothing, \label{pre834}%
\end{equation}%
\begin{equation}
\left\{
\begin{array}
[c]{c}%
\forall k\in\widehat{\mathbf{T}}_{\mu}^{\widehat{x}}\cap\{k^{\prime}%
,k^{\prime}+1,k^{\prime}+2,...\},\\
\{k+zp|z\in\mathbf{Z}\}\cap\{k^{\prime},k^{\prime}+1,k^{\prime}+2,...\}\subset
\widehat{\mathbf{T}}_{\mu}^{\widehat{x}}.
\end{array}
\right.  \label{pre835}%
\end{equation}
We put $k^{\prime\prime}=k^{\prime}+1.$ The existence from (\ref{pre834}) of
some $k\in\widehat{\mathbf{T}}_{\mu}^{\widehat{x}}\cap\{k^{\prime},k^{\prime
}+1,k^{\prime}+2,...\}$ means that $\widehat{x}(k)=\mu$ and $k\geq k^{\prime
},$ thus $k\geq k^{\prime\prime}-1;$ the number $k_{1}=k-k^{\prime\prime}$ is
$\geq-1$ and it fulfills $\widehat{\sigma}^{k^{\prime\prime}}(\widehat
{x})(k_{1})=\widehat{x}(k_{1}+k^{\prime\prime})=\widehat{x}(k)=\mu,$ thus
$k_{1}\in\widehat{\mathbf{T}}_{\mu}^{\widehat{\sigma}^{k^{\prime\prime}%
}(\widehat{x})}$ and $\widehat{\mathbf{T}}_{\mu}^{\widehat{\sigma}%
^{k^{\prime\prime}}(\widehat{x})}\neq\varnothing.$ Let now $k\in
\widehat{\mathbf{T}}_{\mu}^{\widehat{\sigma}^{k^{\prime\prime}}(\widehat{x})}$
and $z\in\mathbf{Z}$ arbitrary such that $k+zp\geq-1.$ We have%
\[
\mu=\widehat{\sigma}^{k^{\prime\prime}}(\widehat{x})(k)=\widehat
{x}(k+k^{\prime\prime}),
\]
where $k\geq-1$ means that $k+k^{\prime\prime}=k+k^{\prime}+1\geq k^{\prime}$
and on the other hand $k+k^{\prime\prime}+zp\geq k^{\prime\prime}-1=k^{\prime
},$ thus we can apply (\ref{pre835}). We have:%
\[
\widehat{\sigma}^{k^{\prime\prime}}(\widehat{x})(k+zp)=\widehat{x}%
(k+k^{\prime\prime}+zp)\overset{(\ref{pre835})}{=}\widehat{x}(k+k^{\prime
\prime})=\mu,
\]
in other words $k+zp\in\widehat{\mathbf{T}}_{\mu}^{\widehat{\sigma}%
^{k^{\prime\prime}}(\widehat{x})}.$

(\ref{pre782})$\Longrightarrow$(\ref{pre783}) Let $p\geq1.$ (\ref{pre782})
states the existence of $\mu\in\widehat{Or}(\widehat{x})$ and $k^{\prime
\prime}\in\mathbf{N}$ such that%
\begin{equation}
\widehat{\mathbf{T}}_{\mu}^{\widehat{\sigma}^{k^{\prime\prime}}(\widehat{x}%
)}\neq\varnothing, \label{pre836}%
\end{equation}%
\begin{equation}
\forall k\in\widehat{\mathbf{T}}_{\mu}^{\widehat{\sigma}^{k^{\prime\prime}%
}(\widehat{x})},\{k+zp|z\in\mathbf{Z}\}\cap\mathbf{N}_{\_}\subset
\widehat{\mathbf{T}}_{\mu}^{\widehat{\sigma}^{k^{\prime\prime}}(\widehat{x})}.
\label{pre837}%
\end{equation}
We define $k^{\prime}=k^{\prime\prime}-1.$ (\ref{pre836}) shows the existence
of $k\in\widehat{\mathbf{T}}_{\mu}^{\widehat{\sigma}^{k^{\prime\prime}%
}(\widehat{x})},$ thus $\widehat{x}(k+k^{\prime\prime})=\mu.$ With the
notation $k_{1}=k+k^{\prime\prime}$ we have $k_{1}=k+k^{\prime}+1\geq
k^{\prime}$ (because $k\geq-1$).

Let now $k\geq k^{\prime}$ arbitrary such that $\widehat{x}(k)=\mu.$ The
number $k-k^{\prime\prime}=k-k^{\prime}-1\geq-1$ satisfies $\widehat{\sigma
}^{k^{\prime\prime}}(\widehat{x})(k-k^{\prime\prime})=\widehat{x}(k)=\mu,$
thus $k-k^{\prime\prime}\in\widehat{\mathbf{T}}_{\mu}^{\widehat{\sigma
}^{k^{\prime\prime}}(\widehat{x})}.$ We infer%
\[
k-k^{\prime\prime}+p\in\{k-k^{\prime\prime}+zp|z\in\mathbf{Z}\}\cap
\mathbf{N}_{\_}\overset{(\ref{pre837})}{\subset}\widehat{\mathbf{T}}_{\mu
}^{\widehat{\sigma}^{k^{\prime\prime}}(\widehat{x})},
\]
thus $\mu=\widehat{\sigma}^{k^{\prime\prime}}(\widehat{x})(k-k^{\prime\prime
}+p)=\widehat{x}(k+p).$ Moreover, if $k-p\geq k^{\prime},$ then $k-k^{\prime
\prime}-p=k-k^{\prime}-1-p\geq-1$ and%
\[
k-k^{\prime\prime}-p\in\{k-k^{\prime\prime}+zp|z\in\mathbf{Z}\}\cap
\mathbf{N}_{\_}\overset{(\ref{pre837})}{\subset}\widehat{\mathbf{T}}_{\mu
}^{\widehat{\sigma}^{k^{\prime\prime}}(\widehat{x})},
\]
thus $\mu=\widehat{\sigma}^{k^{\prime\prime}}(\widehat{x})(k-k^{\prime\prime
}-p)=\widehat{x}(k-p).$

(\ref{pre783})$\Longrightarrow$(\ref{pre784}) Let $p\geq1$ arbitrary. From
(\ref{pre783}) we infer the existence of $\mu\in\widehat{Or}(\widehat{x})$ and
$k^{\prime}\in\mathbf{N}_{\_}$ such that%
\begin{equation}
\exists k_{1}\geq k^{\prime},\widehat{x}(k_{1})=\mu,
\end{equation}%
\begin{equation}
\left\{
\begin{array}
[c]{c}%
\forall k\geq k^{\prime},\widehat{x}(k)=\mu\Longrightarrow\\
\Longrightarrow(\widehat{x}(k)=\widehat{x}(k+p)\text{ and }k-p\geq k^{\prime
}\Longrightarrow\widehat{x}(k)=\widehat{x}(k-p)).
\end{array}
\right.  \label{pre838}%
\end{equation}
We define $k^{\prime\prime}=k^{\prime}+1$ and let $k_{1}\geq k^{\prime}$ such
that $\widehat{x}(k_{1})=\mu.$ The number $k_{1}^{\prime}=k_{1}-k^{\prime
\prime}$ belongs to $\mathbf{N}_{\_}$ and fulfills $\widehat{\sigma
}^{k^{\prime\prime}}(\widehat{x})(k_{1}^{\prime})=\widehat{x}(k_{1}^{\prime
}+k^{\prime\prime})=\widehat{x}(k_{1})=\mu,$ in other words%
\begin{equation}
\exists k_{1}^{\prime}\in\mathbf{N}_{\_},\widehat{\sigma}^{k^{\prime\prime}%
}(\widehat{x})(k_{1}^{\prime})=\mu.
\end{equation}
Let now $k\in\mathbf{N}_{\_}$ arbitrary, with the property that $\widehat
{\sigma}^{k^{\prime\prime}}(\widehat{x})(k)=\mu,$ thus $\widehat
{x}(k+k^{\prime\prime})=\mu.$ In this situation we have $k+k^{\prime\prime
}=k+k^{\prime}+1\geq k^{\prime}$ and we can apply (\ref{pre838}), resulting%
\[
\widehat{\sigma}^{k^{\prime\prime}}(\widehat{x})(k)=\widehat{x}(k+k^{\prime
\prime})\overset{(\ref{pre838})}{=}\widehat{x}(k+k^{\prime\prime}%
+p)=\widehat{\sigma}^{k^{\prime\prime}}(\widehat{x})(k+p).
\]
In the case when in addition $k-p\geq-1,$ we have $k+k^{\prime\prime}-p\geq
k^{\prime\prime}-1=k^{\prime},$ thus we can apply (\ref{pre838}) again, with
the result%
\[
\widehat{\sigma}^{k^{\prime\prime}}(\widehat{x})(k)=\widehat{x}(k+k^{\prime
\prime})\overset{(\ref{pre838})}{=}\widehat{x}(k+k^{\prime\prime}%
-p)=\widehat{\sigma}^{k^{\prime\prime}}(\widehat{x})(k-p).
\]

(\ref{pre784})$\Longrightarrow$(\ref{per48}) We put $p=1$ in (\ref{pre784});
then $\mu\in\widehat{Or}(\widehat{x})$ and $k^{\prime\prime}\in\mathbf{N}$
exist such that%
\begin{equation}
\exists k_{1}\in\mathbf{N}_{\_},\widehat{\sigma}^{k^{\prime\prime}}%
(\widehat{x})(k_{1})=\mu,
\end{equation}%
\begin{equation}
\left\{
\begin{array}
[c]{c}%
\forall k\in\mathbf{N}_{\_},\widehat{\sigma}^{k^{\prime\prime}}(\widehat
{x})(k)=\mu\Longrightarrow(\widehat{\sigma}^{k^{\prime\prime}}(\widehat
{x})(k)=\widehat{\sigma}^{k^{\prime\prime}}(\widehat{x})(k+1)\text{ and }\\
\text{and }k\geq0\Longrightarrow\widehat{\sigma}^{k^{\prime\prime}}%
(\widehat{x})(k)=\widehat{\sigma}^{k^{\prime\prime}}(\widehat{x})(k-1)),
\end{array}
\right.
\end{equation}
thus%
\begin{equation}
\exists k_{1}\in\mathbf{N}_{\_},\widehat{x}(k_{1}+k^{\prime\prime})=\mu,
\label{pre839}%
\end{equation}%
\begin{equation}
\left\{
\begin{array}
[c]{c}%
\forall k\in\mathbf{N}_{\_},\widehat{x}(k+k^{\prime\prime})=\mu\Longrightarrow
(\widehat{x}(k+k^{\prime\prime})=\widehat{x}(k+k^{\prime\prime}+1)\text{ and
}\\
\text{and }k\geq0\Longrightarrow\widehat{x}(k+k^{\prime\prime})=\widehat
{x}(k+k^{\prime\prime}-1)).
\end{array}
\right.  \label{pre840}%
\end{equation}
We define $k^{\prime}=k^{\prime\prime}-1.$ With the notation $k_{2}%
=k+k^{\prime\prime},$ where $k\in\mathbf{N}_{\_},$ we get $k_{2}=k+k^{\prime
}+1\geq k^{\prime}$ and (\ref{pre839}), (\ref{pre840}) become%
\begin{equation}
\exists k_{2}\geq k^{\prime},\widehat{x}(k_{2})=\mu, \label{pre841}%
\end{equation}%
\begin{equation}
\left\{
\begin{array}
[c]{c}%
\forall k_{2}\geq k^{\prime},\widehat{x}(k_{2})=\mu\Longrightarrow(\widehat
{x}(k_{2})=\widehat{x}(k_{2}+1)\text{ and }\\
\text{and }k_{2}\geq k^{\prime}+1\Longrightarrow\widehat{x}(k_{2})=\widehat
{x}(k_{2}-1)).
\end{array}
\right.  \label{pre842}%
\end{equation}
From (\ref{pre841}), (\ref{pre842}) we infer%
\[
\mu=\widehat{x}(k^{\prime})=\widehat{x}(k^{\prime}+1)=\widehat{x}(k^{\prime
}+2)=...
\]
thus (\ref{per48}) holds.

b) (\ref{per49})$\Longrightarrow$(\ref{pre786}) Let $T>0$ arbitrary. From
(\ref{per49}) some $\mu\in\mathbf{B}^{n}$ and $t_{1}^{\prime}\in\mathbf{R} $
exist such that%
\begin{equation}
\forall t\geq t_{1}^{\prime},x(t)=\mu. \label{pre843}%
\end{equation}
Let $t^{\prime}\in I^{x}$ that we can choose, without restricting the
generality, $t^{\prime}\leq t_{1}^{\prime}.$ From (\ref{pre843}) we have
$\mathbf{T}_{\mu}^{x}\cap\lbrack t_{1}^{\prime},\infty)=[t_{1}^{\prime}%
,\infty)\neq\varnothing$ and, on the other hand, let $t\in\mathbf{T}_{\mu}%
^{x}\cap\lbrack t_{1}^{\prime},\infty),z\in\mathbf{Z}$ be arbitrary with
$t+zT\geq t_{1}^{\prime}.$ Then, from (\ref{pre843}), $x(t+zT)=\mu,$ i.e.
$t+zT\in\mathbf{T}_{\mu}^{x}.$ (\ref{pre786}) results.

(\ref{pre786})$\Longrightarrow$(\ref{pre787}) Obvious.

(\ref{pre787})$\Longrightarrow$(\ref{pre788}) Let $T>0$ arbitrary. From
(\ref{pre787}) we have the existence of $\mu\in Or(x)$ and $t_{1}^{\prime}%
\in\mathbf{R}$ such that%
\begin{equation}
\mathbf{T}_{\mu}^{x}\cap\lbrack t_{1}^{\prime},\infty)\neq\varnothing,
\label{pre844}%
\end{equation}%
\begin{equation}
\forall t\in\mathbf{T}_{\mu}^{x}\cap\lbrack t_{1}^{\prime},\infty
),\{t+zT|z\in\mathbf{Z}\}\cap\lbrack t_{1}^{\prime},\infty)\subset
\mathbf{T}_{\mu}^{x}. \label{pre845}%
\end{equation}
Because some $t\in\mathbf{T}_{\mu}^{x}\cap\lbrack t_{1}^{\prime},\infty) $
exists (from (\ref{pre844})) and then $\{t,t+T,t+2T,...\}\subset
\mathbf{T}_{\mu}^{x}$ (from (\ref{pre845})) we infer $\mu\in\omega(x).$

We take $t^{\prime\prime}>t_{1}^{\prime}$ arbitrary. Some $\varepsilon>0$
exists with $\forall\xi\in(t^{\prime\prime}-\varepsilon,t^{\prime\prime
}),x(\xi)=x(t^{\prime\prime}-0).$ We take $t^{\prime}\in(\max\{t_{1}^{\prime
},t^{\prime\prime}-\varepsilon\},t^{\prime\prime})$ arbitrarily and we have%
\[
\sigma^{t^{\prime\prime}}(x)(t)=\left\{
\begin{array}
[c]{c}%
x(t),t\geq t^{\prime}\\
x(t^{\prime\prime}-0),t<t^{\prime\prime}.
\end{array}
\right.
\]
We infer the truth of $t^{\prime}\in(-\infty,t^{\prime\prime})\subset
I^{\sigma^{t^{\prime\prime}}(x)}.$ The fact that $\mathbf{T}_{\mu}%
^{\sigma^{t^{\prime\prime}}(x)}\cap\lbrack t^{\prime},\infty)\neq\varnothing$
results from the remark that $\mu\in\omega(x).$ Let us take now some
$t\in\mathbf{T}_{\mu}^{\sigma^{t^{\prime\prime}}(x)}\cap\lbrack t^{\prime
},\infty)$ and $z\in\mathbf{Z}$ arbitrarily such that $t+zT\geq t^{\prime}.$
Obviously $\mathbf{T}_{\mu}^{\sigma^{t^{\prime\prime}}(x)}\cap\lbrack
t^{\prime},\infty)=\mathbf{T}_{\mu}^{x}\cap\lbrack t^{\prime},\infty).$ As far
as $t\in\mathbf{T}_{\mu}^{x}\cap\lbrack t_{1}^{\prime},\infty),t+zT\geq
t_{1}^{\prime},$ we can apply (\ref{pre845}) and we have that $t+zT\in
\mathbf{T}_{\mu}^{x},$ i.e. $x(t+zT)=\mu.$ As $t+zT\geq t^{\prime},$ and
consequently $x(t+zT)=\sigma^{t^{\prime\prime}}(x)(t+zT),$ we conclude that
$t+zT\in\mathbf{T}_{\mu}^{\sigma^{t^{\prime\prime}}(x)}.$

(\ref{pre788})$\Longrightarrow$(\ref{pre789}) Obvious.

(\ref{pre789})$\Longrightarrow$(\ref{pre790}) Let $T>0.$ From (\ref{pre789})
we have the existence of $\mu\in Or(x),t^{\prime\prime}\in\mathbf{R}$ and
$t^{\prime\prime\prime}\in\mathbf{R}$ such that%
\begin{equation}
\mathbf{T}_{\mu}^{\sigma^{t^{\prime\prime}}(x)}\cap\lbrack t^{\prime
\prime\prime},\infty)\neq\varnothing, \label{pre846}%
\end{equation}%
\begin{equation}
\forall t\in\mathbf{T}_{\mu}^{\sigma^{t^{\prime\prime}}(x)}\cap\lbrack
t^{\prime\prime\prime},\infty),\{t+zT|z\in\mathbf{Z}\}\cap\lbrack
t^{\prime\prime\prime},\infty)\subset\mathbf{T}_{\mu}^{\sigma^{t^{\prime
\prime}}(x)}. \label{pre847}%
\end{equation}
From (\ref{pre846}) some $t\in\mathbf{T}_{\mu}^{\sigma^{t^{\prime\prime}}%
(x)}\cap\lbrack t^{\prime\prime\prime},\infty)$ exists and from (\ref{pre847})
$\{t,t+T,t+2T,...\}\subset\mathbf{T}_{\mu}^{\sigma^{t^{\prime\prime}}(x)},$
thus $\mu\in\omega(x).$

We define $t_{1}^{\prime}=\max\{t^{\prime\prime},t^{\prime\prime\prime}\}.$
$t^{\prime}\in\mathbf{R}$ is chosen without loss $\leq t_{1}^{\prime}$ such
that $t^{\prime}\in I^{x}.$ Since $\mu\in\omega(x)$ we get $\exists
t_{2}^{\prime}\geq t_{1}^{\prime},x(t_{2}^{\prime})=\mu.$

Let now $t\geq t_{1}^{\prime}$ such that $x(t)=\mu,$ thus $t\in\mathbf{T}%
_{\mu}^{\sigma^{t^{\prime\prime}}(x)}\cap\lbrack t^{\prime\prime\prime}%
,\infty).$ We have%
\[
t+T\in\{t+zT|z\in\mathbf{Z}\}\cap\lbrack t^{\prime\prime\prime},\infty
)\overset{(\ref{pre847})}{\subset}\mathbf{T}_{\mu}^{\sigma^{t^{\prime\prime}%
}(x)}%
\]
and, because $t+T\geq t_{1}^{\prime},\mu=\sigma^{t^{\prime\prime}%
}(x)(t+T)=x(t+T).$ Similarly, if $t-T\geq t_{1}^{\prime},$ we can apply
(\ref{pre847}) again and we get $\mu=\sigma^{t^{\prime\prime}}%
(x)(t-T)=x(t-T).$

(\ref{pre790})$\Longrightarrow$(\ref{pre791}) Obvious.

(\ref{pre791})$\Longrightarrow$(\ref{pre792}) Let $T>0$ arbitrary. Then
$\mu\in Or(x),$ $t_{1}^{\prime}\in\mathbf{R}$ exist such that%
\begin{equation}
\exists t_{2}^{\prime}\geq t_{1}^{\prime},x(t_{2}^{\prime})=\mu,
\label{pre848}%
\end{equation}%
\begin{equation}
\left\{
\begin{array}
[c]{c}%
\forall t\geq t_{1}^{\prime},x(t)=\mu\Longrightarrow\\
\Longrightarrow(x(t)=x(t+T)\text{ and }t-T\geq t_{1}^{\prime}\Longrightarrow
x(t)=x(t-T)).
\end{array}
\right.  \label{pre849}%
\end{equation}
From (\ref{pre848}), (\ref{pre849}) we infer $\{t_{2}^{\prime},t_{2}^{\prime
}+T,t_{2}^{\prime}+2T,...\}\subset\mathbf{T}_{\mu}^{x},$ thus $\mu\in
\omega(x).$

Let $\varepsilon>0$ with $\forall\xi\in\lbrack t_{1}^{\prime},t_{1}^{\prime
}+\varepsilon),x(\xi)=x(t_{1}^{\prime})$ and we take $t^{\prime}%
=t^{\prime\prime}\in(t_{1}^{\prime},t_{1}^{\prime}+\varepsilon)$ arbitrarily.
We have%
\begin{equation}
\sigma^{t^{\prime\prime}}(x)(t)=\left\{
\begin{array}
[c]{c}%
x(t),t\geq t^{\prime},\\
x(t_{1}^{\prime}),t\leq t^{\prime},
\end{array}
\right.  \label{pre850}%
\end{equation}
wherefrom%
\[
\sigma^{t^{\prime\prime}}(x)(-\infty+0)=x(t_{1}^{\prime})=x(t^{\prime\prime
}-0),
\]
meaning that $(-\infty,t^{\prime}]\subset\mathbf{T}_{x(t^{\prime\prime}%
-0)}^{\sigma^{t^{\prime\prime}}(x)}$ holds, in other words $t^{\prime}\in
I^{\sigma^{t^{\prime\prime}}(x)}$. As $\mu\in\omega(x),$ we obtain the
existence of $t^{\prime\prime\prime}>t^{\prime}$ with $\sigma^{t^{\prime
\prime}}(x)(t^{\prime\prime\prime})=x(t^{\prime\prime\prime})=\mu.$ The truth
of%
\begin{equation}
\left\{
\begin{array}
[c]{c}%
\forall t\geq t^{\prime},\sigma^{t^{\prime\prime}}(x)(t)=\mu\Longrightarrow
(\sigma^{t^{\prime\prime}}(x)(t)=\sigma^{t^{\prime\prime}}(x)(t+T)\text{
and}\\
\text{and }t-T\geq t^{\prime}\Longrightarrow\sigma^{t^{\prime\prime}%
}(x)(t)=\sigma^{t^{\prime\prime}}(x)(t-T))
\end{array}
\right.
\end{equation}
results from (\ref{pre849}), $t^{\prime}\geq t_{1}^{\prime},$ (\ref{pre850})
and Lemma \ref{Lem30}, page \pageref{Lem30}.

(\ref{pre792})$\Longrightarrow$(\ref{pre793}) Obvious.

(\ref{pre793})$\Longrightarrow$(\ref{per79}) Let $T>0$ arbitrary. Then $\mu\in
Or(x),$ $t^{\prime\prime}\in\mathbf{R}$ and $t^{\prime}\in\mathbf{R} $ exist
such that%
\begin{equation}
\exists t^{\prime\prime\prime}\geq t^{\prime},\sigma^{t^{\prime\prime}%
}(x)(t^{\prime\prime\prime})=\mu, \label{pre851}%
\end{equation}%
\begin{equation}
\left\{
\begin{array}
[c]{c}%
\forall t\geq t^{\prime},\sigma^{t^{\prime\prime}}(x)(t)=\mu\Longrightarrow
(\sigma^{t^{\prime\prime}}(x)(t)=\sigma^{t^{\prime\prime}}(x)(t+T)\text{
and}\\
\text{and }t-T\geq t^{\prime}\Longrightarrow\sigma^{t^{\prime\prime}%
}(x)(t)=\sigma^{t^{\prime\prime}}(x)(t-T)).
\end{array}
\right.  \label{pre852}%
\end{equation}
From (\ref{pre852}), Lemma \ref{Lem30}, page \pageref{Lem30} and from the fact
that $\forall t\geq\max\{t^{\prime},t^{\prime\prime}\},\sigma^{t^{\prime
\prime}}(x)(t)=x(t),$ we have
\begin{equation}
\left\{
\begin{array}
[c]{c}%
\forall t\geq\max\{t^{\prime},t^{\prime\prime}\},x(t)=\mu\Longrightarrow\\
\Longrightarrow(x(t)=x(t+T)\text{ and }t-T\geq\max\{t^{\prime},t^{\prime
\prime}\}\Longrightarrow x(t)=x(t-T)).
\end{array}
\right.
\end{equation}
On the other hand from (\ref{pre851}), (\ref{pre852}) we have the existence of
$t^{\prime\prime\prime}\geq t^{\prime}$ with%
\[
\{t^{\prime\prime\prime},t^{\prime\prime\prime}+T,t^{\prime\prime\prime
}+2T,...\}\subset\mathbf{T}_{\mu}^{\sigma^{t^{\prime\prime}}(x)},
\]
meaning that $\mu\in\omega(\sigma^{t^{\prime\prime}}(x))=\omega(x)$ (Theorem
\ref{The12}, page \pageref{The12}). If $\omega(x)=\{\mu\}$ then the
implication is proved, so let us suppose against all reason that this is not
true. Some $t_{1},t_{2}\in\mathbf{R}$ exist with the property $\max
\{t^{\prime},t^{\prime\prime}\}<t_{1}<t_{2},$ $[t_{1},t_{2})\subset
\mathbf{T}_{\mu}^{x},$ $x(t_{1}-0)\neq\mu,$ $x(t_{2})\neq\mu.$ This shows from
Lemma \ref{Lem28}, page \pageref{Lem28} that
\begin{equation}
\lbrack t_{1},t_{2})\cup\lbrack t_{1}+T,t_{2}+T)\cup\lbrack t_{1}%
+2T,t_{2}+2T)\cup...\subset\mathbf{T}_{\mu}^{x} \label{pre960}%
\end{equation}
and from Lemma \ref{Lem25}, page \pageref{Lem25} that $\forall k\in
\mathbf{N},$%
\begin{equation}
x(t_{1}+kT-0)\neq\mu, \label{pre956}%
\end{equation}%
\begin{equation}
x(t_{2}+kT)\neq\mu. \label{pre957}%
\end{equation}

Let us write now (\ref{pre793}) for $T^{\prime}\in(0,t_{2}-t_{1}).$ There
exist $\mu^{\prime}\in Or(x),$ $t_{1}^{\prime\prime}\in\mathbf{R}$ and
$t_{1}^{\prime}\in\mathbf{R}$ with%
\begin{equation}
\exists t_{1}^{\prime\prime\prime}\geq t_{1}^{\prime},\sigma^{t_{1}%
^{\prime\prime}}(x)(t_{1}^{\prime\prime\prime})=\mu^{\prime},
\end{equation}%
\begin{equation}
\left\{
\begin{array}
[c]{c}%
\forall t\geq t_{1}^{\prime},\sigma^{t_{1}^{\prime\prime}}(x)(t)=\mu^{\prime
}\Longrightarrow(\sigma^{t_{1}^{\prime\prime}}(x)(t)=\sigma^{t_{1}%
^{\prime\prime}}(x)(t+T^{\prime})\text{ and}\\
\text{and }t-T^{\prime}\geq t_{1}^{\prime}\Longrightarrow\sigma^{t_{1}%
^{\prime\prime}}(x)(t)=\sigma^{t_{1}^{\prime\prime}}(x)(t-T^{\prime})).
\end{array}
\right.
\end{equation}
We infer like before the existence of $t_{3},t_{4}\in\mathbf{R}$ having the
property that $\max\{t_{1}^{\prime},t_{1}^{\prime\prime}\}<t_{3}%
<t_{4},\;[t_{3},t_{4})\subset\mathbf{T}_{\mu^{\prime}}^{x},\;x(t_{3}-0)\neq
\mu^{\prime},\;x(t_{4})\neq\mu^{\prime}$ and
\begin{equation}
\lbrack t_{3},t_{4})\cup\lbrack t_{3}+T^{\prime},t_{4}+T^{\prime})\cup\lbrack
t_{3}+2T^{\prime},t_{4}+2T^{\prime})\cup...\subset\mathbf{T}_{\mu^{\prime}%
}^{x}%
\end{equation}
and for any $k\in\mathbf{N}$ we have%
\begin{equation}
x(t_{3}+kT^{\prime}-0)\neq\mu^{\prime}, \label{pre958}%
\end{equation}%
\begin{equation}
x(t_{4}+kT^{\prime})\neq\mu^{\prime}. \label{pre959}%
\end{equation}
From the fact that $T^{\prime}<t_{2}-t_{1}$ and from Lemma \ref{Lem29}, page
\pageref{Lem29} we have that%
\[
\varnothing\neq([t_{1},t_{2})\cup\lbrack t_{1}+T,t_{2}+T)\cup\lbrack
t_{1}+2T,t_{2}+2T)\cup...)\cap
\]%
\[
\cap([t_{3},t_{4})\cup\lbrack t_{3}+T^{\prime},t_{4}+T^{\prime})\cup\lbrack
t_{3}+2T^{\prime},t_{4}+2T^{\prime})\cup...)\subset\mathbf{T}_{\mu}^{x}%
\cap\mathbf{T}_{\mu^{\prime}}^{x}%
\]
wherefrom $\mu=\mu^{\prime}.$

Let now $k_{2},k_{3}\in\mathbf{N}$ with the property that $[t_{1}+k_{2}%
T,t_{2}+k_{2}T)\cap\lbrack t_{3}+k_{3}T^{\prime},t_{4}+k_{3}T^{\prime}%
)\neq\varnothing.$ We have the following non-exclusive cases, that cover all
the possibilities.

Case $t_{1}+k_{2}T=t_{3}+k_{3}T^{\prime}$

As $T^{\prime}<t_{2}-t_{1},$ we have $t_{3}+(k_{3}+1)T^{\prime}\in(t_{1}%
+k_{2}T,t_{2}+k_{2}T),$ contradiction with (\ref{pre958}).

Case $t_{1}+k_{2}T\in(t_{3}+k_{3}T^{\prime},t_{4}+k_{3}T^{\prime}),$

contradiction with (\ref{pre956}).

Case $t_{2}+k_{2}T\in(t_{3}+k_{3}T^{\prime},t_{4}+k_{3}T^{\prime}),$

contradiction with (\ref{pre957}).

Case $t_{3}+k_{3}T^{\prime}\in(t_{1}+k_{2}T,t_{2}+k_{2}T),$

contradiction with (\ref{pre958}).

Case $t_{4}+k_{3}T^{\prime}\in(t_{1}+k_{2}T,t_{2}+k_{2}T),$

contradiction with (\ref{pre959}).

The fact that we have obtained in all these cases a contradiction shows the
falsity of (\ref{pre960}), with $x(t_{1}-0)\neq\mu,$ $x(t_{2})\neq\mu.$ These
should be replaced by an inclusion of the form $[t_{1},\infty)\subset
\mathbf{T}_{\mu}^{x}.$ We have proved the truth of (\ref{per128}), thus
(\ref{per79}) holds.
\end{proof}

\section{The third group of eventual constancy properties, version}

\begin{remark}
These properties are a version of the properties of the third group from the
previous Section. To be noticed that the universal quantifier $\forall\mu
\in\widehat{Or}(\widehat{x}),\forall\mu\in Or(x)$ in Theorem \ref{The97}, page
\pageref{The97} can be replaced by the existential quantifier in two different
ways; the first possibility expressed at (\ref{pre785}), (\ref{pre786}) is:
$\exists\mu\in\widehat{Or}(\widehat{x}),...,\widehat{\mathbf{T}}_{\mu
}^{\widehat{x}}\cap\{k^{\prime},k^{\prime}+1,k^{\prime}+2,...\}\neq
\varnothing,\exists\mu\in Or(x),...,\mathbf{T}_{\mu}^{x}\cap\lbrack
t_{1}^{\prime},\infty)\neq\varnothing$ and the second possibility expressed at
(\ref{pre795}), (\ref{pre799}) to follow is: $\exists\mu\in\widehat{\omega
}(\widehat{x}),\exists\mu\in\omega(x)$, when the previous non-triviality
conditions $\widehat{\mathbf{T}}_{\mu}^{\widehat{x}}\cap\{k^{\prime}%
,k^{\prime}+1,k^{\prime}+2,...\}\neq\varnothing,\mathbf{T}_{\mu}^{x}%
\cap\lbrack t_{1}^{\prime},\infty)\neq\varnothing$ are fulfilled see also
Lemma \ref{Lem37}, page \pageref{Lem37}.
\end{remark}

\begin{theorem}
\label{The99}Let the signals $\widehat{x}\in\widehat{S}^{(n)},x\in S^{(n)}.$

a) The following statements are equivalent with the eventual constancy of
$\widehat{x}:$%
\begin{equation}
\left\{
\begin{array}
[c]{c}%
\forall p\geq1,\exists\mu\in\widehat{\omega}(\widehat{x}),\exists k^{\prime
}\in\mathbf{N}_{\_},\forall k\in\widehat{\mathbf{T}}_{\mu}^{\widehat{x}}%
\cap\{k^{\prime},k^{\prime}+1,k^{\prime}+2,...\},\\
\{k+zp|z\in\mathbf{Z}\}\cap\{k^{\prime},k^{\prime}+1,k^{\prime}+2,...\}\subset
\widehat{\mathbf{T}}_{\mu}^{\widehat{x}},
\end{array}
\right.  \label{pre795}%
\end{equation}%
\begin{equation}
\left\{
\begin{array}
[c]{c}%
\forall p\geq1,\exists\mu\in\widehat{\omega}(\widehat{x}),\exists
k^{\prime\prime}\in\mathbf{N},\forall k\in\widehat{\mathbf{T}}_{\mu}%
^{\widehat{\sigma}^{k^{\prime\prime}}(\widehat{x})},\\
\{k+zp|z\in\mathbf{Z}\}\cap\mathbf{N}_{\_}\subset\widehat{\mathbf{T}}_{\mu
}^{\widehat{\sigma}^{k^{\prime\prime}}(\widehat{x})},
\end{array}
\right.  \label{pre796}%
\end{equation}%
\begin{equation}
\left\{
\begin{array}
[c]{c}%
\forall p\geq1,\exists\mu\in\widehat{\omega}(\widehat{x}),\exists k^{\prime
}\in\mathbf{N}_{\_},\forall k\geq k^{\prime},\widehat{x}(k)=\mu\Longrightarrow
\\
\Longrightarrow(\widehat{x}(k)=\widehat{x}(k+p)\text{ and }k-p\geq k^{\prime
}\Longrightarrow\widehat{x}(k)=\widehat{x}(k-p)),
\end{array}
\right.  \label{pre797}%
\end{equation}%
\begin{equation}
\left\{
\begin{array}
[c]{c}%
\forall p\geq1,\exists\mu\in\widehat{\omega}(\widehat{x}),\exists
k^{\prime\prime}\in\mathbf{N},\forall k\in\mathbf{N}_{\_},\widehat{\sigma
}^{k^{\prime\prime}}(\widehat{x})(k)=\mu\Longrightarrow\\
\Longrightarrow(\widehat{\sigma}^{k^{\prime\prime}}(\widehat{x})(k)=\widehat
{\sigma}^{k^{\prime\prime}}(\widehat{x})(k+p)\text{ and }\\
\text{and }k-p\geq-1\Longrightarrow\widehat{\sigma}^{k^{\prime\prime}%
}(\widehat{x})(k)=\widehat{\sigma}^{k^{\prime\prime}}(\widehat{x})(k-p)).
\end{array}
\right.  \label{pre798}%
\end{equation}

b) The following statements are equivalent with the eventual constancy of $x$:%
\begin{equation}
\left\{
\begin{array}
[c]{c}%
\forall T>0,\exists\mu\in\omega(x),\exists t^{\prime}\in I^{x},\\
\exists t_{1}^{\prime}\geq t^{\prime},\forall t\in\mathbf{T}_{\mu}^{x}%
\cap\lbrack t_{1}^{\prime},\infty),\{t+zT|z\in\mathbf{Z}\}\cap\lbrack
t_{1}^{\prime},\infty)\subset\mathbf{T}_{\mu}^{x}),
\end{array}
\right.  \label{pre799}%
\end{equation}%
\begin{equation}
\left\{
\begin{array}
[c]{c}%
\forall T>0,\exists\mu\in\omega(x),\exists t_{1}^{\prime}\in\mathbf{R},\\
\forall t\in\mathbf{T}_{\mu}^{x}\cap\lbrack t_{1}^{\prime},\infty
),\{t+zT|z\in\mathbf{Z}\}\cap\lbrack t_{1}^{\prime},\infty)\subset
\mathbf{T}_{\mu}^{x},
\end{array}
\right.  \label{pre800}%
\end{equation}%
\begin{equation}
\left\{
\begin{array}
[c]{c}%
\forall T>0,\exists\mu\in\omega(x),\exists t^{\prime\prime}\in\mathbf{R}%
,\exists t^{\prime}\in I^{\sigma^{t^{\prime\prime}}(x)},\\
\forall t\in\mathbf{T}_{\mu}^{\sigma^{t^{\prime\prime}}(x)}\cap\lbrack
t^{\prime},\infty),\{t+zT|z\in\mathbf{Z}\}\cap\lbrack t^{\prime}%
,\infty)\subset\mathbf{T}_{\mu}^{\sigma^{t^{\prime\prime}}(x)},
\end{array}
\right.  \label{pre801}%
\end{equation}%
\begin{equation}
\left\{
\begin{array}
[c]{c}%
\forall T>0,\exists\mu\in\omega(x),\exists t^{\prime\prime}\in\mathbf{R}%
,\exists t^{\prime}\in\mathbf{R},\\
\forall t\in\mathbf{T}_{\mu}^{\sigma^{t^{\prime\prime}}(x)}\cap\lbrack
t^{\prime},\infty),\{t+zT|z\in\mathbf{Z}\}\cap\lbrack t^{\prime}%
,\infty)\subset\mathbf{T}_{\mu}^{\sigma^{t^{\prime\prime}}(x)},
\end{array}
\right.  \label{pre802}%
\end{equation}%
\begin{equation}
\left\{
\begin{array}
[c]{c}%
\forall T>0,\exists\mu\in\omega(x),\exists t^{\prime}\in I^{x},\exists
t_{1}^{\prime}\geq t^{\prime},\forall t\geq t_{1}^{\prime},x(t)=\mu
\Longrightarrow\\
\Longrightarrow(x(t)=x(t+T)\text{ and }t-T\geq t_{1}^{\prime}\Longrightarrow
x(t)=x(t-T)),
\end{array}
\right.  \label{pre803}%
\end{equation}%
\begin{equation}
\left\{
\begin{array}
[c]{c}%
\forall T>0,\exists\mu\in\omega(x),\exists t_{1}^{\prime}\in\mathbf{R},\forall
t\geq t_{1}^{\prime},x(t)=\mu\Longrightarrow\\
\Longrightarrow(x(t)=x(t+T)\text{ and }t-T\geq t_{1}^{\prime}\Longrightarrow
x(t)=x(t-T)),
\end{array}
\right.  \label{pre804}%
\end{equation}%
\begin{equation}
\left\{
\begin{array}
[c]{c}%
\forall T>0,\exists\mu\in\omega(x),\exists t^{\prime\prime}\in\mathbf{R}%
,\exists t^{\prime}\in I^{\sigma^{t^{\prime\prime}}(x)},\\
\forall t\geq t^{\prime},\sigma^{t^{\prime\prime}}(x)(t)=\mu\Longrightarrow
(\sigma^{t^{\prime\prime}}(x)(t)=\sigma^{t^{\prime\prime}}(x)(t+T)\text{
and}\\
\text{and }t-T\geq t^{\prime}\Longrightarrow\sigma^{t^{\prime\prime}%
}(x)(t)=\sigma^{t^{\prime\prime}}(x)(t-T)),
\end{array}
\right.  \label{pre805}%
\end{equation}%
\begin{equation}
\left\{
\begin{array}
[c]{c}%
\forall T>0,\exists\mu\in\omega(x),\exists t^{\prime\prime}\in\mathbf{R}%
,\exists t^{\prime}\in\mathbf{R},\forall t\geq t^{\prime},\sigma
^{t^{\prime\prime}}(x)(t)=\mu\Longrightarrow\\
\Longrightarrow(\sigma^{t^{\prime\prime}}(x)(t)=\sigma^{t^{\prime\prime}%
}(x)(t+T)\text{ and}\\
\text{and }t-T\geq t^{\prime}\Longrightarrow\sigma^{t^{\prime\prime}%
}(x)(t)=\sigma^{t^{\prime\prime}}(x)(t-T)).
\end{array}
\right.  \label{pre806}%
\end{equation}

\end{theorem}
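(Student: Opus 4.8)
The plan is to deduce Theorem \ref{The99} from Theorem \ref{The98}, which is already established. Each statement of Theorem \ref{The99} arises from the statement occupying the same position in Theorem \ref{The98} by two modifications: the existential quantifier now ranges over $\widehat{\omega}(\widehat{x})$ (resp. $\omega(x)$) instead of $\widehat{Or}(\widehat{x})$ (resp. $Or(x)$), and the non-triviality clauses of the form $\widehat{\mathbf{T}}_{\mu}^{\widehat{x}}\cap\{k^{\prime},k^{\prime}+1,k^{\prime}+2,...\}\neq\varnothing$, $\widehat{\mathbf{T}}_{\mu}^{\widehat{\sigma}^{k^{\prime\prime}}(\widehat{x})}\neq\varnothing$, $\mathbf{T}_{\mu}^{x}\cap\lbrack t_{1}^{\prime},\infty)\neq\varnothing$, $\mathbf{T}_{\mu}^{\sigma^{t^{\prime\prime}}(x)}\cap\lbrack t^{\prime},\infty)\neq\varnothing$ are suppressed. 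First I would check that each statement of Theorem \ref{The99} implies the one in the same position in Theorem \ref{The98}: the quantifier change only weakens the hypothesis, since $\widehat{\omega}(\widehat{x})\subset\widehat{Or}(\widehat{x})$ and $\omega(x)\subset Or(x)$ by Theorem \ref{The12}; and the dropped non-triviality clauses come back for free, because a point of $\widehat{\omega}(\widehat{x})$ has an infinite support set and a point of $\omega(x)$ has a superiorly unbounded support set (Theorem \ref{The12}a), Lemma \ref{Lem37}), so the intersection of the support set with any final segment is nonempty; for the statements involving the forgetful function one also invokes $\widehat{\omega}(\widehat{\sigma}^{k^{\prime\prime}}(\widehat{x}))=\widehat{\omega}(\widehat{x})$ and $\omega(\sigma^{t^{\prime\prime}}(x))=\omega(x)$ (Theorem \ref{The12}c)) to keep $\mu$ an omega limit point after forgetting. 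Combined with Theorem \ref{The98}, this shows that every statement of Theorem \ref{The99} implies the eventual constancy of the corresponding signal.

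For the converse I would argue as follows. By Theorem \ref{The15}, the eventual constancy of $\widehat{x}$ (resp. $x$) is exactly $\widehat{\omega}(\widehat{x})=\{\mu\}$ (resp. $\omega(x)=\{\mu\}$), where $\mu$ is the final value. Hence, fixing any $p\geq1$ (resp. $T>0$), the unique element $\mu$ of $\widehat{\omega}(\widehat{x})$ (resp. $\omega(x)$) is a legitimate witness, and the periodicity and shift conditions for it hold by the very computations already carried out in the proof of Theorem \ref{The98} for the implications $(\ref{per48})\Rightarrow(\ref{pre785})\Rightarrow\cdots$ and $(\ref{per49})\Rightarrow(\ref{pre786})\Rightarrow\cdots$: in those proofs the chosen witness is precisely the final value, and it is carried unchanged along the whole chain, so the verifications transfer with the only addition being the (trivial) remark that the witness belongs to the omega limit set, not merely to the orbit. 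In particular the real-time manipulations with $\sigma^{t^{\prime\prime}}$, the choice of $t^{\prime}\in I^{\sigma^{t^{\prime\prime}}(x)}$, and the invocations of Lemma \ref{Lem30} are reused unchanged.

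Thus the proof is essentially bookkeeping: matching the four statements of part a) and the eight statements of part b) with their counterparts in Theorem \ref{The98}, and, in the real-time case, re-running the forgetful-function-and-initial-time constructions. An equally valid but more self-contained alternative is to imitate the proof of Theorem \ref{The98} directly, establishing the cyclic chains $(\ref{per48})\Rightarrow(\ref{pre795})\Rightarrow(\ref{pre796})\Rightarrow(\ref{pre797})\Rightarrow(\ref{pre798})\Rightarrow(\ref{per48})$ and $(\ref{per49})\Rightarrow(\ref{pre799})\Rightarrow\cdots\Rightarrow(\ref{pre806})\Rightarrow(\ref{per79})$ with $\widehat{Or}(\widehat{x})$ replaced by $\widehat{\omega}(\widehat{x})$ and $Or(x)$ by $\omega(x)$ throughout, the needed non-triviality of support sets being supplied automatically by Theorem \ref{The12}a) and Lemma \ref{Lem37}. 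I expect the only real obstacle to be the sheer number of near-identical cases; no new idea is needed beyond the observation that an omega limit point has a support set meeting every final segment.
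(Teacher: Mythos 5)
Your proposal is correct and follows essentially the route the paper itself indicates: the paper states Theorem \ref{The99} without a separate proof, justifying it in the preceding Remark by exactly your observation that each statement differs from its counterpart in Theorem \ref{The98} only by trading the non-triviality clauses for membership in the omega limit set, the two being interchangeable by Lemma \ref{Lem37} (together with Theorem \ref{The12} for the shifted signals). Your additional check that the witness produced in the proof of Theorem \ref{The98} is the final value, hence already an omega limit point, is the same bookkeeping the paper leaves implicit.
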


\section{The fourth group of eventual constancy properties}

\begin{remark}
This group of eventual constancy properties involves the eventual periodicity
of the signals.
\end{remark}

\begin{theorem}
\label{The100}Let the signals $\widehat{x}\in\widehat{S}^{(n)},x\in S^{(n)}. $

a) The following statements are equivalent with the eventual constancy of
$\widehat{x}:$%
\begin{equation}
\forall p\geq1,\exists k^{\prime}\in\mathbf{N}_{\_},\forall k\geq k^{\prime
},\widehat{x}(k)=\widehat{x}(k+p), \label{per83}%
\end{equation}%
\begin{equation}
\forall p\geq1,\exists k^{\prime\prime}\in\mathbf{N},\forall k\in
\mathbf{N}_{\_},\widehat{\sigma}^{k^{\prime\prime}}(\widehat{x})(k)=\widehat
{\sigma}^{k^{\prime\prime}}(\widehat{x})(k+p). \label{pre515}%
\end{equation}

b) The following statements are equivalent with the eventual constancy of $x:
$%
\begin{equation}
\forall T>0,\exists t^{\prime}\in I^{x},\exists t_{1}^{\prime}\geq t^{\prime
},\forall t\geq t_{1}^{\prime},x(t)=x(t+T), \label{pre557}%
\end{equation}%
\begin{equation}
\forall T>0,\exists t_{1}^{\prime}\in\mathbf{R},\forall t\geq t_{1}^{\prime
},x(t)=x(t+T), \label{pre601}%
\end{equation}%
\begin{equation}
\forall T>0,\exists t^{\prime\prime}\in\mathbf{R},\exists t^{\prime}\in
I^{\sigma^{t^{\prime\prime}}(x)},\forall t\geq t^{\prime},\sigma
^{t^{\prime\prime}}(x)(t)=\sigma^{t^{\prime\prime}}(x)(t+T), \label{pre558}%
\end{equation}%
\begin{equation}
\forall T>0,\exists t^{\prime\prime}\in\mathbf{R},\exists t^{\prime}%
\in\mathbf{R},\forall t\geq t^{\prime},\sigma^{t^{\prime\prime}}%
(x)(t)=\sigma^{t^{\prime\prime}}(x)(t+T). \label{pre602}%
\end{equation}

\end{theorem}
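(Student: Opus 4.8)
The plan is to prove both parts through short cycles of implications, the common theme being that every listed statement asserts ``eventual periodicity with \emph{every} period''. The real content --- that such a signal is eventually constant --- is immediate in the discrete case (take period $1$) and, in the continuous case, reduces after a routine rearrangement of quantifiers to a statement already shown equivalent to eventual constancy in Theorem \ref{The97}. The $\sigma$-shift manipulations needed are the same ones used repeatedly in the proofs of Theorems \ref{The97}--\ref{The99}.

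For part a) I would run (\ref{per48})$\Rightarrow$(\ref{per83})$\Rightarrow$(\ref{pre515})$\Rightarrow$(\ref{per48}). If $\widehat{x}(k)=\mu$ for all $k\geq k^{\prime}$, then for every $p\geq1$ and $k\geq k^{\prime}$ we have $\widehat{x}(k)=\mu=\widehat{x}(k+p)$, giving (\ref{per83}) with the same $k^{\prime}$. For (\ref{per83})$\Rightarrow$(\ref{pre515}): given $p$, pick $k^{\prime}$ from (\ref{per83}) and set $k^{\prime\prime}=\max\{0,k^{\prime}+1\}\in\mathbf{N}$; for $k\in\mathbf{N}_{\_}$ the index $k+k^{\prime\prime}$ is $\geq k^{\prime}$, so $\widehat{\sigma}^{k^{\prime\prime}}(\widehat{x})(k)=\widehat{x}(k+k^{\prime\prime})=\widehat{x}(k+k^{\prime\prime}+p)=\widehat{\sigma}^{k^{\prime\prime}}(\widehat{x})(k+p)$. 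For (\ref{pre515})$\Rightarrow$(\ref{per48}): take $p=1$ in (\ref{pre515}) to obtain $k^{\prime\prime}$ with $\widehat{x}(j)=\widehat{x}(j+1)$ for all $j\geq k^{\prime\prime}-1$, so $\widehat{x}$ is constant on $\{k^{\prime\prime}-1,k^{\prime\prime},\dots\}$, which is (\ref{per48}).

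For part b) I would run (\ref{per49})$\Rightarrow$(\ref{pre557})$\Rightarrow$(\ref{pre601})$\Rightarrow$(\ref{pre558})$\Rightarrow$(\ref{pre602})$\Rightarrow$(\ref{per49}). The first step uses $I^{x}\neq\varnothing$: pick $t^{\prime}\in I^{x}$, and if $x(t)=\mu$ for $t\geq s^{\prime}$ take $t_{1}^{\prime}=\max\{t^{\prime},s^{\prime}\}$, so $x(t)=\mu=x(t+T)$ for $t\geq t_{1}^{\prime}$. The implications (\ref{pre557})$\Rightarrow$(\ref{pre601}) and (\ref{pre558})$\Rightarrow$(\ref{pre602}) merely drop the requirements $t^{\prime}\in I^{x}$ and $t^{\prime}\in I^{\sigma^{t^{\prime\prime}}(x)}$. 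For (\ref{pre601})$\Rightarrow$(\ref{pre558}): given $T$ and $t_{1}^{\prime}$, choose $t^{\prime\prime}>t_{1}^{\prime}$, take $\varepsilon>0$ with $x(\xi)=x(t^{\prime\prime}-0)$ on $(t^{\prime\prime}-\varepsilon,t^{\prime\prime})$ (Theorem \ref{The1}), and pick $t^{\prime}\in(\max\{t_{1}^{\prime},t^{\prime\prime}-\varepsilon\},t^{\prime\prime})$; then $\sigma^{t^{\prime\prime}}(x)$ coincides with $x$ on $[t^{\prime},\infty)$, $t^{\prime}\in I^{\sigma^{t^{\prime\prime}}(x)}$, and the relation $x(t)=x(t+T)$ on $[t_{1}^{\prime},\infty)$ transfers to $\sigma^{t^{\prime\prime}}(x)$ by Lemma \ref{Lem30}. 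For the closing step (\ref{pre602})$\Rightarrow$(\ref{per49}): from (\ref{pre602}) one checks that for $t\geq t^{\prime}$, if $\sigma^{t^{\prime\prime}}(x)(t)=\mu$ then $\sigma^{t^{\prime\prime}}(x)(t)=\sigma^{t^{\prime\prime}}(x)(t+T)$ (trivially) and, when $t-T\geq t^{\prime}$, $\sigma^{t^{\prime\prime}}(x)(t)=\sigma^{t^{\prime\prime}}(x)(t-T)$ (apply the relation at $t-T$); this is exactly statement (\ref{pre600}) of Theorem \ref{The97}, which is equivalent to eventual constancy, so (\ref{per49}) follows.

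The only substantive point is this closing implication --- that being eventually $T$-periodic for \emph{every} $T>0$ forces eventual constancy --- and above I discharge it by quoting Theorem \ref{The97}. A self-contained argument is also available: pick two incommensurable periods $T_{0},T_{1}$; on a common ``final'' half-line $x$ is then invariant under the dense subgroup $T_{0}\mathbf{Z}+T_{1}\mathbf{Z}$ of $\mathbf{R}$; since $x$ fails to be eventually constant precisely when it has jump points arbitrarily far out (Theorems \ref{The12}, \ref{The12_}, \ref{The15}), choosing a group element $g>0$ smaller than the constant stretch of $x$ just to the left of such a jump point $\tau$ forces both $x(\tau-g)=x(\tau-0)$ and $x(\tau-g)=x(\tau)$, contradicting $x(\tau-0)\neq x(\tau)$.
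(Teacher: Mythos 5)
Your proof is correct and follows the same cycle of implications as the paper: for a), (\ref{per48})$\Rightarrow$(\ref{per83})$\Rightarrow$(\ref{pre515})$\Rightarrow$(\ref{per48}) with the identical choices $k^{\prime\prime}=k^{\prime}+1$ and $p=1$; for b), the chain (\ref{per49})$\Rightarrow$(\ref{pre557})$\Rightarrow$(\ref{pre601})$\Rightarrow$(\ref{pre558})$\Rightarrow$(\ref{pre602})$\Rightarrow$eventual constancy, with the same $\varepsilon$-window construction of $t^{\prime}\in I^{\sigma^{t^{\prime\prime}}(x)}$ in the step (\ref{pre601})$\Rightarrow$(\ref{pre558}). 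The one place you genuinely diverge is the closing implication. The paper proves (\ref{pre602})$\Rightarrow$(\ref{per79}) from scratch: it assumes two distinct points $\mu^{\prime},\mu^{\prime\prime}\in\omega(x)$, extracts arithmetic progressions $x(t_{0}+kT)=\mu^{\prime}$, $x(t_{1}+kT)=\mu^{\prime\prime}$, and then applies (\ref{pre602}) a second time with the period $T^{\prime}=t_{1}-t_{0}$ to force $\mu^{\prime}=\mu^{\prime\prime}$. You instead observe that (\ref{pre602}) immediately implies statement (\ref{pre600}) of Theorem \ref{The97} (the forward identity is given outright, and the backward one follows by applying it at $t-T$), and then invoke that theorem's already-established equivalence with eventual constancy. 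Your reduction is valid and avoids duplicating an argument the paper has in effect already carried out inside Theorem \ref{The97} --- indeed the paper's direct proof here is structurally a repetition of its own proof of (\ref{pre600})$\Rightarrow$(\ref{per79}). Your alternative self-contained argument via two incommensurable periods and the density of $T_{0}\mathbf{Z}+T_{1}\mathbf{Z}$ is also sound in outline, though it is only sketched and is not needed once the reduction to Theorem \ref{The97} is in place.
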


\begin{proof}
a) (\ref{per48})$_{page\;\pageref{per48}}\Longrightarrow$(\ref{per83}) Let
$p\geq1$ arbitrary. We have from (\ref{per48}) the existence of $\mu
\in\mathbf{B}^{n}$ and $k^{\prime}\in\mathbf{N}_{\_}$ such that%
\[
\forall k\geq k^{\prime},\widehat{x}(k)=\mu.
\]
Then for any $k\geq k^{\prime}$ we have $\widehat{x}(k+p)=\mu,$ thus
(\ref{per83}) holds.

(\ref{per83})$\Longrightarrow$(\ref{pre515}) Let $p\geq1.$ From (\ref{per83}),
some $k^{\prime}\in\mathbf{N}_{\_}$ exists with%
\begin{equation}
\forall k\geq k^{\prime},\widehat{x}(k)=\widehat{x}(k+p). \label{pre686}%
\end{equation}
We define $k^{\prime\prime}=k^{\prime}+1$ and let $k\in\mathbf{N}_{\_}$
arbitrary. As $k+k^{\prime\prime}=k+k^{\prime}+1\geq k^{\prime},$ we can write
that%
\[
\widehat{\sigma}^{k^{\prime\prime}}(\widehat{x})(k)=\widehat{x}(k+k^{\prime
\prime})=\widehat{x}(k+k^{\prime}+1)\overset{(\ref{pre686})}{=}\widehat
{x}(k+k^{\prime}+1+p)=
\]%
\[
=\widehat{x}(k+k^{\prime\prime}+p)=\widehat{\sigma}^{k^{\prime\prime}%
}(\widehat{x})(k+p).
\]

(\ref{pre515})$\Longrightarrow$(\ref{per48}) We write (\ref{pre515}) for $p=1$
under the form: $k^{\prime\prime}\in\mathbf{N}$ exists with%
\begin{equation}
\forall k\in\mathbf{N}_{\_},\widehat{\sigma}^{k^{\prime\prime}}(\widehat
{x})(k)=\widehat{\sigma}^{k^{\prime\prime}}(\widehat{x})(k+1), \label{pre687}%
\end{equation}
i.e. $\widehat{\sigma}^{k^{\prime\prime}}(\widehat{x})$ is constant. We denote
with $\mu$ the value of this constant, for which we have from (\ref{pre687}):%
\begin{equation}
\forall k\in\mathbf{N}_{\_},\widehat{x}(k+k^{\prime\prime})=\mu.
\label{pre688}%
\end{equation}
We denote \ $k^{\prime}=k^{\prime\prime}-1,k^{\prime}\in\mathbf{N}_{\_}. $ As
$k+k^{\prime\prime}=k+k^{\prime}+1\geq k^{\prime},$ (\ref{pre688}) shows that%
\[
\forall k\geq k^{\prime},\widehat{x}(k)=\mu.
\]

b) (\ref{per49})$_{page\;\pageref{per49}}\Longrightarrow$(\ref{pre557}) Let
$T>0$ arbitrary. Some $\mu\in\mathbf{B}^{n}$ and some $t_{1}^{\prime}%
\in\mathbf{R}$ exist from (\ref{per49}) with%
\begin{equation}
\forall t\geq t_{1}^{\prime},x(t)=\mu. \label{pre856}%
\end{equation}
There also exists an initial time instant $t^{\prime}\in I^{x}$ that can be
chosen without loss $\leq t_{1}^{\prime}.$

We fix in (\ref{pre856}) an arbitrary $t\geq t_{1}^{\prime}.$ We have
$x(t+T)=\mu,$ thus (\ref{pre557}) holds.

(\ref{pre557})$\Longrightarrow$(\ref{pre601}) Obvious.

(\ref{pre601})$\Longrightarrow$(\ref{pre558}) Let $T>0$ arbitrary. Some
$t_{1}^{\prime}\in\mathbf{R}$ exists from (\ref{pre601}) such that%
\begin{equation}
\forall t\geq t_{1}^{\prime},x(t)=x(t+T). \label{pre718}%
\end{equation}
We take $t^{\prime\prime}>t_{1}^{\prime}$ arbitrary. Some $\varepsilon>0$
exists then with $\forall t\in(t^{\prime\prime}-\varepsilon,t^{\prime\prime
}),x(t)=x(t^{\prime\prime}-0).$ We also take $t^{\prime}\in(t^{\prime\prime
}-\varepsilon,t^{\prime\prime})\cap\lbrack t_{1}^{\prime},\infty)$ arbitrarily
and on the other hand we have%
\begin{equation}
\sigma^{t^{\prime\prime}}(x)(t)=\left\{
\begin{array}
[c]{c}%
x(t),t>t^{\prime\prime}-\varepsilon,\\
x(t^{\prime\prime}-0),t<t^{\prime\prime}.
\end{array}
\right.  \label{pre719}%
\end{equation}
The fact that $\forall t\leq t^{\prime},\sigma^{t^{\prime\prime}%
}(x)(t)=x(t^{\prime\prime}-0)$ is obvious, wherefrom $t^{\prime}\in
I^{\sigma^{t^{\prime\prime}}(x)}$. For any $t\geq t^{\prime}$ we have%
\[
\sigma^{t^{\prime\prime}}(x)(t)\overset{(\ref{pre719})}{=}x(t)\overset
{(\ref{pre718})}{=}x(t+T)\overset{(\ref{pre719})}{=}\sigma^{t^{\prime\prime}%
}(x)(t+T).
\]

(\ref{pre558})$\Longrightarrow$(\ref{pre602}) Obvious.

(\ref{pre602})$\Longrightarrow$(\ref{per79})$_{page\;\pageref{per79}}$ We
suppose against all reason that (\ref{per79}) is false, meaning that
$\mu^{\prime},\mu^{\prime\prime}\in\omega(x)$ exist, with $\mu^{\prime}\neq
\mu^{\prime\prime}.$ Let $T>0$ be arbitrary. From (\ref{pre602}) we have the
existence of $t^{\prime\prime}\in\mathbf{R},t^{\prime}\in\mathbf{R}$ such that%
\begin{equation}
\forall t\geq t^{\prime},\sigma^{t^{\prime\prime}}(x)(t)=\sigma^{t^{\prime
\prime}}(x)(t+T), \label{pre721}%
\end{equation}
wherefrom%
\begin{equation}
\forall t\geq\max\{t^{\prime},t^{\prime\prime}\},x(t)=x(t+T). \label{pre961}%
\end{equation}
Then $t_{0}\geq\max\{t^{\prime},t^{\prime\prime}\}$ and $t_{1}\geq
\max\{t^{\prime},t^{\prime\prime}\}$ exist such that $x(t_{0})=\mu^{\prime
},x(t_{1})=\mu^{\prime\prime}$ thus, from (\ref{pre961}),%
\begin{equation}
\forall k\in\mathbf{N},x(t_{0}+kT)=\mu^{\prime}, \label{pre722}%
\end{equation}%
\begin{equation}
\forall k\in\mathbf{N},x(t_{1}+kT)=\mu^{\prime\prime}. \label{pre723}%
\end{equation}
Obviously $t_{0}\neq t_{1}$ and, in order to make a choice, we suppose that
$t_{0}<t_{1}.$

We write now (\ref{pre602}) for $T^{\prime}=t_{1}-t_{0}$ and we get the
existence of $t_{1}^{\prime\prime}\in\mathbf{R},t_{1}^{\prime}\in\mathbf{R}$
with%
\begin{equation}
\forall t\geq t_{1}^{\prime},\sigma^{t_{1}^{\prime\prime}}(x)(t)=\sigma
^{t_{1}^{\prime\prime}}(x)(t+t_{1}-t_{0}). \label{pre724}%
\end{equation}
Let $k_{1}\in\mathbf{N}$ satisfying $t_{0}+k_{1}T\geq\max\{t_{1}^{\prime
},t_{1}^{\prime\prime}\}.$ We have $t_{1}+k_{1}T>t_{0}+k_{1}T\geq\max
\{t_{1}^{\prime},t_{1}^{\prime\prime}\},$ wherefrom:%
\[
\mu^{\prime}\overset{(\ref{pre722})}{=}x(t_{0}+k_{1}T)=\sigma^{t_{1}%
^{\prime\prime}}(x)(t_{0}+k_{1}T)\overset{(\ref{pre724})}{=}\sigma
^{t_{1}^{\prime\prime}}(x)(t_{0}+k_{1}T+t_{1}-t_{0})
\]%
\[
=\sigma^{t_{1}^{\prime\prime}}(x)(t_{1}+k_{1}T)=x(t_{1}+k_{1}T)\overset
{(\ref{pre723})}{=}\mu^{\prime\prime},
\]
representing a contradiction with our supposition that $\mu^{\prime}\neq
\mu^{\prime\prime}.$ We infer the truth of (\ref{per79}).
\end{proof}

\section{Discrete time vs real time}

\begin{theorem}
\label{The113}We suppose that $(t_{k})\in Seq$ exists with%
\begin{equation}
x(t)=\widehat{x}(-1)\cdot\chi_{(-\infty,t_{0})}(t)\oplus\widehat{x}%
(0)\cdot\chi_{\lbrack t_{0},t_{1})}(t)\oplus...\oplus\widehat{x}(k)\cdot
\chi_{\lbrack t_{k},t_{k+1})}(t)\oplus... \label{per119}%
\end{equation}
Then the eventual constancies of $\widehat{x}$ and $x$ are equivalent.
\end{theorem}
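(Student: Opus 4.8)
The statement to prove is Theorem \ref{The113}: if $x\in S^{(n)}$ is built from $\widehat{x}\in\widehat{S}^{(n)}$ via the specific $Seq$-encoding in (\ref{per119}), then $\widehat{x}$ is eventually constant if and only if $x$ is eventually constant. The plan is to reduce everything to the final-value characterization of eventual constancy (Theorem \ref{The15}, via (\ref{per80}) and (\ref{per79})), namely that $\widehat{x}$ is eventually constant iff $\widehat{\omega}(\widehat{x})=\{\mu\}$ for some $\mu$, and $x$ is eventually constant iff $\omega(x)=\{\mu\}$ for some $\mu$. Then the key observation is that Theorem \ref{The114}a), applied to the relation (\ref{per119}) (which is exactly of the form (\ref{per102}) with $t_k$ in place of the generic sequence), already gives $\widehat{\omega}(\widehat{x})=\omega(x)$. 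Hence $\widehat{\omega}(\widehat{x})$ is a singleton iff $\omega(x)$ is a singleton, and the two eventual constancies are equivalent.

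More concretely, I would proceed as follows. First, note that the hypothesis (\ref{per119}) is a particular instance of the hypothesis (\ref{per102}) of Theorem \ref{The114}, so part a) of that theorem applies verbatim and yields $\widehat{Or}(\widehat{x})=Or(x)$ and $\widehat{\omega}(\widehat{x})=\omega(x)$. Second, invoke Theorem \ref{The15}: the eventual constancy of $\widehat{x}$ is equivalent to statement (\ref{per80}), i.e. to the existence of $\mu\in\mathbf{B}^{n}$ with $\widehat{\omega}(\widehat{x})=\{\mu\}$; likewise the eventual constancy of $x$ is equivalent to statement (\ref{per79}), i.e. to the existence of $\mu\in\mathbf{B}^{n}$ with $\omega(x)=\{\mu\}$. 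Third, chain the two: $\widehat{x}$ eventually constant $\Longleftrightarrow$ $\exists\mu,\ \widehat{\omega}(\widehat{x})=\{\mu\}$ $\Longleftrightarrow$ $\exists\mu,\ \omega(x)=\{\mu\}$ $\Longleftrightarrow$ $x$ eventually constant, where the middle equivalence is exactly the identity $\widehat{\omega}(\widehat{x})=\omega(x)$ from the first step.

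There is essentially no obstacle here; the theorem is a corollary of machinery already built. The only thing worth double-checking is the applicability of Theorem \ref{The114}a) — one must confirm that the displayed $x$ in (\ref{per119}), where the role of $x(t_k)$ is played by $\widehat{x}(k)$, genuinely matches the template of (\ref{per102}), which it does since (\ref{per102}) is written with precisely the same substitution $x(t_k)=\widehat{x}(k)$ and $\mu=\widehat{x}(-1)$. If one preferred a self-contained argument avoiding Theorem \ref{The114}, one could instead argue directly from the definitions: assuming $\widehat{x}(k)=\mu$ for all $k\ge k^{\prime}$ (some $k^{\prime}\in\mathbf{N}$, without loss $k^{\prime}\ge 0$), read off from (\ref{per119}) that $x(t)=\widehat{x}(k)=\mu$ for all $t\ge t_{k^{\prime}}$, so $t^{\prime}=t_{k^{\prime}}$ witnesses (\ref{per49}); conversely, if $x(t)=\mu$ for all $t\ge t^{\prime}$, pick $k^{\prime}\in\mathbf{N}$ with $t_{k^{\prime}}\ge t^{\prime}$ (possible since $(t_k)$ is superiorly unbounded), and then $\widehat{x}(k)=x(t_k)=\mu$ for all $k\ge k^{\prime}$. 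Either route is short; I would present the first, citing Theorem \ref{The114}a) and Theorem \ref{The15}.
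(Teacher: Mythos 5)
Your proposal is correct, but your preferred route differs from the one the paper takes. The paper's proof is the direct argument you relegate to the end: it observes that under (\ref{per119}) the statements (\ref{per48}) and (\ref{per49}) are equivalent, and it exhibits the witnesses explicitly, taking $k^{\prime}=-1$ if $t^{\prime}<t_{0}$ and $k^{\prime}=k$ if $t^{\prime}\in[t_{k},t_{k+1})$, and conversely $t^{\prime}=t_{k^{\prime}}$ if $k^{\prime}\geq0$ and $t^{\prime}=t_{0}-\varepsilon$ if $k^{\prime}=-1$. Your primary route instead chains Theorem \ref{The114} a) (which gives $\widehat{\omega}(\widehat{x})=\omega(x)$ under the same hypothesis) with the characterization (\ref{per80})/(\ref{per79}) of eventual constancy from Theorem \ref{The15}; this is a legitimate and fully rigorous alternative, and it is arguably closer in spirit to the paper's own remark that Theorem \ref{The113} restates Theorem \ref{The92}. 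The trade-off: your modular derivation is shorter and reuses established machinery, but it discards the explicit correspondence between the limits of constancy (final times) of $\widehat{x}$ and of $x$, which the paper's direct computation records and which is the kind of quantitative information this monograph tends to track. Either version is acceptable; your fallback argument is essentially the paper's proof.
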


\begin{proof}
If (\ref{per119}) is true, then (\ref{per48})$_{page\;\pageref{per48}}$ and
(\ref{per49})$_{page\;\pageref{per49}}$ are obviously equivalent, with
$k^{\prime}=\left\{
\begin{array}
[c]{c}%
-1,\;if\;t^{\prime}<t_{0},\\
k,\text{ }if\;t^{\prime}\in\lbrack t_{k},t_{k+1}),k\geq0
\end{array}
\right.  ,$ and $t^{\prime}=\left\{
\begin{array}
[c]{c}%
t_{k^{\prime}},if\;k^{\prime}\geq0\\
t_{0}-\varepsilon,if\;k^{\prime}=-1
\end{array}
\right.  ,$ where $\varepsilon>0$ is arbitrary.
\end{proof}

\section{Discussion}

\begin{remark}
The statements of Theorem \ref{The15}, page \pageref{The15},..., Theorem
\ref{The100}, page \pageref{The100}, are structured in discrete time - real
time analogue properties and we notice that to a discrete time statement there
correspond either one or (in Theorems \ref{The97}, \ref{The98}, \ref{The99},
\ref{The100}) two real time statements. This is principially based on the fact
that we may omit in these requirements to state that an initial time exists,
since this is contained in the definition of the signals.
\end{remark}

\begin{remark}
Theorem \ref{The113} is a restatement of Theorem \ref{The92}, page
\pageref{The92}.
\end{remark}

\begin{remark}
The properties (\ref{per48}),..., (\ref{per80}) and (\ref{per49}),...,
(\ref{per79}) do not involve periodicity. The other properties that are
equivalent with eventual constancy are divided into two groups:

- (\ref{pre781}),...,(\ref{pre538}) and (\ref{pre553}),...,(\ref{pre600});
(\ref{pre785}),...,(\ref{pre784}) and (\ref{pre786}),...,(\ref{pre793});
(\ref{pre795}),..., (\ref{pre798}) and (\ref{pre799}),...,(\ref{pre806}) are
of eventual periodicity of the points, and

- (\ref{per83}), (\ref{pre515}) and (\ref{pre557}),..., (\ref{pre602}) are of
eventual periodicity of the signals.
\end{remark}

\begin{remark}
The common point, of intersection of the previous groups of periodicity
properties is the one that the eventual periodicity of a signal exists if all
the points of the orbit are eventually periodic, with the same period and the
same limit of periodicity.
\end{remark}

\begin{remark}
The statements (\ref{pre514}), (\ref{pre538}) and (\ref{pre554}),
(\ref{pre556}); (\ref{pre782}), (\ref{pre784}) and (\ref{pre788}),
(\ref{pre792}); (\ref{pre796}), (\ref{pre798}) and (\ref{pre801}),
(\ref{pre805}); (\ref{pre515}) and (\ref{pre558}) are of periodicity of
$\widehat{\sigma}^{k^{\prime\prime}}(\widehat{x}),\sigma^{t^{\prime\prime}%
}(x).$ The eventual periodicity of $\widehat{x},x$ results from the fact that
$\widehat{\sigma}^{k^{\prime\prime}}(\widehat{x}),\sigma^{t^{\prime\prime}%
}(x)$ ignore the first values of $\widehat{x},x.$
\end{remark}

\begin{remark}
\label{Rem24}We ask that, in order that the eventual periodicity be equivalent
with the eventual constancy, it should take place with any period
$p\geq1,T>0.$
\end{remark}

\begin{remark}
\label{Rem25}In (\ref{per48}),..., (\ref{per80}) and (\ref{per49}),...,
(\ref{per79}) the existence of a unique $\mu$ is asked and we have
$\mu=\underset{k\rightarrow\infty}{\lim}\widehat{x}(k),\mu=\underset
{t\rightarrow\infty}{\lim}x(t).$
\end{remark}

\begin{remark}
The eventually constant signals $\widehat{x},x$ fulfill $\widehat{\omega
}(\widehat{x})=\{\mu\},\omega(x)=\{\mu\}$ like the constant signals, but
$\widehat{Or}(\widehat{x}),Or(x)$ contain also other points than $\mu,$ in
general. The points of $\widehat{Or}(\widehat{x})\setminus\widehat{\omega
}(\widehat{x}),$ $Or(x)\setminus\omega(x)$ are some 'first values' of these signals.
\end{remark}

\chapter{\label{Cha2}Constant signals}

The Chapter presents properties that are equivalent with the constancy of the
signals and that are related, most of them, with periodicity. The key aspect
is that periodicity must hold with any period in order to be equivalent with constancy.

We have gathered these properties in four groups, in order to analyze them
better and make them be better understood. Section 1 presents the first group
of constancy properties, gathering these properties that are not related with
periodicity. Sections 2, respectively 3 present the groups of constancy
properties of the signals involving periodicity and eventual periodicity
properties of all the points of the orbit, respectively of some point of the
orbit. The fourth group of constancy properties, involving the periodicity and
the eventual periodicity of the signals, is introduced in Section 4. Section 5
relates the constancy of the discrete time and the real time signals. The last
Section contains the interpretation of the constancy properties.

\section{The first group of constancy properties}

\begin{remark}
The first group of constancy properties of the signals contains these
properties that are not related with periodicity. These properties are
inspired one by one by the properties of eventual constancy from Theorem
\ref{The15}, page \pageref{The15}.
\end{remark}

\begin{theorem}
\label{The13}We consider the signals $\widehat{x}\in\widehat{S}^{(n)},x\in
S^{(n)}.$

a) The following requirements stating the constancy of $\widehat{x}$ are
equivalent%
\begin{equation}
\exists\mu\in\mathbf{B}^{n},\forall k\in\mathbf{N}_{\_},\widehat{x}(k)=\mu,
\label{per46}%
\end{equation}%
\begin{equation}
\exists\mu\in\mathbf{B}^{n},\widehat{\mathbf{T}}_{\mu}^{\widehat{x}%
}=\mathbf{N}_{\_}, \label{per129}%
\end{equation}%
\begin{equation}
\exists\mu\in\mathbf{B}^{n},\widehat{Or}(\widehat{x})=\{\mu\}. \label{per77}%
\end{equation}

b) The following requirements stating the constancy of $x$ are also equivalent%
\begin{equation}
\exists\mu\in\mathbf{B}^{n},\forall t\in\mathbf{R},x(t)=\mu, \label{per47}%
\end{equation}%
\begin{equation}
\exists\mu\in\mathbf{B}^{n},\mathbf{T}_{\mu}^{x}=\mathbf{R}, \label{per126}%
\end{equation}%
\begin{equation}
\exists\mu\in\mathbf{B}^{n},Or(x)=\{\mu\}. \label{per78}%
\end{equation}

\end{theorem}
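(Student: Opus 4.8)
The plan is to prove both parts by a short cycle of implications that merely unfold the definitions of the support set (Definition \ref{Not5}) and of the orbit (Definition \ref{Def9}); no periodicity machinery is needed. I would first treat a) and then observe that b) is literally the same argument after replacing $\widehat{x}$ by $x$, $\mathbf{N}_{\_}$ by $\mathbf{R}$, the index $k$ by $t$, $\widehat{\mathbf{T}}_{\mu}^{\widehat{x}}$ by $\mathbf{T}_{\mu}^{x}$ and $\widehat{Or}(\widehat{x})$ by $Or(x)$. In every implication the witness $\mu$ that occurs is the same, namely the common value of the signal, so nothing has to be tracked carefully there.

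For a): to get (\ref{per46})$\Longrightarrow$(\ref{per129}), note that if $\widehat{x}(k)=\mu$ for all $k\in\mathbf{N}_{\_}$ then every $k$ lies in $\widehat{\mathbf{T}}_{\mu}^{\widehat{x}}=\{k\,|\,k\in\mathbf{N}_{\_},\widehat{x}(k)=\mu\}$, so $\mathbf{N}_{\_}\subset\widehat{\mathbf{T}}_{\mu}^{\widehat{x}}$, while the opposite inclusion holds by definition; hence $\widehat{\mathbf{T}}_{\mu}^{\widehat{x}}=\mathbf{N}_{\_}$. For (\ref{per129})$\Longrightarrow$(\ref{per77}): from $\widehat{\mathbf{T}}_{\mu}^{\widehat{x}}=\mathbf{N}_{\_}$ one reads off $\widehat{x}(k)=\mu$ for every $k$, so $\widehat{Or}(\widehat{x})=\{\widehat{x}(k)\,|\,k\in\mathbf{N}_{\_}\}=\{\mu\}$, using that $\mathbf{N}_{\_}\neq\varnothing$ so the orbit actually attains $\mu$. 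Finally (\ref{per77})$\Longrightarrow$(\ref{per46}) is immediate: if $\widehat{Or}(\widehat{x})=\{\mu\}$ then for each $k$ the value $\widehat{x}(k)$ lies in $\{\mu\}$, i.e. $\widehat{x}(k)=\mu$. This closes the cycle, so the three statements are equivalent. Part b) runs through the identical three implications, with "every $t\in\mathbf{R}$" playing the role of "every $k\in\mathbf{N}_{\_}$"; no use is made of the right-continuity or the piecewise-constant form (\ref{per189}) of $x$.

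There is no genuine obstacle here — the result is a definitional unpacking. The one point worth a word of care is that when passing to the set-equality formulations ((\ref{per129}), (\ref{per77}) and their real-time counterparts (\ref{per126}), (\ref{per78})) one needs the underlying time domain to be nonempty, so that "all values are $\mu$" really forces $\widehat{Or}(\widehat{x})=\{\mu\}$ (rather than possibly $\varnothing$) and $\widehat{\mathbf{T}}_{\mu}^{\widehat{x}}=\mathbf{N}_{\_}$; since $\mathbf{N}_{\_}$ and $\mathbf{R}$ are nonempty this is automatic, and it is the only place where the ambient sets enter beyond pure set-theoretic bookkeeping.
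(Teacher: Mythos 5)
Your proposal is correct and follows exactly the same route as the paper's own proof: the cycle (\ref{per46})$\Longrightarrow$(\ref{per129})$\Longrightarrow$(\ref{per77})$\Longrightarrow$(\ref{per46}) by definitional unpacking of the support set and the orbit, with part b) obtained by the same substitutions. Your extra remark about the nonemptiness of the time domain is a harmless refinement the paper leaves implicit.
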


\begin{proof}
a) (\ref{per46})$\Longrightarrow$(\ref{per129}) If $\mu\in\mathbf{B}^{n}$
exists such that $\forall k\in\mathbf{N}_{\_},\widehat{x}(k)=\mu,$ then
$\{k|k\in\mathbf{N}_{\_},\widehat{x}(k)=\mu\}=\mathbf{N}_{\_}.$

(\ref{per129})$\Longrightarrow$(\ref{per77}) If $\mu\in\mathbf{B}^{n}$ exists
such that $\{k|k\in\mathbf{N}_{\_},\widehat{x}(k)=\mu\}=\mathbf{N}_{\_},$ then
$\{\widehat{x}(k)|k\in\mathbf{N}_{\_}\}=\{\mu\}.$

(\ref{per77})$\Longrightarrow$(\ref{per46}) The existence of $\mu\in
\mathbf{B}^{n}$ such that $\{\widehat{x}(k)|k\in\mathbf{N}_{\_}\}=\{\mu\}$
implies $\forall k\in\mathbf{N}_{\_},\widehat{x}(k)=\mu.$

b) (\ref{per47})$\Longrightarrow$(\ref{per126}) If $\mu\in\mathbf{B}^{n}$
exists such that $\forall t\in\mathbf{R},x(t)=\mu,$ then $\{t|t\in
\mathbf{R},x(t)=\mu\}=\mathbf{R}.$

(\ref{per126})$\Longrightarrow$(\ref{per78}) If $\mu\in\mathbf{B}^{n}$ exists
such that $\{t|t\in\mathbf{R},x(t)=\mu\}=\mathbf{R}$, then $\{x(t)|t\in
\mathbf{R}\}=\{\mu\}.$

(\ref{per78})$\Longrightarrow$(\ref{per47}) The existence of $\mu$ with
$\{x(t)|t\in\mathbf{R}\}=\{\mu\}$ implies that $\forall t\in\mathbf{R}%
,x(t)=\mu$ is true.
\end{proof}

\section{The second group of constancy properties}

\begin{remark}
This group of constancy properties of the signals involves periodicity and
eventual periodicity properties of all the points $\mu$ of the orbit, i.e. in
(\ref{per185}),...,(\ref{pre537}), (\ref{per186}),...,(\ref{pre552}) to follow
we ask $\forall\mu\in\widehat{Or}(\widehat{x}),\forall\mu\in Or(x).$
\end{remark}

\begin{remark}
In order to understand better the way that these properties were written, to
be noticed the existence of the couples and triples:

- (\ref{per185})-(\ref{per186}), (\ref{pre506})-(\ref{pre548}%
),...,(\ref{pre537})-(\ref{pre552});

- (\ref{per185})-(\ref{pre506})-(\ref{pre507}), (\ref{pre522})-(\ref{pre523}%
)-(\ref{pre537}) and (\ref{per186})-(\ref{pre548})-(\ref{pre549}),
(\ref{pre550})-(\ref{pre551})-(\ref{pre552});

- (\ref{per185})-(\ref{pre522}),(\ref{pre506})-(\ref{pre523}),(\ref{pre507}%
)-(\ref{pre537}) and (\ref{per186})-(\ref{pre550}), (\ref{pre548}%
)-(\ref{pre551}), (\ref{pre549})-(\ref{pre552}).
\end{remark}

\begin{remark}
These properties are inspired by the equivalent properties of Theorem
\ref{The97}, page \pageref{The97}.\ Note that:

- (\ref{per185}) and (\ref{pre506}) (the last contains $\forall k^{\prime}%
\in\mathbf{N}_{\_}$) are inspired by (\ref{pre781})$_{page\;\pageref{pre781}}$
(containing $\exists k^{\prime}\in\mathbf{N}_{\_}$);

- (\ref{pre507}) (containing $\forall k^{\prime\prime}\in\mathbf{N}$) is
inspired by (\ref{pre514})$_{page\;\pageref{pre514}}$ (containing $\exists
k^{\prime\prime}\in\mathbf{N}$);

- (\ref{pre522}) and (\ref{pre523}) (the last contains $\forall k^{\prime}%
\in\mathbf{N}_{\_}$) are inspired by (\ref{pre524})$_{page\;\pageref{pre524}}$
(containing $\exists k^{\prime}\in\mathbf{N}_{\_}$);

- (\ref{pre537}) (containing $\forall k^{\prime\prime}\in\mathbf{N}$) is
inspired by (\ref{pre538})$_{page\;\pageref{pre538}}$ (containing $\exists
k^{\prime\prime}\in\mathbf{N}$);

- (\ref{per186}) and (\ref{pre548}) (the last contains $\forall t_{1}^{\prime
}\geq t^{\prime}$) are inspired by (\ref{pre553})$_{page\;\pageref{pre553}}$
and (\ref{pre597})$_{page\;\pageref{pre597}}$ (containing $\exists
t_{1}^{\prime}\geq t^{\prime}$ and $\exists t_{1}^{\prime}\in\mathbf{R}$);

- (\ref{pre549}) (containing $\forall t^{\prime\prime}\in\mathbf{R}$) is
inspired by (\ref{pre554})$_{page\;\pageref{pre554}}$ and (\ref{pre598}%
)$_{page\;\pageref{pre598}}$ (containing $\exists t^{\prime\prime}%
\in\mathbf{R}$);

- (\ref{pre550}) and (\ref{pre551}) (the last contains $\forall t_{1}^{\prime
}\geq t^{\prime}$) are inspired by (\ref{pre555})$_{page\;\pageref{pre555}}$
and (\ref{pre599})$_{page\;\pageref{pre599}}$ (containing $\exists
t_{1}^{\prime}\geq t^{\prime}$ and $\exists t_{1}^{\prime}\in\mathbf{R}$);

- (\ref{pre552}) (containing $\forall t^{\prime\prime}\in\mathbf{R}$) is
inspired by (\ref{pre556})$_{page\;\pageref{pre556}}$ and (\ref{pre600}%
)$_{page\;\pageref{pre600}}$ (containing $\exists t^{\prime\prime}%
\in\mathbf{R}$).
\end{remark}

\begin{theorem}
\label{The95}a) Any of the following properties is equivalent with the
constancy of $\widehat{x}\in\widehat{S}^{(n)}$:%
\begin{equation}
\forall p\geq1,\forall\mu\in\widehat{Or}(\widehat{x}),\forall k\in
\widehat{\mathbf{T}}_{\mu}^{\widehat{x}},\{k+zp|z\in\mathbf{Z}\}\cap
\mathbf{N}_{\_}\subset\widehat{\mathbf{T}}_{\mu}^{\widehat{x}}, \label{per185}%
\end{equation}%
\begin{equation}
\left\{
\begin{array}
[c]{c}%
\forall p\geq1,\forall\mu\in\widehat{Or}(\widehat{x}),\forall k^{\prime}%
\in\mathbf{N}_{\_},\forall k\in\widehat{\mathbf{T}}_{\mu}^{\widehat{x}}%
\cap\{k^{\prime},k^{\prime}+1,k^{\prime}+2,...\},\\
\{k+zp|z\in\mathbf{Z}\}\cap\{k^{\prime},k^{\prime}+1,k^{\prime}+2,...\}\subset
\widehat{\mathbf{T}}_{\mu}^{\widehat{x}},
\end{array}
\right.  \label{pre506}%
\end{equation}%
\begin{equation}
\left\{
\begin{array}
[c]{c}%
\forall p\geq1,\forall\mu\in\widehat{Or}(\widehat{x}),\forall k^{\prime\prime
}\in\mathbf{N},\forall k\in\widehat{\mathbf{T}}_{\mu}^{\widehat{\sigma
}^{k^{\prime\prime}}(\widehat{x})},\\
\{k+zp|z\in\mathbf{Z}\}\cap\mathbf{N}_{\_}\subset\widehat{\mathbf{T}}_{\mu
}^{\widehat{\sigma}^{k^{\prime\prime}}(\widehat{x})},
\end{array}
\right.  \label{pre507}%
\end{equation}%
\begin{equation}
\left\{
\begin{array}
[c]{c}%
\forall p\geq1,\forall\mu\in\widehat{Or}(\widehat{x}),\forall k\in
\mathbf{N}_{\_},\widehat{x}(k)=\mu\Longrightarrow\\
\Longrightarrow(\widehat{x}(k)=\widehat{x}(k+p)\text{ and }k-p\geq
-1\Longrightarrow\widehat{x}(k)=\widehat{x}(k-p)),
\end{array}
\right.  \label{pre522}%
\end{equation}%
\begin{equation}
\left\{
\begin{array}
[c]{c}%
\forall p\geq1,\forall\mu\in\widehat{Or}(\widehat{x}),\forall k^{\prime}%
\in\mathbf{N}_{\_},\forall k\geq k^{\prime},\widehat{x}(k)=\mu\Longrightarrow
\\
\Longrightarrow(\widehat{x}(k)=\widehat{x}(k+p)\text{ and }k-p\geq k^{\prime
}\Longrightarrow\widehat{x}(k)=\widehat{x}(k-p)),
\end{array}
\right.  \label{pre523}%
\end{equation}%
\begin{equation}
\left\{
\begin{array}
[c]{c}%
\forall p\geq1,\forall\mu\in\widehat{Or}(\widehat{x}),\forall k^{\prime\prime
}\in\mathbf{N},\forall k\in\mathbf{N}_{\_},\widehat{\sigma}^{k^{\prime\prime}%
}(\widehat{x})(k)=\mu\Longrightarrow\\
\Longrightarrow(\widehat{\sigma}^{k^{\prime\prime}}(\widehat{x})(k)=\widehat
{\sigma}^{k^{\prime\prime}}(\widehat{x})(k+p)\text{ and }\\
\text{and }k-p\geq-1\Longrightarrow\widehat{\sigma}^{k^{\prime\prime}%
}(\widehat{x})(k)=\widehat{\sigma}^{k^{\prime\prime}}(\widehat{x})(k-p)).
\end{array}
\right.  \label{pre537}%
\end{equation}

b) Any of the following properties is equivalent with the constancy of $x\in
S^{(n)}$:%
\begin{equation}
\left\{
\begin{array}
[c]{c}%
\forall T>0,\forall\mu\in Or(x),\exists t^{\prime}\in I^{x},\\
\forall t\in\mathbf{T}_{\mu}^{x}\cap\lbrack t^{\prime},\infty),\{t+zT|z\in
\mathbf{Z}\}\cap\lbrack t^{\prime},\infty)\subset\mathbf{T}_{\mu}^{x},
\end{array}
\right.  \label{per186}%
\end{equation}%
\begin{equation}
\left\{
\begin{array}
[c]{c}%
\forall T>0,\forall\mu\in Or(x),\exists t^{\prime}\in I^{x},\\
\forall t_{1}^{\prime}\geq t^{\prime},\forall t\in\mathbf{T}_{\mu}^{x}%
\cap\lbrack t_{1}^{\prime},\infty),\{t+zT|z\in\mathbf{Z}\}\cap\lbrack
t_{1}^{\prime},\infty)\subset\mathbf{T}_{\mu}^{x},
\end{array}
\right.  \label{pre548}%
\end{equation}%
\begin{equation}
\left\{
\begin{array}
[c]{c}%
\forall T>0,\forall\mu\in Or(x),\forall t^{\prime\prime}\in\mathbf{R},\exists
t^{\prime}\in I^{\sigma^{t^{\prime\prime}}(x)},\\
\forall t\in\mathbf{T}_{\mu}^{\sigma^{t^{\prime\prime}}(x)}\cap\lbrack
t^{\prime},\infty),\{t+zT|z\in\mathbf{Z}\}\cap\lbrack t^{\prime}%
,\infty)\subset\mathbf{T}_{\mu}^{\sigma^{t^{\prime\prime}}(x)},
\end{array}
\right.  \label{pre549}%
\end{equation}%
\begin{equation}
\left\{
\begin{array}
[c]{c}%
\forall T>0,\forall\mu\in Or(x),\exists t^{\prime}\in I^{x},\forall t\geq
t^{\prime},\\
x(t)=\mu\Longrightarrow(x(t)=x(t+T)\text{ and }t-T\geq t^{\prime
}\Longrightarrow x(t)=x(t-T)),
\end{array}
\right.  \label{pre550}%
\end{equation}%
\begin{equation}
\left\{
\begin{array}
[c]{c}%
\forall T>0,\forall\mu\in Or(x),\exists t^{\prime}\in I^{x},\forall
t_{1}^{\prime}\geq t^{\prime},\forall t\geq t_{1}^{\prime},x(t)=\mu
\Longrightarrow\\
\Longrightarrow(x(t)=x(t+T)\text{ and }t-T\geq t_{1}^{\prime}\Longrightarrow
x(t)=x(t-T)),
\end{array}
\right.  \label{pre551}%
\end{equation}%
\begin{equation}
\left\{
\begin{array}
[c]{c}%
\forall T>0,\forall\mu\in Or(x),\forall t^{\prime\prime}\in\mathbf{R},\exists
t^{\prime}\in I^{\sigma^{t^{\prime\prime}}(x)},\\
\forall t\geq t^{\prime},\sigma^{t^{\prime\prime}}(x)(t)=\mu\Longrightarrow
(\sigma^{t^{\prime\prime}}(x)(t)=\sigma^{t^{\prime\prime}}(x)(t+T)\text{
and}\\
\text{and }t-T\geq t^{\prime}\Longrightarrow\sigma^{t^{\prime\prime}%
}(x)(t)=\sigma^{t^{\prime\prime}}(x)(t-T)).
\end{array}
\right.  \label{pre552}%
\end{equation}

\end{theorem}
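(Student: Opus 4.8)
The plan is to prove both parts by running, in each, a cycle of implications that passes through the constancy characterizations of Theorem \ref{The13}, page \pageref{The13}, exploiting throughout the principle that demanding periodicity with \emph{every} period $p\geq 1$ (resp.\ $T>0$) collapses it to period $1$, i.e.\ to constancy. In both parts almost every link of the cycle is a quantifier manipulation together with a forgetful-shift identity; the only link with real content is the one that feeds ``periodic with all periods, starting from an initial time'' back into constancy.

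For a) I would run $(\ref{per77})\Rightarrow(\ref{per185})\Rightarrow(\ref{pre506})\Rightarrow(\ref{pre507})\Rightarrow(\ref{pre522})\Rightarrow(\ref{pre523})\Rightarrow(\ref{pre537})\Rightarrow(\ref{per46})$. The arrow $(\ref{per77})\Rightarrow(\ref{per185})$ is immediate, since $\widehat{Or}(\widehat{x})=\{\mu\}$ forces $\widehat{\mathbf{T}}_\mu^{\widehat{x}}=\mathbf{N}_\_$; $(\ref{per185})\Rightarrow(\ref{pre506})$ merely replaces the progression $\{k+zp\}\cap\mathbf{N}_\_$ by the smaller $\{k+zp\}\cap\{k',k'+1,\dots\}$; $(\ref{pre507})\Rightarrow(\ref{pre522})$ (restrict to $k''=0$) and $(\ref{pre522})\Rightarrow(\ref{pre523})$ (use $k'\geq -1$) are routine unpackings of ``$k+zp\in\widehat{\mathbf{T}}_\mu^{\widehat{x}}$'' into ``$\widehat{x}(k+zp)=\mu$''. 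The two arrows introducing the forgetful function, $(\ref{pre506})\Rightarrow(\ref{pre507})$ and $(\ref{pre523})\Rightarrow(\ref{pre537})$, use the identity $\widehat{\mathbf{T}}_\mu^{\widehat{\sigma}^{k''}(\widehat{x})}=\{k\in\mathbf{N}_\_\mid k+k''\in\widehat{\mathbf{T}}_\mu^{\widehat{x}}\}$ with $k'=k''-1$, exactly as in the proof of Theorem \ref{The97}, page \pageref{The97}. The cycle closes with $(\ref{pre537})\Rightarrow(\ref{per46})$: taking $p=1$, $k''=0$ and $\mu=\widehat{x}(-1)\in\widehat{Or}(\widehat{x})$ one gets $\widehat{x}(-1)=\widehat{x}(0)=\widehat{x}(1)=\dots=\mu$ by induction, so $\widehat{x}$ is constant.

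For b) I would run
\[
(\ref{per78})\Rightarrow(\ref{per186})\Rightarrow(\ref{pre548})\Rightarrow(\ref{pre549})\Rightarrow(\ref{pre550})\Rightarrow(\ref{pre551})\Rightarrow(\ref{pre552})\Rightarrow(\ref{per186})\Rightarrow(\ref{per47}).
\]
Here $(\ref{per78})\Rightarrow(\ref{per186})$ is trivial; $(\ref{per186})\Rightarrow(\ref{pre548})$ and $(\ref{pre550})\Rightarrow(\ref{pre551})$ upgrade ``periodicity from $t'$'' to ``periodicity from any $t_1'\geq t'$'', since $\{t+zT\}\cap[t_1',\infty)\subset\{t+zT\}\cap[t',\infty)$; the ``easy'' forgetful arrows $(\ref{pre549})\Rightarrow(\ref{pre550})$ (which also passes from set membership to function form by induction on $z$) and $(\ref{pre552})\Rightarrow(\ref{per186})$ pick $t''\in I^x$, so that $\sigma^{t''}(x)=x$ and $I^{\sigma^{t''}(x)}=I^x$ by Theorem \ref{Pro2}, page \pageref{Pro2}, after which the statement for $\sigma^{t''}(x)$ is literally the statement for $x$; finally $(\ref{per186})\Rightarrow(\ref{per47})$ is the content step proved below. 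The genuine labour in b) sits in the two forgetful-introducing arrows $(\ref{pre548})\Rightarrow(\ref{pre549})$ and $(\ref{pre551})\Rightarrow(\ref{pre552})$: for an arbitrary $t''\in\mathbf{R}$ one picks $t'\in(-\infty,t'')\subset I^{\sigma^{t''}(x)}$ small enough that $\sigma^{t''}(x)(t)=x(t)$ for all $t\geq t'$ and transfers the periodicity of $x$ from $t_1'$ to that of $\sigma^{t''}(x)$ from $t'$ by Lemma \ref{Lem30}, page \pageref{Lem30}, just as in the steps $(\ref{pre597})\Rightarrow(\ref{pre554})$ and $(\ref{pre599})\Rightarrow(\ref{pre556})$ of Theorem \ref{The97}.

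The one implication with actual content is $(\ref{per186})\Rightarrow(\ref{per47})$. Assume $x$ is not constant; by Theorem \ref{The143}, page \pageref{The143}, $I^x=(-\infty,t_0)$ with $x(t)=x(-\infty+0)$ for $t<t_0$ and $x(t_0)\neq x(-\infty+0)$. First, for any $s<s'$ in $[t_0,\infty)$ apply $(\ref{per186})$ to $\mu=x(s)$ and $T=s'-s$: the resulting $t'\in I^x$ satisfies $t'<t_0\leq s$, so $s\in\mathbf{T}_\mu^x\cap[t',\infty)$ and $s'=s+T\in\{s+zT\mid z\in\mathbf{Z}\}\cap[t',\infty)\subset\mathbf{T}_\mu^x$, i.e.\ $x(s')=x(s)$; hence $x\equiv\mu_*$ on $[t_0,\infty)$ with $\mu_*=x(t_0)$. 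Second, apply $(\ref{per186})$ to $\mu=x(-\infty+0)$ and $T=1$: the resulting $t'\in I^x$ is an initial time, so $(-\infty,t']\subset\mathbf{T}_{x(-\infty+0)}^x$, hence $\{t'+z\mid z\in\mathbf{Z}\}\cap[t',\infty)\subset\mathbf{T}_{x(-\infty+0)}^x$; choosing $z$ with $t'+z\geq t_0$ gives $\mu_*=x(t'+z)=x(-\infty+0)$, so $x(t_0)=\mu_*=x(-\infty+0)$, a contradiction. Thus $x$ is constant. I expect the main obstacle to be not this closing step but the bookkeeping in b) — keeping the six continuous-time properties aligned across the identities $I^{\sigma^{t''}(x)}=I^x$ and $\sigma^{t''}(x)|_{[t',\infty)}=x|_{[t',\infty)}$, and carrying out the Lemma \ref{Lem30} transfers for arbitrary $t''$, which are routine but lengthy.
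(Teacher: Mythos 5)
Your proposal is correct and follows essentially the same cyclic chain as the paper: part a) is identical, and in part b) your only deviation is closing the cycle via $(\ref{pre552})\Rightarrow(\ref{per186})\Rightarrow(\ref{per47})$ rather than the paper's direct $(\ref{pre552})\Rightarrow(\ref{per78})$, both resting on the same trick of choosing the period equal to a gap between two time instants where the values could differ. One bookkeeping remark: the induction on $z$ that rebuilds the full $\{t+zT\,|\,z\in\mathbf{Z}\}$ set form from one-step shifts is needed in $(\ref{pre552})\Rightarrow(\ref{per186})$, not in $(\ref{pre549})\Rightarrow(\ref{pre550})$, which merely specializes to $z=\pm1$.
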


\begin{proof}
a) (\ref{per77})$\Longrightarrow$(\ref{per185}) We suppose that $\mu
\in\mathbf{B}^{n}$ exists with $\{\widehat{x}(k)|k\in\mathbf{N}_{\_}\}=\{\mu\}
$ and let $p\geq1,$ $k\in\widehat{\mathbf{T}}_{\mu}^{\widehat{x}},$
$z\in\mathbf{Z}$ arbitrary such that $k+zp\geq-1.$ Obviously $\widehat
{x}(k+zp)=\mu,$ thus $k+zp\in\widehat{\mathbf{T}}_{\mu}^{\widehat{x}}.$

(\ref{per185})$\Longrightarrow$(\ref{pre506}) We take $p\geq1,$ $\mu
\in\widehat{Or}(\widehat{x}),$ $k^{\prime}\in\mathbf{N}_{\_}$ arbitrary. If
$\widehat{\mathbf{T}}_{\mu}^{\widehat{x}}\cap\{k^{\prime},k^{\prime
}+1,k^{\prime}+2,...\}=\varnothing,$ then the statement%
\[
\forall k\in\widehat{\mathbf{T}}_{\mu}^{\widehat{x}}\cap\{k^{\prime}%
,k^{\prime}+1,k^{\prime}+2,...\},\{k+zp|z\in\mathbf{Z}\}\cap\{k^{\prime
},k^{\prime}+1,k^{\prime}+2,...\}\subset\widehat{\mathbf{T}}_{\mu}%
^{\widehat{x}}%
\]
holds trivially, thus we can suppose from now that $\widehat{\mathbf{T}}_{\mu
}^{\widehat{x}}\cap\{k^{\prime},k^{\prime}+1,k^{\prime}+2,...\}\neq
\varnothing$ and let $k\in\widehat{\mathbf{T}}_{\mu}^{\widehat{x}},$
$z\in\mathbf{Z}$ arbitrary, fixed, such that $k\geq k^{\prime}$ and $k+zp\geq
k^{\prime}.$ As $k+zp\geq-1,$ we have from (\ref{per185}) that $k+zp\in
\widehat{\mathbf{T}}_{\mu}^{\widehat{x}}.$

(\ref{pre506})$\Longrightarrow$(\ref{pre507}) Let $p\geq1,$ $\mu\in
\widehat{Or}(\widehat{x}),$ $k^{\prime\prime}\in\mathbf{N}$ arbitrary. If
$\widehat{\mathbf{T}}_{\mu}^{\widehat{\sigma}^{k^{\prime\prime}}(\widehat{x}%
)}=\varnothing,$ the statement%
\[
\forall k\in\widehat{\mathbf{T}}_{\mu}^{\widehat{\sigma}^{k^{\prime\prime}%
}(\widehat{x})},\{k+zp|z\in\mathbf{Z}\}\cap\mathbf{N}_{\_}\subset
\widehat{\mathbf{T}}_{\mu}^{\widehat{\sigma}^{k^{\prime\prime}}(\widehat{x})}%
\]
is trivially fulfilled, so that we can suppose from now that $\widehat
{\mathbf{T}}_{\mu}^{\widehat{\sigma}^{k^{\prime\prime}}(\widehat{x})}%
\neq\varnothing.$ Let $k\in\widehat{\mathbf{T}}_{\mu}^{\widehat{\sigma
}^{k^{\prime\prime}}(\widehat{x})},$ $z\in\mathbf{Z}$ arbitrary such that
$k+zp\geq-1.$ We have $\widehat{x}(k+k^{\prime\prime})=\mu$ or, if we denote
$k^{\prime}=k^{\prime\prime}-1,$ then $\widehat{x}(k+k^{\prime}+1)=\mu,$ where
$k^{\prime}\in\mathbf{N}_{\_}.$ Of course that $k+k^{\prime}+1\geq k^{\prime
},$ thus $k+k^{\prime}+1\in\widehat{\mathbf{T}}_{\mu}^{\widehat{x}}%
\cap\{k^{\prime},k^{\prime}+1,k^{\prime}+2,...\}$ and, on the other hand,
$k+k^{\prime}+1+zp\geq k^{\prime}+1-1,$ thus we can apply (\ref{pre506}),
wherefrom $\widehat{x}(k+k^{\prime}+1+zp)=\mu.$ It has resulted that
$\widehat{\sigma}^{k^{\prime\prime}}(\widehat{x})(k+zp)=\widehat
{x}(k+k^{\prime\prime}+zp)=\mu,$ in other words $k+zp\in\widehat{\mathbf{T}%
}_{\mu}^{\widehat{\sigma}^{k^{\prime\prime}}(\widehat{x})}.$

(\ref{pre507})$\Longrightarrow$(\ref{pre522}) Let $p\geq1,\mu\in\widehat
{Or}(\widehat{x})$ and $k\in\mathbf{N}_{\_}$ such that $\widehat{x}(k)=\mu.$
(\ref{pre507}) written for $k^{\prime\prime}=0$ gives $\{k+zp|z\in
\mathbf{Z}\}\cap\mathbf{N}_{\_}\subset\widehat{\mathbf{T}}_{\mu}^{\widehat{x}%
},$ thus%
\[
k+p\in\{k+zp|z\in\mathbf{Z}\}\cap\mathbf{N}_{\_}\overset{(\ref{pre507}%
)}{\subset}\widehat{\mathbf{T}}_{\mu}^{\widehat{x}},
\]
wherefrom $\widehat{x}(k+p)=\mu=\widehat{x}(k).$

If in addition $k-p\geq-1,$ then%
\[
k-p\in\{k+zp|z\in\mathbf{Z}\}\cap\mathbf{N}_{\_}\overset{(\ref{pre507}%
)}{\subset}\widehat{\mathbf{T}}_{\mu}^{\widehat{x}},
\]
wherefrom $\widehat{x}(k-p)=\mu=\widehat{x}(k).$

(\ref{pre522})$\Longrightarrow$(\ref{pre523}) We take $p\geq1,$ $\mu
\in\widehat{Or}(\widehat{x}),$ $k^{\prime}\in\mathbf{N}_{\_}$ arbitrary. If
$\forall k\geq k^{\prime},\widehat{x}(k)\neq\mu$, then the statement%
\[
\forall k\geq k^{\prime},\widehat{x}(k)=\mu\Longrightarrow(\widehat
{x}(k)=\widehat{x}(k+p)\text{ and }k-p\geq k^{\prime}\Longrightarrow
\widehat{x}(k)=\widehat{x}(k-p))
\]
is trivially fulfilled, thus we can take $k\geq k^{\prime}$ arbitrarily with
$\widehat{x}(k)=\mu.$ From (\ref{pre522}) we have that $\widehat
{x}(k)=\widehat{x}(k+p).$ In the case that in addition $k-p\geq k^{\prime},$
as $k-p\geq-1,$ we can apply (\ref{pre522}) again in order to infer that
$\widehat{x}(k)=\widehat{x}(k-p).$

(\ref{pre523})$\Longrightarrow$(\ref{pre537}) Let $p\geq1,$ $\mu\in
\widehat{Or}(\widehat{x}),$ $k^{\prime\prime}\in\mathbf{N}$ arbitrary. If
$\forall k\in\mathbf{N}_{\_},\widehat{\sigma}^{k^{\prime\prime}}(\widehat
{x})(k)\neq\mu$ then the statement%
\[
\left\{
\begin{array}
[c]{c}%
\forall k\in\mathbf{N}_{\_},\widehat{\sigma}^{k^{\prime\prime}}(\widehat
{x})(k)=\mu\Longrightarrow(\widehat{\sigma}^{k^{\prime\prime}}(\widehat
{x})(k)=\widehat{\sigma}^{k^{\prime\prime}}(\widehat{x})(k+p)\text{ and }\\
\text{and }k-p\geq-1\Longrightarrow\widehat{\sigma}^{k^{\prime\prime}%
}(\widehat{x})(k)=\widehat{\sigma}^{k^{\prime\prime}}(\widehat{x})(k-p))
\end{array}
\right.
\]
is trivially true, thus we take $k\in\mathbf{N}_{\_}$ arbitrary such that
$\widehat{\sigma}^{k^{\prime\prime}}(\widehat{x})(k)=\widehat{x}%
(k+k^{\prime\prime})=\mu.$ We denote $k^{\prime}=k^{\prime\prime}-1$ and we
see that $\widehat{x}(k+k^{\prime}+1)=\mu,$ where $k+k^{\prime}+1\geq
k^{\prime}.$ We can apply (\ref{pre523}) and we infer that%
\[
\widehat{\sigma}^{k^{\prime\prime}}(\widehat{x})(k)=\widehat{x}(k+k^{\prime
\prime})=\widehat{x}(k+k^{\prime}+1)\overset{(\ref{pre523})}{=}\widehat
{x}(k+k^{\prime}+1+p)=
\]%
\[
=\widehat{x}(k+k^{\prime\prime}+p)=\widehat{\sigma}^{k^{\prime\prime}%
}(\widehat{x})(k+p).
\]
We suppose now that in addition $k-p\geq-1,$ thus $k+k^{\prime}+1-p\geq
k^{\prime}$ and we can apply again (\ref{pre523}) in order to obtain%
\[
\widehat{\sigma}^{k^{\prime\prime}}(\widehat{x})(k)=\widehat{x}(k+k^{\prime
\prime})=\widehat{x}(k+k^{\prime}+1)\overset{(\ref{pre523})}{=}\widehat
{x}(k+k^{\prime}+1-p)=
\]%
\[
=\widehat{x}(k+k^{\prime\prime}-p)=\widehat{\sigma}^{k^{\prime\prime}%
}(\widehat{x})(k-p).
\]

(\ref{pre537})$\Longrightarrow$(\ref{per46}) The statement (\ref{pre537})
written for $p=1$ and $k^{\prime\prime}=0$ becomes:%
\begin{equation}
\left\{
\begin{array}
[c]{c}%
\forall\mu\in\widehat{Or}(\widehat{x}),\forall k\in\mathbf{N}_{\_},\widehat
{x}(k)=\mu\Longrightarrow\\
\Longrightarrow(\widehat{x}(k)=\widehat{x}(k+1)\text{ and }k\geq
0\Longrightarrow\widehat{x}(k)=\widehat{x}(k-1)).
\end{array}
\right.  \label{pre748}%
\end{equation}
Let $\mu\in\widehat{Or}(\widehat{x})$ arbitrary, thus $k_{1}\in\mathbf{N}%
_{\_}$ exists with $\widehat{x}(k_{1})=\mu.$ From (\ref{pre748}) we infer:%
\[
\widehat{x}(k_{1})=\widehat{x}(k_{1}-1)=\widehat{x}(k_{1}-2)=...=\widehat
{x}(-1),
\]%
\[
\widehat{x}(k_{1})=\widehat{x}(k_{1}+1)=\widehat{x}(k_{1}+2)=...
\]
We have obtained that (\ref{per46}) holds.

b) (\ref{per78})$\Longrightarrow$(\ref{per186}) The hypothesis states the
existence of $\mu\in\mathbf{B}^{n}$ such that $\{x(t)|t\in\mathbf{R}%
\}=\{\mu\}.$ Let us take $T>0$ and $t^{\prime}\in I^{x}$ arbitrarily. Let
furthermore $t\in\mathbf{T}_{\mu}^{x}\cap\lbrack t^{\prime},\infty
)$(=$[t^{\prime},\infty)$) and $z\in\mathbf{Z}$ having the property that
$t+zT\geq t^{\prime}.$ We have $x(t+zT)=\mu,$ thus $t+zT\in\mathbf{T}_{\mu
}^{x}.$ These imply the truth of (\ref{per186}).

(\ref{per186})$\Longrightarrow$(\ref{pre548}) Let $T>0$ and $\mu\in Or(x)$
arbitrary. (\ref{per186}) shows the existence of $t^{\prime}\in I^{x}$ with
the property%
\begin{equation}
\forall t\in\mathbf{T}_{\mu}^{x}\cap\lbrack t^{\prime},\infty),\{t+zT|z\in
\mathbf{Z}\}\cap\lbrack t^{\prime},\infty)\subset\mathbf{T}_{\mu}^{x}.
\label{pre657}%
\end{equation}
Let us take $t_{1}^{\prime}\geq t^{\prime}$ arbitrary. If $\mathbf{T}_{\mu
}^{x}\cap\lbrack t_{1}^{\prime},\infty)=\varnothing,$ then the statement%
\[
\forall t\in\mathbf{T}_{\mu}^{x}\cap\lbrack t_{1}^{\prime},\infty
),\{t+zT|z\in\mathbf{Z}\}\cap\lbrack t_{1}^{\prime},\infty)\subset
\mathbf{T}_{\mu}^{x}%
\]
is trivially true, so we suppose $\mathbf{T}_{\mu}^{x}\cap\lbrack
t_{1}^{\prime},\infty)\neq\varnothing$ and let $t\in\mathbf{T}_{\mu}^{x}%
\cap\lbrack t_{1}^{\prime},\infty),z\in\mathbf{Z}$ arbitrary such that
$t+zT\geq t_{1}^{\prime}.$ We have $t\in\mathbf{T}_{\mu}^{x}\cap\lbrack
t^{\prime},\infty),t+zT\geq t^{\prime}$ and we can apply (\ref{pre657}). We
infer $t+zT\in\mathbf{T}_{\mu}^{x}.$

(\ref{pre548})$\Longrightarrow$(\ref{pre549}) Let $T>0,\mu\in Or(x)$
arbitrary. (\ref{pre548}) shows the existence of $t^{\prime\prime\prime}\in
I^{x}$ with%
\begin{equation}
\forall t\in\mathbf{T}_{\mu}^{x}\cap\lbrack t^{\prime\prime\prime}%
,\infty),\{t+zT|z\in\mathbf{Z}\}\cap\lbrack t^{\prime\prime\prime}%
,\infty)\subset\mathbf{T}_{\mu}^{x} \label{pre658}%
\end{equation}
true. When writing (\ref{pre658}) we have taken in (\ref{pre548})
$t_{1}^{\prime}=t^{\prime\prime\prime}(=t^{\prime}).$

Let $t^{\prime\prime}\in\mathbf{R}$ arbitrary. We have the following possibilities.

Case $t^{\prime\prime}\leq t^{\prime\prime\prime}$

Then, from $t^{\prime\prime\prime}\in I^{x}$, $\sigma^{t^{\prime\prime}}(x)=x$
and (\ref{pre658}) we have the truth of (\ref{pre549}) with $t^{\prime
}=t^{\prime\prime\prime}.$

Case $t^{\prime\prime}>t^{\prime\prime\prime}$

Some $\varepsilon>0$ exists with the property that $\forall t\in
(t^{\prime\prime}-\varepsilon,t^{\prime\prime}),x(t)=x(t^{\prime\prime}-0).$
We take $t^{\prime}\in(t^{\prime\prime}-\varepsilon,t^{\prime\prime}%
)\cap(t^{\prime\prime\prime},t^{\prime\prime})$ arbitrarily. The fact that%
\[
(-\infty,t^{\prime}]\subset\mathbf{T}_{x(t^{\prime\prime}-0)}^{\sigma
^{t^{\prime\prime}}(x)}%
\]
is obvious. If $\mathbf{T}_{\mu}^{\sigma^{t^{\prime\prime}}(x)}\cap\lbrack
t^{\prime},\infty)=\varnothing,$ then the property%
\begin{equation}
\forall t\in\mathbf{T}_{\mu}^{\sigma^{t^{\prime\prime}}(x)}\cap\lbrack
t^{\prime},\infty),\{t+zT|z\in\mathbf{Z}\}\cap\lbrack t^{\prime}%
,\infty)\subset\mathbf{T}_{\mu}^{\sigma^{t^{\prime\prime}}(x)}%
\end{equation}
is true, thus we can suppose that $\mathbf{T}_{\mu}^{\sigma^{t^{\prime\prime}%
}(x)}\cap\lbrack t^{\prime},\infty)\neq\varnothing$ and let $t\in
\mathbf{T}_{\mu}^{\sigma^{t^{\prime\prime}}(x)}\cap\lbrack t^{\prime},\infty)$
arbitrary, fixed. We notice that $\forall t\geq t^{\prime},\sigma
^{t^{\prime\prime}}(x)(t)=x(t),$ thus $\mathbf{T}_{\mu}^{\sigma^{t^{\prime
\prime}}(x)}\cap\lbrack t^{\prime},\infty)=\mathbf{T}_{\mu}^{x}\cap\lbrack
t^{\prime},\infty).$ We take $z\in\mathbf{Z}$ arbitrary with $t+zT\geq
t^{\prime}.$ Because in this situation $t\in\mathbf{T}_{\mu}^{x}\cap\lbrack
t^{\prime\prime\prime},\infty)$ and $t+zT\geq t^{\prime\prime\prime},$ we can
apply (\ref{pre658}) and we infer $t+zT\in\mathbf{T}_{\mu}^{x},$ i.e.
$x(t+zT)=\mu=\sigma^{t^{\prime\prime}}(x)(t+zT)$ and finally $t+zT\in
\mathbf{T}_{\mu}^{\sigma^{t^{\prime\prime}}(x)}.$

(\ref{pre549})$\Longrightarrow$(\ref{pre550}) Let $T>0,\mu\in Or(x)$
arbitrary, fixed. The existence of $x(-\infty+0)$ shows that in (\ref{pre549})
we can choose $t^{\prime\prime}\in\mathbf{R}$ sufficiently small such that
$\sigma^{t^{\prime\prime}}(x)=x$ (see Theorem \ref{Pro2} a), page
\pageref{Pro2}). For that choice of $t^{\prime\prime},$ (\ref{pre549}) shows
the existence of $t^{\prime}\in I^{\sigma^{t^{\prime\prime}}(x)}$ with%
\begin{equation}
\forall t\in\mathbf{T}_{\mu}^{x}\cap\lbrack t^{\prime},\infty),\{t+zT|z\in
\mathbf{Z}\}\cap\lbrack t^{\prime},\infty)\subset\mathbf{T}_{\mu}^{x}
\label{pre660}%
\end{equation}
true. If $\forall t\geq t^{\prime},x(t)\neq\mu,$ then the statement
\[
\forall t\geq t^{\prime},x(t)=\mu\Longrightarrow(x(t)=x(t+T)\text{ and
}t-T\geq t^{\prime}\Longrightarrow x(t)=x(t-T))
\]
is trivially true, so we can suppose that $\mathbf{T}_{\mu}^{x}\cap\lbrack
t^{\prime},\infty)\neq\varnothing.$ Let $t\geq t^{\prime}$ arbitrary such that
$x(t)=\mu,$ in other words $t\in\mathbf{T}_{\mu}^{x}\cap\lbrack t^{\prime
},\infty).$ As far as%
\[
t+T\in\{t+zT|z\in\mathbf{Z}\}\cap\lbrack t^{\prime},\infty)\overset
{(\ref{pre660})}{\subset}\mathbf{T}_{\mu}^{x},
\]
we conclude that $t+T\in\mathbf{T}_{\mu}^{x},$ i.e. $x(t+T)=\mu=x(t).$ If in
addition $t-T\geq t^{\prime},$ then%
\[
t-T\in\{t+zT|z\in\mathbf{Z}\}\cap\lbrack t^{\prime},\infty)\overset
{(\ref{pre660})}{\subset}\mathbf{T}_{\mu}^{x},
\]
wherefrom $t-T\in\mathbf{T}_{\mu}^{x},$ i.e. $x(t-T)=\mu=x(t).$

(\ref{pre550})$\Longrightarrow$(\ref{pre551}) We take $T>0$ and $\mu\in Or(x)$
arbitrary, for which the truth of (\ref{pre550}) shows the existence of
$t^{\prime}\in I^{x}$ such that%
\begin{equation}
\forall t\geq t^{\prime},x(t)=\mu\Longrightarrow(x(t)=x(t+T)\text{ and
}t-T\geq t^{\prime}\Longrightarrow x(t)=x(t-T)). \label{pre662}%
\end{equation}
Let now $t_{1}^{\prime}\geq t^{\prime}$ arbitrary. If $\forall t\geq
t_{1}^{\prime},x(t)\neq\mu,$ the statement%
\[
\forall t\geq t_{1}^{\prime},x(t)=\mu\Longrightarrow(x(t)=x(t+T)\text{ and
}t-T\geq t_{1}^{\prime}\Longrightarrow x(t)=x(t-T))
\]
is trivially true, so we suppose that we can take $t\geq t_{1}^{\prime}$
arbitrarily with $x(t)=\mu.$ As $t\geq t^{\prime},$ we conclude from
(\ref{pre662}) that
\begin{equation}
x(t)=x(t+T)\text{ and }t-T\geq t^{\prime}\Longrightarrow x(t)=x(t-T)
\label{pre663}%
\end{equation}
holds. If $t-T\geq t_{1}^{\prime},$ then $t-T\geq t^{\prime}$ and from
(\ref{pre663}) we have that $x(t)=x(t-T).$

(\ref{pre551})$\Longrightarrow$(\ref{pre552}) Let $T>0,\mu\in Or(x)$
arbitrary, fixed. (\ref{pre551}) shows the existence of $t^{\prime\prime
\prime}\in I^{x}$ such that, in the special case when $t_{1}^{\prime}\geq
t^{\prime\prime\prime}$ holds as equality$,$ we have%
\begin{equation}
\forall t\geq t^{\prime\prime\prime},x(t)=\mu\Longrightarrow(x(t)=x(t+T)\text{
and }t-T\geq t^{\prime\prime\prime}\Longrightarrow x(t)=x(t-T)).
\label{pre665}%
\end{equation}
Let $t^{\prime\prime}\in\mathbf{R}$ arbitrary, fixed. We have the following possibilities.

Case $t^{\prime\prime}\leq t^{\prime\prime\prime}$

We have $\sigma^{t^{\prime\prime}}(x)=x$ and, from (\ref{pre665}) we have the
truth of (\ref{pre552}), with $t^{\prime}=t^{\prime\prime\prime}.$

Case $t^{\prime\prime}>t^{\prime\prime\prime}$

Some $\varepsilon>0$ exists with the property that $\forall t\in
(t^{\prime\prime}-\varepsilon,t^{\prime\prime}),x(t)=x(t^{\prime\prime}-0).$
We take $t^{\prime}\in(t^{\prime\prime}-\varepsilon,t^{\prime\prime}%
)\cap(t^{\prime\prime\prime},t^{\prime\prime})$ arbitrary and, from
(\ref{pre665}) and Lemma \ref{Lem30}, page \pageref{Lem30}, we infer%
\begin{equation}
\forall t\geq t^{\prime},x(t)=\mu\Longrightarrow(x(t)=x(t+T)\text{ and
}t-T\geq t^{\prime}\Longrightarrow x(t)=x(t-T)). \label{pre979}%
\end{equation}
We notice that $(-\infty,t^{\prime}]\subset\mathbf{T}_{x(t^{\prime\prime}%
-0)}^{\sigma^{t^{\prime\prime}}(x)},$ thus $t^{\prime}\in I^{\sigma
^{t^{\prime\prime}}(x)}.$ If $\forall t\geq t^{\prime},\sigma^{t^{\prime
\prime}}(x)(t)\neq\mu,$ then
\[
\left\{
\begin{array}
[c]{c}%
\forall t\geq t^{\prime},\sigma^{t^{\prime\prime}}(x)(t)=\mu\Longrightarrow
(\sigma^{t^{\prime\prime}}(x)(t)=\sigma^{t^{\prime\prime}}(x)(t+T)\text{
and}\\
\text{and }t-T\geq t^{\prime}\Longrightarrow\sigma^{t^{\prime\prime}%
}(x)(t)=\sigma^{t^{\prime\prime}}(x)(t-T))
\end{array}
\right.
\]
is trivially true, so let $t\geq t^{\prime}$ arbitrary with $\sigma
^{t^{\prime\prime}}(x)(t)=\mu.$ As for $t\geq t^{\prime},\sigma^{t^{\prime
\prime}}(x)(t)=x(t)$ (irrespective of the fact that $t<t^{\prime\prime}$ or
$t\geq t^{\prime\prime}$), we can apply (\ref{pre979}).

(\ref{pre552})$\Longrightarrow$(\ref{per78}) We suppose against all reason
that $\mu,\mu^{\prime}\in Or(x)$ exist, $\mu\neq\mu^{\prime},$ meaning the
existence of $t_{1},t_{2}\in\mathbf{R}$ with $x(t_{1})=\mu,x(t_{2}%
)=\mu^{\prime}.$ We can suppose without loss that $t_{1}>t_{2}.$ We write
(\ref{pre552}) for the period $T^{\prime}=t_{1}-t_{2}>0,$ for $\mu^{\prime}$
and for $t^{\prime\prime}\in\mathbf{R}$ sufficiently small such that
$\sigma^{t^{\prime\prime}}(x)=x.$ Some $t^{\prime}\in I^{x}$ exists with%
\begin{equation}
\forall t\geq t^{\prime},x(t)=\mu^{\prime}\Longrightarrow(x(t)=x(t+T^{\prime
})\text{ and }t-T^{\prime}\geq t^{\prime}\Longrightarrow x(t)=x(t-T^{\prime
})). \label{pre751}%
\end{equation}
We use the fact that $Or(x)=\{x(t)|t\geq t^{\prime}\},$ thus $t^{\prime}\leq
t_{2}<t_{1}$ and we have%
\[
\mu^{\prime}=x(t_{2})\overset{(\ref{pre751})}{=}x(t_{2}+T^{\prime}%
)=x(t_{2}+t_{1}-t_{2})=x(t_{1})=\mu,
\]
contradiction with the supposition that $\mu\neq\mu^{\prime}.$ (\ref{per78}) holds.
\end{proof}

\section{The third group of constancy properties}

\begin{remark}
The third group of constancy properties involves periodicity and eventual
periodicity properties of some point $\mu$ of the orbit. The constancy
properties result from (\ref{per185}),...,(\ref{pre537}) and (\ref{per186}%
),...,(\ref{pre552}) of the second group of properties, by the replacement of
$\forall\mu\in\widehat{Or}(\widehat{x}),\forall\mu\in Or(x)$ with $\exists
\mu\in\widehat{Or}(\widehat{x}),\exists\mu\in Or(x).$ The proofs of the
implications are similar, most of them, with the proofs of Theorem
\ref{The95}, page \pageref{The95}.
\end{remark}

\begin{remark}
\label{Rem17}The properties are also inspired by the eventual constancy
properties of Theorem \ref{The98}, page \pageref{The98}. Note that:

- (\ref{pre752}), (\ref{pre753}) (the last contains $\forall k^{\prime}%
\in\mathbf{N}_{\_}$) are inspired by (\ref{pre785})$_{page\;\pageref{pre785}}$
(containing $\exists k^{\prime}\in\mathbf{N}_{\_},\widehat{\mathbf{T}}_{\mu
}^{\widehat{x}}\cap\{k^{\prime},k^{\prime}+1,k^{\prime}+2,...\}\neq
\varnothing$);

- (\ref{pre754}) (containing $\forall k^{\prime\prime}\in\mathbf{N}$) is
inspired by (\ref{pre782})$_{page\;\pageref{pre782}}$ (containing $\exists
k^{\prime\prime}\in\mathbf{N},\widehat{\mathbf{T}}_{\mu}^{\widehat{\sigma
}^{k^{\prime\prime}}(\widehat{x})}\neq\varnothing$);

- (\ref{pre755}), (\ref{pre756}) (the last contains $\forall k^{\prime}%
\in\mathbf{N}_{\_}$) are inspired by (\ref{pre783})$_{page\;\pageref{pre783}}$
(containing $\exists k^{\prime}\in\mathbf{N}_{\_},\exists k_{1}\geq k^{\prime
},\widehat{x}(k_{1})=\mu$);

- (\ref{pre757}) (containing $\forall k^{\prime\prime}\in\mathbf{N}$) is
inspired by (\ref{pre784})$_{page\;\pageref{pre784}}$ (containing $\exists
k^{\prime\prime}\in\mathbf{N},\exists k_{1}\in\mathbf{N}_{\_},\widehat{\sigma
}^{k^{\prime\prime}}(\widehat{x})(k_{1})=\mu$);

- (\ref{pre758}) and (\ref{pre759}) (the last contains $\forall t_{1}^{\prime
}\geq t^{\prime}$) are inspired by (\ref{pre786})$_{page\;\pageref{pre786}}$
(containing $\exists t_{1}^{\prime}\geq t^{\prime},\mathbf{T}_{\mu}^{x}%
\cap\lbrack t_{1}^{\prime},\infty)\neq\varnothing$) and (\ref{pre787}%
)$_{page\;\pageref{pre787}}$ (containing $\exists t_{1}^{\prime}\in
\mathbf{R},\mathbf{T}_{\mu}^{x}\cap\lbrack t_{1}^{\prime},\infty
)\neq\varnothing$);

- (\ref{pre760}) (containing $\forall t^{\prime\prime}\in\mathbf{R}$) is
inspired by (\ref{pre788})$_{page\;\pageref{pre788}}$ and (\ref{pre789}%
)$_{page\;\pageref{pre789}}$ (containing $\exists t^{\prime\prime}%
\in\mathbf{R}$...$\mathbf{T}_{\mu}^{\sigma^{t^{\prime\prime}}(x)}\cap\lbrack
t^{\prime},\infty)\neq\varnothing$);

- (\ref{pre761}) and (\ref{pre762}) (the last contains $\forall t_{1}^{\prime
}\geq t^{\prime}$) are inspired by (\ref{pre790})$_{page\;\pageref{pre790}}$
(containing $\exists t_{1}^{\prime}\geq t^{\prime},\exists t_{2}^{\prime}\geq
t_{1}^{\prime},x(t_{2}^{\prime})=\mu$) and (\ref{pre791}%
)$_{page\;\pageref{pre791}}$ (containing $\exists t_{1}^{\prime}\in
\mathbf{R},\exists t_{2}^{\prime}\geq t_{1}^{\prime},x(t_{2}^{\prime})=\mu$);

- (\ref{pre763}) (containing $\forall t^{\prime\prime}\in\mathbf{R}$) is
inspired by (\ref{pre792})$_{page\;\pageref{pre792}}$ and (\ref{pre793}%
)$_{page\;\pageref{pre793}}$ (containing $\exists t^{\prime\prime}%
\in\mathbf{R}$...$\exists t^{\prime\prime\prime}\geq t^{\prime},\sigma
^{t^{\prime\prime}}(x)(t^{\prime\prime\prime})=\mu$).
\end{remark}

\begin{remark}
We refer also to Theorem \ref{The99}, page \pageref{The99} that makes use of
the eventual periodicity of some points of the omega limit set. Here are the differences:

- (\ref{pre752}), (\ref{pre753}) (the last contains $\exists\mu\in\widehat
{Or}(\widehat{x}),\forall k^{\prime}\in\mathbf{N}_{\_}$) are inspired by
(\ref{pre795})$_{page\;\pageref{pre795}}$ (containing $\exists\mu\in
\widehat{\omega}(\widehat{x}),\exists k^{\prime}\in\mathbf{N}_{\_} $);

- (\ref{pre754}) (containing $\exists\mu\in\widehat{Or}(\widehat{x}),\forall
k^{\prime\prime}\in\mathbf{N}$) is inspired by (\ref{pre796}%
)$_{page\;\pageref{pre796}}$ (containing $\exists\mu\in\widehat{\omega
}(\widehat{x}),\exists k^{\prime\prime}\in\mathbf{N}$);

- (\ref{pre755}), (\ref{pre756}) (the last contains $\exists\mu\in\widehat
{Or}(\widehat{x}),\forall k^{\prime}\in\mathbf{N}_{\_}$) are inspired by
(\ref{pre797})$_{page\;\pageref{pre797}}$ (containing $\exists\mu\in
\widehat{\omega}(\widehat{x}),\exists k^{\prime}\in\mathbf{N}_{\_} $);

- (\ref{pre757}) (containing $\exists\mu\in\widehat{Or}(\widehat{x}),\forall
k^{\prime\prime}\in\mathbf{N}$) is inspired by (\ref{pre798}%
)$_{page\;\pageref{pre798}}$ (containing $\exists\mu\in\widehat{\omega
}(\widehat{x}),\exists k^{\prime\prime}\in\mathbf{N}$);

- (\ref{pre758}) and (\ref{pre759}) (the last contains $\exists\mu\in
Or(x),...,\forall t_{1}^{\prime}\geq t^{\prime}$) are inspired by
(\ref{pre799})$_{page\;\pageref{pre799}}$ (containing $\exists\mu\in
\omega(x),...,\exists t_{1}^{\prime}\geq t^{\prime}$) and (\ref{pre800}%
)$_{page\;\pageref{pre800}}$ (containing $\exists\mu\in\omega(x),\exists
t_{1}^{\prime}\in\mathbf{R}$);

- (\ref{pre760}) (containing $\exists\mu\in Or(x),\forall t^{\prime\prime}%
\in\mathbf{R}$) is inspired by (\ref{pre801})$_{page\;\pageref{pre801}}$ and
(\ref{pre802})$_{page\;\pageref{pre802}}$ (containing $\exists\mu\in
\omega(x),\exists t^{\prime\prime}\in\mathbf{R}$);

- (\ref{pre761}) and (\ref{pre762}) (the last contains $\exists\mu\in
Or(x),...,$ $\forall t_{1}^{\prime}\geq t^{\prime}$) are inspired by
(\ref{pre803})$_{page\;\pageref{pre803}}$ (containing $\exists\mu\in\omega(x),
$...$,$ $\exists t_{1}^{\prime}\geq t^{\prime}$) and (\ref{pre804}%
)$_{page\;\pageref{pre804}}$ (containing $\exists\mu\in\omega(x),\exists
t_{1}^{\prime}\in\mathbf{R}$);

- (\ref{pre763}) (containing $\exists\mu\in Or(x),\forall t^{\prime\prime}%
\in\mathbf{R}$) is inspired by (\ref{pre805})$_{page\;\pageref{pre805}}$ and
(\ref{pre806})$_{page\;\pageref{pre806}}$ (containing $\exists\mu\in
\omega(x),\exists t^{\prime\prime}\in\mathbf{R}$).
\end{remark}

\begin{theorem}
\label{The94}Let the signals $\widehat{x}\in\widehat{S}^{(n)},x\in S^{(n)}.$

a) The following properties are equivalent with the constancy of $\widehat
{x}\in\widehat{S}^{(n)}$:%
\begin{equation}
\forall p\geq1,\exists\mu\in\widehat{Or}(\widehat{x}),\forall k\in
\widehat{\mathbf{T}}_{\mu}^{\widehat{x}},\{k+zp|z\in\mathbf{Z}\}\cap
\mathbf{N}_{\_}\subset\widehat{\mathbf{T}}_{\mu}^{\widehat{x}}, \label{pre752}%
\end{equation}%
\begin{equation}
\left\{
\begin{array}
[c]{c}%
\forall p\geq1,\exists\mu\in\widehat{Or}(\widehat{x}),\forall k^{\prime}%
\in\mathbf{N}_{\_},\forall k\in\widehat{\mathbf{T}}_{\mu}^{\widehat{x}}%
\cap\{k^{\prime},k^{\prime}+1,k^{\prime}+2,...\},\\
\{k+zp|z\in\mathbf{Z}\}\cap\{k^{\prime},k^{\prime}+1,k^{\prime}+2,...\}\subset
\widehat{\mathbf{T}}_{\mu}^{\widehat{x}},
\end{array}
\right.  \label{pre753}%
\end{equation}%
\begin{equation}
\left\{
\begin{array}
[c]{c}%
\forall p\geq1,\exists\mu\in\widehat{Or}(\widehat{x}),\forall k^{\prime\prime
}\in\mathbf{N},\forall k\in\widehat{\mathbf{T}}_{\mu}^{\widehat{\sigma
}^{k^{\prime\prime}}(\widehat{x})},\\
\{k+zp|z\in\mathbf{Z}\}\cap\mathbf{N}_{\_}\subset\widehat{\mathbf{T}}_{\mu
}^{\widehat{\sigma}^{k^{\prime\prime}}(\widehat{x})},
\end{array}
\right.  \label{pre754}%
\end{equation}%
\begin{equation}
\left\{
\begin{array}
[c]{c}%
\forall p\geq1,\exists\mu\in\widehat{Or}(\widehat{x}),\forall k\in
\mathbf{N}_{\_},\widehat{x}(k)=\mu\Longrightarrow\\
\Longrightarrow(\widehat{x}(k)=\widehat{x}(k+p)\text{ and }k-p\geq
-1\Longrightarrow\widehat{x}(k)=\widehat{x}(k-p)),
\end{array}
\right.  \label{pre755}%
\end{equation}%
\begin{equation}
\left\{
\begin{array}
[c]{c}%
\forall p\geq1,\exists\mu\in\widehat{Or}(\widehat{x}),\forall k^{\prime}%
\in\mathbf{N}_{\_},\forall k\geq k^{\prime},\widehat{x}(k)=\mu\Longrightarrow
\\
\Longrightarrow(\widehat{x}(k)=\widehat{x}(k+p)\text{ and }k-p\geq k^{\prime
}\Longrightarrow\widehat{x}(k)=\widehat{x}(k-p)),
\end{array}
\right.  \label{pre756}%
\end{equation}%
\begin{equation}
\left\{
\begin{array}
[c]{c}%
\forall p\geq1,\exists\mu\in\widehat{Or}(\widehat{x}),\forall k^{\prime\prime
}\in\mathbf{N},\forall k\in\mathbf{N}_{\_},\widehat{\sigma}^{k^{\prime\prime}%
}(\widehat{x})(k)=\mu\Longrightarrow\\
\Longrightarrow(\widehat{\sigma}^{k^{\prime\prime}}(\widehat{x})(k)=\widehat
{\sigma}^{k^{\prime\prime}}(\widehat{x})(k+p)\text{ and }\\
\text{and }k-p\geq-1\Longrightarrow\widehat{\sigma}^{k^{\prime\prime}%
}(\widehat{x})(k)=\widehat{\sigma}^{k^{\prime\prime}}(\widehat{x})(k-p)).
\end{array}
\right.  \label{pre757}%
\end{equation}

b) The following properties are equivalent with the constancy of $x\in
S^{(n)}$:%
\begin{equation}
\left\{
\begin{array}
[c]{c}%
\forall T>0,\exists\mu\in Or(x),\exists t^{\prime}\in I^{x},\\
\forall t\in\mathbf{T}_{\mu}^{x}\cap\lbrack t^{\prime},\infty),\{t+zT|z\in
\mathbf{Z}\}\cap\lbrack t^{\prime},\infty)\subset\mathbf{T}_{\mu}^{x},
\end{array}
\right.  \label{pre758}%
\end{equation}%
\begin{equation}
\left\{
\begin{array}
[c]{c}%
\forall T>0,\exists\mu\in Or(x),\exists t^{\prime}\in I^{x},\\
\forall t_{1}^{\prime}\geq t^{\prime},\forall t\in\mathbf{T}_{\mu}^{x}%
\cap\lbrack t_{1}^{\prime},\infty),\{t+zT|z\in\mathbf{Z}\}\cap\lbrack
t_{1}^{\prime},\infty)\subset\mathbf{T}_{\mu}^{x},
\end{array}
\right.  \label{pre759}%
\end{equation}%
\begin{equation}
\left\{
\begin{array}
[c]{c}%
\forall T>0,\exists\mu\in Or(x),\forall t^{\prime\prime}\in\mathbf{R},\exists
t^{\prime}\in I^{\sigma^{t^{\prime\prime}}(x)},\\
\forall t\in\mathbf{T}_{\mu}^{\sigma^{t^{\prime\prime}}(x)}\cap\lbrack
t^{\prime},\infty),\{t+zT|z\in\mathbf{Z}\}\cap\lbrack t^{\prime}%
,\infty)\subset\mathbf{T}_{\mu}^{\sigma^{t^{\prime\prime}}(x)},
\end{array}
\right.  \label{pre760}%
\end{equation}%
\begin{equation}
\left\{
\begin{array}
[c]{c}%
\forall T>0,\exists\mu\in Or(x),\exists t^{\prime}\in I^{x},\forall t\geq
t^{\prime},\\
x(t)=\mu\Longrightarrow(x(t)=x(t+T)\text{ and }t-T\geq t^{\prime
}\Longrightarrow x(t)=x(t-T)),
\end{array}
\right.  \label{pre761}%
\end{equation}%
\begin{equation}
\left\{
\begin{array}
[c]{c}%
\forall T>0,\exists\mu\in Or(x),\exists t^{\prime}\in I^{x},\forall
t_{1}^{\prime}\geq t^{\prime},\forall t\geq t_{1}^{\prime},x(t)=\mu
\Longrightarrow\\
\Longrightarrow(x(t)=x(t+T)\text{ and }t-T\geq t_{1}^{\prime}\Longrightarrow
x(t)=x(t-T)),
\end{array}
\right.  \label{pre762}%
\end{equation}%
\begin{equation}
\left\{
\begin{array}
[c]{c}%
\forall T>0,\exists\mu\in Or(x),\forall t^{\prime\prime}\in\mathbf{R},\exists
t^{\prime}\in I^{\sigma^{t^{\prime\prime}}(x)},\\
\forall t\geq t^{\prime},\sigma^{t^{\prime\prime}}(x)(t)=\mu\Longrightarrow
(\sigma^{t^{\prime\prime}}(x)(t)=\sigma^{t^{\prime\prime}}(x)(t+T)\text{
and}\\
\text{and }t-T\geq t^{\prime}\Longrightarrow\sigma^{t^{\prime\prime}%
}(x)(t)=\sigma^{t^{\prime\prime}}(x)(t-T)).
\end{array}
\right.  \label{pre763}%
\end{equation}

\end{theorem}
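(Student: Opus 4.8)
The plan is to prove both parts by a closed cycle of implications, in exactly the pattern used for Theorem \ref{The95}, page \pageref{The95}; the only difference is that the universal quantifier $\forall\mu\in\widehat{Or}(\widehat{x})$ (resp. $\forall\mu\in Or(x)$) is now replaced by the existential quantifier $\exists\mu$. The forward direction causes no trouble: by Theorem \ref{The13}, page \pageref{The13}, constancy of $\widehat{x}$ means $\widehat{Or}(\widehat{x})=\{\mu\}$ for a single $\mu$, so choosing that $\mu$ makes each listed statement hold (indeed the corresponding $\forall\mu$ statement of Theorem \ref{The95} already holds), and similarly for $x$. The substance is the backward direction, where, after a particular $\mu\in\widehat{Or}(\widehat{x})$ has been pinned down, specializing the period to $p=1$ (resp., in the real case, to a suitable $T^{\prime}$) forces the signal to be constant equal to $\mu$; the fact that a constant signal has a single value (Theorem \ref{The13}) is what makes the passage from $\forall\mu$ to $\exists\mu$ harmless.

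For part a) I would establish $(\ref{per46})\Rightarrow(\ref{pre752})\Rightarrow(\ref{pre753})\Rightarrow(\ref{pre754})\Rightarrow(\ref{pre755})\Rightarrow(\ref{pre756})\Rightarrow(\ref{pre757})\Rightarrow(\ref{per46})$. The first step is immediate: if $\widehat{x}(k)=\mu$ for all $k$ then $\widehat{\mathbf{T}}_{\mu}^{\widehat{x}}=\mathbf{N}_{\_}$ and every set $\{k+zp\mid z\in\mathbf{Z}\}\cap\mathbf{N}_{\_}$ is contained in it. The intermediate steps are bookkeeping with the forgetful function $\widehat{\sigma}^{k^{\prime\prime}}$: moving from $\widehat{x}$ to $\widehat{\sigma}^{k^{\prime\prime}}(\widehat{x})$ amounts to the index shift $k\mapsto k+k^{\prime\prime}$, and the non-triviality built into each statement is dealt with by splitting off the case in which the relevant support set is empty (where the implication holds vacuously); these are verbatim transcriptions of the corresponding steps of Theorem \ref{The95}, carrying $\exists\mu$ instead of $\forall\mu$. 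The closing step $(\ref{pre757})\Rightarrow(\ref{per46})$ uses the specialization $p=1$, $k^{\prime\prime}=0$: it yields a $\mu\in\widehat{Or}(\widehat{x})$ such that $\widehat{x}(k)=\mu$ implies $\widehat{x}(k)=\widehat{x}(k+1)$, and implies $\widehat{x}(k)=\widehat{x}(k-1)$ whenever $k\geq0$; picking $k_{1}$ with $\widehat{x}(k_{1})=\mu$ (possible since $\mu\in\widehat{Or}(\widehat{x})$) and propagating the equality both upward and downward gives $\widehat{x}(k)=\mu$ for every $k\in\mathbf{N}_{\_}$, that is, $(\ref{per46})$.

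For part b) the analogous cycle is $(\ref{per47})\Rightarrow(\ref{pre758})\Rightarrow(\ref{pre759})\Rightarrow(\ref{pre760})\Rightarrow(\ref{pre761})\Rightarrow(\ref{pre762})\Rightarrow(\ref{pre763})\Rightarrow(\ref{per47})$. Here $\sigma^{t^{\prime\prime}}$ plays the role of $\widehat{\sigma}^{k^{\prime\prime}}$, and two standard devices are used repeatedly: by Theorem \ref{Pro2}, page \pageref{Pro2}, every $x$ admits $t^{\prime\prime}$ small enough with $\sigma^{t^{\prime\prime}}(x)=x$, which handles the steps quantified by $\forall t^{\prime\prime}\in\mathbf{R}$; and Lemma \ref{Lem30}, page \pageref{Lem30}, together with the fact that $\sigma^{t^{\prime\prime}}(x)$ agrees with $x$ to the right of a suitable $t^{\prime}$, allows the limit of periodicity to be shifted freely. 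The closing step $(\ref{pre763})\Rightarrow(\ref{per47})$ follows the pattern of $(\ref{pre552})\Rightarrow(\ref{per78})$ in Theorem \ref{The95}: choose $t^{\prime\prime}$ so small that $\sigma^{t^{\prime\prime}}(x)=x$, so $(\ref{pre763})$ furnishes $\mu\in Or(x)$ and $t^{\prime}\in I^{x}$; since $t^{\prime}\in I^{x}$ we have $Or(x)=\{x(t)\mid t\geq t^{\prime}\}$, so assuming $x$ took two distinct values $\mu,\mu^{\prime}$ at times $t_{1}>t_{2}\geq t^{\prime}$ and applying $(\ref{pre763})$ with period $T^{\prime}=t_{1}-t_{2}$ would give $\mu^{\prime}=x(t_{2})=x(t_{2}+T^{\prime})=x(t_{1})=\mu$, a contradiction; hence $Or(x)=\{\mu\}$, which is $(\ref{per47})$.

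The obstacle is entirely one of organization: each part is a chain of seven implications, every one of which demands the same careful handling of the index/time shift by the forgetful function and of the vacuous cases, and one must track which auxiliary point $\mu$ and which auxiliary times $t^{\prime},t^{\prime\prime}$ are being threaded through the cycle. No fundamentally new idea is needed beyond what already appears in the proof of Theorem \ref{The95}; the single genuinely delicate implication, the one closing the real-time cycle, is precisely the argument given there for $(\ref{pre552})\Rightarrow(\ref{per78})$.
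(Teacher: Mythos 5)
Your cycle for part a) coincides with the paper's, and the discrete-time closing step $(\ref{pre757})\Rightarrow(\ref{per46})$ is handled exactly as in the paper: the step-by-step propagation upward and downward from a single $k_{1}$ with $\widehat{x}(k_{1})=\mu$ works because each newly reached index again has value $\mu$ and so stays inside the hypothesis of the implication. The forward directions and the intermediate shift arguments in both parts are also fine.

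The gap is in your closing step of the real-time cycle, $(\ref{pre763})\Rightarrow(\ref{per47})$. You claim it is ``precisely the argument given for $(\ref{pre552})\Rightarrow(\ref{per78})$'' in Theorem \ref{The95}, namely: take $t_{1}>t_{2}\geq t^{\prime}$ with $x(t_{1})=\mu\neq\mu^{\prime}=x(t_{2})$, apply the hypothesis with period $T^{\prime}=t_{1}-t_{2}$, and conclude $\mu^{\prime}=x(t_{2})=x(t_{2}+T^{\prime})=x(t_{1})=\mu$. But that argument depends essentially on the universal quantifier $\forall\mu\in Or(x)$ in $(\ref{pre552})$, which lets you instantiate the periodicity property at the specific point $\mu^{\prime}=x(t_{2})$. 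In $(\ref{pre763})$ the quantifier is $\exists\mu\in Or(x)$: applying it with period $T^{\prime}$ only furnishes \emph{some} point $\mu^{\prime\prime}\in Or(x)$ satisfying $x(t)=\mu^{\prime\prime}\Rightarrow x(t)=x(t+T^{\prime})$, and nothing forces $\mu^{\prime\prime}$ to equal $\mu^{\prime}$; the implication is silent at times where $x$ does not take the value $\mu^{\prime\prime}$, so the equality $x(t_{2})=x(t_{2}+T^{\prime})$ does not follow. This is exactly why the paper's proof of this implication is so much longer: assuming $x$ non-constant, it extracts from the hypothesis for period $T$ a point $\mu$ and a maximal interval $[t_{1},t_{2})\subset\mathbf{T}_{\mu}^{x}$ with boundary conditions, uses Lemma \ref{Lem28} and Lemma \ref{Lem25} to propagate both the interval and its boundary behavior by multiples of $T$, then invokes the hypothesis again for a period $T^{\prime}\in(0,t_{2}-t_{1})$ to get a second point $\mu^{\prime}$ with its own periodic union of intervals, and only via Lemma \ref{Lem29} (the two periodic unions must intersect because $T^{\prime}<t_{2}-t_{1}$) does it conclude $\mu=\mu^{\prime}$, after which a case analysis on the interval endpoints yields the contradiction. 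Your proposal needs this identification of the existential witnesses for different periods before any cross-period cancellation can be performed; without it the backward direction of part b) is not established.
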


\begin{proof}
a) (\ref{per46})$\Longrightarrow$(\ref{pre752}) Let us prove first that
(\ref{per46}) implies%
\begin{equation}
\exists\mu\in\widehat{Or}(\widehat{x}),\forall p\geq1,\forall k\in
\widehat{\mathbf{T}}_{\mu}^{\widehat{x}},\{k+zp|z\in\mathbf{Z}\}\cap
\mathbf{N}_{\_}\subset\widehat{\mathbf{T}}_{\mu}^{\widehat{x}}. \label{pre927}%
\end{equation}
Indeed, the hypothesis states the existence of $\mu\in\widehat{Or}(\widehat
{x})$ with $\forall k\in\mathbf{N}_{\_},\widehat{x}(k)=\mu$ and let
$p\geq1,k\in\widehat{\mathbf{T}}_{\mu}^{\widehat{x}},z\in\mathbf{Z}$ arbitrary
such that $k+zp\geq-1.$ Then $\widehat{x}(k+zp)=\mu,$ thus $k+zp\in
\widehat{\mathbf{T}}_{\mu}^{\widehat{x}}$ and (\ref{pre927}) holds.
(\ref{pre927}) obviously implies (\ref{pre752}).

(\ref{pre752})$\Longrightarrow$(\ref{pre753}) We take $p\geq1$ arbitrarily.
The truth of (\ref{pre752}) shows the existence of $\mu\in\widehat
{Or}(\widehat{x})$ with%
\begin{equation}
\forall k\in\widehat{\mathbf{T}}_{\mu}^{\widehat{x}},\{k+zp|z\in
\mathbf{Z}\}\cap\mathbf{N}_{\_}\subset\widehat{\mathbf{T}}_{\mu}^{\widehat{x}%
}. \label{pre928}%
\end{equation}
Let now $k^{\prime}\in\mathbf{N}_{\_}$ arbitrary. If $\widehat{\mathbf{T}%
}_{\mu}^{\widehat{x}}\cap\{k^{\prime},k^{\prime}+1,k^{\prime}%
+2,...\}=\varnothing,$ then
\[
\forall k\in\widehat{\mathbf{T}}_{\mu}^{\widehat{x}}\cap\{k^{\prime}%
,k^{\prime}+1,k^{\prime}+2,...\},\{k+zp|z\in\mathbf{Z}\}\cap\{k^{\prime
},k^{\prime}+1,k^{\prime}+2,...\}\subset\widehat{\mathbf{T}}_{\mu}%
^{\widehat{x}}%
\]
is trivially true, so we suppose $\widehat{\mathbf{T}}_{\mu}^{\widehat{x}}%
\cap\{k^{\prime},k^{\prime}+1,k^{\prime}+2,...\}\neq\varnothing$ and let
$k\in\widehat{\mathbf{T}}_{\mu}^{\widehat{x}},$ $z\in\mathbf{Z}$ arbitrary,
fixed, such that $k\geq k^{\prime}$ and $k+zp\geq k^{\prime}.$ As
$k+zp\geq-1,$ we have from (\ref{pre928}) that $k+zp\in\widehat{\mathbf{T}%
}_{\mu}^{\widehat{x}}.$

(\ref{pre753})$\Longrightarrow$(\ref{pre754}) Let an arbitrary $p\geq1.$ We
have from (\ref{pre753}) the existence of $\mu\in\widehat{Or}(\widehat{x})$
such that%
\begin{equation}
\left\{
\begin{array}
[c]{c}%
\forall k^{\prime}\in\mathbf{N}_{\_},\forall k\in\widehat{\mathbf{T}}_{\mu
}^{\widehat{x}}\cap\{k^{\prime},k^{\prime}+1,k^{\prime}+2,...\},\\
\{k+zp|z\in\mathbf{Z}\}\cap\{k^{\prime},k^{\prime}+1,k^{\prime}+2,...\}\subset
\widehat{\mathbf{T}}_{\mu}^{\widehat{x}}%
\end{array}
\right.  \label{pre929}%
\end{equation}
holds and we take $k^{\prime\prime}\in\mathbf{N}$ arbitrary. If $\widehat
{\mathbf{T}}_{\mu}^{\widehat{\sigma}^{k^{\prime\prime}}(\widehat{x}%
)}=\varnothing,$ then
\[
\forall k\in\widehat{\mathbf{T}}_{\mu}^{\widehat{\sigma}^{k^{\prime\prime}%
}(\widehat{x})},\{k+zp|z\in\mathbf{Z}\}\cap\mathbf{N}_{\_}\subset
\widehat{\mathbf{T}}_{\mu}^{\widehat{\sigma}^{k^{\prime\prime}}(\widehat{x})}%
\]
is trivially true, thus we can suppose $\widehat{\mathbf{T}}_{\mu}%
^{\widehat{\sigma}^{k^{\prime\prime}}(\widehat{x})}\neq\varnothing$ and let
$k\in\widehat{\mathbf{T}}_{\mu}^{\widehat{\sigma}^{k^{\prime\prime}}%
(\widehat{x})},$ $z\in\mathbf{Z}$ arbitrary such that $k+zp\geq-1.$ We have
$\widehat{x}(k+k^{\prime\prime})=\mu$ or, if we denote $k^{\prime}%
=k^{\prime\prime}-1,$ then $\widehat{x}(k+k^{\prime}+1)=\mu,$ where
$k^{\prime}\in\mathbf{N}_{\_}.$ Of course that $k+k^{\prime}+1\geq k^{\prime
},$ thus $k+k^{\prime}+1\in\widehat{\mathbf{T}}_{\mu}^{\widehat{x}}%
\cap\{k^{\prime},k^{\prime}+1,k^{\prime}+2,...\}$ and, on the other hand,
$k+k^{\prime}+1+zp\geq k^{\prime}+1-1,$ resulting that we can apply
(\ref{pre929}), wherefrom $\widehat{x}(k+k^{\prime}+1+zp)=\mu.$ It has
resulted that $\widehat{\sigma}^{k^{\prime\prime}}(\widehat{x})(k+zp)=\widehat
{x}(k+k^{\prime\prime}+zp)=\mu,$ in other words $k+zp\in\widehat{\mathbf{T}%
}_{\mu}^{\widehat{\sigma}^{k^{\prime\prime}}(\widehat{x})}.$

(\ref{pre754})$\Longrightarrow$(\ref{pre755}) Let $p\geq1$ arbitrary.
(\ref{pre754}) shows the existence of $\mu\in\widehat{Or}(\widehat{x})$ with%
\begin{equation}
\forall k\in\widehat{\mathbf{T}}_{\mu}^{\widehat{x}},\{k+zp|z\in
\mathbf{Z}\}\cap\mathbf{N}_{\_}\subset\widehat{\mathbf{T}}_{\mu}^{\widehat{x}}
\label{pre930}%
\end{equation}
true (for $k^{\prime\prime}=0$ and $\widehat{\sigma}^{k^{\prime\prime}%
}(\widehat{x})=\widehat{x}$) and let $k\in\mathbf{N}_{\_}$ such that
$\widehat{x}(k)=\mu.$ We obtain%
\[
k+p\in\{k+zp|z\in\mathbf{Z}\}\cap\mathbf{N}_{\_}\overset{(\ref{pre930}%
)}{\subset}\widehat{\mathbf{T}}_{\mu}^{\widehat{x}},
\]
wherefrom $\widehat{x}(k+p)=\mu=\widehat{x}(k).$

If in addition $k-p\geq-1,$ then%
\[
k-p\in\{k+zp|z\in\mathbf{Z}\}\cap\mathbf{N}_{\_}\overset{(\ref{pre930}%
)}{\subset}\widehat{\mathbf{T}}_{\mu}^{\widehat{x}},
\]
wherefrom $\widehat{x}(k-p)=\mu=\widehat{x}(k).$

(\ref{pre755})$\Longrightarrow$(\ref{pre756}) We take an arbitrary $p\geq1$
and we have from (\ref{pre755}) the existence of $\mu\in\widehat{Or}%
(\widehat{x}),$ with%
\begin{equation}
\left\{
\begin{array}
[c]{c}%
\forall k\in\mathbf{N}_{\_},\widehat{x}(k)=\mu\Longrightarrow\\
\Longrightarrow(\widehat{x}(k)=\widehat{x}(k+p)\text{ and }k-p\geq
-1\Longrightarrow\widehat{x}(k)=\widehat{x}(k-p))
\end{array}
\right.  \label{pre931}%
\end{equation}
fulfilled. We take $k^{\prime}\in\mathbf{N}_{\_}$ arbitrary. If $\forall k\geq
k^{\prime},\widehat{x}(k)\neq\mu$ then
\[
\forall k\geq k^{\prime},\widehat{x}(k)=\mu\Longrightarrow(\widehat
{x}(k)=\widehat{x}(k+p)\text{ and }k-p\geq k^{\prime}\Longrightarrow
\widehat{x}(k)=\widehat{x}(k-p))
\]
is trivially true, thus we can take $k\geq k^{\prime}$ arbitrarily such that
$\widehat{x}(k)=\mu.$ From (\ref{pre931}) we have that $\widehat
{x}(k)=\widehat{x}(k+p).$ In case that $k-p\geq k^{\prime},$ as $k-p\geq-1,$
we can apply (\ref{pre931}) once again in order to infer that $\widehat
{x}(k)=\widehat{x}(k-p).$

(\ref{pre756})$\Longrightarrow$(\ref{pre757}) For an arbitrary $p\geq1,$ the
hypothesis states the existence of $\mu\in\widehat{Or}(\widehat{x})$ with%
\begin{equation}
\left\{
\begin{array}
[c]{c}%
\forall k^{\prime}\in\mathbf{N}_{\_},\forall k\geq k^{\prime},\widehat
{x}(k)=\mu\Longrightarrow\\
\Longrightarrow(\widehat{x}(k)=\widehat{x}(k+p)\text{ and }k-p\geq k^{\prime
}\Longrightarrow\widehat{x}(k)=\widehat{x}(k-p))
\end{array}
\right.  \label{pre932}%
\end{equation}
true. We take $k^{\prime\prime}\in\mathbf{N}$ arbitrary. If $\forall
k\in\mathbf{N}_{\_},\widehat{\sigma}^{k^{\prime\prime}}(\widehat{x})(k)\neq
\mu,$ then
\[
\left\{
\begin{array}
[c]{c}%
\forall k\in\mathbf{N}_{\_},\widehat{\sigma}^{k^{\prime\prime}}(\widehat
{x})(k)=\mu\Longrightarrow(\widehat{\sigma}^{k^{\prime\prime}}(\widehat
{x})(k)=\widehat{\sigma}^{k^{\prime\prime}}(\widehat{x})(k+p)\text{ and }\\
\text{and }k-p\geq-1\Longrightarrow\widehat{\sigma}^{k^{\prime\prime}%
}(\widehat{x})(k)=\widehat{\sigma}^{k^{\prime\prime}}(\widehat{x})(k-p))
\end{array}
\right.
\]
is trivially true, thus let $k\in\mathbf{N}_{\_}$ arbitrary with
$\widehat{\sigma}^{k^{\prime\prime}}(\widehat{x})(k)=\widehat{x}%
(k+k^{\prime\prime})=\mu.$ We denote $k^{\prime}=k^{\prime\prime}-1$ and we
see that $\widehat{x}(k+k^{\prime}+1)=\mu,$ where $k+k^{\prime}+1\geq
k^{\prime}.$ We can apply (\ref{pre932}) and we infer that%
\[
\widehat{\sigma}^{k^{\prime\prime}}(\widehat{x})(k)=\widehat{x}(k+k^{\prime
\prime})=\widehat{x}(k+k^{\prime}+1)\overset{(\ref{pre932})}{=}\widehat
{x}(k+k^{\prime}+1+p)=
\]%
\[
=\widehat{x}(k+k^{\prime\prime}+p)=\widehat{\sigma}^{k^{\prime\prime}%
}(\widehat{x})(k+p).
\]
We suppose now that in addition we have $k-p\geq-1,$ thus $k+k^{\prime
}+1-p\geq k^{\prime}$ and we can apply again (\ref{pre932}) in order to obtain%
\[
\widehat{\sigma}^{k^{\prime\prime}}(\widehat{x})(k)=\widehat{x}(k+k^{\prime
\prime})=\widehat{x}(k+k^{\prime}+1)\overset{(\ref{pre932})}{=}\widehat
{x}(k+k^{\prime}+1-p)=
\]%
\[
=\widehat{x}(k+k^{\prime\prime}-p)=\widehat{\sigma}^{k^{\prime\prime}%
}(\widehat{x})(k-p).
\]

(\ref{pre757})$\Longrightarrow$(\ref{per46}) The hypothesis written for $p=1 $
shows the existence of $\mu\in\widehat{Or}(\widehat{x})$ such that, in the
special case when $k^{\prime\prime}=0,$%
\begin{equation}
\left\{
\begin{array}
[c]{c}%
\forall k\in\mathbf{N}_{\_},\widehat{x}(k)=\mu\Longrightarrow\\
\Longrightarrow(\widehat{x}(k)=\widehat{x}(k+1)\text{ and }k\geq
0\Longrightarrow\widehat{x}(k)=\widehat{x}(k-1))
\end{array}
\right.  \label{pre764}%
\end{equation}
is fulfilled. Some $k_{1}\in\mathbf{N}_{\_}$ exists with $\widehat{x}%
(k_{1})=\mu$ and from (\ref{pre764}) we get:%
\[
\widehat{x}(k_{1})=\widehat{x}(k_{1}-1)=\widehat{x}(k_{1}-2)=...=\widehat
{x}(-1),
\]%
\[
\widehat{x}(k_{1})=\widehat{x}(k_{1}+1)=\widehat{x}(k_{1}+2)=...
\]
i.e. (\ref{per46}) holds.

b) (\ref{per126})$\Longrightarrow$(\ref{pre758}) Let $T>0$ arbitrary. The
hypothesis states the existence of $\mu\in\mathbf{B}^{n}$ such that
$\mathbf{T}_{\mu}^{x}=\{t|t\in\mathbf{R},x(t)=\mu\}=\mathbf{R},$ in particular
$\mu=x(-\infty+0).$ We take $t^{\prime}\in I^{x}$ arbitrarily and let
$t\in\mathbf{T}_{\mu}^{x}\cap\lbrack t^{\prime},\infty)=[t^{\prime}%
,\infty),z\in\mathbf{Z}$ with $t+zT\geq t^{\prime}.$ We conclude that
$t+zT\in\mathbf{T}_{\mu}^{x}.$ These imply the truth of (\ref{pre758}).

(\ref{pre758})$\Longrightarrow$(\ref{pre759}) Let $T>0$ arbitrary. The
hypothesis states the existence of $\mu\in Or(x),t^{\prime}\in I^{x}$ with the
property%
\begin{equation}
\forall t\in\mathbf{T}_{\mu}^{x}\cap\lbrack t^{\prime},\infty),\{t+zT|z\in
\mathbf{Z}\}\cap\lbrack t^{\prime},\infty)\subset\mathbf{T}_{\mu}^{x}.
\label{pre933}%
\end{equation}
Let us take $t_{1}^{\prime}\geq t^{\prime}$ arbitrary. If $\mathbf{T}_{\mu
}^{x}\cap\lbrack t_{1}^{\prime},\infty)=\varnothing,$ then the statement%
\[
\forall t\in\mathbf{T}_{\mu}^{x}\cap\lbrack t_{1}^{\prime},\infty
),\{t+zT|z\in\mathbf{Z}\}\cap\lbrack t_{1}^{\prime},\infty)\subset
\mathbf{T}_{\mu}^{x}%
\]
is trivially true, thus we can suppose from now that $\mathbf{T}_{\mu}^{x}%
\cap\lbrack t_{1}^{\prime},\infty)\neq\varnothing.$ We take $t\in
\mathbf{T}_{\mu}^{x}\cap\lbrack t_{1}^{\prime},\infty),z\in\mathbf{Z}$
arbitrarily such that $t+zT\geq t_{1}^{\prime}.$ We have $t\in\mathbf{T}_{\mu
}^{x}\cap\lbrack t^{\prime},\infty),t+zT\geq t^{\prime}$ and we can apply
(\ref{pre933}). We infer $t+zT\in\mathbf{T}_{\mu}^{x}.$

(\ref{pre759})$\Longrightarrow$(\ref{pre760}) Let $T>0$ arbitrary. From
(\ref{pre759}) we get the existence of $\mu\in Or(x)$ and $t^{\prime
\prime\prime}\in I^{x}$ with%
\begin{equation}
\forall t\in\mathbf{T}_{\mu}^{x}\cap\lbrack t^{\prime\prime\prime}%
,\infty),\{t+zT|z\in\mathbf{Z}\}\cap\lbrack t^{\prime\prime\prime}%
,\infty)\subset\mathbf{T}_{\mu}^{x} \label{pre934}%
\end{equation}
true. When writing (\ref{pre934}) we have taken in (\ref{pre759})
$t_{1}^{\prime}=t^{\prime\prime\prime}(=t^{\prime}).$ As $Or(x)=\{x(t)|t\geq
t^{\prime\prime\prime}\},$ we have $\mathbf{T}_{\mu}^{x}\cap\lbrack
t^{\prime\prime\prime},\infty)\neq\varnothing$ and (\ref{pre934}) shows that
$\mu\in\omega(x).$

Let $t^{\prime\prime}\in\mathbf{R}$ arbitrary. We have the following possibilities.

Case $t^{\prime\prime}\leq t^{\prime\prime\prime}$

Then, since $t^{\prime\prime}\in I^{x}$, we get $\sigma^{t^{\prime\prime}%
}(x)=x$ thus from (\ref{pre934}) we have the truth of (\ref{pre760}) with
$t^{\prime}=t^{\prime\prime\prime}.$

Case $t^{\prime\prime}>t^{\prime\prime\prime}$

Some $\varepsilon>0$ exists with the property that $\forall t\in
(t^{\prime\prime}-\varepsilon,t^{\prime\prime}),x(t)=x(t^{\prime\prime}-0).$
We take $t^{\prime}\in(t^{\prime\prime}-\varepsilon,t^{\prime\prime}%
)\cap(t^{\prime\prime\prime},t^{\prime\prime})$ arbitrarily and we have%
\[
\sigma^{t^{\prime\prime}}(x)(t)=\left\{
\begin{array}
[c]{c}%
x(t),t\geq t^{\prime}\\
x(t^{\prime\prime}-0),t<t^{\prime\prime}.
\end{array}
\right.
\]
The fact that $t^{\prime}\in(-\infty,t^{\prime\prime})\subset I^{\sigma
^{t^{\prime\prime}}(x)}$ is obvious. As far as $\mu\in\omega(x),$ we have
$\mathbf{T}_{\mu}^{\sigma^{t^{\prime\prime}}(x)}\cap\lbrack t^{\prime}%
,\infty)\neq\varnothing.$ Let $t\in\mathbf{T}_{\mu}^{\sigma^{t^{\prime\prime}%
}(x)}\cap\lbrack t^{\prime},\infty)$ arbitrary, fixed and we notice that
$\mathbf{T}_{\mu}^{\sigma^{t^{\prime\prime}}(x)}\cap\lbrack t^{\prime}%
,\infty)=\mathbf{T}_{\mu}^{x}\cap\lbrack t^{\prime},\infty).$ We take also
$z\in\mathbf{Z}$ arbitrary with $t+zT\geq t^{\prime}.$ Because in this
situation $t\in\mathbf{T}_{\mu}^{x}\cap\lbrack t^{\prime\prime\prime},\infty)$
and $t+zT\geq t^{\prime\prime\prime},$ we can apply (\ref{pre934}) and we
infer $t+zT\in\mathbf{T}_{\mu}^{x},$ i.e. $x(t+zT)=\mu=\sigma^{t^{\prime
\prime}}(x)(t+zT)$ and finally $t+zT\in\mathbf{T}_{\mu}^{\sigma^{t^{\prime
\prime}}(x)}.$

(\ref{pre760})$\Longrightarrow$(\ref{pre761}) Let $T>0$ arbitrary, fixed. From
(\ref{pre760}), some $\mu\in Or(x)$ exists such that%
\begin{equation}
\left\{
\begin{array}
[c]{c}%
\forall t^{\prime\prime}\in\mathbf{R},\exists t^{\prime}\in I^{\sigma
^{t^{\prime\prime}}(x)}\mathbf{,}\text{ }\\
\forall t\in\mathbf{T}_{\mu}^{\sigma^{t^{\prime\prime}}(x)}\cap\lbrack
t^{\prime},\infty),\{t+zT|z\in\mathbf{Z}\}\cap\lbrack t^{\prime}%
,\infty)\subset\mathbf{T}_{\mu}^{\sigma^{t^{\prime\prime}}(x)}.
\end{array}
\right.  \label{pre936}%
\end{equation}
The existence of $x(-\infty+0)$ shows that in (\ref{pre936}) we can choose
$t^{\prime\prime}\in\mathbf{R}$ sufficiently small such that $\sigma
^{t^{\prime\prime}}(x)=x.$ For this choice of $t^{\prime\prime},$
(\ref{pre936}) shows the existence of $t^{\prime}\in I^{x}$ with%
\begin{equation}
\forall t\in\mathbf{T}_{\mu}^{x}\cap\lbrack t^{\prime},\infty),\{t+zT|z\in
\mathbf{Z}\}\cap\lbrack t^{\prime},\infty)\subset\mathbf{T}_{\mu}^{x}
\label{pre937}%
\end{equation}
true. We have $Or(x)=\{x(t)|t\geq t^{\prime}\},$ wherefrom we get
$\mathbf{T}_{\mu}^{x}\cap\lbrack t^{\prime},\infty)\neq\varnothing.$ Let
$t\geq t^{\prime}$ arbitrary with $x(t)=\mu,$ in other words $t\in
\mathbf{T}_{\mu}^{x}\cap\lbrack t^{\prime},\infty).$ As far as%
\[
t+T\in\{t+zT|z\in\mathbf{Z}\}\cap\lbrack t^{\prime},\infty)\overset
{(\ref{pre937})}{\subset}\mathbf{T}_{\mu}^{x},
\]
we conclude that $t+T\in\mathbf{T}_{\mu}^{x},$ i.e. $x(t+T)=\mu=x(t).$ If in
addition $t-T\geq t^{\prime},$ then%
\[
t-T\in\{t+zT|z\in\mathbf{Z}\}\cap\lbrack t^{\prime},\infty)\overset
{(\ref{pre937})}{\subset}\mathbf{T}_{\mu}^{x},
\]
wherefrom $t-T\in\mathbf{T}_{\mu}^{x},$ i.e. $x(t-T)=\mu=x(t).$

(\ref{pre761})$\Longrightarrow$(\ref{pre762}) We take $T>0$ arbitrarily, for
which the truth of (\ref{pre761}) shows the existence of $\mu\in Or(x)$ and
$t^{\prime}\in I^{x}$ such that%
\begin{equation}
\forall t\geq t^{\prime},x(t)=\mu\Longrightarrow(x(t)=x(t+T)\text{ and
}t-T\geq t^{\prime}\Longrightarrow x(t)=x(t-T)). \label{pre938}%
\end{equation}
Let now $t_{1}^{\prime}\geq t^{\prime}$ arbitrary. As $\mu\in Or(x)$ we get
the existence of $t\geq t^{\prime}$ with $x(t)=\mu,$ wherefrom taking into
account (\ref{pre938}) we infer $\mu\in\omega(x).$ Let $t\geq t_{1}^{\prime}$
arbitrarily such that $x(t)=\mu$ (we can take such $t^{\prime}$s because
$\mu\in\omega(x)$ and $\mathbf{T}_{\mu}^{x}$ is superiorly unbounded). As
$t\geq t^{\prime},$ we conclude from (\ref{pre938}) that $x(t)=x(t+T)$ holds.
If in addition $t-T\geq t_{1}^{\prime},$ then $t-T\geq t^{\prime}$ and from
(\ref{pre938}) we have that $x(t)=x(t-T).$

(\ref{pre762})$\Longrightarrow$(\ref{pre763}) Let $T>0$ arbitrary, fixed.
(\ref{pre762}) shows the existence of $\mu\in Or(x)$ and $t^{\prime
\prime\prime}\in I^{x}$ such that, in the special case when $t_{1}^{\prime
}\geq t^{\prime\prime\prime}$ is true as equality, we have%
\begin{equation}
\forall t\geq t^{\prime\prime\prime},x(t)=\mu\Longrightarrow(x(t)=x(t+T)\text{
and }t-T\geq t^{\prime\prime\prime}\Longrightarrow x(t)=x(t-T)) \label{pre941}%
\end{equation}
and in particular we notice that $\mu\in\omega(x)$ and $\mathbf{T}_{\mu}^{x}$
is superiorly unbounded.

Let $t^{\prime\prime}\in\mathbf{R}$ arbitrary, fixed. We have the following possibilities.

Case $t^{\prime\prime}\leq t^{\prime\prime\prime}$

We have $\sigma^{t^{\prime\prime}}(x)=x$ and, from (\ref{pre941}) we have the
truth of (\ref{pre763}), with $t^{\prime}=t^{\prime\prime\prime}.$

Case $t^{\prime\prime}>t^{\prime\prime\prime}$

Some $\varepsilon>0$ exists with the property that $\forall t\in
(t^{\prime\prime}-\varepsilon,t^{\prime\prime}),x(t)=x(t^{\prime\prime}-0).$
We take $t^{\prime}\in(t^{\prime\prime}-\varepsilon,t^{\prime\prime}%
)\cap(t^{\prime\prime\prime},t^{\prime\prime})$ arbitrary. We notice that%
\[
\sigma^{t^{\prime\prime}}(x)(t)=\left\{
\begin{array}
[c]{c}%
x(t),t\geq t^{\prime},\\
x(t^{\prime\prime}-0),t<t^{\prime\prime},
\end{array}
\right.
\]
$t^{\prime}\in(-\infty,t^{\prime\prime})\subset I^{\sigma^{t^{\prime\prime}%
}(x)}$ hold and let now $t\geq t^{\prime}$ arbitrary with $\sigma
^{t^{\prime\prime}}(x)(t)=\mu.$ Such a choice of $t$ is possible since
$\mathbf{T}_{\mu}^{x}$ is superiorly unbounded. We infer from (\ref{pre941})
and Lemma \ref{Lem30}, page \pageref{Lem30} that%
\begin{equation}
\forall t\geq t^{\prime},x(t)=\mu\Longrightarrow(x(t)=x(t+T)\text{ and
}t-T\geq t^{\prime}\Longrightarrow x(t)=x(t-T)). \label{pre980}%
\end{equation}
As for $t\geq t^{\prime},\sigma^{t^{\prime\prime}}(x)(t)=x(t)$, we can apply
(\ref{pre980}) in order to conclude the truth of (\ref{pre763}).

(\ref{pre763})$\Longrightarrow$(\ref{per47}) We suppose against all reason
that $x$ is not constant, thus $t_{0}\in\mathbf{R}$ exists with%
\begin{equation}
\forall t<t_{0},x(t)=x(-\infty+0), \label{pre766}%
\end{equation}%
\begin{equation}
x(t_{0})\neq x(-\infty+0). \label{pre767}%
\end{equation}
Let $T>0$ arbitrary. Some $\mu\in Or(x)$ exists from the hypothesis
(\ref{pre763}) such that%
\begin{equation}
\left\{
\begin{array}
[c]{c}%
\forall t^{\prime\prime}\in\mathbf{R},\exists t^{\prime}\in I^{\sigma
^{t^{\prime\prime}}(x)}\text{,}\\
\forall t\geq t^{\prime},\sigma^{t^{\prime\prime}}(x)(t)=\mu\Longrightarrow
(\sigma^{t^{\prime\prime}}(x)(t)=\sigma^{t^{\prime\prime}}(x)(t+T)\text{
and}\\
\text{and }t-T\geq t^{\prime}\Longrightarrow\sigma^{t^{\prime\prime}%
}(x)(t)=\sigma^{t^{\prime\prime}}(x)(t-T)).
\end{array}
\right.  \label{pre765}%
\end{equation}
We take in (\ref{pre765}) $t^{\prime\prime}\leq t_{0},$ for which we have
$\sigma^{t^{\prime\prime}}(x)=x$ and from (\ref{pre766}), (\ref{pre767}),
(\ref{pre765}), $t^{\prime}<t_{0}$ exists with $t^{\prime}\in I^{x},$%
\begin{equation}
\forall t\geq t^{\prime},x(t)=\mu\Longrightarrow(x(t)=x(t+T)\text{ and
}t-T\geq t^{\prime}\Longrightarrow x(t)=x(t-T)). \label{pre777}%
\end{equation}
As $\mu\in Or(x)=\{x(t)|t\geq t^{\prime}\},$ some $t^{\prime\prime\prime}\geq
t^{\prime}$ exists with $x(t^{\prime\prime\prime})=\mu$ and, from
(\ref{pre777}), $\mu\in\omega(x).$ In both situations: $t^{\prime\prime\prime
}\in\lbrack t^{\prime},t_{0})$ and $t^{\prime\prime\prime}\geq t_{0},$ we have
the existence of $t_{1},t_{2}\in\mathbf{R}$ with the properties $t^{\prime
}\leq t_{1}<t_{2},$ $[t_{1},t_{2})\subset\mathbf{T}_{\mu}^{x}$ and at least
one of $x(t_{1}-0)\neq\mu,x(t_{2})\neq\mu$ is true. We note from Lemma
\ref{Lem28}, page \pageref{Lem28} that in this situation the inclusion%
\begin{equation}
\lbrack t_{1},t_{2})\cup\lbrack t_{1}+T,t_{2}+T)\cup\lbrack t_{1}%
+2T,t_{2}+2T)\cup...\subset\mathbf{T}_{\mu}^{x} \label{pre778}%
\end{equation}
holds. Moreover, Lemma \ref{Lem25}, page \pageref{Lem25} shows that $\forall
k\in\mathbf{N},$ one of $x(t_{1}+kT-0)\neq\mu,x(t_{2}+kT)\neq\mu$ is also fulfilled.

Let now $T^{\prime}\in(0,t_{2}-t_{1}).$ From the hypothesis (\ref{pre763}),
$\mu^{\prime}\in Or(x)$ exists such that%
\begin{equation}
\left\{
\begin{array}
[c]{c}%
\forall t^{\prime\prime}\in\mathbf{R},\exists t^{\prime}\in I^{\sigma
^{t^{\prime\prime}}(x)}\text{,}\\
\forall t\geq t^{\prime},\sigma^{t^{\prime\prime}}(x)(t)=\mu^{\prime
}\Longrightarrow(\sigma^{t^{\prime\prime}}(x)(t)=\sigma^{t^{\prime\prime}%
}(x)(t+T^{\prime})\text{ and}\\
\text{and }t-T^{\prime}\geq t^{\prime}\Longrightarrow\sigma^{t^{\prime\prime}%
}(x)(t)=\sigma^{t^{\prime\prime}}(x)(t-T^{\prime})).
\end{array}
\right.  \label{pre779}%
\end{equation}
For $t^{\prime\prime}\leq t_{0},$ as $\sigma^{t^{\prime\prime}}(x)=x,$
(\ref{pre766}), (\ref{pre767}), (\ref{pre779}) imply the existence of
$t_{0}^{\prime}<t_{0}$ such that $t_{0}^{\prime}\in I^{x},$%
\begin{equation}
\left\{
\begin{array}
[c]{c}%
\forall t\geq t_{0}^{\prime},x(t)=\mu^{\prime}\Longrightarrow\\
\Longrightarrow(x(t)=x(t+T^{\prime})\text{ and }t-T^{\prime}\geq t_{0}%
^{\prime}\Longrightarrow x(t)=x(t-T^{\prime})).
\end{array}
\right.  \label{pre780}%
\end{equation}
But $\mu^{\prime}\in Or(x)=\{x(t)|t\geq t_{0}^{\prime}\}$ and, like before,
$t_{1}^{\prime},t_{2}^{\prime}\in\mathbf{R}$ exist such that $t_{0}^{\prime
}\leq t_{1}^{\prime}<t_{2}^{\prime},$ $[t_{1}^{\prime},t_{2}^{\prime}%
)\subset\mathbf{T}_{\mu^{\prime}}^{x}$ and%
\begin{equation}
\lbrack t_{1}^{\prime},t_{2}^{\prime})\cup\lbrack t_{1}^{\prime}+T^{\prime
},t_{2}^{\prime}+T^{\prime})\cup\lbrack t_{1}^{\prime}+2T^{\prime}%
,t_{2}^{\prime}+2T^{\prime})\cup...\subset\mathbf{T}_{\mu^{\prime}}^{x}.
\end{equation}
The fact that $T^{\prime}<t_{2}-t_{1}$ implies however from Lemma \ref{Lem29},
page \pageref{Lem29} that%
\[
\varnothing\neq([t_{1},t_{2})\cup\lbrack t_{1}+T,t_{2}+T)\cup\lbrack
t_{1}+2T,t_{2}+2T)\cup...)\cap
\]%
\[
\cap([t_{1}^{\prime},t_{2}^{\prime})\cup\lbrack t_{1}^{\prime}+T^{\prime
},t_{2}^{\prime}+T^{\prime})\cup\lbrack t_{1}^{\prime}+2T^{\prime}%
,t_{2}^{\prime}+2T^{\prime})\cup...)\subset\mathbf{T}_{\mu}^{x}\cap
\mathbf{T}_{\mu^{\prime}}^{x},
\]
thus $\mu=\mu^{\prime}.$ As we have already mentioned, two possibilities exist.

Case $x(t_{1}-0)\neq\mu.$

Let $k\in\mathbf{N}$ with $t_{1}+kT>t_{0}^{\prime}.$ Some $\varepsilon>0$
exists with $\forall\xi\in(t_{1}+kT-\varepsilon,t_{1}+kT),x(\xi)=x(t_{1}%
+kT-0)$ and $t_{1}+kT-\varepsilon\geq t_{0}^{\prime}.$ But then $t\in
(t_{1}+kT-\varepsilon,t_{1}+kT)$ exists such that $t+T^{\prime}\in\lbrack
t_{1}+kT,t_{2}+kT)\footnote{Proving that $max\{t_{1}+kT-\varepsilon
,t_{1}+kT-T^{\prime}\}<\min\{t_{1}+kT,t_{2}+kT-T^{\prime}\}$ is easy and we
take $t\in(max\{t_{1}+kT-\varepsilon,t_{1}+kT-T^{\prime}\}<\min\{t_{1}%
+kT,t_{2}+kT-T^{\prime}\})$ arbitrarily.}$ and we have%
\[
\mu\overset{Lemma\;\ref{Lem25}}{\neq}x(t_{1}+kT-0)=x(t),
\]%
\[
\mu\overset{(\ref{pre778})}{=}x(t+T^{\prime})\overset{(\ref{pre780})\text{
with }\mu=\mu^{\prime}}{=}x(t),
\]
contradiction.

Case $x(t_{2})\neq\mu.$

Let $k\in\mathbf{N}$ such that $t_{1}+kT>t_{0}^{\prime}$ and $t\in\lbrack
t_{1}+kT,t_{2}+kT)$ such that $t+T^{\prime}=t_{2}+kT\footnote{Such a $t$
exists since $t_{1}+kT\leq t_{2}+kT-T^{\prime}<t_{2}+kT$ holds.}. $ We have%
\[
\mu\overset{(\ref{pre778})}{=}x(t)\overset{(\ref{pre780})\text{ with }\mu
=\mu^{\prime}}{=}x(t+T^{\prime})=x(t_{2}+kT)\overset{(\ref{pre777})}{=}%
x(t_{2})\overset{Lemma\;\ref{Lem25}}{\neq}\mu,
\]
contradiction.

We have obtained that $x$ is constant.
\end{proof}

\section{The fourth group of constancy properties}

\begin{remark}
The constancy properties to follow have their origin in the eventual constancy
properties from Theorem \ref{The100}, page \pageref{The100} and they use the
periodicity and the eventual periodicity of the signals. We see that:

- (\ref{per75}) and (\ref{pre508}) (the last contains $\forall k^{\prime}%
\in\mathbf{N}_{\_}$) have their origin in (\ref{per83}%
)$_{page\;\pageref{per83}}$ (containing $\exists k^{\prime}\in\mathbf{N}_{\_}$);

- (\ref{pre509}) (containing $\forall k^{\prime\prime}\in\mathbf{N}$) has its
origin in (\ref{pre515})$_{page\;\pageref{pre515}}$ (containing $\exists
k^{\prime\prime}\in\mathbf{N}$);

- (\ref{per76}) and (\ref{pre512}) (the last contains $\forall t_{1}^{\prime
}\geq t^{\prime}$) have their origin in (\ref{pre557}%
)$_{page\;\pageref{pre557}}$ (containing $\exists t_{1}^{\prime}\geq
t^{\prime}$) and (\ref{pre601})$_{page\;\pageref{pre601}}$ (containing
$\exists t_{1}^{\prime}\in\mathbf{R}$);

- (\ref{pre513}) (containing $\forall t^{\prime\prime}\in\mathbf{R}$) has its
origin in (\ref{pre558})$_{page\;\pageref{pre558}}$ and (\ref{pre602}%
)$_{page\;\pageref{pre602}}$ (containing both $\exists t^{\prime\prime}%
\in\mathbf{R}$).
\end{remark}

\begin{theorem}
\label{The96}a) The following properties are equivalent with the constancy of
the signal $\widehat{x}\in\widehat{S}^{(n)}$:%
\begin{equation}
\forall p\geq1,\forall k\in\mathbf{N}_{\_},\widehat{x}(k)=\widehat{x}(k+p),
\label{per75}%
\end{equation}%
\begin{equation}
\forall p\geq1,\forall k^{\prime}\in\mathbf{N}_{\_},\forall k\geq k^{\prime
},\widehat{x}(k)=\widehat{x}(k+p), \label{pre508}%
\end{equation}%
\begin{equation}
\forall p\geq1,\forall k^{\prime\prime}\in\mathbf{N},\forall k\in
\mathbf{N}_{\_},\widehat{\sigma}^{k^{\prime\prime}}(\widehat{x})(k)=\widehat
{\sigma}^{k^{\prime\prime}}(\widehat{x})(k+p). \label{pre509}%
\end{equation}

b) The following properties are equivalent with the constancy of $x\in
S^{(n)}$:%
\begin{equation}
\forall T>0,\exists t^{\prime}\in I^{x},\forall t\geq t^{\prime},x(t)=x(t+T),
\label{per76}%
\end{equation}%
\begin{equation}
\forall T>0,\exists t^{\prime}\in I^{x},\forall t_{1}^{\prime}\geq t^{\prime
},\forall t\geq t_{1}^{\prime},x(t)=x(t+T), \label{pre512}%
\end{equation}%
\begin{equation}
\forall T>0,\forall t^{\prime\prime}\in\mathbf{R},\exists t^{\prime}\in
I^{\sigma^{t^{\prime\prime}}(x)},\forall t\geq t^{\prime},\sigma
^{t^{\prime\prime}}(x)(t)=\sigma^{t^{\prime\prime}}(x)(t+T). \label{pre513}%
\end{equation}

\end{theorem}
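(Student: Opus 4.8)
The plan is to prove each part through a short cycle of implications, isolating the single step that needs work. Part a) is entirely elementary, in the style of the argument for (\ref{pre515}) in Theorem \ref{The100}. The constancy of $\widehat{x}$ trivially gives (\ref{per75}), since $\widehat{x}(k)=\mu=\widehat{x}(k+p)$ for all $k,p$. For (\ref{per75})$\Longrightarrow$(\ref{pre508}) note that $\{k\mid k\geq k^{\prime}\}\subset\mathbf{N}_{\_}$; for (\ref{pre508})$\Longrightarrow$(\ref{pre509}) specialize $k^{\prime}=-1$ in (\ref{pre508}) and apply it at the index $k+k^{\prime\prime}\geq-1$, obtaining $\widehat{x}(k+k^{\prime\prime})=\widehat{x}(k+k^{\prime\prime}+p)$, i.e. $\widehat{\sigma}^{k^{\prime\prime}}(\widehat{x})(k)=\widehat{\sigma}^{k^{\prime\prime}}(\widehat{x})(k+p)$. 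Finally (\ref{pre509}) with $p=1$, $k^{\prime\prime}=0$ reads $\forall k\in\mathbf{N}_{\_},\widehat{x}(k)=\widehat{x}(k+1)$, so $\widehat{x}(-1)=\widehat{x}(0)=\widehat{x}(1)=...$ and $\widehat{x}$ is constant by Theorem \ref{The13}.

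For part b) I would first settle the easy implications. Constancy of $x$ gives (\ref{per76}) by taking any $t^{\prime}\in I^{x}$ and noting $x(t)=\mu=x(t+T)$; (\ref{per76})$\Longrightarrow$(\ref{pre512}) holds because $\forall t\geq t^{\prime}$ already entails $\forall t\geq t_{1}^{\prime}$ for each $t_{1}^{\prime}\geq t^{\prime}$; and (\ref{pre513})$\Longrightarrow$(\ref{per76}) follows by choosing $t^{\prime\prime}$ small enough that $\sigma^{t^{\prime\prime}}(x)=x$ and $I^{\sigma^{t^{\prime\prime}}(x)}=I^{x}$ (Theorem \ref{Pro2} a)), since then (\ref{pre513}) is literally (\ref{per76}). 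The implication requiring work is (\ref{pre512})$\Longrightarrow$(\ref{pre513}): given $T>0$, $t^{\prime\prime}\in\mathbf{R}$, and the point $t_{1}^{\prime}\in I^{x}$ supplied by (\ref{pre512}) for the period $T$, if $t^{\prime\prime}\leq t_{1}^{\prime}$ then $\sigma^{t^{\prime\prime}}(x)=x$ and we reuse $t^{\prime}=t_{1}^{\prime}$; if $t^{\prime\prime}>t_{1}^{\prime}$ we pick (via Theorem \ref{The1}) a small $\varepsilon>0$ with $x$ constant equal to $x(t^{\prime\prime}-0)$ on $(t^{\prime\prime}-\varepsilon,t^{\prime\prime})$ and $t^{\prime\prime}-\varepsilon>t_{1}^{\prime}$, then take $t^{\prime}\in(t^{\prime\prime}-\varepsilon,t^{\prime\prime})$; one checks that the initial value of $\sigma^{t^{\prime\prime}}(x)$ is $x(t^{\prime\prime}-0)$ so $t^{\prime}\in I^{\sigma^{t^{\prime\prime}}(x)}$, that $\sigma^{t^{\prime\prime}}(x)(t)=x(t)$ for $t\geq t^{\prime}$, and that (\ref{pre512}) applied with $t_{1}^{\prime}$ replaced by $t^{\prime}\geq t_{1}^{\prime}$ gives $\sigma^{t^{\prime\prime}}(x)(t)=x(t)=x(t+T)=\sigma^{t^{\prime\prime}}(x)(t+T)$ for $t\geq t^{\prime}$. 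This is the same forgetful-function bookkeeping that appears in the proofs of Theorems \ref{The97} and \ref{The95}.

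It remains to prove (\ref{per76})$\Longrightarrow$ constancy, which closes the loop. I would route this through eventual constancy: (\ref{per76}) implies (\ref{pre601}), so Theorem \ref{The100} b) gives that $x$ is eventually constant, $\omega(x)=\{\mu\}$, and some $t^{*}$ satisfies $\forall t\geq t^{*},x(t)=\mu$. Suppose, for contradiction, $x$ is not constant; then by Theorem \ref{The143} b) there is $t_{0}$ with $I^{x}=(-\infty,t_{0})$, $x(t_{0})\neq x(-\infty+0)$, and $x$ equal to $x(-\infty+0)$ on $(-\infty,t_{0})$. In the case $x(-\infty+0)=\nu\neq\mu$: fix any $T>0$ and let $t^{\prime}\in(-\infty,t_{0})$ be as in (\ref{per76}); then $x\equiv\nu$ on the nonempty interval $[t^{\prime},t_{0})$, hence, iterating $x(t)=x(t+T)$, on every $[t^{\prime}+kT,t_{0}+kT)$ with $k\geq0$, so $\mathbf{T}_{\nu}^{x}$ is unbounded above and $\nu\in\omega(x)$ (Theorem \ref{The12}), contradicting $\omega(x)=\{\mu\}$. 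In the case $x(-\infty+0)=\mu$: here $x(t_{0})=\nu\neq\mu$ and hence $t_{0}<t^{*}$; applying (\ref{per76}) with $T=t^{*}-t_{0}>0$ we get $t^{\prime}\in(-\infty,t_{0})$, so $t_{0}>t^{\prime}$ and $\nu=x(t_{0})=x(t_{0}+T)=x(t^{*})=\mu$, a contradiction. Therefore $x$ is constant.

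The main obstacle is precisely this last implication: one must exploit that the limit of periodicity $t^{\prime}$ is forced to lie in $I^{x}$, which is exactly what rules out a nonconstant "first-values" segment, and is the reason the real-time statements must require $t^{\prime}\in I^{x}$ (respectively $t^{\prime}\in I^{\sigma^{t^{\prime\prime}}(x)}$) rather than an arbitrary $t^{\prime}\in\mathbf{R}$, unlike the eventual-constancy Theorem \ref{The100}. The case analysis in (\ref{pre512})$\Longrightarrow$(\ref{pre513}) is the only other place demanding attention, and it is routine manipulation of $\sigma^{t^{\prime\prime}}$ together with right continuity and the existence of left limits.
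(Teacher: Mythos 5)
Your proof is correct, and part a) together with the implications constancy $\Longrightarrow$ (\ref{per76}) $\Longrightarrow$ (\ref{pre512}) $\Longrightarrow$ (\ref{pre513}) follows the paper essentially verbatim (same forgetful-function case split on $t^{\prime\prime}$ versus the witness from (\ref{pre512})). Where you genuinely diverge is in closing the cycle in part b). The paper proves (\ref{pre513})$\Longrightarrow$(\ref{per47}) directly: assuming $x$ non constant, it isolates the first interval $[t_{0},t_{1})$ on which $x$ differs from $x(-\infty+0)$, instantiates the period at some $T\in(0,t_{1}-t_{0})$, and gets a contradiction from $x(-\infty+0)=x(t)=x(t+T)\neq x(-\infty+0)$ for $t$ just below $t_{0}$. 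You instead prove (\ref{per76})$\Longrightarrow$ constancy by first passing to eventual constancy via Theorem \ref{The100} b) (legitimate, since that theorem lives in the earlier chapter), so that $\omega(x)=\{\mu\}$, and then splitting on whether $x(-\infty+0)$ equals $\mu$: if not, the periodicity propagates the initial segment forward and forces $x(-\infty+0)\in\omega(x)$; if so, the specific period $T=t^{*}-t_{0}$ drags the value at the first discontinuity out to the constant tail. Both arguments hinge on the same two levers — the quantifier $\forall T>0$ and the constraint $t^{\prime}\in I^{x}$ — but yours buys a shorter contradiction at the price of importing the eventual-constancy machinery, while the paper's is self-contained and makes visible exactly which small periods are incompatible with a non-trivial initial segment. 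Your concluding observation about why $t^{\prime}\in I^{x}$ cannot be weakened to $t^{\prime}\in\mathbf{R}$ is exactly the right diagnosis and matches the paper's contrast with Theorem \ref{The100}.
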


\begin{proof}
a) (\ref{per46})$\Longrightarrow$(\ref{per75}) We suppose that $\mu
\in\mathbf{B}^{n}$ exists with $\forall k\in\mathbf{N}_{\_},\widehat{x}%
(k)=\mu$ and let $p\geq1,k\in\mathbf{N}_{\_}$ arbitrary. We have
\[
\widehat{x}(k)=\mu=\widehat{x}(k+p),
\]
making (\ref{per75}) true.

(\ref{per75})$\Longrightarrow$(\ref{pre508}) Let $p\geq1,k^{\prime}%
\in\mathbf{N}_{\_}$,$k\geq k^{\prime}$ arbitrary$.$ From (\ref{per75}) we
infer that $\widehat{x}(k)=\widehat{x}(k+p).$

(\ref{pre508})$\Longrightarrow$(\ref{pre509}) We take $p\geq1,k^{\prime\prime
}\in\mathbf{N},k\in\mathbf{N}_{\_}$ arbitrarily. We denote $k^{\prime
}=k^{\prime\prime}-1$ and we notice that $k+k^{\prime\prime}=k+k^{\prime
}+1\geq k^{\prime},$ thus we can apply (\ref{pre508}) and we obtain%
\[
\widehat{\sigma}^{k^{\prime\prime}}(\widehat{x})(k)=\widehat{x}(k+k^{\prime
\prime})=\widehat{x}(k+k^{\prime}+1)\overset{(\ref{pre508})}{=}\widehat
{x}(k+k^{\prime}+1+p)=
\]%
\[
=\widehat{x}(k+k^{\prime\prime}+p)=\widehat{\sigma}^{k^{\prime\prime}%
}(\widehat{x})(k+p).
\]

(\ref{pre509})$\Longrightarrow$(\ref{per46}) We write (\ref{pre509}) for $p=1$
and $k^{\prime\prime}=0,$ when $\widehat{\sigma}^{k^{\prime\prime}}%
(\widehat{x})(k)=\widehat{x}(k),$ with $k=-1,0,1,...$ and we get%
\[
\widehat{x}(-1)=\widehat{x}(0)=\widehat{x}(1)=...
\]
We denote with $\mu$ the common value of $\widehat{x}(-1),\widehat
{x}(0),\widehat{x}(1),...$ (\ref{per46}) holds.

b) (\ref{per47})$\Longrightarrow$(\ref{per76}) If $\mu\in\mathbf{B}^{n}$
exists with $\forall t\in\mathbf{R},x(t)=\mu,$ then for arbitrary $T>0$ and
$t^{\prime}\in\mathbf{R},$
\[
\forall t\leq t^{\prime},x(t)=\mu,
\]%
\[
\forall t\geq t^{\prime},x(t)=x(t+T)=\mu
\]
hold. We have that $t^{\prime}\in I^{x}$ and (\ref{per76}) is true.

(\ref{per76})$\Longrightarrow$(\ref{pre512}) Let $T>0$ arbitrary.
(\ref{per76}) shows that $t^{\prime}\in I^{x}$ exists such that%
\begin{equation}
\forall t\geq t^{\prime},x(t)=x(t+T). \label{pre671}%
\end{equation}
We take $t_{1}^{\prime}\geq t^{\prime}$ and $t\geq t_{1}^{\prime}$
arbitrarily. From the fact that $t\geq t^{\prime},$ the statement
(\ref{pre671}) gives $x(t)=x(t+T),$ i.e. (\ref{pre512}) is true.

(\ref{pre512})$\Longrightarrow$(\ref{pre513}) Let $T>0$ arbitrary.
(\ref{pre512}) shows the existence of $t^{\prime\prime\prime}\in I^{x}$ such
that, in the special case when $t_{1}^{\prime}\geq t^{\prime\prime\prime} $
holds as equality$,$ the statement%
\begin{equation}
\forall t\geq t^{\prime\prime\prime},x(t)=x(t+T) \label{pre673}%
\end{equation}
is fulfilled. We suppose that an arbitrary $t^{\prime\prime}\in\mathbf{R}$ is
given and we have the following possibilities.

Case $t^{\prime\prime}\leq t^{\prime\prime\prime}$

From $\forall t\leq t^{\prime\prime\prime},x(t)=x(-\infty+0)$ we infer that
$\sigma^{t^{\prime\prime}}(x)=x$ and, taking into account (\ref{pre673})
also$,$ we get that (\ref{pre513}) is true with $t^{\prime}=t^{\prime
\prime\prime}.$

Case $t^{\prime\prime}>t^{\prime\prime\prime}$

Some $\varepsilon>0$ exists with the property that $\forall t\in
(t^{\prime\prime}-\varepsilon,t^{\prime\prime}),x(t)=x(t^{\prime\prime}-0).$
We take arbitrarily a $t^{\prime}\in(t^{\prime\prime}-\varepsilon
,t^{\prime\prime})\cap(t^{\prime\prime\prime},t^{\prime\prime})$ and we get
that $\forall t\leq t^{\prime},\sigma^{t^{\prime\prime}}(x)(t)=x(t^{\prime
\prime}-0)$ is true. We notice that for any $t\geq t^{\prime}$ we have
$\sigma^{t^{\prime\prime}}(x)(t)=x(t),$ irrespective of the fact that
$t<t^{\prime\prime}$ or $t\geq t^{\prime\prime}$ and let us fix an arbitrary
$t\geq t^{\prime}.$ We have%
\[
\sigma^{t^{\prime\prime}}(x)(t)=x(t)\overset{(\ref{pre673})}{=}x(t+T)=\sigma
^{t^{\prime\prime}}(x)(t+T).
\]

(\ref{pre513})$\Longrightarrow$(\ref{per47}) Let us suppose against all reason
that (\ref{per47}) is false, meaning that $t_{0}<t_{1}$ exist with the
property%
\begin{equation}
\forall t<t_{0},x(t)=x(-\infty+0), \label{pre675}%
\end{equation}%
\begin{equation}
\forall t\in\lbrack t_{0},t_{1}),x(t)\neq x(-\infty+0). \label{pre676}%
\end{equation}
We write (\ref{pre513}) for $T\in(0,t_{1}-t_{0})$ and $t^{\prime\prime}$
sufficiently small such that $\sigma^{t^{\prime\prime}}(x)=x$ and we obtain
the existence of $t^{\prime}\in I^{x}$ with%
\begin{equation}
\forall t\geq t^{\prime},x(t)=x(t+T). \label{pre982}%
\end{equation}
From (\ref{pre675}), (\ref{pre676}) we infer $t^{\prime}<t_{0}.$

Let now $t\in\lbrack t^{\prime},t_{0})\cap\lbrack t_{0}-T,t_{0})$ fixed. We
have $t+T\in\lbrack t_{0},t_{0}+T)\subset\lbrack t_{0},t_{1}),$ thus%
\[
x(-\infty+0)=x(t)\overset{(\ref{pre982})}{=}x(t+T)\overset{(\ref{pre676}%
)}{\neq}x(-\infty+0),
\]
contradiction. We conclude that (\ref{per47}) holds.
\end{proof}

\section{Discrete time vs real time}

\begin{theorem}
\label{The112}Let us suppose that the sequence $(t_{k})\in Seq$ exists such
that%
\begin{equation}
x(t)=\widehat{x}(-1)\cdot\chi_{(-\infty,t_{0})}(t)\oplus\widehat{x}%
(0)\cdot\chi_{\lbrack t_{0},t_{1})}(t)\oplus...\oplus\widehat{x}(k)\cdot
\chi_{\lbrack t_{k},t_{k+1})}(t)\oplus...
\end{equation}
Then the constancy of $\widehat{x}$ is equivalent with the constancy of $x$.
\end{theorem}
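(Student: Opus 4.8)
The plan is to exploit the representation hypothesis directly together with the already-established first group of constancy properties (Theorem \ref{The13}), which recasts constancy in terms of the orbit. The key observation is that, when $x$ is given by the formula $x(t)=\widehat{x}(-1)\cdot\chi_{(-\infty,t_{0})}(t)\oplus\widehat{x}(0)\cdot\chi_{\lbrack t_{0},t_{1})}(t)\oplus\dots$, the orbit of $x$ and the orbit of $\widehat{x}$ coincide: $Or(x)=\widehat{Or}(\widehat{x})$. This is exactly part a) of Theorem \ref{The114}, which I would invoke as a black box. So the whole argument reduces to comparing $\widehat{Or}(\widehat{x})=\{\mu\}$ with $Or(x)=\{\mu\}$, and these are literally the same set.

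More concretely, I would argue as follows. First suppose $\widehat{x}$ is constant. By the equivalence (\ref{per77}) of Theorem \ref{The13} a), this means $\widehat{Or}(\widehat{x})=\{\mu\}$ for some $\mu\in\mathbf{B}^{n}$. By Theorem \ref{The114} a), $Or(x)=\widehat{Or}(\widehat{x})=\{\mu\}$, hence by the equivalence (\ref{per78}) of Theorem \ref{The13} b), $x$ is constant. The converse is the mirror image: if $x$ is constant then $Or(x)=\{\mu\}$ by (\ref{per78}), so $\widehat{Or}(\widehat{x})=Or(x)=\{\mu\}$ by Theorem \ref{The114} a) again, and $\widehat{x}$ is constant by (\ref{per77}). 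That is the entire proof; there are no cases to distinguish and no $\varepsilon$'s to chase.

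Alternatively, if one prefers not to lean on Theorem \ref{The114}, the same fact can be seen by hand: the set of values of $x$ is $\{\widehat{x}(-1)\}\cup\{\widehat{x}(k)\mid k\in\mathbf{N}\}=\{\widehat{x}(k)\mid k\in\mathbf{N}_{\_}\}$, which is precisely $\widehat{Or}(\widehat{x})$, so $|Or(x)|=1$ iff $|\widehat{Or}(\widehat{x})|=1$. I would most likely present the short version routed through Theorem \ref{The114} and Theorem \ref{The13}, since those are the natural references in context, and note parenthetically that Theorem \ref{The112} is, like Theorem \ref{The113}, just a specialization of Theorem \ref{The92}. I do not expect any real obstacle here: the only thing to be careful about is citing the correct numbered equivalences from Theorem \ref{The13} and making sure the orbit-equality direction of Theorem \ref{The114} a) is genuinely unconditional (it is — unlike the $Or$ versus $\widehat{Or}$ statement in part b), the equality $\widehat{Or}(\widehat{x})=Or(x)$ in part a) holds with no side condition on $\widetilde{k},\widetilde{t}$).
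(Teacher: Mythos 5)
Your argument is correct and is essentially the paper's own proof: the paper likewise invokes Theorem \ref{The114} a) to get $\widehat{Or}(\widehat{x})=Or(x)$ and then concludes via the equivalence of (\ref{per77}) and (\ref{per78}) from Theorem \ref{The13}. Nothing further is needed.
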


\begin{proof}
Obvious, but let us take a look at Theorem \ref{The114} a) also, page
\pageref{The114}, stating that the hypothesis implies $\widehat{Or}%
(\widehat{x})=Or(x).$ We infer then the equivalence between (\ref{per77}%
)$_{page\;\pageref{per77}}$ and (\ref{per78})$_{page\;\pageref{per78}}$:%
\[
\exists\mu\in\mathbf{B}^{n},\widehat{Or}(\widehat{x})=\{\mu
\}\Longleftrightarrow\exists\mu\in\mathbf{B}^{n},Or(x)=\{\mu\}.
\]

\end{proof}

\section{Discussion}

\begin{remark}
The statements from Theorems \ref{The13}, \ref{The95}, \ref{The94},
\ref{The96} are present in discrete time - real time couples: (\ref{per46}%
)-(\ref{per47}), (\ref{per129})-(\ref{per126}),... This continues the previous
style of organizing the exposure, corresponding to our intuition that strong
analogies work between the discrete time and the real time properties of the
signals. Theorem \ref{The112} gives the relation between the discrete time and
the real time constancy of the signals.
\end{remark}

\begin{remark}
A common point, of intersection of the three groups 2,3,4 of properties of
periodicity exists, in the sense that the periodicity of a signal is present
i.e. all the points of its orbit are periodic, with the same period.
\end{remark}

\begin{remark}
\label{Rem2}The key request in all these periodicity properties in order to be
equivalent with constancy is that they hold for any period $p\geq1,T>0.$
\end{remark}

\begin{remark}
\label{Rem15}a) In (\ref{per46}),..., (\ref{per77}) the existence of a unique
$\mu\in\mathbf{B}^{n}$ is stated. Similarly, in (\ref{per47}),...,
(\ref{per78}) $\exists\mu\in\mathbf{B}^{n}$ must be understood as $\exists
!\mu\in\mathbf{B}^{n}.$
\end{remark}

\begin{remark}
The statement%
\[
\forall k\in\widehat{\mathbf{T}}_{\mu}^{\widehat{x}},\{k+zp|z\in
\mathbf{Z}\}\cap\mathbf{N}_{\_}\subset\widehat{\mathbf{T}}_{\mu}^{\widehat{x}}%
\]
from (\ref{per185}) is one of periodicity of $\mu$ with the period $p$ and the
statement%
\[
\exists t^{\prime}\in I^{x},\forall t\in\mathbf{T}_{\mu}^{x}\cap\lbrack
t^{\prime},\infty),\{t+zT|z\in\mathbf{Z}\}\cap\lbrack t^{\prime}%
,\infty)\subset\mathbf{T}_{\mu}^{x}%
\]
from (\ref{per186}) is one of periodicity of $\mu$ with the period $T.$ Both
these requirements are related with an initial time=limit of periodicity,
which is $-1$ and $t^{\prime}.$ Their demand is that if $\widehat{x}%
(k)=\mu,x(t)=\mu,$ then right translations along the time axis are allowed
giving the same value $\mu$ of $\widehat{x},x:$ $\widehat{x}(k+zp)=\mu
,z\geq0,x(t+zT)=\mu,z\geq0$ and left translations along the time axis are also
allowed as long as the argument still exceeds the limit of periodicity- and
they give the same value $\mu$ of $\widehat{x},x:$ $\widehat{x}(k+zp)=\mu
,z<0,x(t+zT)=\mu,z<0.$ And this should happen for all the periods $p\geq1,T>0$
and all the points of the orbit $\mu\in\widehat{Or}(\widehat{x}),\mu\in
Or(x).$
\end{remark}

\begin{remark}
The properties%
\[
\left\{
\begin{array}
[c]{c}%
\forall k^{\prime}\in\mathbf{N}_{\_},\forall k\in\widehat{\mathbf{T}}_{\mu
}^{\widehat{x}}\cap\{k^{\prime},k^{\prime}+1,k^{\prime}+2,...\},\\
\{k+zp|z\in\mathbf{Z}\}\cap\{k^{\prime},k^{\prime}+1,k^{\prime}+2,...\}\subset
\widehat{\mathbf{T}}_{\mu}^{\widehat{x}},
\end{array}
\right.
\]%
\[
\exists t^{\prime}\in I^{x},\forall t_{1}^{\prime}\geq t^{\prime},\forall
t\in\mathbf{T}_{\mu}^{x}\cap\lbrack t_{1}^{\prime},\infty),\{t+zT|z\in
\mathbf{Z}\}\cap\lbrack t_{1}^{\prime},\infty)\subset\mathbf{T}_{\mu}^{x}%
\]
from (\ref{pre506}), (\ref{pre548}) are of eventual periodicity of $\mu$ with
the period $p,T.$ Here the periodicity of $\mu$ starts not from the very
beginning $-1,t^{\prime}$ like previously, but from a time instant $k^{\prime
}\in\mathbf{N}_{\_},t_{1}^{\prime}\geq t^{\prime}.$ In order to have
periodicity, we ask that such properties hold for any $k^{\prime}%
,t_{1}^{\prime}.$ And in order to rediscover constancy, they should hold for
any $p,T$ and any $\mu\in\widehat{Or}(\widehat{x}),\mu\in Or(x).$
\end{remark}

\begin{remark}
\label{Rem26}Let us fix in (\ref{pre507}) $p\geq1,\mu\in\widehat{Or}%
(\widehat{x})$ and $k^{\prime\prime}\in\mathbf{N}.$ In general we have
$\widehat{Or}(\widehat{\sigma}^{k^{\prime\prime}}(\widehat{x}))\subset
\widehat{Or}(\widehat{x})$ (we have shown this at Theorem \ref{The12} c), page
\pageref{The12}) and the points $\mu\in\widehat{Or}(\widehat{x})\smallsetminus
\widehat{Or}(\widehat{\sigma}^{k^{\prime\prime}}(\widehat{x}))$ may satisfy
$\widehat{\mathbf{T}}_{\mu}^{\widehat{\sigma}^{k^{\prime\prime}}(\widehat{x}%
)}=\varnothing,$ when%
\[
\forall k\in\widehat{\mathbf{T}}_{\mu}^{\widehat{\sigma}^{k^{\prime\prime}%
}(\widehat{x})},\{k+zp|z\in\mathbf{Z}\}\cap\mathbf{N}_{\_}\subset
\widehat{\mathbf{T}}_{\mu}^{\widehat{\sigma}^{k^{\prime\prime}}(\widehat{x})}%
\]
is trivially fulfilled. This is not the case if the previous property takes
place for any $k^{\prime\prime}\in\mathbf{N},$ including the case
$k^{\prime\prime}=0,$ when $\widehat{Or}(\widehat{\sigma}^{k^{\prime\prime}%
}(\widehat{x}))=\widehat{Or}(\widehat{x}).$ This discussion is in principle
the same for (\ref{pre549}).
\end{remark}

\begin{remark}
The requests%
\[
\forall k\in\widehat{\mathbf{T}}_{\mu}^{\widehat{\sigma}^{k^{\prime\prime}%
}(\widehat{x})},\{k+zp|z\in\mathbf{Z}\}\cap\mathbf{N}_{\_}\subset
\widehat{\mathbf{T}}_{\mu}^{\widehat{\sigma}^{k^{\prime\prime}}(\widehat{x}%
)},
\]%
\[
\left\{
\begin{array}
[c]{c}%
\forall k\in\mathbf{N}_{\_},\widehat{\sigma}^{k^{\prime\prime}}(\widehat
{x})(k)=\mu\Longrightarrow(\widehat{\sigma}^{k^{\prime\prime}}(\widehat
{x})(k)=\widehat{\sigma}^{k^{\prime\prime}}(\widehat{x})(k+p)\text{ and }\\
\text{and }k-p\geq-1\Longrightarrow\widehat{\sigma}^{k^{\prime\prime}%
}(\widehat{x})(k)=\widehat{\sigma}^{k^{\prime\prime}}(\widehat{x})(k-p))
\end{array}
\right.
\]
derived from (\ref{pre507}), (\ref{pre537}) and the requests%
\[
\exists t^{\prime}\in I^{\sigma^{t^{\prime\prime}}(x)},\forall t\in
\mathbf{T}_{\mu}^{\sigma^{t^{\prime\prime}}(x)}\cap\lbrack t^{\prime}%
,\infty),\{t+zT|z\in\mathbf{Z}\}\cap\lbrack t^{\prime},\infty)\subset
\mathbf{T}_{\mu}^{\sigma^{t^{\prime\prime}}(x)},
\]%
\[
\left\{
\begin{array}
[c]{c}%
\exists t^{\prime}\in I^{\sigma^{t^{\prime\prime}}(x)},\forall t\geq
t^{\prime},\sigma^{t^{\prime\prime}}(x)(t)=\mu\Longrightarrow(\sigma
^{t^{\prime\prime}}(x)(t)=\sigma^{t^{\prime\prime}}(x)(t+T)\text{ and}\\
\text{and }t-T\geq t^{\prime}\Longrightarrow\sigma^{t^{\prime\prime}%
}(x)(t)=\sigma^{t^{\prime\prime}}(x)(t-T))
\end{array}
\right.
\]
derived from (\ref{pre549}), (\ref{pre552}) are of eventual periodicity of
$\mu$ with the period $p,T.$ In such requests, the fact that periodicity might
not start from the very beginning is indicated by working with the signals
$\widehat{\sigma}^{k^{\prime\prime}}(\widehat{x}),\sigma^{t^{\prime\prime}%
}(x)$ that have forgotten their first values. Note that all these properties
are of periodicity of $\mu$ -related to $\widehat{\sigma}^{k^{\prime\prime}%
}(\widehat{x}),\sigma^{t^{\prime\prime}}(x)$ instead of $\widehat{x},x$- and
that they must hold, for constancy, for all $p,T,$ all $\mu\in\widehat
{Or}(\widehat{x}),\mu\in Or(x)$ and all $k^{\prime\prime}\in\mathbf{N}%
,t^{\prime\prime}\in\mathbf{R}.$
\end{remark}

\begin{remark}
The statements%
\[
\left\{
\begin{array}
[c]{c}%
\forall k\in\mathbf{N}_{\_},\widehat{x}(k)=\mu\Longrightarrow\\
\Longrightarrow(\widehat{x}(k)=\widehat{x}(k+p)\text{ and }k-p\geq
-1\Longrightarrow\widehat{x}(k)=\widehat{x}(k-p)),
\end{array}
\right.
\]%
\[
\exists t^{\prime}\in I^{x},\forall t\geq t^{\prime},x(t)=\mu\Longrightarrow
(x(t)=x(t+T)\text{ and }t-T\geq t^{\prime}\Longrightarrow x(t)=x(t-T))
\]
from (\ref{pre522}), (\ref{pre550}) refer also to the periodicity of $\mu$
with the period $p,T.$ The difference from the previous property consists in
the fact that the translations along the time axis are with one period only,
and the general case is rediscovered by iterating these translations. We must
have periodicity with any period $p,T$, of all the points $\mu\in\widehat
{Or}(\widehat{x}),\mu\in Or(x)$ for constancy.
\end{remark}

\begin{remark}
The case of (\ref{pre523}) is similar with that of (\ref{pre507}) (see Remark
\ref{Rem26}). Points $\mu\in\widehat{Or}(\widehat{x})$ might exist for which
$\widehat{x}(k)=\mu$ is false if $k\geq k^{\prime}$ and then
\begin{equation}
\left\{
\begin{array}
[c]{c}%
\forall k\geq k^{\prime},\widehat{x}(k)=\mu\Longrightarrow\\
\Longrightarrow(\widehat{x}(k)=\widehat{x}(k+p)\text{ and }k-p\geq k^{\prime
}\Longrightarrow\widehat{x}(k)=\widehat{x}(k-p))
\end{array}
\right.  \label{p124}%
\end{equation}
is trivially true. This is not the case, because the truth of (\ref{p124})
includes the value $k^{\prime}=-1.$ The remark holds also for%
\begin{equation}
\left\{
\begin{array}
[c]{c}%
\forall t\geq t_{1}^{\prime},x(t)=\mu\Longrightarrow\\
\Longrightarrow(x(t)=x(t+T)\text{ and }t-T\geq t_{1}^{\prime}\Longrightarrow
x(t)=x(t-T))
\end{array}
\right.  \label{p125}%
\end{equation}
and (\ref{pre551}).
\end{remark}

\begin{remark}
The requests (\ref{p124}), (\ref{p125}) derived from (\ref{pre523}),
(\ref{pre551}) are also of eventual periodicity of $\mu,$ i.e. periodicity
starting at $k^{\prime}\in\mathbf{N}_{\_}$ and $t_{1}^{\prime}\geq t^{\prime
}\in I^{x}.$ The difference with the previous situation is given by the
translations along the time axis with one period. The requests must be
fulfilled for all $k^{\prime},t_{1}^{\prime}\geq t^{\prime},$ all $p,T$ and
all $\mu.$
\end{remark}

\begin{remark}
In (\ref{pre537}), (\ref{pre552}) we have%
\[
\left\{
\begin{array}
[c]{c}%
\forall k\in\mathbf{N}_{\_},\widehat{\sigma}^{k^{\prime\prime}}(\widehat
{x})(k)=\mu\Longrightarrow(\widehat{\sigma}^{k^{\prime\prime}}(\widehat
{x})(k)=\widehat{\sigma}^{k^{\prime\prime}}(\widehat{x})(k+p)\text{ and }\\
\text{and }k-p\geq-1\Longrightarrow\widehat{\sigma}^{k^{\prime\prime}%
}(\widehat{x})(k)=\widehat{\sigma}^{k^{\prime\prime}}(\widehat{x})(k-p)),
\end{array}
\right.
\]%
\[
\left\{
\begin{array}
[c]{c}%
\exists t^{\prime}\in I^{\sigma^{t^{\prime\prime}}(x)},\forall t\geq
t^{\prime},\sigma^{t^{\prime\prime}}(x)(t)=\mu\Longrightarrow(\sigma
^{t^{\prime\prime}}(x)(t)=\sigma^{t^{\prime\prime}}(x)(t+T)\text{ and}\\
\text{and }t-T\geq t^{\prime}\Longrightarrow\sigma^{t^{\prime\prime}%
}(x)(t)=\sigma^{t^{\prime\prime}}(x)(t-T))
\end{array}
\right.
\]
i.e. the periodicity of $\widehat{x},x$ after having forgotten some first values.
\end{remark}

\begin{remark}
The third group of constancy properties repeats the statements of the second
group, by replacing $\forall\mu\in\widehat{Or}(\widehat{x}),\forall\mu\in
Or(x)$ with $\exists\mu\in\widehat{Or}(\widehat{x}),\exists\mu\in Or(x).$ This
is possible since constancy means that the orbits have exactly one point,
$\widehat{Or}(\widehat{x})=\{\mu\},Or(x)=\{\mu\}.$ The proofs of the
implications are in general similar with those of the second group.
\end{remark}

\begin{remark}
The properties%
\[
\forall k\in\mathbf{N}_{\_},\widehat{x}(k)=\widehat{x}(k+p),
\]%
\[
\exists t^{\prime}\in I^{x},\forall t\geq t^{\prime},x(t)=x(t+T)
\]
from (\ref{per75}), (\ref{per76}) are of periodicity of the signals
$\widehat{x},x$ with the period $p,T.$ In order to get constancy, these
properties must hold for any period $p,T.$
\end{remark}

\begin{remark}
The next properties%
\[
\forall k\geq k^{\prime},\widehat{x}(k)=\widehat{x}(k+p),
\]%
\[
\exists t^{\prime}\in I^{x},\forall t\geq t_{1}^{\prime},x(t)=x(t+T)
\]
that occur in (\ref{pre508}), (\ref{pre512}) are of eventual periodicity of
the signals $\widehat{x},x$ with the periods $p,T.$ It is asked that
periodicity starts at any limit of periodicity $k^{\prime},t_{1}^{\prime}\geq
t^{\prime}$ (we have periodicity so far) and that it holds for any period
$p,T$ for constancy.
\end{remark}

\begin{remark}
The properties%
\[
\forall k\in\mathbf{N}_{\_},\widehat{\sigma}^{k^{\prime\prime}}(\widehat
{x})(k)=\widehat{\sigma}^{k^{\prime\prime}}(\widehat{x})(k+p),
\]%
\[
\exists t^{\prime}\in I^{\sigma^{t^{\prime\prime}}(x)},\forall t\geq
t^{\prime},\sigma^{t^{\prime\prime}}(x)(t)=\sigma^{t^{\prime\prime}}(x)(t+T)
\]
from (\ref{pre509}), (\ref{pre513}) are of eventual periodicity of
$\widehat{x},x.$ The properties are asked to hold for any $k^{\prime\prime}%
\in\mathbf{N},t^{\prime\prime}\in\mathbf{R}$ for periodicity and any
$p\geq1,T>0$ for constancy.
\end{remark}

\chapter{\label{Cha6}Eventually periodic points}

We give first some statements that are equivalent with the eventual
periodicity of the points and a discussion on their properties.

Section 3 shows that an eventually periodic point is accessed for time
instants greater than the limit of periodicity at least once in a time
interval with the length of a period. This fundamental result will be used
frequently later.

The bound of the limit of periodicity and the independence of eventual
periodicity on the choice of the limit of periodicity are treated in Section 4.

The property of eventual constancy that follows in Section 5 is used in
Section 6 to establish the relation between the discrete time and the
continuous time eventual periodicity of the points.

Section 7 highlights the relation between the support sets $\widehat
{\mathbf{T}}_{\mu}^{\widehat{x}},\mathbf{T}_{\mu}^{x}$ and the sets of the
periods $\widehat{P}_{\mu}^{\widehat{x}},P_{\mu}^{x}$.

The fact that the sum, the difference and the multiples of the periods are
periods is treated in Section 8.

In Section 9 we show which is the form of $\widehat{P}_{\mu}^{\widehat{x}%
},P_{\mu}^{x}$ and in particular we address the issue of the existence of the
prime period.

Sections 10 and 11 give necessary and sufficient conditions of eventual
periodicity and a special case of eventually periodic point is treated in
Section 12, where the prime period is known.

Section 13 gives a result relating the eventually periodic points with the
eventually constant signals.

\section{Equivalent properties with the eventual periodicity of a point}

\begin{remark}
The properties that are equivalent with the eventual periodicity of the points
were already used in Chapter 3 dedicated to the eventually constant signals at
Theorem \ref{The97}, page \pageref{The97} (see also Theorem \ref{The98}, page
\pageref{The98}, and Theorem \ref{The99}, page \pageref{The99}). To be
compared (\ref{pre154}),...,(\ref{pre159}) with (\ref{pre781}%
)$_{page\;\pageref{pre781}}$,...,(\ref{pre538})$_{page\;\pageref{pre538}}$ and
(\ref{pre572}),...,(\ref{pre606}) with (\ref{pre553})$_{page\;\pageref{pre553}%
}$,...,(\ref{pre600})$_{page\;\pageref{pre600}}.$ We make also the
associations (\ref{pre153})-(\ref{per83})$_{page\;\pageref{per83}}$,
(\ref{pre159})-(\ref{pre515})$_{page\;\pageref{pre515}}$ and (\ref{pre574}%
)-(\ref{pre557})$_{page\;\pageref{pre557}}$, (\ref{pre605})-(\ref{pre601}%
)$_{page\;\pageref{pre601}}$, (\ref{pre575})-(\ref{pre558}%
)$_{page\;\pageref{pre558}}$, (\ref{pre606})-(\ref{pre602}%
)$_{page\;\pageref{pre602}}$ with the properties of eventual constancy of the
signals from group four, see Theorem \ref{The100}, page \pageref{The100}.
\end{remark}

\begin{theorem}
\label{The67}We consider the signals $\widehat{x}\in\widehat{S}^{(n)},x\in
S^{(n)}$.

a) The following statements are equivalent for any $p\geq1$ and any $\mu
\in\widehat{\omega}(\widehat{x}):$%
\begin{equation}
\left\{
\begin{array}
[c]{c}%
\exists k^{\prime}\in\mathbf{N}_{\_},\forall k\in\widehat{\mathbf{T}}_{\mu
}^{\widehat{x}}\cap\{k^{\prime},k^{\prime}+1,k^{\prime}+2,...\},\\
\{k+zp|z\in\mathbf{Z}\}\cap\{k^{\prime},k^{\prime}+1,k^{\prime}+2,...\}\subset
\widehat{\mathbf{T}}_{\mu}^{\widehat{x}},
\end{array}
\right.  \label{pre154}%
\end{equation}%
\begin{equation}
\exists k^{\prime\prime}\in\mathbf{N},\forall k\in\widehat{\mathbf{T}}_{\mu
}^{\widehat{\sigma}^{k^{\prime\prime}}(\widehat{x})},\{k+zp|z\in
\mathbf{Z}\}\cap\mathbf{N}_{\_}\subset\widehat{\mathbf{T}}_{\mu}%
^{\widehat{\sigma}^{k^{\prime\prime}}(\widehat{x})}, \label{pre158}%
\end{equation}%
\begin{equation}
\left\{
\begin{array}
[c]{c}%
\exists k^{\prime}\in\mathbf{N}_{\_},\forall k\geq k^{\prime},\widehat
{x}(k)=\mu\Longrightarrow\\
\Longrightarrow(\widehat{x}(k)=\widehat{x}(k+p)\text{ and }k-p\geq k^{\prime
}\Longrightarrow\widehat{x}(k)=\widehat{x}(k-p)),
\end{array}
\right.  \label{pre153}%
\end{equation}%
\begin{equation}
\left\{
\begin{array}
[c]{c}%
\exists k^{\prime\prime}\in\mathbf{N},\forall k\in\mathbf{N}_{\_}%
,\widehat{\sigma}^{k^{\prime\prime}}(\widehat{x})(k)=\mu\Longrightarrow\\
\Longrightarrow(\widehat{\sigma}^{k^{\prime\prime}}(\widehat{x})(k)=\widehat
{\sigma}^{k^{\prime\prime}}(\widehat{x})(k+p)\text{ and }\\
\text{and }k-p\geq-1\Longrightarrow\widehat{\sigma}^{k^{\prime\prime}%
}(\widehat{x})(k)=\widehat{\sigma}^{k^{\prime\prime}}(\widehat{x})(k-p)).
\end{array}
\right.  \label{pre159}%
\end{equation}

b) The following statements are also equivalent for any $T>0$ and $\mu
\in\omega(x)$:%
\begin{equation}
\exists t^{\prime}\in I^{x},\exists t_{1}^{\prime}\geq t^{\prime},\forall
t\in\mathbf{T}_{\mu}^{x}\cap\lbrack t_{1}^{\prime},\infty),\{t+zT|z\in
\mathbf{Z}\}\cap\lbrack t_{1}^{\prime},\infty)\subset\mathbf{T}_{\mu}^{x},
\label{pre572}%
\end{equation}%
\begin{equation}
\exists t_{1}^{\prime}\in\mathbf{R},\forall t\in\mathbf{T}_{\mu}^{x}%
\cap\lbrack t_{1}^{\prime},\infty),\{t+zT|z\in\mathbf{Z}\}\cap\lbrack
t_{1}^{\prime},\infty)\subset\mathbf{T}_{\mu}^{x}, \label{pre603}%
\end{equation}%
\begin{equation}
\left\{
\begin{array}
[c]{c}%
\exists t^{\prime\prime}\in\mathbf{R},\exists t^{\prime}\in I^{\sigma
^{t^{\prime\prime}}(x)},\\
\forall t\in\mathbf{T}_{\mu}^{\sigma^{t^{\prime\prime}}(x)}\cap\lbrack
t^{\prime},\infty),\{t+zT|z\in\mathbf{Z}\}\cap\lbrack t^{\prime}%
,\infty)\subset\mathbf{T}_{\mu}^{\sigma^{t^{\prime\prime}}(x)},
\end{array}
\right.  \label{pre573}%
\end{equation}%
\begin{equation}
\exists t^{\prime\prime}\in\mathbf{R},\exists t^{\prime}\in\mathbf{R},\forall
t\in\mathbf{T}_{\mu}^{\sigma^{t^{\prime\prime}}(x)}\cap\lbrack t^{\prime
},\infty),\{t+zT|z\in\mathbf{Z}\}\cap\lbrack t^{\prime},\infty)\subset
\mathbf{T}_{\mu}^{\sigma^{t^{\prime\prime}}(x)}, \label{pre604}%
\end{equation}%
\begin{equation}
\left\{
\begin{array}
[c]{c}%
\exists t^{\prime}\in I^{x},\exists t_{1}^{\prime}\geq t^{\prime},\forall
t\geq t_{1}^{\prime},x(t)=\mu\Longrightarrow\\
\Longrightarrow(x(t)=x(t+T)\text{ and }t-T\geq t_{1}^{\prime}\Longrightarrow
x(t)=x(t-T)),
\end{array}
\right.  \label{pre574}%
\end{equation}%
\begin{equation}
\left\{
\begin{array}
[c]{c}%
\exists t_{1}^{\prime}\in\mathbf{R},\forall t\geq t_{1}^{\prime}%
,x(t)=\mu\Longrightarrow\\
\Longrightarrow(x(t)=x(t+T)\text{ and }t-T\geq t_{1}^{\prime}\Longrightarrow
x(t)=x(t-T)),
\end{array}
\right.  \label{pre605}%
\end{equation}%
\begin{equation}
\left\{
\begin{array}
[c]{c}%
\exists t^{\prime\prime}\in\mathbf{R},\exists t^{\prime}\in I^{\sigma
^{t^{\prime\prime}}(x)},\\
\forall t\geq t^{\prime},\sigma^{t^{\prime\prime}}(x)(t)=\mu\Longrightarrow
(\sigma^{t^{\prime\prime}}(x)(t)=\sigma^{t^{\prime\prime}}(x)(t+T)\text{
and}\\
\text{and }t-T\geq t^{\prime}\Longrightarrow\sigma^{t^{\prime\prime}%
}(x)(t)=\sigma^{t^{\prime\prime}}(x)(t-T)),
\end{array}
\right.  \label{pre575}%
\end{equation}%
\begin{equation}
\left\{
\begin{array}
[c]{c}%
\exists t^{\prime\prime}\in\mathbf{R},\exists t^{\prime}\in\mathbf{R},\forall
t\geq t^{\prime},\sigma^{t^{\prime\prime}}(x)(t)=\mu\Longrightarrow\\
\Longrightarrow(\sigma^{t^{\prime\prime}}(x)(t)=\sigma^{t^{\prime\prime}%
}(x)(t+T)\text{ and}\\
\text{and }t-T\geq t^{\prime}\Longrightarrow\sigma^{t^{\prime\prime}%
}(x)(t)=\sigma^{t^{\prime\prime}}(x)(t-T)).
\end{array}
\right.  \label{pre606}%
\end{equation}

\end{theorem}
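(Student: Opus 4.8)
The natural strategy is to prove a cycle of implications within each of parts a) and b), exactly in the style used earlier in the excerpt (Theorems \ref{The97}, \ref{The98}, \ref{The99}). For part a) I would establish
\[
(\ref{pre154})\Longrightarrow(\ref{pre158})\Longrightarrow(\ref{pre159})\Longrightarrow(\ref{pre153})\Longrightarrow(\ref{pre154})
\]
(or some convenient ordering), and for part b) an analogous cycle through (\ref{pre572}), (\ref{pre603}), (\ref{pre573}), (\ref{pre604}), (\ref{pre574}), (\ref{pre605}), (\ref{pre575}), (\ref{pre606}). The backbone of all the discrete-time passages is the translation identity between a signal and its forgetful shift: if $k''=k'+1$ (or $k'=k''-1$), then $\widehat{\sigma}^{k''}(\widehat{x})(k)=\widehat{x}(k+k'')$, so that membership $k\in\widehat{\mathbf{T}}_\mu^{\widehat{\sigma}^{k''}(\widehat{x})}$ corresponds to $k+k''\in\widehat{\mathbf{T}}_\mu^{\widehat{x}}$ with $k+k''\ge k'$, and the constraint $k+zp\ge -1$ transfers to $k+zp+k''\ge k'$. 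This is precisely the bookkeeping already performed in the proof of Theorem \ref{The97}; here it is lighter because we fix a single $\mu$ and we may \emph{assume} $\mu\in\widehat{\omega}(\widehat{x})$, so the non-triviality set $\widehat{\mathbf{T}}_\mu^{\widehat{x}}\cap\{k',k'+1,\dots\}$ is automatically nonempty (and likewise $\widehat{\mathbf{T}}_\mu^{\widehat{\sigma}^{k''}(\widehat{x})}\neq\varnothing$, since shifting an infinite support set keeps it infinite). For the implication into the ``$\widehat{x}(k)=\widehat{x}(k\pm p)$'' form (\ref{pre153}), (\ref{pre159}), one notes that $\{k+zp\mid z\in\mathbf{Z}\}\cap\{k',\dots\}\subset\widehat{\mathbf{T}}_\mu^{\widehat{x}}$ applied with $z=\pm 1$ gives exactly the one-step equalities, while conversely iterating the one-step equalities recovers the full arithmetic progression; the condition $\mu\in\widehat{\omega}(\widehat{x})$ guarantees that whenever $k$ ranges over $\{k',k'+1,\dots\}$ with $\widehat{x}(k)=\mu$ it actually ranges over the whole of $\widehat{\mathbf{T}}_\mu^{\widehat{x}}\cap\{k',\dots\}$, which is what pins $\mu\in\widehat{\omega}$ rather than merely $\mu\in\widehat{Or}$.

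For part b) the real-time passages among (\ref{pre572})--(\ref{pre575}) are the direct analogues. The key facts to invoke, all already available in the excerpt, are: Theorem \ref{The1} (existence of left/right limits, so that for $t''>t_1'$ one can pick $\varepsilon>0$ with $x$ constant on $(t''-\varepsilon,t'')$, giving $\sigma^{t''}(x)(t)=x(t)$ for $t\ge t'$ and $t'\in I^{\sigma^{t''}(x)}$); Theorem \ref{Pro2}a) (for $t''$ small enough, $\sigma^{t''}(x)=x$, so the ``$\exists t'\in I^{\sigma^{t''}(x)}$'' versions reduce to the ``$\exists t'\in I^x$'' versions); and Lemma \ref{Lem30}, page \pageref{Lem30} (monotonicity of these periodicity conditions in the limit-of-periodicity parameter, so one may enlarge $t'$ freely). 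The cycle then runs: trivial implications downward ((\ref{pre572})$\Rightarrow$(\ref{pre603}), (\ref{pre573})$\Rightarrow$(\ref{pre604}), etc.) are immediate because dropping ``$t'\in I^x$'' and weakening to ``$t'\in\mathbf{R}$'' only weakens the statement; the step from a $\sigma^{t''}(x)$-version back to a plain-$x$-version uses Theorem \ref{Pro2}a); and the final return implication (e.g. (\ref{pre606})$\Rightarrow$(\ref{pre572})), closing the loop, uses the assumption $\mu\in\omega(x)$ to produce $t\ge t_1'$ with $x(t)=\mu$ and then converts the one-step equality $x(t)=x(t+T)$ back into the inclusion $\{t+zT\mid z\in\mathbf{Z}\}\cap[t_1',\infty)\subset\mathbf{T}_\mu^x$ by iteration, reading off $t'\in I^x$ from the definition of signal.

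\textbf{Anticipated main obstacle.} The routine index/shift manipulations are exactly as in Theorem \ref{The97} and present no difficulty. The one place that needs genuine care is showing that the weaker-looking ``one-step'' forms (\ref{pre153})/(\ref{pre159}) and (\ref{pre574})/(\ref{pre575})/(\ref{pre605})/(\ref{pre606}) are \emph{equivalent}, not merely implied by, the ``full-progression'' forms. In the real-time case this is delicate because $x$ is only piecewise constant: from $x(t)=\mu\Rightarrow x(t+T)=\mu$ for all large $t$ one wants to conclude $\mathbf{T}_\mu^x\cap[t_1',\infty)$ is invariant under $+T$, and the backward direction $x(t)=\mu\Rightarrow x(t-T)=\mu$ when $t-T\ge t_1'$ must be handled symmetrically; the subtlety is that the hypothesis is stated as an implication conditioned on $x(t)=\mu$, so one must be sure that iterating it never ``leaves'' the set where the hypothesis applies. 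Here the assumption $\mu\in\omega(x)$ (equivalently $\mathbf{T}_\mu^x$ superiorly unbounded, Theorem \ref{The12}a)) is exactly what rescues the argument: it guarantees that $\mathbf{T}_\mu^x\cap[t_1',\infty)$ is nonempty and unbounded, so the orbit of any of its points under $+\mathbf{Z}T$, intersected with $[t_1',\infty)$, stays inside it. I would therefore organize the proof so that each ``return'' implication first records $\mu\in\omega(x)$ (resp. $\mu\in\widehat{\omega}(\widehat{x})$), then picks a witness $t$ (resp. $k$) in the support set beyond the limit of periodicity, and only then iterates the one-step relation; this is the single step where the hypothesis $\mu\in\omega$ is essential, and getting the quantifier order right there is the crux.
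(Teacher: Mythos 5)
Your plan is essentially the paper's own proof: the author likewise reuses the chain of implications from Theorem \ref{The97} (in the order $(\ref{pre154})\Rightarrow(\ref{pre158})\Rightarrow(\ref{pre153})\Rightarrow(\ref{pre159})$ and its eight-term real-time analogue) and only writes out the closing implications $(\ref{pre159})\Rightarrow(\ref{pre154})$ and $(\ref{pre606})\Rightarrow(\ref{pre572})$, which iterate the one-step equalities in the three cases $z>0$, $z=0$, $z<0$ after using $\mu\in\widehat{\omega}(\widehat{x})$, respectively $\mu\in\omega(x)$, to secure a starting point in the support set beyond the limit of periodicity. You have correctly identified both the structure and the single genuinely new step, so the proposal matches the paper's argument.
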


\begin{proof}
a) The proof of the implications%
\[
(\ref{pre154})\Longrightarrow(\ref{pre158})\Longrightarrow(\ref{pre153}%
)\Longrightarrow(\ref{pre159})
\]
follows from the proof of Theorem \ref{The97}, page \pageref{The97}.

(\ref{pre159})$\Longrightarrow$(\ref{pre154}) From (\ref{pre159}),
$k^{\prime\prime}\in\mathbf{N}$ exists making%
\begin{equation}
\left\{
\begin{array}
[c]{c}%
\forall k\in\mathbf{N}_{\_},\widehat{\sigma}^{k^{\prime\prime}}(\widehat
{x})(k)=\mu\Longrightarrow(\widehat{\sigma}^{k^{\prime\prime}}(\widehat
{x})(k)=\widehat{\sigma}^{k^{\prime\prime}}(\widehat{x})(k+p)\text{ and }\\
\text{and }k-p\geq-1\Longrightarrow\widehat{\sigma}^{k^{\prime\prime}%
}(\widehat{x})(k)=\widehat{\sigma}^{k^{\prime\prime}}(\widehat{x})(k-p))
\end{array}
\right.  \label{pre887}%
\end{equation}
true. We define $k^{\prime}=k^{\prime\prime}-1.$ The fact that $\mu\in
\widehat{\omega}(\widehat{x})$ implies that $\widehat{\mathbf{T}}_{\mu
}^{\widehat{x}}$ is infinite, thus $\widehat{\mathbf{T}}_{\mu}^{\widehat{x}%
}\cap\{k^{\prime},k^{\prime}+1,k^{\prime}+2,...\}\neq\varnothing.$ Let
$k\in\widehat{\mathbf{T}}_{\mu}^{\widehat{x}}\cap\{k^{\prime},k^{\prime
}+1,k^{\prime}+2,...\},z\in\mathbf{Z}$ arbitrary such that $k+zp\geq
k^{\prime}.$ The number $k-k^{\prime\prime}$ satisfies $k-k^{\prime\prime
}=k-k^{\prime}-1\geq-1,\widehat{x}(k-k^{\prime\prime}+k^{\prime\prime
})=\widehat{\sigma}^{k^{\prime\prime}}(\widehat{x})(k-k^{\prime\prime})$ and
the number $k-k^{\prime\prime}+zp$ satisfies $k-k^{\prime\prime}%
+zp=k-k^{\prime}-1+zp\geq-1,$ thus we can apply (\ref{pre887}). We have the
following possibilities:

Case $z>0,$%
\[
\mu=\widehat{x}(k)=\widehat{\sigma}^{k^{\prime\prime}}(\widehat{x}%
)(k-k^{\prime\prime})\overset{(\ref{pre887})}{=}\widehat{\sigma}%
^{k^{\prime\prime}}(\widehat{x})(k-k^{\prime\prime}+p)\overset{(\ref{pre887}%
)}{=}%
\]%
\[
\overset{(\ref{pre887})}{=}\widehat{\sigma}^{k^{\prime\prime}}(\widehat
{x})(k-k^{\prime\prime}+2p)\overset{(\ref{pre887})}{=}...\overset
{(\ref{pre887})}{=}\widehat{\sigma}^{k^{\prime\prime}}(\widehat{x}%
)(k-k^{\prime\prime}+zp)=\widehat{x}(k+zp);
\]

Case $z=0,$%
\[
\mu=\widehat{x}(k)=\widehat{x}(k+zp);
\]

Case $z<0,$%
\[
\mu=\widehat{x}(k)=\widehat{\sigma}^{k^{\prime\prime}}(\widehat{x}%
)(k-k^{\prime\prime})\overset{(\ref{pre887})}{=}\widehat{\sigma}%
^{k^{\prime\prime}}(\widehat{x})(k-k^{\prime\prime}-p)\overset{(\ref{pre887}%
)}{=}%
\]%
\[
\overset{(\ref{pre887})}{=}\widehat{\sigma}^{k^{\prime\prime}}(\widehat
{x})(k-k^{\prime\prime}-2p)\overset{(\ref{pre887})}{=}...\overset
{(\ref{pre887})}{=}\widehat{\sigma}^{k^{\prime\prime}}(\widehat{x}%
)(k-k^{\prime\prime}+zp)=\widehat{x}(k+zp).
\]
It has resulted that, in all the three situations, $k+zp\in\widehat
{\mathbf{T}}_{\mu}^{\widehat{x}},$ thus (\ref{pre154}) holds.

b) The proof of the implications%
\[
(\ref{pre572})\Longrightarrow(\ref{pre603})\Longrightarrow(\ref{pre573}%
)\Longrightarrow(\ref{pre604})\Longrightarrow(\ref{pre574})\Longrightarrow
(\ref{pre605})\Longrightarrow(\ref{pre575})\Longrightarrow(\ref{pre606})
\]
follows from the proof of Theorem \ref{The97}, page \pageref{The97}.

(\ref{pre606})$\Longrightarrow$(\ref{pre572}) From (\ref{pre606}) we get the
existence of $t^{\prime\prime}\in\mathbf{R}$ and $t^{\prime}\in\mathbf{R}$
with%
\begin{equation}
\left\{
\begin{array}
[c]{c}%
\forall t\geq t^{\prime},\sigma^{t^{\prime\prime}}(x)(t)=\mu\Longrightarrow
(\sigma^{t^{\prime\prime}}(x)(t)=\sigma^{t^{\prime\prime}}(x)(t+T)\text{
and}\\
\text{and }t-T\geq t^{\prime}\Longrightarrow\sigma^{t^{\prime\prime}%
}(x)(t)=\sigma^{t^{\prime\prime}}(x)(t-T))
\end{array}
\right.  \label{pre896}%
\end{equation}
and on the other hand we take arbitrarily some $t^{\prime\prime\prime}\in
I^{x}\footnote{From this moment we prove the truth of a statement which is
stronger than (\ref{pre572}).}.$ Let $t_1^\prime\geq\max\{t^\prime
,t^\prime\prime,t^\prime\prime\prime\}$ arbitrary also. We have%
\begin{equation}
\forall t\geq t_{1}^{\prime},\sigma^{t^{\prime\prime}}(x)(t)=x(t)
\label{pre898}%
\end{equation}
and, taking into account (\ref{pre896}), (\ref{pre898}) and Lemma \ref{Lem30},
page \pageref{Lem30} we infer%
\begin{equation}
\forall t\geq t_{1}^{\prime},x(t)=\mu\Longrightarrow(x(t)=x(t+T)\text{ and
}t-T\geq t_{1}^{\prime}\Longrightarrow x(t)=x(t-T)). \label{pre899}%
\end{equation}
As $\mu\in\omega(x),$ $\textbf{T}_\mu^x$ is superiorly unbounded thus
$\textbf{T}_\mu^x\cap\lbrack t_1^\prime,\infty)\neq\varnothing.$ Let us take
now $t\in\textbf{T}_\mu^x\cap\lbrack t_1^\prime,\infty)$ and $z\in\textbf{Z}$
arbitrarily such that $t+zT\geq t_1^\prime.$ The following possibilities exist:

Case $z>0,$%
\[
\mu=x(t)\overset{(\ref{pre899})}{=}x(t+T)\overset{(\ref{pre899})}%
{=}x(t+2T)\overset{(\ref{pre899})}{=}...\overset{(\ref{pre899})}{=}x(t+zT);
\]

Case $z=0,$%
\[
\mu=x(t)=x(t+zT);
\]

Case $z<0,$%
\[
\mu=x(t)\overset{(\ref{pre899})}{=}x(t-T)\overset{(\ref{pre899})}%
{=}x(t-2T)\overset{(\ref{pre899})}{=}...\overset{(\ref{pre899})}{=}x(t+zT).
\]
It has resulted that in all these situations $x(t+zT)=\mu,$ thus
$t+zT\in\mathbf{T}_{\mu}^{x}.$
\end{proof}

\begin{example}
For the signal $x\in S^{(1)},\forall t\in\mathbf{R},$%
\[
x(t)=\chi_{(-\infty,0)}(t)\oplus\chi_{\lbrack3,4)}(t)\oplus\chi_{\lbrack
5,6)}(t)\oplus\chi_{\lbrack7,8)}(t)\oplus...
\]
neither of $0,1\in Or(x)$ is periodic, but for any $t^{\prime}\in
\lbrack2,\infty)$ we get%
\[
\forall t\in\mathbf{T}_{0}^{x}\cap\lbrack t^{\prime},\infty),\{t+z2|z\in
\mathbf{Z}\}\cap\lbrack t^{\prime},\infty)\subset\mathbf{T}_{0}^{x},
\]%
\[
\forall t\in\mathbf{T}_{1}^{x}\cap\lbrack t^{\prime},\infty),\{t+z2|z\in
\mathbf{Z}\}\cap\lbrack t^{\prime},\infty)\subset\mathbf{T}_{1}^{x},
\]
thus $0,1$ are eventually periodic with $P_{0}^{x}=P_{1}^{x}=\{2,4,6,...\}.$
\end{example}

\section{Discussion}

\begin{remark}
The properties (\ref{pre154}), (\ref{pre153}) are of eventual periodicity of
$\mu$, meaning that the periodicity starts at a limit of periodicity
$k^{\prime}$ which is in general bigger than the initial time $-1$.
Equivalently, the properties (\ref{pre158}), (\ref{pre159}) are of periodicity
of $\mu$ (starting at the initial time $-1$), however not the periodicity
referring to $\widehat{x},$ but the periodicity referring to $\widehat{\sigma
}^{k^{\prime\prime}}(\widehat{x}),$ $k^{\prime\prime}\geq0,$ meaning that
$\widehat{x}$ might have forgotten some of its first values. The real time
equivalent statements are interpreted similarly.
\end{remark}

\begin{remark}
Note that in the statement of Theorem \ref{The67} we have asked $\mu
\in\widehat{\omega}(\widehat{x}),\mu\in\omega(x)$ instead of $\mu\in
\widehat{Or}(\widehat{x}),\mu\in Or(x),$ the usual demand of periodicity of
$\mu.$ This avoids stating further requests of non-triviality $\widehat
{\mathbf{T}}_{\mu}^{\widehat{x}}\cap\{k^{\prime},k^{\prime}+1,k^{\prime
}+2,...\}\neq\varnothing$ and $\mathbf{T}_{\mu}^{x}\cap\lbrack t_{1}^{\prime
},\infty)\neq\varnothing$ that are necessary in eventual periodicity, see also
Lemma \ref{Lem37}, page \pageref{Lem37}.
\end{remark}

\begin{remark}
The prime period of the eventually periodic point $\mu\in\widehat{\omega
}(\widehat{x})$ always exists, but the prime period of the eventually periodic
point $\mu\in\omega(x)$ might not exist, for example if $x$ is eventually
constant and equal with $\mu$ (i.e. if $\underset{t\rightarrow\infty}{\lim
}x(t)=\mu$), when $P_{\mu}^{x}=(0,\infty).$ We shall prove in Theorem
\ref{The70}, page \pageref{The70} that this is the only situation when the
eventually periodic point $\mu\in\omega(x)$ has no prime period.
\end{remark}

\section{The accessibility of the eventually periodic points}

\begin{theorem}
\label{Lem1}a) Let $\widehat{x}$ and $\mu\in\widehat{\omega}(\widehat{x})$
that is eventually periodic, with the period $p\geq1$ and the limit of
periodicity $k^{\prime}\in\mathbf{N}_{\_}.$ For any $k\geq k^{\prime}$ we have
$\widehat{\mathbf{T}}_{\mu}^{\widehat{x}}\cap\{k,k+1,...,k+p-1\}\neq
\varnothing.$

b) Let $x$ and $\mu\in\omega(x)$ that is eventually periodic with the period
$T>0$ and the limit of periodicity $t^{\prime}\in\mathbf{R.}$ For any $t\geq
t^{\prime},$ we have $\mathbf{T}_{\mu}^{x}\cap\lbrack t,t+T)\neq\varnothing.$
\end{theorem}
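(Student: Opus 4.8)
The plan is to argue both parts by contradiction, exploiting the definition of eventual periodicity (Definition \ref{Def19}) together with the fact that $\mu$ being an omega limit point means its support set is infinite / unbounded from above (Theorem \ref{The12} a)). For part a), suppose some $k\geq k^{\prime}$ has $\widehat{\mathbf{T}}_{\mu}^{\widehat{x}}\cap\{k,k+1,\dots,k+p-1\}=\varnothing$. Since $\widehat{\mathbf{T}}_{\mu}^{\widehat{x}}$ is infinite (because $\mu\in\widehat{\omega}(\widehat{x})$), there is some $k_{1}\in\widehat{\mathbf{T}}_{\mu}^{\widehat{x}}$ with $k_{1}\geq k+p\geq k^{\prime}$. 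Applying (\ref{pre738}) to this $k_{1}$, the whole arithmetic progression $\{k_{1}+zp\mid z\in\mathbf{Z}\}\cap\{k^{\prime},k^{\prime}+1,\dots\}$ lies in $\widehat{\mathbf{T}}_{\mu}^{\widehat{x}}$. Stepping down from $k_{1}$ by multiples of $p$, there is exactly one such multiple landing in the block $\{k,k+1,\dots,k+p-1\}$ (these $p$ consecutive integers form a complete set of residues mod $p$, and $k\geq k^{\prime}$ so the block stays above the limit of periodicity), contradicting the assumed emptiness.

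\textbf{The real-time part.} For part b), the structure is the same but I must be a little more careful because $\mathbf{T}_{\mu}^{x}$ is a union of half-open intervals rather than a discrete set. Suppose $\mathbf{T}_{\mu}^{x}\cap[t,t+T)=\varnothing$ for some $t\geq t^{\prime}$. Since $\mu\in\omega(x)$, the set $\mathbf{T}_{\mu}^{x}$ is unbounded from above, so pick $s\in\mathbf{T}_{\mu}^{x}$ with $s\geq t+T\geq t^{\prime}$. By (\ref{pre740}) applied to $s$, the set $\{s+zT\mid z\in\mathbf{Z}\}\cap[t^{\prime},\infty)$ is contained in $\mathbf{T}_{\mu}^{x}$. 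Now among the points $s, s-T, s-2T, \dots$ there is one, say $s-mT$ with $m\geq 1$, that falls into $[t,t+T)$: indeed $s-mT$ decreases from $s\geq t+T$ in steps of size $T$, so the first time it drops below $t+T$ it is still $\geq t+T-T=t$; and since $s-mT\in[t,t+T)\subset[t^{\prime},\infty)$ (using $t\geq t^{\prime}$), it is a legitimate element of the progression intersected with $[t^{\prime},\infty)$. Hence $s-mT\in\mathbf{T}_{\mu}^{x}\cap[t,t+T)$, contradicting emptiness.

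\textbf{Main obstacle.} The routine part is the "some multiple of $p$ (resp. $T$) lands in the target block" counting argument; the one point needing care is making sure that the element we produce genuinely satisfies the side condition "$\geq k^{\prime}$" (resp. "$\in[t^{\prime},\infty)$") required to invoke (\ref{pre738}) / (\ref{pre740}) — this is exactly why the hypothesis $k\geq k^{\prime}$ (resp. $t\geq t^{\prime}$) is imposed, and it is what keeps the descending step from escaping the "final" time set. Beyond that, the only subtlety in the real-time case is that $[t,t+T)$ is an interval of length exactly one period, so it must contain a point of the $T$-translate progression through $s$; since those translates are spaced exactly $T$ apart, precisely one lies in any half-open interval of length $T$, which closes the argument.
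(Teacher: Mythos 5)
Your proof is correct and rests on the same two ingredients as the paper's: the infinitude (resp.\ unboundedness from above) of $\widehat{\mathbf{T}}_{\mu}^{\widehat{x}}$, $\mathbf{T}_{\mu}^{x}$ coming from $\mu$ being an omega limit point, and the leftward translation by multiples of the period permitted by (\ref{pre738}), (\ref{pre740}) as long as one stays above the limit of periodicity. The paper organizes this by first locating $\min\widehat{\mathbf{T}}_{\mu}^{\widehat{x}}\cap\{k^{\prime},k^{\prime}+1,\ldots\}$ in the first period-block and then translating forward, whereas you translate a far-out support point backward into the allegedly empty block; the difference is only bookkeeping.
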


\begin{proof}
a) The hypothesis implies the truth of%
\begin{equation}
\widehat{\mathbf{T}}_{\mu}^{\widehat{x}}\cap\{k^{\prime},k^{\prime
}+1,k^{\prime}+2,...\}\neq\varnothing, \label{p59}%
\end{equation}%
\begin{equation}
\forall k\in\widehat{\mathbf{T}}_{\mu}^{\widehat{x}}\cap\{k^{\prime}%
,k^{\prime}+1,k^{\prime}+2,...\},\{k+zp|z\in\mathbf{Z}\}\cap\{k^{\prime
},k^{\prime}+1,k^{\prime}+2,...\}\subset\widehat{\mathbf{T}}_{\mu}%
^{\widehat{x}}. \label{p60}%
\end{equation}
The truth of (\ref{p59}) allows us to define $k^{\prime\prime}=\min
\widehat{\mathbf{T}}_{\mu}^{\widehat{x}}\cap\{k^{\prime},k^{\prime
}+1,k^{\prime}+2,...\}$ and we prove that $k^{\prime\prime}\in\widehat
{\mathbf{T}}_{\mu}^{\widehat{x}}\cap\{k^{\prime},k^{\prime}+1,...,k^{\prime
}+p-1\}.$ If, against all reason, this would not be true, then we would have
$k^{\prime\prime}\geq k^{\prime}+p$ and%
\[
k^{\prime\prime}-p\in\{k^{\prime\prime}+zp|z\in\mathbf{Z}\}\cap\{k^{\prime
},k^{\prime}+1,k^{\prime}+2,...\}\overset{(\ref{p60})}{\subset}\widehat
{\mathbf{T}}_{\mu}^{\widehat{x}},
\]
representing a contradiction with the definition of $k^{\prime\prime}.$

From (\ref{p60}) we infer that $\{k^{\prime\prime},k^{\prime\prime
}+p,k^{\prime\prime}+2p,...\}\subset\widehat{\mathbf{T}}_{\mu}^{\widehat{x}%
}\cap\{k^{\prime},k^{\prime}+1,k^{\prime}+2,...\},$ meaning that $\forall
k\geq k^{\prime},$ $\widehat{\mathbf{T}}_{\mu}^{\widehat{x}}\cap
\{k,k+1,...,k+p-1\}\neq\varnothing.$

b) We have from the hypothesis that%
\begin{equation}
\mathbf{T}_{\mu}^{x}\cap\lbrack t^{\prime},\infty)\neq\varnothing,
\label{pre68}%
\end{equation}%
\begin{equation}
\forall t\in\mathbf{T}_{\mu}^{x}\cap\lbrack t^{\prime},\infty),\{t+zT|z\in
\mathbf{Z}\}\cap\lbrack t^{\prime},\infty)\subset\mathbf{T}_{\mu}^{x}
\label{pre69}%
\end{equation}
are fulfilled. The request (\ref{pre68}) allows defining $t^{\prime\prime
}=\min\mathbf{T}_{\mu}^{x}\cap\lbrack t^{\prime},\infty).$ We show that
$t^{\prime\prime}\in\mathbf{T}_{\mu}^{x}\cap\lbrack t^{\prime},t^{\prime}+T).$
If, against all reason, this would not be true, then we would have
$t^{\prime\prime}\geq t^{\prime}+T.$ This means that $t^{\prime\prime}-T\geq
t^{\prime},$ thus%
\[
t^{\prime\prime}-T\in\{t^{\prime\prime}+zT|z\in\mathbf{Z}\}\cap\lbrack
t^{\prime},\infty)\overset{(\ref{pre69})}{\subset}\mathbf{T}_{\mu}^{x},
\]
contradiction with the definition of $t^{\prime\prime}.$

By using (\ref{pre69}) we get $\{t^{\prime\prime},t^{\prime\prime}%
+T,t^{\prime\prime}+2T,...\}\subset\mathbf{T}_{\mu}^{x}\cap\lbrack t^{\prime
},\infty).$ The statement of the Theorem holds.
\end{proof}

\section{The limit of periodicity}

\begin{theorem}
\label{The137}a) $\widehat{x}\in\widehat{S}^{(n)},\mu\in\widehat{\omega
}(\widehat{x}),p\geq1,p^{\prime}\geq1,k^{\prime}\in\mathbf{N}_{\_}%
,k^{\prime\prime}\in\mathbf{N}_{\_}$ are given. If%
\begin{equation}
\left\{
\begin{array}
[c]{c}%
\forall k\in\widehat{\mathbf{T}}_{\mu}^{\widehat{x}}\cap\{k^{\prime}%
,k^{\prime}+1,k^{\prime}+2,...\},\\
\{k+zp|z\in\mathbf{Z}\}\cap\{k^{\prime},k^{\prime}+1,k^{\prime}+2,...\}\subset
\widehat{\mathbf{T}}_{\mu}^{\widehat{x}},
\end{array}
\right.
\end{equation}%
\begin{equation}
\left\{
\begin{array}
[c]{c}%
\forall k\in\widehat{\mathbf{T}}_{\mu}^{\widehat{x}}\cap\{k^{\prime\prime
},k^{\prime\prime}+1,k^{\prime\prime}+2,...\},\\
\{k+zp^{\prime}|z\in\mathbf{Z}\}\cap\{k^{\prime\prime},k^{\prime\prime
}+1,k^{\prime\prime}+2,...\}\subset\widehat{\mathbf{T}}_{\mu}^{\widehat{x}}%
\end{array}
\right.
\end{equation}
hold, then
\begin{equation}
\left\{
\begin{array}
[c]{c}%
\forall k\in\widehat{\mathbf{T}}_{\mu}^{\widehat{x}}\cap\{k^{\prime}%
,k^{\prime}+1,k^{\prime}+2,...\},\\
\{k+zp^{\prime}|z\in\mathbf{Z}\}\cap\{k^{\prime},k^{\prime}+1,k^{\prime
}+2,...\}\subset\widehat{\mathbf{T}}_{\mu}^{\widehat{x}}%
\end{array}
\right.
\end{equation}
is true.

b) Let $x\in S^{(n)},\mu\in\omega(x),T>0,T^{\prime}>0,t^{\prime}\in
\mathbf{R},t^{\prime\prime}\in\mathbf{R}$. Then%
\begin{equation}
\forall t\in\mathbf{T}_{\mu}^{x}\cap\lbrack t^{\prime},\infty),\{t+zT|z\in
\mathbf{Z}\}\cap\lbrack t^{\prime},\infty)\subset\mathbf{T}_{\mu}^{x},
\label{p236}%
\end{equation}%
\begin{equation}
\forall t\in\mathbf{T}_{\mu}^{x}\cap\lbrack t^{\prime\prime},\infty
),\{t+zT^{\prime}|z\in\mathbf{Z}\}\cap\lbrack t^{\prime\prime},\infty
)\subset\mathbf{T}_{\mu}^{x} \label{p237}%
\end{equation}
imply%
\begin{equation}
\forall t\in\mathbf{T}_{\mu}^{x}\cap\lbrack t^{\prime},\infty),\{t+zT^{\prime
}|z\in\mathbf{Z}\}\cap\lbrack t^{\prime},\infty)\subset\mathbf{T}_{\mu}^{x}.
\label{p238}%
\end{equation}

\end{theorem}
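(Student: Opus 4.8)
The plan is to prove both parts by one and the same three-step ``transport'' argument: move a point of the support set upward past the \emph{second} limit of periodicity, shift it there by a multiple of $p'$ (resp.\ $T'$), and then bring the shifted point back down. No splitting into cases according to the relative order of $k'$ and $k''$ (resp.\ $t'$ and $t''$) will be necessary, since the degenerate situations collapse on their own. Incidentally the argument will not actually use $\mu\in\widehat{\omega}(\widehat{x})$ (resp.\ $\mu\in\omega(x)$): if the intersection $\widehat{\mathbf{T}}_{\mu}^{\widehat{x}}\cap\{k',k'+1,k'+2,...\}$ (resp.\ $\mathbf{T}_{\mu}^{x}\cap\lbrack t',\infty)$) is empty, the conclusion holds vacuously.

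For part a), I would first fix an integer $j\in\mathbf{N}$ with $jp\geq k''-k'$ (possible since $p\geq1$). Then take $k\in\widehat{\mathbf{T}}_{\mu}^{\widehat{x}}\cap\{k',k'+1,k'+2,...\}$ and $z\in\mathbf{Z}$ with $k+zp'\geq k'$; the goal is $k+zp'\in\widehat{\mathbf{T}}_{\mu}^{\widehat{x}}$. Step one: since $k+jp$ is a forward $p$-translate of $k$ with $k+jp\geq k\geq k'$, the first hypothesis gives $k+jp\in\widehat{\mathbf{T}}_{\mu}^{\widehat{x}}$, and moreover $k+jp\geq k'+jp\geq k''$. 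Step two: apply the second hypothesis to $k+jp\in\widehat{\mathbf{T}}_{\mu}^{\widehat{x}}\cap\{k'',k''+1,k''+2,...\}$; since $k+zp'+jp=(k+jp)+zp'$ and $k+zp'+jp\geq k'+jp\geq k''$, this yields $k+zp'+jp\in\widehat{\mathbf{T}}_{\mu}^{\widehat{x}}$. Step three: now $k+zp'+jp\in\widehat{\mathbf{T}}_{\mu}^{\widehat{x}}$ and $k+zp'+jp\geq k'$ (because $k+zp'\geq k'$ and $jp\geq0$), so the first hypothesis applies to it as well, and $k+zp'=(k+zp'+jp)+(-j)p\geq k'$ forces $k+zp'\in\widehat{\mathbf{T}}_{\mu}^{\widehat{x}}$, which is the desired conclusion.

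For part b) the argument is identical, with $\mathbf{N}_{\_}$ replaced by $\mathbf{R}$, the half-lines $\{k',k'+1,k'+2,...\}$ replaced by $\lbrack t',\infty)$, the support set $\widehat{\mathbf{T}}_{\mu}^{\widehat{x}}$ replaced by $\mathbf{T}_{\mu}^{x}$, and $j\in\mathbf{N}$ chosen (Archimedean property) so that $jT\geq t''-t'$: the two outer ``push'' steps invoke (\ref{p236}) and the middle ``shift'' step invokes (\ref{p237}), and one concludes $t+zT'\in\mathbf{T}_{\mu}^{x}$ for every $t\in\mathbf{T}_{\mu}^{x}\cap\lbrack t',\infty)$ and every $z\in\mathbf{Z}$ with $t+zT'\geq t'$, i.e.\ exactly (\ref{p238}).

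The only thing that needs care — the ``obstacle'', such as it is — is the inequality bookkeeping: at each of the three steps one must check that the translated point still lies in the correct half-line, namely $\geq k'$ for the two $p$-steps and $\geq k''$ for the $p'$-step (and analogously with $t',t'',T,T'$). This is precisely why $j$ is taken with $jp\geq k''-k'$, and why the standing assumption $k+zp'\geq k'$ gets used twice: once to reach the correct half-line before step three, and once to make the final translate land inside $\widehat{\mathbf{T}}_{\mu}^{\widehat{x}}$. Beyond this, the proof is routine.
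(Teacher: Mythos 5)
Your proof is correct and is essentially the paper's own argument: push the point of the support set upward by a multiple of $p$ (resp.\ $T$) past the second limit of periodicity, apply the $p'$-periodicity (resp.\ $T'$-periodicity) there, and push back down, with the inequality bookkeeping exactly as you describe. The paper merely phrases this with a case split on whether $t'\geq t''$ (the degenerate case where no transport is needed, which in your version is $j=0$), so the content is identical.
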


\begin{proof}
b) Let $t\in\mathbf{T}_{\mu}^{x},z\in\mathbf{Z}$ arbitrary such that $t\geq
t^{\prime}$ and $t+zT^{\prime}\geq t^{\prime}.$ We have the following possibilities.

Case $t^{\prime}\geq t^{\prime\prime}$

Then $t\geq t^{\prime\prime}$ and $t+zT^{\prime}\geq t^{\prime\prime},$ thus
$t+zT^{\prime}\overset{(\ref{p237})}{\in}\mathbf{T}_{\mu}^{x}.$

Case $t^{\prime}<t^{\prime\prime}$

$k\in\mathbf{N}$ exists with $t+kT\geq t^{\prime\prime},t+zT^{\prime}+kT\geq
t^{\prime\prime}.$ Obviously $t+kT\geq t^{\prime}$ and we can write%
\[
\mu=x(t)\overset{(\ref{p236})}{=}x(t+kT)\overset{(\ref{p237})}{=}%
x(t+zT^{\prime}+kT)\overset{(\ref{p236})}{=}x(t+zT^{\prime}),
\]
in other words $t+zT^{\prime}\in\mathbf{T}_{\mu}^{x}.$
\end{proof}

\begin{remark}
The previous Theorem states that the set of the limits of periodicity does not
depend on the period. In particular, this justifies the notations $\widehat
{L}_{\mu}^{\widehat{x}},L_{\mu}^{x}$ where the period is missing.
\end{remark}

\begin{example}
Let the signal $x\in S^{(1)},$%
\[
x(t)=\chi_{\lbrack0,1)}(t)\oplus\chi_{\lbrack4,5)}(t)\oplus\chi_{\lbrack
6,7)}(t)\oplus\chi_{\lbrack8,9)}(t)\oplus\chi_{\lbrack10,11)}(t)\oplus...
\]
where $2,4\in P_{1}^{x}.$ We might be tempted to think that the eventual
periodicity of $1$ with the period $T=4$ has the prime limit of periodicity
$t^{\prime}$ different from its eventual periodicity with $T^{\prime}=2$ that
has the prime limit of periodicity $t^{\prime\prime}=3.$ This is not the case,
the fact that $[2,3)\subset\mathbf{T}_{1}^{x}$ is false shows that both prime
limits of periodicity are $t^{\prime}=3.$
\end{example}

\begin{theorem}
\label{The117}a) Let $\widehat{x}\in\widehat{S}^{(n)}$ and $\mu\in
\widehat{\omega}(\widehat{x})$ with the property that $\widehat{L}_{\mu
}^{\widehat{x}}\neq\varnothing.$ Then $k^{\prime}\in\mathbf{N}_{\_}$ exists
with $\widehat{L}_{\mu}^{\widehat{x}}=\{k^{\prime},k^{\prime}+1,k^{\prime
}+2,...\}.$

b) Let $x\in S^{(n)}$ non constant and $\mu\in\omega(x)$ having the property
that $L_{\mu}^{x}\neq\varnothing.$ Then $t^{\prime}\in\mathbf{R}$ exists such
that $L_{\mu}^{x}=[t^{\prime},\infty).$
\end{theorem}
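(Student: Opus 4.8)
The plan is to show, in each case, that the relevant set of limits of periodicity is nonempty (given), closed upward (i.e. if an element belongs to it, so does every larger one), and bounded below, so that it is a discrete ray $\{k',k'+1,\dots\}$ respectively a closed ray $[t',\infty)$. The upward-closedness is where Theorem~\ref{The137} does the work: if $k_0\in\widehat{L}_\mu^{\widehat{x}}$ then some $p\geq 1$ witnesses (\ref{pre738}) with limit $k_0$, and for any $k_1\geq k_0$ one checks directly from the definition of $\widehat{\mathbf{T}}_\mu^{\widehat{x}}\cap\{k_1,k_1+1,\dots\}\subset\widehat{\mathbf{T}}_\mu^{\widehat{x}}\cap\{k_0,k_0+1,\dots\}$ that the same $p$ witnesses eventual periodicity with limit $k_1$ (the non-triviality clause $\widehat{\mathbf{T}}_\mu^{\widehat{x}}\cap\{k_1,\dots\}\neq\varnothing$ follows because $\mu\in\widehat{\omega}(\widehat{x})$ makes $\widehat{\mathbf{T}}_\mu^{\widehat{x}}$ infinite). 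Hence $k_1\in\widehat{L}_\mu^{\widehat{x}}$. The real-time case is identical in spirit: if $t_0\in L_\mu^x$ with witnessing period $T$, and $t_1\geq t_0$, then intersecting with $[t_1,\infty)$ instead of $[t_0,\infty)$ preserves (\ref{pre740}), and $\mathbf{T}_\mu^x\cap[t_1,\infty)\neq\varnothing$ since $\mu\in\omega(x)$ makes $\mathbf{T}_\mu^x$ superiorly unbounded; so $t_1\in L_\mu^x$.

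Once upward-closedness is established, in the discrete case I would simply set $k'=\min\widehat{L}_\mu^{\widehat{x}}$, which exists because $\widehat{L}_\mu^{\widehat{x}}$ is a nonempty subset of $\mathbf{N}_{\_}$, and conclude $\widehat{L}_\mu^{\widehat{x}}=\{k',k'+1,k'+2,\dots\}$. In the real-time case there is no minimum a priori, so I would instead put $t'=\inf L_\mu^x$ (a real number, using that $x$ is non constant: some initial time $t_0$ with $I^x=(-\infty,t_0)$ bounds $L_\mu^x$ below by, say, any real number at all — actually boundedness below is automatic since $L_\mu^x\subset\mathbf{R}$, but I must argue $\inf L_\mu^x>-\infty$, for which I would invoke that non-constancy of $x$ forces $Or(x)$ to contain a value other than $\mu$, taken on an interval near $-\infty$, so a limit of periodicity below the point where that value is last taken cannot exist; more carefully, the existence of $x(-\infty+0)$ and Remark on $I^x$ give that $L_\mu^x$ cannot extend below $I^x$ unless $x$ is constant). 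Then I must show the infimum is attained, i.e. $t'\in L_\mu^x$, so that $L_\mu^x=[t',\infty)$ rather than $(t',\infty)$.

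The main obstacle is precisely the attainment of the infimum in part (b). The argument I have in mind: pick a decreasing sequence $t_n\downarrow t'$ with $t_n\in L_\mu^x$, each with its own witnessing period $T_n$; by Theorem~\ref{The137} all of them can be taken to witness eventual periodicity with one fixed period $T$ (namely, take $T=T_1$ and transport). Then for every $n$, $\forall t\in\mathbf{T}_\mu^x\cap[t_n,\infty)$, $\{t+zT\mid z\in\mathbf{Z}\}\cap[t_n,\infty)\subset\mathbf{T}_\mu^x$. Given $t\in\mathbf{T}_\mu^x\cap[t',\infty)$ and $z\in\mathbf{Z}$ with $t+zT\geq t'$, I want $t+zT\in\mathbf{T}_\mu^x$; choose $n$ large enough that $t\geq t_n$ and $t+zT\geq t_n$ — possible only if $t>t'$ or $t+zT>t'$ strictly, which is the subtle point. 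The boundary instants $t=t'$ and $t+zT=t'$ must be handled separately, using right-continuity of $x$ (property~(\ref{per741})): on a small right-neighborhood of $t'$ the signal is constant, so membership of $t'$ in $\mathbf{T}_\mu^x$ and the required translates can be deduced from nearby interior instants together with Lemma~\ref{Lem28} / Lemma~\ref{Lem25}-type interval reasoning that already appears in the proof of Theorem~\ref{The94}. I would also record the non-triviality clause $\mathbf{T}_\mu^x\cap[t',\infty)\neq\varnothing$, immediate from $\mu\in\omega(x)$.

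Finally, I would note that part (a) does not need the hypothesis ``$\widehat{x}$ constant or not'' because $\mathbf{N}_{\_}$ is well-ordered, whereas part (b) genuinely needs ``$x$ non constant'': if $x$ were constant and equal to $\mu$ then $L_\mu^x=\mathbf{R}$ (every real is a limit of periodicity for every period), which is not of the form $[t',\infty)$, so the non-constancy hypothesis is exactly what rules out this degenerate case — this matches the parenthetical remark about $P_\mu^x=(0,\infty)$ in the discussion section, and I would mention it for orientation.
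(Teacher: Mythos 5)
Your overall architecture coincides with the paper's: upward closedness of the set of limits of periodicity (this is exactly Lemma \ref{Lem30}) plus well-ordering settles a), and for b) one needs in addition a lower bound on $L_{\mu}^{x}$ and attainment of the infimum, the latter via right-continuity. Part a) and the upward-closedness step are correct.

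The genuine gap is in your lower-bound argument for b). You justify $\inf L_{\mu}^{x}>-\infty$ by saying that non-constancy forces a value other than $\mu$ to be taken ``on an interval near $-\infty$'', so that no limit of periodicity can lie below the point where that value is last taken. This is false precisely when $\mu=x(-\infty+0)$: in that case the value taken on all of $I^{x}=(-\infty,t_{0})$ is $\mu$ itself, and your heuristic gives nothing; the follow-up sentence, that ``$L_{\mu}^{x}$ cannot extend below $I^{x}$'', does not parse, since $I^{x}$ is unbounded below, and in fact $L_{\mu}^{x}$ must meet $I^{x}$ whenever $\mu$ is periodic rather than merely eventually periodic. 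The paper's argument is different and covers both cases: it shows $t_{0}-T\notin L_{\mu}^{x}$ for any $T\in P_{\mu}^{x}$. When $\mu=x(-\infty+0)$ one has $t_{0}-T\in\mathbf{T}_{\mu}^{x}$, and the translate $t_{0}-T+T=t_{0}\geq t_{0}-T$ would force $x(t_{0})=\mu$, contradicting $x(t_{0})\neq x(-\infty+0)$; when $\mu\neq x(-\infty+0)$, Theorem \ref{Lem1} gives $\mathbf{T}_{\mu}^{x}\cap[t_{0}-T,t_{0})\neq\varnothing$, which forces $\mu=x(-\infty+0)$, again a contradiction. Some argument of this kind is needed; as written, your step fails in the first case.

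The attainment of the infimum is only sketched in your proposal, but the tool you name (right-continuity at $t'$ and at $t'+T$, reducing the endpoint instants to nearby interior ones) is exactly what the paper's case analysis in part b.ii) carries out, so that portion is a matter of writing out details rather than a missing idea.
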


\begin{proof}
a) The statement is a consequence of Lemma \ref{Lem30}, page \pageref{Lem30}.

b) Because $x$ is not constant, $t_{0}\in\mathbf{R}$ exists with
$I^{x}=(-\infty,t_{0}).$ Let $T\in P_{\mu}^{x}.$

b.i) We show first that $t_{0}-T\notin L_{\mu}^{x}$ and we suppose against all
reason that $t_{0}-T\in L_{\mu}^{x}.$ We have two possibilities.

Case $\mu=x(-\infty+0)$

The hypothesis $t_{0}-T\in L_{\mu}^{x}$ implies, as far as $t_{0}%
-T\in\mathbf{T}_{\mu}^{x},$%
\[
\mu=x(t_{0}-T)=x(t_{0}),
\]
representing a contradiction with the fact that $t_{0}\notin I^{x}.$

Case $\mu\neq x(-\infty+0)$

We infer from Theorem \ref{Lem1}, page \pageref{Lem1} that $\mathbf{T}_{\mu
}^{x}\cap\lbrack t_{0}-T,t_{0})\neq\varnothing,$ wherefrom $\mu=x(-\infty+0),$
representing a contradiction.

b.ii) From b.i) and from Lemma \ref{Lem30}, we draw the conclusion that
$L_{\mu}^{x}$ has one of the forms $L_{\mu}^{x}=(t^{\prime},\infty),L_{\mu
}^{x}=[t^{\prime},\infty),$ where $t^{\prime}>t_{0}-T.$ We show that the first
possibility cannot take place, thus we suppose against all reason that
$t^{\prime}$ exists with $L_{\mu}^{x}=(t^{\prime},\infty).$ We have the
existence of $\varepsilon^{\prime}>0,\varepsilon^{\prime\prime}>0$ such that%
\begin{equation}
\forall t\in(t^{\prime},t^{\prime}+\varepsilon^{\prime}),x(t)=x(t^{\prime}),
\label{p223}%
\end{equation}%
\begin{equation}
\forall t\in(t^{\prime}+T,t^{\prime}+T+\varepsilon^{\prime\prime
}),x(t)=x(t^{\prime}+T) \label{p224}%
\end{equation}
and let $\varepsilon\in(0,\min\{\varepsilon^{\prime},\varepsilon^{\prime
\prime}\}).$ Two possibilities exist.

Case $x(t^{\prime})=\mu$

We have $t^{\prime}\notin L_{\mu}^{x},$ thus $x(t^{\prime}+T)\neq\mu$ and
$(t^{\prime},t^{\prime}+\varepsilon)\subset L_{\mu}^{x}$ means that
\begin{equation}
\forall t\in(t^{\prime},t^{\prime}+\varepsilon),x(t)=\mu\Longrightarrow
x(t)=x(t+T). \label{p225}%
\end{equation}
Let $t\in(t^{\prime},t^{\prime}+\varepsilon)$ arbitrary. We can write%
\[
\mu=x(t^{\prime})\overset{(\ref{p223})}{=}x(t)\overset{(\ref{p225})}%
{=}x(t+T)\overset{(\ref{p224})}{=}x(t^{\prime}+T),
\]
contradiction.

Case $x(t^{\prime})\neq\mu$

In this case two possibilities exist. The case $x(t^{\prime}+T)=\mu$ when
$(t^{\prime},t^{\prime}+\varepsilon)\subset L_{\mu}^{x}$ means the truth of
(\ref{p225}). Let $t\in(t^{\prime},t^{\prime}+\varepsilon)$ arbitrary. We
conclude%
\[
\mu=x(t^{\prime}+T)\overset{(\ref{p224})}{=}x(t+T)\overset{(\ref{p225})}%
{=}x(t)\overset{(\ref{p223})}{=}x(t^{\prime}),
\]
representing a contradiction. And the case $x(t^{\prime}+T)\neq\mu$ when
$\forall k\in\mathbf{N},x(t+kT)\neq\mu.$ As for any $t\in\mathbf{T}_{\mu}%
^{x}\cap(t^{\prime},\infty)=\mathbf{T}_{\mu}^{x}\cap\lbrack t^{\prime}%
,\infty),$ we have $\{t+zT|z\in\mathbf{Z}\}\cap(t^{\prime},\infty
)=\{t+zT|z\in\mathbf{Z}\}\cap\lbrack t^{\prime},\infty),$ the conclusion is
$t^{\prime}\in L_{\mu}^{x},$ contradiction.

It has resulted that the existence of $t^{\prime}>t_{0}-T$ with $L_{\mu}%
^{x}=[t^{\prime},\infty)$ is the only possibility.
\end{proof}

\section{A property of eventual constancy}

\begin{theorem}
\label{The19}We consider the signals $\widehat{x},x.$

a) Let $\mu\in\widehat{\omega}(\widehat{x}).$ If $k^{\prime}\in\mathbf{N}%
_{\_}$ exists making%
\begin{equation}
\forall k\in\widehat{\mathbf{T}}_{\mu}^{\widehat{x}}\cap\{k^{\prime}%
,k^{\prime}+1,k^{\prime}+2,...\},\{k+zp|z\in\mathbf{Z}\}\cap\{k^{\prime
},k^{\prime}+1,k^{\prime}+2,...\}\subset\widehat{\mathbf{T}}_{\mu}%
^{\widehat{x}} \label{per372}%
\end{equation}
true for $p=1,$ then%
\begin{equation}
\forall k\geq k^{\prime},\widehat{x}(k)=\mu\label{per374}%
\end{equation}
and (\ref{per372}) holds for any $p\geq1.$

b) Let $\mu\in\omega(x)$ and we suppose that $t_{0}\in\mathbf{R},h>0$ exist
such that $x$ is of the form%
\begin{equation}%
\begin{array}
[c]{c}%
x(t)=x(-\infty+0)\cdot\chi_{(-\infty,t_{0})}(t)\oplus x(t_{0})\cdot
\chi_{\lbrack t_{0},t_{0}+h)}(t)\oplus...\\
...\oplus x(t_{0}+kh)\cdot\chi_{\lbrack t_{0}+kh,t_{0}+(k+1)h)}(t)\oplus...
\end{array}
\label{per486}%
\end{equation}
If $t^{\prime}\in\mathbf{R,}$ $T\in(0,h)\cup(h,2h)\cup...\cup(qh,(q+1)h)\cup
...$ exist making%
\begin{equation}
\forall t\in\mathbf{T}_{\mu}^{x}\cap\lbrack t^{\prime},\infty),\{t+zT|z\in
\mathbf{Z}\}\cap\lbrack t^{\prime},\infty)\subset\mathbf{T}_{\mu}^{x}
\label{per373}%
\end{equation}
true, then%
\begin{equation}
\forall t\geq t^{\prime},x(t)=\mu\label{per375}%
\end{equation}
is true and in this case (\ref{per373}) holds for any $T>0.$

c) We ask that (\ref{per486}) is fulfilled under the form%
\begin{equation}%
\begin{array}
[c]{c}%
x(t)=\widehat{x}(-1)\cdot\chi_{(-\infty,t_{0})}(t)\oplus\widehat{x}%
(0)\cdot\chi_{\lbrack t_{0},t_{0}+h)}(t)\oplus...\\
...\oplus\widehat{x}(k)\cdot\chi_{\lbrack t_{0}+kh,t_{0}+(k+1)h)}(t)\oplus...
\end{array}
\label{per371}%
\end{equation}
and let $\mu\in\widehat{\omega}(\widehat{x})=\omega(x)\footnote{The fact that
(\ref{per371}) implies $\widehat{\omega}(\widehat{x})=\omega(x)$ was proved at
Theorem \ref{The114}, page \pageref{The114}.}$ be arbitrary. The following
statements hold:

c.1) If $k^{\prime}\in\mathbf{N}_{\_}$ exists making (\ref{per372}) true for
$p=1$, then (\ref{per374}) is true, (\ref{per372}) holds for any $p\geq1 $ and
$t^{\prime}\in\mathbf{R}$ exists such that (\ref{per375}) holds and
(\ref{per373}) is also true for any $T>0.$

c.2) If $t^{\prime}\in\mathbf{R,}$ $T\in(0,h)\cup(h,2h)\cup...\cup
(qh,(q+1)h)\cup...$ exist making (\ref{per373}) true, then (\ref{per375}) is
true, (\ref{per373}) is true for any $T>0$ and $k^{\prime}\in\mathbf{N}_{\_}$
exists such that (\ref{per374}) is true and (\ref{per372}) is also true for
any $p\geq1$.
\end{theorem}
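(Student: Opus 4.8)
The plan is to prove part a) directly, then derive parts b) and c) from it via the discrete-time/continuous-time correspondence already established in Theorem \ref{The114} and Theorem \ref{The92}. For part a), I start from the hypothesis that (\ref{per372}) holds for $p=1$; that is, for every $k\in\widehat{\mathbf{T}}_{\mu}^{\widehat{x}}\cap\{k^{\prime},k^{\prime}+1,\dots\}$ we have $\{k+z\mid z\in\mathbf{Z}\}\cap\{k^{\prime},k^{\prime}+1,\dots\}\subset\widehat{\mathbf{T}}_{\mu}^{\widehat{x}}$. Since $\mu\in\widehat{\omega}(\widehat{x})$, the set $\widehat{\mathbf{T}}_{\mu}^{\widehat{x}}$ is infinite (Theorem \ref{The12} a)), so it meets $\{k^{\prime},k^{\prime}+1,\dots\}$; fix any such $k_{0}$. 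Applying the hypothesis with this $k_{0}$ and noting that $\{k_{0}+z\mid z\in\mathbf{Z}\}\cap\{k^{\prime},k^{\prime}+1,\dots\}=\{k^{\prime},k^{\prime}+1,\dots\}$, we get $\{k^{\prime},k^{\prime}+1,\dots\}\subset\widehat{\mathbf{T}}_{\mu}^{\widehat{x}}$, which is exactly (\ref{per374}). Once (\ref{per374}) is in hand, for arbitrary $p\geq1$ and any $k\in\widehat{\mathbf{T}}_{\mu}^{\widehat{x}}\cap\{k^{\prime},k^{\prime}+1,\dots\}$, every element $k+zp$ lying in $\{k^{\prime},k^{\prime}+1,\dots\}$ satisfies $\widehat{x}(k+zp)=\mu$ by (\ref{per374}), hence is in $\widehat{\mathbf{T}}_{\mu}^{\widehat{x}}$; so (\ref{per372}) holds for all $p\geq1$. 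This part is genuinely short.

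For part b), the form (\ref{per486}) says $x$ arises from the discrete-time signal $\widehat{y}$ with $\widehat{y}(-1)=x(-\infty+0)$, $\widehat{y}(k)=x(t_{0}+kh)$ exactly in the manner of Theorem \ref{The114}, so $\widehat{\omega}(\widehat{y})=\omega(x)$ and $\mu\in\widehat{\omega}(\widehat{y})$. The idea is to translate the real-time periodicity condition (\ref{per373}), with $T\in(qh,(q+1)h)$, back onto $\widehat{y}$. Here the key observation is that because $x$ is piecewise constant on the uniform grid of mesh $h$, a hypothesis of periodicity with a period $T$ that is \emph{not} a multiple of $h$ forces the value $\mu$ to propagate through every grid cell: translating a cell $[t_{0}+kh,t_{0}+(k+1)h)$ on which $x\equiv\mu$ by $\pm T$ lands inside two adjacent cells, each of which must then also carry the value $\mu$, and iterating this fills the whole tail. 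I would make this precise by first locating, via Theorem \ref{Lem1}, a point of $\mathbf{T}_{\mu}^{x}$ in $[t^{\prime},t^{\prime}+T)$, then using (\ref{per373}) to show successively that all grid cells past a certain index lie in $\mathbf{T}_{\mu}^{x}$; in effect this reduces to showing that (\ref{per373}) with such a non-commensurate $T$ implies the discrete condition (\ref{per372}) for $\widehat{y}$ with $p=1$, to which part a) then applies, yielding (\ref{per374}) for $\widehat{y}$ and hence (\ref{per375}) for $x$. The remaining assertion, that (\ref{per373}) then holds for \emph{every} $T>0$, follows from (\ref{per375}) since $x$ is eventually equal to $\mu$ past $t^{\prime}$: any translate of a point of $\mathbf{T}_{\mu}^{x}$ by any $T$ that stays $\geq t^{\prime}$ lands where $x=\mu$.

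Part c) is then just the conjunction of a) and b) applied to the \emph{same} signals, exploiting (\ref{per371}) which makes $\widehat{x}$ itself the discrete reduction of $x$, so $\widehat{\omega}(\widehat{x})=\omega(x)$ by Theorem \ref{The114}. For c.1): the hypothesis gives (\ref{per374}) and (\ref{per372}) for all $p\geq1$ by a); then taking $t^{\prime}\in[t_{0},t_{0}+(k^{\prime}+1)h)$ if $k^{\prime}\geq0$ (or $t^{\prime}<t_{0}$ if $k^{\prime}=-1$), the relation $\widehat{x}(k)=\mu$ for $k\geq k^{\prime}$ translates to $x(t)=\mu$ for $t\geq t^{\prime}$, which is (\ref{per375}), and (\ref{per373}) for all $T>0$ follows as at the end of b). For c.2): apply b) to obtain (\ref{per375}) and (\ref{per373}) for all $T>0$, then convert (\ref{per375}) back to (\ref{per374}) for $\widehat{x}$ by choosing $k^{\prime}$ with $t_{0}+k^{\prime}h\geq t^{\prime}$, and invoke a) for (\ref{per372}) at all $p\geq1$. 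I expect the main obstacle to be the combinatorial core of b): carefully justifying that a period $T$ with $qh<T<(q+1)h$ really does force the value $\mu$ into every grid cell of the tail — this is where one must handle the interplay between the non-integer ratio $T/h$ and the grid alignment, and is the only place where the hypothesis $T\notin h\mathbf{Z}$ is used in an essential way. Everything else is bookkeeping with the forgetful-function and orbit identities already proved.
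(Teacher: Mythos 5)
Your argument for a) is the paper's own proof almost verbatim, and c) is handled there exactly as you do, as a corollary of a) and b) via the grid correspondence of Theorem \ref{The114}. Where you genuinely diverge is in the combinatorial core of b). The paper translates single points $t_{0}+k^{\prime}h$, $t_{0}+(k^{\prime}+1)h-T$, etc., obtaining one new grid cell at a time, and is then forced into a laborious case analysis ($T\in(0,h)$, $(h,2h)$, $(2h,3h)$, $(3h,4h)$, then the general pattern $T\in(qh,(q+1)h)$ with separate forward and backward passes to cover every residue class modulo $q$). Your observation that one should translate the \emph{entire} cell $[t_{0}+kh,t_{0}+(k+1)h)$ is sharper: since $T\notin h\mathbf{Z}$, the translate straddles exactly one grid boundary and meets two adjacent cells in sets of positive length, so constancy of $x$ on cells forces \emph{both} to carry the value $\mu$. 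Iterating, the runs of consecutive $\mu$-cells grow by one at each step and merge once their length reaches $q$, which kills the residue-class bookkeeping entirely. This is a real simplification and buys a uniform treatment of all $q$ at once.

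One detail you should not gloss over when writing this out: your propagation yields $x(t)=\mu$ only for $t\geq\overline{t}$ with $\overline{t}$ possibly much larger than $t^{\prime}$, whereas (\ref{per375}) demands the bound $t^{\prime}$ itself. The paper closes this with a short separate argument: if $t^{\prime\prime}\in[t^{\prime},\overline{t})$ had $x(t^{\prime\prime})\neq\mu$, choose $q$ with $t^{\prime\prime}+qT\geq\overline{t}$; then $t^{\prime\prime}+qT\in\mathbf{T}_{\mu}^{x}\cap[t^{\prime},\infty)$ and $t^{\prime\prime}=(t^{\prime\prime}+qT)-qT\geq t^{\prime}$, so (\ref{per373}) forces $t^{\prime\prime}\in\mathbf{T}_{\mu}^{x}$, a contradiction. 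Your sketch mentions $\pm T$ translations but never states this step; it is a one-liner, but it is needed, and it is also what lets c.1) and c.2) produce the correct limits $t^{\prime}$ and $k^{\prime}$ rather than merely some larger thresholds.
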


\begin{proof}
a) Some $k^{\prime}\in\mathbf{N}_{\_}$ exists with (\ref{per372}) fulfilled
for $p=1$, meaning that%
\[
\forall k,\forall z,(k\in\widehat{\mathbf{T}}_{\mu}^{\widehat{x}}\text{ and
}z\in\mathbf{Z}\text{ and }k\geq k^{\prime}\text{ and }k+z\geq k^{\prime
})\Longrightarrow k+z\in\widehat{\mathbf{T}}_{\mu}^{\widehat{x}}%
\]
holds. $\mu\in\widehat{\omega}(\widehat{x})$ implies that $\widehat
{\mathbf{T}}_{\mu}^{\widehat{x}}$ is infinite, thus some $k\in\widehat
{\mathbf{T}}_{\mu}^{\widehat{x}},k\geq k^{\prime}$ exists indeed. We infer%
\[
\{k^{\prime},k^{\prime}+1,k^{\prime}+2,...\}\subset\widehat{\mathbf{T}}_{\mu
}^{\widehat{x}}.
\]
We have obtained the truth of (\ref{per374}). In these circumstances
(\ref{per372}) holds for any $p\geq1$.

b) We suppose that $t_{0}\in\mathbf{R},h>0$ exist such that (\ref{per486})
holds and also that $t^{\prime}\in\mathbf{R,}$ $T\in(0,h)\cup(h,2h)\cup
...\cup(qh,(q+1)h)\cup...$ exist such that (\ref{per373}) is true.
Furthermore, $\mu\in\omega(x)$ implies $\mathbf{T}_{\mu}^{x}\cap\lbrack
t^{\prime},\infty)\neq\varnothing,$ since $\mathbf{T}_{\mu}^{x}$ is unbounded
from above.

We show first the existence of $\overline{t}\in\mathbf{R}$ such that%
\begin{equation}
\forall t\geq\overline{t},x(t)=\mu\label{per377}%
\end{equation}
is true.

We have the existence of $k^{\prime}\in\mathbf{N}_{\_}$ such that
$t_{0}+k^{\prime}h\geq t^{\prime},$ $x(t_{0}+k^{\prime}h)=\mu$ and%
\begin{equation}
\forall t\in\lbrack t_{0}+k^{\prime}h,t_{0}+(k^{\prime}+1)h),x(t)=\mu.
\label{per378}%
\end{equation}

Case $T\in(0,h),$ when%
\begin{equation}
t_{0}+k^{\prime}h<t_{0}+(k^{\prime}+1)h-T<t_{0}+(k^{\prime}+1)h,
\label{per389}%
\end{equation}%
\begin{equation}
\mu\overset{(\ref{per378})}{=}x(t_{0}+k^{\prime}h)\overset{(\ref{per389})}%
{=}x(t_{0}+(k^{\prime}+1)h-T)\overset{(\ref{per373})}{=}x(t_{0}+(k^{\prime
}+1)h), \label{per381}%
\end{equation}%
\begin{equation}
\forall t\in\lbrack t_{0}+(k^{\prime}+1)h,t_{0}+(k^{\prime}+2)h),x(t)\overset
{(\ref{per381})}{=}\mu; \label{per382}%
\end{equation}%
\begin{equation}
t_{0}+(k^{\prime}+1)h<t_{0}+(k^{\prime}+2)h-T<t_{0}+(k^{\prime}+2)h,
\label{per390}%
\end{equation}%
\begin{equation}
\mu\overset{(\ref{per381})}{=}x(t_{0}+(k^{\prime}+1)h)\overset{(\ref{per390}%
)}{=}x(t_{0}+(k^{\prime}+2)h-T)\overset{(\ref{per373})}{=}x(t_{0}+(k^{\prime
}+2)h), \label{per383}%
\end{equation}%
\begin{equation}
\forall t\in\lbrack t_{0}+(k^{\prime}+2)h,t_{0}+(k^{\prime}+3)h),x(t)\overset
{(\ref{per383})}{=}\mu; \label{per427}%
\end{equation}%
\[
...
\]
Thus the statement (\ref{per377}) holds, from (\ref{per378}), (\ref{per382}),
(\ref{per427}),... for $\overline{t}=t_{0}+k^{\prime}h.$

Case $T\in(h,2h),$ when%
\begin{equation}
t_{0}+(k^{\prime}+1)h<t_{0}+k^{\prime}h+T<t_{0}+(k^{\prime}+2)h,
\label{per402}%
\end{equation}%
\begin{equation}
\mu\overset{(\ref{per378})}{=}x(t_{0}+k^{\prime}h)\overset{(\ref{per373})}%
{=}x(t_{0}+k^{\prime}h+T)\overset{(\ref{per402})}{=}x(t_{0}+(k^{\prime}+1)h),
\label{per384}%
\end{equation}%
\begin{equation}
\forall t\in\lbrack t_{0}+(k^{\prime}+1)h,t_{0}+(k^{\prime}+2)h),x(t)\overset
{(\ref{per384})}{=}\mu; \label{per385}%
\end{equation}%
\begin{equation}
t_{0}+(k^{\prime}+2)h<t_{0}+(k^{\prime}+1)h+T<t_{0}+(k^{\prime}+3)h,
\label{per403}%
\end{equation}%
\begin{equation}
\mu\overset{(\ref{per384})}{=}x(t_{0}+(k^{\prime}+1)h)\overset{(\ref{per373}%
)}{=}x(t_{0}+(k^{\prime}+1)h+T)\overset{(\ref{per403})}{=}x(t_{0}+(k^{\prime
}+2)h), \label{per386}%
\end{equation}%
\begin{equation}
\forall t\in\lbrack t_{0}+(k^{\prime}+2)h,t_{0}+(k^{\prime}+3)h),x(t)\overset
{(\ref{per386})}{=}\mu; \label{per387}%
\end{equation}%
\[
...
\]
The statement (\ref{per377}) holds, from (\ref{per378}), (\ref{per385}),
(\ref{per387}),... for $\overline{t}=t_{0}+k^{\prime}h.$

Case $T\in(2h,3h).$ In this situation%
\begin{equation}
t_{0}+(k^{\prime}+2)h<t_{0}+k^{\prime}h+T<t_{0}+(k^{\prime}+3)h,
\label{per404}%
\end{equation}%
\begin{equation}
\mu\overset{(\ref{per378})}{=}x(t_{0}+k^{\prime}h)\overset{(\ref{per373})}%
{=}x(t_{0}+k^{\prime}h+T)\overset{(\ref{per404})}{=}x(t_{0}+(k^{\prime}+2)h),
\label{per405}%
\end{equation}%
\begin{equation}
\forall t\in\lbrack t_{0}+(k^{\prime}+2)h,t_{0}+(k^{\prime}+3)h),x(t)\overset
{(\ref{per405})}{=}\mu; \label{per406}%
\end{equation}%
\begin{equation}
t_{0}+(k^{\prime}+4)h<t_{0}+(k^{\prime}+2)h+T<t_{0}+(k^{\prime}+5)h,
\label{per391}%
\end{equation}%
\begin{equation}
\mu\overset{(\ref{per405})}{=}x(t_{0}+(k^{\prime}+2)h)\overset{(\ref{per373}%
)}{=}x(t_{0}+(k^{\prime}+2)h+T)\overset{(\ref{per391})}{=}x(t_{0}+(k^{\prime
}+4)h), \label{per407}%
\end{equation}%
\begin{equation}
\forall t\in\lbrack t_{0}+(k^{\prime}+4)h,t_{0}+(k^{\prime}+5)h),x(t)\overset
{(\ref{per407})}{=}\mu; \label{per408}%
\end{equation}%
\begin{equation}
t_{0}+(k^{\prime}+6)h<t_{0}+(k^{\prime}+4)h+T<t_{0}+(k^{\prime}+7)h,
\label{per409}%
\end{equation}%
\[
...
\]%
\begin{equation}
\forall j\in\mathbf{N},\forall t\in\lbrack t_{0}+(k^{\prime}+2j)h,t_{0}%
+(k^{\prime}+2j+1)h),x(t)=\mu. \label{per410}%
\end{equation}

Furthermore%
\begin{equation}
t_{0}+(k^{\prime}+1)h<t_{0}+(k^{\prime}+4)h-T<t_{0}+(k^{\prime}+2)h,
\label{per411}%
\end{equation}%
\begin{equation}
\mu\overset{(\ref{per410})}{=}x(t_{0}+(k^{\prime}+4)h)\overset{(\ref{per373}%
)}{=}x(t_{0}+(k^{\prime}+4)h-T)\overset{(\ref{per411})}{=}x(t_{0}+(k^{\prime
}+1)h), \label{per412}%
\end{equation}%
\begin{equation}
\forall t\in\lbrack t_{0}+(k^{\prime}+1)h,t_{0}+(k^{\prime}+2)h),x(t)\overset
{(\ref{per412})}{=}\mu; \label{per413}%
\end{equation}%
\begin{equation}
t_{0}+(k^{\prime}+3)h<t_{0}+(k^{\prime}+1)h+T<t_{0}+(k^{\prime}+4)h,
\end{equation}%
\[
...
\]
and we prove that%
\begin{equation}
\forall j\in\mathbf{N},\forall t\in\lbrack t_{0}+(k^{\prime}+2j+1)h,t_{0}%
+(k^{\prime}+2j+2)h),x(t)=\mu\label{per414}%
\end{equation}
similarly with (\ref{per404}),...,(\ref{per410}), starting from $x(t_{0}%
+(k^{\prime}+1)h)\overset{(\ref{per413})}{=}\mu$ instead of $x(t_{0}%
+k^{\prime}h)\overset{(\ref{per378})}{=}\mu.$ From (\ref{per410}),
(\ref{per414}) we infer that the statement (\ref{per377}) is true for
$\overline{t}=t_{0}+k^{\prime}h.$

Case $T\in(3h,4h),$%
\begin{equation}
t_{0}+(k^{\prime}+3)h<t_{0}+k^{\prime}h+T<t_{0}+(k^{\prime}+4)h,
\label{per415}%
\end{equation}%
\begin{equation}
\mu\overset{(\ref{per378})}{=}x(t_{0}+k^{\prime}h)\overset{(\ref{per373})}%
{=}x(t_{0}+k^{\prime}h+T)\overset{(\ref{per415})}{=}x(t_{0}+(k^{\prime}+3)h),
\label{per416}%
\end{equation}%
\begin{equation}
\forall t\in\lbrack t_{0}+(k^{\prime}+3)h,t_{0}+(k^{\prime}+4)h),x(t)\overset
{(\ref{per416})}{=}\mu; \label{per393}%
\end{equation}%
\begin{equation}
t_{0}+(k^{\prime}+6)h<t_{0}+(k^{\prime}+3)h+T<t_{0}+(k^{\prime}+7)h,
\label{per392}%
\end{equation}%
\begin{equation}
\mu\overset{(\ref{per416})}{=}x(t_{0}+(k^{\prime}+3)h)\overset{(\ref{per373}%
)}{=}x(t_{0}+(k^{\prime}+3)h+T)\overset{(\ref{per392})}{=}x(t_{0}+(k^{\prime
}+6)h), \label{per417}%
\end{equation}%
\begin{equation}
\forall t\in\lbrack t_{0}+(k^{\prime}+6)h,t_{0}+(k^{\prime}+7)h),x(t)\overset
{(\ref{per417})}{=}\mu; \label{per394}%
\end{equation}%
\begin{equation}
t_{0}+(k^{\prime}+9)h<t_{0}+(k^{\prime}+6)h+T<t_{0}+(k^{\prime}+10)h,
\end{equation}%
\[
...
\]%
\begin{equation}
\forall j\in\mathbf{N},\forall t\in\lbrack t_{0}+(k^{\prime}+3j)h,t_{0}%
+(k^{\prime}+3j+1)h),x(t)=\mu. \label{per418}%
\end{equation}
Furthermore,%
\begin{equation}
t_{0}+(k^{\prime}+2)h<t_{0}+(k^{\prime}+6)h-T<t_{0}+(k^{\prime}+3)h,
\label{per419}%
\end{equation}%
\begin{equation}
\mu\overset{(\ref{per418})}{=}x(t_{0}+(k^{\prime}+6)h)\overset{(\ref{per373}%
)}{=}x(t_{0}+(k^{\prime}+6)h-T)\overset{(\ref{per419})}{=}x(t_{0}+(k^{\prime
}+2)h) \label{per420}%
\end{equation}%
\[
...
\]
and we remake the reasoning (\ref{per415}),...,(\ref{per418}) starting from
$x(t_{0}+(k^{\prime}+2)h)\overset{(\ref{per419})}{=}\mu$ instead of
$x(t_{0}+k^{\prime}h)\overset{(\ref{per378})}{=}\mu.$ We obtain:%
\begin{equation}
\forall j\in\mathbf{N},\forall t\in\lbrack t_{0}+(k^{\prime}+3j+2)h,t_{0}%
+(k^{\prime}+3j+3)h),x(t)=\mu\label{per421}%
\end{equation}
and we also have%
\begin{equation}
t_{0}+(k^{\prime}+1)h<t_{0}+(k^{\prime}+5)h-T<t_{0}+(k^{\prime}+2)h,
\label{per422}%
\end{equation}%
\begin{equation}
\mu\overset{(\ref{per421})}{=}x(t_{0}+(k^{\prime}+5)h)\overset{(\ref{per373}%
)}{=}x(t_{0}+(k^{\prime}+5)h-T)\overset{(\ref{per422})}{=}x(t_{0}+(k^{\prime
}+1)h) \label{per423}%
\end{equation}%
\[
...
\]
We remake the reasoning (\ref{per415}),...,(\ref{per418}) starting from
$x(t_{0}+(k^{\prime}+1)h)\overset{(\ref{per423})}{=}\mu$ instead of
$x(t_{0}+k^{\prime}h)\overset{(\ref{per378})}{=}\mu.$ We get:%
\begin{equation}
\forall j\in\mathbf{N},\forall t\in\lbrack t_{0}+(k^{\prime}+3j+1)h,t_{0}%
+(k^{\prime}+3j+2)h),x(t)=\mu. \label{per424}%
\end{equation}
From (\ref{per418}), (\ref{per421}), (\ref{per424}) we have the truth of
(\ref{per377}) for $\overline{t}=t_{0}+k^{\prime}h.$

In the general case $T\in(qh,(q+1)h),$ $q\geq2$ we prove in succession the
truth of%
\[
\forall j\in\mathbf{N},\forall t\in\lbrack t_{0}+(k^{\prime}+qj)h,t_{0}%
+(k^{\prime}+qj+1)h),x(t)=\mu,
\]%
\[
\forall j\in\mathbf{N},\forall t\in\lbrack t_{0}+(k^{\prime}+qj+q-1)h,t_{0}%
+(k^{\prime}+qj+q)h),x(t)=\mu,
\]%
\[
...
\]%
\[
\forall j\in\mathbf{N},\forall t\in\lbrack t_{0}+(k^{\prime}+qj+1)h,t_{0}%
+(k^{\prime}+qj+2)h),x(t)=\mu,
\]
wherefrom the truth of (\ref{per377}) follows for $\overline{t}=t_{0}%
+k^{\prime}h.$

We prove now that in (\ref{per375}) we can take $\overline{t}=t^{\prime}.$ Let
us suppose, against all reason, that this is not true, i.e. $\overline
{t}>t^{\prime}$\footnote{if $\overline{t}<t^{\prime},$ the other way of
negating $\overline{t}=t^{\prime},$ then from $\forall t\geq\overline
{t},x(t)=\mu,$ we can write $\forall t\geq t^{\prime},x(t)=\mu,$ i.e. finally
we can take $\overline{t}=t^{\prime}.$} and some $t^{\prime\prime}\in\lbrack
t^{\prime},\overline{t})$ exists with $x(t^{\prime\prime})\neq\mu.$ Let
$q\geq1$ with the property that $t^{\prime\prime}+qT\geq\overline{t},$ in
other words $t^{\prime\prime}+qT\in\mathbf{T}_{\mu}^{x}\cap\lbrack t^{\prime
},\infty).$ Then
\[
t^{\prime\prime}+qT-qT\in\{t^{\prime\prime}+qT+zT|z\in\mathbf{Z}\}\cap\lbrack
t^{\prime},\infty)\subset\mathbf{T}_{\mu}^{x}%
\]
and we infer that $x(t^{\prime\prime})=\mu,$ contradiction. (\ref{per375}) is
proved and obviously (\ref{per373}) holds for any $T>0.$

c) This is a consequence of a) and b).
\end{proof}

\section{Discrete time vs real time}

\begin{theorem}
\label{The24}We consider the signals $\widehat{x}\in\widehat{S}^{(n)},x\in
S^{(n)}$ which are not eventually constant and we suppose that%
\begin{equation}%
\begin{array}
[c]{c}%
x(t)=\widehat{x}(-1)\cdot\chi_{(-\infty,t_{0})}(t)\oplus\widehat{x}%
(0)\cdot\chi_{\lbrack t_{0},t_{0}+h)}(t)\oplus...\\
...\oplus\widehat{x}(k)\cdot\chi_{\lbrack t_{0}+kh,t_{0}+(k+1)h)}(t)\oplus...
\end{array}
\label{per141_}%
\end{equation}
is true, where $t_{0}\in\mathbf{R},h>0.$ Let $\mu\in\widehat{\omega}%
(\widehat{x})=\omega(x).$

a) If $p\geq1$ and $k^{\prime}\in\mathbf{N}_{\_}$ exist such that%
\begin{equation}
\forall k\in\widehat{\mathbf{T}}_{\mu}^{\widehat{x}}\cap\{k^{\prime}%
,k^{\prime}+1,k^{\prime}+2,...\},\{k+zp|z\in\mathbf{Z}\}\cap\{k^{\prime
},k^{\prime}+1,k^{\prime}+2,...\}\subset\widehat{\mathbf{T}}_{\mu}%
^{\widehat{x}}, \label{per144_}%
\end{equation}
then $t^{\prime}\in\mathbf{R}$ exists with
\begin{equation}
\forall t\in\mathbf{T}_{\mu}^{x}\cap\lbrack t^{\prime},\infty),\{t+zT|z\in
\mathbf{Z}\}\cap\lbrack t^{\prime},\infty)\subset\mathbf{T}_{\mu}^{x}
\label{per145_}%
\end{equation}
true for $T=ph.$

b) If $T>0$ and $t^{\prime}\in\mathbf{R}$ exist for which (\ref{per145_}) is
true, then $\frac{T}{h}\in\{1,2,3,...\}$ and $k^{\prime}\in\mathbf{N}_{\_}$
exists such that (\ref{per144_}) is true for $p=\frac{T}{h}.$
\end{theorem}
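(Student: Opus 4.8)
\textbf{Proof plan for Theorem \ref{The24}.}

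The plan is to exploit the correspondence between $\widehat{x}$ and $x$ given by \eqref{per141_}, together with the earlier accessibility and eventual-constancy results, and to split into the two directions a) and b). For part a), I would start from \eqref{per144_} and set $T=ph$. The key translation identity is that for $k\in\mathbf{N}_{\_}$ the time instant $t=t_0+kh$ satisfies $x(t)=\widehat{x}(k)$, and more precisely $x$ is constant equal to $\widehat{x}(k)$ on the whole interval $[t_0+kh,\,t_0+(k+1)h)$. Hence $\mathbf{T}_\mu^x\cap[t_0+k^{\prime}h,\infty)$ is exactly the union of the intervals $[t_0+kh,t_0+(k+1)h)$ over $k\in\widehat{\mathbf{T}}_\mu^{\widehat{x}}\cap\{k^{\prime},k^{\prime}+1,\dots\}$. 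Choosing $t^{\prime}=t_0+k^{\prime}h$, a point $t\in\mathbf{T}_\mu^x\cap[t^{\prime},\infty)$ lies in some such interval, say $t\in[t_0+kh,t_0+(k+1)h)$ with $k\ge k^{\prime}$, $\widehat{x}(k)=\mu$; then $t+zph$ lies in $[t_0+(k+zp)h,\,t_0+(k+zp+1)h)$, and if $t+zph\ge t^{\prime}$ then $k+zp\ge k^{\prime}$, so \eqref{per144_} gives $k+zp\in\widehat{\mathbf{T}}_\mu^{\widehat{x}}$, i.e. $x(t+zph)=\mu$. That yields \eqref{per145_} for $T=ph$. This direction is essentially bookkeeping about which interval a real time instant falls in.

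For part b), the new content is showing $\frac{T}{h}\in\mathbf{N}$. I would argue by contradiction: suppose $T\in(qh,(q+1)h)$ for some $q\in\mathbf{N}$, i.e. $T/h\notin\{1,2,3,\dots\}$. Since \eqref{per141_} is exactly the hypothesis \eqref{per486} of Theorem \ref{The19} b) (with $\widehat{x}(-1)=x(-\infty+0)$, and $\mu\in\omega(x)$), and \eqref{per145_} is exactly \eqref{per373}, Theorem \ref{The19} b) applies and forces $\forall t\ge t^{\prime},\ x(t)=\mu$, that is, $x$ is eventually constant. This contradicts the standing assumption that $x$ is not eventually constant. Therefore $T/h$ must be a positive integer; write $p=T/h\ge 1$.

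It then remains to produce $k^{\prime}\in\mathbf{N}_{\_}$ with \eqref{per144_} for this $p$. Here I would run the correspondence of part a) in reverse. From \eqref{per145_} pick $t^{\prime}$; enlarge it if necessary so that $t^{\prime}=t_0+k^{\prime}h$ for some $k^{\prime}\in\mathbf{N}_{\_}$ (this is legitimate because if \eqref{per145_} holds for some $t^{\prime}$ it holds for any larger value, by Theorem \ref{The137} b) or directly). Now take $k\in\widehat{\mathbf{T}}_\mu^{\widehat{x}}\cap\{k^{\prime},k^{\prime}+1,\dots\}$ and $z\in\mathbf{Z}$ with $k+zp\ge k^{\prime}$. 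Then $t:=t_0+kh\in\mathbf{T}_\mu^x\cap[t^{\prime},\infty)$ and $t+zT=t_0+(k+zp)h\ge t^{\prime}$, so \eqref{per145_} gives $x(t_0+(k+zp)h)=\mu$, hence $\widehat{x}(k+zp)=\mu$, i.e. $k+zp\in\widehat{\mathbf{T}}_\mu^{\widehat{x}}$; this is \eqref{per144_}. The main obstacle, and the only place real care is needed, is the step $\frac{T}{h}\in\mathbf{N}$: the clean way is precisely to invoke Theorem \ref{The19} b) as above rather than trying to argue directly about the arithmetic of $T$ versus $h$, so I would make sure the hypotheses of that theorem are verified verbatim (the form \eqref{per486} of $x$, and $\mu\in\omega(x)$, both of which are given) before quoting it.
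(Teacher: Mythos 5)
Your proposal is correct and follows essentially the same route as the paper: part a) by the same interval bookkeeping with $T=ph$ and $t'=t_0+k'h$, and part b) by invoking Theorem \ref{The19} b) to rule out $T/h\notin\{1,2,3,\dots\}$ via the eventual-constancy contradiction, then reversing the correspondence (the paper normalizes $t'$ only to lie in $[t_0+k'h,t_0+(k'+1)h)$ rather than to equal $t_0+k'h$, and justifies the enlargement of $t'$ by Lemma \ref{Lem30} b), but these are cosmetic differences). No gaps.
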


\begin{proof}
a) The hypothesis states the existence of $t_{0}\in\mathbf{R},h>0$ such that
(\ref{per141_}) is true and also that, given $\mu\in\widehat{\omega}%
(\widehat{x})=\omega(x),$ $p\geq1$ and $k^{\prime}\in\mathbf{N}_{\_}$ exist
with (\ref{per144_}) fulfilled.

We define $T=ph,t^{\prime}=t_{0}+k^{\prime}h.$ Let $t\in\mathbf{T}_{\mu}%
^{x},z\in\mathbf{Z}$ be arbitrary with the property that $t\geq t^{\prime
},t+zT\geq t^{\prime}$ ($\mu\in\omega(x)$ implies that $\mathbf{T}_{\mu}%
^{x}\cap\lbrack t^{\prime},\infty)\neq\varnothing$). Some $k\geq k^{\prime}$
exists then such that $t\in\lbrack t_{0}+kh,t_{0}+(k+1)h)$ and we can write%
\[
t+zT\in\lbrack t_{0}+kh+zT,t_{0}+(k+1)h+zT)=[t_{0}+(k+zp)h,t_{0}+(k+1+zp)h).
\]
Obviously $t_{0}+(k+zp)h\geq t_{0}+k^{\prime}h=t^{\prime}$ implies $k+zp\geq
k^{\prime}.$ We infer%
\[
\mu=x(t)=\widehat{x}(k)\overset{(\ref{per144_})}{=}\widehat{x}(k+zp)=x(t+zT),
\]
in other words (\ref{per145_}) holds.

b) Some $t_{0}\in\mathbf{R}$ and $h>0$ exist from the hypothesis such that
(\ref{per141_}) is true and, given $\mu,$ some $T>0,t^{\prime}\in\mathbf{R} $
exist also such that (\ref{per145_}) holds. If $T\in(0,h)\cup(h,2h)\cup
...\cup(qh,(q+1)h)\cup...$ then from Theorem \ref{The19} b), page
\pageref{The19}, we have that $\underset{t\rightarrow\infty}{\lim
}x(t)=\underset{k\rightarrow\infty}{\lim}\widehat{x}(k)=\mu,$ contradiction
with the hypothesis, thus $T\in\{h,2h,3h,...\}$ for which we define
$p=\frac{T}{h},p\geq1.$ As $\mu\in\omega(x),\mathbf{T}_{\mu}^{x}$ is unbounded
from above and $\mathbf{T}_{\mu}^{x}\cap\lbrack t^{\prime},\infty
)\neq\varnothing$ is true for any $t^{\prime}.$ We can suppose, by making use
of Lemma \ref{Lem30}, page \pageref{Lem30} that in (\ref{per145_}) we have
$t^{\prime}\geq t_{0}-h$ and we denote by $k^{\prime}\in\mathbf{N}_{\_}$ the
number for which $t^{\prime}\in\lbrack t_{0}+k^{\prime}h,t_{0}+(k^{\prime
}+1)h).$

Let now $k\in\widehat{\mathbf{T}}_{\mu}^{\widehat{x}}$ and $z\in\mathbf{Z} $
be arbitrary with $k\geq k^{\prime}$ and $k+zp\geq k^{\prime}$ ($\widehat
{\mathbf{T}}_{\mu}^{\widehat{x}}\cap\{k^{\prime},k^{\prime}+1,k^{\prime
}+2,...\}\neq\varnothing$ because $\mu\in\widehat{\omega}(\widehat{x}%
)=\omega(x)$ and $\widehat{\mathbf{T}}_{\mu}^{\widehat{x}}$ is infinite). Then%
\begin{equation}
t^{\prime}+(k-k^{\prime})h\geq t^{\prime}, \label{per750}%
\end{equation}%
\begin{equation}
t^{\prime}+(k-k^{\prime}+zp)h\geq t^{\prime} \label{per751}%
\end{equation}
and on the other hand%
\begin{equation}
t_{0}+kh\leq t^{\prime}+(k-k^{\prime})h<t_{0}+(k+1)h, \label{per544}%
\end{equation}%
\begin{equation}
t_{0}+(k+zp)h\leq t^{\prime}+(k-k^{\prime}+zp)h<t_{0}+(k+zp+1)h \label{per545}%
\end{equation}
are true. We conclude%
\[
\mu=\widehat{x}(k)\overset{(\ref{per544})}{=}x(t^{\prime}+(k-k^{\prime
})h)\overset{(\ref{per145_}),(\ref{per750}),(\ref{per751})}{=}%
\]%
\[
=x(t^{\prime}+(k-k^{\prime})h+zT)=x(t^{\prime}+(k-k^{\prime}+zp)h)\overset
{(\ref{per545})}{=}\widehat{x}(k+zp).
\]
(\ref{per144_}) holds.
\end{proof}

\begin{example}
Let the signal $\widehat{x}\in\widehat{S}^{(1)}$ such that%
\[
\widehat{x}=0,0,0,0,1,\widehat{x}(4),\widehat{x}(5),1,\widehat{x}%
(7),\widehat{x}(8),1,\widehat{x}(10),...
\]
Then $1$ is an eventually periodic point of $\widehat{x}$, $p=3$ is its period
and any $k^{\prime}=1$ is the prime limit of periodicity. If
\[
\widehat{x}(4)=\widehat{x}(5)=\widehat{x}(7)=\widehat{x}(8)=...=0
\]
then $3$ is its prime period and if%
\[
\widehat{x}(4)=\widehat{x}(5)=\widehat{x}(7)=\widehat{x}(8)=...=1
\]
then $1$ is its prime period.
\end{example}

\section{Support sets vs sets of periods}

\begin{remark}
Let $x,y\in S^{(n)}$ be two signals and $\mu\in\omega(x)\cap\omega(y).$ One
might be tempted to think that implications of the kind%
\begin{equation}
\mathbf{T}_{\mu}^{x}=\mathbf{T}_{\mu}^{y}\Longrightarrow P_{\mu}^{x}=P_{\mu
}^{y}, \label{pre27}%
\end{equation}%
\begin{equation}
P_{\mu}^{x}=P_{\mu}^{y}\Longrightarrow\mathbf{T}_{\mu}^{x}=\mathbf{T}_{\mu
}^{y} \label{pre28}%
\end{equation}
hold and the purpose of this Section is that of understanding them better. We
give real time examples, keeping in mind that the same statements hold in
discrete time too.
\end{remark}

\begin{example}
\label{Exa14}We suppose that $Or(x)=Or(y)=\{\mu,\mu^{\prime},\mu^{\prime
\prime}\}$ and let%
\[
x(t)=\mu^{\prime}\cdot\chi_{(-\infty,2)}(t)\oplus\mu^{\prime\prime}\cdot
\chi_{\lbrack2,3)}(t)\oplus\mu\cdot\chi_{\lbrack3,4)}(t)\oplus\mu
^{\prime\prime}\cdot\chi_{\lbrack4,6)}(t)
\]%
\[
\oplus\mu\cdot\chi_{\lbrack6,7)}(t)\oplus\mu^{\prime\prime}\cdot\chi
_{\lbrack7,9)}(t)\oplus\mu\cdot\chi_{\lbrack9,10)}(t)\oplus...
\]%
\[
y(t)=\mu^{\prime}\cdot\chi_{(-\infty,0)}(t)\oplus\mu^{\prime\prime}\cdot
\chi_{\lbrack0,3)}(t)\oplus\mu\cdot\chi_{\lbrack3,4)}(t)\oplus\mu
^{\prime\prime}\cdot\chi_{\lbrack4,6)}(t)
\]%
\[
\oplus\mu\cdot\chi_{\lbrack6,7)}(t)\oplus\mu^{\prime\prime}\cdot\chi
_{\lbrack7,9)}(t)\oplus\mu\cdot\chi_{\lbrack9,10)}(t)\oplus...
\]
We see that $I^{x}=(-\infty,2),$ $I^{y}=(-\infty,0),$ $\mathbf{T}_{\mu}%
^{x}=\mathbf{T}_{\mu}^{y}=[3,4)\cup\lbrack6,7)\cup\lbrack9,10)\cup...$,
$P_{\mu}^{x}=P_{\mu}^{y}=\{3,6,9,...\}$ and $L_{\mu}^{x}=L_{\mu}^{y}%
=[1,\infty).$ The fact that $\mu$ is a periodic point of $x$ is expressed by
the non-empty intersection $I^{x}\cap L_{\mu}^{x}=[1,2)$ and the fact that
$\mu$ is an eventually periodic point of $y$ only follows from $I^{y}\cap
L_{\mu}^{y}=\varnothing.$ The interpretation of (\ref{pre27}) according to
this Example is: the implication $\mathbf{T}_{\mu}^{x}=\mathbf{T}_{\mu}%
^{y}\Longrightarrow P_{\mu}^{x}=P_{\mu}^{y}$ takes place, however $\mu$ may be
a periodic point of $x$ and an eventually periodic point of $y$.
\end{example}

\begin{example}
\label{Exa15}We take%
\[
\mathbf{T}_{\mu}^{x}=(-\infty,2)\cup\lbrack4,5)\cup\lbrack9,10)\cup
\lbrack14,15)\cup...
\]%
\[
\mathbf{T}_{\mu}^{y}=(-\infty,1)\cup\lbrack2,3)\cup\lbrack4,5)\cup
\lbrack7,8)\cup\lbrack9,10)\cup\lbrack12,13)\cup...
\]
$\mu$ is an eventually periodic point of both $x,y$ with $P_{\mu}^{x}=P_{\mu
}^{y}=\{5,10,15,...\}$ and $L_{\mu}^{x}=[2,\infty),$ $L_{\mu}^{y}=[1,\infty).$
The difference between the two signals $x,y$ consists in the fact that in
$\mathbf{T}_{\mu}^{x}$ the interval $[4,5)$ repeats within a period and in
$\mathbf{T}_{\mu}^{y}$ the intervals $[2,3),[4,5)$ repeat within a period. The
periods $T$ coincide for $x$ and $y$ and (\ref{pre28}) is false.
\end{example}

\section{Sums, differences and multiples of periods}

\begin{theorem}
\label{The68}The signals $\widehat{x},x$ are considered.

a) Let $p,p^{\prime}\geq1,k^{\prime}\in\mathbf{N}_{\_}$,$\mu\in\widehat
{\omega}(\widehat{x})$ and we ask that%
\begin{equation}
\forall k\in\widehat{\mathbf{T}}_{\mu}^{\widehat{x}}\cap\{k^{\prime}%
,k^{\prime}+1,k^{\prime}+2,...\},\{k+zp|z\in\mathbf{Z}\}\cap\{k^{\prime
},k^{\prime}+1,k^{\prime}+2,...\}\subset\widehat{\mathbf{T}}_{\mu}%
^{\widehat{x}}, \label{p228}%
\end{equation}%
\begin{equation}
\forall k\in\widehat{\mathbf{T}}_{\mu}^{\widehat{x}}\cap\{k^{\prime}%
,k^{\prime}+1,k^{\prime}+2,...\},\{k+zp^{\prime}|z\in\mathbf{Z}\}\cap
\{k^{\prime},k^{\prime}+1,k^{\prime}+2,...\}\subset\widehat{\mathbf{T}}_{\mu
}^{\widehat{x}} \label{p229}%
\end{equation}
hold. We have $p+p^{\prime}\geq1,$%
\begin{equation}
\left\{
\begin{array}
[c]{c}%
\forall k\in\widehat{\mathbf{T}}_{\mu}^{\widehat{x}}\cap\{k^{\prime}%
,k^{\prime}+1,k^{\prime}+2,...\},\\
\{k+z(p+p^{\prime})|z\in\mathbf{Z}\}\cap\{k^{\prime},k^{\prime}+1,k^{\prime
}+2,...\}\subset\widehat{\mathbf{T}}_{\mu}^{\widehat{x}}%
\end{array}
\right.  \label{p230}%
\end{equation}
and if $p>p^{\prime},$ then $p-p^{\prime}\geq1,$%
\begin{equation}
\left\{
\begin{array}
[c]{c}%
\forall k\in\widehat{\mathbf{T}}_{\mu}^{\widehat{x}}\cap\{k^{\prime}%
,k^{\prime}+1,k^{\prime}+2,...\},\\
\{k+z(p-p^{\prime})|z\in\mathbf{Z}\}\cap\{k^{\prime},k^{\prime}+1,k^{\prime
}+2,...\}\subset\widehat{\mathbf{T}}_{\mu}^{\widehat{x}}%
\end{array}
\right.  \label{p231}%
\end{equation}
hold.

b) Let $T,T^{\prime}>0,$ $t^{\prime}\in\mathbf{R,}$ $\mu\in\omega(x)$ be
arbitrary with%
\begin{equation}
\forall t\in\mathbf{T}_{\mu}^{x}\cap\lbrack t^{\prime},\infty),\{t+zT|z\in
\mathbf{Z}\}\cap\lbrack t^{\prime},\infty)\subset\mathbf{T}_{\mu}^{x},
\label{p232}%
\end{equation}%
\begin{equation}
\forall t\in\mathbf{T}_{\mu}^{x}\cap\lbrack t^{\prime},\infty),\{t+zT^{\prime
}|z\in\mathbf{Z}\}\cap\lbrack t^{\prime},\infty)\subset\mathbf{T}_{\mu}^{x}
\label{p233}%
\end{equation}
fulfilled. We have on one hand that $T+T^{\prime}>0$ and%
\begin{equation}
\forall t\in\mathbf{T}_{\mu}^{x}\cap\lbrack t^{\prime},\infty
),\{t+z(T+T^{\prime})|z\in\mathbf{Z}\}\cap\lbrack t^{\prime},\infty
)\subset\mathbf{T}_{\mu}^{x} \label{p234}%
\end{equation}
are true and on the other hand that $T>T^{\prime}$ implies $T-T^{\prime}>0$
and%
\begin{equation}
\forall t\in\mathbf{T}_{\mu}^{x}\cap\lbrack t^{\prime},\infty
),\{t+z(T-T^{\prime})|z\in\mathbf{Z}\}\cap\lbrack t^{\prime},\infty
)\subset\mathbf{T}_{\mu}^{x}. \label{p235}%
\end{equation}

\end{theorem}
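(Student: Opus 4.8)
The plan is to prove part b) directly by manipulating the periodicity conditions, and then obtain part a) as the analogous discrete‑time argument (or, alternatively, observe that a) is the special case of b) obtained via the embedding of Theorem \ref{The114} when the $t_k$ are equally spaced, though a self‑contained discrete proof is cleaner). For b), I would fix $\mu\in\omega(x)$ and $t^{\prime}\in\mathbf{R}$, assume \eqref{p232} and \eqref{p233}, and aim first at \eqref{p234}. Take an arbitrary $t\in\mathbf{T}_{\mu}^{x}\cap[t^{\prime},\infty)$ and $z\in\mathbf{Z}$ with $t+z(T+T^{\prime})\geq t^{\prime}$. The goal is to show $t+z(T+T^{\prime})\in\mathbf{T}_{\mu}^{x}$, i.e. $x(t+z(T+T^{\prime}))=\mu$.

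\textbf{Handling the sum.} The key difficulty is that $t+zT$ or $t+z(T+T^{\prime})$ may drop below $t^{\prime}$ at intermediate stages, so one cannot naively split $t+z(T+T^{\prime})=(t+zT)+zT^{\prime}$ and apply \eqref{p232} then \eqref{p233}. The remedy, exactly as in the proof of Theorem \ref{The137} b), is to push the argument far to the right first: since $\mu\in\omega(x)$, the set $\mathbf{T}_{\mu}^{x}$ is superiorly unbounded, and using \eqref{p232} repeatedly with positive multiples of $T$ we get $\{t,t+T,t+2T,\ldots\}\subset\mathbf{T}_{\mu}^{x}$. So choose $k\in\mathbf{N}$ large enough that $t+kT$ and $t+z(T+T^{\prime})+kT$ both exceed $t^{\prime}$ (and also exceed whatever intermediate points we hit). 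Then write
\[
\mu = x(t) = x(t+kT) = x\bigl(t+kT+zT\bigr) = x\bigl(t+kT+zT+zT^{\prime}\bigr) = x\bigl(t+z(T+T^{\prime})+kT\bigr) = x\bigl(t+z(T+T^{\prime})\bigr),
\]
where the first and last equalities come from \eqref{p232} applied with the large shift $kT$ (all arguments staying $\geq t^{\prime}$), the middle step $t+kT\mapsto t+kT+zT$ uses \eqref{p232} once more with all points above $t^{\prime}$, and $t+kT+zT\mapsto t+kT+zT+zT^{\prime}$ uses \eqref{p233}. One must be a little careful to choose $k$ so that $t+kT+jzT^{\prime}\geq t^{\prime}$ for the relevant sign of $z$; this is possible precisely because we are free to inflate $k$. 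This establishes \eqref{p234}.

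\textbf{Handling the difference.} For \eqref{p235}, assume $T>T^{\prime}$, so trivially $T-T^{\prime}>0$. The argument is the same in spirit: given $t\in\mathbf{T}_{\mu}^{x}\cap[t^{\prime},\infty)$ and $z$ with $t+z(T-T^{\prime})\geq t^{\prime}$, first slide up by a large multiple $kT$ of $T$ (legal by \eqref{p232}), then apply $z$ shifts by $-T^{\prime}$, i.e. $t+kT\mapsto t+kT-zT^{\prime}$... wait — more precisely use $t+z(T-T^{\prime})+kT = (t+kT+zT) - zT^{\prime}$ after moving up by $(k+z)T$ if $z\geq0$, resp. a suitable large shift if $z<0$; apply \eqref{p233} to remove the $zT^{\prime}$ and \eqref{p232} to remove the residual multiple of $T$, keeping every intermediate argument $\geq t^{\prime}$ by choosing $k$ large. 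I expect the bookkeeping of signs of $z$ (splitting into cases $z>0$, $z=0$, $z<0$, as is done throughout this manuscript, e.g. in the proof of Theorem \ref{The67}) to be the only real obstacle, and it is routine. Part a) is then the verbatim discrete analogue: replace $\mathbf{T}_{\mu}^{x}$ by $\widehat{\mathbf{T}}_{\mu}^{\widehat{x}}$, intervals $[t^{\prime},\infty)$ by $\{k^{\prime},k^{\prime}+1,\ldots\}$, and use that $\mu\in\widehat{\omega}(\widehat{x})$ makes $\widehat{\mathbf{T}}_{\mu}^{\widehat{x}}$ infinite, so $\widehat{\mathbf{T}}_{\mu}^{\widehat{x}}\cap\{k^{\prime},k^{\prime}+1,\ldots\}\neq\varnothing$, which is what lets us slide upward by large multiples of $p$; the statements $p+p^{\prime}\geq1$ and (when $p>p^{\prime}$) $p-p^{\prime}\geq1$ are immediate since $p,p^{\prime}$ are positive integers.
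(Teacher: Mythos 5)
Your proof is correct, but it takes a different route from the paper's. The paper handles the threshold problem (intermediate points possibly dropping below $k^{\prime}$ resp. $t^{\prime}$) by a sign-based case analysis in which the \emph{order} of applying the two hypotheses is chosen so that the first intermediate point is automatically above the threshold: for the sum with $z<0$ one notes $t+zT\geq t+z(T+T^{\prime})\geq t^{\prime}$ (since $zT^{\prime}\leq0$) and applies \eqref{p232} then \eqref{p233}; for $z>0$ one has $t+zT\geq t\geq t^{\prime}$ trivially; for the difference in a) one first adds the nonnegative quantity ($-zp^{\prime}$ when $z<0$, $zp$ when $z>0$) and only then subtracts. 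No auxiliary large shift is needed. You instead import the inflate-then-deflate device from the proof of Theorem \ref{The137} b): slide up by $kT$ with $k\geq|z|$, perform both shifts at a level where everything is manifestly $\geq t^{\prime}$, then slide back down by $kT$ (legitimate because the final target is $\geq t^{\prime}$ by hypothesis). Your version is more uniform --- one choice of $k$ disposes of all three sign cases at once --- at the cost of an extra parameter; the paper's is more economical but requires noticing the right order of application in each case. Two small remarks: you do not actually need $\mu\in\omega(x)$ (or the infiniteness of $\widehat{\mathbf{T}}_{\mu}^{\widehat{x}}$) to slide upward --- the chain $\{t,t+T,t+2T,\ldots\}\subset\mathbf{T}_{\mu}^{x}$ already follows from \eqref{p232} applied to the given $t\in\mathbf{T}_{\mu}^{x}\cap[t^{\prime},\infty)$; and your parenthetical suggestion of deducing a) from b) via the embedding of Theorem \ref{The114} should not be leaned on (the period correspondence $T=ph$ requires the separate analysis of Theorem \ref{The24}), but since you ultimately give the direct discrete analogue this does not affect the argument.
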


\begin{proof}
a) We prove the second implication. We take some arbitrary, fixed
$k\in\widehat{\mathbf{T}}_{\mu}^{\widehat{x}},$ $z\in\mathbf{Z}$ such that
$k\geq k^{\prime},k+z(p-p^{\prime})\geq k^{\prime}$ and we have the following possibilities:

Case $z<0$

We obtain in succession $k-zp^{\prime}\geq k^{\prime},$ $k-zp^{\prime}%
\overset{(\ref{p229})}{\in}\widehat{\mathbf{T}}_{\mu}^{\widehat{x}},$
$k-zp^{\prime}+zp\overset{hyp}{\geq}k^{\prime},$ $k+z(p-p^{\prime}%
)\overset{(\ref{p228})}{\in}\widehat{\mathbf{T}}_{\mu}^{\widehat{x}}.$

Case $z=0$

$k=k+z(p-p^{\prime})\in\widehat{\mathbf{T}}_{\mu}^{\widehat{x}}$ trivially.

Case $z>0$

We have $k+zp\geq k^{\prime},$ $k+zp\overset{(\ref{p228})}{\in}\widehat
{\mathbf{T}}_{\mu}^{\widehat{x}},$ $k+zp-zp^{\prime}\overset{hyp}{\geq
}k^{\prime},$ $k+z(p-p^{\prime})\overset{(\ref{p229})}{\in}\widehat
{\mathbf{T}}_{\mu}^{\widehat{x}}.$

b) We prove the first implication and let $t\in\mathbf{T}_{\mu}^{x}\cap\lbrack
t^{\prime},\infty),$ $z\in\mathbf{Z}$ be arbitrary, fixed such that
$t+z(T+T^{\prime})\geq t^{\prime}.$

Case $z<0$

We have in succession $t+zT\geq t+z(T+T^{\prime})\overset{hyp}{\geq}t^{\prime
},$ $t+zT\overset{(\ref{p232})}{\in}\mathbf{T}_{\mu}^{x},$ $t+z(T+T^{\prime
})\overset{(\ref{p233})}{\in}\mathbf{T}_{\mu}^{x}.$

Case $z=0$

We infer $t=t+z(T+T^{\prime})\in\mathbf{T}_{\mu}^{x}.$

Case $z>0$

We have $t+zT\geq t\geq t^{\prime},$ $t+zT\overset{(\ref{p232})}{\in
}\mathbf{T}_{\mu}^{x},$ $t+z(T+T^{\prime})\overset{hyp}{\geq}t^{\prime},$
$t+z(T+T^{\prime})\overset{(\ref{p233})}{\in}\mathbf{T}_{\mu}^{x}.$
\end{proof}

\begin{theorem}
\label{The25}a) Let $p,k_{1}\geq1,$ $k^{\prime}\in\mathbf{N}_{\_}$ and $\mu
\in\widehat{\omega}(\widehat{x}).$ Then $p^{\prime}=k_{1}p$ fulfills
$p^{\prime}\geq1$ and%
\begin{equation}
\forall k\in\widehat{\mathbf{T}}_{\mu}^{\widehat{x}}\cap\{k^{\prime}%
,k^{\prime}+1,k^{\prime}+2,...\},\{k+zp|z\in\mathbf{Z}\}\cap\{k^{\prime
},k^{\prime}+1,k^{\prime}+2,...\}\subset\widehat{\mathbf{T}}_{\mu}%
^{\widehat{x}} \label{per609}%
\end{equation}
implies%
\begin{equation}
\forall k\in\widehat{\mathbf{T}}_{\mu}^{\widehat{x}}\cap\{k^{\prime}%
,k^{\prime}+1,k^{\prime}+2,...\},\{k+zp^{\prime}|z\in\mathbf{Z}\}\cap
\{k^{\prime},k^{\prime}+1,k^{\prime}+2,...\}\subset\widehat{\mathbf{T}}_{\mu
}^{\widehat{x}}. \label{per610}%
\end{equation}

b) Let $T>0,t^{\prime}\in\mathbf{R,}$ $k_{1}\geq1$ and $\mu\in\omega(x) $ be
arbitrary. Then $T^{\prime}=k_{1}T$ fulfills $T^{\prime}>0$ and%
\begin{equation}
\forall t\in\mathbf{T}_{\mu}^{x}\cap\lbrack t^{\prime},\infty),\{t+zT|z\in
\mathbf{Z}\}\cap\lbrack t^{\prime},\infty)\subset\mathbf{T}_{\mu}^{x}
\label{per611}%
\end{equation}
implies%
\begin{equation}
\forall t\in\mathbf{T}_{\mu}^{x}\cap\lbrack t^{\prime},\infty),\{t+zT^{\prime
}|z\in\mathbf{Z}\}\cap\lbrack t^{\prime},\infty)\subset\mathbf{T}_{\mu}^{x}.
\label{per612}%
\end{equation}

\end{theorem}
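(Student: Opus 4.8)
\textbf{Proof plan for Theorem \ref{The25}.}

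The plan is to prove both parts by the same strategy: show that a multiple $p' = k_1 p$ (resp.\ $T' = k_1 T$) of a period is again a period, by writing $z p'$ as the sum of $z$ copies of $k_1 p$ — or, more directly, by iterating the single-step translation property. First I would fix $k \in \widehat{\mathbf{T}}_{\mu}^{\widehat{x}} \cap \{k',k'+1,k'+2,\dots\}$ and $z \in \mathbf{Z}$ with $k + z p' \geq k'$, and observe that the claim is trivial for $z = 0$. For $z > 0$, since $p' = k_1 p$ with $k_1 \geq 1$, all the intermediate points $k + p, k + 2p, \dots, k + k_1 z p = k + z p'$ are $\geq k$ hence $\geq k'$; applying (\ref{per609}) repeatedly (each step starting from a point of $\widehat{\mathbf{T}}_{\mu}^{\widehat{x}} \cap \{k',k'+1,\dots\}$ and moving up by $p$) yields $k + z p' \in \widehat{\mathbf{T}}_{\mu}^{\widehat{x}}$. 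For $z < 0$, the intermediate points $k - p, k - 2p, \dots, k - k_1 |z| p = k + z p'$ are all $\geq k + z p' \geq k'$, so the same repeated application of (\ref{per609}), now moving down by $p$, gives the conclusion.

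Alternatively, and perhaps more cleanly, I note that Theorem \ref{The68} a) already shows that the sum of two periods (with a common limit of periodicity) is a period. Hence, starting from (\ref{per609}) with period $p$ and applying Theorem \ref{The68} a) inductively $k_1 - 1$ times, $p + p = 2p$, then $2p + p = 3p$, \dots, $(k_1 - 1)p + p = k_1 p = p'$ are all periods with the same limit of periodicity $k'$; this is exactly (\ref{per610}). Part b) is handled identically: $T' = k_1 T$ is obtained from $T$ by $k_1 - 1$ applications of Theorem \ref{The68} b), giving (\ref{per612}) from (\ref{per611}). The hypothesis $\mu \in \widehat{\omega}(\widehat{x})$ (resp.\ $\mu \in \omega(x)$) is exactly what Theorem \ref{The68} requires, so nothing extra is needed. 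The role of $\mu$ being an omega limit point is only to guarantee nontriviality, i.e.\ that the relevant support-set intersections are nonempty, but in fact the implications hold formally even without that.

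The only mild subtlety — and the closest thing to an obstacle — is bookkeeping the inequalities correctly in the direct iteration argument: one must check that \emph{every} intermediate translate stays $\geq k'$ so that (\ref{per609}) is legitimately applicable at each step. For $z > 0$ this is immediate since we only increase; for $z < 0$ one uses that the final point $k + z p'$ is assumed $\geq k'$ and all intermediate descending translates exceed it. Using the Theorem \ref{The68} route sidesteps this entirely, since that theorem's proof already absorbs the inequality checks. I would therefore present the short induction on $k_1$ invoking Theorem \ref{The68}, and remark in passing that the case $k_1 = 1$ is the hypothesis itself and the inductive step is a single application of Theorem \ref{The68} a) (resp.\ b)).
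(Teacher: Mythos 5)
Your proposal is correct, and the route you say you would actually present --- iterating Theorem \ref{The68} a) (resp.\ b)) $k_{1}-1$ times to pass from $p$ to $2p$ to $\dots$ to $k_{1}p$ --- is exactly the paper's proof, which consists of the single line ``This is a consequence of Theorem \ref{The68}.'' Your alternative direct iteration, with the inequality bookkeeping for the descending chain when $z<0$, is also sound but is not needed.
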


\begin{proof}
This is a consequence of Theorem \ref{The68}.
\end{proof}

\begin{corollary}
\label{Cor6}a) For any $\widehat{x},\mu\in\widehat{\omega}(\widehat{x})$ and
$p\geq1,$ if $p\in\widehat{P}_{\mu}^{\widehat{x}},$ then $\{p,2p,3p,...\}$
$\subset\widehat{P}_{\mu}^{\widehat{x}};$

b) for any $x,\mu\in\omega(x)$ and $T>0,$ $T\in P_{\mu}^{x}$ implies
$\{T,2T,3T,...\}\subset P_{\mu}^{x}.$
\end{corollary}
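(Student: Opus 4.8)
The plan is to derive Corollary~\ref{Cor6} directly from Theorem~\ref{The25}, which is exactly the ``multiples of a period are periods'' statement phrased in terms of the support sets rather than in terms of the sets $\widehat{P}_{\mu}^{\widehat{x}}, P_{\mu}^{x}$. The only work to do is to unwind the definitions of $\widehat{P}_{\mu}^{\widehat{x}}$ and $P_{\mu}^{x}$ (Notation~\ref{Not12}) and to observe that an arbitrary element of $\{p,2p,3p,\dots\}$ has the form $k_{1}p$ with $k_{1}\ge 1$ (respectively $k_{1}T$ in the real time case).

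For part a), I would fix $\widehat{x}$, $\mu\in\widehat{\omega}(\widehat{x})$ and $p\geq 1$ with $p\in\widehat{P}_{\mu}^{\widehat{x}}$. By Notation~\ref{Not12} this means $k^{\prime}\in\mathbf{N}_{\_}$ exists with (\ref{pre738}) holding for this $p$ and $k^{\prime}$, i.e.
\[
\forall k\in\widehat{\mathbf{T}}_{\mu}^{\widehat{x}}\cap\{k^{\prime},k^{\prime}+1,k^{\prime}+2,\dots\},\{k+zp|z\in\mathbf{Z}\}\cap\{k^{\prime},k^{\prime}+1,k^{\prime}+2,\dots\}\subset\widehat{\mathbf{T}}_{\mu}^{\widehat{x}}.
\]
Now take any element of $\{p,2p,3p,\dots\}$; it is $p^{\prime}=k_{1}p$ for some $k_{1}\geq 1$. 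Applying Theorem~\ref{The25}~a) with this $k_{1}$ (the hypothesis (\ref{per609}) is precisely the displayed property above) gives (\ref{per610}), which says that (\ref{pre738}) holds for the pair $p^{\prime}, k^{\prime}$; hence $p^{\prime}\in\widehat{P}_{\mu}^{\widehat{x}}$. Since $p^{\prime}$ was an arbitrary element of $\{p,2p,3p,\dots\}$, we get $\{p,2p,3p,\dots\}\subset\widehat{P}_{\mu}^{\widehat{x}}$.

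Part b) is entirely parallel: from $T\in P_{\mu}^{x}$ we obtain $t^{\prime}\in\mathbf{R}$ making (\ref{pre740}) hold for $T$ and $t^{\prime}$, i.e. $\mathbf{T}_{\mu}^{x}\cap[t^{\prime},\infty)\neq\varnothing$ together with (\ref{per611}); an arbitrary element of $\{T,2T,3T,\dots\}$ is $T^{\prime}=k_{1}T$ with $k_{1}\geq 1$; Theorem~\ref{The25}~b) yields (\ref{per612}), and since $\mathbf{T}_{\mu}^{x}\cap[t^{\prime},\infty)\neq\varnothing$ is already known (it does not depend on the period), the pair $T^{\prime}, t^{\prime}$ satisfies (\ref{pre740}), so $T^{\prime}\in P_{\mu}^{x}$. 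There is essentially no obstacle here — the content is all in Theorem~\ref{The25}, and the corollary is just the translation of that set-theoretic statement into the language of the period sets; the only thing to be careful about is noting that the non-triviality clause $\mathbf{T}_{\mu}^{x}\cap[t^{\prime},\infty)\neq\varnothing$ in the definition of $P_{\mu}^{x}$ is inherited unchanged, which is immediate because it makes no reference to the period.
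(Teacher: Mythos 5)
Your proposal is correct and is exactly the paper's argument: the paper proves Corollary \ref{Cor6} by stating that it is a direct consequence of Theorem \ref{The25}, and your write-up simply spells out the routine unwinding of Notation \ref{Not12} (including the correct observation that the non-triviality clause in (\ref{pre740}) is independent of the period). No difference in substance.
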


\begin{proof}
The Corollary is a direct consequence of Theorem \ref{The25}.
\end{proof}

\section{The set of the periods}

\begin{theorem}
\label{The70}a) Let $\widehat{x}\in\widehat{S}^{(n)}$ and $\mu\in
\widehat{\omega}(\widehat{x}).$ We ask that $\mu$ is an eventually periodic
point of $\widehat{x}$.~Then $\widetilde{p}\geq1$ exists such that%
\[
\widehat{P}_{\mu}^{\widehat{x}}=\{\widetilde{p},2\widetilde{p},3\widetilde
{p},...\}.
\]

b) We suppose that the signal $x\in S^{(n)}$ is not eventually constant and
let $\mu\in\omega(x).$ We ask that $\mu~$is an eventually periodic point of
$x$. Then $\widetilde{T}>0$ exists such that%
\[
P_{\mu}^{x}=\{\widetilde{T},2\widetilde{T},3\widetilde{T},...\}.
\]

\end{theorem}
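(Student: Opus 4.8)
The plan is to treat parts a) and b) in parallel, since the discrete and continuous time cases are structurally identical. In both we already know, from Corollary \ref{Cor6}, page \pageref{Cor6}, that $\widehat{P}_{\mu}^{\widehat{x}}$ and $P_{\mu}^{x}$ are closed under taking positive integer multiples; so the only thing to prove is that each of these sets is \emph{generated} by a single smallest element. For a) this is immediate: $\widehat{P}_{\mu}^{\widehat{x}}$ is a nonempty (by hypothesis of eventual periodicity) subset of $\{1,2,3,\dots\}$, hence it has a least element $\widetilde{p}$; Corollary \ref{Cor6} a) gives $\{\widetilde{p},2\widetilde{p},3\widetilde{p},\dots\}\subset\widehat{P}_{\mu}^{\widehat{x}}$, and the reverse inclusion is the content that must be extracted. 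For that reverse inclusion I would argue: if $p\in\widehat{P}_{\mu}^{\widehat{x}}$ but $\widetilde{p}\nmid p$, write $p=q\widetilde{p}+r$ with $0<r<\widetilde{p}$; using Theorem \ref{The137}, page \pageref{The137} (independence of the limit of periodicity from the period) pick a common limit of periodicity $k^{\prime}$ for both $p$ and $\widetilde{p}$, then apply Theorem \ref{The68} a): $q\widetilde{p}\in\widehat{P}_{\mu}^{\widehat{x}}$ by Corollary \ref{Cor6}, and since $p>q\widetilde{p}$ the difference $p-q\widetilde{p}=r$ is again a period (with the same $k^{\prime}$), contradicting minimality of $\widetilde{p}$. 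Hence $\widetilde{p}\mid p$ and $\widehat{P}_{\mu}^{\widehat{x}}=\{\widetilde{p},2\widetilde{p},3\widetilde{p},\dots\}$.

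For b) the same Euclidean-algorithm idea applies, but with real periods there is no automatic least element, so the real work is showing that $\inf P_{\mu}^{x}>0$ and that this infimum is attained. The key input here is Theorem \ref{The19}, page \pageref{The19}: because $\mu\in\omega(x)$ is eventually periodic and $x$ is not eventually constant, $x$ cannot be of the ``arbitrarily fine grid'' form that makes \emph{every} $T>0$ a period — more precisely, Theorem \ref{The19} b) tells us that if $x$ admits a uniform decomposition with step $h$ and $T$ is a non-multiple of $h$ is a period of $\mu$, then $\underset{t\rightarrow\infty}{\lim}x(t)=\mu$, i.e. $x$ is eventually constant, contradiction. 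But an arbitrary $x\in S^{(n)}$ need not come with a uniform grid; so instead I would work directly from the accessibility result Theorem \ref{Lem1}, page \pageref{Lem1}, and from the piecewise-constant structure of $x$. Concretely: fix any $T_{0}\in P_{\mu}^{x}$ with limit of periodicity $t^{\prime}$; by Theorem \ref{Lem1} b), $\mathbf{T}_{\mu}^{x}\cap[t,t+T_{0})\neq\varnothing$ for all $t\geq t^{\prime}$, and on $[t^{\prime},t^{\prime}+T_{0}]$ the signal $x$ has only finitely many jump points. I would use these jump points to produce a strictly positive lower bound $\delta>0$ for the length of the maximal ``$\mu$-blocks'' and ``non-$\mu$-blocks'' that repeat, and then show that any period $T\in P_{\mu}^{x}$ must be an integer multiple of the period of this repeating pattern, which forces $\inf P_{\mu}^{x}$ to be attained and positive. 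Call this attained infimum $\widetilde{T}$; then exactly as in a), using Theorem \ref{The68} b) (sum/difference of periods is a period, same $t^{\prime}$ via Theorem \ref{The137}) together with Corollary \ref{Cor6} b), the Euclidean argument gives $\widetilde{T}\mid T$ for every $T\in P_{\mu}^{x}$, hence $P_{\mu}^{x}=\{\widetilde{T},2\widetilde{T},3\widetilde{T},\dots\}$.

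The main obstacle is precisely the real-time step of showing $\inf P_{\mu}^{x}$ is positive and attained, i.e. ruling out a decreasing sequence of periods $T_{n}\downarrow 0$. The cleanest route is probably this: suppose $T_{n}\in P_{\mu}^{x}$ with $T_{n}\to 0$. Using Theorem \ref{The68} b) repeatedly (differences of periods with a common limit of periodicity, obtained from Theorem \ref{The137}), one shows that then every sufficiently small positive real is, after adding a large multiple of some fixed period, again ``effectively'' a period on a final segment $[\overline{t},\infty)$; but on such a segment $x$ is piecewise constant with jumps only on the locations inherited from $T_{0}$, and having periods of arbitrarily small length forces $x$ to be constant on $[\overline{t},\infty)$ with value $\mu$ — that is, $x$ is eventually constant, contradicting the hypothesis. (This is the continuous analogue of the ``$r<\widetilde{p}$'' contradiction, pushed to the limit, and it is essentially the mechanism behind Theorem \ref{The19} b).) Once positivity and attainment are secured, the rest is the routine division-with-remainder argument already described, and the statement follows. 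Part c)-type bookkeeping about prime limits of periodicity is not needed here; only the prime period is asserted, and the Remark following Theorem \ref{The67}, page \pageref{The67}, already flags that the eventually constant case $P_{\mu}^{x}=(0,\infty)$ is exactly the excluded degenerate situation.
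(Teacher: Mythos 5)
Your part a) is the paper's argument verbatim in structure: least element, Corollary \ref{Cor6} for one inclusion, and division with remainder via Theorems \ref{The137} and \ref{The68} for the other. Your part b) has the same skeleton as the paper (first establish that $\min P_{\mu}^{x}$ exists, then run the same Euclidean argument), but the mechanism you propose for the existence of the minimum is genuinely different from the paper's. The paper argues as follows: if $T_{k}\in P_{\mu}^{x}$ decreases strictly to $T=\inf P_{\mu}^{x}$, then by Lemma \ref{Lem38} (non--eventual-constancy gives, arbitrarily far to the right, a point $t''$ with $x(t''-0)\neq x(t'')=\mu$), by Lemma \ref{Lem10} this jump onto $\mu$ propagates to every $t''+T_{k}$, while Lemma \ref{Lem4_} (a pure right-continuity/piecewise-constancy fact about decreasing sequences) forces $x(t''+T_{k}-0)=x(t''+T_{k})$ for all large $k$ --- an immediate contradiction. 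Your route instead extracts arbitrarily small positive periods from differences $T_{n}-T_{m}$ and argues that an $\varepsilon$-grid of $\mu$-points with $\varepsilon$ below the minimal length of the non-$\mu$ blocks (these lengths are bounded below because the non-$\mu$ set on $[t',\infty)$ is $T_{1}$-periodic and has finitely many blocks per period) forces $[t_{0},\infty)\subset\mathbf{T}_{\mu}^{x}$, i.e.\ eventual constancy. That argument can be completed and is a legitimate alternative; what it buys is that it avoids Lemmas \ref{Lem10} and \ref{Lem4_} entirely, at the cost of a density estimate that you have only sketched. The paper's version is shorter because the jump discontinuity does all the work in one step.

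One caution: your \emph{first} suggested route for b) --- ``show that any period $T\in P_{\mu}^{x}$ must be an integer multiple of the period of this repeating pattern, which forces $\inf P_{\mu}^{x}$ to be attained'' --- is circular as stated, since ``$P_{\mu}^{x}$ consists of the multiples of one generator'' is precisely the conclusion of the theorem. Only your second route (arbitrarily small periods $\Rightarrow$ eventual constancy) actually closes the argument, so that is the one that needs to be written out; once it is, the rest of your proof is sound.
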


\begin{proof}
a) We denote with $\widetilde{p}$ the least element of $\widehat{P}_{\mu
}^{\widehat{x}}.$ From Corollary \ref{Cor6}, page \pageref{Cor6} we have the
inclusion $\{\widetilde{p},2\widetilde{p},3\widetilde{p},...\}\subset
\widehat{P}_{\mu}^{\widehat{x}}.$ We show that $\widehat{P}_{\mu}^{\widehat
{x}}\subset\{\widetilde{p},2\widetilde{p},3\widetilde{p},...\}.$ We presume
against all reason that this is not true, i.e. that some $p^{\prime}%
\in\widehat{P}_{\mu}^{\widehat{x}}\smallsetminus\{\widetilde{p},2\widetilde
{p},3\widetilde{p},...\}$ exists. In these circumstances we have the existence
of $k_{1}\geq1$ with $k_{1}\widetilde{p}<p^{\prime}<(k_{1}+1)\widetilde{p}.$
We infer that $1\leq p^{\prime}-k_{1}\widetilde{p}<\widetilde{p}$ and, from
Theorems \ref{The68}, \ref{The25}, page \pageref{The68} we conclude that
$p^{\prime}-k_{1}\widetilde{p}\in\widehat{P}_{\mu}^{\widehat{x}}.$ We have
obtained a contradiction with the fact that $\widetilde{p}$ is the least
element of $\widehat{P}_{\mu}^{\widehat{x}}.$

b) The proof is made in two steps.

b.1) We show first that $\min P_{\mu}^{x}$ exists. We suppose against all
reason that this is not true, namely that a strictly decreasing sequence
$T_{k}\in P_{\mu}^{x},k\in\mathbf{N}$ exists that is convergent to $T=\inf
P_{\mu}^{x}.$ As $x$ is not eventually constant, the following property is
true:%
\begin{equation}
\forall t\in\mathbf{R},\exists t^{\prime\prime}>t,x(t^{\prime\prime}-0)\neq
x(t^{\prime\prime})=\mu, \label{p4_}%
\end{equation}
see Lemma \ref{Lem38}, page \pageref{Lem38}. The hypothesis states the
existence $\forall k\in\mathbf{N},$ of $t_{k}^{\prime}\in\mathbf{R}$ with%
\begin{equation}
\forall t\in\mathbf{T}_{\mu}^{x}\cap\lbrack t_{k}^{\prime},\infty
),\{t+zT_{k}|z\in\mathbf{Z}\}\cap\lbrack t_{k}^{\prime},\infty)\subset
\mathbf{T}_{\mu}^{x}.
\end{equation}
We can suppose, as $t_{k}^{\prime}$ do not depend on $T_{k},$ that they have a
common value $t^{\prime}.$ From (\ref{p4_}) we infer that we can take some
$t^{\prime\prime}>t^{\prime}$ with $x(t^{\prime\prime}-0)\neq x(t^{\prime
\prime})=\mu$ and, since $\mu\in\omega(x),$ we can apply Lemma \ref{Lem10},
page \pageref{Lem10} stating%
\begin{equation}
\forall k\in\mathbf{N},x(t^{\prime\prime}+T_{k}-0)\neq x(t^{\prime\prime
}+T_{k})=\mu. \label{per946}%
\end{equation}
We infer from Lemma \ref{Lem4_}, page \pageref{Lem4_} that $N\in\mathbf{N}$
exists with $\forall k\geq N,$%
\[
x(t^{\prime\prime}+T_{k}-0)=x(t^{\prime\prime}+T_{k})=x(t^{\prime\prime}+T),
\]
contradiction with (\ref{per946}). It has resulted that such a sequence
$T_{k},k\in\mathbf{N}$ does not exist, thus $P_{\mu}^{x}$ has a minimum that
we denote by $\widetilde{T}.$

b.2) The inclusion $\{\widetilde{T},2\widetilde{T},3\widetilde{T},...\}\subset
P_{\mu}^{x}$ results from Corollary \ref{Cor6}, we prove the inclusion
$P_{\mu}^{x}\subset\{\widetilde{T},2\widetilde{T},3\widetilde{T},...\}.$ We
suppose against all reason that some $T^{\prime}\in P_{\mu}^{x}\setminus
\{\widetilde{T},2\widetilde{T},3\widetilde{T},...\}$ exists and let $k_{1}%
\geq1$ with the property $T^{\prime}\in(k_{1}\widetilde{T},(k_{1}%
+1)\widetilde{T}).$ We infer that $0<T^{\prime}-k_{1}\widetilde{T}%
<\widetilde{T}$ and, from Theorems \ref{The68}, \ref{The25}, we get
$T^{\prime}-k_{1}\widetilde{T}\in P_{\mu}^{x}.$ We have obtained a
contradiction, since $\widetilde{T}$ was defined to be the minimum of $P_{\mu
}^{x}.$ $P_{\mu}^{x}=\{\widetilde{T},2\widetilde{T},3\widetilde{T},...\}$ holds.
\end{proof}

\begin{theorem}
We suppose that the relation between $\widehat{x}$ and $x$ is given by%
\[%
\begin{array}
[c]{c}%
x(t)=\widehat{x}(-1)\cdot\chi_{(-\infty,t_{0})}(t)\oplus\widehat{x}%
(0)\cdot\chi_{\lbrack t_{0},t_{0}+h)}(t)\oplus\widehat{x}(1)\cdot\chi_{\lbrack
t_{0}+h,t_{0}+2h)}(t)\oplus...\\
...\oplus\widehat{x}(k)\cdot\chi_{\lbrack t_{0}+kh,t_{0}+(k+1)h)}(t)\oplus...
\end{array}
\]
where $t_{0}\in\mathbf{R}$ and $h>0$ and that $\mu\in\widehat{\omega}%
(\widehat{x})=\omega(x)$ is an eventually periodic point of any of
$\widehat{x},x.$ Then two possibilities exist:

a) $\widehat{x},$ $x$ are both eventually constant, $\widehat{P}_{\mu
}^{\widehat{x}}=\{1,2,3,...\}$ and $P_{\mu}^{x}=(0,\infty);$

b) none of $\widehat{x},$ $x$ is eventually constant, $\min\widehat{P}_{\mu
}^{\widehat{x}}=p>1$ and $\min P_{\mu}^{x}=T=ph$.
\end{theorem}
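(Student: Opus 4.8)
The plan is to combine the sampling correspondence of Theorem \ref{The24} with the eventual-constancy dichotomy already developed for sampled signals. First I would use Theorem \ref{The113}, page \pageref{The113} (or equivalently Theorem \ref{The92}): under the stated relation between $\widehat{x}$ and $x$, the eventual constancy of $\widehat{x}$ is equivalent with the eventual constancy of $x$. This reduces the dichotomy to a single case split: either both signals are eventually constant, or neither is.

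In case (a), suppose $\widehat{x}$ and $x$ are eventually constant. Since $\mu\in\widehat{\omega}(\widehat{x})=\omega(x)$, the unique omega-limit value is $\mu$ (Theorem \ref{The15}, via (\ref{per80}) and (\ref{per79})), hence $\underset{k\rightarrow\infty}{\lim}\widehat{x}(k)=\underset{t\rightarrow\infty}{\lim}x(t)=\mu$. Then $\widehat{x}$ is eventually periodic with \emph{every} period $p\geq1$: indeed $\exists k^{\prime}\in\mathbf{N}_{\_},\forall k\geq k^{\prime},\widehat{x}(k)=\mu$ gives, for any $k\in\widehat{\mathbf{T}}_{\mu}^{\widehat{x}}\cap\{k^{\prime},k^{\prime}+1,\ldots\}$ and any $z$ with $k+zp\geq k^{\prime}$, that $\widehat{x}(k+zp)=\mu$, i.e. (\ref{pre738}) holds. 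So $\widehat{P}_{\mu}^{\widehat{x}}=\{1,2,3,\ldots\}$. Symmetrically, $\forall t\geq t^{\prime},x(t)=\mu$ shows that every $T>0$ satisfies (\ref{pre740}), so $P_{\mu}^{x}=(0,\infty)$; the non-triviality requirement $\mathbf{T}_{\mu}^{x}\cap[t^{\prime},\infty)\neq\varnothing$ holds because $[t^{\prime},\infty)\subset\mathbf{T}_{\mu}^{x}$.

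In case (b), suppose neither $\widehat{x}$ nor $x$ is eventually constant. By hypothesis $\mu$ is an eventually periodic point of one — hence, by Theorem \ref{The24} parts a) and b), of both — and the prime period exists in both settings: $\widehat{P}_{\mu}^{\widehat{x}}=\{\widetilde p,2\widetilde p,\ldots\}$ with $\widetilde p=\min\widehat{P}_{\mu}^{\widehat{x}}$ and $P_{\mu}^{x}=\{\widetilde T,2\widetilde T,\ldots\}$ with $\widetilde T=\min P_{\mu}^{x}$, by Theorem \ref{The70}. It remains to show $\widetilde p>1$, $\widetilde T>h\cdot0=\ldots$ — more precisely $\widetilde T=\widetilde p\,h$ with $\widetilde p>1$. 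First, $\widetilde p\neq1$: if $1\in\widehat{P}_{\mu}^{\widehat{x}}$, then Theorem \ref{The19} a) (applied with $p=1$) forces $\forall k\geq k^{\prime},\widehat{x}(k)=\mu$, i.e. $\widehat{x}$ eventually constant, a contradiction; so $\widetilde p\geq2$, i.e. $\widetilde p>1$. Next, Theorem \ref{The24} a) with $p=\widetilde p$ gives $\widetilde p\,h\in P_{\mu}^{x}$, hence $\widetilde T\leq\widetilde p\,h$; and Theorem \ref{The24} b) applied to the period $\widetilde T$ gives $\widetilde T/h\in\{1,2,3,\ldots\}$ and $\widetilde T/h\in\widehat{P}_{\mu}^{\widehat{x}}$, hence $\widetilde T/h\geq\widetilde p$, i.e. $\widetilde T\geq\widetilde p\,h$. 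Therefore $\widetilde T=\widetilde p\,h$, completing case (b).

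\textbf{Main obstacle.} The routine bookkeeping is in case (b): one must be careful that when invoking Theorem \ref{The24} b) for the period $\widetilde T$, the exclusion of $\widetilde T\in(qh,(q+1)h)$ genuinely uses that $x$ is \emph{not} eventually constant (otherwise Theorem \ref{The19} b) would force eventual constancy, contradicting the case assumption), so that $\widetilde T/h$ is a positive integer and lies in $\widehat{P}_{\mu}^{\widehat{x}}$. Everything else is a direct chain of the cited theorems; no new estimates are needed.
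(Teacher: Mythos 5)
Your proof is correct and follows essentially the same route as the paper: reduce to the dichotomy via the equivalence of eventual constancy, then in the non-constant case combine Theorem \ref{The24} (both directions) with Theorem \ref{The70} to identify $\min P_{\mu}^{x}=\widetilde{p}h$. You actually supply details the paper leaves implicit (the explicit treatment of case a) and the argument via Theorem \ref{The19} that $\widetilde{p}>1$), and these details are sound.
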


\begin{proof}
We see that $\widehat{x},x$ are simultaneously eventually constant or not. We
suppose that they are not eventually constant and we prove b). From Theorem
\ref{The24}, page \pageref{The24} we know that $p\in\widehat{P}_{\mu
}^{\widehat{x}}\Longrightarrow T=ph\in P_{\mu}^{x}$ and conversely, $T\in
P_{\mu}^{x}\Longrightarrow p=\frac{T}{h}\in\widehat{P}_{\mu}^{\widehat{x}}.$
From Theorem \ref{The70} we get $\widehat{P}_{\mu}^{\widehat{x}}%
=\{p,2p,3p,...\}$ and $P_{\mu}^{x}=\{T,2T,3T,...\}$, thus $T=ph.$
\end{proof}

\section{Necessity conditions of eventual periodicity}

\begin{theorem}
\label{The69}Let $\widehat{x}\in\widehat{S}^{(n)}$ be not eventually constant.
For $\mu\in\widehat{\omega}(\widehat{x}),$ $p\geq1$ and $k^{\prime}%
\in\mathbf{N}_{\_}$ we suppose that%
\begin{equation}
\forall k\in\widehat{\mathbf{T}}_{\mu}^{\widehat{x}}\cap\{k^{\prime}%
,k^{\prime}+1,k^{\prime}+2,...\},\{k+zp|z\in\mathbf{Z}\}\cap\{k^{\prime
},k^{\prime}+1,k^{\prime}+2,...\}\subset\widehat{\mathbf{T}}_{\mu}%
^{\widehat{x}} \label{pre164}%
\end{equation}
holds. Then $n_{1},n_{2},...,n_{k_{1}}\in\{k^{\prime},k^{\prime}%
+1,...,k^{\prime}+p-1\},k_{1}\geq1$ exist such that%
\begin{equation}
\widehat{\mathbf{T}}_{\mu}^{\widehat{x}}\cap\{k^{\prime},k^{\prime
}+1,k^{\prime}+2,...\}=\underset{k\in\mathbf{N}}{%
{\displaystyle\bigcup}
}\{n_{1}+kp,n_{2}+kp,...,n_{k_{1}}+kp\}. \label{pre166}%
\end{equation}

\end{theorem}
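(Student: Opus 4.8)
The goal is to describe the support set $\widehat{\mathbf{T}}_{\mu}^{\widehat{x}}\cap\{k^{\prime},k^{\prime}+1,k^{\prime}+2,...\}$ as a finite union of arithmetic progressions with common difference $p$. The plan is to look at what happens inside a single ``window'' of length $p$, namely the set $W=\widehat{\mathbf{T}}_{\mu}^{\widehat{x}}\cap\{k^{\prime},k^{\prime}+1,...,k^{\prime}+p-1\}$, and show that $W$ determines the whole tail of the support set by periodic translation. First I would observe that $W\neq\varnothing$: indeed, Theorem~\ref{Lem1} a), page~\pageref{Lem1}, applies since (\ref{pre164}) says exactly that $\mu$ is eventually periodic with period $p$ and limit of periodicity $k^{\prime}$, and it gives $\widehat{\mathbf{T}}_{\mu}^{\widehat{x}}\cap\{k,k+1,...,k+p-1\}\neq\varnothing$ for every $k\geq k^{\prime}$, in particular for $k=k^{\prime}$. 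So I may write $W=\{n_{1},n_{2},...,n_{k_{1}}\}$ with $k_{1}\geq 1$ and $n_{1},...,n_{k_{1}}\in\{k^{\prime},k^{\prime}+1,...,k^{\prime}+p-1\}$.

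Next I would prove the two inclusions making up (\ref{pre166}). For $\supset$: fix $i\in\{1,...,k_{1}\}$ and $k\in\mathbf{N}$; then $n_{i}\in\widehat{\mathbf{T}}_{\mu}^{\widehat{x}}\cap\{k^{\prime},k^{\prime}+1,k^{\prime}+2,...\}$ and $n_{i}+kp\in\{n_{i}+zp\mid z\in\mathbf{Z}\}\cap\{k^{\prime},k^{\prime}+1,k^{\prime}+2,...\}$, so by (\ref{pre164}) we get $n_{i}+kp\in\widehat{\mathbf{T}}_{\mu}^{\widehat{x}}$, and clearly $n_{i}+kp\geq k^{\prime}$, hence $n_{i}+kp$ lies in the left-hand set. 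For $\subset$: take an arbitrary $m\in\widehat{\mathbf{T}}_{\mu}^{\widehat{x}}\cap\{k^{\prime},k^{\prime}+1,k^{\prime}+2,...\}$. Write $m=k^{\prime}+qp+r$ with $q\in\mathbf{N}$ and $r\in\{0,1,...,p-1\}$ by Euclidean division, so that the window representative is $n:=k^{\prime}+r=m-qp\in\{k^{\prime},...,k^{\prime}+p-1\}$. Since $m\in\widehat{\mathbf{T}}_{\mu}^{\widehat{x}}\cap\{k^{\prime},k^{\prime}+1,k^{\prime}+2,...\}$ and $n=m+(-q)p\geq k^{\prime}$, applying (\ref{pre164}) with $z=-q$ gives $n\in\widehat{\mathbf{T}}_{\mu}^{\widehat{x}}$; as $n$ is also in the window, $n\in W$, say $n=n_{i}$. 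Then $m=n_{i}+qp$ with $q\in\mathbf{N}$, so $m$ belongs to the right-hand union. This establishes (\ref{pre166}).

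The proof is essentially routine once one recognizes that $W$ is nonempty, and the only place requiring a cited result is that nonemptiness, which is precisely Theorem~\ref{Lem1} a). The mild subtlety — the part I would be most careful about — is handling the direction $\subset$: one must push $m$ \emph{down} to its window representative by subtracting a multiple of $p$, and to invoke (\ref{pre164}) one needs both endpoints, $m$ and $n$, to lie in $\{k^{\prime},k^{\prime}+1,k^{\prime}+2,...\}$; the choice $n=m-qp$ with $q=\lfloor (m-k^{\prime})/p\rfloor$ guarantees $k^{\prime}\leq n\leq k^{\prime}+p-1$, which is exactly what is needed. No new ideas beyond Theorem~\ref{Lem1} and the translation-closure hypothesis (\ref{pre164}) are involved; the statement is really a bookkeeping consequence of eventual periodicity saying that the tail of the support set is a union of at most $p$ residue-class progressions modulo $p$.
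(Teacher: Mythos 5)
Your proof is correct and follows essentially the same route as the paper's: nonemptiness of the window $\{k^{\prime},...,k^{\prime}+p-1\}\cap\widehat{\mathbf{T}}_{\mu}^{\widehat{x}}$ via Theorem \ref{Lem1}, then the two inclusions by translating with multiples of $p$ using (\ref{pre164}). The only cosmetic difference is that the paper descends from an arbitrary element $k^{\prime\prime}$ to the window one step of $p$ at a time, whereas you jump directly via Euclidean division; both are single applications of the same hypothesis.
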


\begin{proof}
We apply Theorem \ref{Lem1}, page \pageref{Lem1} written for $k=k^{\prime}$
and we obtain that $\widehat{\mathbf{T}}_{\mu}^{\widehat{x}}\cap\{k^{\prime
},k^{\prime}+1,...,k^{\prime}+p-1\}\neq\varnothing,$ wherefrom we have the
existence of $n_{1},n_{2},...,n_{k_{1}},k_{1}\geq1$ with%
\begin{equation}
\widehat{\mathbf{T}}_{\mu}^{\widehat{x}}\cap\{k^{\prime},k^{\prime
}+1,...,k^{\prime}+p-1\}=\{n_{1},n_{2},...,n_{k_{1}}\}. \label{p36}%
\end{equation}

We prove $\widehat{\mathbf{T}}_{\mu}^{\widehat{x}}\cap\{k^{\prime},k^{\prime
}+1,k^{\prime}+2,...\}\subset\underset{k\in\mathbf{N}}{%
{\displaystyle\bigcup}
}\{n_{1}+kp,n_{2}+kp,...,n_{k_{1}}+kp\}$ and let $k^{\prime\prime}\in
\widehat{\mathbf{T}}_{\mu}^{\widehat{x}}\cap\{k^{\prime},k^{\prime
}+1,k^{\prime}+2,...\}$ arbitrary. We get from (\ref{pre164}) the existence of
a finite sequence $k^{\prime\prime},k^{\prime\prime}-p,...,k^{\prime\prime
}-\overline{k}p\in\widehat{\mathbf{T}}_{\mu}^{\widehat{x}},\overline{k}%
\in\mathbf{N}$ with the property that $k^{\prime\prime}-\overline{k}%
p\in\{k^{\prime},k^{\prime}+1,...,k^{\prime}+p-1\},$ thus we have from
(\ref{p36}) the existence of $j\in\{1,...,k_{1}\}$ with $k^{\prime\prime
}-\overline{k}p=n_{j}.$ This means that $k^{\prime\prime}=n_{j}+\overline
{k}p\in\underset{k\in\mathbf{N}}{%
{\displaystyle\bigcup}
}\{n_{1}+kp,n_{2}+kp,...,n_{k_{1}}+kp\}.$

We prove that $\underset{k\in\mathbf{N}}{%
{\displaystyle\bigcup}
}\{n_{1}+kp,n_{2}+kp,...,n_{k_{1}}+kp\}\subset\widehat{\mathbf{T}}_{\mu
}^{\widehat{x}}\cap\{k^{\prime},k^{\prime}+1,k^{\prime}+2,...\}.$ Let
$k^{\prime\prime}\in\underset{k\in\mathbf{N}}{%
{\displaystyle\bigcup}
}\{n_{1}+kp,n_{2}+kp,...,n_{k_{1}}+kp\}$ arbitrary, thus $j\in\{1,...,k_{1}\}
$ and $k\in\mathbf{N}$ exist such that $k^{\prime\prime}=n_{j}+kp.$ As
$n_{j}\in\widehat{\mathbf{T}}_{\mu}^{\widehat{x}}\cap\{k^{\prime},k^{\prime
}+1,k^{\prime}+2,...\},$ we have $n_{j}+kp\geq k^{\prime}$ thus we can apply
(\ref{pre164}) and we get $k^{\prime\prime}\in\widehat{\mathbf{T}}_{\mu
}^{\widehat{x}}.$
\end{proof}

\begin{remark}
The hypothesis of the previous Theorem avoids the situation when $\widehat{x}
$ is eventually constant. In that case $\widehat{\omega}(\widehat{x}%
)=\{\mu\},p=1,k_{1}=1,n_{1}=k^{\prime}$ and (\ref{pre166}) takes the form
$\widehat{\mathbf{T}}_{\mu}^{\widehat{x}}\cap\{k^{\prime},k^{\prime
}+1,k^{\prime}+2,...\}=\underset{k\in\mathbf{N}}{%
{\displaystyle\bigcup}
}\{k^{\prime}+k\}=\{k^{\prime},k^{\prime}+1,k^{\prime}+2,...\}.$
\end{remark}

\begin{remark}
Lemma \ref{Lem35}, page \pageref{Lem35} shows that we can replace (\ref{p36})
and (\ref{pre166}) with%
\[
\widehat{\mathbf{T}}_{\mu}^{\widehat{x}}\cap\{k^{\prime\prime},k^{\prime
\prime}+1,...,k^{\prime\prime}+p-1\}=\{n_{1}^{\prime},n_{2}^{\prime
},...,n_{k_{1}}^{\prime}\},
\]%
\[%
\begin{array}
[c]{c}%
\widehat{\mathbf{T}}_{\mu}^{\widehat{x}}\cap\{k^{\prime},k^{\prime
}+1,k^{\prime}+2,...\}=\underset{k\in\mathbf{N}}{%
{\displaystyle\bigcup}
}\{n_{1}+kp,n_{2}+kp,...,n_{k_{1}}+kp\}\\
=\underset{z\in\mathbf{Z}}{%
{\displaystyle\bigcup}
}\{n_{1}^{\prime}+zp,n_{2}^{\prime}+zp,...,n_{k_{1}}^{\prime}+zp\}\cap
\{k^{\prime},k^{\prime}+1,k^{\prime}+2,...\}\\
\supset\underset{k\in\mathbf{N}}{%
{\displaystyle\bigcup}
}\{n_{1}^{\prime}+kp,n_{2}^{\prime}+kp,...,n_{k_{1}}^{\prime}+kp\}=\widehat
{\mathbf{T}}_{\mu}^{\widehat{x}}\cap\{k^{\prime\prime},k^{\prime\prime
}+1,k^{\prime\prime}+2,...\},
\end{array}
\]
where $k^{\prime\prime}\geq k^{\prime}$ is arbitrary.
\end{remark}

\begin{theorem}
\label{The71}The signal $x\in S^{(n)}$ is not eventually constant and let the
point $\mu\in\omega(x),$ as well as $T>0,t^{\prime}\in\mathbf{R}$ with%
\begin{equation}
\forall t\in\mathbf{T}_{\mu}^{x}\cap\lbrack t^{\prime},\infty),\{t+zT|z\in
\mathbf{Z}\}\cap\lbrack t^{\prime},\infty)\subset\mathbf{T}_{\mu}^{x}.
\label{pre174}%
\end{equation}
Then $a_{1},b_{1},a_{2},b_{2},...,a_{k_{1}},b_{k_{1}}\in\mathbf{R,}$
$k_{1}\geq1$ exist such that%
\begin{equation}
t^{\prime}\leq a_{1}<b_{1}<a_{2}<b_{2}<...<a_{k_{1}}<b_{k_{1}}\leq t^{\prime
}+T, \label{p50}%
\end{equation}%
\begin{equation}%
\begin{array}
[c]{c}%
\mathbf{T}_{\mu}^{x}\cap\lbrack t^{\prime},\infty)=\\
\underset{k\in\mathbf{N}}{%
{\displaystyle\bigcup}
}([a_{1}+kT,b_{1}+kT)\cup\lbrack a_{2}+kT,b_{2}+kT)\cup...\cup\lbrack
a_{k_{1}}+kT,b_{k_{1}}+kT))
\end{array}
\label{pre178}%
\end{equation}
hold.
\end{theorem}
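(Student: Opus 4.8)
The statement is the real-time analogue of Theorem~\ref{The69}, so the natural strategy is to mimic that proof, replacing the discrete ``first window'' $\{k',k'+1,\dots,k'+p-1\}$ by the half-open interval $[t',t'+T)$ and the finite set $\{n_1,\dots,n_{k_1}\}$ by a finite union of half-open subintervals of $[t',t'+T)$. First I would invoke Theorem~\ref{Lem1}~b), page~\pageref{Lem1}, applied with $t=t'$, which under hypothesis~(\ref{pre174}) gives $\mathbf{T}_{\mu}^{x}\cap[t',t'+T)\neq\varnothing$. The key preliminary observation is that $x$ restricted to $[t',t'+T)$ is a piecewise constant function taking finitely many values (it changes value only finitely often on any bounded interval, by the definition of a signal), so $\mathbf{T}_{\mu}^{x}\cap[t',t'+T)$ is a finite union of half-open intervals; here I would use the fact that $\mathbf{T}_{\mu}^{x}$ is relatively clopen in $\mathbf{R}$ within each constancy cell, together with right-continuity~(\ref{per426}), to see the pieces are of the form $[a_i,b_i)$. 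This yields $a_1,b_1,\dots,a_{k_1},b_{k_1}$ with $t'\le a_1<b_1<\dots<a_{k_1}<b_{k_1}\le t'+T$, i.e.\ (\ref{p50}), after possibly merging adjacent pieces and noting $k_1\ge 1$ by the nonemptiness just established. One subtlety: a priori a constancy cell of $x$ equal to $\mu$ could straddle the point $t'+T$; I would handle this by showing $x(t'+T-0)=\mu$ would force, via~(\ref{pre174}), $x(t'-0)=\mu$ and hence (with $x$ not eventually constant and Theorem~\ref{Lem1}) consistency of the endpoint labelling, so one may always choose the decomposition with $b_{k_1}\le t'+T$ and $a_1\ge t'$.

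\textbf{The two inclusions.} With $a_i,b_i$ fixed, I would prove
\[
\mathbf{T}_{\mu}^{x}\cap[t',\infty)=\underset{k\in\mathbf{N}}{\textstyle\bigcup}\Big([a_1+kT,b_1+kT)\cup\cdots\cup[a_{k_1}+kT,b_{k_1}+kT)\Big)
\]
by double inclusion, exactly paralleling the discrete argument. For ``$\supset$'': any point of the right-hand side has the form $t_0+kT$ with $t_0\in[a_i,b_i)\subset\mathbf{T}_{\mu}^{x}\cap[t',\infty)$ and $k\ge0$, so $t_0+kT\ge t'$ and~(\ref{pre174}) gives $t_0+kT\in\mathbf{T}_{\mu}^{x}$. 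For ``$\subset$'': given $t\in\mathbf{T}_{\mu}^{x}\cap[t',\infty)$, pick the largest $k\in\mathbf{N}$ with $t-kT\ge t'$; then $t-kT\in[t',t'+T)$, and by~(\ref{pre174}) applied with the negative multiple $-k$ (legal since $t-kT\ge t'$ and $t\ge t'$) we get $t-kT\in\mathbf{T}_{\mu}^{x}\cap[t',t'+T)$, hence $t-kT\in[a_i,b_i)$ for some $i$, so $t=(t-kT)+kT$ lies in the union. This is the real-time transcription of the ``$k''-\bar k p=n_j$'' step in Theorem~\ref{The69}.

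\textbf{Main obstacle.} The routine part is the two inclusions; the genuinely delicate point is establishing the \emph{structure} of $\mathbf{T}_{\mu}^{x}\cap[t',t'+T)$ as a \emph{finite} union of \emph{half-open} intervals $[a_i,b_i)$ with the strict ordering and the endpoint constraints $t'\le a_1$, $b_{k_1}\le t'+T$. The finiteness comes from $x\in S^{(n)}$ having only finitely many jumps in a bounded interval; the half-open form comes from right-continuity, property~(\ref{per426}); but the endpoint bookkeeping at $t'$ and at $t'+T$ requires care, because a value-$\mu$ cell could extend below $t'$ or past $t'+T$, and one must argue these cases can be absorbed (using that $\mathbf{T}_{\mu}^{x}$ is superiorly unbounded since $\mu\in\omega(x)$, and that $x$ is not eventually constant, via Lemma~\ref{Lem38}, page~\pageref{Lem38}, to exclude a degenerate single-interval answer). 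I would also remark, as in the discussion following Theorem~\ref{The69}, that the hypothesis ``$x$ not eventually constant'' is exactly what prevents the trivial collapse $\mathbf{T}_{\mu}^{x}\cap[t',\infty)=[t',\infty)$ with $k_1=1$, $a_1=t'$, $b_1=t'+T$ (which is still formally of the asserted shape, but uninteresting).
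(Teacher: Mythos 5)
Your proposal is correct and follows essentially the same route as the paper: the paper likewise defines the $[a_i,b_i)$ as the (finite, half-open) decomposition of $\mathbf{T}_{\mu}^{x}\cap[t',t'+T)$, justified by (\ref{pre174}) and Theorem \ref{Lem1} at $t=t'$, and then proves (\ref{pre178}) by the same double inclusion, shifting down by multiples of $T$ into the first window and back up. The only difference is that you spell out the piecewise-constant/right-continuity argument for the structure of the first window (which the paper asserts tersely); your worry about a cell straddling $t'+T$ resolves itself by simple truncation, since the components of $\mathbf{T}_{\mu}^{x}\cap[t',t'+T)$ are automatically of the form $[a,b)$ with $b\le t'+T$.
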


\begin{proof}
We define the intervals $[a_{1},b_{1}),[a_{2},b_{2}),...,[a_{k_{1}},b_{k_{1}%
})$ such that (\ref{p50}) and%
\begin{equation}
\mathbf{T}_{\mu}^{x}\cap\lbrack t^{\prime},t^{\prime}+T)=[a_{1},b_{1}%
)\cup\lbrack a_{2},b_{2})\cup...\cup\lbrack a_{k_{1}},b_{k_{1}}) \label{p71}%
\end{equation}
are fulfilled, by taking into account (\ref{pre174}) and Theorem \ref{Lem1},
page \pageref{Lem1}, written for $t=t^{\prime}$.

We prove $\mathbf{T}_{\mu}^{x}\cap\lbrack t^{\prime},\infty)\subset
\underset{k\in\mathbf{N}}{%
{\displaystyle\bigcup}
}([a_{1}+kT,b_{1}+kT)\cup\lbrack a_{2}+kT,b_{2}+kT)\cup...\cup\lbrack
a_{k_{1}}+kT,b_{k_{1}}+kT))$ and let $t\in\mathbf{T}_{\mu}^{x}\cap\lbrack
t^{\prime},\infty)$ arbitrary. A finite sequence $t,t-T,t-2T,...,t-\overline
{k}T\in\mathbf{T}_{\mu}^{x}$ exists, from (\ref{pre174}), such that
$t-\overline{k}T\in\lbrack t^{\prime},t^{\prime}+T),$ where $\overline{k}%
\in\mathbf{N}.$ This implies the existence of $j\in\{1,...,k_{1}\}$ such that
$t-\overline{k}T\in\lbrack a_{j},b_{j}),$ i.e. $t\in\lbrack a_{j}+\overline
{k}T,b_{j}+\overline{k}T)\subset\underset{k\in\mathbf{N}}{%
{\displaystyle\bigcup}
}([a_{1}+kT,b_{1}+kT)\cup\lbrack a_{2}+kT,b_{2}+kT)\cup...\cup\lbrack
a_{k_{1}}+kT,b_{k_{1}}+kT)).$

We prove $\underset{k\in\mathbf{N}}{%
{\displaystyle\bigcup}
}([a_{1}+kT,b_{1}+kT)\cup\lbrack a_{2}+kT,b_{2}+kT)\cup...\cup\lbrack
a_{k_{1}}+kT,b_{k_{1}}+kT))\subset\mathbf{T}_{\mu}^{x}\cap\lbrack t^{\prime
},\infty)$ and let $t\in\underset{k\in\mathbf{N}}{%
{\displaystyle\bigcup}
}([a_{1}+kT,b_{1}+kT)\cup\lbrack a_{2}+kT,b_{2}+kT)\cup...\cup\lbrack
a_{k_{1}}+kT,b_{k_{1}}+kT))$ arbitrary$.$ Some $j\in\{1,...,k_{1}\}$ and some
$k\in\mathbf{N}$ exist such that $t\in\lbrack a_{j}+kT,b_{j}+kT),$ wherefrom
$t-kT\in\lbrack a_{j},b_{j})\subset\mathbf{T}_{\mu}^{x}\cap\lbrack t^{\prime
},\infty).$ We can write%
\[
t\in\{t-kT+zT|z\in\mathbf{Z}\}\cap\lbrack t^{\prime},\infty)\overset
{(\ref{pre174})}{\subset}\mathbf{T}_{\mu}^{x}.
\]
Since $t\geq t^{\prime},$ (\ref{pre178}) is proved.
\end{proof}

\begin{remark}
Let us see what happens if in the hypothesis of the previous Theorem $x$ would
have been eventually constant; in this case $\omega(x)=\{\mu\},k_{1}%
=1,[a_{1},b_{1})=[t^{\prime},t^{\prime}+T)$ and (\ref{pre178}) becomes
$\mathbf{T}_{\mu}^{x}\cap\lbrack t^{\prime},\infty)=[t^{\prime},\infty).$
\end{remark}

\begin{remark}
Let $t^{\prime\prime}\geq t^{\prime}$ arbitrary. We get from Lemma
\ref{Lem35}, page \pageref{Lem35} that we can replace (\ref{p71}) and
(\ref{pre178}) with%
\[
\mathbf{T}_{\mu}^{x}\cap\lbrack t^{\prime\prime},t^{\prime\prime}%
+T)=[a_{1}^{\prime},b_{1}^{\prime})\cup\lbrack a_{2}^{\prime},b_{2}^{\prime
})\cup...\cup\lbrack a_{p_{1}}^{\prime},b_{p_{1}}^{\prime})
\]
and%
\[
\mathbf{T}_{\mu}^{x}\cap\lbrack t^{\prime},\infty)=\underset{k\in\mathbf{N}}{%
{\displaystyle\bigcup}
}([a_{1}+kT,b_{1}+kT)\cup\lbrack a_{2}+kT,b_{2}+kT)\cup...\cup\lbrack
a_{k_{1}}+kT,b_{k_{1}}+kT))
\]%
\[
=\underset{z\in\mathbf{Z}}{%
{\displaystyle\bigcup}
}([a_{1}^{\prime}+zT,b_{1}^{\prime}+zT)\cup\lbrack a_{2}^{\prime}%
+zT,b_{2}^{\prime}+zT)\cup...\cup\lbrack a_{p_{1}}^{\prime}+zT,b_{p_{1}%
}^{\prime}+zT))\cap\lbrack t^{\prime},\infty)
\]%
\[
\supset\underset{k\in\mathbf{N}}{%
{\displaystyle\bigcup}
}([a_{1}^{\prime}+kT,b_{1}^{\prime}+kT)\cup\lbrack a_{2}^{\prime}%
+kT,b_{2}^{\prime}+kT)\cup...\cup\lbrack a_{p_{1}}^{\prime}+kT,b_{p_{1}%
}^{\prime}+kT))=\mathbf{T}_{\mu}^{x}\cap\lbrack t^{\prime\prime},\infty).
\]

\end{remark}

\section{Sufficiency conditions of eventual periodicity}

\begin{theorem}
\label{The72}Let $\widehat{x}\in\widehat{S}^{(n)}$, $\mu\in\widehat{\omega
}(\widehat{x}),$ $p\geq1,k^{\prime}\in\mathbf{N}_{\_}$ and $n_{1}%
,n_{2},...,n_{k_{1}}\in\{k^{\prime},k^{\prime}+1,...,k^{\prime}+p-1\}$,
$k_{1}\geq1$ such that%
\begin{equation}
\widehat{\mathbf{T}}_{\mu}^{\widehat{x}}\cap\{k^{\prime},k^{\prime
}+1,k^{\prime}+2,...\}=\underset{k\in\mathbf{N}}{%
{\displaystyle\bigcup}
}\{n_{1}+kp,n_{2}+kp,...,n_{k_{1}}+kp\}. \label{pre180}%
\end{equation}
In such circumstances%
\begin{equation}
\forall k\in\widehat{\mathbf{T}}_{\mu}^{\widehat{x}}\cap\{k^{\prime}%
,k^{\prime}+1,k^{\prime}+2,...\},\{k+zp|z\in\mathbf{Z}\}\cap\{k^{\prime
},k^{\prime}+1,k^{\prime}+2,...\}\subset\widehat{\mathbf{T}}_{\mu}%
^{\widehat{x}}. \label{pre181}%
\end{equation}

\end{theorem}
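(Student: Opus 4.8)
The plan is to show that the ``periodic decomposition'' hypothesis (\ref{pre180}) forces the closure property (\ref{pre181}), essentially by a direct computation with the arithmetic-progression structure. First I would fix an arbitrary $k\in\widehat{\mathbf{T}}_{\mu}^{\widehat{x}}\cap\{k^{\prime},k^{\prime}+1,k^{\prime}+2,...\}$ and an arbitrary $z\in\mathbf{Z}$ with the property that $k+zp\geq k^{\prime}$; the goal is to prove $k+zp\in\widehat{\mathbf{T}}_{\mu}^{\widehat{x}}$. By (\ref{pre180}) applied to $k$, there exist $j\in\{1,\dots,k_{1}\}$ and $\overline{k}\in\mathbf{N}$ such that $k=n_{j}+\overline{k}p$. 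Then $k+zp=n_{j}+(\overline{k}+z)p$, and from $k+zp\geq k^{\prime}$ together with $n_{j}\geq k^{\prime}$ (which is part of the hypothesis $n_{j}\in\{k^{\prime},\dots,k^{\prime}+p-1\}$) and $n_{j}\leq k^{\prime}+p-1$, one deduces $\overline{k}+z\geq 0$, i.e. $\overline{k}+z\in\mathbf{N}$. Hence $k+zp=n_{j}+(\overline{k}+z)p$ belongs to $\bigcup_{\ell\in\mathbf{N}}\{n_{1}+\ell p,\dots,n_{k_{1}}+\ell p\}$, which by (\ref{pre180}) equals $\widehat{\mathbf{T}}_{\mu}^{\widehat{x}}\cap\{k^{\prime},k^{\prime}+1,k^{\prime}+2,...\}\subset\widehat{\mathbf{T}}_{\mu}^{\widehat{x}}$. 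This gives exactly (\ref{pre181}).

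The one genuinely delicate point is the inequality $\overline{k}+z\geq 0$, so I would spell it out carefully. From $k+zp\geq k^{\prime}$ and $k=n_{j}+\overline{k}p$ we get $n_{j}+(\overline{k}+z)p\geq k^{\prime}$, hence $(\overline{k}+z)p\geq k^{\prime}-n_{j}$. Since $n_{j}\leq k^{\prime}+p-1$, the right-hand side satisfies $k^{\prime}-n_{j}\geq k^{\prime}-(k^{\prime}+p-1)=1-p$, so $(\overline{k}+z)p\geq 1-p$, i.e. $(\overline{k}+z+1)p\geq 1>0$, and because $p\geq 1>0$ this forces $\overline{k}+z+1\geq 1$, that is $\overline{k}+z\geq 0$. (Here $\overline{k}+z$ is an integer, so $\overline{k}+z\geq 0$ is the same as $\overline{k}+z\in\mathbf{N}$.) This is the place where the constraint $n_{j}\in\{k^{\prime},\dots,k^{\prime}+p-1\}$ is used essentially; without the upper bound $n_{j}\leq k^{\prime}+p-1$ the conclusion could fail.

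I do not expect to need Theorem \ref{Lem1} or any accessibility result here, since (\ref{pre180}) is handed to us as an exact equality of sets; the proof is purely formal manipulation of the indexing. It is worth remarking that this Theorem is the converse direction of Theorem \ref{The69}, page \pageref{The69}, which produced a decomposition of the form (\ref{pre180}) out of the closure property (\ref{pre164}); so together they show that (\ref{pre180}) and (\ref{pre181}) are equivalent descriptions of the eventual periodicity of $\mu$ with period $p$ and limit of periodicity $k^{\prime}$. Accordingly, the write-up can be kept short: fix $k,z$, invoke (\ref{pre180}) to write $k=n_{j}+\overline{k}p$, verify $\overline{k}+z\geq 0$ by the displayed inequality chain, and conclude $k+zp\in\widehat{\mathbf{T}}_{\mu}^{\widehat{x}}$ by (\ref{pre180}) again.
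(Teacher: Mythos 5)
Your proposal is correct and follows essentially the same route as the paper's proof: decompose the given $k$ as $n_{j}+\overline{k}p$ via (\ref{pre180}), observe that $k+zp=n_{j}+(\overline{k}+z)p$, and use the upper bound $n_{j}\leq k^{\prime}+p-1$ (the paper writes this as $n_{j}-p\leq k^{\prime}-1$) to force $\overline{k}+z\geq0$, whence membership in the union follows from (\ref{pre180}) again. Your explicit inequality chain for $\overline{k}+z\geq0$ is just a spelled-out version of the paper's one-line justification, so there is nothing further to add.
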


\begin{proof}
Let $k^{\prime\prime}\in\widehat{\mathbf{T}}_{\mu}^{\widehat{x}}%
\cap\{k^{\prime},k^{\prime}+1,k^{\prime}+2,...\}$ and $z_{1}\in\mathbf{Z}$
arbitrary such that $k^{\prime\prime}+z_{1}p\geq k^{\prime}.$ Then
$j\in\{1,...,k_{1}\}$ and $\overline{k}\in\mathbf{N}$ exist with
$k^{\prime\prime}=n_{j}+\overline{k}p$ and we have $k^{\prime\prime}%
+z_{1}p=n_{j}+(z_{1}+\overline{k})p\geq k^{\prime}.$ This means the existence
of $z^{\prime}\in Z$ with $n_{j}+z^{\prime}p\geq k^{\prime}$ and, as
$n_{j}-p\leq k^{\prime}-1,$ we get $z^{\prime}\geq0.$ In this situation
$n_{j}+z^{\prime}p\overset{(\ref{pre180})}{\in}\widehat{\mathbf{T}}_{\mu
}^{\widehat{x}}\cap\{k^{\prime},k^{\prime}+1,k^{\prime}+2,...\},$ thus
(\ref{pre181}) holds.
\end{proof}

\begin{theorem}
\label{The76}Let $x,$ $\mu\in\omega(x),$ $T>0,t^{\prime}\in\mathbf{R}$ and the
numbers $a_{1},b_{1},a_{2},b_{2},...,$ $a_{k_{1}},b_{k_{1}}\in\mathbf{R},$
$k_{1}\geq1$ such that%
\begin{equation}
t^{\prime}\leq a_{1}<b_{1}<a_{2}<b_{2}<...<a_{k_{1}}<b_{k_{1}}\leq t^{\prime
}+T,
\end{equation}%
\begin{equation}%
\begin{array}
[c]{c}%
\mathbf{T}_{\mu}^{x}\cap\lbrack t^{\prime},\infty)=\\
\underset{k\in\mathbf{N}}{%
{\displaystyle\bigcup}
}([a_{1}+kT,b_{1}+kT)\cup\lbrack a_{2}+kT,b_{2}+kT)\cup...\cup\lbrack
a_{k_{1}}+kT,b_{k_{1}}+kT))
\end{array}
\label{p48}%
\end{equation}
hold. We infer%
\begin{equation}
\forall t\in\mathbf{T}_{\mu}^{x}\cap\lbrack t^{\prime},\infty),\{t+zT|z\in
\mathbf{Z}\}\cap\lbrack t^{\prime},\infty)\subset\mathbf{T}_{\mu}^{x}.
\label{pre191}%
\end{equation}

\end{theorem}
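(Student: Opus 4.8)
The plan is to mimic the proof of Theorem \ref{The72}, the discrete-time counterpart of this statement. I would fix an arbitrary $t\in\mathbf{T}_{\mu}^{x}\cap\lbrack t^{\prime},\infty)$ and an arbitrary $z\in\mathbf{Z}$ with $t+zT\geq t^{\prime}$, and show that $t+zT\in\mathbf{T}_{\mu}^{x}$; since $t+zT\geq t^{\prime}$ this is exactly what (\ref{pre191}) asks for, once we note that $\mu\in\omega(x)$ (so $\mathbf{T}_{\mu}^{x}$ is unbounded from above and the hypotheses are non-trivial) and that no further use of $\omega(x)$ is needed beyond what (\ref{p48}) already encodes.

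First, using the representation (\ref{p48}), from $t\in\mathbf{T}_{\mu}^{x}\cap\lbrack t^{\prime},\infty)$ I would extract indices $j\in\{1,\ldots,k_{1}\}$ and $k\in\mathbf{N}$ with $t\in\lbrack a_{j}+kT,b_{j}+kT)$. Translating by $zT$ then gives $t+zT\in\lbrack a_{j}+(k+z)T,b_{j}+(k+z)T)$, because $t\geq a_{j}+kT$ yields $t+zT\geq a_{j}+(k+z)T$ and $t<b_{j}+kT$ yields $t+zT<b_{j}+(k+z)T$.

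The only remaining point — and the one place where the bounds $t^{\prime}\leq a_{1}$ and $b_{k_{1}}\leq t^{\prime}+T$ in the hypothesis are actually used — is to verify that $k+z\in\mathbf{N}$, i.e. $k+z\geq 0$, so that the interval $[a_{j}+(k+z)T,b_{j}+(k+z)T)$ genuinely occurs in the union on the right-hand side of (\ref{p48}). For this I would combine $t+zT\geq t^{\prime}$ with $t<b_{j}+kT\leq(t^{\prime}+T)+kT=t^{\prime}+(k+1)T$, which gives $t^{\prime}\leq t+zT<t^{\prime}+(k+z+1)T$; since $T>0$, this forces $k+z+1>0$, hence $k+z\geq 0$. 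Now (\ref{p48}) yields $t+zT\in\lbrack a_{j}+(k+z)T,b_{j}+(k+z)T)\subset\mathbf{T}_{\mu}^{x}$, which is the desired conclusion. I do not expect any genuine obstacle here: the argument is elementary, the small case distinction that appears in the sum/difference-of-periods proofs (Theorem \ref{The68}) is avoided because we translate a single interval rather than iterate single steps, and the only delicate bookkeeping is the inequality $k+z\geq 0$ handled above.
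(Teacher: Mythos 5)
Your proof is correct and follows essentially the same route as the paper's: locate $t$ in an interval $[a_j+kT,b_j+kT)$ of the union (\ref{p48}), translate by $zT$, and use the bound $b_j\leq b_{k_1}\leq t^{\prime}+T$ together with $t+zT\geq t^{\prime}$ to conclude $k+z\geq 0$, so the translated interval again lies in the union. The paper phrases the last step as ``$b_j-T<t^{\prime}$ forces $z^{\prime}\geq 0$'' rather than via your inequality $t^{\prime}\leq t+zT<t^{\prime}+(k+z+1)T$, but the content is identical.
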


\begin{proof}
Let $t^{\prime\prime}\in\mathbf{T}_{\mu}^{x}\cap\lbrack t^{\prime},\infty)$
and $z_{1}\in\mathbf{Z}$ arbitrary with $t^{\prime\prime}+z_{1}T\geq
t^{\prime}.$ From (\ref{p48}) we have the existence of $j\in\{1,...,k_{1}\}$
and $\overline{k}\in\mathbf{N}$ with $t^{\prime\prime}\in\lbrack
a_{j}+\overline{k}T,b_{j}+\overline{k}T).$ We obtain that $t^{\prime\prime
}+z_{1}T\in\lbrack a_{j}+(z_{1}+\overline{k})T,b_{j}+(z_{1}+\overline
{k})T)\subset\lbrack t^{\prime},\infty).$ We get the existence of $z^{\prime
}\in\mathbf{Z}$ with $t^{\prime\prime}+z_{1}T\in\lbrack a_{j}+z^{\prime
}T,b_{j}+z^{\prime}T)\subset\lbrack t^{\prime},\infty)$ and, since
$b_{j}-T<t^{\prime},$ we infer $z^{\prime}\geq0.$ We have obtained that
$t^{\prime\prime}+z_{1}T\overset{(\ref{p48})}{\in}\mathbf{T}_{\mu}^{x}%
\cap\lbrack t^{\prime},\infty),$ thus (\ref{pre191}) holds.
\end{proof}

\section{A special case}

\begin{theorem}
\label{The122}Let $\widehat{x}\in\widehat{S}^{(n)}$, $\mu\in\widehat{\omega
}(\widehat{x}),$ $p\geq1,k^{\prime}\in\mathbf{N}_{\_}$ and $n_{1}%
\in\{k^{\prime},k^{\prime}+1,...,k^{\prime}+p-1\}$ such that%
\begin{equation}
\widehat{\mathbf{T}}_{\mu}^{\widehat{x}}\cap\{k^{\prime},k^{\prime
}+1,k^{\prime}+2,...\}=\{n_{1},n_{1}+p,n_{1}+2p,n_{1}+3p,...\}. \label{p14}%
\end{equation}
Then

a) $\mu$ is an eventually periodic point of $\widehat{x}$ with the period $p:$%
\begin{equation}
\forall k\in\widehat{\mathbf{T}}_{\mu}^{\widehat{x}}\cap\{k^{\prime}%
,k^{\prime}+1,k^{\prime}+2,...\},\{k+zp|z\in\mathbf{Z}\}\cap\{k^{\prime
},k^{\prime}+1,k^{\prime}+2,...\}\subset\widehat{\mathbf{T}}_{\mu}%
^{\widehat{x}}; \label{p72}%
\end{equation}

b) $p$ is the prime period of $\mu:$
\begin{equation}
\widehat{P}_{\mu}^{\widehat{x}}=\{p,2p,3p,...\}.
\end{equation}

\end{theorem}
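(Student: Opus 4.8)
The statement has two parts. Part a) asks me to show that the hypothesis (\ref{p14}) implies the eventual periodicity condition (\ref{p72}), and part b) asks me to identify $p$ as the prime period, i.e.\ $\widehat{P}_{\mu}^{\widehat{x}}=\{p,2p,3p,\dots\}$. The key observation is that (\ref{p14}) is precisely the special case of Theorem \ref{The72} in which the finite set $\{n_{1},n_{2},\dots,n_{k_{1}}\}$ is a singleton $\{n_{1}\}$ (so $k_{1}=1$). Therefore part a) follows immediately by invoking Theorem \ref{The72}, page \pageref{The72}, with $k_{1}=1$: the hypothesis (\ref{pre180}) becomes exactly (\ref{p14}), and the conclusion (\ref{pre181}) is exactly (\ref{p72}).

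For part b) the plan is as follows. By part a) we have $p\in\widehat{P}_{\mu}^{\widehat{x}}$, so Corollary \ref{Cor6}, page \pageref{Cor6}, gives the inclusion $\{p,2p,3p,\dots\}\subset\widehat{P}_{\mu}^{\widehat{x}}$. It remains to prove the reverse inclusion, and by Theorem \ref{The70}, page \pageref{The70} (which applies since $\mu$ is an eventually periodic point of $\widehat{x}$), we know $\widehat{P}_{\mu}^{\widehat{x}}=\{\widetilde{p},2\widetilde{p},3\widetilde{p},\dots\}$ for $\widetilde{p}=\min\widehat{P}_{\mu}^{\widehat{x}}$. So it suffices to show $\widetilde{p}=p$, equivalently $p\le\widetilde{p}$, equivalently: no $p'<p$ can lie in $\widehat{P}_{\mu}^{\widehat{x}}$. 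This is where the real work lies.

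Suppose for contradiction that $p'\in\widehat{P}_{\mu}^{\widehat{x}}$ with $1\le p'<p$. Then there is some limit of periodicity $k''$ with
\[
\forall k\in\widehat{\mathbf{T}}_{\mu}^{\widehat{x}}\cap\{k'',k''+1,\dots\},\ \{k+zp'\mid z\in\mathbf{Z}\}\cap\{k'',k''+1,\dots\}\subset\widehat{\mathbf{T}}_{\mu}^{\widehat{x}}.
\]
By Theorem \ref{The137}, page \pageref{The137}, we may replace the limit of periodicity by $\max\{k',k''\}$, so without loss of generality $k''\ge k'$, and by Theorem \ref{The69}, page \pageref{The69} (or by the same reasoning applied directly), the set $\widehat{\mathbf{T}}_{\mu}^{\widehat{x}}\cap\{k'',k''+1,\dots\}$ is a union of arithmetic progressions of common difference $p'$. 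But from (\ref{p14}) and $k''\ge k'$ we also know $\widehat{\mathbf{T}}_{\mu}^{\widehat{x}}\cap\{k'',k''+1,\dots\}=\{n_{1}+mp\mid m\in\mathbf{N},\ n_{1}+mp\ge k''\}$, a single arithmetic progression of common difference $p$. Using Theorem \ref{Lem1}, page \pageref{Lem1}, applied with this $p'$, we get $\widehat{\mathbf{T}}_{\mu}^{\widehat{x}}\cap\{k'',k''+1,\dots,k''+p'-1\}\ne\varnothing$; pick $n$ in this intersection. Then both $n$ and $n+p'$ lie in $\widehat{\mathbf{T}}_{\mu}^{\widehat{x}}\cap\{k'',\dots\}$ (the latter because $n+p'$ exceeds $k''$ and $p'$ is a period past $k''$), so by (\ref{p14}) both $n$ and $n+p'$ must be $\equiv n_{1}\pmod p$; hence $p'\equiv 0\pmod p$, contradicting $1\le p'<p$. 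This contradiction forces $\widetilde{p}=p$ and hence $\widehat{P}_{\mu}^{\widehat{x}}=\{p,2p,3p,\dots\}$.

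\textbf{Main obstacle.} The routine part (part a) is a one-line appeal to Theorem \ref{The72}. The delicate point in part b) is handling the limit of periodicity correctly when comparing the hypothetical smaller period $p'$ with $p$: one must make sure both periodicity conditions hold past a \emph{common} tail $\{k'',k''+1,\dots\}$ before extracting the congruence contradiction, which is exactly what Theorem \ref{The137} is for. An alternative, perhaps cleaner, route to the same contradiction is to note that $p'\in\widehat{P}_{\mu}^{\widehat{x}}$ together with $p\in\widehat{P}_{\mu}^{\widehat{x}}$ would, by Theorems \ref{The68} and \ref{The25}, force $\gcd$-type combinations such as $p-p'$ (if $p>p'$) to be periods as well, and iterating drives the period below any positive bound unless $p\mid p'$; since $p'<p$ this is impossible. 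Either way, the essential content is that a single-progression support set past $k'$ cannot be refined by a strictly smaller period.
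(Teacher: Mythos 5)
Your proposal is correct and follows essentially the same route as the paper: part a) is the identical one-line appeal to Theorem \ref{The72} with $k_{1}=1$, and part b) is the same contradiction — a hypothetical period $p'<p$ would force an element of $\widehat{\mathbf{T}}_{\mu}^{\widehat{x}}$ past $k'$ not congruent to $n_{1}$ modulo $p$, contradicting (\ref{p14}), after which Theorem \ref{The70} gives the form of $\widehat{P}_{\mu}^{\widehat{x}}$. Your extra care with aligning the limits of periodicity via Theorem \ref{The137} is a justified refinement of a step the paper leaves implicit, but it does not change the argument.
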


\begin{proof}
a) This is a special case of Theorem \ref{The72}, page \pageref{The72},
written for $k_{1}=1.$

b) We suppose against all reason that $p^{\prime}\in\widehat{P}_{\mu
}^{\widehat{x}}$ exists with $p^{\prime}<p.$ As $n_{1}\in\widehat{\mathbf{T}%
}_{\mu}^{\widehat{x}}\cap\{k^{\prime},k^{\prime}+1,k^{\prime}+2,...\},$ we
obtain from (\ref{p72}) that $n_{1}+p^{\prime}\in\widehat{\mathbf{T}}_{\mu
}^{\widehat{x}}\cap\{k^{\prime},k^{\prime}+1,k^{\prime}+2,...\},$
contradiction with (\ref{p14}). Thus any $p^{\prime}\in\widehat{P}_{\mu
}^{\widehat{x}}$ fulfills $p^{\prime}\geq p.$ We apply Theorem \ref{The70},
page \pageref{The70}.
\end{proof}

\begin{theorem}
\label{The116}Let $x,$ $\mu\in\omega(x),$ $T>0,$ $t^{\prime}\in\mathbf{R}$ and
the interval $[a,b)\subset\lbrack t^{\prime},t^{\prime}+T)$ such that%
\begin{equation}
\mathbf{T}_{\mu}^{x}\cap\lbrack t^{\prime},\infty)=[a,b)\cup\lbrack
a+T,b+T)\cup\lbrack a+2T,b+2T)\cup... \label{p49}%
\end{equation}
holds. We have

a) $\mu$ is an eventually periodic point of $x$ with the period $T:$
\begin{equation}
\forall t\in\mathbf{T}_{\mu}^{x}\cap\lbrack t^{\prime},\infty),\{t+zT|z\in
\mathbf{Z}\}\cap\lbrack t^{\prime},\infty)\subset\mathbf{T}_{\mu}^{x};
\end{equation}

b) if $x$ is not eventually constant, then $T$ is the prime period of $x:$%
\begin{equation}
P_{\mu}^{x}=\{T,2T,3T,...\}.
\end{equation}

\end{theorem}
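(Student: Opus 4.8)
The plan is to prove the two parts of Theorem \ref{The116} in sequence, with part a) being a direct application of the preceding sufficiency result and part b) requiring a short additional argument.

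For part a), the hypothesis (\ref{p49}) is exactly the statement (\ref{p48}) of Theorem \ref{The76}, page \pageref{The76}, in the special case $k_{1}=1$, $a_{1}=a$, $b_{1}=b$. So I would simply invoke Theorem \ref{The76} with these choices: the chain of strict inequalities $t^{\prime}\leq a<b\leq t^{\prime}+T$ holds by assumption ($[a,b)\subset\lbrack t^{\prime},t^{\prime}+T)$), and (\ref{p49}) gives the required decomposition of $\mathbf{T}_{\mu}^{x}\cap\lbrack t^{\prime},\infty)$. The conclusion (\ref{pre191}) of Theorem \ref{The76} is precisely the assertion of part a), namely that $\mu$ is eventually periodic with period $T$ (and limit of periodicity $t^{\prime}$), so in particular $T\in P_{\mu}^{x}$.

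For part b), I would argue by contradiction in the same style as the proof of Theorem \ref{The122} b). Suppose some $T^{\prime}\in P_{\mu}^{x}$ with $0<T^{\prime}<T$ exists. Since $a\in\mathbf{T}_{\mu}^{x}\cap\lbrack t^{\prime},\infty)$ (the first interval in (\ref{p49}) starts at $a\geq t^{\prime}$), eventual periodicity with period $T^{\prime}$ — more precisely, applying the already-established (\ref{pre191})-type property for $T^{\prime}$ after possibly enlarging the limit of periodicity, using Theorem \ref{Lem1}, page \pageref{Lem1} to guarantee $\mathbf{T}_{\mu}^{x}\cap\lbrack a,a+T^{\prime})\neq\varnothing$ — would force points of $\mathbf{T}_{\mu}^{x}$ to lie strictly between the consecutive ``blocks'' $[a,b)$ and $[a+T,b+T)$ in (\ref{p49}), contradicting the explicit form of $\mathbf{T}_{\mu}^{x}\cap\lbrack t^{\prime},\infty)$. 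Here is where I expect the main subtlety: one must be careful because $T^{\prime}$-eventual periodicity may only start at some limit of periodicity $t^{\prime\prime}\geq t^{\prime}$, so I would first translate forward by a multiple of $T$ to land in a block $[a+kT,b+kT)$ with $a+kT\geq t^{\prime\prime}$, and then use the accessibility of $\mu$ within every window of length $T^{\prime}$ (Theorem \ref{Lem1}) together with (\ref{p49}) to locate a point of $\mathbf{T}_{\mu}^{x}$ outside all the blocks — the gap $b+kT-(a+(k+1)T)$ equals $b-a-T<0$... rather, the gap between the end $b+kT$ of one block and the start $a+(k+1)T$ of the next has length $T-(b-a)>0$, and choosing $T^{\prime}$ small relative to this (or iterating, since multiples of $T^{\prime}$ are again periods by Corollary \ref{Cor6}) yields the contradiction. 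Having ruled out $T^{\prime}<T$, every element of $P_{\mu}^{x}$ is $\geq T$, so $T=\min P_{\mu}^{x}$; since $x$ is not eventually constant, Theorem \ref{The70} b), page \pageref{The70}, applies and gives $P_{\mu}^{x}=\{T,2T,3T,...\}$.

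The main obstacle is therefore the bookkeeping in part b): correctly handling the possibility that the prime limit of periodicity for a hypothetical smaller period $T^{\prime}$ differs from $t^{\prime}$, and extracting from (\ref{p49}) a genuinely contradictory point of $\mathbf{T}_{\mu}^{x}$. An alternative, cleaner route would be to avoid ad hoc interval arithmetic entirely: observe that by part a) and Theorem \ref{The71}, page \pageref{The71} (the necessity result), any $T^{\prime}\in P_{\mu}^{x}$ forces $\mathbf{T}_{\mu}^{x}\cap\lbrack t^{\prime},\infty)$ to have a decomposition into $k_{1}^{\prime}\geq1$ translated blocks with period $T^{\prime}$; comparing with the single-block decomposition (\ref{p49}) of period $T$ and using uniqueness of such a representation (up to the choice of starting point, cf. Lemma \ref{Lem35}) shows $T^{\prime}$ must be a multiple of $T$, hence $T^{\prime}\geq T$. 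This second approach is the one I would actually write, as it reduces part b) to citing Theorems \ref{The70} and \ref{The71} rather than re-deriving interval estimates.
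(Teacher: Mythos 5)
Your part a) is exactly the paper's argument: Theorem \ref{The76} specialized to $k_{1}=1$, with the conclusion (\ref{pre191}) giving the eventual periodicity of $\mu$ with period $T$ and limit of periodicity $t^{\prime}$. That part is fine.

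Part b) is where both of your routes leave genuine gaps. In the first route, the window argument via Theorem \ref{Lem1} only produces a contradiction when $T^{\prime}\leq T-(b-a)$: the window $[b+kT,b+kT+T^{\prime})$ sits inside the gap $[b+kT,a+(k+1)T)$ only in that case, and otherwise it reaches into the next block, where $\mathbf{T}_{\mu}^{x}$ does have points. Your proposed repair fails: Corollary \ref{Cor6} produces \emph{larger} periods, not smaller ones, and even using differences of periods (Theorem \ref{The68}) you can only shrink down to something like a common divisor of $T$ and $T^{\prime}$ --- for $T^{\prime}=T/2$ that is $T/2$ itself, which need not be $\leq T-(b-a)$ when the block is long. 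In the second route, the step ``uniqueness of such a representation (cf.\ Lemma \ref{Lem35}) shows $T^{\prime}$ must be a multiple of $T$'' is asserted, not proved: Lemma \ref{Lem35} only re-bases a decomposition with the \emph{same} period at a different starting time; it says nothing about comparing decompositions with two different periods. You also never use the non-eventual-constancy hypothesis, which is needed to rule out $b=a+T$ (when the union in (\ref{p49}) collapses to $[a,\infty)$ and every $T^{\prime}>0$ is a period). The paper's own proof of b) is much more direct: it first notes $b<a+T$ (else $x$ is eventually constant), then for any hypothetical $T^{\prime}\in P_{\mu}^{x}$ with $T^{\prime}<T$ it checks the single inequality $\max\{a,b-T^{\prime}\}<\min\{b,a+T-T^{\prime}\}$ (equivalent to $a<b$, $b<a+T$, $T^{\prime}<T$), picks any $t$ in that interval, and observes that $t\in[a,b)\subset\mathbf{T}_{\mu}^{x}$ while $t+T^{\prime}\in[b,a+T)$ lies in the gap, so $\mu=x(t)=x(t+T^{\prime})\neq\mu$, a contradiction; this shows every period is $\geq T$, and Theorem \ref{The70} then gives $P_{\mu}^{x}=\{T,2T,3T,...\}$. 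The one idea you are missing is precisely this choice of $t$: a point of the block whose translate by $T^{\prime}$ lands in the gap, which exists for \emph{every} $T^{\prime}<T$, with no smallness condition on $T^{\prime}$.
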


\begin{proof}
a) This is a special case of Theorem \ref{The76}, page \pageref{The76},
written for $k_{1}=1.$

b) We notice first of all that $b<a+T,$ otherwise $\mathbf{T}_{\mu}^{x}%
\cap\lbrack t^{\prime},\infty)=[a,\infty)$ and $x$ is eventually constant,
representing a contradiction with the hypothesis.

Let us suppose now against all reason that $T$ is not the prime period of
$\mu,$ i.e. $T^{\prime}\in P_{\mu}^{x}$ exists with $T^{\prime}<T.$ We see
that%
\[
\max\{a,b-T^{\prime}\}<\min\{b,a+T-T^{\prime}\}
\]
holds, because $a<b,a<a+T-T^{\prime},b-T^{\prime}<b,b-T^{\prime}%
<a+T-T^{\prime}$ are all true. We take $t\in\lbrack\max\{a,b-T^{\prime}%
\},\min\{b,a+T-T^{\prime}\})$ and we have%
\[
a\leq\max\{a,b-T^{\prime}\}\leq t<\min\{b,a+T-T^{\prime}\}\leq b,
\]%
\[
b\leq\max\{a+T^{\prime},b\}\leq t+T^{\prime}<\min\{b+T^{\prime},a+T\}\leq
a+T.
\]
We have obtained%
\[
\mu=x(t)=x(t+T^{\prime})\overset{(\ref{p49})}{\neq}\mu,
\]
contradiction. We conclude that any $T^{\prime}\in P_{\mu}^{x}$ fulfills
$T^{\prime}\geq T.$ We apply Theorem \ref{The70}, page \pageref{The70}.
\end{proof}

\section{Eventually periodic points vs. eventually constant signals}

\begin{theorem}
\label{The74_}a) Let the signal $\widehat{x}\in\widehat{S}^{(n)}$ and the
point $\mu\in\widehat{\omega}(\widehat{x}).$ We suppose that $p\geq1$ and
$k^{\prime},k_{1},k_{2}\in\mathbf{N}_{\_}$ exist such that%
\begin{equation}
k^{\prime}\leq k_{1}<k_{2}, \label{per913}%
\end{equation}%
\begin{equation}
\forall k\in\widehat{\mathbf{T}}_{\mu}^{\widehat{x}}\cap\{k^{\prime}%
,k^{\prime}+1,k^{\prime}+2,...\},\{k+zp|z\in\mathbf{Z}\}\cap\{k^{\prime
},k^{\prime}+1,k^{\prime}+2,...\}\subset\widehat{\mathbf{T}}_{\mu}%
^{\widehat{x}}, \label{per914}%
\end{equation}%
\begin{equation}
\{k_{1},k_{1}+1,...,k_{2}\}\subset\widehat{\mathbf{T}}_{\mu}^{\widehat{x}},
\label{per915}%
\end{equation}%
\begin{equation}
k_{1}+p\leq k_{2} \label{per916}%
\end{equation}
are true. Then $\{k^{\prime},k^{\prime}+1,k^{\prime}+2,...\}\subset
\widehat{\mathbf{T}}_{\mu}^{\widehat{x}}$ holds$.$

b) The signal $x\in S^{(n)}$ and the point $\mu\in\omega(x)$ are given. We
suppose that $T>0$ and $t^{\prime},t_{1},t_{2}\in\mathbf{R}$ exist such that%
\begin{equation}
t^{\prime}\leq t_{1}<t_{2}, \label{per919}%
\end{equation}%
\begin{equation}
\forall t\in\mathbf{T}_{\mu}^{x}\cap\lbrack t^{\prime},\infty),\{t+zT|z\in
\mathbf{Z}\}\cap\lbrack t^{\prime},\infty)\subset\mathbf{T}_{\mu}^{x},
\label{per496}%
\end{equation}%
\begin{equation}
\lbrack t_{1},t_{2})\subset\mathbf{T}_{\mu}^{x}, \label{per497}%
\end{equation}%
\begin{equation}
t_{1}+T\leq t_{2} \label{per920}%
\end{equation}
hold. Then $[t^{\prime},\infty)\subset\mathbf{T}_{\mu}^{x}.$
\end{theorem}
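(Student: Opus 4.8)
The plan is to prove part a) by a discrete argument using the structural description of $\widehat{\mathbf{T}}_{\mu}^{\widehat{x}}$ from Theorem \ref{The69}, and then derive part b) either by a parallel real-time argument using Theorem \ref{The71}, or — more efficiently — by reducing to the discrete case via the sampling correspondence. Let me sketch the discrete argument first. Since $\mu\in\widehat{\omega}(\widehat{x})$ and (\ref{per914}) holds, $\mu$ is eventually periodic with period $p$ and limit of periodicity $k^{\prime}$. If $\widehat{x}$ were eventually constant with final value $\mu$, we would already be done, so the interesting case is when $\widehat{x}$ is not eventually constant, and then Theorem \ref{The69} applies: there exist $n_1,\dots,n_{k_1}\in\{k^{\prime},\dots,k^{\prime}+p-1\}$ with
\[
\widehat{\mathbf{T}}_{\mu}^{\widehat{x}}\cap\{k^{\prime},k^{\prime}+1,k^{\prime}+2,\dots\}=\underset{k\in\mathbf{N}}{\bigcup}\{n_1+kp,\dots,n_{k_1}+kp\}.
\]
The hypotheses (\ref{per915}), (\ref{per916}) say that $\widehat{\mathbf{T}}_{\mu}^{\widehat{x}}$ contains a run $\{k_1,k_1+1,\dots,k_2\}$ of at least $p+1$ consecutive integers (since $k_1+p\le k_2$). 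Such a run, intersected with each residue class mod $p$, hits every residue class — indeed any $p$ consecutive integers already cover all residues mod $p$. So for every residue $r$ mod $p$ there is an element of $\widehat{\mathbf{T}}_{\mu}^{\widehat{x}}\cap\{k^{\prime},\dots\}$ congruent to $r$, which forces $\{n_1,\dots,n_{k_1}\}$ to be a full set of residues, i.e. $k_1=p$ and $\{n_1,\dots,n_p\}=\{k^{\prime},k^{\prime}+1,\dots,k^{\prime}+p-1\}$. Then the displayed union is exactly $\{k^{\prime},k^{\prime}+1,k^{\prime}+2,\dots\}$, giving $\{k^{\prime},k^{\prime}+1,\dots\}\subset\widehat{\mathbf{T}}_{\mu}^{\widehat{x}}$ as claimed. (In fact this shows $\widehat{x}$ is eventually constant after all, so the "not eventually constant" subcase is vacuous — but the argument does not need to know that in advance.)

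For part b) the cleanest route is to run the analogue of the above using Theorem \ref{The71}: since $\mu\in\omega(x)$ and (\ref{per496}) holds, either $x$ is eventually constant equal to $\mu$ (done), or Theorem \ref{The71} gives numbers $t^{\prime}\le a_1<b_1<\dots<a_{k_1}<b_{k_1}\le t^{\prime}+T$ with
\[
\mathbf{T}_{\mu}^{x}\cap[t^{\prime},\infty)=\underset{k\in\mathbf{N}}{\bigcup}\big([a_1+kT,b_1+kT)\cup\dots\cup[a_{k_1}+kT,b_{k_1}+kT)\big).
\]
The hypothesis (\ref{per497}) together with (\ref{per920}) says $[t_1,t_2)\subset\mathbf{T}_{\mu}^{x}$ with $t_2-t_1\ge T$, so $\mathbf{T}_{\mu}^{x}\cap[t^{\prime},\infty)$ contains a half-open interval of length at least $T$. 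Projecting this interval modulo $T$ onto $[t^{\prime},t^{\prime}+T)$ covers that whole interval (an interval of length $\ge T$ wraps completely around the circle $\mathbf{R}/T\mathbf{Z}$). Hence $[a_1,b_1)\cup\dots\cup[a_{k_1},b_{k_1})=[t^{\prime},t^{\prime}+T)$, so $k_1=1$, $a_1=t^{\prime}$, $b_1=t^{\prime}+T$, and the union telescopes to $[t^{\prime},\infty)$, i.e. $[t^{\prime},\infty)\subset\mathbf{T}_{\mu}^{x}$.

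**Main obstacle.** The substantive content is already packaged into Theorems \ref{The69} and \ref{The71}; the remaining work is purely combinatorial/geometric: checking that "a run of $\ge p+1$ consecutive integers meets every residue class mod $p$" and, in the real-time case, that "an interval of length $\ge T$ projects onto all of $[t^{\prime},t^{\prime}+T)$ modulo $T$". The one point requiring a little care is bookkeeping about half-open endpoints — one must verify $b_1\le t^{\prime}+T$ combined with the covering forces $b_1=t^{\prime}+T$ exactly, and likewise $a_1=t^{\prime}$, rather than merely $a_1\le$ something; this is where sloppiness could creep in, but it follows directly from the normalization built into Theorem \ref{The71}. Separating off the "eventually constant" cases at the start (so Theorems \ref{The69}, \ref{The71} are legitimately applicable) is the only other precaution needed.
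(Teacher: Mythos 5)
Your argument is correct in substance, but it takes a noticeably heavier route than the paper's. The paper proves b) in three lines, with no case split and no appeal to the structure theorems: given an arbitrary $t\geq t^{\prime}$, condition (\ref{per920}) guarantees some $z^{\prime}\in\mathbf{Z}$ with $t+z^{\prime}T\in[t_{1},t_{2})$ (an interval of length $\geq T$ meets every orbit of the translation by $T$); by (\ref{per919}) and (\ref{per497}) this point lies in $\mathbf{T}_{\mu}^{x}\cap[t^{\prime},\infty)$, and then (\ref{per496}) applied with the shift $-z^{\prime}T$ returns $t\in\mathbf{T}_{\mu}^{x}$. The discrete case is identical. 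Your proof instead invokes Theorems \ref{The69} and \ref{The71} to get the canonical decomposition of the support set and then argues that the long run forces all residues mod $p$ (resp.\ all of $[t^{\prime},t^{\prime}+T)$) to appear; the covering step you use there is essentially the same translate-into-the-run trick, so the structure theorems are a detour rather than a source of extra power. The detour has a cost: those theorems require the signal not to be eventually constant, and your disposal of that case ("we would already be done") is too quick. Eventual constancy with value $\mu$ only gives $\widehat{x}(k)=\mu$ for $k\geq k^{\prime\prime}$ with some $k^{\prime\prime}$ possibly larger than $k^{\prime}$; to conclude $\{k^{\prime},k^{\prime}+1,\dots\}\subset\widehat{\mathbf{T}}_{\mu}^{\widehat{x}}$ you still need to propagate backwards from $k+mp\geq k^{\prime\prime}$ down to $k\geq k^{\prime}$ using (\ref{per914}) — a one-line fix, but it must be said. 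The direct argument avoids this entirely, which is the main thing each approach "buys": yours exhibits the global structure of the support set, the paper's is shorter, self-contained, and uniform over all cases.
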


\begin{proof}
b) Let $t\geq t^{\prime}$ be arbitrary. From (\ref{per920}) we have the
existence of $z^{\prime}\in\mathbf{Z}$ with $t+z^{\prime}T\in\lbrack
t_{1},t_{2}).$ We infer:%
\begin{equation}
t+z^{\prime}T\overset{(\ref{per919}),(\ref{per497})}{\in}\mathbf{T}_{\mu}%
^{x}\cap\lbrack t^{\prime},\infty), \label{per921}%
\end{equation}%
\[
t\in\{t+z^{\prime}T+zT|z\in\mathbf{Z}\}\cap\lbrack t^{\prime},\infty
)\overset{(\ref{per496}),(\ref{per921})}{\subset}\mathbf{T}_{\mu}^{x}.
\]
As $t$ was arbitrary, we get the statement of the Theorem.
\end{proof}

\chapter{\label{Cha7}Eventually periodic signals}

In the first two Sections we give properties that are equivalent with the
eventual periodicity of the signals.

In Section 3 we show the property that, for time instants greater than the
limit of periodicity, each omega limit point is accessed in a time interval
with the length of at most a period.

The bound of the limit of periodicity issue is addressed in Section 4.

Sections 5 and 6 refer to a property of eventual constancy that is used in
Section 7 to relate the discrete time with the real time eventually periodic signals.

The fact that the sums, the differences and the multiples of periods are
periods is shown in Section 8.

Section 9 draws conclusions concerning the form of the sets $\widehat
{P}^{\widehat{x}},P^{x}$ and in particular the existence of the prime period
is proved.

Sections 10, 11, 12 give necessity and sufficiency properties of eventual
periodicity and a special case, when the prime period is known.

The issue of changing the order of the quantifiers in stating eventual
periodicity properties is addressed in Section 13. Since the problem is not
solved so far, we state in Section 14 the hypothesis $P$ stating basically
that if all the points of the omega limit set are eventually periodic, then
the signal is eventually periodic.

\section{The first group of eventual periodicity properties}

\begin{remark}
These properties involve the eventual periodicity request of all the omega
limit points $\mu\in\widehat{\omega}(\widehat{x}),\mu\in\omega(x),$ with a
common period $p\geq1,T>0$ and a common limit of periodicity $k^{\prime}%
\in\mathbf{N},$ $t^{\prime}\in\mathbf{R}$. This way, we notice the
associations (\ref{per164})-(\ref{pre781})$_{page\;\pageref{pre781}}$,...,
(\ref{pre547})-(\ref{pre538})$_{page\;\pageref{pre538}}$, (\ref{pre576}%
)-(\ref{pre553})$_{page\;\pageref{pre553}}$,..., (\ref{pre610})-(\ref{pre600}%
)$_{page\;\pageref{pre600}}$ with the statements of Theorem \ref{The97}, page
\pageref{The97}, where eventual periodicity was used to characterize eventual
constancy. We make also the associations (\ref{per164})-(\ref{pre154}%
)$_{page\;\pageref{pre154}}$,...,(\ref{pre547})-(\ref{pre159}%
)$_{page\;\pageref{pre159}}$ and (\ref{pre576})-(\ref{pre572}%
)$_{page\;\pageref{pre572}}$, ...,(\ref{pre610})-(\ref{pre606}%
)$_{page\;\pageref{pre606}}$ with the statements of Theorem \ref{The67}, page
\pageref{The67}, referring to the eventual periodicity of the points.
\end{remark}

\begin{remark}
The statements (\ref{per164}),..., (\ref{pre547}), (\ref{pre576}),...,
(\ref{pre610}) from Theorem \ref{The28} are called of eventual periodicity of
$\widehat{x},x$ due to their equivalence with Definition \ref{Def28}, page
\pageref{Def28} that will be proved in the following Section, in Theorem
\ref{The109}.
\end{remark}

\begin{theorem}
\label{The28}The signals $\widehat{x}\in\widehat{S}^{(n)},x\in S^{(n)}$ are given.

a) The following statements are equivalent for any $p\geq1:$%
\begin{equation}
\left\{
\begin{array}
[c]{c}%
\forall\mu\in\widehat{\omega}(\widehat{x}),\exists k^{\prime}\in
\mathbf{N}_{\_},\forall k\in\widehat{\mathbf{T}}_{\mu}^{\widehat{x}}%
\cap\{k^{\prime},k^{\prime}+1,k^{\prime}+2,...\},\\
\{k+zp|z\in\mathbf{Z}\}\cap\{k^{\prime},k^{\prime}+1,k^{\prime}+2,...\}\subset
\widehat{\mathbf{T}}_{\mu}^{\widehat{x}},
\end{array}
\right.  \label{per164}%
\end{equation}%
\begin{equation}
\left\{
\begin{array}
[c]{c}%
\forall\mu\in\widehat{\omega}(\widehat{x}),\exists k^{\prime\prime}%
\in\mathbf{N},\forall k\in\widehat{\mathbf{T}}_{\mu}^{\widehat{\sigma
}^{k^{\prime\prime}}(\widehat{x})},\\
\{k+zp|z\in\mathbf{Z}\}\cap\mathbf{N}_{\_}\subset\widehat{\mathbf{T}}_{\mu
}^{\widehat{\sigma}^{k^{\prime\prime}}(\widehat{x})},
\end{array}
\right.  \label{pre340}%
\end{equation}%
\begin{equation}
\left\{
\begin{array}
[c]{c}%
\forall\mu\in\widehat{\omega}(\widehat{x}),\exists k^{\prime}\in
\mathbf{N}_{\_},\forall k\geq k^{\prime},\widehat{x}(k)=\mu\Longrightarrow\\
\Longrightarrow(\widehat{x}(k)=\widehat{x}(k+p)\text{ and }k-p\geq k^{\prime
}\Longrightarrow\widehat{x}(k)=\widehat{x}(k-p)),
\end{array}
\right.  \label{pre546}%
\end{equation}%
\begin{equation}
\left\{
\begin{array}
[c]{c}%
\forall\mu\in\widehat{\omega}(\widehat{x}),\exists k^{\prime\prime}%
\in\mathbf{N},\forall k\in\mathbf{N}_{\_},\widehat{\sigma}^{k^{\prime\prime}%
}(\widehat{x})(k)=\mu\Longrightarrow\\
\Longrightarrow(\widehat{\sigma}^{k^{\prime\prime}}(\widehat{x})(k)=\widehat
{\sigma}^{k^{\prime\prime}}(\widehat{x})(k+p)\text{ and }\\
\text{and }k-p\geq-1\Longrightarrow\widehat{\sigma}^{k^{\prime\prime}%
}(\widehat{x})(k)=\widehat{\sigma}^{k^{\prime\prime}}(\widehat{x})(k-p)).
\end{array}
\right.  \label{pre547}%
\end{equation}

b) The following statements are also equivalent for any $T>0:$%
\begin{equation}
\left\{
\begin{array}
[c]{c}%
\forall\mu\in\omega(x),\exists t^{\prime}\in I^{x},\\
\exists t_{1}^{\prime}\geq t^{\prime},\forall t\in\mathbf{T}_{\mu}^{x}%
\cap\lbrack t_{1}^{\prime},\infty),\{t+zT|z\in\mathbf{Z}\}\cap\lbrack
t_{1}^{\prime},\infty)\subset\mathbf{T}_{\mu}^{x},
\end{array}
\right.  \label{pre576}%
\end{equation}%
\begin{equation}
\left\{
\begin{array}
[c]{c}%
\forall\mu\in\omega(x),\exists t_{1}^{\prime}\in\mathbf{R},\\
\forall t\in\mathbf{T}_{\mu}^{x}\cap\lbrack t_{1}^{\prime},\infty
),\{t+zT|z\in\mathbf{Z}\}\cap\lbrack t_{1}^{\prime},\infty)\subset
\mathbf{T}_{\mu}^{x},
\end{array}
\right.  \label{pre607}%
\end{equation}%
\begin{equation}
\left\{
\begin{array}
[c]{c}%
\forall\mu\in\omega(x),\exists t^{\prime\prime}\in\mathbf{R},\exists
t^{\prime}\in I^{\sigma^{t^{\prime\prime}}(x)},\\
\forall t\in\mathbf{T}_{\mu}^{\sigma^{t^{\prime\prime}}(x)}\cap\lbrack
t^{\prime},\infty),\{t+zT|z\in\mathbf{Z}\}\cap\lbrack t^{\prime}%
,\infty)\subset\mathbf{T}_{\mu}^{\sigma^{t^{\prime\prime}}(x)},
\end{array}
\right.  \label{pre577}%
\end{equation}%
\begin{equation}
\left\{
\begin{array}
[c]{c}%
\forall\mu\in\omega(x),\exists t^{\prime\prime}\in\mathbf{R},\exists
t^{\prime}\in\mathbf{R},\\
\forall t\in\mathbf{T}_{\mu}^{\sigma^{t^{\prime\prime}}(x)}\cap\lbrack
t^{\prime},\infty),\{t+zT|z\in\mathbf{Z}\}\cap\lbrack t^{\prime}%
,\infty)\subset\mathbf{T}_{\mu}^{\sigma^{t^{\prime\prime}}(x)},
\end{array}
\right.  \label{pre608}%
\end{equation}%
\begin{equation}
\left\{
\begin{array}
[c]{c}%
\forall\mu\in\omega(x),\exists t^{\prime}\in I^{x},\exists t_{1}^{\prime}\geq
t^{\prime},\forall t\geq t_{1}^{\prime},x(t)=\mu\Longrightarrow\\
\Longrightarrow(x(t)=x(t+T)\text{ and }t-T\geq t_{1}^{\prime}\Longrightarrow
x(t)=x(t-T)),
\end{array}
\right.  \label{pre578}%
\end{equation}%
\begin{equation}
\left\{
\begin{array}
[c]{c}%
\forall\mu\in\omega(x),\exists t_{1}^{\prime}\in\mathbf{R},\forall t\geq
t_{1}^{\prime},x(t)=\mu\Longrightarrow\\
\Longrightarrow(x(t)=x(t+T)\text{ and }t-T\geq t_{1}^{\prime}\Longrightarrow
x(t)=x(t-T)),
\end{array}
\right.  \label{pre609}%
\end{equation}%
\begin{equation}
\left\{
\begin{array}
[c]{c}%
\forall\mu\in\omega(x),\exists t^{\prime\prime}\in\mathbf{R},\exists
t^{\prime}\in I^{\sigma^{t^{\prime\prime}}(x)},\\
\forall t\geq t^{\prime},\sigma^{t^{\prime\prime}}(x)(t)=\mu\Longrightarrow
(\sigma^{t^{\prime\prime}}(x)(t)=\sigma^{t^{\prime\prime}}(x)(t+T)\text{
and}\\
\text{and }t-T\geq t^{\prime}\Longrightarrow\sigma^{t^{\prime\prime}%
}(x)(t)=\sigma^{t^{\prime\prime}}(x)(t-T)),
\end{array}
\right.  \label{pre579}%
\end{equation}%
\begin{equation}
\left\{
\begin{array}
[c]{c}%
\forall\mu\in\omega(x),\exists t^{\prime\prime}\in\mathbf{R},\exists
t^{\prime}\in\mathbf{R},\forall t\geq t^{\prime},\sigma^{t^{\prime\prime}%
}(x)(t)=\mu\Longrightarrow\\
\Longrightarrow(\sigma^{t^{\prime\prime}}(x)(t)=\sigma^{t^{\prime\prime}%
}(x)(t+T)\text{ and}\\
\text{and }t-T\geq t^{\prime}\Longrightarrow\sigma^{t^{\prime\prime}%
}(x)(t)=\sigma^{t^{\prime\prime}}(x)(t-T)).
\end{array}
\right.  \label{pre610}%
\end{equation}

\end{theorem}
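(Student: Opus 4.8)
The proof of Theorem \ref{The28} splits into two independent blocks, the discrete time part a) and the real time part b), and in each block the strategy is the same: prove a cycle of implications so that all listed statements become equivalent. For part a) the plan is to establish
\[
(\ref{per164})\Longrightarrow(\ref{pre340})\Longrightarrow(\ref{pre546})\Longrightarrow(\ref{pre547})\Longrightarrow(\ref{per164}),
\]
and for part b)
\[
(\ref{pre576})\Longrightarrow(\ref{pre607})\Longrightarrow(\ref{pre577})\Longrightarrow(\ref{pre608})\Longrightarrow(\ref{pre578})\Longrightarrow(\ref{pre609})\Longrightarrow(\ref{pre579})\Longrightarrow(\ref{pre610})\Longrightarrow(\ref{pre576}).
\]
The key observation that makes this cheap is that each of these statements, once a fixed $\mu\in\widehat{\omega}(\widehat{x})$ (resp. $\mu\in\omega(x)$) is singled out, is \emph{exactly} one of the equivalent conditions of Theorem \ref{The67}, page \pageref{The67}, applied to that point: (\ref{per164}) is the $\forall\mu$-quantified version of (\ref{pre154}), (\ref{pre340}) of (\ref{pre158}), (\ref{pre546}) of (\ref{pre153}), (\ref{pre547}) of (\ref{pre159}), and similarly (\ref{pre576})--(\ref{pre610}) are the $\forall\mu$-quantified versions of (\ref{pre572})--(\ref{pre606}). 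So the plan is simply: fix $p\geq1$ (resp. $T>0$) and an arbitrary $\mu\in\widehat{\omega}(\widehat{x})$ (resp. $\mu\in\omega(x)$), invoke the corresponding implication from the proof of Theorem \ref{The67}, and then re-universally-quantify over $\mu$. Since $\mu$ was arbitrary and the implications in Theorem \ref{The67} hold for each individual $\mu$, the $\forall\mu$ form is preserved step by step.

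First I would handle part a). For the forward chain $(\ref{per164})\Rightarrow(\ref{pre340})\Rightarrow(\ref{pre546})\Rightarrow(\ref{pre547})$ I note that these four implications, point by point, are precisely the implications $(\ref{pre154})\Rightarrow(\ref{pre158})\Rightarrow(\ref{pre153})\Rightarrow(\ref{pre159})$ whose proof was already given in (or deduced from) Theorem \ref{The97}, page \pageref{The97}, and recapitulated in the proof of Theorem \ref{The67}; here one just observes that the manipulations used there (defining $k^{\prime\prime}=k^{\prime}+1$, shifting the argument by $k^{\prime\prime}$, iterating the one-step periodicity) never use anything about $\mu$ beyond $\mu\in\widehat{\omega}(\widehat{x})$, so quantifying $\forall\mu$ throughout costs nothing. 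The closing implication $(\ref{pre547})\Rightarrow(\ref{per164})$ is the pointwise $(\ref{pre159})\Rightarrow(\ref{pre154})$ established inside the proof of Theorem \ref{The67} a) --- the case analysis on $z>0$, $z=0$, $z<0$ that walks back and forth along the arithmetic progression, crucially using that $\widehat{\mathbf{T}}_{\mu}^{\widehat{x}}$ is infinite because $\mu\in\widehat{\omega}(\widehat{x})$ --- again applied for each $\mu$ separately and then universally quantified.

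Part b) is longer only because there are two real time versions of several discrete statements, but structurally it is identical: the chain $(\ref{pre576})\Rightarrow\dots\Rightarrow(\ref{pre610})$ consists of implications that are, point by point, the implications $(\ref{pre572})\Rightarrow\dots\Rightarrow(\ref{pre606})$ from the proof of Theorem \ref{The67} b) (several of which are marked ``Obvious'' there, namely the ones weakening $t^{\prime}\in I^{x}$ to $t^{\prime}\in\mathbf{R}$, or introducing a forgetful shift $\sigma^{t^{\prime\prime}}(x)$ and using Theorem \ref{Pro2} and Lemma \ref{Lem30}, page \pageref{Lem30}), and the closing implication $(\ref{pre610})\Rightarrow(\ref{pre576})$ is the pointwise $(\ref{pre606})\Rightarrow(\ref{pre572})$, again a $z>0/z=0/z<0$ case split using that $\mathbf{T}_{\mu}^{x}$ is unbounded above since $\mu\in\omega(x)$. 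The only genuine subtlety to watch --- and what I expect to be the main obstacle in writing it cleanly rather than a conceptual difficulty --- is the bookkeeping of the auxiliary constants when passing between the ``$\exists t^{\prime}\in I^{\sigma^{t^{\prime\prime}}(x)}$'' forms and the plain ``$\exists t^{\prime}\in\mathbf{R}$'' forms: one must choose $t^{\prime}$ in the small interval $(t^{\prime\prime}-\varepsilon,t^{\prime\prime})$ on which $x$ is locally constant (so that $\sigma^{t^{\prime\prime}}(x)$ agrees with $x$ for $t\geq t^{\prime}$, by the right-continuity \eqref{per741} and Theorem \ref{The1}), and then transport the periodicity statement through Lemma \ref{Lem30}. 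Since all of this is done verbatim in the proof of Theorem \ref{The67}, the proof of Theorem \ref{The28} can be written in two sentences --- one per part --- each simply citing the corresponding chain in Theorem \ref{The67} and remarking that fixing $\mu$, applying it, and re-quantifying $\forall\mu$ gives the claim.
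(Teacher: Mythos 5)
Your proposal is correct and follows essentially the same route as the paper: the same cycle of implications, with the forward chains inherited from Theorem \ref{The97} and the closing implications $(\ref{pre547})\Rightarrow(\ref{per164})$ and $(\ref{pre610})\Rightarrow(\ref{pre576})$ handled by the $z>0/z=0/z<0$ walk along the arithmetic progression, using that $\widehat{\mathbf{T}}_{\mu}^{\widehat{x}}$ is infinite (resp. $\mathbf{T}_{\mu}^{x}$ is superiorly unbounded). Your packaging — observing that each statement of Theorem \ref{The28} is the $\forall\mu$-quantification of the corresponding statement of Theorem \ref{The67}, so the whole theorem follows by fixing $\mu$, applying Theorem \ref{The67}, and re-quantifying — is a slightly more economical organization of the same content, since the paper re-derives the closing implications rather than citing Theorem \ref{The67} directly.
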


\begin{proof}
a) The proof of the implications%
\[
(\ref{per164})\Longrightarrow(\ref{pre340})\Longrightarrow(\ref{pre546}%
)\Longrightarrow(\ref{pre547})
\]
follows from Theorem \ref{The97}, page \pageref{The97}. We prove
(\ref{pre547})$\Longrightarrow$(\ref{per164}).

Let $\mu\in\widehat{\omega}(\widehat{x})$ arbitrary, fixed. (\ref{pre547})
shows the existence of $k^{\prime\prime}\in\mathbf{N}$ such that%
\begin{equation}
\left\{
\begin{array}
[c]{c}%
\forall k\in\mathbf{N}_{\_},\widehat{\sigma}^{k^{\prime\prime}}(\widehat
{x})(k)=\mu\Longrightarrow(\widehat{\sigma}^{k^{\prime\prime}}(\widehat
{x})(k)=\widehat{\sigma}^{k^{\prime\prime}}(\widehat{x})(k+p)\text{ and }\\
\text{and }k-p\geq-1\Longrightarrow\widehat{\sigma}^{k^{\prime\prime}%
}(\widehat{x})(k)=\widehat{\sigma}^{k^{\prime\prime}}(\widehat{x})(k-p)).
\end{array}
\right.  \label{p20}%
\end{equation}

We denote $k^{\prime}=k^{\prime\prime}-1,$ where $k^{\prime}\geq-1.$ We also
denote $k^{\prime\prime\prime}=k+k^{\prime}+1,$ where $k^{\prime\prime\prime
}\geq k^{\prime}.$ With these notations, (\ref{p20}) becomes%
\begin{equation}
\left\{
\begin{array}
[c]{c}%
\forall k^{\prime\prime\prime}\geq k^{\prime},\widehat{x}(k^{\prime
\prime\prime})=\mu\Longrightarrow(\widehat{x}(k^{\prime\prime\prime}%
)=\widehat{x}(k^{\prime\prime\prime}+p)\text{ and }\\
\text{and }k^{\prime\prime\prime}-p\geq k^{\prime}\Longrightarrow\widehat
{x}(k^{\prime\prime\prime})=\widehat{x}(k^{\prime\prime\prime}-p)).
\end{array}
\right.  \label{p21}%
\end{equation}

Let now $k\in\widehat{\mathbf{T}}_{\mu}^{\widehat{x}}$ and $z\in\mathbf{Z} $
arbitrary such that $k\geq k^{\prime}$ and $k+zp\geq k^{\prime}.$ We have the
following possibilities.

Case $z>0,$%
\[
\mu=\widehat{x}(k)\overset{(\ref{p21})}{=}\widehat{x}(k+p)\overset
{(\ref{p21})}{=}\widehat{x}(k+2p)\overset{(\ref{p21})}{=}...\overset
{(\ref{p21})}{=}\widehat{x}(k+zp);
\]

Case $z=0,$%
\[
\mu=\widehat{x}(k)=\widehat{x}(k+zp);
\]

Case $z<0,$%
\[
\mu=\widehat{x}(k)\overset{(\ref{p21})}{=}\widehat{x}(k-p)\overset
{(\ref{p21})}{=}\widehat{x}(k-2p)\overset{(\ref{p21})}{=}...\overset
{(\ref{p21})}{=}\widehat{x}(k+zp).
\]

We have obtained in all these situations that $k+zp\in\widehat{\mathbf{T}%
}_{\mu}^{\widehat{x}}$ holds, i.e. (\ref{per164}) is true.

b) The proof of the implications%
\[
(\ref{pre576})\Longrightarrow(\ref{pre607})\Longrightarrow(\ref{pre577}%
)\Longrightarrow(\ref{pre608})\Longrightarrow(\ref{pre578})\Longrightarrow
(\ref{pre609})\Longrightarrow(\ref{pre579})\Longrightarrow(\ref{pre610})
\]
follows from Theorem \ref{The97}, page \pageref{The97}. We prove
(\ref{pre610})$\Longrightarrow$(\ref{pre576}).

Let $\mu\in\omega(x)$ arbitrary. From (\ref{pre610}) we get the existence of
$t^{\prime\prime}\in\mathbf{R}$ and $t^{\prime}\in\mathbf{R}$ such that%
\begin{equation}
\left\{
\begin{array}
[c]{c}%
\forall t\geq t^{\prime},\sigma^{t^{\prime\prime}}(x)(t)=\mu\Longrightarrow
(\sigma^{t^{\prime\prime}}(x)(t)=\sigma^{t^{\prime\prime}}(x)(t+T)\text{
and}\\
\text{and }t-T\geq t^{\prime}\Longrightarrow\sigma^{t^{\prime\prime}%
}(x)(t)=\sigma^{t^{\prime\prime}}(x)(t-T)).
\end{array}
\right.  \label{p22}%
\end{equation}

Let $t_{1}^{\prime}=\max\{t^{\prime},t^{\prime\prime}\}.$ On one hand
(\ref{p22}) is still true if we replace $t^{\prime}$ with $t_{1}^{\prime},$
from Lemma \ref{Lem30}, page \pageref{Lem30}. On the other hand, in this case
$\sigma^{t^{\prime\prime}}(x)=x,$ thus (\ref{p22}) becomes%
\begin{equation}
\forall t\geq t_{1}^{\prime},x(t)=\mu\Longrightarrow(x(t)=x(t+T)\text{ and
}t-T\geq t_{1}^{\prime}\Longrightarrow x(t)=x(t-T)). \label{p23}%
\end{equation}

We take arbitrarily some $t^{\prime\prime\prime}\in I^{x}\cap(-\infty
,t_{1}^{\prime}].$ Let $t\in\mathbf{T}_{\mu}^{x}$ and $z\in\mathbf{Z}$
arbitrary with $t\geq t_{1}^{\prime}$ and $t+zT\geq t_{1}^{\prime}.$ We prove
in all the three cases $z>0,z=0,z<0$ that (\ref{p23}) implies $t+zT\in
\mathbf{T}_{\mu}^{x}.$
\end{proof}

\begin{example}
Let $x\in S^{(2)},$
\[%
\begin{array}
[c]{c}%
x(t)=(0,1)\cdot\chi_{(-\infty,-\frac{1}{2})}(t)\oplus(0,1)\cdot\chi
_{\lbrack0,1)}(t)\oplus(1,0)\cdot\chi_{\lbrack1,2)}(t)\oplus(1,1)\cdot
\chi_{\lbrack2,3)}(t)\\
\oplus(0,1)\cdot\chi_{\lbrack3,4)}(t)\oplus(1,0)\cdot\chi_{\lbrack
4,5)}(t)\oplus(1,1)\cdot\chi_{\lbrack5,6)}(t)\oplus(0,1)\cdot\chi
_{\lbrack6,7)}(t)\oplus...
\end{array}
\]
$x$ is eventually periodic and it fulfills%
\begin{equation}
\forall t\geq0,x(t)=x(t+3), \label{per547}%
\end{equation}
since all of $(0,1),(1,0),(1,1)\in\omega(x)$ are eventually periodic with the
period $T=3$ and the limit of periodicity $t^{\prime}=0.$
\end{example}

\begin{example}
\label{Exa6}The signal $\widehat{x}\in\widehat{S}^{(1)},$%
\[
\widehat{x}=\underset{1}{\underbrace{0}},\underset{1}{\underbrace{1}%
},\underset{2}{\underbrace{0,0}},\underset{2}{\underbrace{1,1}},\underset
{3}{\underbrace{0,0,0}},\underset{3}{\underbrace{1,1,1}},\underset
{4}{\underbrace{0,0,0,0}},...
\]
is not eventually periodic, because none of $0,1$ is eventually periodic.
\end{example}

\section{The second group of eventual periodicity properties}

\begin{remark}
This group of properties refers to signals, not to their values, and they were
presented previously in Theorem \ref{The100}, page \pageref{The100}, as useful
in characterizing the eventual constancy. We notice the associations
(\ref{per163})-(\ref{per83})$_{page\;\pageref{per83}},$ (\ref{pre341}%
)-(\ref{pre515})$_{page\;\pageref{pre515}}$ and (\ref{pre580})-(\ref{pre557}%
)$_{page\;\pageref{pre557}},...,$(\ref{pre612})-(\ref{pre602}%
)$_{page\;\pageref{pre602}}.$
\end{remark}

\begin{theorem}
\label{The109}The signals $\widehat{x},x$ are given.

a) For any $p\geq1,$ the following statements are equivalent with the eventual
periodicity of $\widehat{x}$:%
\begin{equation}
\exists k^{\prime}\in\mathbf{N}_{\_},\forall k\geq k^{\prime},\widehat
{x}(k)=\widehat{x}(k+p), \label{per163}%
\end{equation}%
\begin{equation}
\exists k^{\prime\prime}\in\mathbf{N},\forall k\in\mathbf{N}_{\_}%
,\widehat{\sigma}^{k^{\prime\prime}}(\widehat{x})(k)=\widehat{\sigma
}^{k^{\prime\prime}}(\widehat{x})(k+p). \label{pre341}%
\end{equation}

b) For any $T>0,$ the following statements are also equivalent with the
eventual periodicity of $x$:%
\begin{equation}
\exists t^{\prime}\in I^{x},\exists t_{1}^{\prime}\geq t^{\prime},\forall
t\geq t_{1}^{\prime},x(t)=x(t+T), \label{pre580}%
\end{equation}%
\begin{equation}
\exists t_{1}^{\prime}\in\mathbf{R},\forall t\geq t_{1}^{\prime},x(t)=x(t+T),
\label{pre611}%
\end{equation}%
\begin{equation}
\exists t^{\prime\prime}\in\mathbf{R},\exists t^{\prime}\in I^{\sigma
^{t^{\prime\prime}}(x)},\forall t\geq t^{\prime},\sigma^{t^{\prime\prime}%
}(x)(t)=\sigma^{t^{\prime\prime}}(x)(t+T), \label{pre581}%
\end{equation}%
\begin{equation}
\exists t^{\prime\prime}\in\mathbf{R},\exists t^{\prime}\in\mathbf{R},\forall
t\geq t^{\prime},\sigma^{t^{\prime\prime}}(x)(t)=\sigma^{t^{\prime\prime}%
}(x)(t+T). \label{pre612}%
\end{equation}

\end{theorem}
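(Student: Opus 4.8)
The statement to prove is Theorem \ref{The109}, which asserts that for a fixed period $p\geq1$ (resp. $T>0$) the four (resp. four) displayed conditions are each equivalent to eventual periodicity of $\widehat{x}$ (resp. $x$) as given in Definition \ref{Def28}. The plan is to prove the discrete-time part a) and the real-time part b) separately, and in each case to show the cycle of implications that closes the loop. For a), I would show
\[
\text{eventual periodicity}\Longrightarrow(\ref{per163})\Longrightarrow(\ref{pre341})\Longrightarrow(\ref{per164})_{page\;\pageref{per164}}\Longrightarrow\text{eventual periodicity}.
\]
The first implication is immediate from Definition \ref{Def28}: eventual periodicity of $\widehat{x}$ \emph{is} literally the statement that $\exists k^{\prime}\in\mathbf{N}_{\_}$ with $\forall k\geq k^{\prime},\widehat{x}(k)=\widehat{x}(k+p)$, i.e.\ (\ref{per163}). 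The implication $(\ref{per163})\Longrightarrow(\ref{pre341})$ is the same argument already carried out in the proof of Theorem \ref{The100} a) for the passage $(\ref{per83})\Longrightarrow(\ref{pre515})$: set $k^{\prime\prime}=k^{\prime}+1$, note $k+k^{\prime\prime}\geq k^{\prime}$ for every $k\in\mathbf{N}_{\_}$, and compute
\[
\widehat{\sigma}^{k^{\prime\prime}}(\widehat{x})(k)=\widehat{x}(k+k^{\prime\prime})=\widehat{x}(k+k^{\prime}+1)=\widehat{x}(k+k^{\prime}+1+p)=\widehat{\sigma}^{k^{\prime\prime}}(\widehat{x})(k+p).
\]

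\textbf{Closing the loop and the real-time case.} For the implication $(\ref{pre341})\Longrightarrow(\ref{per164})$ I would argue that (\ref{pre341}) written for the given $p$ says $\widehat{\sigma}^{k^{\prime\prime}}(\widehat{x})$ satisfies $\forall k\in\mathbf{N}_{\_},\widehat{\sigma}^{k^{\prime\prime}}(\widehat{x})(k)=\widehat{\sigma}^{k^{\prime\prime}}(\widehat{x})(k+p)$, which in particular implies, for each $\mu\in\widehat{\omega}(\widehat{x})$, the statement (\ref{pre547}) of Theorem \ref{The28} (with that same $k^{\prime\prime}$, the one-period translation property in both directions being a triviality here since $\widehat{\sigma}^{k^{\prime\prime}}(\widehat{x})(k)=\widehat{\sigma}^{k^{\prime\prime}}(\widehat{x})(k\pm p)$ holds for \emph{all} $k$). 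By Theorem \ref{The28} a), $\widehat{\omega}(\widehat{x})$-wise (\ref{pre547}) is equivalent to (\ref{per164}). Finally $(\ref{per164})\Longrightarrow\text{eventual periodicity}$: given $\mu$-dependent limits $k^{\prime}_{\mu}$, take $k^{\prime}=\max_{\mu\in\widehat{\omega}(\widehat{x})}k^{\prime}_{\mu}$ (a finite maximum, since $\widehat{\omega}(\widehat{x})\subset\mathbf{B}^{n}$ is finite and nonempty by Theorem \ref{The12}), enlarged further so that $\{\widehat{x}(k)\mid k\geq k^{\prime}\}=\widehat{\omega}(\widehat{x})$ by Theorem \ref{The12_}; then for every $k\geq k^{\prime}$, with $\mu=\widehat{x}(k)\in\widehat{\omega}(\widehat{x})$, we have $k\in\widehat{\mathbf{T}}_{\mu}^{\widehat{x}}\cap\{k^{\prime},k^{\prime}+1,\dots\}$ and $k+p\geq k^{\prime}$, so (\ref{per164}) gives $k+p\in\widehat{\mathbf{T}}_{\mu}^{\widehat{x}}$, i.e.\ $\widehat{x}(k+p)=\mu=\widehat{x}(k)$. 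That is (\ref{pre742}), so $\widehat{x}$ is eventually periodic. The real-time part b) is handled the same way, showing
\[
\text{ev.\ per.}\Longrightarrow(\ref{pre580})\Longrightarrow(\ref{pre611})\Longrightarrow(\ref{pre581})\Longrightarrow(\ref{pre612})\Longrightarrow(\ref{pre576})\Longrightarrow\text{ev.\ per.}
\]
The first implication is again just Definition \ref{Def28} plus the freedom to insert an $I^{x}$-element $t^{\prime}\leq t_{1}^{\prime}$ (which exists by Notation \ref{Not15} and the definition of signals). The chain $(\ref{pre580})\Longrightarrow\dots\Longrightarrow(\ref{pre576})$ is the content of Theorem \ref{The100} b)'s implications $(\ref{pre557})\Longrightarrow\dots\Longrightarrow(\ref{pre553})$, plus one extra step $(\ref{pre610})\Longrightarrow(\ref{pre576})$ which is exactly the implication proved inside Theorem \ref{The28} b) (using Lemma \ref{Lem30} and the observation that for $t_{1}^{\prime}=\max\{t^{\prime},t^{\prime\prime}\}$ one has $\sigma^{t^{\prime\prime}}(x)=x$ beyond $t_{1}^{\prime}$). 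The last implication $(\ref{pre576})\Longrightarrow\text{ev.\ per.}$ mirrors the discrete case: again $\omega(x)$ is finite and nonempty (Theorem \ref{The12}), take the common $t_{1}^{\prime}$ to be the maximum of the $\mu$-dependent ones enlarged via Theorem \ref{The12_} so that $\{x(t)\mid t\geq t_{1}^{\prime}\}=\omega(x)$, and conclude $\forall t\geq t_{1}^{\prime},x(t)=x(t+T)$, which is (\ref{pre744}).

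\textbf{Where the work sits.} Almost every single implication here is either a verbatim special case of an already-proved theorem (Theorem \ref{The100}, Theorem \ref{The28}) or a one-line forgetful-function computation of the type repeated throughout the paper, so I expect no genuine obstacle. The only step requiring a small genuine argument is the two ``closing'' implications $(\ref{per164})\Longrightarrow\text{ev.\ per.}$ and $(\ref{pre576})\Longrightarrow\text{ev.\ per.}$, where one must pass from a periodicity property that holds for each omega-limit point with its own limit of periodicity to a \emph{single} limit of periodicity valid for the whole signal; the finiteness of $\widehat{\omega}(\widehat{x})$, $\omega(x)$ together with Theorem \ref{The12_} (which provides a time after which the signal only visits $\widehat{\omega}(\widehat{x})$, resp.\ $\omega(x)$) is exactly what makes this work, and this is the one place I would write out carefully. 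I would also remark, as the surrounding text of Section 2 already does, that the intermediate statements (\ref{per163})--(\ref{pre341}) and (\ref{pre580})--(\ref{pre612}) are what justify calling the first-group statements (\ref{per164})--(\ref{pre610}) ``eventual periodicity of $\widehat{x},x$''.
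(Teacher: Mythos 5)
Your proposal is correct and follows essentially the same route as the paper: the forward implications among (\ref{per163})--(\ref{pre341}) and (\ref{pre580})--(\ref{pre612}) are delegated to Theorem \ref{The100}, and the loop is closed through the first-group statements (\ref{per164}), (\ref{pre576}) of Theorem \ref{The28}, with the key step (passing from $\mu$-dependent limits of periodicity to a common one via finiteness of $\widehat{\omega}(\widehat{x})$, $\omega(x)$, Theorem \ref{The12_} and Lemma \ref{Lem30}) identified and argued exactly as the paper does. The only blemish is a loose citation in part b): the step you need is $(\ref{pre612})\Longrightarrow(\ref{pre576})$, which the paper proves directly, whereas you invoke Theorem \ref{The28}'s $(\ref{pre610})\Longrightarrow(\ref{pre576})$ — this still works, but only after the (easy, and worth stating) observation that $(\ref{pre612})$ trivially implies $(\ref{pre610})$.
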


\begin{proof}
a) The implication (\ref{per163})$\Longrightarrow$(\ref{pre341}) results from
Theorem \ref{The100}. We prove (\ref{per164})$\Longrightarrow$(\ref{per163}).

We suppose that $\widehat{\omega}(\widehat{x})=\{\mu^{1},...,\mu^{s}\}.$ For
any $i\in\{1,...,s\},$ some $k_{i}^{\prime}\in\mathbf{N}_{\_}$ exists with the
property%
\begin{equation}
\left\{
\begin{array}
[c]{c}%
\forall k\in\widehat{\mathbf{T}}_{\mu^{i}}^{\widehat{x}}\cap\{k_{i}^{\prime
},k_{i}^{\prime}+1,k_{i}^{\prime}+2,...\},\\
\{k+zp|z\in\mathbf{Z}\}\cap\{k_{i}^{\prime},k_{i}^{\prime}+1,k_{i}^{\prime
}+2,...\}\subset\widehat{\mathbf{T}}_{\mu^{i}}^{\widehat{x}}.
\end{array}
\right.
\end{equation}

Let $\widetilde{k}\in\mathbf{N}_{\_}$ be a time instant that fulfills
$\widehat{\omega}(\widehat{x})=\{\widehat{x}(k)|k\geq\widetilde{k}\}.$ With
$k^{\prime}=\max\{\widetilde{k},k_{1}^{\prime},...,k_{s}^{\prime}\},$ from
Lemma \ref{Lem30}, page \pageref{Lem30} we have
\begin{equation}
\left\{
\begin{array}
[c]{c}%
\forall k\in\widehat{\mathbf{T}}_{\mu^{i}}^{\widehat{x}}\cap\{k^{\prime
},k^{\prime}+1,k^{\prime}+2,...\},\\
\{k+zp|z\in\mathbf{Z}\}\cap\{k^{\prime},k^{\prime}+1,k^{\prime}+2,...\}\subset
\widehat{\mathbf{T}}_{\mu^{i}}^{\widehat{x}},
\end{array}
\right.  \label{p31}%
\end{equation}
and this statement is true for all $i\in\{1,...,s\}.$

Let now $k\geq k^{\prime}$ arbitrary, for which $i$ exists with $\widehat
{x}(k)=\mu^{i}.$ We infer%
\begin{equation}
k+p\in\{k+zp|z\in\mathbf{Z}\}\cap\{k^{\prime},k^{\prime}+1,k^{\prime
}+2,...\}\overset{(\ref{p31})}{\subset}\widehat{\mathbf{T}}_{\mu^{i}%
}^{\widehat{x}},
\end{equation}
thus $\widehat{x}(k+p)=\mu^{i}=\widehat{x}(k).$

(\ref{pre341})$\Longrightarrow$(\ref{per164}) Let $\mu\in\widehat{\omega
}(\widehat{x})$ arbitrary. Some $k^{\prime\prime}\in\mathbf{N}$ exists with
the property that%
\begin{equation}
\forall k\in\mathbf{N}_{\_},\widehat{x}(k+k^{\prime\prime})=\widehat
{x}(k+k^{\prime\prime}+p). \label{pre905}%
\end{equation}
We denote $k^{\prime}=k^{\prime\prime}-1,$ where $k^{\prime}\in\mathbf{N}%
_{\_}.$ We also denote $k^{\prime\prime\prime}=k+k^{\prime}+1,$ where
$k^{\prime\prime\prime}\geq k^{\prime}.$ With these notations (\ref{pre905})
becomes%
\begin{equation}
\forall k^{\prime\prime\prime}\geq k^{\prime},\widehat{x}(k^{\prime
\prime\prime})=\widehat{x}(k^{\prime\prime\prime}+p). \label{p30}%
\end{equation}
Let now $k\in\widehat{\mathbf{T}}_{\mu}^{\widehat{x}}$ and $z\in\mathbf{Z} $
arbitrary such that $k\geq k^{\prime}$ and $k+zp\geq k^{\prime}.$ The
following possibilities exist.

Case $z>0,$%
\[
\mu=\widehat{x}(k)\overset{(\ref{p30})}{=}\widehat{x}(k+p)\overset
{(\ref{p30})}{=}\widehat{x}(k+2p)\overset{(\ref{p30})}{=}...\overset
{(\ref{p30})}{=}\widehat{x}(k+zp);
\]

Case $z=0,$%
\[
\mu=\widehat{x}(k)=\widehat{x}(k+zp);
\]

Case $z<0,$%
\[
\widehat{x}(k+zp)\overset{(\ref{p30})}{=}\widehat{x}(k+(z+1)p)\overset
{(\ref{p30})}{=}\widehat{x}(k+(z+2)p)\overset{(\ref{p30})}{=}...
\]%
\[
...\overset{(\ref{p30})}{=}\widehat{x}(k-p)\overset{(\ref{p30})}{=}\widehat
{x}(k)=\mu.
\]
In all these cases $\widehat{x}(k+zp)=\mu,$ thus $k+zp\in\widehat{\mathbf{T}%
}_{\mu}^{\widehat{x}}.$ (\ref{per164}) is proved.

b) The implications%
\[
(\ref{pre580})\Longrightarrow(\ref{pre611})\Longrightarrow(\ref{pre581}%
)\Longrightarrow(\ref{pre612})
\]
result from Theorem \ref{The100}, page \pageref{The100}.

(\ref{pre576})$\Longrightarrow$(\ref{pre580}) We suppose that $\omega
(x)=\{\mu^{1},...,\mu^{s}\}$ and let $i\in\{1,...,s\}$ arbitrary. From
(\ref{pre576}) we have the existence of $t^{i}\in I^{x}$ and $t_{1}^{i}\geq
t^{i}$ with%
\begin{equation}
\forall t\in\mathbf{T}_{\mu^{i}}^{x}\cap\lbrack t_{1}^{i},\infty
),\{t+zT|z\in\mathbf{Z}\}\cap\lbrack t_{1}^{i},\infty)\subset\mathbf{T}%
_{\mu^{i}}^{x} \label{pre907}%
\end{equation}
fulfilled.

We denote $t^{\prime}=\max\{t^{1},...,t^{s}\}$ and we notice that $t^{\prime
}\in I^{x}$ holds, since $t^{\prime}$ coincides with one of $t^{1},...,t^{s}.$
We put $\widetilde{t}\in\mathbf{R}$ for the time instant that fulfills%
\begin{equation}
\omega(x)=\{x(t)|t\geq\widetilde{t}\}. \label{p33}%
\end{equation}
Let $i\in\{1,...,s\}$ arbitrary and fixed. The fact that for $t_{1}^{\prime
}=\max\{\widetilde{t},t^{\prime},t_{1}^{1},...,t_{1}^{s}\}$ the statement%
\begin{equation}
\forall t\in\mathbf{T}_{\mu^{i}}^{x}\cap\lbrack t_{1}^{\prime},\infty
),\{t+zT|z\in\mathbf{Z}\}\cap\lbrack t_{1}^{\prime},\infty)\subset
\mathbf{T}_{\mu^{i}}^{x} \label{p32}%
\end{equation}
holds is a consequence of (\ref{pre907}) and Lemma \ref{Lem30} b), page
\pageref{Lem30}.

Let now $t\geq t_{1}^{\prime}$ arbitrary. Some $i\in\{1,...,s\}$ exists with
$x(t)=\mu^{i},$ thus we can write $t\in\mathbf{T}_{\mu^{i}}^{x}\cap\lbrack
t_{1}^{\prime},\infty)$ and%
\[
t+T\in\{t+zT|z\in\mathbf{Z}\}\cap\lbrack t_{1}^{\prime},\infty)\overset
{(\ref{p32})}{\subset}\mathbf{T}_{\mu^{i}}^{x},
\]
wherefrom $x(t+T)=\mu^{i}=x(t).$

(\ref{pre612})$\Longrightarrow$(\ref{pre576}) We denote with $\widetilde{t}%
\in\mathbf{R}$ the time instant that fulfills (\ref{p33}). The hypothesis
shows the existence of $t^{\prime\prime}\in\mathbf{R}$ and $t^{\prime}%
\in\mathbf{R}$ such that%
\begin{equation}
\forall t\geq t^{\prime},\sigma^{t^{\prime\prime}}(x)(t)=\sigma^{t^{\prime
\prime}}(x)(t+T) \label{pre910}%
\end{equation}
and let $t_{1}^{\prime}=\max\{\widetilde{t},t^{\prime\prime},t^{\prime}\}$
arbitrary. We have from (\ref{pre910}):%
\begin{equation}
\forall t\geq t_{1}^{\prime},x(t)=x(t+T). \label{pre911}%
\end{equation}
Let now $\mu\in\omega(x)$ arbitrary and $t^{\prime\prime\prime}\in I^{x}$. We
suppose that $t^{\prime\prime\prime}\leq t_{1}^{\prime},$ as this is always
possible. Let $t\in\mathbf{T}_{\mu}^{x}\cap\lbrack t_{1}^{\prime},\infty)$
arbitrary and let us take $z\in\mathbf{Z}$ arbitrary itself with $t+zT\geq
t_{1}^{\prime}.$ We have:

Case $z>0,$%
\[
\mu=x(t)\overset{(\ref{pre911})}{=}x(t+T)\overset{(\ref{pre911})}%
{=}x(t+2T)\overset{(\ref{pre911})}{=}...\overset{(\ref{pre911})}{=}x(t+zT);
\]

Case $z=0,$%
\[
\mu=x(t)=x(t+zT);
\]

Case $z<0,$%
\[
x(t+zT)\overset{(\ref{pre911})}{=}...\overset{(\ref{pre911})}{=}%
x(t-2T)\overset{(\ref{pre911})}{=}x(t-T)\overset{(\ref{pre911})}{=}x(t)=\mu.
\]
We have obtained that in all these situations $x(t+zT)=\mu,$ i.e.
$t+zT\in\mathbf{T}_{\mu}^{x}.$
\end{proof}

\begin{remark}
The eventual periodicity of the signals highlights the existence of two time
instants, $\widetilde{k}\in\mathbf{N}_{\_}$ and $k^{\prime}\in\mathbf{N}_{\_}$
given by%
\begin{equation}
\widehat{\omega}(\widehat{x})=\{\widehat{x}(k)|k\geq\widetilde{k}\},
\label{p34}%
\end{equation}%
\begin{equation}
\forall k\geq k^{\prime},\widehat{x}(k)=\widehat{x}(k+p). \label{p35}%
\end{equation}
None of $\widetilde{k},k^{\prime}$ is unique, in the sense that (\ref{p34}),
(\ref{p35}) may be rewritten for any $\widetilde{k}_{1}\geq\widetilde{k}%
,k_{1}^{\prime}\geq k^{\prime}$ but if $\widetilde{k},k^{\prime}$ are chosen
to be the least such that (\ref{p34}), (\ref{p35}) hold, then $\widetilde
{k}\leq k^{\prime}.$

The situation is also true in the real time case, with the remark that if
$Or(x)=\omega(x),$ then the least $\widetilde{t}$ such that%
\[
\omega(x)=\{x(t)|t\geq\widetilde{t}\}
\]
holds does not exist.
\end{remark}

\begin{remark}
The statements (\ref{per164}),..., (\ref{pre547}) and (\ref{pre576}),...,
(\ref{pre610}) refer to left-and-right time shifts, while the statements
(\ref{per163}), (\ref{pre341}) and (\ref{pre580}),..., (\ref{pre612}) refer to
right time shifts only.
\end{remark}

\section{The accessibility of the omega limit set}

\begin{theorem}
\label{The140}a) If $\widehat{x}\in\widehat{S}^{(n)},$ then
\begin{equation}
\underset{\mu\in\widehat{\omega}(\widehat{x})}{%
{\displaystyle\bigcap}
}\widehat{P}_{\mu}^{\widehat{x}}\neq\varnothing\Longrightarrow\forall
k^{\prime}\in\underset{\mu\in\widehat{\omega}(\widehat{x})}{%
{\displaystyle\bigcap}
}\widehat{L}_{\mu}^{\widehat{x}},\text{ }\widehat{\omega}(\widehat
{x})=\{\widehat{x}(k)|k\geq k^{\prime}\}, \label{p254}%
\end{equation}%
\begin{equation}
\widehat{P}^{\widehat{x}}\neq\varnothing\Longrightarrow\forall k^{\prime}%
\in\widehat{L}^{\widehat{x}},\text{ }\widehat{\omega}(\widehat{x}%
)=\{\widehat{x}(k)|k\geq k^{\prime}\} \label{p253}%
\end{equation}
hold.

b) For $x\in S^{(n)},$ we have the truth of
\begin{equation}
\underset{\mu\in\omega(x)}{%
{\displaystyle\bigcap}
}P_{\mu}^{x}\neq\varnothing\Longrightarrow\forall t^{\prime}\in\underset
{\mu\in\omega(x)}{%
{\displaystyle\bigcap}
}L_{\mu}^{x},\text{ }\omega(x)=\{x(t)|t\geq t^{\prime}\}, \label{p255}%
\end{equation}%
\begin{equation}
P^{x}\neq\varnothing\Longrightarrow\forall t^{\prime}\in L^{x},\text{ }%
\omega(x)=\{x(t)|t\geq t^{\prime}\}. \label{p256}%
\end{equation}

\end{theorem}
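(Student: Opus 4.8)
My plan is to prove the four implications of Theorem \ref{The140} by the same underlying argument: if a period (resp.\ a limit of periodicity) is common to all omega limit points, then beyond that limit of periodicity each omega limit point is still attained, and no other point can occur -- so the "tail" of the signal takes exactly the values of $\widehat{\omega}(\widehat{x})$ (resp.\ $\omega(x)$). Parts a) and b) are discrete--real analogues, and within each the second implication reduces to the first, since $\widehat{P}^{\widehat{x}}\neq\varnothing$ combined with Theorem \ref{The28}/\ref{The109} gives a common period for all $\mu\in\widehat{\omega}(\widehat{x})$ with a common limit of periodicity, i.e.\ $\widehat{P}^{\widehat{x}}\subset\bigcap_{\mu}\widehat{P}_{\mu}^{\widehat{x}}$ and $\widehat{L}^{\widehat{x}}\subset\bigcap_{\mu}\widehat{L}_{\mu}^{\widehat{x}}$; and similarly in real time. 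So the real work is the first implication in each of a) and b).

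For part a), first implication: assume $p\in\bigcap_{\mu\in\widehat{\omega}(\widehat{x})}\widehat{P}_{\mu}^{\widehat{x}}$ and fix an arbitrary $k^{\prime}\in\bigcap_{\mu\in\widehat{\omega}(\widehat{x})}\widehat{L}_{\mu}^{\widehat{x}}$. I want to show $\widehat{\omega}(\widehat{x})=\{\widehat{x}(k)\mid k\geq k^{\prime}\}$. The inclusion $\supset$ is handled by a clean-up: pick $k^{\prime\prime}\geq k^{\prime}$ large enough (by Theorem \ref{The12_}) so that $\widehat{\omega}(\widehat{x})=\{\widehat{x}(k)\mid k\geq k^{\prime\prime}\}$; then for $k^{\prime}\le k<k^{\prime\prime}$ I must argue $\widehat{x}(k)\in\widehat{\omega}(\widehat{x})$. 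This is where I invoke the accessibility result Theorem \ref{Lem1}a: each $\mu\in\widehat{\omega}(\widehat{x})$, being eventually periodic with period $p$ and limit of periodicity $k^{\prime}$, satisfies $\widehat{\mathbf{T}}_{\mu}^{\widehat{x}}\cap\{k,k+1,\dots,k+p-1\}\neq\varnothing$ for every $k\ge k^{\prime}$; taking $k=k^{\prime}$, every window of length $p$ starting at $k^{\prime}$ meets the support of every omega limit point. Then, by the left-and-right shift property (\ref{pre738}) with limit $k^{\prime}$, for any $k\in\{k^{\prime},\dots,k^{\prime}+p-1\}$ with $\widehat{x}(k)=\mu$ we can translate up by multiples of $p$ to land past $k^{\prime\prime}$, so $\widehat{x}(k)$ is attained arbitrarily late and hence lies in $\widehat{\omega}(\widehat{x})$; combining the windows shows the whole block $\{k^{\prime},\dots,k^{\prime\prime}-1\}$ lands in $\widehat{\omega}(\widehat{x})$. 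The inclusion $\subset$ is immediate: if $\mu\in\widehat{\omega}(\widehat{x})$ then $\widehat{\mathbf{T}}_{\mu}^{\widehat{x}}$ is infinite, so it meets $\{k^{\prime},k^{\prime}+1,\dots\}$, giving some $k\ge k^{\prime}$ with $\widehat{x}(k)=\mu$.

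For part b), first implication, the argument is the real-time mirror: assume $T\in\bigcap_{\mu\in\omega(x)}P_{\mu}^{x}$, fix $t^{\prime}\in\bigcap_{\mu\in\omega(x)}L_{\mu}^{x}$; I want $\omega(x)=\{x(t)\mid t\geq t^{\prime}\}$. Again $\subset$ is trivial since $\mathbf{T}_{\mu}^{x}$ is unbounded above and hence meets $[t^{\prime},\infty)$. For $\supset$: by Theorem \ref{The12_} choose $t^{\prime\prime}\geq t^{\prime}$ with $\omega(x)=\{x(t)\mid t\geq t^{\prime\prime}\}$; given $t\in[t^{\prime},t^{\prime\prime})$, let $\mu=x(t)$. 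If $\mu\in\omega(x)$ we are done; if not, I use Theorem \ref{Lem1}b applied to the eventually periodic points together with the piecewise-constant structure. The subtle point — the main obstacle — is that in real time $x(t)$ for $t$ in the tail could in principle be a "transient" value not in $\omega(x)$ only on a set of measure zero, but since $x$ is piecewise constant (Definition \ref{Def2}) and right continuous, any value taken at some $t\ge t^{\prime}$ is taken on a whole subinterval $[t,t+\delta)$; then the shift property (\ref{pre740}) with limit $t^{\prime}$ lets me translate that subinterval up by multiples of $T$ past $t^{\prime\prime}$, forcing $\mu\in\{x(s)\mid s\ge t^{\prime\prime}\}=\omega(x)$. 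I expect this interplay between the discrete shift argument and the topological fact that piecewise-constant signals attain each value on an interval to be the one place requiring care; everything else is a routine transcription of Theorem \ref{Lem1} and Theorem \ref{The12_}. Finally, the two "signal-level" implications (\ref{p253}) and (\ref{p256}) follow by remarking that $\widehat{P}^{\widehat{x}}\neq\varnothing$ forces, via Theorem \ref{The28}, a common period and common limit of periodicity for all $\mu\in\widehat{\omega}(\widehat{x})$ (so $\widehat{L}^{\widehat{x}}\subset\bigcap_{\mu}\widehat{L}_{\mu}^{\widehat{x}}$ and likewise for periods), and then applying (\ref{p254}); and symmetrically (\ref{p256}) follows from (\ref{p255}) via Theorem \ref{The109}.
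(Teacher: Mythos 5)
Your overall architecture matches the paper's: reduce the signal-level implications (\ref{p253}) and (\ref{p256}) to the point-level ones, get the easy inclusion from the infinitude (resp.\ unboundedness from above) of the supports, and attack the hard inclusion $\{\widehat{x}(k)\mid k\geq k^{\prime}\}\subset\widehat{\omega}(\widehat{x})$, resp.\ $\{x(t)\mid t\geq t^{\prime}\}\subset\omega(x)$, using the common period and the common limit of periodicity. But your argument for the hard inclusion does not close. You take $k\in\{k^{\prime},\dots,k^{\prime}+p-1\}$ ``with $\widehat{x}(k)=\mu$'' and translate upward by multiples of $p$; this only treats positions whose value already lies in $\widehat{\omega}(\widehat{x})$, because the shift property (\ref{pre738}) is a statement about $\widehat{\mathbf{T}}_{\mu}^{\widehat{x}}$ for the eventually periodic points $\mu\in\widehat{\omega}(\widehat{x})$ and gives you nothing at a position $k\geq k^{\prime}$ where $\widehat{x}(k)\notin\widehat{\omega}(\widehat{x})$ --- which is precisely the possibility the inclusion must exclude. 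Theorem \ref{Lem1} says each $\widehat{\mathbf{T}}_{\mu}^{\widehat{x}}$ \emph{meets} every window of length $p$; it does not say these sets \emph{cover} the window, so ``combining the windows'' does not show that the whole block $\{k^{\prime},\dots,k^{\prime\prime}-1\}$ lands in $\widehat{\omega}(\widehat{x})$.

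The missing idea is a downward (left) shift combined with a pigeonhole. For $k\geq k^{\prime}$ the progression $k,k+p,k+2p,\dots$ is infinite and $\mathbf{B}^{n}$ is finite, so some $\mu_{0}$ satisfies $\widehat{x}(k+jp)=\mu_{0}$ for infinitely many $j$, whence $\widehat{\mathbf{T}}_{\mu_{0}}^{\widehat{x}}$ is infinite and $\mu_{0}\in\widehat{\omega}(\widehat{x})$; picking one such $j_{0}$, the point $k+j_{0}p$ lies in $\widehat{\mathbf{T}}_{\mu_{0}}^{\widehat{x}}\cap\{k^{\prime},k^{\prime}+1,\dots\}$ and $k=(k+j_{0}p)-j_{0}p\geq k^{\prime}$, so (\ref{pre738}) forces $k\in\widehat{\mathbf{T}}_{\mu_{0}}^{\widehat{x}}$, i.e.\ $\widehat{x}(k)=\mu_{0}\in\widehat{\omega}(\widehat{x})$. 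The same device settles the real-time case: the worry about piecewise constancy and values attained only on small sets is a red herring, since the actual obstacle is again that (\ref{pre740}) applies only once you know $x(t)\in\omega(x)$, and it is removed by locating an omega limit value on $\{t+kT\mid k\in\mathbf{N}\}$ and shifting it down to $t$. (The paper's own proof writes $\widehat{x}(k)=\widehat{x}(k+p)=\cdots$ at this point without comment, so it is terser than it should be; but the upward translation you offer in its place is not a substitute for the downward one.)
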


\begin{proof}
a) (\ref{p254}). The hypothesis states that $\underset{\mu\in\widehat{\omega
}(\widehat{x})}{%
{\displaystyle\bigcap}
}\widehat{P}_{\mu}^{\widehat{x}}\neq\varnothing.$ If $\underset{\mu\in
\widehat{\omega}(\widehat{x})}{%
{\displaystyle\bigcap}
}\widehat{L}_{\mu}^{\widehat{x}}=\varnothing$ then the statement is trivially
true, thus we suppose that $\underset{\mu\in\widehat{\omega}(\widehat{x})}{%
{\displaystyle\bigcap}
}\widehat{L}_{\mu}^{\widehat{x}}\neq\varnothing$ and let $k^{\prime}%
\in\underset{\mu\in\widehat{\omega}(\widehat{x})}{%
{\displaystyle\bigcap}
}\widehat{L}_{\mu}^{\widehat{x}}$ arbitrary. We prove $\widehat{\omega
}(\widehat{x})\subset\{\widehat{x}(k)|k\geq k^{\prime}\}.$ Some $\widetilde
{k}\in\mathbf{N}_{\_}$ exists such that $\widehat{\omega}(\widehat
{x})=\{\widehat{x}(k)|k\geq\widetilde{k}\}$ and we have the following possibilities.

Case $k^{\prime}<\widetilde{k}$

In this case $\widehat{\omega}(\widehat{x})\subset\{\widehat{x}(k)|k\geq
k^{\prime}\}.$

Case $k^{\prime}\geq\widetilde{k}$

If so, we have from Theorem \ref{The12_}, page \pageref{The12_} that
$\widehat{\omega}(\widehat{x})=\{\widehat{x}(k)|k\geq k^{\prime}\}.$

We prove $\{\widehat{x}(k)|k\geq k^{\prime}\}\subset\widehat{\omega}%
(\widehat{x}).$ For this we take arbitrarily $k\geq k^{\prime}$ and
$p\in\underset{\mu\in\widehat{\omega}(\widehat{x})}{%
{\displaystyle\bigcap}
}\widehat{P}_{\mu}^{\widehat{x}}.$ We have $\widehat{x}(k)=\widehat
{x}(k+p)=\widehat{x}(k+2p)=...,$ thus $\widehat{\mathbf{T}}_{\widehat{x}%
(k)}^{\widehat{x}}$ is infinite and $\widehat{x}(k)\in\widehat{\omega
}(\widehat{x}).$

b) (\ref{p256}). We show that $\omega(x)\subset\{x(t)|t\geq t^{\prime}\}.$
From Theorem \ref{The12_}, page \pageref{The12_} we know that some
$\widetilde{t}\in\mathbf{R}$ exists with $\omega(x)=\{x(t)|t\geq\widetilde
{t}\}.$ There are two possibilities.

Case $t^{\prime}<\widetilde{t}$

If so, then $\omega(x)\subset\{x(t)|t\geq t^{\prime}\}.$

Case $t^{\prime}\geq\widetilde{t}$

In this case, see Theorem \ref{The12_}, $\omega(x)=\{x(t)|t\geq t^{\prime}\}.$

We show now that $\{x(t)|t\geq t^{\prime}\}\subset\omega(x)$ holds and let
$t\geq t^{\prime},T\in P^{x}$ arbitrary. The hypothesis shows that
$x(t)=x(t+T)=x(t+2T)=...,$ i.e. $\mathbf{T}_{x(t)}^{x}$ is superiorly
unbounded. This means that $x(t)\in\omega(x).$
\end{proof}

\begin{theorem}
\label{The125}a) If $\widehat{x}$ is eventually periodic with the period
$p\geq1$ and the limit of periodicity $k^{\prime}\in\mathbf{N}_{\_}:$%
\begin{equation}
\forall k\geq k^{\prime},\widehat{x}(k)=\widehat{x}(k+p),
\end{equation}
then
\begin{equation}
\forall k\geq k^{\prime},\widehat{\omega}(\widehat{x})=\{\widehat{x}%
(i)|i\in\{k,k+1,...,k+p-1\}\}.
\end{equation}

b) If $x$ is eventually periodic with the period $T>0$ and the limit of
periodicity $t^{\prime}\in\mathbf{R:}$%
\begin{equation}
\forall t\geq t^{\prime},x(t)=x(t+T),
\end{equation}
then%
\begin{equation}
\forall t\geq t^{\prime},\omega(x)=\{x(\xi)|\xi\in\lbrack t,t+T)\}.
\end{equation}

\end{theorem}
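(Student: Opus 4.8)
The plan is to fix an arbitrary $k\geq k^{\prime}$ (respectively an arbitrary $t\geq t^{\prime}$) and prove the two inclusions separately, using the characterisation of omega limit points by support sets from Theorem \ref{The12} a), page \pageref{The12}: a point $\mu$ lies in $\widehat{\omega}(\widehat{x})$ iff $\widehat{\mathbf{T}}_{\mu}^{\widehat{x}}$ is infinite, and $\mu\in\omega(x)$ iff $\mathbf{T}_{\mu}^{x}$ is unbounded from above. The only hypothesis available is the defining relation of eventual periodicity (Definition \ref{Def28}), i.e. $\widehat{x}(j)=\widehat{x}(j+p)$ for all $j\geq k^{\prime}$ (resp. $x(s)=x(s+T)$ for all $s\geq t^{\prime}$), and the whole proof is just a bookkeeping of where this relation can be applied.

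For a), the inclusion $\supseteq$ goes as follows: for $i\in\{k,k+1,\dots,k+p-1\}$ we have $i\geq k^{\prime}$, so iterating the periodicity relation forward gives $\widehat{x}(i)=\widehat{x}(i+p)=\widehat{x}(i+2p)=\dots$, hence $\{i,i+p,i+2p,\dots\}\subset\widehat{\mathbf{T}}_{\widehat{x}(i)}^{\widehat{x}}$, which is therefore infinite, and Theorem \ref{The12} a) yields $\widehat{x}(i)\in\widehat{\omega}(\widehat{x})$. For $\subseteq$, take $\mu\in\widehat{\omega}(\widehat{x})$; then $\widehat{\mathbf{T}}_{\mu}^{\widehat{x}}$ is infinite, so it contains some $k^{\prime\prime}\geq k$. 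Let $j$ be the unique element of $\{k,\dots,k+p-1\}$ with $k^{\prime\prime}=j+qp$ for some $q\in\mathbf{N}$; since every index $j+mp$ with $0\le m\le q-1$ is $\geq j\geq k\geq k^{\prime}$, the periodicity relation applies $q$ times and gives $\mu=\widehat{x}(k^{\prime\prime})=\widehat{x}(j)$, so $\mu$ belongs to the stated finite set.

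Part b) is the exact real-time transcription. For $\supseteq$: if $\xi\in[t,t+T)$ then $\xi\geq t^{\prime}$, so $x(\xi)=x(\xi+T)=x(\xi+2T)=\dots$ and $\{\xi,\xi+T,\xi+2T,\dots\}\subset\mathbf{T}_{x(\xi)}^{x}$ is unbounded from above, whence $x(\xi)\in\omega(x)$ by Theorem \ref{The12} a). For $\subseteq$: given $\mu\in\omega(x)$, the set $\mathbf{T}_{\mu}^{x}$ is unbounded from above, so there is $s\geq t$ with $x(s)=\mu$; writing $s=\xi+nT$ with $\xi\in[t,t+T)$ and $n\in\mathbf{N}$ (so $\xi=t+((s-t)\bmod T)$), all the intermediate points $\xi+mT$, $0\le m\le n-1$, are $\geq\xi\geq t\geq t^{\prime}$, so $n$ applications of $x(\eta)=x(\eta+T)$ give $\mu=x(s)=x(\xi)$ with $\xi\in[t,t+T)$.

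I do not expect a genuine obstacle here; the statement is essentially a corollary of Theorem \ref{The12}. The one point that needs care — and the only reason the statement is nontrivial — is that the conclusion is asserted for \emph{every} $k\geq k^{\prime}$ (resp. $t\geq t^{\prime}$), not just the prime limit of periodicity, so one must check that each backward or forward application of the periodicity relation keeps the argument $\geq k^{\prime}$ (resp. $\geq t^{\prime}$); the choice of $j$ (resp. $\xi$) inside the window $\{k,\dots,k+p-1\}$ (resp. $[t,t+T)$) is exactly what makes this work uniformly in $k$ (resp. $t$). If one prefers, the $\subseteq$ direction can alternatively be deduced from Theorem \ref{The140} (giving $\widehat{\omega}(\widehat{x})=\{\widehat{x}(i)\mid i\geq k\}$, resp. $\omega(x)=\{x(s)\mid s\geq t\}$, since $k\in\widehat{L}^{\widehat{x}}$, resp. $t\in L^{x}$ by Theorem \ref{The117}) followed by the same modulo-$p$ (resp. modulo-$T$) reduction, but the direct argument above is shorter.
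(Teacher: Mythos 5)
Your proof is correct. Both inclusions are established soundly: the forward iteration $\widehat{x}(i)=\widehat{x}(i+p)=\cdots$ makes $\widehat{\mathbf{T}}_{\widehat{x}(i)}^{\widehat{x}}$ infinite (and $\mathbf{T}_{x(\xi)}^{x}$ unbounded from above), which by Theorem \ref{The12} a) gives the inclusion $\supseteq$; and your modulo-$p$ (resp.\ modulo-$T$) reduction of an occurrence $k^{\prime\prime}\geq k$ (resp.\ $s\geq t$) into the window $\{k,\dots,k+p-1\}$ (resp.\ $[t,t+T)$), with the check that every intermediate index stays $\geq k^{\prime}$ (resp.\ $\geq t^{\prime}$), gives $\subseteq$. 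The paper reaches the same conclusion by a slightly different route: it first observes (via Theorem \ref{The140}) that $\widehat{\omega}(\widehat{x})=\{\widehat{x}(k)\mid k\geq k^{\prime}\}$, and for the inclusion $\subseteq$ it invokes the point-level accessibility result, Theorem \ref{Lem1}, which says that each eventually periodic point meets every window of length one period beyond the limit of periodicity; this requires the prior fact (Theorem \ref{The109}) that the eventual periodicity of the signal makes every omega limit point eventually periodic with the same period and limit of periodicity. Your argument unwinds that machinery into a direct computation with the defining relation $\widehat{x}(k)=\widehat{x}(k+p)$ (resp.\ $x(t)=x(t+T)$), so it is more self-contained and avoids the signal-to-point conversion, at the cost of redoing in place the window-reduction that Theorem \ref{Lem1} packages once and for all. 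Either way the content is the same, and your closing alternative via Theorems \ref{The140} and \ref{The117} is essentially the paper's own path.
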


\begin{proof}
a) We know from Theorem \ref{The140} that $\widehat{\omega}(\widehat
{x})=\{\widehat{x}(k)|k\geq k^{\prime}\}.$ Let $k\geq k^{\prime}$ and $\mu
\in\widehat{\omega}(\widehat{x})$ arbitrary, fixed. As $\mu~$is eventually
periodic with the period $p$, we have from Theorem \ref{Lem1}, page
\pageref{Lem1} that $\widehat{\mathbf{T}}_{\mu}^{\widehat{x}}\cap
\{k,k+1,...,k+p-1\}\neq\varnothing.$ We get the existence of $i\in
\widehat{\mathbf{T}}_{\mu}^{\widehat{x}}\cap\{k,k+1,...,k+p-1\}$ thus
$\mu=\widehat{x}(i).$ We have proved that $\widehat{\omega}(\widehat
{x})\subset\{\widehat{x}(i)|i\in\{k,k+1,...,k+p-1\}\}.$ The inverse inclusion
is obvious, since any eventually periodic value of $\widehat{x}$ is an omega
limit point.

b) Theorem \ref{The140} shows that $\omega(x)=\{x(t)|t\geq t^{\prime}\}.$ Let
us fix arbitrarily $t\geq t^{\prime}$ and $\mu\in\omega(x).$ As $\mu$ is
eventually periodic with the period $T,$ we infer fromTheorem \ref{Lem1} that
$\mathbf{T}_{\mu}^{x}\cap\lbrack t,t+T)\neq\varnothing$ and let $\xi
\in\mathbf{T}_{\mu}^{x}\cap\lbrack t,t+T)$ thus $\mu=x(\xi).$ We have shown
the inclusion $\omega(x)\subset\{x(\xi)|\xi\in\lbrack t,t+T)\}.$ The inclusion
$\{x(\xi)|\xi\in\lbrack t,t+T)\}\subset\omega(x)$ is obvious, since any point
of the left hand set is eventually periodic and omega limit.
\end{proof}

\begin{remark}
The previous Theorem states the property that, in the case of the eventually
periodic signals, all the omega limit points are accessible in a time interval
with the length of a period.
\end{remark}

\section{The limit of periodicity}

\begin{theorem}
\label{The141}a) $\widehat{x}\in\widehat{S}^{(n)},p\geq1,p^{\prime}%
\geq1,k^{\prime}\in\mathbf{N}_{\_},k^{\prime\prime}\in\mathbf{N}_{\_}$ are
given such that%
\begin{equation}
\forall k\geq k^{\prime},\widehat{x}(k)=\widehat{x}(k+p), \label{p247}%
\end{equation}%
\begin{equation}
\forall k\geq k^{\prime\prime},\widehat{x}(k)=\widehat{x}(k+p^{\prime})
\label{p248}%
\end{equation}
hold. We have%
\begin{equation}
\forall k\geq k^{\prime},\widehat{x}(k)=\widehat{x}(k+p^{\prime}).
\label{p249}%
\end{equation}

b) We consider the signal $x\in S^{(n)},$ together with $T>0,T^{\prime
}>0,t^{\prime}\in\mathbf{R},t^{\prime\prime}\in\mathbf{R}$ and we ask that%
\begin{equation}
\forall t\geq t^{\prime},x(t)=x(t+T),
\end{equation}%
\begin{equation}
\forall t\geq t^{\prime\prime},x(t)=x(t+T^{\prime})
\end{equation}
are fulfilled. Then%
\begin{equation}
\forall t\geq t^{\prime},x(t)=x(t+T^{\prime})
\end{equation}
is true.
\end{theorem}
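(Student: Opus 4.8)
The plan is to establish both parts by the same elementary device that appears in the proof of Theorem~\ref{The137}: starting from an arbitrary argument above the first limit of periodicity, move it upward by the period $p$ (resp.\ $T$) until it passes the second limit of periodicity $k''$ (resp.\ $t''$), apply there the period $p'$ (resp.\ $T'$) once, and then move back down by the period $p$ (resp.\ $T$). All the translations involved are by a positive amount, so every intermediate argument automatically stays above the first limit of periodicity and each application of a hypothesis is legitimate.

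For part a), fix an arbitrary $k\geq k'$. Since $p\geq 1$, pick $m\in\mathbf{N}$ so large that $k+mp\geq k''$; then also $k+p'+mp\geq k''$. The integers $k,k+p,...,k+(m-1)p$ are all $\geq k'$, so repeated use of (\ref{p247}) yields
\[
\widehat{x}(k)=\widehat{x}(k+p)=...=\widehat{x}(k+mp).
\]
Since $k+mp\geq k''$, (\ref{p248}) gives $\widehat{x}(k+mp)=\widehat{x}(k+mp+p')$. Finally the integers $k+p',k+p'+p,...,k+p'+(m-1)p$ are all $\geq k'$ (using $k\geq k'$, $p'\geq 1$, $p\geq 1$), so reading (\ref{p247}) at these arguments gives
\[
\widehat{x}(k+p')=\widehat{x}(k+p'+p)=...=\widehat{x}(k+p'+mp)=\widehat{x}(k+mp+p').
\]
Concatenating the three chains of equalities we obtain $\widehat{x}(k)=\widehat{x}(k+mp)=\widehat{x}(k+mp+p')=\widehat{x}(k+p')$, which is (\ref{p249}).

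For part b), fix an arbitrary $t\geq t'$. Since $T>0$, pick $m\in\mathbf{N}$ with $t+mT\geq t''$; then also $t+T'+mT\geq t''$. Applying the period $T$ to the arguments $t,t+T,...,t+(m-1)T$, all of which are $\geq t'$, gives $x(t)=x(t+mT)$. Applying the period $T'$ at the point $t+mT\geq t''$ gives $x(t+mT)=x(t+mT+T')$. Applying the period $T$ to the arguments $t+T',t+T'+T,...,t+T'+(m-1)T$, all $\geq t'$ because $t\geq t'$ and $T',T>0$, gives $x(t+T')=x(t+mT+T')$. Combining, $x(t)=x(t+T')$; since $t\geq t'$ was arbitrary, this is the asserted conclusion.

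There is essentially no hard step: the argument is a finite chain of substitutions, and the only thing to watch is that each substitution is applied to an argument that is $\geq k'$ or $\geq t'$ (for the hypotheses on period $p$, $T$) or $\geq k''$ or $\geq t''$ (for those on period $p'$, $T'$); this is immediate since all shifts are positive and we start from $k\geq k'$ or $t\geq t'$, so no case distinction on $k'$ versus $-1$, or on the relative sizes of $p$ and $p'$, is required. Alternatively, part a) could be derived by applying Theorem~\ref{The137}~a) to each $\mu\in\widehat{\omega}(\widehat{x})$ and then invoking the characterization of eventual periodicity of a signal in terms of its omega limit points (Theorem~\ref{The28}), and part b) similarly from Theorem~\ref{The137}~b); but the direct argument above is shorter and self-contained.
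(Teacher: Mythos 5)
Your proof is correct and follows essentially the same route as the paper's: shift the argument upward by multiples of $p$ (resp.\ $T$) until it exceeds $k''$ (resp.\ $t''$), apply the $p'$- (resp.\ $T'$-) periodicity there, and shift back down. The only cosmetic difference is that the paper splits into the cases $k'\geq k''$ and $k'<k''$, which you absorb by allowing $m=0$.
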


\begin{proof}
a) Let $k\geq k^{\prime}$ arbitrary, fixed. We have two possibilities.

Case $k^{\prime}\geq k^{\prime\prime}$

In this situation $k\geq k^{\prime\prime},$ thus we can write%
\[
\widehat{x}(k)\overset{(\ref{p248})}{=}\widehat{x}(k+p^{\prime}).
\]

Case $k^{\prime}<k^{\prime\prime}$

Let us take $k_{1}\in\mathbf{N}$ with the property that $k+k_{1}p\geq
k^{\prime\prime}.$ We can write:%
\[
\widehat{x}(k)\overset{(\ref{p247})}{=}\widehat{x}(k+k_{1}p)\overset
{(\ref{p248})}{=}\widehat{x}(k+k_{1}p+p^{\prime})\overset{(\ref{p247})}%
{=}\widehat{x}(k+p^{\prime}).
\]

\end{proof}

\begin{remark}
The previous Theorem states the fact that, if $\widehat{x},x$ are eventually
periodic, then $\widehat{L}^{\widehat{x}},L^{x}$ do not depend on the choice
of $p\in\widehat{P}^{\widehat{x}},T\in P^{x}.$
\end{remark}

\begin{theorem}
\label{The139}a) If $\widehat{x}$ is eventually periodic, then%
\[
\widehat{L}^{\widehat{x}}=\underset{\mu\in\widehat{\omega}(\widehat{x})}{%
{\displaystyle\bigcap}
}\widehat{L}_{\mu}^{\widehat{x}};
\]

b) if $x$ is eventually periodic, we have%
\[
L^{x}=\underset{\mu\in\omega(x)}{%
{\displaystyle\bigcap}
}L_{\mu}^{x}.
\]

\end{theorem}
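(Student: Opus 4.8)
The statement to prove is Theorem \ref{The139}: if $\widehat{x}$ (resp. $x$) is eventually periodic, then $\widehat{L}^{\widehat{x}}=\bigcap_{\mu\in\widehat{\omega}(\widehat{x})}\widehat{L}_{\mu}^{\widehat{x}}$ (resp. $L^{x}=\bigcap_{\mu\in\omega(x)}L_{\mu}^{x}$). The natural approach is a double inclusion, carried out in discrete time first and then transcribed to real time with the obvious modifications ($\mathbf{N}_{\_}$ becoming $\mathbf{R}$, the sets $\{k',k'+1,\dots\}$ becoming $[t',\infty)$, and periodic signal/point definitions from Definitions \ref{Def28}, \ref{Def19}).

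For the inclusion $\widehat{L}^{\widehat{x}}\subset\bigcap_{\mu}\widehat{L}_{\mu}^{\widehat{x}}$: fix $k'\in\widehat{L}^{\widehat{x}}$, so there is $p\geq1$ with $\forall k\geq k',\ \widehat{x}(k)=\widehat{x}(k+p)$. I need to show $k'\in\widehat{L}_{\mu}^{\widehat{x}}$ for every $\mu\in\widehat{\omega}(\widehat{x})$, i.e. that (\ref{pre738}) holds with this $k'$ and some period. Note that Theorem \ref{The28} tells us that the signal-level eventual periodicity statement (\ref{per163}) is equivalent to statement (\ref{per164}), which is exactly ``$\forall\mu\in\widehat{\omega}(\widehat{x})$, $\exists k'_\mu$, (\ref{pre154}) holds'' — but the limits of periodicity $k'_\mu$ there are a priori $\mu$-dependent. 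The key observation is that Theorem \ref{The137} says the set of limits of periodicity of a given point does not depend on the period, and more usefully: from $\forall k\geq k',\widehat{x}(k)=\widehat{x}(k+p)$ we directly get, for any $\mu\in\widehat{\omega}(\widehat{x})$ and any $k\in\widehat{\mathbf{T}}_{\mu}^{\widehat{x}}\cap\{k',k'+1,\dots\}$ and $z\in\mathbf{Z}$ with $k+zp\geq k'$, that $\widehat{x}(k+zp)=\widehat{x}(k)=\mu$ by iterating the relation $\widehat{x}(j)=\widehat{x}(j+p)$ upward/downward while staying $\geq k'$. Also $\widehat{\mathbf{T}}_{\mu}^{\widehat{x}}\cap\{k',k'+1,\dots\}\neq\varnothing$ since $\widehat{\mathbf{T}}_{\mu}^{\widehat{x}}$ is infinite ($\mu$ is an omega limit point). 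Hence (\ref{pre738}) holds with period $p$ and limit $k'$, so $k'\in\widehat{L}_{\mu}^{\widehat{x}}$ for all $\mu$.

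For the reverse inclusion $\bigcap_{\mu}\widehat{L}_{\mu}^{\widehat{x}}\subset\widehat{L}^{\widehat{x}}$: fix $k'\in\bigcap_{\mu\in\widehat{\omega}(\widehat{x})}\widehat{L}_{\mu}^{\widehat{x}}$. Write $\widehat{\omega}(\widehat{x})=\{\mu^{1},\dots,\mu^{s}\}$ (finite, by Theorem \ref{The12}). For each $i$ there is $p_i\geq1$ such that (\ref{pre738}) holds for $\mu^i$ with limit $k'$ and period $p_i$; by Corollary \ref{Cor6} (or Theorem \ref{The137}) we may replace each $p_i$ by the common period $p=p_1\cdots p_s$ (a common multiple), so (\ref{pre738}) holds for every $\mu^i$ with the \emph{same} limit $k'$ and the \emph{same} period $p$. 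Enlarge $k'$ if necessary (using Theorem \ref{The12_}/Theorem \ref{The140}, and that enlarging preserves (\ref{pre738}) by Lemma \ref{Lem30}) so that also $\widehat{\omega}(\widehat{x})=\{\widehat{x}(k)\mid k\geq k'\}$. Now for any $k\geq k'$, $\widehat{x}(k)=\mu^i$ for some $i$, so $k\in\widehat{\mathbf{T}}_{\mu^i}^{\widehat{x}}\cap\{k',k'+1,\dots\}$, and since $k+p\geq k'$, (\ref{pre738}) for $\mu^i$ gives $k+p\in\widehat{\mathbf{T}}_{\mu^i}^{\widehat{x}}$, i.e. $\widehat{x}(k+p)=\mu^i=\widehat{x}(k)$. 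Thus $\forall k\geq k',\ \widehat{x}(k)=\widehat{x}(k+p)$, so $k'\in\widehat{L}^{\widehat{x}}$. The real time part b) is identical after replacing $p$ by $T$ and using Theorem \ref{The28} b), Theorem \ref{The141}, Theorem \ref{The140}, and Lemma \ref{Lem30} in place of their discrete analogues; the one extra subtlety is the initial-time condition hidden in $L^x$ versus $\widehat{L}^{\widehat{x}}$, but since both Definition \ref{Def28} (eventual periodicity, not periodicity) and Definition \ref{Def19} only demand $t'\in\mathbf{R}$, no $I^x$ intersection enters here.

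\textbf{Main obstacle.} The only genuine difficulty is the passage from point-wise limits of periodicity with \emph{possibly different periods} $p_i$ to a \emph{single} limit $k'$ working for a \emph{single} period simultaneously valid for all omega limit points. This is precisely where Corollary \ref{Cor6} (multiples of a period are periods) and Theorem \ref{The137} (the limit-of-periodicity set is period-independent) do the work: they let me (i) keep the same limit $k'$ while switching to a common period, and (ii) guarantee $k'$ still lies in $\widehat{L}_{\mu^i}^{\widehat{x}}$ after the switch. Everything else — finiteness of $\widehat{\omega}(\widehat{x})$, infinitude of $\widehat{\mathbf{T}}_{\mu}^{\widehat{x}}$, enlarging $k'$ to also satisfy the accessibility statement $\widehat{\omega}(\widehat{x})=\{\widehat{x}(k)\mid k\geq k'\}$ — is routine bookkeeping using results already in the excerpt.
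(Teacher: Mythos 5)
Your overall strategy is the paper's: a double inclusion that converts between the signal-level statement $\forall k\geq k',\ \widehat{x}(k)=\widehat{x}(k+p)$ and the point-level statement (\ref{pre738}) held by \emph{all} omega limit points at the \emph{same} $k'$, exactly as in the two implications of Theorem \ref{The109}; your forward inclusion is correct as written. In the reverse inclusion, however, the step ``enlarge $k'$ if necessary so that $\widehat{\omega}(\widehat{x})=\{\widehat{x}(k)\mid k\geq k'\}$'' would sink the proof if the enlargement were ever actually performed: $\widehat{L}^{\widehat{x}}$ has the form $\{k_{0},k_{0}+1,k_{0}+2,\dots\}$ (Theorem \ref{The77}), so it is upward closed but not downward closed, and showing that some \emph{larger} instant belongs to $\widehat{L}^{\widehat{x}}$ says nothing about the original $k'\in\bigcap_{\mu}\widehat{L}_{\mu}^{\widehat{x}}$, which is what the inclusion demands. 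The repair is that no enlargement is ever needed: statement (\ref{p254}) of Theorem \ref{The140} gives $\widehat{\omega}(\widehat{x})=\{\widehat{x}(k)\mid k\geq k'\}$ for the original $k'$, precisely because $\bigcap_{\mu}\widehat{P}_{\mu}^{\widehat{x}}\supset\widehat{P}^{\widehat{x}}\neq\varnothing$. You cite the right theorem, but you must use it to \emph{delete} the enlargement, not to justify it; this accessibility fact is the real content hiding behind the reverse inclusion.

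The second problem is your device for producing a common period, $p=p_{1}\cdots p_{s}$. It works in discrete time but does not survive the passage to real time: for real $T_{1},\dots,T_{s}$ the product is not a common multiple (take $T_{1}=1/2$, $T_{2}=1/3$), and whether the sets $P_{\mu^{1}}^{x},\dots,P_{\mu^{s}}^{x}$ admit a common element at all is exactly the unresolved ``hypothesis $P$'' issue of Section \ref{Sec1} and Remark \ref{Rem23} — so part b) is not ``identical'' to part a) on this point. The correct move, and the one the paper makes implicitly, is to take the common period from the hypothesis itself: since $x$ is eventually periodic, pick $T\in P^{x}\subset P_{\mu^{1}}^{x}\cap\dots\cap P_{\mu^{s}}^{x}$ (Theorem \ref{The110}), and then invoke Theorem \ref{The137} b) to transfer the given common limit $t'$ from the individual periods $T_{i}$ to this single $T$. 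With these two repairs your argument coincides with the paper's proof, which is a terser version of the same plan.
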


\begin{proof}
a) The hypothesis states $\widehat{P}^{\widehat{x}}\neq\varnothing$ and let
$p\in\widehat{P}^{\widehat{x}}.$ We show that $\widehat{L}^{\widehat{x}%
}\subset\underset{\mu\in\widehat{\omega}(\widehat{x})}{%
{\displaystyle\bigcap}
}\widehat{L}_{\mu}^{\widehat{x}}$ and we take for this $k^{\prime}\in
\widehat{L}^{\widehat{x}}$ arbitrary, thus%
\begin{equation}
\forall k\geq k^{\prime},\widehat{x}(k)=\widehat{x}(k+p). \label{p303}%
\end{equation}
Starting from Theorem \ref{The109}, page \pageref{The109}, the proof of
(\ref{pre341})$_{page\;\pageref{pre341}}\Longrightarrow$(\ref{per164}%
)$_{page\;\pageref{per164}}$ it is shown that (\ref{p303}) implies
\begin{equation}
\left\{
\begin{array}
[c]{c}%
\forall\mu\in\widehat{\omega}(\widehat{x}),\forall k\in\widehat{\mathbf{T}%
}_{\mu}^{\widehat{x}}\cap\{k^{\prime},k^{\prime}+1,k^{\prime}+2,...\},\\
\{k+zp|z\in\mathbf{Z}\}\cap\{k^{\prime},k^{\prime}+1,k^{\prime}+2,...\}\subset
\widehat{\mathbf{T}}_{\mu}^{\widehat{x}},
\end{array}
\right.  \label{p304}%
\end{equation}
wherefrom we have that $k^{\prime}\in\underset{\mu\in\widehat{\omega}%
(\widehat{x})}{%
{\displaystyle\bigcap}
}\widehat{L}_{\mu}^{\widehat{x}}.$

We show that $\underset{\mu\in\widehat{\omega}(\widehat{x})}{%
{\displaystyle\bigcap}
}\widehat{L}_{\mu}^{\widehat{x}}\subset\widehat{L}^{\widehat{x}}$ and let for
this $k^{\prime}\underset{\mu\in\widehat{\omega}(\widehat{x})}{\in%
{\displaystyle\bigcap}
}\widehat{L}_{\mu}^{\widehat{x}},$ i.e. (\ref{p304}) holds. Starting from the
implication (\ref{per164})$_{page\;\pageref{per164}}\Longrightarrow
$(\ref{pre341})$_{page\;\pageref{pre341}}$ of Theorem \ref{The109}, page
\pageref{The109}, it is shown the truth of (\ref{p303}), in other words
$k^{\prime}\in\widehat{L}^{\widehat{x}}.$
\end{proof}

\begin{theorem}
\label{The77}a) Let $\widehat{x}\in\widehat{S}^{(n)}$ eventually periodic.
Then $k^{\prime}\in\mathbf{N}_{\_}$ exists with $\widehat{L}^{\widehat{x}%
}=\{k^{\prime},k^{\prime}+1,k^{\prime}+2,...\}.$

b) Let $x\in S^{(n)}$ be eventually periodic and not constant$.$ Then
$t^{\prime}\in\mathbf{R}$ exists such that $L^{x}=[t^{\prime},\infty).$
\end{theorem}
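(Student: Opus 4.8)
Proof proposal for Theorem \ref{The77}.

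The plan is to reduce both parts to the analogous statements about limits of periodicity of points, which have already been established. For part a), the signal $\widehat{x}$ is eventually periodic, so $\widehat{P}^{\widehat{x}}\neq\varnothing$ and hence $\widehat{L}^{\widehat{x}}\neq\varnothing$. By Theorem \ref{The139} a) we have $\widehat{L}^{\widehat{x}}=\underset{\mu\in\widehat{\omega}(\widehat{x})}{\bigcap}\widehat{L}_{\mu}^{\widehat{x}}$. Each $\mu\in\widehat{\omega}(\widehat{x})$ is then eventually periodic (with the common period $p$, say), so $\widehat{L}_{\mu}^{\widehat{x}}\neq\varnothing$ and Theorem \ref{The117} a) gives $k_{\mu}^{\prime}\in\mathbf{N}_{\_}$ with $\widehat{L}_{\mu}^{\widehat{x}}=\{k_{\mu}^{\prime},k_{\mu}^{\prime}+1,k_{\mu}^{\prime}+2,...\}$. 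Since $\widehat{\omega}(\widehat{x})$ is finite, the intersection of these finitely many upward-closed rays $\{k_{\mu}^{\prime},k_{\mu}^{\prime}+1,...\}$ is itself an upward-closed ray $\{k^{\prime},k^{\prime}+1,k^{\prime}+2,...\}$ with $k^{\prime}=\max\{k_{\mu}^{\prime}\mid\mu\in\widehat{\omega}(\widehat{x})\}$, which is the desired conclusion.

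For part b), the argument is parallel. Since $x$ is eventually periodic, $P^{x}\neq\varnothing$ and so $L^{x}\neq\varnothing$. Theorem \ref{The139} b) gives $L^{x}=\underset{\mu\in\omega(x)}{\bigcap}L_{\mu}^{x}$. Because $x$ is not constant, each $\mu\in\omega(x)$ is a point of a non-constant signal with $L_{\mu}^{x}\neq\varnothing$, so Theorem \ref{The117} b) applies and yields $t_{\mu}^{\prime}\in\mathbf{R}$ with $L_{\mu}^{x}=[t_{\mu}^{\prime},\infty)$. The set $\omega(x)$ is finite (it is contained in $\mathbf{B}^{n}$), so again the intersection of finitely many right-rays $[t_{\mu}^{\prime},\infty)$ is a right-ray $[t^{\prime},\infty)$ with $t^{\prime}=\max\{t_{\mu}^{\prime}\mid\mu\in\omega(x)\}$, giving $L^{x}=[t^{\prime},\infty)$.

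The only point requiring a little care is the applicability of Theorem \ref{The117}: its part b) hypothesis demands that $x$ be non-constant and that $L_{\mu}^{x}\neq\varnothing$ for the point $\mu$ under consideration. Non-constancy is exactly the standing hypothesis of Theorem \ref{The77} b), and $L_{\mu}^{x}\neq\varnothing$ follows because $\mu\in\omega(x)$ is eventually periodic as a consequence of $x$ being eventually periodic — concretely, if $p\in\widehat{P}^{\widehat{x}}$ (resp. $T\in P^{x}$) witnesses eventual periodicity of the signal, then by Theorems \ref{The109} and \ref{The28} every $\mu\in\omega(x)$ is eventually periodic with that same period, hence $L_{\mu}^{x}\neq\varnothing$. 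So the main (and very mild) obstacle is just bookkeeping: making sure each invoked theorem's side conditions are verified before taking the finite intersection. No genuinely hard step is involved; the substance has been done in Theorems \ref{The117} and \ref{The139}.
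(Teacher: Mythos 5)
Your argument is correct and is essentially the paper's own proof: it combines Theorem \ref{The139} (writing $\widehat{L}^{\widehat{x}}$, $L^{x}$ as the finite intersection of the $\widehat{L}_{\mu}^{\widehat{x}}$, $L_{\mu}^{x}$) with Theorem \ref{The117} (each of these is an upward-closed ray) and takes the maximum of the endpoints. The paper only writes out part a) and leaves b) to the reader, so your explicit verification of the side conditions in the real-time case is a welcome but not novel addition.
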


\begin{proof}
a) We put $\widehat{\omega}(\widehat{x})$ under the form $\widehat{\omega
}(\widehat{x})=\{\mu^{1},...,\mu^{s}\},s\geq1.$ Theorem \ref{The117}, page
\pageref{The117} shows the existence of $k_{i}^{\prime}\in\mathbf{N}_{\_}$
that fulfill%
\[
\widehat{L}_{\mu^{i}}^{\widehat{x}}=\{k_{i}^{\prime},k_{i}^{\prime}%
+1,k_{i}^{\prime}+2,...\},i=\overline{1,s}.
\]
We apply Theorem \ref{The139} and we get%
\[
\widehat{L}^{\widehat{x}}=\widehat{L}_{\mu^{1}}^{\widehat{x}}\cap
...\cap\widehat{L}_{\mu^{s}}^{\widehat{x}}=\{k^{\prime},k^{\prime}%
+1,k^{\prime}+2,...\},
\]
where $k^{\prime}=\max\{k_{1}^{\prime},...,k_{s}^{\prime}\}.$
\end{proof}

\section{A property of eventual constancy}

\begin{theorem}
\label{The20}We consider the signals $\widehat{x},x$.

a) If $k^{\prime}\in\mathbf{N}_{\_}$ exists making%
\begin{equation}
\forall k\geq k^{\prime},\widehat{x}(k)=\widehat{x}(k+p) \label{per343}%
\end{equation}
true for $p=1,$ then $\mu\in\widehat{\omega}(\widehat{x})$ exists with%
\begin{equation}
\forall k\geq k^{\prime},\widehat{x}(k)=\mu\label{per346}%
\end{equation}
fulfilled and in this case (\ref{per343}) holds for any $p\geq1.$

b) We suppose that
\begin{equation}%
\begin{array}
[c]{c}%
x(t)=x(-\infty+0)\cdot\chi_{(-\infty,t_{0})}(t)\oplus x(t_{0})\cdot
\chi_{\lbrack t_{0},t_{0}+h)}(t)\oplus...\\
...\oplus x(t_{0}+kh)\cdot\chi_{\lbrack t_{0}+kh,t_{0}+(k+1)h)}(t)\oplus...
\end{array}
\label{per487}%
\end{equation}
is true for $t_{0}\in\mathbf{R}$ and $h>0.$ If%
\begin{equation}
\forall t\geq t^{\prime},x(t)=x(t+T) \label{per344}%
\end{equation}
holds for $t^{\prime}\in\mathbf{R,}$ $T\in(0,h)\cup(h,2h)\cup...\cup
(qh,(q+1)h)\cup...$ then some $\mu\in\omega(x)$ exists such that%
\begin{equation}
\forall t\geq t^{\prime},x(t)=\mu\label{per347}%
\end{equation}
and in this case (\ref{per344}) is true for any $T>0.$

c) We presume that (\ref{per487}) takes the form%
\begin{equation}%
\begin{array}
[c]{c}%
x(t)=\widehat{x}(-1)\cdot\chi_{(-\infty,t_{0})}(t)\oplus\widehat{x}%
(0)\cdot\chi_{\lbrack t_{0},t_{0}+h)}(t)\oplus...\\
...\oplus\widehat{x}(k)\cdot\chi_{\lbrack t_{0}+kh,t_{0}+(k+1)h)}(t)\oplus...
\end{array}
\label{per345}%
\end{equation}
and let $\mu\in\widehat{\omega}(\widehat{x})=\omega(x)$ be an arbitrary point.

c.1) If $k^{\prime}\in\mathbf{N}_{\_}$ exists such that (\ref{per343}) is true
for $p=1,$ then (\ref{per346}) is fulfilled and $t^{\prime}\in\mathbf{R}$
exists also such that (\ref{per347}) is true. In this case (\ref{per343})
holds for any $p\geq1$ and (\ref{per344}) holds for any $T>0.$

c.2) If $t^{\prime}\in\mathbf{R,}$ $T\in(0,h)\cup(h,2h)\cup...\cup
(qh,(q+1)h)\cup...$ exist making (\ref{per344}) true, then $k^{\prime}%
\in\mathbf{N}_{\_}$ exists such that (\ref{per346}) holds and (\ref{per347})
holds too. Moreover, in this situation (\ref{per343}) is true for any $p\geq1$
and (\ref{per344}) is true for any $T>0.$
\end{theorem}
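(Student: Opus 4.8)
The plan is to prove Theorem \ref{The20} by reducing each part to the corresponding statement about eventual constancy established earlier, most directly Theorem \ref{The100} (the fourth group of eventual constancy properties) and Theorem \ref{The113} (discrete time vs. real time eventual constancy). The crucial observation is that \eqref{per343} with $p=1$ says literally that $\widehat\sigma^{k^{\prime}+1}(\widehat x)$ is constant; equivalently, $\widehat x(k^{\prime})=\widehat x(k^{\prime}+1)=\widehat x(k^{\prime}+2)=\cdots$, which is exactly the eventual constancy of $\widehat x$ with limit of constancy $k^{\prime}$, and with common value $\mu=\widehat x(k^{\prime})\in\widehat\omega(\widehat x)$ (the value is an omega limit point because its support set is then infinite, by Theorem \ref{The12} a)). That gives \eqref{per346}, and once $\widehat x$ is eventually constant, statement \eqref{per343} for all $p\geq1$ is immediate: if $\widehat x(k)=\mu$ for all $k\geq k^{\prime}$, then $\widehat x(k+p)=\mu=\widehat x(k)$ for every $k\geq k^{\prime}$ and every $p\geq1$. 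So part a) is short.

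For part b) I would first note that \eqref{per487} is precisely the hypothesis needed to apply Theorem \ref{The19} b), page \pageref{The19}, in the guise where the point $\mu$ is now unspecified. Actually the cleanest route: \eqref{per344} holding for some $t^{\prime}$ and some $T$ which is not an integer multiple of $h$ is exactly a periodicity hypothesis with a ``forbidden'' period relative to the step grid; applying the argument of Theorem \ref{The19} b) to each $\mu\in\omega(x)$ (every $\mu\in\omega(x)$ is hit at arbitrarily large times, so the non-triviality $\mathbf T_{\mu}^{x}\cap[t^{\prime},\infty)\neq\varnothing$ holds) forces $\omega(x)=\{\mu\}$ for a single $\mu$ and $x(t)=\mu$ for all $t\geq t^{\prime}$. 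Alternatively, and perhaps more economically, I can invoke Theorem \ref{The100} b): \eqref{per344} for a fixed $T$ gives property \eqref{pre601} for that $T$; but to get eventual constancy from Theorem \ref{The100} I would need \eqref{pre601} for \emph{all} $T>0$, which I do not yet have — so the grid argument of Theorem \ref{The19} b) really is the engine here, and I would cite it. Once \eqref{per347} is established, eventual constancy of $x$ with limit $t^{\prime}$ immediately yields \eqref{per344} for every $T>0$, since $x(t+T)=\mu=x(t)$ for all $t\geq t^{\prime}$.

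Part c) is then a matter of transport between discrete and real time under the hypothesis \eqref{per345}, which is the special uniform-step sampling \eqref{per371} used in Theorem \ref{The19} c) and Theorem \ref{The24}. For c.1): apply part a) to get \eqref{per346} and eventual constancy of $\widehat x$ with limit $k^{\prime}$; under \eqref{per345} the signal $x$ is then eventually constant too by Theorem \ref{The113} (or Theorem \ref{The19} c.1)), with $t^{\prime}=t_{0}+k^{\prime}h$ and the same value $\mu$ (here $\widehat\omega(\widehat x)=\omega(x)$ by Theorem \ref{The114} a)), giving \eqref{per347}; then \eqref{per343} holds for all $p\geq1$ (from part a)) and \eqref{per344} holds for all $T>0$ (from part b), or directly from constancy of $x$ beyond $t^{\prime}$). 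For c.2): apply part b) to get \eqref{per347} and eventual constancy of $x$; by Theorem \ref{The113} this is equivalent to eventual constancy of $\widehat x$, which gives some $k^{\prime}\in\mathbf N_{\_}$ with \eqref{per346}; then \eqref{per343} for all $p\geq1$ follows from part a) and \eqref{per344} for all $T>0$ follows from part b).

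The main obstacle I anticipate is part b): one must resist the temptation to apply Theorem \ref{The100} b) directly, because that theorem characterizes eventual constancy only when the periodicity-type property holds for \emph{every} period, whereas here it is handed to us for a single, specially chosen $T\notin\{h,2h,\dots\}$. The real content is the step-grid incompatibility argument already carried out in Theorem \ref{The19} b), and the work is to observe that that argument does not actually use a preselected $\mu$ — it produces the single surviving value of $\omega(x)$ — so it applies verbatim here. Everything else is bookkeeping: tracking that the common value lies in the omega limit set, that the non-triviality conditions $\mathbf T_{\mu}^{x}\cap[t^{\prime},\infty)\neq\varnothing$ and $\widehat{\mathbf T}_{\mu}^{\widehat x}\cap\{k^{\prime},k^{\prime}+1,\dots\}\neq\varnothing$ hold because omega limit support sets are infinite/unbounded, and that the equivalences of Theorems \ref{The113} and \ref{The114} let one pass freely between $\widehat x$ and $x$ under \eqref{per345}.
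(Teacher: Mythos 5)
Your proposal is correct, and parts a) and c) follow the paper's own lines (the paper in fact dispatches c) silently as a consequence of a), b) and the discrete/real-time correspondence). Part b), however, takes a genuinely different route. The paper proves b) directly: assuming \eqref{per347} fails, it locates the first grid point $t_{0}^{\prime}=t_{0}+k_{0}h$ where $x$ leaves the value $\mu=x(t^{\prime})$, chooses $t^{\prime\prime}\in(\max\{t_{0}^{\prime}-T,t^{\prime}\},t_{0}^{\prime})$ so that $t^{\prime\prime}+T$ and $t_{0}^{\prime}+T$ land in the same step interval, and derives $\mu=x(t^{\prime\prime})=x(t^{\prime\prime}+T)=x(t_{0}^{\prime}+T)=x(t_{0}^{\prime})\neq\mu$, with a parallel case for $T\in(qh,(q+1)h)$, $q\geq1$ — a fresh, self-contained grid-incompatibility argument. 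You instead reduce b) to Theorem \ref{The19} b), which already contains the (longer) grid argument at the level of a single point $\mu\in\omega(x)$. That reduction is legitimate and avoids re-proving the same incompatibility; it is essentially the link the paper itself points out afterwards in the ``Discussion on eventual constancy'' section but does not exploit in the proof. Two small things you should make explicit for the reduction to be airtight: first, the passage from the signal-level hypothesis \eqref{per344} to the point-level hypothesis \eqref{per373} of Theorem \ref{The19} b) for a given $\mu\in\omega(x)$ is the routine three-case iteration ($z>0$ by forward steps from $t$, $z<0$ by forward steps from $t+zT\geq t^{\prime}$), which deserves a line; second, Theorem \ref{The19} b) is applied to one preselected $\mu\in\omega(x)$ and concludes $x(t)=\mu$ for all $t\geq t^{\prime}$ — applying it to a single $\mu$ (which exists since $\omega(x)\neq\varnothing$ by Theorem \ref{The12} b)) already yields \eqref{per347}, and the fact that $\omega(x)$ collapses to a singleton is a consequence rather than something the argument ``produces'' along the way, so your phrasing there should be tightened. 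With those two clarifications the proof is complete.
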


\begin{proof}
a) Let $k^{\prime}\in\mathbf{N}_{\_}$ be with the property that (\ref{per343})
holds for $p=1,$ i.e.
\begin{equation}
\forall k\geq k^{\prime},\widehat{x}(k)=\widehat{x}(k^{\prime}).
\label{per433}%
\end{equation}
We denote $\widehat{x}(k^{\prime})$ with $\mu$ and this obviously implies that
$\mu\in\widehat{\omega}(\widehat{x}).$ Equation (\ref{per433}) may be
rewritten under the form (\ref{per346}) and%
\[
\forall k\geq k^{\prime},\widehat{x}(k)=\mu=\widehat{x}(k+p)
\]
holds for any $p\geq1.$

b) The hypothesis states the existence of $t_{0}\in\mathbf{R},h>0$ such that
(\ref{per487}) holds and also the existence of $t^{\prime}\in\mathbf{R}$ and
$T\in(0,h)\cup(h,2h)\cup...\cup(qh,(q+1)h)\cup...$ such that (\ref{per344})
holds. We denote $x(t^{\prime})$ with $\mu.$

Let $T\in(0,h)$ be arbitrary. If, against all reason, $x$ does not fulfill
(\ref{per347}), the time instant $t_{0}^{\prime}>t^{\prime}$ exists such that%
\begin{equation}
\forall t\in\lbrack t^{\prime},t_{0}^{\prime}),x(t)=\mu, \label{per326_}%
\end{equation}%
\begin{equation}
x(t_{0}^{\prime})\neq\mu. \label{per327_}%
\end{equation}
Since obviously $t_{0}^{\prime}\geq t_{0},$ we have the existence of $k_{0}%
\in\mathbf{N}$ such that $t_{0}^{\prime}\in\lbrack t_{0}+k_{0}h,t_{0}%
+(k_{0}+1)h).$ As $\forall t\in\lbrack t_{0}+k_{0}h,t_{0}+(k_{0}+1)h),$
$x(t)=x(t_{0}+k_{0}h),$ we get $t_{0}^{\prime}=t_{0}+k_{0}h.$ With the
notation $\widetilde{t}=\max\{t_{0}^{\prime}-T,t^{\prime}\},$ we infer
$\widetilde{t}<t_{0}^{\prime}$ and for any $t^{\prime\prime}\in(\widetilde
{t},t_{0}^{\prime}),$ we have%
\begin{equation}
t^{\prime}<t^{\prime\prime}<t_{0}^{\prime}<t^{\prime\prime}+T<t_{0}^{\prime
}+T<t_{0}^{\prime}+h. \label{per743}%
\end{equation}
We deduce%
\begin{equation}
x(t^{\prime\prime}+T)=x(t_{0}^{\prime}+T), \label{per328_}%
\end{equation}
as far as both previous terms are equal with $x(t_{0}^{\prime}),$ and%
\[
\mu\overset{(\ref{per326_}),(\ref{per743})}{=}x(t^{\prime\prime}%
)\overset{(\ref{per344}),(\ref{per743})}{=}x(t^{\prime\prime}+T)\overset
{(\ref{per328_})}{=}x(t_{0}^{\prime}+T)\overset{(\ref{per344}),(\ref{per743}%
)}{=}x(t_{0}^{\prime})\overset{(\ref{per327_})}{\neq}\mu,
\]
contradiction showing that a $t_{0}^{\prime}$ that makes true (\ref{per326_}),
(\ref{per327_}) does not exist.

The case when for $q\geq1,$ we have that $T\in(qh,(q+1)h)$ is similar with the
previous one. (\ref{per344}) continues to be true for some $t^{\prime}%
\in\mathbf{R}$ and if, against all reason, $x$ does not fulfill (\ref{per347}%
), we get that $t_{q}^{\prime}>t^{\prime}$ exists with
\begin{equation}
\forall t\in\lbrack t^{\prime},t_{q}^{\prime}),x(t)=\mu, \label{per329_}%
\end{equation}%
\begin{equation}
x(t_{q}^{\prime})\neq\mu. \label{per330_}%
\end{equation}
Thus $k_{q}\in\mathbf{N}$ exists such that $t_{q}^{\prime}\in\lbrack
t_{0}+k_{q}h,t_{0}+(k_{q}+1)h)$ and, from the fact that $\forall t\in\lbrack
t_{0}+k_{q}h,t_{0}+(k_{q}+1)h),$ we get $x(t)=x(t_{0}+k_{q}h),$ the conclusion
is $t_{q}^{\prime}=t_{0}+k_{q}h.$ With the notation $\widetilde{t}=\max
\{t_{q}^{\prime}+qh-T,t^{\prime}\},$ we obtain $\widetilde{t}<t_{q}^{\prime}$
and for any $t^{\prime\prime}\in(\widetilde{t},t_{q}^{\prime})$ we have%
\begin{equation}
t^{\prime}<t^{\prime\prime}<t_{q}^{\prime}<t_{q}^{\prime}+qh<t^{\prime\prime
}+T<t_{q}^{\prime}+T<t_{q}^{\prime}+(q+1)h. \label{per744}%
\end{equation}
We infer%
\begin{equation}
x(t^{\prime\prime}+T)=x(t_{q}^{\prime}+T), \label{per331_}%
\end{equation}
because both previous terms are equal with $x(t_{q}^{\prime}+qh)$ and%
\[
\mu\overset{(\ref{per329_}),(\ref{per744})}{=}x(t^{\prime\prime}%
)\overset{(\ref{per344}),(\ref{per744})}{=}x(t^{\prime\prime}+T)\overset
{(\ref{per331_})}{=}x(t_{q}^{\prime}+T)\overset{(\ref{per344}),(\ref{per744}%
)}{=}x(t_{q}^{\prime})\overset{(\ref{per330_})}{\neq}\mu
\]
contradiction, in other words a $t_{q}^{\prime}\in\mathbf{R}$ that makes
(\ref{per329_}), (\ref{per330_}) true does not exist. Thus $x$ fulfills
(\ref{per347}) and in such circumstances (\ref{per344}) is true for any $T>0.$
\end{proof}

\section{Discussion on eventual constancy}

\begin{remark}
The point is that Theorem \ref{The19}, page \pageref{The19} and Theorem
\ref{The20}, page \pageref{The20} express the same idea, meaning that in the
situation when $\widehat{x},x$ are related by%
\[%
\begin{array}
[c]{c}%
x(t)=\widehat{x}(-1)\cdot\chi_{(-\infty,t_{0})}(t)\oplus\widehat{x}%
(0)\cdot\chi_{\lbrack t_{0},t_{0}+h)}(t)\oplus...\\
...\oplus\widehat{x}(k)\cdot\chi_{\lbrack t_{0}+kh,t_{0}+(k+1)h)}(t)\oplus...
\end{array}
\]
any of a)%
\begin{equation}
\left\{
\begin{array}
[c]{c}%
\forall\mu\in\widehat{\omega}(\widehat{x}),\forall k\in\widehat{\mathbf{T}%
}_{\mu}^{\widehat{x}}\cap\{k^{\prime},k^{\prime}+1,k^{\prime}+2,...\},\\
\{k+zp|z\in\mathbf{Z}\}\cap\{k^{\prime},k^{\prime}+1,k^{\prime}+2,...\}\subset
\widehat{\mathbf{T}}_{\mu}^{\widehat{x}},
\end{array}
\right.  \label{p97}%
\end{equation}
or%
\begin{equation}
\forall k\geq k^{\prime},\widehat{x}(k)=\widehat{x}(k+p) \label{p98}%
\end{equation}
true for $p=1$ and some $k^{\prime}\in\mathbf{N}_{\_},$

b)%
\begin{equation}
\forall\mu\in\omega(x),\forall t\in\mathbf{T}_{\mu}^{x}\cap\lbrack t^{\prime
},\infty),\{t+zT|z\in\mathbf{Z}\}\cap\lbrack t^{\prime},\infty)\subset
\mathbf{T}_{\mu}^{x}, \label{p99}%
\end{equation}
or%
\begin{equation}
\forall t\geq t^{\prime},x(t)=x(t+T) \label{p100}%
\end{equation}
true for $T\in(0,h)\cup(h,2h)\cup...\cup(qh,(q+1)h)\cup...$ and some
$t^{\prime}\in\mathbf{R}$

implies the truth of%
\begin{equation}
\forall k\geq k^{\prime},\widehat{x}(k)=\mu, \label{p101}%
\end{equation}%
\begin{equation}
\forall t\geq t^{\prime},x(t)=\mu\label{p102}%
\end{equation}
meaning in particular that $\widehat{x},x$ are eventually equal with the same
constant $\mu$. However Theorem \ref{The109}, page \pageref{The109} states the
equivalence, for any $p\geq1,k^{\prime}\in\mathbf{N}_{\_}$ between (\ref{p97})
and (\ref{p98}) and also the equivalence, for any $T>0,t^{\prime}\in
\mathbf{R}$ between (\ref{p99}) and (\ref{p100}), thus the fact that Theorems
\ref{The19} and \ref{The20} give the same conclusion is natural.
\end{remark}

\section{Discrete time vs real time}

\begin{theorem}
\label{The21}We suppose that $\widehat{x},x$ are related by%
\begin{equation}%
\begin{array}
[c]{c}%
x(t)=\widehat{x}(-1)\cdot\chi_{(-\infty,t_{0})}(t)\oplus\widehat{x}%
(0)\cdot\chi_{\lbrack t_{0},t_{0}+h)}(t)\oplus...\\
...\oplus\widehat{x}(k)\cdot\chi_{\lbrack t_{0}+kh,t_{0}+(k+1)h)}(t)\oplus...
\end{array}
\label{per106}%
\end{equation}
where $t_{0}\in\mathbf{R},h>0.$ The existence of $p\geq1$ and $k^{\prime}%
\in\mathbf{N}_{\_}$ such that%
\begin{equation}
\forall k\geq k^{\prime},\widehat{x}(k)=\widehat{x}(k+p), \label{per563}%
\end{equation}
implies the existence of $t^{\prime}\in\mathbf{R}$ such that%
\begin{equation}
\forall t\geq t^{\prime},x(t)=x(t+T) \label{per565}%
\end{equation}
is true for $T=ph.$
\end{theorem}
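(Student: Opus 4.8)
The plan is to verify the defining property of eventual periodicity of $x$ directly from the explicit correspondence (\ref{per106}). Recalling that (\ref{per106}) means $x(t)=\widehat{x}(k)$ whenever $t\in[t_{0}+kh,t_{0}+(k+1)h)$ with $k\geq0$, and $x(t)=\widehat{x}(-1)$ whenever $t<t_{0}$, I would set $T=ph$ and $t^{\prime}=t_{0}+k^{\prime}h$ (so that $t^{\prime}=t_{0}-h$ in the boundary case $k^{\prime}=-1$), and then show $\forall t\geq t^{\prime},x(t)=x(t+T)$.

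Fix $t\geq t^{\prime}$. The first step is to locate $t$ inside the partition. If $t\geq t_{0}$, there is a unique $k\geq0$ with $t\in[t_{0}+kh,t_{0}+(k+1)h)$; from $t_{0}+k^{\prime}h\leq t<t_{0}+(k+1)h$ one gets $k^{\prime}\leq k$. Then $t+T=t+ph\in[t_{0}+(k+p)h,t_{0}+(k+p+1)h)$ with $k+p\geq0$, so by (\ref{per106}) and (\ref{per563}) (applicable since $k\geq k^{\prime}$) one has $x(t+T)=\widehat{x}(k+p)=\widehat{x}(k)=x(t)$. The remaining possibility is $t<t_{0}$, which can occur only when $k^{\prime}=-1$, i.e. $t\in[t_{0}-h,t_{0})$; here $x(t)=\widehat{x}(-1)$, while $t+T\in[t_{0}+(p-1)h,t_{0}+ph)$ with $p-1\geq0$, so $x(t+T)=\widehat{x}(p-1)=\widehat{x}(-1+p)=\widehat{x}(-1)=x(t)$, using (\ref{per563}) with $k=-1\geq k^{\prime}$. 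In both cases (\ref{per565}) holds, which completes the argument.

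There is no genuinely hard step: the content is the elementary observation that translating the real time argument by $ph$ corresponds to translating the discrete index by $p$, because the partition is uniform with mesh $h$. The only point requiring attention is the bookkeeping at the left end --- handling the index $-1$ region $(-\infty,t_{0})$ and the boundary case $k^{\prime}=-1$ --- which is why I single out the two cases above; this is exactly the one-directional counterpart, for eventual periodicity, of the equivalence recorded in Theorem \ref{The113} (and ultimately of Theorem \ref{The92}).
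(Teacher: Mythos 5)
Your proof is correct and follows essentially the same route as the paper: the same choice $T=ph$, $t^{\prime}=t_{0}+k^{\prime}h$, and the same index-translation argument $x(t)=\widehat{x}(k)=\widehat{x}(k+p)=x(t+T)$. The only difference is that you treat the boundary case $k^{\prime}=-1$ (the region $t\in[t_{0}-h,t_{0})$) explicitly, whereas the paper absorbs it into the general statement ``some $k\geq k^{\prime}$ exists with $t\in[t_{0}+kh,t_{0}+(k+1)h)$''; this is a matter of bookkeeping, not of substance.
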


\begin{proof}
The equation (\ref{per106}) is true for some $t_{0}\in\mathbf{R},h>0$ and
$p\geq1,k^{\prime}\in\mathbf{N}_{\_}$ exist having the property that
(\ref{per563}) holds. We use the notations $T=ph,t^{\prime}=t_{0}+k^{\prime}h$
and let $t\geq t^{\prime}$ be arbitrary, fixed. Some $k\geq k^{\prime}$ exists
with the property $t\in\lbrack t_{0}+kh,t_{0}+(k+1)h),$ wherefrom
$t+T\in\lbrack t_{0}+(k+p)h,t_{0}+(k+1+p)h)$ and we finally infer that%
\[
x(t)=\widehat{x}(k)\overset{(\ref{per563})}{=}\widehat{x}(k+p)=x(t+T).
\]
Because $t\geq t^{\prime}$ was arbitrarily chosen, we have inferred the truth
of (\ref{per565}).
\end{proof}

\begin{theorem}
\label{The118}If $\widehat{x},x$ are not eventually constant, (\ref{per106})
holds for $t_{0}\in\mathbf{R},h>0$ and $T>0,t^{\prime}\in\mathbf{R}$ exist
such that $x$ fulfills (\ref{per565}), then $\frac{T}{h}\in\{1,2,3,...\}$ and
$k^{\prime}\in\mathbf{N}_{\_}$ exists such that (\ref{per563}) is true for
$p=\frac{T}{h}.$
\end{theorem}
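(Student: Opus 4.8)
\textbf{Proof proposal for Theorem \ref{The118}.}

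The plan is to argue by contradiction on the claim $\frac{T}{h}\in\{1,2,3,\dots\}$, then compute $p$ and verify the discrete periodicity. First I would observe that under the hypothesis (\ref{per106}), the real time signal $x$ is piecewise constant on the intervals $[t_{0}+kh,t_{0}+(k+1)h)$, so it is of the form required in Theorem \ref{The19} b), page \pageref{The19} with $t_{0}$ and $h$ as given. Now suppose, against the conclusion, that $\frac{T}{h}$ is not a positive integer, i.e. $T\in(0,h)\cup(h,2h)\cup\dots\cup(qh,(q+1)h)\cup\dots$. Since $x$ is eventually periodic with period $T$ and limit of periodicity $t^{\prime}$, pick any $\mu\in\omega(x)$; by Theorem \ref{The28} (the equivalence of the eventual periodicity of $x$ with the eventual periodicity of all omega limit points) and Theorem \ref{Lem1}, the hypothesis (\ref{per565}) yields that (\ref{per373}) holds for this $\mu$ with period $T$ and some limit of periodicity. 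Then Theorem \ref{The19} b) forces $\lim_{t\rightarrow\infty}x(t)=\mu$, i.e. $x$ is eventually constant. By Theorem \ref{The113}, page \pageref{The113}, the eventual constancy of $x$ is equivalent to that of $\widehat{x}$, so $\widehat{x}$ is eventually constant too — contradicting the hypothesis that neither $\widehat{x}$ nor $x$ is eventually constant. Hence $\frac{T}{h}\in\{1,2,3,\dots\}$, and we set $p=\frac{T}{h}$, $p\geq1$.

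Next I would recover the discrete limit of periodicity. Using Lemma \ref{Lem30}, page \pageref{Lem30}, the validity of (\ref{per565}) for $t^{\prime}$ implies its validity for any larger value, so without loss of generality we may take $t^{\prime}\geq t_{0}-h$. Let $k^{\prime}\in\mathbf{N}_{\_}$ be the unique integer with $t^{\prime}\in[t_{0}+k^{\prime}h,t_{0}+(k^{\prime}+1)h)$. The claim is then that $\forall k\geq k^{\prime},\ \widehat{x}(k)=\widehat{x}(k+p)$. Fix $k\geq k^{\prime}$. The point $t^{\prime}+(k-k^{\prime})h$ lies in $[t_{0}+kh,t_{0}+(k+1)h)$ (since $t_{0}+kh\leq t^{\prime}+(k-k^{\prime})h<t_{0}+(k+1)h$, because $t_{0}\leq t^{\prime}-k^{\prime}h<t_{0}+h$), and it satisfies $t^{\prime}+(k-k^{\prime})h\geq t^{\prime}$; likewise $t^{\prime}+(k-k^{\prime})h+T=t^{\prime}+(k-k^{\prime}+p)h$ lies in $[t_{0}+(k+p)h,t_{0}+(k+p+1)h)$ and is $\geq t^{\prime}$. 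Therefore
\[
\widehat{x}(k)=x(t^{\prime}+(k-k^{\prime})h)\overset{(\ref{per565})}{=}x(t^{\prime}+(k-k^{\prime})h+T)=x(t^{\prime}+(k-k^{\prime}+p)h)=\widehat{x}(k+p),
\]
using the piecewise-constant form (\ref{per106}) at both ends and the period-$T$ relation in the middle. This establishes (\ref{per563}) for $p=\frac{T}{h}$.

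The main obstacle — and the only genuinely substantive step — is the first one: ruling out a non-integer ratio $\frac{T}{h}$. That requires invoking the rather heavy Theorem \ref{The19} b), whose proof is the long case analysis on $T\in(qh,(q+1)h)$ showing that a period incommensurable with the sampling grid $h$ forces eventual constancy. Everything else is bookkeeping with the interval decomposition $[t_{0}+kh,t_{0}+(k+1)h)$ and the shift lemma \ref{Lem30}. A parallel and slightly slicker route, which I would mention as an alternative, is to note that Theorem \ref{The24} b), page \pageref{The24}, already proves exactly this implication at the level of a single eventually periodic point $\mu$; combining it with Theorem \ref{The28} (to pass from all omega limit points to the signal) would give the result with the non-integer case again handled by Theorem \ref{The19} b). Either way, the dichotomy ``$T$ lands on the grid or $x$ is eventually constant'' is the crux, and the contradiction with the non-eventual-constancy hypothesis closes it.
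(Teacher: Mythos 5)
Your proposal is correct and follows essentially the same route as the paper: the dichotomy on whether $T/h$ is an integer, a contradiction with non-eventual-constancy when it is not, and then the grid bookkeeping $t'+(k-k')h\in[t_{0}+kh,t_{0}+(k+1)h)$ to transfer the period. The only deviation is that for the constancy step you detour through the point-level Theorem \ref{The19} b) (via Theorem \ref{The28}/Theorem \ref{The109}), whereas the paper applies the signal-level Theorem \ref{The20} b) directly to the hypothesis (\ref{per565}); the paper itself observes that these two theorems express the same fact, so this is a cosmetic difference.
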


\begin{proof}
Some $t_{0}\in R,h>0$ exist with (\ref{per106}) fulfilled and $T>0,t^{\prime
}\in\mathbf{R}$ exist also with (\ref{per565}) true. If in (\ref{per565}) we
have $T\in(0,h)\cup(h,2h)\cup...\cup(qh,(q+1)h)\cup...$ then, from Theorem
\ref{The20}, page \pageref{The20}, $\mu\in\widehat{\omega}(\widehat{x}%
)=\omega(x)$ and $k^{\prime}\in\mathbf{N}_{\_}$ exist such that $\forall k\geq
k^{\prime},\widehat{x}(k)=\mu$ resulting a contradiction with the hypothesis,
stating that $\widehat{x},x$ are not eventually constant. We suppose from now
that $T\in\{h,2h,3h,...\}.$ We denote $p=\frac{T}{h},$ $p\geq1.$ As far as for
any $t^{\prime\prime}\geq t^{\prime}$ we have%
\begin{equation}
\forall t\geq t^{\prime\prime},x(t)=x(t+T), \label{per443}%
\end{equation}
we can suppose without loosing the generality the existence of $k^{\prime}%
\in\mathbf{N}_{\_}$ with $t^{\prime\prime}=t_{0}+k^{\prime}h.$ In this
situation for any $k\geq k^{\prime}$ and any $t\geq t^{\prime\prime}$ with
$t\in\lbrack t_{0}+kh,t_{0}+(k+1)h)$ we have%
\[
t+T\in\lbrack t_{0}+kh+ph,t_{0}+(k+1)h+ph)=[t_{0}+(k+p)h,t_{0}+(k+1+p)h)
\]
and we can write%
\[
\widehat{x}(k)=x(t)\overset{(\ref{per443})}{=}x(t+T)=\widehat{x}(k+p),
\]
thus (\ref{per563}) is true.
\end{proof}

\begin{example}
We define $\widehat{x}\in\widehat{S}^{(1)}$ by
\[
\forall k\in\mathbf{N}_{\_},\widehat{x}(k)=\left\{
\begin{array}
[c]{c}%
1,if\;k\in\{-1,2,4,6,8,...\}\\
0,otherwise
\end{array}
\right.
\]
and $x\in S^{(1)}$ respectively by%
\[
x(t)=\widehat{x}(-1)\cdot\chi_{(-\infty,-4)}(t)\oplus\widehat{x}(0)\cdot
\chi_{\lbrack-4,-2)}(t)\oplus
\]%
\[
\oplus\widehat{x}(1)\cdot\chi_{\lbrack-2,0)}(t)\oplus\widehat{x}(2)\cdot
\chi_{\lbrack0,2)}(t)\oplus...
\]
We have%
\[
\forall t\geq-2,x(t)=x(t+4),
\]%
\[
\forall k\geq1,\widehat{x}(k)=\widehat{x}(k+2)
\]
thus (\ref{per565}) is fulfilled with $T=4,t^{\prime}=-2$ and (\ref{per563})
is true with $p=2,k^{\prime}=1.$ Furthermore, in this example $h=2.$
\end{example}

\section{Sums, differences and multiples of periods}

\begin{theorem}
\label{The79}Let the signals $\widehat{x},x.$

a) We suppose that $\widehat{x}$ has the periods $p,p^{\prime}\geq1$ and the
limit of periodicity $k^{\prime}\in\mathbf{N}_{\_}:$%
\begin{equation}
\forall k\geq k^{\prime},\widehat{x}(k)=\widehat{x}(k+p), \label{p239}%
\end{equation}%
\begin{equation}
\forall k\geq k^{\prime},\widehat{x}(k)=\widehat{x}(k+p^{\prime}).
\label{p240}%
\end{equation}
Then $p+p^{\prime}\geq1$, $\widehat{x}$ has the period $p+p^{\prime}$ and the
limit of periodicity $k^{\prime}$%
\begin{equation}
\forall k\geq k^{\prime},\widehat{x}(k)=\widehat{x}(k+p+p^{\prime})
\label{p241}%
\end{equation}
and if $p>p^{\prime},$ then $p-p^{\prime}\geq1,$ $\widehat{x}$ has the period
$p-p^{\prime}$ and the limit of periodicity $k^{\prime}$%
\begin{equation}
\forall k\geq k^{\prime},\widehat{x}(k)=\widehat{x}(k+p-p^{\prime}).
\label{p242}%
\end{equation}

b) Let $T,T^{\prime}>0,$ $t^{\prime}\in\mathbf{R}$ be arbitrary with%
\begin{equation}
\forall t\geq t^{\prime},x(t)=x(t+T), \label{p243}%
\end{equation}%
\begin{equation}
\forall t\geq t^{\prime},x(t)=x(t+T^{\prime}) \label{p244}%
\end{equation}
fulfilled. We have on one hand that $T+T^{\prime}>0$ and%
\begin{equation}
\forall t\geq t^{\prime},x(t)=x(t+T+T^{\prime}), \label{p245}%
\end{equation}
and on the other hand that $T>T^{\prime}$ implies $T-T^{\prime}>0$ and%
\begin{equation}
\forall t\geq t^{\prime},x(t)=x(t+T-T^{\prime}). \label{p246}%
\end{equation}

\end{theorem}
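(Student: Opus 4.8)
The plan is to prove all four implications --- the two in part a) and the two in part b) --- by direct index chasing, applying the two hypotheses in succession. Since the discrete time and real time arguments are literally the same once one writes $t,T,T',t'$ in place of $k,p,p',k'$, I would carry out part a) in full and then simply remark that part b) follows verbatim (the only change being that the subtraction $T-T'$ of positive reals needs $T>T'$, which is exactly the assumption made).

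First I would dispose of the size claims: $p+p'\geq 1$ because $p,p'\geq 1$, and if $p>p'$ then $p-p'\geq 1$ since $p$ and $p'$ are integers with $p\geq p'+1$; in the real time case $T+T'>0$ always and $T-T'>0$ whenever $T>T'$. Next, for (\ref{p241}) I would fix an arbitrary $k\geq k'$. Because $p'\geq 1$ we have $k+p'\geq k'$, so (\ref{p240}) applied at index $k$ gives $\widehat{x}(k)=\widehat{x}(k+p')$, and (\ref{p239}) applied at index $k+p'$ gives $\widehat{x}(k+p')=\widehat{x}(k+p'+p)$; chaining these two equalities yields $\widehat{x}(k)=\widehat{x}(k+p+p')$, which is (\ref{p241}).

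For (\ref{p242}), under the extra assumption $p>p'$, I would again fix $k\geq k'$. Then $k+p-p'>k\geq k'$, so (\ref{p240}) applied at index $k+p-p'$ gives $\widehat{x}(k+p-p')=\widehat{x}(k+p-p'+p')=\widehat{x}(k+p)$, while (\ref{p239}) applied at index $k$ gives $\widehat{x}(k)=\widehat{x}(k+p)$; combining the two gives $\widehat{x}(k)=\widehat{x}(k+p-p')$, i.e. (\ref{p242}) with the same limit of periodicity $k'$. Part b) is the identical computation with $k,p,p',k'$ replaced by $t,T,T',t'$.

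There is no real obstacle here: the only point requiring a moment's care is that every index (or real argument) to which a hypothesis is invoked must remain $\geq k'$ (resp. $\geq t'$), and since all the shifts involved are by amounts that keep us at or above $k'$ (resp. $t'$) this is automatic. As an alternative route one could instead invoke the equivalence of eventual periodicity of the signal with the eventual periodicity of all omega limit points (Theorem \ref{The28}, Theorem \ref{The109}) together with the corresponding statement for points, Theorem \ref{The68}, but the direct argument above is shorter and keeps track of the limit of periodicity explicitly, so that is the one I would write up.
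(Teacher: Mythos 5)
Your proof is correct and is essentially the paper's own argument: both establish (\ref{p241}) by chaining the two hypotheses (the paper applies (\ref{p239}) first and then (\ref{p240}), you reverse the order, which is immaterial), and both establish (\ref{p242}) by applying (\ref{p240}) at the index $k+p-p^{\prime}\geq k^{\prime}$ and (\ref{p239}) at $k$. The paper likewise proves only part a) and leaves b) as the verbatim real-time analogue, so your write-up matches it in substance and structure.
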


\begin{proof}
a) Let $k\geq k^{\prime}$ be arbitrary and fixed. Then%
\[
\widehat{x}(k)\overset{(\ref{p239})}{=}\widehat{x}(k+p)\overset{(\ref{p240}%
)}{=}\widehat{x}(k+p+p^{\prime}).
\]
We suppose now that $p>p^{\prime},$ thus $k+p-p^{\prime}\geq k^{\prime}.$ We
can write that%
\[
\widehat{x}(k+p-p^{\prime})\overset{(\ref{p240})}{=}\widehat{x}(k+p)\overset
{(\ref{p239})}{=}\widehat{x}(k).
\]

\end{proof}

\begin{theorem}
\label{The33}We consider the signals $\widehat{x},x.$

a) Let $p,k_{1}\geq1$ and $k^{\prime}\in\mathbf{N}_{\_}.$ Then $p^{\prime
}=k_{1}p$ fulfills $p^{\prime}\geq1$ and%
\begin{equation}
\forall k\geq k^{\prime},\widehat{x}(k)=\widehat{x}(k+p) \label{per556}%
\end{equation}
implies%
\begin{equation}
\forall k\geq k^{\prime},\widehat{x}(k)=\widehat{x}(k+p^{\prime}).
\label{per557}%
\end{equation}

b) Let $T>0,k_{1}\geq1$ and $t^{\prime}\in\mathbf{R}.$ Then $T^{\prime}%
=k_{1}T$ fulfills $T^{\prime}>0$ and%
\begin{equation}
\forall t\geq t^{\prime},x(t)=x(t+T) \label{per560}%
\end{equation}
implies%
\begin{equation}
\forall t\geq t^{\prime},x(t)=x(t+T^{\prime}). \label{per561}%
\end{equation}

\end{theorem}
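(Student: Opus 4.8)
The plan is to obtain Theorem \ref{The33} as a straightforward induction on $k_1$, with Theorem \ref{The79} as the engine. For part a), the positivity $p' = k_1 p \geq 1$ is immediate from $k_1 \geq 1$ and $p \geq 1$, so the only thing to prove is that (\ref{per556}) implies (\ref{per557}). I would argue by induction on $k_1 \geq 1$. The base case $k_1 = 1$ is trivial, since then $p' = p$ and (\ref{per557}) is literally (\ref{per556}). For the inductive step, assume $\forall k \geq k',\ \widehat{x}(k) = \widehat{x}(k + k_1 p)$ has been established; then $k_1 p$ and $p$ are both periods of $\widehat{x}$ sharing the common limit of periodicity $k'$, so the "sum of periods" half of Theorem \ref{The79} a) gives $\forall k \geq k',\ \widehat{x}(k) = \widehat{x}(k + k_1 p + p) = \widehat{x}(k + (k_1+1)p)$, which is the statement for $k_1 + 1$.

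There is also a direct route that bypasses Theorem \ref{The79} entirely: fix an arbitrary $k \geq k'$ and prove by finite induction on $j \in \{0, 1, \dots, k_1\}$ that $\widehat{x}(k) = \widehat{x}(k + jp)$. The case $j = 0$ is vacuous, and in passing from $j$ to $j+1$ one uses $k + jp \geq k'$ (valid because $j \geq 0$, $p \geq 1$, $k \geq k'$) to apply the hypothesis (\ref{per556}) at the point $k + jp$, obtaining $\widehat{x}(k+jp) = \widehat{x}(k+jp+p) = \widehat{x}(k+(j+1)p)$. Evaluating at $j = k_1$ yields (\ref{per557}).

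Part b) is handled identically: $T' = k_1 T > 0$ because $k_1 \geq 1$ and $T > 0$, and the implication (\ref{per560})$\Longrightarrow$(\ref{per561}) follows either by the same induction on $k_1$, invoking the "sum of periods" half of Theorem \ref{The79} b) (with common limit of periodicity $t'$) in the inductive step, or by the direct finite-induction argument on $j$ applied to an arbitrary fixed $t \geq t'$, where $t + jT \geq t'$ for $j \geq 0$ licenses each use of (\ref{per560}). I do not anticipate a genuine obstacle here; the only points needing a little care are keeping the limit of periodicity pinned at $k'$ (resp. $t'$) throughout the induction — which is exactly the form in which Theorem \ref{The79} is phrased — and recording the trivial positivity of $p'$ (resp. $T'$).
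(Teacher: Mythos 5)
Your proposal is correct and follows the paper's own route: the paper proves Theorem \ref{The33} simply by citing Theorem \ref{The79}, i.e.\ by iterating the ``sum of periods'' statement with the limit of periodicity held fixed at $k^{\prime}$ (resp.\ $t^{\prime}$), which is precisely your induction on $k_{1}$. Your second, direct finite induction on $j$ is an equivalent unwinding of the same argument and introduces nothing essentially different.
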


\begin{proof}
This is a consequence of Theorem \ref{The79}.
\end{proof}

\begin{corollary}
\label{Cor7}a) If $p\in\widehat{P}^{\widehat{x}},$ then
$\{p,2p,3p,...\}\subset\widehat{P}^{\widehat{x}}.$

b) If $T\in P^{x},$ then $\{T,2T,3T,...\}\subset P^{x}.$
\end{corollary}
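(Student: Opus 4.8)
The statement to prove is Corollary \ref{Cor7}, which asserts that $\widehat{P}^{\widehat{x}}$ and $P^{x}$ are closed under taking positive integer multiples. The plan is to derive this immediately from Theorem \ref{The33}, exactly as the one-line proof ``This is a consequence of Theorem \ref{The79}'' derives that theorem from the previous one. So the whole argument is a short unpacking of the relevant definitions together with one application of Theorem \ref{The33}.

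First I would treat part a). Suppose $p\in\widehat{P}^{\widehat{x}}$. By Notation \ref{Not19}, this means $p\geq1$ and there exists $k^{\prime}\in\mathbf{N}_{\_}$ such that $\forall k\geq k^{\prime},\widehat{x}(k)=\widehat{x}(k+p)$, i.e.\ (\ref{pre742}) holds. Now fix an arbitrary $k_{1}\geq1$ and set $p^{\prime}=k_{1}p$. Theorem \ref{The33} a), applied with this $p$, $k_{1}$ and $k^{\prime}$, gives $p^{\prime}\geq1$ and $\forall k\geq k^{\prime},\widehat{x}(k)=\widehat{x}(k+p^{\prime})$; hence $p^{\prime}=k_{1}p\in\widehat{P}^{\widehat{x}}$. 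Since $k_{1}\geq1$ was arbitrary, $\{p,2p,3p,\dots\}\subset\widehat{P}^{\widehat{x}}$.

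Part b) is entirely parallel. If $T\in P^{x}$ then by Notation \ref{Not19} $T>0$ and some $t^{\prime}\in\mathbf{R}$ satisfies $\forall t\geq t^{\prime},x(t)=x(t+T)$, i.e.\ (\ref{pre744}). For arbitrary $k_{1}\geq1$, putting $T^{\prime}=k_{1}T$, Theorem \ref{The33} b) yields $T^{\prime}>0$ and $\forall t\geq t^{\prime},x(t)=x(t+T^{\prime})$, so $k_{1}T\in P^{x}$; thus $\{T,2T,3T,\dots\}\subset P^{x}$.

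There is essentially no obstacle here: the corollary is a formal consequence of Theorem \ref{The33}, which in turn rests on Theorem \ref{The79} (sums/differences of periods are periods). The only thing to be careful about is bookkeeping with the definitions—making sure that ``period'' is read through Notation \ref{Not19} (existence of \emph{some} limit of periodicity $k^{\prime}$ or $t^{\prime}$), and that the same $k^{\prime}$ (resp.\ $t^{\prime}$) is reused when invoking Theorem \ref{The33}, which is exactly how that theorem is phrased. So the proof is genuinely a one-liner citing Theorem \ref{The33}, with the above sentences as optional elaboration.
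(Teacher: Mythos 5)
Your proof is correct and follows the paper's own route exactly: the paper proves Corollary \ref{Cor7} by the one-line citation of Theorem \ref{The33}, and your elaboration (unpacking Notation \ref{Not19} and reusing the same limit of periodicity $k^{\prime}$, respectively $t^{\prime}$, for each multiple $k_{1}p$, respectively $k_{1}T$) is precisely the intended argument.
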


\begin{proof}
This follows from Theorem \ref{The33}.
\end{proof}

\section{The set of the periods}

\begin{theorem}
\label{The124}a) We suppose that for $\widehat{x}\in\widehat{S}^{(n)},$ the
set $\widehat{P}^{\widehat{x}}$ is not empty. Some $\widetilde{p}\geq1$ exists
then with the property%
\begin{equation}
\widehat{P}^{\widehat{x}}=\{\widetilde{p},2\widetilde{p},3\widetilde{p},...\}.
\label{pre233}%
\end{equation}

b) Let $x\in S^{(n)}$ be not eventually constant and we suppose that the set
$P^{x}$ is not empty. Then $\widetilde{T}>0$ exists such that%
\begin{equation}
P^{x}=\{\widetilde{T},2\widetilde{T},3\widetilde{T},...\}. \label{pre234}%
\end{equation}

\end{theorem}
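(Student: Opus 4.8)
## Proof Plan for Theorem \ref{The124}

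The plan is to reduce this statement to the corresponding result for points, namely Theorem \ref{The70}, page \pageref{The70}, together with the relation between $\widehat{P}^{\widehat{x}}$ (respectively $P^{x}$) and the intersection of the sets of periods of the omega limit points, which is implicit in Theorem \ref{The139}. The key observation is that if $\widehat{P}^{\widehat{x}}\neq\varnothing$ then $\widehat{x}$ is eventually periodic, hence by Theorem \ref{The28} and Theorem \ref{The109} every $\mu\in\widehat{\omega}(\widehat{x})$ is an eventually periodic point, so Theorem \ref{The70} a) applies to each of the finitely many $\mu^{1},\dots,\mu^{s}\in\widehat{\omega}(\widehat{x})$, giving $\widehat{P}_{\mu^{i}}^{\widehat{x}}=\{\widetilde{p}_{i},2\widetilde{p}_{i},3\widetilde{p}_{i},\dots\}$.

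For part a), first I would establish that $\widehat{P}^{\widehat{x}}=\bigcap_{i=1}^{s}\widehat{P}_{\mu^{i}}^{\widehat{x}}$: the inclusion $\subset$ follows because if $p\in\widehat{P}^{\widehat{x}}$ then $\forall k\geq k^{\prime},\widehat{x}(k)=\widehat{x}(k+p)$, and by the proof technique of Theorem \ref{The109} (the implication $(\ref{pre341})\Rightarrow(\ref{per164})$) this forces $p\in\widehat{P}_{\mu^{i}}^{\widehat{x}}$ for every $i$; the inclusion $\supset$ follows by the reverse implication of Theorem \ref{The109}, using Lemma \ref{Lem30} to find a common limit of periodicity. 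Next, since each $\widehat{P}_{\mu^{i}}^{\widehat{x}}=\widetilde{p}_{i}\mathbf{N}^{*}$, the intersection $\bigcap_{i=1}^{s}\widetilde{p}_{i}\mathbf{N}^{*}$ is exactly $\ell\mathbf{N}^{*}$ where $\ell=\operatorname{lcm}(\widetilde{p}_{1},\dots,\widetilde{p}_{s})$; setting $\widetilde{p}=\ell$ gives $\widehat{P}^{\widehat{x}}=\{\widetilde{p},2\widetilde{p},3\widetilde{p},\dots\}$. Alternatively, and more in the spirit of the earlier proofs, one can argue directly: $\widehat{P}^{\widehat{x}}\neq\varnothing$ is closed under sums, positive differences and multiples by Theorem \ref{The79} and Theorem \ref{The33} (Corollary \ref{Cor7}), so letting $\widetilde{p}=\min\widehat{P}^{\widehat{x}}$ — which exists since $\widehat{P}^{\widehat{x}}\subset\mathbf{N}^{*}$ is nonempty — Corollary \ref{Cor7} gives $\{\widetilde{p},2\widetilde{p},\dots\}\subset\widehat{P}^{\widehat{x}}$, and if some $p^{\prime}\in\widehat{P}^{\widehat{x}}$ were not a multiple of $\widetilde{p}$ we would get $k_{1}\geq1$ with $k_{1}\widetilde{p}<p^{\prime}<(k_{1}+1)\widetilde{p}$, whence $1\leq p^{\prime}-k_{1}\widetilde{p}<\widetilde{p}$ lies in $\widehat{P}^{\widehat{x}}$ by Theorem \ref{The79}/\ref{The33}, contradicting minimality. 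This direct argument is cleaner and I would present it.

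For part b), the same direct scheme works but the existence of $\min P^{x}$ is now the main obstacle, exactly as in the proof of Theorem \ref{The70} b): here $P^{x}\subset(0,\infty)$ and there is no discreteness a priori. I would copy that argument: suppose no minimum exists, so a strictly decreasing sequence $T_{k}\in P^{x}$ converges to $T=\inf P^{x}$; since $x$ is not eventually constant, pick $\mu\in\omega(x)$ and use Lemma \ref{Lem38} to find $t^{\prime\prime}>t^{\prime}$ (for a common limit of periodicity $t^{\prime}$, available since the $t_{k}^{\prime}$ do not depend on $T_{k}$) with $x(t^{\prime\prime}-0)\neq x(t^{\prime\prime})=\mu$; then Lemma \ref{Lem10} gives $x(t^{\prime\prime}+T_{k}-0)\neq x(t^{\prime\prime}+T_{k})=\mu$ for all $k$, while Lemma \ref{Lem4_} forces $x(t^{\prime\prime}+T_{k}-0)=x(t^{\prime\prime}+T_{k})$ for $k$ large, a contradiction. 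So $\widetilde{T}=\min P^{x}$ exists. Then Corollary \ref{Cor7} b) gives $\{\widetilde{T},2\widetilde{T},\dots\}\subset P^{x}$, and the remainder-argument using Theorem \ref{The79} b) and Theorem \ref{The33} b) excludes any $T^{\prime}\in P^{x}$ that is not a multiple of $\widetilde{T}$, yielding $P^{x}=\{\widetilde{T},2\widetilde{T},3\widetilde{T},\dots\}$. The only genuinely new content beyond Theorem \ref{The70} is checking that the sum/difference/multiple closure properties hold for the \emph{signal} periods $\widehat{P}^{\widehat{x}},P^{x}$ rather than the point periods — but these are supplied verbatim by Theorem \ref{The79}, Theorem \ref{The33} and Corollary \ref{Cor7}, so the proof is essentially a transcription of the proof of Theorem \ref{The70} with $\widehat{\mathbf{T}}_{\mu}^{\widehat{x}},\mathbf{T}_{\mu}^{x}$ replaced by the signals themselves.
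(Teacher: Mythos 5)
Your proposal is correct and, in the version you say you would actually present, it coincides with the paper's own proof: part a) is the minimality-plus-difference argument via Corollary \ref{Cor7} and Theorems \ref{The79}, \ref{The33}, and part b) establishes $\min P^{x}$ exactly as in Theorem \ref{The70} b) using Lemma \ref{Lem38}, Lemma \ref{Lem10} and Lemma \ref{Lem4_} (with Theorem \ref{The141} justifying the common limit of periodicity), followed by the same remainder argument. No essential difference from the paper's route.
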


\begin{proof}
a) We have $\widehat{P}^{\widehat{x}}\neq\varnothing$ and we denote with
$\widetilde{p}\geq1$ its minimum$.$ The inclusion $\{\widetilde{p}%
,2\widetilde{p},3\widetilde{p},...\}\subset\widehat{P}^{\widehat{x}}$ was
stated in Corollary \ref{Cor7}$.$ In order to prove that $\widehat
{P}^{\widehat{x}}\subset\{\widetilde{p},2\widetilde{p},3\widetilde{p},...\},$
we suppose against all reason that $p^{\prime}\in\widehat{P}^{\widehat{x}}$
exists with the property that $p^{\prime}\notin\{\widetilde{p},2\widetilde
{p},3\widetilde{p},...\}$ and consequently $k\geq1$ exists such that
$k\widetilde{p}<p^{\prime}<(k+1)\widetilde{p}.$ We infer that $1\leq
p^{\prime}-k\widetilde{p}<\widetilde{p}$ and, from Theorem \ref{The79}, page
\pageref{The79} and \ref{The33}, page \pageref{The33} that $p^{\prime
}-k\widetilde{p}\in\widehat{P}^{\widehat{x}}.$ This fact is in contradiction
however with the supposition that $\widetilde{p}=\min\widehat{P}^{\widehat{x}%
}.$

b) We proceed in two steps. At b.i) we prove that $\min P^{x}$ exists and at
b.ii) we prove that the only elements of $P^{x}$ are the multiples of $\min
P^{x}.$

b.i) We suppose against all reason that $\min P^{x}$ does not exist, namely
that a strictly decreasing sequence $T_{k}\in P^{x},k\in\mathbf{N}$ exists
that is convergent to $T=\inf P^{x}.$ As $x$ is not eventually constant, the
following property%
\begin{equation}
\forall t\in\mathbf{R},\exists t^{\prime\prime}>t,x(t^{\prime\prime}-0)\neq
x(t^{\prime\prime}) \label{p65_}%
\end{equation}
is true, from Lemma \ref{Lem38}, page \pageref{Lem38}. The hypothesis states
the existence $\forall k\in\mathbf{N},$ of $t_{k}^{\prime}\in\mathbf{R}$ with%
\begin{equation}
\forall t\geq t_{k}^{\prime},x(t)=x(t+T_{k}). \label{p67_}%
\end{equation}
As $t_{k}^{\prime}$ do not depend on $T_{k},$ see Theorem \ref{The141}, page
\pageref{The141}, we can suppose that they are all equal with some $t^{\prime
}\in\mathbf{R}.$ Property (\ref{p65_}) implies that we can take a
$t^{\prime\prime}>t^{\prime}$ such that $x(t^{\prime\prime}-0)\neq
x(t^{\prime\prime})$ and we can apply now Lemma \ref{Lem10}, page
\pageref{Lem10} giving
\begin{equation}
\forall k\in\mathbf{N},x(t^{\prime\prime}+T_{k}-0)\neq x(t^{\prime\prime
}+T_{k}). \label{p68_}%
\end{equation}
We infer from Lemma \ref{Lem4_}, page \pageref{Lem4_} that $N\in\mathbf{N}$
exists with $\forall k\geq N,$%
\[
x(t^{\prime\prime}+T_{k}-0)=x(t^{\prime\prime}+T_{k})=x(t^{\prime\prime}+T),
\]
contradiction with (\ref{p68_}). It has resulted that such a sequence
$T_{k},k\in\mathbf{N}$ does not exist, thus $P^{x},$ which is non-empty, has a
minimum $\widetilde{T}>0.$

b.ii) We have from Corollary \ref{Cor7} that $\{\widetilde{T},2\widetilde
{T},3\widetilde{T},...\}\subset P^{x}.$ We prove that $\forall k\geq1,\forall
T^{\prime}\in(k\widetilde{T},(k+1)\widetilde{T}),T^{\prime}\notin P^{x}.$ We
suppose against all reason that $k\geq1$ and $T^{\prime}\in(k\widetilde
{T},(k+1)\widetilde{T})$ exist such that $T^{\prime}\in P^{x}.$ This means,
from Theorem \ref{The79}, page \pageref{The79} and Theorem \ref{The33}, page
\pageref{The33}, that $T^{\prime}-k\widetilde{T}\in P^{x}$ and since
$T^{\prime}-k\widetilde{T}<\widetilde{T},$ we have obtained a contradiction
with the fact that $\widetilde{T}=\min P^{x}.$
\end{proof}

\begin{theorem}
\label{The87}We suppose that the relation between $\widehat{x}$ and $x$ is
given by%
\[%
\begin{array}
[c]{c}%
x(t)=\widehat{x}(-1)\cdot\chi_{(-\infty,t_{0})}(t)\oplus\widehat{x}%
(0)\cdot\chi_{\lbrack t_{0},t_{0}+h)}(t)\oplus\widehat{x}(1)\cdot\chi_{\lbrack
t_{0}+h,t_{0}+2h)}(t)\oplus...\\
...\oplus\widehat{x}(k)\cdot\chi_{\lbrack t_{0}+kh,t_{0}+(k+1)h)}(t)\oplus...
\end{array}
\]
where $t_{0}\in\mathbf{R}$ and $h>0.$ If $\widehat{x},$ $x$ are eventually
periodic, two possibilities exist:

a) $\widehat{x},$ $x$ are both eventually constant, $p=1$ is the prime period
of $\widehat{x}$ and $x$ has no prime period;

b) neither of $\widehat{x},$ $x$ is eventually constant, $\widetilde{p}>1$ is
the prime period of $\widehat{x}$ and $\widetilde{T}=\widetilde{p}h$ is the
prime period of $x$.
\end{theorem}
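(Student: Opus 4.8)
The statement to prove is Theorem \ref{The87}: given the standard conversion (\ref{per106}) between $\widehat{x}$ and $x$ with step $h>0$, and given that $\widehat{x},x$ are eventually periodic, exactly one of two things happens — either both are eventually constant with $\widehat{x}$ having prime period $1$ and $x$ having no prime period, or neither is eventually constant, $\widehat{x}$ has a prime period $\widetilde{p}>1$ and $x$ has prime period $\widetilde{T}=\widetilde{p}h$. The plan is to reduce everything to results already established in this Chapter and the previous ones, so that very little genuinely new work is required.

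First I would dispose of the dichotomy itself. The conversion formula (\ref{per106}) makes $\widehat{x}$ and $x$ simultaneously eventually constant or not: this is Theorem \ref{The113} (eventual constancies of $\widehat{x}$ and $x$ are equivalent under (\ref{per119}), which is the same hypothesis). So there are indeed just the two cases a) and b). In case a), $\widehat{x}$ eventually constant means $\widehat{P}^{\widehat{x}} = \{1,2,3,\dots\}$ — the eventually constant discrete time signals are eventually periodic with any period $p\geq 1$ (this is noted already in the Preface and follows from Theorem \ref{The100}, since (\ref{per83}) holds for every $p$); hence the prime period of $\widehat{x}$ is $1$. For $x$ eventually constant, (\ref{pre557})ff. of Theorem \ref{The100} give that $x$ is eventually periodic with arbitrary period $T>0$, i.e. $P^{x}=(0,\infty)$, which has no minimum, so $x$ has no prime period. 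This settles a) completely.

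The substance is in case b), and here the two pieces I need are already proved. Assume neither $\widehat{x}$ nor $x$ is eventually constant, but both are eventually periodic, so $\widehat{P}^{\widehat{x}}\neq\varnothing$ and $P^{x}\neq\varnothing$. By Theorem \ref{The124} a) there is $\widetilde{p}\geq 1$ with $\widehat{P}^{\widehat{x}}=\{\widetilde{p},2\widetilde{p},3\widetilde{p},\dots\}$, and by Theorem \ref{The124} b) (which requires $x$ not eventually constant — satisfied here) there is $\widetilde{T}>0$ with $P^{x}=\{\widetilde{T},2\widetilde{T},3\widetilde{T},\dots\}$. It remains to show $\widetilde{p}>1$ and $\widetilde{T}=\widetilde{p}h$. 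That $\widetilde{p}>1$ follows because $\widetilde{p}=1$ would mean $\forall k\geq k'$, $\widehat{x}(k)=\widehat{x}(k+1)$ for some $k'$, i.e. $\widehat{x}$ eventually constant (Theorem \ref{The109} with $p=1$, or directly Theorem \ref{The20} a)), contradicting the case assumption. For the relation $\widetilde{T}=\widetilde{p}h$: Theorem \ref{The21} shows $p\in\widehat{P}^{\widehat{x}}\Rightarrow ph\in P^{x}$, and Theorem \ref{The118} shows $T\in P^{x}\Rightarrow \tfrac{T}{h}\in\{1,2,3,\dots\}$ and $\tfrac{T}{h}\in\widehat{P}^{\widehat{x}}$ (both using that $\widehat{x},x$ are not eventually constant). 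So $h\cdot\widehat{P}^{\widehat{x}}=P^{x}$ as subsets of $(0,\infty)$; applying this to the minimal elements, $\widetilde{p}h\in P^{x}$ forces $\widetilde{T}\leq\widetilde{p}h$, and $\widetilde{T}\in P^{x}$ gives $\tfrac{\widetilde{T}}{h}\in\widehat{P}^{\widehat{x}}$ so $\widetilde{p}\leq\tfrac{\widetilde{T}}{h}$, i.e. $\widetilde{p}h\leq\widetilde{T}$. Hence $\widetilde{T}=\widetilde{p}h$.

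\textbf{Main obstacle.} Honestly there is no deep obstacle: the theorem is a bookkeeping corollary of Theorems \ref{The113}, \ref{The124}, \ref{The21}, \ref{The118} (and \ref{The109}/\ref{The20} for the $p=1$ exclusion). The only point needing a little care is making sure each invoked result's hypothesis is met — in particular Theorem \ref{The124} b), Theorem \ref{The21}, and Theorem \ref{The118} all presuppose ``not eventually constant'', so the argument must first be split into cases a) and b) via Theorem \ref{The113} before those results are applied, rather than the other way around. A secondary subtlety is that one should not claim $x$ has a prime period in case a); the correct statement, which the theorem already records, is that $x$ has none, and this is exactly what $P^{x}=(0,\infty)$ expresses.
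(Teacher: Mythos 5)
Your proposal is correct and follows essentially the same route as the paper's own proof: both split into the two cases via the simultaneity of eventual constancy, then in case b) combine Theorem \ref{The124} (form of $\widehat{P}^{\widehat{x}},P^{x}$) with Theorems \ref{The21} and \ref{The118} (the correspondence $p\mapsto ph$ between periods) to get $\widetilde{T}=\widetilde{p}h$. You merely spell out case a) and the exclusion $\widetilde{p}>1$ more explicitly than the paper does, which is a harmless refinement rather than a different argument.
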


\begin{proof}
$\widehat{x},x$ are simultaneously eventually constant or not. Let us suppose
that they are not eventually constant and we prove b). Theorem \ref{The21},
page \pageref{The21} shows that $p\in\widehat{P}^{\widehat{x}}\Longrightarrow
T=ph\in P^{x}$ and conversely, Theorem \ref{The118}, page \pageref{The118}
shows that $T\in P^{x}\Longrightarrow p=\frac{T}{h}\in\widehat{P}^{\widehat
{x}}.$ From Theorem \ref{The124} we have that $\widehat{P}^{\widehat{x}%
}=\{\widetilde{p},2\widetilde{p},3\widetilde{p},...\},P^{x}=\{\widetilde
{T},2\widetilde{T},3\widetilde{T},...\},$ thus $\widetilde{T}=\widetilde{p}h.$
\end{proof}

\begin{theorem}
\label{The110}a) If $\widehat{x}$ is eventually periodic and $\widehat{\omega
}(\widehat{x})=\{\mu^{1},...,\mu^{s}\}$, then%
\[
\widehat{P}^{\widehat{x}}=\widehat{P}_{\mu^{1}}^{\widehat{x}}\cap
...\cap\widehat{P}_{\mu^{s}}^{\widehat{x}}.
\]

b) We suppose that $x$ is eventually periodic and $\omega(x)=\{\mu^{1}%
,...,\mu^{s}\}$ holds. In this case%
\[
P^{x}=P_{\mu^{1}}^{x}\cap...\cap P_{\mu^{s}}^{x}.
\]

\end{theorem}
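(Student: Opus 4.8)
The plan is to prove the set equality in both directions, treating the discrete-time and real-time cases in parallel since they are formally identical once the right earlier results are invoked. I will work with the characterization of eventual periodicity of the signal given by Theorem \ref{The28}, namely the statement (\ref{per164}) (resp. (\ref{pre576})), which says that $\widehat{x}$ (resp. $x$) is eventually periodic with period $p$ (resp. $T$) iff every $\mu \in \widehat{\omega}(\widehat{x})$ (resp. $\mu \in \omega(x)$) is eventually periodic with that same period for \emph{some} limit of periodicity depending on $\mu$.

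First I would prove the inclusion $\widehat{P}^{\widehat{x}} \subset \widehat{P}_{\mu^{i}}^{\widehat{x}}$ for each $i$. Suppose $p \in \widehat{P}^{\widehat{x}}$, so there is $k^{\prime} \in \mathbf{N}_{\_}$ with $\forall k \geq k^{\prime}, \widehat{x}(k) = \widehat{x}(k+p)$. By the implication (\ref{per163})$\Longrightarrow$(\ref{per164}) in Theorem \ref{The109} (whose proof shows how to produce, for each omega limit point, a suitable limit of periodicity from the uniform one), each $\mu^{i} \in \widehat{\omega}(\widehat{x})$ satisfies (\ref{pre154}) for this $p$, hence $p \in \widehat{P}_{\mu^{i}}^{\widehat{x}}$; thus $\widehat{P}^{\widehat{x}} \subset \bigcap_{i} \widehat{P}_{\mu^{i}}^{\widehat{x}}$. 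Conversely, suppose $p \in \widehat{P}_{\mu^{i}}^{\widehat{x}}$ for every $i \in \{1,\dots,s\}$. Then for each $i$ there is $k_{i}^{\prime} \in \mathbf{N}_{\_}$ for which (\ref{pre154}) holds with period $p$; invoking Lemma \ref{Lem30} to enlarge every $k_{i}^{\prime}$ to a common $k^{\prime} = \max\{k_{1}^{\prime},\dots,k_{s}^{\prime}\}$ (the analogue of the argument in the proof of (\ref{per164})$\Longrightarrow$(\ref{per163}) in Theorem \ref{The109}), the statement (\ref{per164}) holds for this $p$, so by Theorem \ref{The28} a) (equivalence with (\ref{pre341})) and Theorem \ref{The109} a) we get $p \in \widehat{P}^{\widehat{x}}$. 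The real-time part is identical, substituting $\omega(x)$, $P_{\mu}^{x}$, Theorem \ref{The109} b), and the same use of Lemma \ref{Lem30} to unify the finitely many limits of periodicity into a common $t_{1}^{\prime}$; one also uses, as in the proof of Theorem \ref{The28} b), that a common initial-time instant can be chosen.

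The main obstacle, such as it is, is bookkeeping rather than a genuine difficulty: I must be careful that when a period $p$ belongs to every $\widehat{P}_{\mu^{i}}^{\widehat{x}}$, the limits of periodicity $k_{i}^{\prime}$ are \emph{a priori} different for different points, and that passing to their maximum is legitimate — this is exactly what Lemma \ref{Lem30} (monotonicity of the limit-of-periodicity property) provides, together with the finiteness of $\widehat{\omega}(\widehat{x})$ (resp. $\omega(x)$), which is guaranteed because $\mathbf{B}^{n}$ is finite (Theorem \ref{The12} b)). One subtlety in the real-time direction $P^{x} \supset \bigcap_{i} P_{\mu^{i}}^{x}$ is that we need a limit of periodicity that is also an initial time instant of $x$ when we want to land in $P^{x}$ via (\ref{pre580}); but since any sufficiently negative $t^{\prime\prime\prime} \in I^{x}$ may be chosen $\leq t_{1}^{\prime}$, this causes no trouble. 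With these points handled, both inclusions follow and the equalities
\[
\widehat{P}^{\widehat{x}} = \widehat{P}_{\mu^{1}}^{\widehat{x}} \cap \dots \cap \widehat{P}_{\mu^{s}}^{\widehat{x}}, \qquad P^{x} = P_{\mu^{1}}^{x} \cap \dots \cap P_{\mu^{s}}^{x}
\]
are established.
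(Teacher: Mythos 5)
Your proof is correct and follows essentially the same route as the paper: both reduce the statement to the equivalence, established in Theorems \ref{The28} and \ref{The109}, between the eventual periodicity of the signal with period $p$ (resp. $T$) and the eventual periodicity of every omega limit point with that same period, using Lemma \ref{Lem30} to pass to a common limit of periodicity. The paper merely states the reverse inclusion as ``obvious,'' whereas you spell out the bookkeeping; the content is the same.
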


\begin{proof}
a) In order to prove that $\widehat{P}^{\widehat{x}}\subset\widehat{P}%
_{\mu^{1}}^{\widehat{x}}\cap...\cap\widehat{P}_{\mu^{s}}^{\widehat{x}},$ we
take an arbitrary $p\in\widehat{P}^{\widehat{x}},$ thus%
\begin{equation}
\left\{
\begin{array}
[c]{c}%
\forall i\in\{1,...,n\},\exists k_{i}^{\prime}\in\mathbf{N}_{\_},\forall
k\in\widehat{\mathbf{T}}_{\mu^{i}}^{\widehat{x}}\cap\{k_{i}^{\prime}%
,k_{i}^{\prime}+1,k_{i}^{\prime}+2,...\},\\
\{k+zp|z\in\mathbf{Z}\}\cap\{k_{i}^{\prime},k_{i}^{\prime}+1,k_{i}^{\prime
}+2,...\}\subset\widehat{\mathbf{T}}_{\mu^{i}}^{\widehat{x}}%
\end{array}
\right.
\end{equation}
holds. This means that $\forall i\in\{1,...,n\},$ $p\in\widehat{P}_{\mu^{i}%
}^{\widehat{x}}.$

The inclusion $\widehat{P}_{\mu^{1}}^{\widehat{x}}\cap...\cap\widehat{P}%
_{\mu^{s}}^{\widehat{x}}\subset\widehat{P}^{\widehat{x}}$ is obvious too.
\end{proof}

\section{Necessity conditions of eventual periodicity}

\begin{theorem}
\label{The132}Let $\widehat{x}\in\widehat{S}^{(n)}$ with $\widehat{\omega
}(\widehat{x})=\{\mu^{1},...,\mu^{s}\}.$ We suppose that $\widehat{x}$ is
eventually periodic with the period $p\geq1$ and the limit of periodicity
$k^{\prime}\in\mathbf{N}_{\_}.$ Then $n_{1}^{i},n_{2}^{i},...,n_{k_{i}}^{i}%
\in\{k^{\prime},k^{\prime}+1,...,k^{\prime}+p-1\}$ exist$,$ $k_{i}\geq1, $
such that%
\begin{equation}
\widehat{\mathbf{T}}_{\mu^{i}}^{\widehat{x}}\cap\{k^{\prime},k^{\prime
}+1,k^{\prime}+2,...\}=\underset{k\in\mathbf{N}}{%
{\displaystyle\bigcup}
}\{n_{1}^{i}+kp,n_{2}^{i}+zp,...,n_{k_{i}}^{i}+kp\}
\end{equation}
for $i\in\{1,...,s\}.$
\end{theorem}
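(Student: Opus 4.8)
The statement is essentially the ``signal version'' of Theorem \ref{The69}, page \pageref{The69}, which describes the structure of a single support set $\widehat{\mathbf{T}}_{\mu}^{\widehat{x}}$ above the limit of periodicity when $\mu$ is an eventually periodic point with period $p$. The plan is to reduce the present statement to Theorem \ref{The69} applied to each $\mu^{i}\in\widehat{\omega}(\widehat{x})$ separately. First I would invoke Theorem \ref{The139}, page \pageref{The139}, which says $\widehat{L}^{\widehat{x}}=\bigcap_{\mu\in\widehat{\omega}(\widehat{x})}\widehat{L}_{\mu}^{\widehat{x}}$; since by hypothesis $k^{\prime}\in\widehat{L}^{\widehat{x}}$ (it is a limit of periodicity of $\widehat{x}$ for the period $p$, via (\ref{pre742})), we get $k^{\prime}\in\widehat{L}_{\mu^{i}}^{\widehat{x}}$ for every $i\in\{1,\dots,s\}$. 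Concretely this means that for each $i$, $p\geq1$ and $k^{\prime}$ witness the eventual periodicity of the point $\mu^{i}$ of $\widehat{x}$, i.e. (\ref{pre738}) holds with $\mu=\mu^{i}$.

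Second, I would note that $\widehat{x}$ eventually periodic does \emph{not} by itself guarantee $\widehat{x}$ is not eventually constant, so the hypothesis of Theorem \ref{The69} (``not eventually constant'') is not automatically met. Here I would split into two cases. If $\widehat{x}$ is eventually constant, then $\widehat{\omega}(\widehat{x})=\{\mu\}$ is a singleton (so $s=1$), and by Theorem \ref{The143}/Theorem \ref{The15} some $k^{\prime\prime}\geq k^{\prime}$ has $\{k^{\prime\prime},k^{\prime\prime}+1,\dots\}\subset\widehat{\mathbf{T}}_{\mu}^{\widehat{x}}$; combined with the fact that $k^{\prime}\in\widehat{L}_{\mu}^{\widehat{x}}$ one shows $\widehat{\mathbf{T}}_{\mu}^{\widehat{x}}\cap\{k^{\prime},k^{\prime}+1,\dots\}=\{k^{\prime},k^{\prime}+1,\dots\}=\bigcup_{k\in\mathbf{N}}\{k^{\prime}+k\}$, which is the claimed form with $k_{1}=1$, $n_{1}^{1}=k^{\prime}$, $p=1$ (this is exactly the Remark after Theorem \ref{The69}). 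Otherwise $\widehat{x}$ is not eventually constant, and I apply Theorem \ref{The69} directly to each point $\mu^{i}$: for each $i$ it yields $n_{1}^{i},\dots,n_{k_{i}}^{i}\in\{k^{\prime},k^{\prime}+1,\dots,k^{\prime}+p-1\}$, $k_{i}\geq1$, with
\[
\widehat{\mathbf{T}}_{\mu^{i}}^{\widehat{x}}\cap\{k^{\prime},k^{\prime}+1,k^{\prime}+2,\dots\}=\underset{k\in\mathbf{N}}{\bigcup}\{n_{1}^{i}+kp,n_{2}^{i}+kp,\dots,n_{k_{i}}^{i}+kp\}.
\]
Since this already matches the statement verbatim for each $i$, the proof is finished by ranging over $i\in\{1,\dots,s\}$.

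I expect the only real subtlety to be the bookkeeping in the eventually-constant case — checking that the degenerate reading $k_{i}=1$, $p=1$ genuinely produces the asserted union form — and the observation (needed to apply Theorem \ref{The69}) that eventual periodicity of $\widehat{x}$ transfers to eventual periodicity of each $\mu^{i}$ with the \emph{same} $k^{\prime}$; both of these are handled by the reductions above. No genuinely new argument is required: the content is packaging Theorem \ref{The69} through Theorem \ref{The139} and the definitions, with the case split to cover the hypothesis gap.
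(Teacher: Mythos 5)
Your proposal is correct and follows essentially the same route as the paper: the eventual periodicity of $\widehat{x}$ with period $p$ and limit of periodicity $k^{\prime}$ transfers to each $\mu^{i}\in\widehat{\omega}(\widehat{x})$ with the same $p$ and $k^{\prime}$, and then Theorem \ref{The69} is applied to each point. Your extra case split for the eventually constant situation (which the paper relegates to the Remark following Theorem \ref{The69} rather than to the proof itself) is a legitimate refinement, though note that for a given $p>1$ the degenerate case yields $k_{i}=p$ with $n_{j}^{i}=k^{\prime}+j-1$ rather than $k_{i}=1$, $p=1$.
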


\begin{proof}
If $\widehat{x}$ is eventually periodic with the period $p$ and the limit of
periodicity $k^{\prime},$ then every $\mu^{i}\in\widehat{\omega}(\widehat{x})$
is eventually periodic with the period $p$ and the limit of periodicity
$k^{\prime},i\in\{1,...,s\}$ and we apply Theorem \ref{The69}, page
\pageref{The69}.
\end{proof}

\begin{theorem}
\label{The138}We consider the non eventually constant signal $x\in S^{(n)}$
and we put the omega limit set under the form $\omega(x)=\{\mu^{1},...,\mu
^{s}\},$ $s\geq2.$ We suppose that $x$ is eventually periodic with the period
$T>0$ and the limit of periodicity $t^{\prime}\in\mathbf{R.}$ Then $a_{1}%
^{i},b_{1}^{i},a_{2}^{i},b_{2}^{i},...,a_{k_{i}}^{i},b_{k_{i}}^{i}%
\in\mathbf{R}$ exist,
\begin{equation}
t^{\prime}\leq a_{1}^{i}<b_{1}^{i}<a_{2}^{i}<b_{2}^{i}<...<a_{k_{i}}%
^{i}<b_{k_{i}}^{i}\leq t^{\prime}+T, \label{pre237}%
\end{equation}
with $k_{i}\geq1,$ $i\in\{1,...,s\},$ such that%
\begin{equation}
\lbrack a_{1}^{i},b_{1}^{i})\cup\lbrack a_{2}^{i},b_{2}^{i})\cup...\cup\lbrack
a_{k_{i}}^{i},b_{k_{i}}^{i})=\mathbf{T}_{\mu^{i}}^{x}\cap\lbrack t^{\prime
},t^{\prime}+T),
\end{equation}%
\begin{equation}%
\begin{array}
[c]{c}%
\mathbf{T}_{\mu^{i}}^{x}\cap\lbrack t^{\prime},\infty)=\\
\underset{k\in\mathbf{N}}{%
{\displaystyle\bigcup}
}([a_{1}^{i}+kT,b_{1}^{i}+kT)\cup\lbrack a_{2}^{i}+kT,b_{2}^{i}+kT)\cup
...\cup\lbrack a_{k_{i}}^{i}+kT,b_{k_{i}}^{i}+kT))
\end{array}
\label{pre238}%
\end{equation}
hold for $i\in\{1,...,s\}.$
\end{theorem}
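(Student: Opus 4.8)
The plan is to obtain Theorem \ref{The138} from the one-point decomposition Theorem \ref{The71}, applied separately to each $\mu^{i}\in\omega(x)$, in exact parallel with the way Theorem \ref{The132} is read off from Theorem \ref{The69} in the discrete time case. First I would note that the side condition $s\geq 2$ is not a genuine restriction: if $\omega(x)$ were a singleton then $x$ would be eventually constant by Theorem \ref{The15} b), contrary to hypothesis, so $s\geq 2$ is automatic and plays no role in the argument.

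The single substantive step is to pass from eventual periodicity of the \emph{signal} $x$ (Definition \ref{Def28}) to eventual periodicity of each omega limit \emph{point} $\mu^{i}$ with the \emph{same} period $T$ and the \emph{same} limit of periodicity $t^{\prime}$, i.e. to
\[
\mathbf{T}_{\mu^{i}}^{x}\cap[t^{\prime},\infty)\neq\varnothing,\qquad \forall t\in\mathbf{T}_{\mu^{i}}^{x}\cap[t^{\prime},\infty),\ \{t+zT\mid z\in\mathbf{Z}\}\cap[t^{\prime},\infty)\subset\mathbf{T}_{\mu^{i}}^{x}.
\]
The first clause holds because $\mu^{i}\in\omega(x)$ forces $\mathbf{T}_{\mu^{i}}^{x}$ to be unbounded from above (Theorem \ref{The12} a)). For the second I would use the defining relation $\forall t\geq t^{\prime},\ x(t)=x(t+T)$ directly: for $t\in\mathbf{T}_{\mu^{i}}^{x}$ with $t\geq t^{\prime}$ and $z\in\mathbf{Z}$ with $t+zT\geq t^{\prime}$, a finite iteration of $x(\cdot)=x(\cdot+T)$ forwards when $z>0$ and backwards when $z<0$ — all intermediate arguments remaining $\geq t^{\prime}$ — yields $x(t+zT)=x(t)=\mu^{i}$. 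This is exactly the three-case ($z>0$, $z=0$, $z<0$) computation already carried out in the proof of Theorem \ref{The109} b) and of Theorem \ref{The28} b), so I would cite it rather than reproduce it.

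With each $\mu^{i}$ known to be an eventually periodic point of $x$ with period $T$ and limit of periodicity $t^{\prime}$ (the non-triviality clause being supplied by $\mu^{i}\in\omega(x)$), I would fix $i\in\{1,\dots,s\}$ and apply Theorem \ref{The71} to $\mu^{i}$; its hypotheses are met since $x$ is not eventually constant and the displayed periodicity property is precisely $(\ref{pre174})$ there. This gives $a_{1}^{i},b_{1}^{i},\dots,a_{k_{i}}^{i},b_{k_{i}}^{i}\in\mathbf{R}$, $k_{i}\geq 1$, satisfying $(\ref{pre237})$ and $(\ref{pre238})$. For the remaining identity $[a_{1}^{i},b_{1}^{i})\cup\dots\cup[a_{k_{i}}^{i},b_{k_{i}}^{i})=\mathbf{T}_{\mu^{i}}^{x}\cap[t^{\prime},t^{\prime}+T)$, I would invoke the fact that the proof of Theorem \ref{The71} constructs the intervals so that this already holds — it is equation $(\ref{p71})$ in that proof; alternatively it follows from $(\ref{pre238})$, since its $k=0$ term is contained in $[t^{\prime},t^{\prime}+T)\cap\mathbf{T}_{\mu^{i}}^{x}$, and conversely any element of $\mathbf{T}_{\mu^{i}}^{x}\cap[t^{\prime},t^{\prime}+T)$ lies by $(\ref{pre238})$ in some $[a_{j}^{i}+kT,b_{j}^{i}+kT)$ with necessarily $k=0$, because $a_{j}^{i}+kT\geq t^{\prime}+kT$ forces $kT<T$. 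Running this for every $i\in\{1,\dots,s\}$ completes the proof.

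I do not anticipate a real obstacle: the genuine content is already packaged in Theorem \ref{The71} (the periodic interval structure of a single support set) and in Theorems \ref{The28}/\ref{The109} (eventual periodicity of the signal is eventual periodicity of all omega limit points with a common period and limit of periodicity). The only point requiring a little care is keeping the single $t^{\prime}$ uniform across the various $\mu^{i}$, and that uniformity is built into Definition \ref{Def28}, so the rest is bookkeeping.
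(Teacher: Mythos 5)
Your proposal is correct and follows essentially the same route as the paper: the paper's proof simply observes that eventual periodicity of $x$ with period $T$ and limit of periodicity $t^{\prime}$ makes every $\mu^{i}\in\omega(x)$ eventually periodic with that same $T$ and $t^{\prime}$, and then applies Theorem \ref{The71} to each $\mu^{i}$. You merely spell out the signal-to-point step (via the three-case iteration already present in Theorem \ref{The109}) and the interval identity, both of which the paper leaves implicit.
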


\begin{proof}
a) $x$ is eventually periodic, with the period $T$ and the limit of
periodicity $t^{\prime},$ thus $\forall i\in\{1,...,s\},\mu^{i}$ is eventually
periodic with the period $T$ and the limit of periodicity $t^{\prime}.$ We
apply Theorem \ref{The71}, page \pageref{The71}.
\end{proof}

\begin{example}
The eventually periodic signal $x\in S^{(1)},$%
\[
x(t)=\chi_{(-\infty,0)}(t)\oplus\chi_{\lbrack1,5)}(t)\oplus\chi_{\lbrack
6,7)}(t)\oplus\chi_{\lbrack8,10)}(t)\oplus\chi_{\lbrack11,12)}(t)\oplus
\chi_{\lbrack13,15)}(t)\oplus...
\]
fulfills $\mu^{1}=1,\mu^{2}=0,k_{1}=k_{2}=2,T=5,t^{\prime}=3,a_{1}^{1}%
=3,b_{1}^{1}=5,a_{2}^{1}=6,b_{2}^{1}=7,a_{1}^{2}=5,b_{1}^{2}=6,a_{2}%
^{2}=7,b_{2}^{2}=8. $
\end{example}

\section{Sufficiency conditions of eventual periodicity}

\begin{theorem}
\label{The119}Let $\widehat{x}\in\widehat{S}^{(n)}$, $\widehat{\omega
}(\widehat{x})=\{\mu^{1},...,\mu^{s}\}$ and $p\geq1,k^{\prime}\in
\mathbf{N}_{\_}.$ We ask that for any $i\in\{1,...,s\},$ the numbers
$n_{1}^{i},n_{2}^{i},...,n_{k_{i}}^{i}\in\{k^{\prime},k^{\prime}%
+1,...,k^{\prime}+p-1\}$ exist, $k_{i}\geq1,$ making%
\begin{equation}
\widehat{\mathbf{T}}_{\mu^{i}}^{\widehat{x}}\cap\{k^{\prime},k^{\prime
}+1,k^{\prime}+2,...\}=\underset{k\in\mathbf{N}}{%
{\displaystyle\bigcup}
}\{n_{1}^{i}+kp,n_{2}^{i}+kp,...,n_{k_{i}}^{i}+kp\} \label{pre240}%
\end{equation}
true. Then $\widehat{x}$ is eventually periodic with the period $p\geq1$ and
the limit of periodicity $k^{\prime}\in\mathbf{N}_{\_}:\forall i\in
\{1,...,s\},$%
\begin{equation}
\forall k\in\widehat{\mathbf{T}}_{\mu^{i}}^{\widehat{x}}\cap\{k^{\prime
},k^{\prime}+1,k^{\prime}+2,...\},\{k+zp|z\in\mathbf{Z}\}\cap\{k^{\prime
},k^{\prime}+1,k^{\prime}+2,...\}\subset\widehat{\mathbf{T}}_{\mu^{i}%
}^{\widehat{x}}. \label{pre241}%
\end{equation}

\end{theorem}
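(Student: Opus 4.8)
The plan is to reduce Theorem \ref{The119} to the already-established point-wise sufficiency result, Theorem \ref{The72}, page \pageref{The72}, applied separately to each omega limit value $\mu^{i}$, and then to assemble the pieces using the characterization of eventual periodicity of signals in terms of eventual periodicity of all omega limit points (Theorem \ref{The28}, page \pageref{The28}, together with the equivalence with Definition \ref{Def28} supplied by Theorem \ref{The109}).

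First I would fix an arbitrary $i\in\{1,\dots,s\}$. The hypothesis gives $n_{1}^{i},\dots,n_{k_{i}}^{i}\in\{k^{\prime},k^{\prime}+1,\dots,k^{\prime}+p-1\}$ with
\[
\widehat{\mathbf{T}}_{\mu^{i}}^{\widehat{x}}\cap\{k^{\prime},k^{\prime}+1,k^{\prime}+2,\dots\}=\underset{k\in\mathbf{N}}{\bigcup}\{n_{1}^{i}+kp,n_{2}^{i}+kp,\dots,n_{k_{i}}^{i}+kp\}.
\]
This is exactly the form of hypothesis (\ref{pre180}) in Theorem \ref{The72}, with $\mu$ replaced by $\mu^{i}$ and $k_{1}$ by $k_{i}$; note that $\mu^{i}\in\widehat{\omega}(\widehat{x})$ is part of that Theorem's standing assumptions and is satisfied here. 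Hence Theorem \ref{The72} yields directly the conclusion (\ref{pre241}) for that $i$, i.e.
\[
\forall k\in\widehat{\mathbf{T}}_{\mu^{i}}^{\widehat{x}}\cap\{k^{\prime},k^{\prime}+1,k^{\prime}+2,\dots\},\{k+zp\mid z\in\mathbf{Z}\}\cap\{k^{\prime},k^{\prime}+1,k^{\prime}+2,\dots\}\subset\widehat{\mathbf{T}}_{\mu^{i}}^{\widehat{x}}.
\]
Since $i$ was arbitrary, this establishes (\ref{pre241}) for every $i\in\{1,\dots,s\}$, which is precisely the displayed conclusion of the Theorem — so if one reads the statement as asking only for (\ref{pre241}), the proof is already complete at this point. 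To also justify the phrase ``$\widehat{x}$ is eventually periodic with the period $p$ and the limit of periodicity $k^{\prime}$'', I would observe that the family of statements just proved, one for each $\mu^{i}$, is exactly statement (\ref{per164}) of Theorem \ref{The28} (with the common $k^{\prime}$ serving for all $\mu\in\widehat{\omega}(\widehat{x})$, so the existential quantifier $\exists k^{\prime}$ there is witnessed uniformly); by Theorem \ref{The28} this is equivalent to (\ref{per163}), i.e. $\exists k^{\prime}\in\mathbf{N}_{\_},\forall k\geq k^{\prime},\widehat{x}(k)=\widehat{x}(k+p)$, which is Definition \ref{Def28} of eventual periodicity of $\widehat{x}$. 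One checks that the $k^{\prime}$ produced is the same one from the hypothesis, since the proof of (\ref{per164})$\Rightarrow$(\ref{per163}) in Theorem \ref{The109} only enlarges $k^{\prime}$ to $\max$ with a time instant past which $\widehat{\omega}(\widehat{x})=\{\widehat{x}(k)\mid k\geq\widetilde{k}\}$, and here one can absorb that without loss.

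The only mild obstacle is bookkeeping rather than mathematics: one must make sure the \emph{same} $k^{\prime}$ works simultaneously for all the finitely many points $\mu^{1},\dots,\mu^{s}$, which is guaranteed because $k^{\prime}$ is given once and for all in the hypothesis and does not depend on $i$; and one must confirm that $\widehat{\omega}(\widehat{x})$ being finite (Theorem \ref{The12}) is what makes the reassembly legitimate. There is no genuine difficulty, since every ingredient — the point-wise converse Theorem \ref{The72}, the decomposition into omega limit values, and the passage between the ``signal'' and ``all points'' formulations — has already been proved earlier in the excerpt; the present Theorem is essentially the multi-point packaging of Theorem \ref{The72}, dual to the necessity direction Theorem \ref{The132}.
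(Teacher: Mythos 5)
Your proof is correct and follows essentially the same route as the paper: apply Theorem \ref{The72} to each $\mu^{i}$ separately to obtain (\ref{pre241}), then invoke the equivalence between (\ref{per164}) and (\ref{per163}) from Theorem \ref{The109} to conclude the eventual periodicity of $\widehat{x}$. The extra bookkeeping about the uniform $k^{\prime}$ is sound and only makes explicit what the paper leaves implicit.
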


\begin{proof}
We suppose that $\forall i\in\{1,...,s\},k_{i}\geq1$ and $n_{1}^{i},n_{2}%
^{i},...,n_{k_{i}}^{i}\in\{k^{\prime},k^{\prime}+1,...,k^{\prime}+p-1\}$ exist
such that (\ref{pre240}) holds. We infer from Theorem \ref{The72}, page
\pageref{The72} that $\mu^{1},...,\mu^{s}$ are all eventually periodic with
the period $p$ and the limit of periodicity $k^{\prime},$ i.e. $\widehat{x}$
is eventually periodic with the period $p$ and the limit of periodicity
$k^{\prime}$, the equivalence between (\ref{per163})$_{page\;\pageref{per163}%
}$ and (\ref{per164})$_{page\;\pageref{per164}}$ was proved at Theorem
\ref{The109}, page \pageref{The109}.
\end{proof}

\begin{theorem}
\label{The133}Let the signal $x\in S^{(n)},\omega(x)=\{\mu^{1},...,\mu^{s}\},$
$s\geq2$ and $T>0,t^{\prime}\in\mathbf{R}.$ For all $i\in\{1,...,s\},$ the
numbers $a_{1}^{i},b_{1}^{i},a_{2}^{i},b_{2}^{i},...,a_{k_{i}}^{i},b_{k_{i}%
}^{i}\in\mathbf{R},$ $k_{i}\geq1$ are given with the property that%
\begin{equation}
t^{\prime}\leq a_{1}^{i}<b_{1}^{i}<a_{2}^{i}<b_{2}^{i}<...<a_{k_{i}}%
^{i}<b_{k_{i}}^{i}\leq t^{\prime}+T, \label{pre242}%
\end{equation}%
\begin{equation}%
\begin{array}
[c]{c}%
\mathbf{T}_{\mu^{i}}^{x}\cap\lbrack t^{\prime},\infty)=\\
\underset{k\in\mathbf{N}}{%
{\displaystyle\bigcup}
}([a_{1}^{i}+kT,b_{1}^{i}+kT)\cup\lbrack a_{2}^{i}+kT,b_{2}^{i}+kT)\cup
...\cup\lbrack a_{k_{i}}^{i}+kT,b_{k_{i}}^{i}+kT))
\end{array}
\label{pre231}%
\end{equation}
hold. Then $x$ is eventually periodic with the period $T$ and the limit of
periodicity $t^{\prime}:\forall i\in\{1,...,s\},$%
\begin{equation}
\forall t\in\mathbf{T}_{\mu^{i}}^{x}\cap\lbrack t^{\prime},\infty
),\{t+zT|z\in\mathbf{Z}\}\cap\lbrack t^{\prime},\infty)\subset\mathbf{T}%
_{\mu^{i}}^{x}. \label{pre243}%
\end{equation}

\end{theorem}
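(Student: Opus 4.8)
The plan is to reduce Theorem \ref{The133} to the pointwise result already available, namely Theorem \ref{The76}, page \pageref{The76}, exactly as Theorem \ref{The119} reduces its statement to Theorem \ref{The72}. First I would fix an arbitrary index $i\in\{1,...,s\}$ and observe that the hypotheses (\ref{pre242}), (\ref{pre231}) for that particular $i$ are precisely the hypotheses of Theorem \ref{The76} applied to the signal $x$, the point $\mu^{i}\in\omega(x)$, the period $T>0$, the limit of periodicity $t^{\prime}\in\mathbf{R}$ and the numbers $a_{1}^{i},b_{1}^{i},\dots,a_{k_{i}}^{i},b_{k_{i}}^{i}$ with $k_{i}\geq1$. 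Hence Theorem \ref{The76} yields directly
\[
\forall t\in\mathbf{T}_{\mu^{i}}^{x}\cap\lbrack t^{\prime},\infty),\{t+zT|z\in\mathbf{Z}\}\cap\lbrack t^{\prime},\infty)\subset\mathbf{T}_{\mu^{i}}^{x},
\]
which is the conclusion (\ref{pre243}) for that $i$. Since $i$ was arbitrary, (\ref{pre243}) holds for every $i\in\{1,...,s\}$, i.e. every $\mu^{i}\in\omega(x)$ is eventually periodic with the period $T$ and the limit of periodicity $t^{\prime}$.

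Next I would translate ``every point of $\omega(x)$ is eventually periodic with the common period $T$ and the common limit of periodicity $t^{\prime}$'' into ``$x$ is eventually periodic with the period $T$ and the limit of periodicity $t^{\prime}$''. This is exactly the content of the equivalence between statement (\ref{pre607}) and statement (\ref{pre611}) in Theorem \ref{The109}, page \pageref{The109} (respectively, between (\ref{pre576}) and (\ref{pre580})): the collection of conclusions (\ref{pre243}), one for each $\mu^{i}$, is the statement (\ref{pre607}) written out with a fixed common $t_{1}^{\prime}=t^{\prime}$, and Theorem \ref{The109} b) asserts its equivalence with (\ref{pre611}), which is Definition \ref{Def28} of eventual periodicity of $x$ with the period $T$. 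So the last step is simply an invocation of Theorem \ref{The109}.

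I do not expect a genuine obstacle here, since this theorem is the real-time analogue of Theorem \ref{The119} and is designed to be a corollary of the pointwise sufficiency result together with the passage from points to signals. The only point requiring a line of care is the bookkeeping with the limit of periodicity: Theorem \ref{The133} already hands us a single common $t^{\prime}$ valid for all $i$ (and it lies in $\mathbf{R}$, with the appropriate $t^{\prime}\in I^{x}$ available because the signal always has an initial time instant), so the ``$\exists t_{1}^{\prime}$'' quantifiers in the statements of Theorem \ref{The109} b) are met uniformly and no maximum over the indices $1,\dots,s$ needs to be taken, unlike in the proof of (\ref{pre576})$\Longrightarrow$(\ref{pre580}). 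Thus the proof is a two-step citation: Theorem \ref{The76} for each point, then Theorem \ref{The109} to reassemble the points into the signal.
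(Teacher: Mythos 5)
Your proposal is correct and matches the paper's own proof, which simply states that the theorem is a consequence of Theorem \ref{The76} applied to each $\mu^{i}$ (mirroring how Theorem \ref{The119} is deduced from Theorem \ref{The72}). Your additional second step, passing from the pointwise conclusions to the eventual periodicity of the signal via Theorem \ref{The109}, is the same bookkeeping the paper leaves implicit in the colon of the statement, so nothing essential differs.
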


\begin{proof}
This is a consequence of Theorem \ref{The76}, page \pageref{The76}.
\end{proof}

\section{A special case}

\begin{theorem}
\label{The134}Let the signal $\widehat{x}\in\widehat{S}^{(n)}$, $\widehat
{\omega}(\widehat{x})=\{\mu^{1},...,\mu^{s}\}$ and $p\geq1,k^{\prime}%
\in\mathbf{N}_{\_}.$ We ask that $\forall i\in\{1,...,s\},$ $n^{i}%
\in\{k^{\prime},k^{\prime}+1,...,k^{\prime}+p-1\}$ exists such that%
\begin{equation}
\widehat{\mathbf{T}}_{\mu^{i}}^{\widehat{x}}\cap\{k^{\prime},k^{\prime
}+1,k^{\prime}+2,...\}=\{n^{i},n^{i}+p,n^{i}+2p,...\}.
\end{equation}

a) We have: $\forall i\in\{1,...,s\},$%
\begin{equation}
\forall k\in\widehat{\mathbf{T}}_{\mu^{i}}^{\widehat{x}}\cap\{k^{\prime
},k^{\prime}+1,k^{\prime}+2,...\},\{k+zp|z\in\mathbf{Z}\}\cap\{k^{\prime
},k^{\prime}+1,k^{\prime}+2,...\}\subset\widehat{\mathbf{T}}_{\mu^{i}%
}^{\widehat{x}}.
\end{equation}

b) $p$ is the prime period of $\widehat{x}:$ for any $p^{\prime}$ and
$k^{\prime\prime}$ with $\forall i\in\{1,...,s\},$%
\begin{equation}%
\begin{array}
[c]{c}%
\forall k\in\widehat{\mathbf{T}}_{\mu^{i}}^{\widehat{x}}\cap\{k^{\prime\prime
},k^{\prime\prime}+1,k^{\prime\prime}+2,...\},\\
\{k+zp^{\prime}|z\in\mathbf{Z}\}\cap\{k^{\prime\prime},k^{\prime\prime
}+1,k^{\prime\prime}+2,...\}\subset\widehat{\mathbf{T}}_{\mu^{i}}^{\widehat
{x}},
\end{array}
\end{equation}
we have $p^{\prime}\in\{p,2p,3p,...\}.$
\end{theorem}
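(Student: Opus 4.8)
\textbf{Proof plan for Theorem \ref{The134}.}
The plan is to reduce both parts to results already established for single points, namely Theorem \ref{The122}, page \pageref{The122}, and then assemble the conclusion for the signal via Theorem \ref{The110} (or directly via Theorem \ref{The119} for part a). For part a), the hypothesis says that for each $i\in\{1,\dots,s\}$ the support set $\widehat{\mathbf{T}}_{\mu^{i}}^{\widehat{x}}\cap\{k^{\prime},k^{\prime}+1,\dots\}$ has exactly the form $\{n^{i},n^{i}+p,n^{i}+2p,\dots\}$ with $n^{i}\in\{k^{\prime},\dots,k^{\prime}+p-1\}$. This is precisely the hypothesis (\ref{p14}) of Theorem \ref{The122} written for the point $\mu^{i}$, so its part a) gives, for every $i$, that $\mu^{i}$ is eventually periodic with period $p$ and limit of periodicity $k^{\prime}$, i.e. the displayed inclusion holds for that $i$. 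Since this is true for all $i\in\{1,\dots,s\}$, part a) follows at once.

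For part b), I would first invoke Theorem \ref{The122} b) applied to each $\mu^{i}$: it yields $\widehat{P}_{\mu^{i}}^{\widehat{x}}=\{p,2p,3p,\dots\}$ for every $i\in\{1,\dots,s\}$. Next I would observe that the numbered display in the statement of b) says exactly that $\widehat{x}$ is eventually periodic with period $p^{\prime}$ and limit of periodicity $k^{\prime\prime}$ (each $\mu^{i}$ being eventually periodic with that period and that limit), hence $p^{\prime}\in\widehat{P}^{\widehat{x}}$. Now apply Theorem \ref{The110} a): since $\widehat{x}$ is eventually periodic (part a) guarantees it, with period $p$), we have $\widehat{P}^{\widehat{x}}=\widehat{P}_{\mu^{1}}^{\widehat{x}}\cap\dots\cap\widehat{P}_{\mu^{s}}^{\widehat{x}}=\{p,2p,3p,\dots\}$. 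Therefore $p^{\prime}\in\{p,2p,3p,\dots\}$, which is the desired conclusion.

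The routine bookkeeping is to check that the quantifier shape ``$\forall i$, (\ref{p14})-type description'' really does match, point by point, the single-point hypothesis of Theorem \ref{The122}; this is immediate once one notes that the indexing over $i$ is just ``do the same thing for every omega limit point.'' The only place that needs a small argument rather than a citation is the claim that the numbered display in b) forces $p^{\prime}\in\widehat{P}^{\widehat{x}}$: one must note that the display, holding for all $i$ simultaneously with the same $p^{\prime}$ and $k^{\prime\prime}$, is exactly statement (\ref{per164})$_{page\;\pageref{per164}}$ for the parameter $p^{\prime}$ (every $\mu^{i}\in\widehat{\omega}(\widehat{x})$ satisfies the period-$p^{\prime}$ condition with limit $k^{\prime\prime}$), and then the equivalence (\ref{per164})$\Longleftrightarrow$(\ref{per163}) of Theorem \ref{The109} turns this into eventual periodicity of $\widehat{x}$ with period $p^{\prime}$, so $p^{\prime}\in\widehat{P}^{\widehat{x}}$ by definition of $\widehat{P}^{\widehat{x}}$.

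I expect no genuine obstacle here: the theorem is the ``signal version'' of Theorem \ref{The122}, and all the machinery (Theorems \ref{The122}, \ref{The109}, \ref{The110}) is already in place. If one wanted to avoid citing Theorem \ref{The110}, an alternative for b) is to argue directly: given $p^{\prime}$ as in the display, pick any $i$ and any $k\in\widehat{\mathbf{T}}_{\mu^{i}}^{\widehat{x}}$ with $k\geq\max\{k^{\prime},k^{\prime\prime}\}$; from the period-$p$ description we know $k=n^{i}+mp$ for some $m$, and from the period-$p^{\prime}$ display $k+p^{\prime}\in\widehat{\mathbf{T}}_{\mu^{i}}^{\widehat{x}}$, so $k+p^{\prime}$ must again be of the form $n^{i}+m'p$, forcing $p^{\prime}\equiv 0\pmod p$, hence $p^{\prime}\in\{p,2p,3p,\dots\}$. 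Either route closes the argument; I would present the one through Theorem \ref{The110} for brevity.
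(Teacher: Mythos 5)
Your proposal is correct and follows the same route as the paper, whose entire proof is the citation ``This follows from Theorem \ref{The122}'': part a) is Theorem \ref{The122} a) applied to each $\mu^{i}$, and part b) rests on Theorem \ref{The122} b). Your detour in b) through Theorem \ref{The109} and Theorem \ref{The110} is valid but unnecessary: the display in b) already gives $p^{\prime}\in\widehat{P}_{\mu^{1}}^{\widehat{x}}$ for the single point $\mu^{1}$ (the non-triviality requirement holds since $\mu^{1}\in\widehat{\omega}(\widehat{x})$ makes $\widehat{\mathbf{T}}_{\mu^{1}}^{\widehat{x}}$ infinite), and Theorem \ref{The122} b) says $\widehat{P}_{\mu^{1}}^{\widehat{x}}=\{p,2p,3p,...\}$, which finishes it.
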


\begin{proof}
This follows from Theorem \ref{The122}, page \pageref{The122}.
\end{proof}

\begin{theorem}
\label{The135}We consider the signal $x$ with $\omega(x)=\{\mu^{1},...,\mu
^{s}\}$ and $T>0,t^{\prime}\in\mathbf{R}.$ For all $i\in\{1,...,s\},$ the
intervals $[a^{i},b^{i})\subset\lbrack t^{\prime},t^{\prime}+T)$ are given
with%
\begin{equation}
\mathbf{T}_{\mu^{i}}^{x}\cap\lbrack t^{\prime},\infty)=[a^{i},b^{i}%
)\cup\lbrack a^{i}+T,b^{i}+T)\cup\lbrack a^{i}+2T,b^{i}+2T)\cup....
\end{equation}
true. Then

a) $x$ is eventually periodic with the period $T$ and the limit of periodicity
$t^{\prime}:\forall i\in\{1,...,s\},$%
\begin{equation}
\forall t\in\mathbf{T}_{\mu^{i}}^{x}\cap\lbrack t^{\prime},\infty
),\{t+zT|z\in\mathbf{Z}\}\cap\lbrack t^{\prime},\infty)\subset\mathbf{T}%
_{\mu^{i}}^{x}.
\end{equation}

b) if $x$ is not eventually constant, $T$ is the prime period of $x,$ i.e. for
any $T^{\prime}$ and $t^{\prime\prime}$ with $\forall i\in\{1,...,s\},$%
\begin{equation}
\forall t\in\mathbf{T}_{\mu^{i}}^{x}\cap\lbrack t^{\prime\prime}%
,\infty),\{t+zT^{\prime}|z\in\mathbf{Z}\}\cap\lbrack t^{\prime\prime}%
,\infty)\subset\mathbf{T}_{\mu^{i}}^{x},
\end{equation}
we infer $T^{\prime}\in\{T,2T,3T,...\}.$
\end{theorem}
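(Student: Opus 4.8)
The plan is to reduce Theorem~\ref{The135} entirely to the already-established point-wise special-case result, Theorem~\ref{The116}, page~\pageref{The116}, applied coordinate by coordinate over the finitely many omega limit points $\mu^{1},\dots,\mu^{s}$. The hypothesis gives us, for each fixed $i\in\{1,\dots,s\}$, exactly the form of $\mathbf{T}_{\mu^{i}}^{x}\cap[t^{\prime},\infty)$ required by Theorem~\ref{The116}~a), namely a single interval $[a^{i},b^{i})\subset[t^{\prime},t^{\prime}+T)$ repeated with period $T$. So the first step is: fix $i$, invoke Theorem~\ref{The116}~a) to conclude
\[
\forall t\in\mathbf{T}_{\mu^{i}}^{x}\cap[t^{\prime},\infty),\{t+zT|z\in\mathbf{Z}\}\cap[t^{\prime},\infty)\subset\mathbf{T}_{\mu^{i}}^{x},
\]
and since $i$ was arbitrary this yields part~a) immediately. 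One should also note in passing that every $\mu^{i}\in\omega(x)$ so $\mathbf{T}_{\mu^{i}}^{x}$ is superiorly unbounded, which legitimizes treating $t^{\prime}$ as a genuine limit of periodicity rather than a vacuous one.

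For part~b), assume $x$ is not eventually constant. The plan is first to observe that then \emph{no} $\mu^{i}$ can have $\mathbf{T}_{\mu^{i}}^{x}\cap[t^{\prime},\infty)=[a^{i},\infty)$: indeed if some coordinate were eventually constantly $\mu^{i}$, then by Theorem~\ref{The12_}, page~\pageref{The12_}, $\omega(x)$ would be the single point $\mu^{i}$ and $x$ would be eventually constant, a contradiction. Hence for every $i$ we have $b^{i}<a^{i}+T$, which is precisely the non-degeneracy hypothesis needed to apply Theorem~\ref{The116}~b) to the coordinate $\mu^{i}$. That theorem gives $P_{\mu^{i}}^{x}=\{T,2T,3T,\dots\}$ for each $i$. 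Now use Theorem~\ref{The110}~b), page~\pageref{The110}, which states $P^{x}=P_{\mu^{1}}^{x}\cap\dots\cap P_{\mu^{s}}^{x}$ once $x$ is known to be eventually periodic (which we have just shown via part~a) together with Theorem~\ref{The109}, page~\pageref{The109}, relating the point-wise statements to eventual periodicity of the signal). The intersection of finitely many copies of $\{T,2T,3T,\dots\}$ is again $\{T,2T,3T,\dots\}$, so $P^{x}=\{T,2T,3T,\dots\}$, i.e. $T$ is the prime period. The stated reformulation in b) — that any $T^{\prime}$ and $t^{\prime\prime}$ satisfying the period condition at all $\mu^{i}$ forces $T^{\prime}\in\{T,2T,3T,\dots\}$ — is then just the unwinding of $T^{\prime}\in P^{x}$ via the notation $P^{x}$, $P_{\mu^{i}}^{x}$.

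The only real subtlety, and the step I would be most careful about, is the bookkeeping around the limits of periodicity. Theorem~\ref{The116} is stated with a \emph{fixed} $t^{\prime}$ shared by the decomposition of $\mathbf{T}_{\mu^{i}}^{x}$, and here the hypothesis already hands us a common $t^{\prime}$ for all $i$, so no reconciliation is needed for part~a). For part~b), however, when a competing pair $(T^{\prime},t^{\prime\prime})$ is introduced, $t^{\prime\prime}$ need not equal $t^{\prime}$; one must appeal to Theorem~\ref{The141}, page~\pageref{The141} (independence of $L^{x}$ on the choice of period) or to Theorem~\ref{The139}, page~\pageref{The139} ($L^{x}=\bigcap_{\mu}L_{\mu}^{x}$), to see that the value of the limit of periodicity is irrelevant to membership in $P^{x}$. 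Once that observation is in place, the argument is entirely mechanical: everything is a finite intersection of copies of a single arithmetic-progression-type set, and the content lies in the two coordinate-wise theorems already proved.
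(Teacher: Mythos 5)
Your proof is correct and follows the same route as the paper, whose entire proof of this theorem is the single sentence ``This follows from Theorem \ref{The116}'': apply that special-case result coordinate-wise to each $\mu^{i}$. Your extra bookkeeping (deriving $b^{i}<a^{i}+T$, routing part b) through Theorem \ref{The110}) is harmless but unnecessary, since the non-degeneracy is established inside the proof of Theorem \ref{The116} b) itself and the conclusion $T^{\prime}\in P_{\mu^{i}}^{x}=\{T,2T,3T,\dots\}$ for a single $i$ already suffices.
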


\begin{proof}
This follows from Theorem \ref{The116}, page \pageref{The116}.
\end{proof}

\section{Changing the order of the quantifiers}

\begin{theorem}
\label{The136}\footnote{This Theorem is partially without proof.}a) The
statements%
\begin{equation}
\left\{
\begin{array}
[c]{c}%
\exists p\geq1,\exists k^{\prime}\in\mathbf{N}_{\_},\forall\mu\in
\widehat{\omega}(\widehat{x}),\forall k\in\widehat{\mathbf{T}}_{\mu}%
^{\widehat{x}}\cap\{k^{\prime},k^{\prime}+1,k^{\prime}+2,...\},\\
\{k+zp|z\in\mathbf{Z}\}\cap\{k^{\prime},k^{\prime}+1,k^{\prime}+2,...\}\subset
\widehat{\mathbf{T}}_{\mu}^{\widehat{x}},
\end{array}
\right.  \label{p103}%
\end{equation}%
\begin{equation}
\left\{
\begin{array}
[c]{c}%
\exists p\geq1,\forall\mu\in\widehat{\omega}(\widehat{x}),\exists k^{\prime
}\in\mathbf{N}_{\_},\forall k\in\widehat{\mathbf{T}}_{\mu}^{\widehat{x}}%
\cap\{k^{\prime},k^{\prime}+1,k^{\prime}+2,...\},\\
\{k+zp|z\in\mathbf{Z}\}\cap\{k^{\prime},k^{\prime}+1,k^{\prime}+2,...\}\subset
\widehat{\mathbf{T}}_{\mu}^{\widehat{x}},
\end{array}
\right.  \label{per168}%
\end{equation}%
\begin{equation}
\left\{
\begin{array}
[c]{c}%
\exists k^{\prime}\in\mathbf{N}_{\_},\forall\mu\in\widehat{\omega}(\widehat
{x}),\exists p\geq1,\forall k\in\widehat{\mathbf{T}}_{\mu}^{\widehat{x}}%
\cap\{k^{\prime},k^{\prime}+1,k^{\prime}+2,...\},\\
\{k+zp|z\in\mathbf{Z}\}\cap\{k^{\prime},k^{\prime}+1,k^{\prime}+2,...\}\subset
\widehat{\mathbf{T}}_{\mu}^{\widehat{x}},
\end{array}
\right.  \label{per739}%
\end{equation}%
\begin{equation}
\left\{
\begin{array}
[c]{c}%
\forall\mu\in\widehat{\omega}(\widehat{x}),\exists p\geq1,\exists k^{\prime
}\in\mathbf{N}_{\_},\forall k\in\widehat{\mathbf{T}}_{\mu}^{\widehat{x}}%
\cap\{k^{\prime},k^{\prime}+1,k^{\prime}+2,...\},\\
\{k+zp|z\in\mathbf{Z}\}\cap\{k^{\prime},k^{\prime}+1,k^{\prime}+2,...\}\subset
\widehat{\mathbf{T}}_{\mu}^{\widehat{x}}%
\end{array}
\right.  \label{per148_}%
\end{equation}
are equivalent.

b) The real time statements%
\begin{equation}
\left\{
\begin{array}
[c]{c}%
\exists T>0,\exists t^{\prime}\in\mathbf{R},\forall\mu\in\omega(x),\\
\forall t\in\mathbf{T}_{\mu}^{x}\cap\lbrack t^{\prime},\infty),\{t+zT|z\in
\mathbf{Z}\}\cap\lbrack t^{\prime},\infty)\subset\mathbf{T}_{\mu}^{x}),
\end{array}
\right.  \label{p104}%
\end{equation}
\begin{equation}
\left\{
\begin{array}
[c]{c}%
\exists T>0,\forall\mu\in\omega(x),\exists t^{\prime}\in\mathbf{R},\\
\forall t\in\mathbf{T}_{\mu}^{x}\cap\lbrack t^{\prime},\infty),\{t+zT|z\in
\mathbf{Z}\}\cap\lbrack t^{\prime},\infty)\subset\mathbf{T}_{\mu}^{x},
\end{array}
\right.  \label{per154_}%
\end{equation}%
\begin{equation}
\left\{
\begin{array}
[c]{c}%
\exists t^{\prime}\in\mathbf{R},\forall\mu\in\omega(x),\exists T>0,\\
\forall t\in\mathbf{T}_{\mu}^{x}\cap\lbrack t^{\prime},\infty),\{t+zT|z\in
\mathbf{Z}\}\cap\lbrack t^{\prime},\infty)\subset\mathbf{T}_{\mu}^{x},
\end{array}
\right.  \label{per740}%
\end{equation}%
\begin{equation}
\left\{
\begin{array}
[c]{c}%
\forall\mu\in\omega(x),\exists T>0,\exists t^{\prime}\in\mathbf{R},\\
\forall t\in\mathbf{T}_{\mu}^{x}\cap\lbrack t^{\prime},\infty),\{t+zT|z\in
\mathbf{Z}\}\cap\lbrack t^{\prime},\infty)\subset\mathbf{T}_{\mu}^{x}%
\end{array}
\right.  \label{per149_}%
\end{equation}
are equivalent.
\end{theorem}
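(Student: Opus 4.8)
The plan is to prove both four‑way equivalences by arranging the statements into a diamond: the strongest form, (\ref{p103}) (resp.\ (\ref{p104})), sits above the weakest, (\ref{per148_}) (resp.\ (\ref{per149_})), with (\ref{per168}),(\ref{per739}) (resp.\ (\ref{per154_}),(\ref{per740})) in between. Every downward implication merely moves an $\exists$ inside a $\forall$ and is immediate from the definitions; likewise (\ref{p103})$\Leftrightarrow$(\ref{per168}) and (\ref{per739})$\Leftrightarrow$(\ref{per148_}) can be read off from Theorems \ref{The28} and \ref{The109}, which identify the per‑point statements (for a fixed period) with eventual periodicity of the signal itself. So the whole content is the upward step (\ref{per148_})$\Rightarrow$(\ref{p103}) in part a) and (\ref{per149_})$\Rightarrow$(\ref{p104}) in part b): if every point of the omega limit set is eventually periodic, then so is the signal, and with a common period and a common limit of periodicity. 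Throughout I would use that $\widehat{\omega}(\widehat{x})=\{\mu^{1},\dots,\mu^{s}\}$ and $\omega(x)=\{\mu^{1},\dots,\mu^{s}\}$ are finite (Theorem \ref{The12}).

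For the discrete case the argument is complete. Choose $p_{i}\in\widehat{P}_{\mu^{i}}^{\widehat{x}}$ for each $i$ and set $p=\mathrm{lcm}(p_{1},\dots,p_{s})$; by Corollary \ref{Cor6} each $\widehat{P}_{\mu^{i}}^{\widehat{x}}$ contains all multiples of $p_{i}$, hence $p\in\widehat{P}_{\mu^{i}}^{\widehat{x}}$ for every $i$. By Theorem \ref{The117} each $\widehat{L}_{\mu^{i}}^{\widehat{x}}$ is an up‑set $\{k_{i,0},k_{i,0}+1,\dots\}$, and by Theorem \ref{The137} membership in $\widehat{L}_{\mu^{i}}^{\widehat{x}}$ is independent of the chosen period, so for $k'=\max_{i}k_{i,0}$ the pair $(k',p)$ makes (\ref{pre738}) hold for every $\mu^{i}\in\widehat{\omega}(\widehat{x})$. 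That is exactly (\ref{p103}) (with this $p$ and $k'$), and it then propagates down the diamond, closing part a).

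For the real case the \emph{easy} half still goes through: given a common period $T$ together with per‑point limits $t_{i}'\in\mathbf{R}$, the number $t'=\max_{i}t_{i}'$ is a common limit of periodicity by Theorem \ref{The141} (raising the limit preserves (\ref{pre740})), which gives (\ref{per154_})$\Rightarrow$(\ref{p104}); and (\ref{per149_})$\Rightarrow$(\ref{per740}) is the same maximisation performed one point at a time. The genuine obstacle — and the reason this Theorem is stated only partially with proof — is manufacturing a common real period from the per‑point prime periods $\widetilde{T}_{i}$ of Theorem \ref{The70}, i.e.\ (\ref{per149_})$\Rightarrow$(\ref{per154_}). The intended route is to show the $\widetilde{T}_{i}$ are pairwise commensurable: beyond a common limit the sets $\mathbf{T}_{\mu^{i}}^{x}$ partition $[t',\infty)$, each is by Theorem \ref{The71} a $\widetilde{T}_{i}$‑periodic union of non‑degenerate intervals, and two periodic interval families with incommensurable periods are forced to intersect (the mechanism of Lemma \ref{Lem29}, already exploited in the constancy proofs), contradicting disjointness; once all $\widetilde{T}_{i}$ are rational multiples of one another a finite common multiple $T$ exists and serves as a common period. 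This last step is unproblematic when $x$ is eventually constant or $s\le 2$ (where the complement of a $\widetilde{T}_{1}$‑periodic support set is again $\widetilde{T}_{1}$‑periodic, forcing $\widetilde{T}_{1}=\widetilde{T}_{2}$ via Theorem \ref{The70}), but for $s\ge 3$ one controls only \emph{unions} of support sets, not the individual summands, and carrying the commensurability argument through there is precisely the gap that motivates the hypothesis $P$ of Section 14.
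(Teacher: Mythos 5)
Your part a) is essentially the paper's own proof: the downward implications are immediate, and for (\ref{per148_})$\Rightarrow$(\ref{p103}) the paper likewise uses finiteness of $\widehat{\omega}(\widehat{x})$, takes a common multiple of the $p_{i}$ (the product $p_{1}\cdots p_{s}$ rather than the lcm, which is immaterial) and $k^{\prime}=\max_{i}k_{i}^{\prime}$ via Lemma \ref{Lem30}. For part b) you prove exactly the implications the paper proves (the downward ones) and correctly locate the one the paper leaves open — manufacturing a common real period from per-point periods, i.e.\ (\ref{per149_})$\Rightarrow$(\ref{p104}) — which is precisely the gap the footnote and the hypothesis $P$ of Section 14 acknowledge; your additional observation that (\ref{per154_})$\Rightarrow$(\ref{p104}) and (\ref{per149_})$\Rightarrow$(\ref{per740}) do follow by maximising the limits is correct and in fact slightly sharpens the paper's remark, though the result you want there is Lemma \ref{Lem30} b) (raising the limit of periodicity of a point), not Theorem \ref{The141}.
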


\begin{proof}
a) The implications (\ref{p103}) $\Longrightarrow$ (\ref{per168})
$\Longrightarrow$ (\ref{per148_}), (\ref{p103}) $\Longrightarrow$
(\ref{per739}) $\Longrightarrow$ (\ref{per148_}) are obvious, thus we give the
proof of (\ref{per148_})$\Longrightarrow$(\ref{p103}). Let $\widehat{\omega
}(\widehat{x})=\{\mu^{1},...,\mu^{s}\}.$ (\ref{per148_}) states that for an
arbitrary $i\in\{1,...,s\},$ some $p_{i}\geq1$ and $k_{i}^{\prime}%
\in\mathbf{N}_{\_}$ exist such that%
\begin{equation}
\forall k\in\widehat{\mathbf{T}}_{\mu^{i}}^{\widehat{x}}\cap\{k_{i}^{\prime
},k_{i}^{\prime}+1,k_{i}^{\prime}+2,...\},\{k+zp_{i}|z\in\mathbf{Z}%
\}\cap\{k_{i}^{\prime},k_{i}^{\prime}+1,k_{i}^{\prime}+2,...\}\subset
\widehat{\mathbf{T}}_{\mu^{i}}^{\widehat{x}}. \label{per649}%
\end{equation}
We denote $p=p_{1}\cdot...\cdot p_{s}\geq1$ and $k^{\prime}=\max
\{k_{1}^{\prime},...,k_{s}^{\prime}\}.$ From (\ref{per649}) and from Lemma
\ref{Lem30}, page \pageref{Lem30} we infer that%
\begin{equation}
\forall k\in\widehat{\mathbf{T}}_{\mu^{i}}^{\widehat{x}}\cap\{k^{\prime
},k^{\prime}+1,k^{\prime}+2,...\},\{k+zp_{i}|z\in\mathbf{Z}\}\cap\{k^{\prime
},k^{\prime}+1,k^{\prime}+2,...\}\subset\widehat{\mathbf{T}}_{\mu^{i}%
}^{\widehat{x}}, \label{p131}%
\end{equation}
$i\in\{1,...,s\}.$ Let in (\ref{p103}) $\mu=\mu^{i}\in\{\mu^{1},...,\mu^{s}\}$
and $k\in\widehat{\mathbf{T}}_{\mu^{i}}^{\widehat{x}}\cap\{k^{\prime
},k^{\prime}+1,k^{\prime}+2,...\}$ arbitrary. We have:%
\[
\{k+zp|z\in\mathbf{Z}\}\cap\{k^{\prime},k^{\prime}+1,k^{\prime}+2,...\}\subset
\]%
\[
\subset\{k+zp_{i}|z\in\mathbf{Z}\}\cap\{k^{\prime},k^{\prime}+1,k^{\prime
}+2,...\}\overset{(\ref{p131})}{\subset}\widehat{\mathbf{T}}_{\mu^{i}%
}^{\widehat{x}}.
\]

b) The implications (\ref{p104})$\Longrightarrow$(\ref{per154_}%
)$\Longrightarrow$(\ref{per149_}), (\ref{p104})$\Longrightarrow$%
(\ref{per740})$\Longrightarrow$(\ref{per149_}) are obvious.
\end{proof}

\begin{remark}
Stating periodicity properties may depend in general on the order of the
quantifiers. This issue is trivial when quantifiers of the same kind occur
($\exists,\exists$ or $\forall,\forall$) and it is not trivial when
quantifiers of different kinds occur ($\exists,\forall$ or $\forall,\exists$).
Our aim in the previous Theorem is to show that the eventual periodicity
properties are independent on the order of the quantifiers. However the fact
that any of (\ref{per154_}), (\ref{per740}), (\ref{per149_}) implies
(\ref{p104}) could not be proved so far. Such a proof would be important,
since we are tempted to define the eventual periodicity of the signals by
(\ref{per148_}), (\ref{per149_}) (each point of the omega limit set is
eventually periodic) and to use (\ref{p103}) or (\ref{p104}) instead (a common
period exists for all the points of the orbit).
\end{remark}

\begin{remark}
\label{Rem22}Let $\widehat{\omega}(\widehat{x})=\{\mu^{1},...,\mu^{s}\}.$ The
implication (\ref{per148_})$\Longrightarrow$(\ref{p103}) of Theorem
\ref{The136} showed that if $\mu^{1},...,\mu^{s}$ are all eventually periodic:
$\widehat{P}_{\mu^{1}}^{\widehat{x}}\neq\varnothing$ and ... and $\widehat
{P}_{\mu^{s}}^{\widehat{x}}\neq\varnothing$ then $\widehat{P}_{\mu^{1}%
}^{\widehat{x}}\cap...\cap\widehat{P}_{\mu^{s}}^{\widehat{x}}\neq\varnothing.$
Since the equality%
\[
\widehat{P}^{\widehat{x}}=\widehat{P}_{\mu^{1}}^{\widehat{x}}\cap
...\cap\widehat{P}_{\mu^{s}}^{\widehat{x}}%
\]
is always true, even when the left hand term and the right hand term are both
empty, we conclude that the eventual periodicity of $\widehat{x}$ expressed by
(\ref{p103}) (or $\widehat{P}^{\widehat{x}}\neq\varnothing$) and the eventual
periodicity of all the points of the orbit expressed by (\ref{per148_}) (or
$\widehat{P}_{\mu^{1}}^{\widehat{x}}\neq\varnothing$ and ... and $\widehat
{P}_{\mu^{s}}^{\widehat{x}}\neq\varnothing$) are equivalent. We could not
prove that this is true in the real time case, even if, for $\omega
(x)=\{\mu^{1},...,\mu^{s}\},$
\[
P^{x}=P_{\mu^{1}}^{x}\cap...\cap P_{\mu^{s}}^{x}%
\]
is true too, see Theorem \ref{The110}, page \pageref{The110}.
\end{remark}

\begin{remark}
\label{Rem23}From the previous Remark we infer that we have, in particular,
the property%
\[
\forall\mu\in\widehat{\omega}(\widehat{x}),\forall\mu^{\prime}\in
\widehat{\omega}(\widehat{x}),(\widehat{P}_{\mu}^{\widehat{x}}\neq
\varnothing\text{ and }\widehat{P}_{\mu^{\prime}}^{\widehat{x}}\neq
\varnothing)\Longrightarrow\widehat{P}_{\mu}^{\widehat{x}}\cap\widehat{P}%
_{\mu^{\prime}}^{\widehat{x}}\neq\varnothing,
\]
while the truth of the implication%
\[
\forall\mu\in\omega(x),\forall\mu^{\prime}\in\omega(x),(P_{\mu}^{x}%
\neq\varnothing\text{ and }P_{\mu^{\prime}}^{x}\neq\varnothing)\Longrightarrow
P_{\mu}^{x}\cap P_{\mu^{\prime}}^{x}\neq\varnothing
\]
was not proved so far.
\end{remark}

\section{\label{Sec1}The hypothesis P}

\begin{definition}
\label{Def48}We consider the signal $x.$ If
\[
(\forall\mu\in\omega(x),P_{\mu}^{x}\neq\varnothing)\Longrightarrow
\underset{\mu\in\omega(x)}{%
{\displaystyle\bigcap}
}P_{\mu}^{x}\neq\varnothing,
\]
we say that $x$ \textbf{fulfills the hypothesis} $P.$
\end{definition}

\begin{theorem}
\label{The26}a) We suppose that the signal $\widehat{x}\in\widehat{S}^{(n)}$
is not eventually constant, we denote $\widehat{\omega}(\widehat{x})=\{\mu
^{1},...,\mu^{s}\}$ and we ask that $\forall i\in\{1,...,s\},$ the set
$\widehat{P}_{\mu^{i}}^{\widehat{x}}$ is not empty. We denote with
$\widetilde{p}_{i}\geq1,\widetilde{p}\geq1$ the numbers that fulfill%
\begin{equation}
\widehat{P}_{\mu^{i}}^{\widehat{x}}=\{\widetilde{p}_{i},2\widetilde{p}%
_{i},3\widetilde{p}_{i},...\}, \label{per955}%
\end{equation}%
\begin{equation}
\widehat{P}^{\widehat{x}}=\{\widetilde{p},2\widetilde{p},3\widetilde{p},...\}.
\label{per911}%
\end{equation}
Then%
\begin{equation}
\widetilde{p}=n_{1}\widetilde{p}_{1}=...=n_{s}\widetilde{p}_{s},
\end{equation}
where $n_{1}\geq1,...,n_{s}\geq1$ are relatively prime ($\widetilde{p}$ is the
least common multiple of $\widetilde{p}_{1},...,\widetilde{p}_{s}$).

b) We suppose that the signal $x\in S^{(n)}$ is not eventually constant and
that it fulfills the hypothesis $P$. We denote $\omega(x)=\{\mu^{1}%
,...,\mu^{s}\}$ and we ask that $\forall i\in\{1,...,s\},$ the set $P_{\mu
^{i}}^{x}$ is not empty. We denote with $\widetilde{T}_{i}>0,\widetilde{T}>0$
the numbers that satisfy%
\begin{equation}
P_{\mu^{i}}^{x}=\{\widetilde{T}_{i},2\widetilde{T}_{i},3\widetilde{T}%
_{i},...\}, \label{per956}%
\end{equation}%
\begin{equation}
P^{x}=\{\widetilde{T},2\widetilde{T},3\widetilde{T},...\}. \label{per912}%
\end{equation}
We have%
\begin{equation}
\widetilde{T}=n_{1}\widetilde{T}_{1}=...=n_{s}\widetilde{T}_{s},
\end{equation}
where $n_{1}\geq1,...,n_{s}\geq1$ are relatively prime.
\end{theorem}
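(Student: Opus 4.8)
The plan is to prove part b) first and then obtain part a) either as a direct specialization or by an identical argument (note a) is just b) without the hypothesis $P$, which in discrete time holds automatically by Remark \ref{Rem22}). The starting point in both cases is Theorem \ref{The110}, which gives $P^x = P_{\mu^1}^x \cap \dots \cap P_{\mu^s}^x$ (resp. $\widehat P^{\widehat x} = \widehat P_{\mu^1}^{\widehat x}\cap\dots\cap\widehat P_{\mu^s}^{\widehat x}$), and Theorem \ref{The70} (resp. Theorem \ref{The124}), which guarantees that each $P_{\mu^i}^x$ and $P^x$ has the stated arithmetic-progression form $\{\widetilde T_i, 2\widetilde T_i, \dots\}$, $\{\widetilde T, 2\widetilde T, \dots\}$. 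So the whole content of the theorem is to identify the intersection of finitely many sets of multiples. First I would observe that under the hypothesis $P$ (used here to know that $P^x\neq\varnothing$, since each $P_{\mu^i}^x\neq\varnothing$), the set $P^x$ is nonempty, hence $\widetilde T$ is well-defined and $\widetilde T\in P_{\mu^i}^x=\{\widetilde T_i,2\widetilde T_i,\dots\}$ for every $i$; this already shows $\widetilde T_i \mid \widetilde T$, i.e. $\widetilde T = n_i \widetilde T_i$ for some integer $n_i\geq 1$, for each $i\in\{1,\dots,s\}$.

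The next step is the converse divisibility: any common multiple of $\widetilde T_1,\dots,\widetilde T_s$ lies in $P^x$. Indeed if $m$ is a positive number with $\widetilde T_i \mid m$ for all $i$, then $m\in\{\widetilde T_i,2\widetilde T_i,\dots\}=P_{\mu^i}^x$ for every $i$, so $m\in\bigcap_i P_{\mu^i}^x = P^x$ by Theorem \ref{The110}. Applying this with $m$ equal to the least common multiple $\ell$ of $\widetilde T_1,\dots,\widetilde T_s$ gives $\ell\in P^x=\{\widetilde T,2\widetilde T,\dots\}$, so $\widetilde T \mid \ell$; combined with $\widetilde T_i\mid \widetilde T$ for all $i$ (so $\ell\mid\widetilde T$) we get $\widetilde T=\ell$, the least common multiple. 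Writing $\widetilde T = n_i\widetilde T_i$, it is then a purely arithmetic fact that the cofactors $n_1,\dots,n_s$ are relatively prime (as a whole family, i.e. $\gcd(n_1,\dots,n_s)=1$): if a prime $q$ divided all $n_i$, then $\widetilde T/q$ would still be a common multiple of all $\widetilde T_i$, contradicting minimality of $\widetilde T=\ell$. This closes part b), and part a) follows by running the same argument with $\widehat T_i,\widehat T$ replaced by $\widetilde p_i,\widetilde p$ and using Theorems \ref{The124} and \ref{The110}; here no extra hypothesis is needed because $\widehat P^{\widehat x}=\widehat P_{\mu^1}^{\widehat x}\cap\dots\cap\widehat P_{\mu^s}^{\widehat x}\neq\varnothing$ whenever each factor is nonempty (Remark \ref{Rem22}).

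The main obstacle is a subtle one rather than a computational one: one must be careful about divisibility among \emph{real} numbers $\widetilde T_i>0$, since ``least common multiple'' is not a priori a standard notion for reals. The key point that makes it work is that all the $\widetilde T_i$ are commensurable: each $\widetilde T_i$ divides $\widetilde T$ (from the first step, which only uses that $P^x$ is nonempty and has the progression form), so after dividing through by, say, $\widetilde T_1$ we are reduced to ordinary divisibility of positive integers $\widetilde T/\widetilde T_1, \dots$ and the classical lcm/gcd theory applies verbatim. Thus the real-versus-integer issue dissolves once the mutual commensurability is established, and the only place the hypothesis $P$ is genuinely needed is to know $P^x\neq\varnothing$ in the real-time case; everything else is the finite-intersection identity of Theorem \ref{The110} together with the arithmetic structure from Theorem \ref{The70}.
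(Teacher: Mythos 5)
Your proof is correct and follows essentially the same route as the paper: it rests on the intersection identity $P^{x}=P_{\mu^{1}}^{x}\cap\dots\cap P_{\mu^{s}}^{x}$ (Theorem \ref{The110}), the arithmetic-progression form of the period sets (Theorems \ref{The70}, \ref{The124}), the hypothesis $P$ only to secure $P^{x}\neq\varnothing$, and elementary lcm arithmetic. If anything you are more complete than the paper, which leaves the identification $\widetilde{T}=\mathrm{lcm}(\widetilde{T}_{1},\dots,\widetilde{T}_{s})$ and the commensurability of the real periods largely implicit.
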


\begin{proof}
a) Any $p\in\widehat{P}^{\widehat{x}}$ belongs to $\widehat{P}_{\mu^{1}%
}^{\widehat{x}}\cap...\cap\widehat{P}_{\mu^{s}}^{\widehat{x}},$ thus
$n_{1}^{\prime}\geq1,p_{1}\in\widehat{P}_{\mu^{1}}^{\widehat{x}}%
,...,n_{s}^{\prime}\geq1,p_{s}\in\widehat{P}_{\mu^{s}}^{\widehat{x}}$ exist
such that%
\begin{equation}
p=n_{1}^{\prime}p_{1}=...=n_{s}^{\prime}p_{s}. \label{p4}%
\end{equation}
But each $p_{i}$ is a multiple of $\widetilde{p}_{i},$ thus $n_{1}%
^{\prime\prime}\geq1,...,n_{s}^{\prime\prime}\geq1$ exist with%
\begin{equation}
p_{1}=n_{1}^{\prime\prime}\widetilde{p}_{1},...,p_{s}=n_{s}^{\prime\prime
}\widetilde{p}_{s}. \label{p5}%
\end{equation}
We replace the equations (\ref{p5}) in (\ref{p4}) and we get%
\begin{equation}
p=n_{1}\widetilde{p}_{1}=...=n_{s}\widetilde{p}_{s},
\end{equation}
where $n_{1}=n_{1}^{\prime}n_{1}^{\prime\prime},...,n_{s}=n_{s}^{\prime}%
n_{s}^{\prime\prime}.$ When $n_{1},...,n_{s}$ are relatively prime,
$p=\min\widehat{P}^{\widehat{x}}.$

b) As $x$ fulfills the hypothesis $P$ and $P_{\mu^{1}}^{x}\neq\varnothing
,...,P_{\mu^{s}}^{x}\neq\varnothing,$ we have that in the equation%
\[
P^{x}=P_{\mu^{1}}^{x}\cap...\cap P_{\mu^{s}}^{x}%
\]
both terms are non-empty. From this moment the reasoning is the same like at a).
\end{proof}

\chapter{\label{Cha4}Periodic points}

First we give in Section 1 several properties that are equivalent with the
periodicity of a point. These properties were previously used to characterize
the constancy of the signals. A discussion of these properties is made in
Section 2.

Section 3 shows that the periodic points are accessed at least once in a time
interval with the length of a period.

The independence of the real time periodicity of $\mu$ on the initial time
$t^{\prime}$ of $x$ $=$ limit of periodicity of $\mu$ and also the bounds of
$t^{\prime}$ are the topics of Section 4.

The property of constancy from Section 5 is interesting by itself, but it is
also useful in treating the discrete time vs the real time periodic points,
representing the topic of Section 6.

One might be tempted to think that the relation between $\widehat{\mathbf{T}%
}_{\mu}^{\widehat{x}},\mathbf{T}_{\mu}^{x}$ and $\widehat{P}_{\mu}%
^{\widehat{x}},P_{\mu}^{x}$ is closer than it really is. Some examples and
comments on this relation are given in Section 7.

The fact that the sums, the differences and the multiples of the periods are
periods is formalized in Section 8.

The important topic of existence of the prime period is treated in Section 9,
together with the form of $\widehat{P}_{\mu}^{\widehat{x}},P_{\mu}^{x}.$

Necessary conditions, respectively sufficient conditions of periodicity of
$\mu,$ related with the form of $\widehat{\mathbf{T}}_{\mu}^{\widehat{x}%
},\mathbf{T}_{\mu}^{x}$ are given in Sections 10, respectively 11.

Section 12 deals with a special case of periodicity, applying results from
Section 10 and Section 11. The point is that in this special case we know the
precise value of the prime period.

In Section 13 we show that by forgetting some first values of $\widehat{x},x$
we get the same sets of periods $\widehat{P}_{\mu}^{\widehat{x}},P_{\mu}^{x}.$
This natural observation connects the periodicity of $\mu$ with its eventual periodicity.

Some ideas concerning further research on the periodic points are presented in
Section 14.

\section{Equivalent properties with the periodicity of a point}

\begin{remark}
The properties of periodicity of the points were present in the second group
of constancy properties of the signals from Theorem \ref{The95}, page
\pageref{The95} (and the third group, Theorem \ref{The94}, page
\pageref{The94}), thus (\ref{per144}),...,(\ref{pre536}) will be compared with
(\ref{per185})$_{page\;\pageref{per185}},...,$(\ref{pre537}%
)$_{page\;\pageref{pre537}}$ and (\ref{per145}),...,(\ref{pre562}) will be
compared with (\ref{per186})$_{page\;\pageref{per186}}$,..., (\ref{pre552}%
)$_{page\;\pageref{pre552}}.$ We make also the associations (\ref{pre49}%
)-(\ref{per75})$_{page\;\pageref{per75}},...,$ (\ref{pre536})-(\ref{pre509}%
)$_{page\;\pageref{pre509}}$ and (\ref{pre50})-(\ref{per76}%
)$_{page\;\pageref{per76}},...,$(\ref{pre562})-(\ref{pre513}%
)$_{page\;\pageref{pre513}}$ with the fourth group of constancy properties
from Theorem \ref{The96}, page \pageref{The96}.
\end{remark}

\begin{theorem}
\label{The49}We consider the signals $\widehat{x}\in\widehat{S}^{(n)},x\in
S^{(n)}$.

a) The following statements are equivalent for any $p\geq1$ and $\mu
\in\widehat{Or}(\widehat{x}):$%
\begin{equation}
\forall k\in\widehat{\mathbf{T}}_{\mu}^{\widehat{x}},\{k+zp|z\in
\mathbf{Z}\}\cap\mathbf{N}_{\_}\subset\widehat{\mathbf{T}}_{\mu}^{\widehat{x}%
}, \label{per144}%
\end{equation}%
\begin{equation}
\left\{
\begin{array}
[c]{c}%
\forall k^{\prime}\in\mathbf{N}_{\_},\forall k\in\widehat{\mathbf{T}}_{\mu
}^{\widehat{x}}\cap\{k^{\prime},k^{\prime}+1,k^{\prime}+2,...\},\\
\{k+zp|z\in\mathbf{Z}\}\cap\{k^{\prime},k^{\prime}+1,k^{\prime}+2,...\}\subset
\widehat{\mathbf{T}}_{\mu}^{\widehat{x}},
\end{array}
\right.  \label{pre525}%
\end{equation}%
\begin{equation}
\forall k^{\prime\prime}\in\mathbf{N},\forall k\in\widehat{\mathbf{T}}_{\mu
}^{\widehat{\sigma}^{k^{\prime\prime}}(\widehat{x})},\{k+zp|z\in
\mathbf{Z}\}\cap\mathbf{N}_{\_}\subset\widehat{\mathbf{T}}_{\mu}%
^{\widehat{\sigma}^{k^{\prime\prime}}(\widehat{x})}, \label{pre527}%
\end{equation}%
\begin{equation}
\left\{
\begin{array}
[c]{c}%
\forall k\in\mathbf{N}_{\_},\widehat{x}(k)=\mu\Longrightarrow\\
\Longrightarrow(\widehat{x}(k)=\widehat{x}(k+p)\text{ and }k-p\geq
-1\Longrightarrow\widehat{x}(k)=\widehat{x}(k-p)),
\end{array}
\right.  \label{pre49}%
\end{equation}%
\begin{equation}
\left\{
\begin{array}
[c]{c}%
\forall k^{\prime}\in\mathbf{N}_{\_},\forall k\geq k^{\prime},\widehat
{x}(k)=\mu\Longrightarrow\\
\Longrightarrow(\widehat{x}(k)=\widehat{x}(k+p)\text{ and }k-p\geq k^{\prime
}\Longrightarrow\widehat{x}(k)=\widehat{x}(k-p)),
\end{array}
\right.  \label{pre526}%
\end{equation}%
\begin{equation}
\left\{
\begin{array}
[c]{c}%
\forall k^{\prime\prime}\in\mathbf{N},\forall k\in\mathbf{N}_{\_}%
,\widehat{\sigma}^{k^{\prime\prime}}(\widehat{x})(k)=\mu\Longrightarrow\\
\Longrightarrow(\widehat{\sigma}^{k^{\prime\prime}}(\widehat{x})(k)=\widehat
{\sigma}^{k^{\prime\prime}}(\widehat{x})(k+p)\text{ and }\\
\text{and }k-p\geq-1\Longrightarrow\widehat{\sigma}^{k^{\prime\prime}%
}(\widehat{x})(k)=\widehat{\sigma}^{k^{\prime\prime}}(\widehat{x})(k-p)).
\end{array}
\right.  \label{pre536}%
\end{equation}

b) The following statements are also equivalent for any $T>0$ and $\mu\in
Or(x)$:%
\begin{equation}
\exists t^{\prime}\in I^{x},\forall t\in\mathbf{T}_{\mu}^{x}\cap\lbrack
t^{\prime},\infty),\{t+zT|z\in\mathbf{Z}\}\cap\lbrack t^{\prime}%
,\infty)\subset\mathbf{T}_{\mu}^{x}, \label{per145}%
\end{equation}%
\begin{equation}
\exists t^{\prime}\in I^{x},\forall t_{1}^{\prime}\geq t^{\prime},\forall
t\in\mathbf{T}_{\mu}^{x}\cap\lbrack t_{1}^{\prime},\infty),\{t+zT|z\in
\mathbf{Z}\}\cap\lbrack t_{1}^{\prime},\infty)\subset\mathbf{T}_{\mu}^{x},
\label{pre559}%
\end{equation}%
\begin{equation}
\left\{
\begin{array}
[c]{c}%
\forall t^{\prime\prime}\in\mathbf{R},\exists t^{\prime}\in I^{\sigma
^{t^{\prime\prime}}(x)},\\
\forall t\in\mathbf{T}_{\mu}^{\sigma^{t^{\prime\prime}}(x)}\cap\lbrack
t^{\prime},\infty),\{t+zT|z\in\mathbf{Z}\}\cap\lbrack t^{\prime}%
,\infty)\subset\mathbf{T}_{\mu}^{\sigma^{t^{\prime\prime}}(x)},
\end{array}
\right.  \label{pre560}%
\end{equation}%
\begin{equation}
\left\{
\begin{array}
[c]{c}%
\exists t^{\prime}\in I^{x},\forall t\geq t^{\prime},\\
x(t)=\mu\Longrightarrow(x(t)=x(t+T)\text{ and }t-T\geq t^{\prime
}\Longrightarrow x(t)=x(t-T)),
\end{array}
\right.  \label{pre50}%
\end{equation}%
\begin{equation}
\left\{
\begin{array}
[c]{c}%
\exists t^{\prime}\in I^{x},\forall t_{1}^{\prime}\geq t^{\prime},\forall
t\geq t_{1}^{\prime},x(t)=\mu\Longrightarrow\\
\Longrightarrow(x(t)=x(t+T)\text{ and }t-T\geq t_{1}^{\prime}\Longrightarrow
x(t)=x(t-T)),
\end{array}
\right.  \label{pre561}%
\end{equation}%
\begin{equation}
\left\{
\begin{array}
[c]{c}%
\forall t^{\prime\prime}\in\mathbf{R},\exists t^{\prime}\in I^{\sigma
^{t^{\prime\prime}}(x)},\\
\forall t\geq t^{\prime},\sigma^{t^{\prime\prime}}(x)(t)=\mu\Longrightarrow
(\sigma^{t^{\prime\prime}}(x)(t)=\sigma^{t^{\prime\prime}}(x)(t+T)\text{
and}\\
\text{and }t-T\geq t^{\prime}\Longrightarrow\sigma^{t^{\prime\prime}%
}(x)(t)=\sigma^{t^{\prime\prime}}(x)(t-T)).
\end{array}
\right.  \label{pre562}%
\end{equation}

\end{theorem}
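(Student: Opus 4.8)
The plan is to prove Theorem \ref{The49} by establishing a cycle of implications in each of parts a) and b), in direct parallel with the structure already used for Theorem \ref{The95} (the second group of constancy properties) and Theorem \ref{The67} (equivalent properties for eventual periodicity of a point). The overarching observation is that periodicity of $\mu$ is the special case of eventual periodicity with limit of periodicity $k^{\prime}=-1$ (respectively $t^{\prime}\in I^{x}$), so most of the implications here will be obtained either by specializing statements from those earlier theorems or by a shift argument through $\widehat{\sigma}^{k^{\prime\prime}}(\widehat{x})$ and $\sigma^{t^{\prime\prime}}(x)$.

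For part a), I would prove the chain $(\ref{per144})\Longrightarrow(\ref{pre525})\Longrightarrow(\ref{pre527})\Longrightarrow(\ref{pre49})\Longrightarrow(\ref{pre526})\Longrightarrow(\ref{pre536})\Longrightarrow(\ref{per144})$. The implication $(\ref{per144})\Longrightarrow(\ref{pre525})$ is immediate: given $k^{\prime}\in\mathbf{N}_{\_}$ and $k\in\widehat{\mathbf{T}}_{\mu}^{\widehat{x}}\cap\{k^{\prime},k^{\prime}+1,\dots\}$, and $z\in\mathbf{Z}$ with $k+zp\geq k^{\prime}\geq-1$, apply $(\ref{per144})$ to get $k+zp\in\widehat{\mathbf{T}}_{\mu}^{\widehat{x}}$. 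The steps $(\ref{pre525})\Longrightarrow(\ref{pre527})\Longrightarrow(\ref{pre49})\Longrightarrow(\ref{pre526})\Longrightarrow(\ref{pre536})$ follow exactly the pattern of the corresponding steps in the proof of Theorem \ref{The95} a) (and Theorem \ref{The97} a)), using $k^{\prime\prime}=k^{\prime}+1$ when passing from a support-set-over-tail formulation to a shifted-signal formulation, and $k^{\prime}=k^{\prime\prime}-1$ in the reverse direction; the only new feature is that the quantifier $\forall k^{\prime}\in\mathbf{N}_{\_}$ (or $\forall k^{\prime\prime}\in\mathbf{N}$) is now universal rather than existential, which only makes the forward implications easier. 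For the closing implication $(\ref{pre536})\Longrightarrow(\ref{per144})$, I would specialize $(\ref{pre536})$ to $k^{\prime\prime}=0$, so that $\widehat{\sigma}^{0}(\widehat{x})=\widehat{x}$, obtaining: for all $k\in\mathbf{N}_{\_}$ with $\widehat{x}(k)=\mu$, we have $\widehat{x}(k)=\widehat{x}(k+p)$, and if $k-p\geq-1$ then $\widehat{x}(k)=\widehat{x}(k-p)$; then for arbitrary $k\in\widehat{\mathbf{T}}_{\mu}^{\widehat{x}}$ and $z\in\mathbf{Z}$ with $k+zp\geq-1$, iterate this one-period shift $|z|$ times (upward if $z>0$, downward if $z<0$, staying $\geq-1$ at each step since the endpoint is $\geq-1$ and intermediate values lie between $k$ and $k+zp$) to conclude $k+zp\in\widehat{\mathbf{T}}_{\mu}^{\widehat{x}}$. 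This iteration is the same device used in the proof of Theorem \ref{The67} a) for $(\ref{pre159})\Longrightarrow(\ref{pre154})$.

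For part b), I would prove the cycle $(\ref{per145})\Longrightarrow(\ref{pre559})\Longrightarrow(\ref{pre560})\Longrightarrow(\ref{pre50})\Longrightarrow(\ref{pre561})\Longrightarrow(\ref{pre562})\Longrightarrow(\ref{per145})$, mirroring the structure of Theorem \ref{The95} b) with the extra care that every occurrence of $\exists t^{\prime}$ must land in $I^{x}$ (or in $I^{\sigma^{t^{\prime\prime}}(x)}$). The implications $(\ref{per145})\Longrightarrow(\ref{pre559})$ and $(\ref{pre50})\Longrightarrow(\ref{pre561})$ are routine: having the property at $t^{\prime}$, one re-derives it at any $t_{1}^{\prime}\geq t^{\prime}$ using Lemma \ref{Lem30}. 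For $(\ref{pre559})\Longrightarrow(\ref{pre560})$ and $(\ref{pre561})\Longrightarrow(\ref{pre562})$, given an arbitrary $t^{\prime\prime}\in\mathbf{R}$, one splits into the cases $t^{\prime\prime}\leq t^{\prime\prime\prime}$ (where $t^{\prime\prime\prime}\in I^{x}$ witnesses the hypothesis) — in which $\sigma^{t^{\prime\prime}}(x)=x$ by Theorem \ref{Pro2} a) and there is nothing to do — and $t^{\prime\prime}>t^{\prime\prime\prime}$, where one picks $\varepsilon>0$ with $x$ constant on $(t^{\prime\prime}-\varepsilon,t^{\prime\prime})$ equal to $x(t^{\prime\prime}-0)$, chooses $t^{\prime}\in(t^{\prime\prime}-\varepsilon,t^{\prime\prime})\cap(t^{\prime\prime\prime},t^{\prime\prime})$, notes $(-\infty,t^{\prime}]\subset\mathbf{T}_{x(t^{\prime\prime}-0)}^{\sigma^{t^{\prime\prime}}(x)}$ so $t^{\prime}\in I^{\sigma^{t^{\prime\prime}}(x)}$, and transfers the property using $\sigma^{t^{\prime\prime}}(x)(t)=x(t)$ for $t\geq t^{\prime}$ together with Lemma \ref{Lem30}; these are verbatim the arguments used for $(\ref{pre548})\Longrightarrow(\ref{pre549})$ and $(\ref{pre550})\Longrightarrow(\ref{pre552})$ in Theorem \ref{The95}. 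The steps $(\ref{pre560})\Longrightarrow(\ref{pre50})$ and $(\ref{pre562})\Longrightarrow(\ref{per145})$ use that $x(-\infty+0)$ exists, so by Theorem \ref{Pro2} a) one may take $t^{\prime\prime}$ small enough that $\sigma^{t^{\prime\prime}}(x)=x$ and read off the support-set or one-period-shift statement directly, with $t^{\prime}\in I^{x}$; for $(\ref{pre562})\Longrightarrow(\ref{per145})$ one additionally iterates the one-period shift upward and downward exactly as in the proof of $(\ref{pre606})\Longrightarrow(\ref{pre572})$ in Theorem \ref{The67} b).

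The main obstacle, such as it is, is bookkeeping rather than mathematical depth: one must consistently verify at each shifted-signal step that the chosen $t^{\prime}$ genuinely lies in $I^{\sigma^{t^{\prime\prime}}(x)}$ (not merely in $\mathbf{R}$), since this is precisely what distinguishes periodicity of $\mu$ from eventual periodicity of $\mu$, and one must make sure the iteration of one-period shifts in the closing implications never leaves the admissible range $\mathbf{N}_{\_}$ (resp. $[t^{\prime},\infty)$) — which it does not, because the first and last terms are in range and the intermediate terms are sandwiched between them. Since Theorem \ref{The67}, Theorem \ref{The95}, Theorem \ref{Pro2}, and Lemma \ref{Lem30} supply every ingredient, no genuinely new idea is needed; the theorem is essentially a recombination of already-proved facts with the limit of periodicity pinned to the initial time.
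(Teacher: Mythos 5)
Your proposal follows the paper's own proof essentially verbatim: the same cyclic chains $(\ref{per144})\Rightarrow\dots\Rightarrow(\ref{pre536})\Rightarrow(\ref{per144})$ and $(\ref{per145})\Rightarrow\dots\Rightarrow(\ref{pre562})\Rightarrow(\ref{per145})$, with the forward links delegated to the proof of Theorem \ref{The95} and the closing links obtained by specializing to $k^{\prime\prime}=0$ (respectively to $t^{\prime\prime}$ small enough that $\sigma^{t^{\prime\prime}}(x)=x$) and iterating the one-period shift over the three cases $z>0$, $z=0$, $z<0$. The argument is correct and no further comment is needed.
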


\begin{proof}
The proof of the implications%
\[
(\ref{per144})\Longrightarrow(\ref{pre525})\Longrightarrow(\ref{pre527}%
)\Longrightarrow(\ref{pre49})\Longrightarrow(\ref{pre526})\Longrightarrow
(\ref{pre536})
\]
follows from Theorem \ref{The95}, page \pageref{The95}.

(\ref{pre536})$\Longrightarrow$(\ref{per144}) We can use (\ref{pre49}) that is
a special case of (\ref{pre536}) when $k^{\prime\prime}=0.$ Let $k\in
\widehat{\mathbf{T}}_{\mu}^{\widehat{x}}$ and $z\in\mathbf{Z}$ with
$k+zp\geq-1$ and we have the following possibilities.

Case $z>0,$%
\[
\mu=\widehat{x}(k)\overset{(\ref{pre49})}{=}\widehat{x}(k+p)\overset
{(\ref{pre49})}{=}\widehat{x}(k+2p)\overset{(\ref{pre49})}{=}...\overset
{(\ref{pre49})}{=}\widehat{x}(k+zp);
\]

Case $z=0,$%
\[
\mu=\widehat{x}(k)=\widehat{x}(k+zp);
\]

Case $z<0,$%
\[
\mu=\widehat{x}(k)\overset{(\ref{pre49})}{=}\widehat{x}(k-p)\overset
{(\ref{pre49})}{=}\widehat{x}(k-2p)\overset{(\ref{pre49})}{=}...\overset
{(\ref{pre49})}{=}\widehat{x}(k+zp).
\]
In all these cases $k+zp\in\widehat{\mathbf{T}}_{\mu}^{\widehat{x}}.$

b) The proof of the implications%
\[
(\ref{per145})\Longrightarrow(\ref{pre559})\Longrightarrow(\ref{pre560}%
)\Longrightarrow(\ref{pre50})\Longrightarrow(\ref{pre561})\Longrightarrow
(\ref{pre562})
\]
follows from Theorem \ref{The95}, page \pageref{The95}.

(\ref{pre562})$\Longrightarrow$(\ref{per145}) We write (\ref{pre562}) in the
special case when $t^{\prime\prime}$ fulfills $\forall t\leq t^{\prime\prime
},x(t)=x(-\infty+0)$ and consequently $\sigma^{t^{\prime\prime}}(x)=x,$
$t^{\prime}\in I^{x}$ and%
\begin{equation}
\forall t\geq t^{\prime},x(t)=\mu\Longrightarrow(x(t)=x(t+T)\text{ and
}t-T\geq t^{\prime}\Longrightarrow x(t)=x(t-T)) \label{pre873}%
\end{equation}
hold. We have $\mathbf{T}_{\mu}^{x}\cap\lbrack t^{\prime},\infty
)\neq\varnothing,$ so let $t\in\mathbf{T}_{\mu}^{x}\cap\lbrack t^{\prime
},\infty)$ and $z\in\mathbf{Z}$ arbitrary with $t+zT\geq t^{\prime}.$ We get
the following possibilities.

Case $z>0,$%
\[
\mu=x(t)\overset{(\ref{pre873})}{=}x(t+T)\overset{(\ref{pre873})}%
{=}x(t+2T)\overset{(\ref{pre873})}{=}...\overset{(\ref{pre873})}{=}x(t+zT);
\]

Case $z=0,$%
\[
\mu=x(t)=x(t+zT);
\]

Case $z<0,$%
\[
\mu=x(t)\overset{(\ref{pre873})}{=}x(t-T)\overset{(\ref{pre873})}%
{=}x(t-2T)\overset{(\ref{pre873})}{=}...\overset{(\ref{pre873})}{=}x(t+zT)
\]
and consequently in all these situations $t+zT\in\mathbf{T}_{\mu}^{x}.$
(\ref{per145}) is true.
\end{proof}

\begin{example}
\label{Exa5}We give the following example of signal $\widehat{x}\in\widehat
{S}^{(1)}$ where none of $0,1\in\widehat{Or}(\widehat{x})$ is periodic or
eventually periodic:%
\[
\widehat{x}=0,\underset{1}{\underbrace{1}},0,\underset{2}{\underbrace{1,1}%
},0,\underset{3}{\underbrace{1,1,1}},0,\underset{4}{\underbrace{1,1,1,1}%
},0,...
\]
This signal is similar with that of Example \ref{Exa6}, page \pageref{Exa6}.
\end{example}

\begin{example}
Let the signal $\widehat{x}\in\widehat{S}^{(1)}$ and we presume that%
\[
\widehat{x}=0,\widehat{x}(0),\widehat{x}(1),0,\widehat{x}(3),\widehat
{x}(4),0,\widehat{x}(6),...
\]
Then $0$ is a periodic point of $\widehat{Or}(\widehat{x})$ and it has the
period $3$. In particular if $\widehat{x}(0),\widehat{x}(1),\widehat
{x}(3),\widehat{x}(4),\widehat{x}(6),...$ are all equal with $1$ then $3$ is
the prime period of $0$ and if they are all equal with $0$ then $1$ is the
prime period of $0$.
\end{example}

\begin{example}
The signal $x\in S^{(1)},$%
\[
x(t)=\chi_{(-\infty,0)}(t)\oplus\chi_{\lbrack1,3)}(t)\oplus\chi_{\lbrack
7,9)}(t)\oplus\chi_{\lbrack10,12)}(t)\oplus\chi_{\lbrack16,18)}(t)\oplus
\chi_{\lbrack19,21)}(t)\oplus...
\]
fulfills $-1\in I^{x}$ and%
\[
\forall t\in\mathbf{T}_{1}^{x}\cap\lbrack-1,\infty),\{t+z9|z\in\mathbf{Z}%
\}\cap\lbrack-1,\infty)\subset\mathbf{T}_{1}^{x},
\]
i.e. the point $1$ has the period $9$. To be noticed how the couple
$[1,3),[7,9)$ of intervals 'generates' periodicity.
\end{example}

\section{Discussion}

\begin{remark}
From the periodicity properties (\ref{per144}), (\ref{per145}), we have that
$\widehat{\mathbf{T}}_{\mu}^{\widehat{x}}$ is infinite and $\mathbf{T}_{\mu
}^{x}$ is superiorly unbounded, thus the periodic points $\mu\in\widehat
{Or}(\widehat{x}),\mu\in Or(x)$ satisfy in fact $\mu\in\widehat{\omega
}(\widehat{x}),\mu\in\omega(x).$ This was noticed since the first introduction
of the periodic points, see Remark \ref{Rem27} from page \pageref{Rem27}, and
several times afterwards.
\end{remark}

\begin{remark}
The prime period of the periodic point $\mu\in\widehat{Or}(\widehat{x})$
always exists, but the prime period of the periodic point $\mu\in Or(x)$ may
not exist, for example if $x$ is constant and equal with $\mu$, see Theorem
\ref{The95}, page \pageref{The95} where $P_{\mu}^{x}=(0,\infty).$ We shall
prove later (Theorem \ref{The26_}, page \pageref{The26_}) that this is the
only situation when the periodic point $\mu\in Or(x)$ has no prime period.
\end{remark}

\begin{remark}
Two more compact forms of writing (\ref{pre49}) and (\ref{pre50}) are%
\begin{equation}
\exists p\in\mathbf{Z}^{\ast},\forall k\in\mathbf{N}_{\_},(\widehat{x}%
(k)=\mu\text{ and }k+p\geq-1)\Longrightarrow\widehat{x}(k)=\widehat{x}(k+p),
\end{equation}%
\begin{equation}
\exists T\in\mathbf{R}^{\ast},\exists t^{\prime}\in I^{x},\forall t\geq
t^{\prime},(x(t)=\mu\text{ and }t+T\geq t^{\prime})\Longrightarrow
x(t)=x(t+T),
\end{equation}
where we have denoted $\mathbf{Z}^{\ast}=\mathbf{Z}\smallsetminus\{0\}$ and
$\mathbf{R}^{\ast}=\mathbf{R}\smallsetminus\{0\}.$ Such statements accept the
existence of negative periods. We shall suppose in the rest of our
presentation that $p\geq1,T>0.$
\end{remark}

\begin{remark}
A temptation exists to write (\ref{pre49}) and (\ref{pre50}) in a wrong way,
recalling the periodicity (\ref{pre741})$_{page\;\pageref{pre741}}$,
(\ref{pre743})$_{page\;\pageref{pre743}}$ of the signals, under the form%
\begin{equation}
\exists p\geq1,\forall k\in\mathbf{N}_{\_},\widehat{x}(k)=\mu\Longrightarrow
\widehat{x}(k)=\widehat{x}(k+p), \label{pre56}%
\end{equation}%
\begin{equation}
\exists T>0,\exists t^{\prime}\in I^{x},\forall t\geq t^{\prime}%
,x(t)=\mu\Longrightarrow x(t)=x(t+T), \label{pre962}%
\end{equation}
that accepts only right time shifts in the definition of periodicity. We give
the discrete time example of%
\[
\widehat{x}=0,0,1,0,1,0,1,0,1,...
\]
that fulfills (\ref{pre56}) with $\mu=1,p=2.$ For this signal $\mu$ is not
periodic and the left time shift requirement $\widehat{x}(1)=1\Longrightarrow
\widehat{x}(1-2)=1$ shows where the problem is. In fact, if $\mu\in
\widehat{Or}(\widehat{x}),\mu\in Or(x)$ then (\ref{pre56}), (\ref{pre962}) are
requirements of eventual periodicity, not of periodicity.
\end{remark}

\begin{remark}
\label{Rem20}Let $\widehat{x},$ $\mu\in\widehat{Or}(\widehat{x})$ and
$p\geq1.$ We have $\widehat{\mathbf{T}}_{\mu}^{\widehat{x}}\neq\varnothing$
and if%
\[
\forall k\in\widehat{\mathbf{T}}_{\mu}^{\widehat{x}},\{k+zp|z\in
\mathbf{Z}\}\cap\mathbf{N}_{\_}\subset\widehat{\mathbf{T}}_{\mu}^{\widehat{x}%
},
\]
then we deduce from Theorem \ref{Lem1}, page \pageref{Lem1} that for any
$k\in\mathbf{N}_{\_}$ we have $\widehat{\mathbf{T}}_{\mu}^{\widehat{x}}%
\cap\{k,k+1,...,k+p-1\}\neq\varnothing.$ Similarly, $x,$ $\mu\in Or(x),$ $T>0,
$ $t^{\prime}\in I^{x}$ are given. If%
\[
\forall t\in\mathbf{T}_{\mu}^{x}\cap\lbrack t^{\prime},\infty),\{t+zT|z\in
\mathbf{Z}\}\cap\lbrack t^{\prime},\infty)\subset\mathbf{T}_{\mu}^{x},
\]
as far as $\mathbf{T}_{\mu}^{x}\cap\lbrack t^{\prime},\infty)\neq\varnothing$
(from Lemma \ref{Lem36}, page \pageref{Lem36})$,$ we can use Theorem
\ref{Lem1} again and get $\forall t\geq t^{\prime},$ $\mathbf{T}_{\mu}^{x}%
\cap\lbrack t,t+T)\neq\varnothing.$
\end{remark}

\section{The accessibility of the periodic points}

\begin{remark}
\label{The142}From Theorem \ref{Lem1}, page \pageref{Lem1} we get that
if.$\mu\in\widehat{Or}(\widehat{x})$ is a periodic point of $\widehat{x}$ with
the period $p\geq1,$ then $\widehat{\mathbf{T}}_{\mu}^{\widehat{x}}%
\cap\{k,k+1,...,k+p-1\}\neq\varnothing$ holds for any $k\in\mathbf{N}_{\_}$
\ From the same Theorem we similarly get that if $\mu\in Or(x)$ is a periodic
point of $x$ with the period $T>0$, then $t^{\prime}\in I^{x}$ exists such
that for any $t\geq t^{\prime},$ we have $\mathbf{T}_{\mu}^{x}\cap\lbrack
t,t+T)\neq\varnothing.$
\end{remark}

\section{The limit of periodicity}

\begin{example}
\label{Exa10}We consider $x,$ $\mu=x(-\infty+0),$ $T>0,$ $t_{0},t_{1}%
,t_{2},t_{3}\in\mathbf{R}$ and we suppose that%
\[
t_{0}<t_{1}<t_{2}<t_{3}<t_{0}+T,
\]%
\[
\mathbf{T}_{\mu}^{x}=(-\infty,t_{0})\cup\lbrack t_{1},t_{2})\cup\lbrack
t_{3},t_{0}+T)\cup\lbrack t_{1}+T,t_{2}+T)\cup\lbrack t_{3}+T,t_{0}+2T)\cup
\]%
\[
\cup\lbrack t_{1}+2T,t_{2}+2T)\cup\lbrack t_{3}+2T,t_{0}+3T)\cup...
\]
hold. For $t^{\prime}\in\lbrack t_{3}-T,t_{0})$ we have $t^{\prime}\in I^{x}$
and%
\begin{equation}
\forall t\in\mathbf{T}_{\mu}^{x}\cap\lbrack t^{\prime},\infty),\{t+zT|z\in
\mathbf{Z}\}\cap\lbrack t^{\prime},\infty)\subset\mathbf{T}_{\mu}^{x}
\label{pre58}%
\end{equation}
fulfilled, thus the property of periodicity of $\mu$ with the period $T$ is
true. For $t^{\prime}<t_{3}-T,$ let us take an arbitrary $t\in\lbrack
\max\{t^{\prime},t_{2}-T\},t_{3}-T).$ On one hand $t\in\mathbf{T}_{\mu}%
^{x}\cap\lbrack t^{\prime},\infty)$ and on the other hand the truth of%
\[
t+T\in\{t+zT|z\in\mathbf{Z}\}\cap\lbrack t^{\prime},\infty)\overset
{(\ref{pre58})}{\subset}\mathbf{T}_{\mu}^{x}%
\]
should indicate that $x(t+T)=\mu$. But this is false, since $t+T\in\lbrack
t_{2},t_{3}).$ We have shown that $L_{\mu}^{x}=[t_{3}-T,\infty).$ We notice
also that choosing $t^{\prime}\geq t_{0}$ is not possible, since
$I^{x}=(-\infty,t_{0})$. We conclude that the exact bounds of the initial
time=limit of periodicity $t^{\prime}$ are given by $t^{\prime}\in I^{x}\cap
L_{\mu}^{x}=$ $[t_{3}-T,t_{0}).$
\end{example}

\begin{theorem}
\label{The5}Let the non constant signal $x$ be given, together with $\mu\in
Or(x),$ $T>0$ and $t^{\prime}\in I^{x}$ having the property that%
\begin{equation}
\forall t\in\mathbf{T}_{\mu}^{x}\cap\lbrack t^{\prime},\infty),\{t+zT|z\in
\mathbf{Z}\}\cap\lbrack t^{\prime},\infty)\subset\mathbf{T}_{\mu}^{x}.
\label{pre60}%
\end{equation}
Then $t_{0}^{\prime},t_{0}\in\mathbf{R}$ exist, $t_{0}^{\prime}<t_{0}$ such
that $\forall t^{\prime\prime}\in\lbrack t_{0}^{\prime},t_{0}),$ we have that
$t^{\prime\prime}\in I^{x},$%
\begin{equation}
\forall t\in\mathbf{T}_{\mu}^{x}\cap\lbrack t^{\prime\prime},\infty
),\{t+zT|z\in\mathbf{Z}\}\cap\lbrack t^{\prime\prime},\infty)\subset
\mathbf{T}_{\mu}^{x} \label{pre62}%
\end{equation}
hold and for any $t^{\prime\prime}\notin\lbrack t_{0}^{\prime},t_{0}),$ at
least one of $t^{\prime\prime}\in I^{x},$ (\ref{pre62}) is false. In other
words $[t_{0}^{\prime},t_{0})=I^{x}\cap L_{\mu}^{x}.$
\end{theorem}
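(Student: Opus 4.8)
The plan is to show that $I^{x}\cap L_{\mu}^{x}$ is a nonempty interval of the form $[t_{0}^{\prime},t_{0})$, by combining three facts: (i) $x$ non constant gives $I^{x}=(-\infty,t_{0})$ for some $t_{0}\in\mathbf{R}$ (Theorem \ref{The143}); (ii) $\mu\in Or(x)$ has a nonempty set of periods, so by Theorem \ref{The117} b) the set $L_{\mu}^{x}$ is a half-line $[\tilde{t},\infty)$ for some $\tilde{t}\in\mathbf{R}$ (here we use that $\mu\in Or(x)$ together with (\ref{pre60}) forces $\mu\in\omega(x)$, as noted in the periodicity discussion and via Theorem \ref{Lem1}); and (iii) the hypothesis $t^{\prime}\in I^{x}$ with (\ref{pre60}) guarantees $t^{\prime}\in I^{x}\cap L_{\mu}^{x}$, so the intersection is nonempty. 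Intersecting $(-\infty,t_{0})$ with $[\tilde{t},\infty)$ and invoking nonemptiness yields $\tilde{t}<t_{0}$ and $I^{x}\cap L_{\mu}^{x}=[\tilde{t},t_{0})$; setting $t_{0}^{\prime}=\tilde{t}$ finishes it.

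First I would fix the form of $I^{x}$: since $x$ is not constant, Theorem \ref{The143} b) (equation (\ref{p259})) gives a unique $t_{0}\in\mathbf{R}$ with $I^{x}=(-\infty,t_{0})$. Next I would establish $\mu\in\omega(x)$. From $\mu\in Or(x)$ we have $\mathbf{T}_{\mu}^{x}\neq\varnothing$; picking any $t\in\mathbf{T}_{\mu}^{x}$ with $t\geq t^{\prime}$ (such $t$ exists—either directly, or after one forward $T$-shift using (\ref{pre60})) and applying (\ref{pre60}) with the positive multiples of $T$ shows $\{t,t+T,t+2T,\dots\}\subset\mathbf{T}_{\mu}^{x}$, so $\mathbf{T}_{\mu}^{x}$ is superiorly unbounded, i.e.\ $\mu\in\omega(x)$ by Theorem \ref{The12} a). In particular $t^{\prime}\in L_{\mu}^{x}$ by the definition of $L_{\mu}^{x}$ (the non-triviality clause $\mathbf{T}_{\mu}^{x}\cap[t^{\prime},\infty)\neq\varnothing$ holds since $\mathbf{T}_{\mu}^{x}$ is unbounded from above), and $t^{\prime}\in I^{x}$ by hypothesis, so $I^{x}\cap L_{\mu}^{x}\ni t^{\prime}$ is nonempty.

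Then I would apply Theorem \ref{The117} b): since $x$ is non constant, $\mu\in\omega(x)$, and $L_{\mu}^{x}\neq\varnothing$, there is $\tilde{t}\in\mathbf{R}$ with $L_{\mu}^{x}=[\tilde{t},\infty)$. Therefore $I^{x}\cap L_{\mu}^{x}=(-\infty,t_{0})\cap[\tilde{t},\infty)$; nonemptiness of this intersection forces $\tilde{t}<t_{0}$, and the intersection equals $[\tilde{t},t_{0})$. Setting $t_{0}^{\prime}=\tilde{t}$, every $t^{\prime\prime}\in[t_{0}^{\prime},t_{0})$ lies in both $I^{x}$ and $L_{\mu}^{x}$, so $t^{\prime\prime}\in I^{x}$ and (\ref{pre62}) hold by the very definition of $L_{\mu}^{x}$; and for $t^{\prime\prime}\notin[t_{0}^{\prime},t_{0})$, either $t^{\prime\prime}\geq t_{0}$ (so $t^{\prime\prime}\notin I^{x}$) or $t^{\prime\prime}<t_{0}^{\prime}$ (so $t^{\prime\prime}\notin L_{\mu}^{x}$, i.e.\ (\ref{pre62}) fails). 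This gives exactly $[t_{0}^{\prime},t_{0})=I^{x}\cap L_{\mu}^{x}$.

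The main obstacle I anticipate is the careful bookkeeping around the non-triviality clauses built into the definitions of $L_{\mu}^{x}$ and of eventual periodicity of a point: one must be sure that "$t^{\prime\prime}\in L_{\mu}^{x}$" is genuinely equivalent to "(\ref{pre62}) holds", which requires knowing $\mathbf{T}_{\mu}^{x}\cap[t^{\prime\prime},\infty)\neq\varnothing$ for all relevant $t^{\prime\prime}$—this is where establishing $\mu\in\omega(x)$ up front is essential (it makes $\mathbf{T}_{\mu}^{x}$ superiorly unbounded, so the clause is automatic), and it is also what lets Theorem \ref{The117} b) and Theorem \ref{Lem1} be invoked cleanly. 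Everything else is a short interval computation.
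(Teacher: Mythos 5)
Your argument is correct and coincides with the paper's first proof of this theorem: non-constancy gives $I^{x}=(-\infty,t_{0})$, the hypothesis forces $\mu\in\omega(x)$ and $L_{\mu}^{x}\neq\varnothing$, Theorem \ref{The117} b) gives $L_{\mu}^{x}=[\tilde{t},\infty)$, and intersecting with $I^{x}$ finishes. (The paper also supplies a second, self-contained constructive proof that locates $t_{0}^{\prime}$ explicitly inside $\mathbf{T}_{\mu}^{x}$ by a case split on $\mu=x(-\infty+0)$; you do not need it.) Two small repairs. First, your parenthetical claim that some $t\in\mathbf{T}_{\mu}^{x}$ with $t\geq t^{\prime}$ exists ``after one forward $T$-shift using (\ref{pre60})'' is circular: (\ref{pre60}) only applies to points of $\mathbf{T}_{\mu}^{x}$ already lying in $[t^{\prime},\infty)$, so it cannot produce one; the correct tool is Lemma \ref{Lem36}, which uses $t^{\prime}\in I^{x}$ directly (either $\mu=x(-\infty+0)$, whence $t^{\prime}\in\mathbf{T}_{\mu}^{x}$, or $\mu\neq x(-\infty+0)$, whence $\mathbf{T}_{\mu}^{x}\subset(t^{\prime},\infty)$). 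Second, $L_{\mu}^{x}$ is defined with an existential quantifier over the period, so ``$t^{\prime\prime}\in L_{\mu}^{x}$'' yields (\ref{pre740}) for \emph{some} period, not automatically for the given $T$; to pass to (\ref{pre62}) for this specific $T$ you should cite Theorem \ref{The137} b), which says the set of limits of periodicity does not depend on the period. Neither point changes the structure of your proof.
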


\begin{remark}
We give two proofs of the previous Theorem for reasons that will become clear later.
\end{remark}

\begin{proof}
The first proof of Theorem \ref{The5}.

From the fact that $x$ is not constant we get the existence of $t_{0}%
\in\mathbf{R}$ with $I^{x}=(-\infty,t_{0}).$ From $\mu\in Or(x)$ and
$t^{\prime}\in I^{x}$ we have that $\mathbf{T}_{\mu}^{x}\cap\lbrack t^{\prime
},\infty)\neq\varnothing$ and this, taking into account (\ref{pre60}) also,
implies $\mu\in\omega(x).$ The existence of $t^{\prime}\in\mathbf{R}$ such
that (\ref{pre60}) holds shows the fact that $L_{\mu}^{x}\neq\varnothing,$
thus we can apply Theorem \ref{The117}, page \pageref{The117}. The existence
of $t_{0}^{\prime}\in R$ has resulted with $L_{\mu}^{x}=[t_{0}^{\prime}%
,\infty).$

The existence of $t^{\prime}\in I^{x}$ making (\ref{pre60}) true shows
furthermore that $t_{0}^{\prime}<t_{0}$ and $[t_{0}^{\prime},t_{0})=I^{x}\cap
L_{\mu}^{x}.$
\end{proof}

\begin{proof}
The second proof of Theorem \ref{The5}.

The fact that $x$ is not constant shows the existence of $t_{0}$ that is
defined by%
\begin{equation}
\forall t<t_{0},x(t)=x(-\infty+0), \label{pre63}%
\end{equation}%
\begin{equation}
x(t_{0})\neq x(-\infty+0). \label{pre64}%
\end{equation}
From (\ref{pre63}), (\ref{pre64}), $t^{\prime}\in I^{x}$ we infer that
$I^{x}=(-\infty,t_{0}),$ $t^{\prime}<t_{0}$ hold$.$ We have the following possibilities.

a) Case $\mu=x(-\infty+0)$

We show first that $[t^{\prime}+T,t_{0}+T)\subset\mathbf{T}_{\mu}^{x}$ and let
for this an arbitrary $t\in\lbrack t^{\prime}+T,t_{0}+T).$ We have $t-T\geq
t^{\prime},t\geq t^{\prime}$ and $t-T\in\lbrack t^{\prime},t_{0}%
)\subset\mathbf{T}_{\mu}^{x},$ so that we can apply (\ref{pre60}):%
\[
t\in\{t-T+zT|z\in\mathbf{Z}\}\cap\lbrack t^{\prime},\infty)\subset
\mathbf{T}_{\mu}^{x}.
\]
The inclusion $[t^{\prime}+T,t_{0}+T)\subset\mathbf{T}_{\mu}^{x}$ is proved.

We get the existence of $t_{1}\leq t^{\prime}+T$ with%
\begin{equation}
\lbrack t_{1},t_{0}+T)\subset\mathbf{T}_{\mu}^{x}, \label{pre64_}%
\end{equation}%
\begin{equation}
x(t_{1}-0)\neq\mu. \label{pre65}%
\end{equation}
We have $t_{1}>t_{0},$ because the other possibility $t_{0}\geq t_{1}$ is in
contradiction with (\ref{pre63}). The conclusion is that%
\begin{equation}
t_{1}-T\leq t^{\prime}<t_{0}<t_{1}<t_{0}+T. \label{pre66}%
\end{equation}
From Lemma \ref{Lem28}, page \pageref{Lem28} and (\ref{pre63}) we infer
\begin{equation}
(-\infty,t_{0})\cup\lbrack t_{1},t_{0}+T)\cup\lbrack t_{1}+T,t_{0}%
+2T)\cup\lbrack t_{1}+2T,t_{0}+3T)\cup...\subset\mathbf{T}_{\mu}^{x}.
\label{pre985}%
\end{equation}
We claim that $t_{0}^{\prime}=t_{1}-T$ fulfills the statement of the Theorem,
in particular that
\begin{equation}
(-\infty,t_{1}-T]\subset\mathbf{T}_{x(-\infty+0)}^{x}, \label{pre986}%
\end{equation}%
\begin{equation}
\forall t\in\mathbf{T}_{\mu}^{x}\cap\lbrack t_{1}-T,\infty),\{t+zT|z\in
\mathbf{Z}\}\cap\lbrack t_{1}-T,\infty)\subset\mathbf{T}_{\mu}^{x}
\label{pre987}%
\end{equation}
hold. We notice that the truth of (\ref{pre986}) is trivial (from
(\ref{pre63}) and (\ref{pre66})) and, in order to prove the satisfaction of
(\ref{pre987}), let $t\in\mathbf{T}_{\mu}^{x}\cap\lbrack t_{1}-T,\infty)$
arbitrary. We have the following sub-cases.

a.1) Case $t\in\lbrack t_{1}-T,t_{0})\cup\lbrack t_{1},t_{0}+T)\cup\lbrack
t_{1}+T,t_{0}+2T)\cup...$

Some $k\in\mathbf{N}_{\_}$ exists with $t\in\lbrack t_{1}+kT,t_{0}+(k+1)T). $
Then%
\[
\{t+zT|z\in\mathbf{Z}\}\cap\lbrack t_{1}-T,\infty
)=\{t+(-k-1)T,t+(-k)T,t+(-k+1)T,...\}
\]%
\[
\subset\lbrack t_{1}-T,t_{0})\cup\lbrack t_{1},t_{0}+T)\cup\lbrack
t_{1}+T,t_{0}+2T)\cup\lbrack t_{1}+2T,t_{0}+3T)\cup...\overset{(\ref{pre985}%
)}{\subset}\mathbf{T}_{\mu}^{x}.
\]

a.2) Case $t\in\mathbf{T}_{\mu}^{x}\cap([t_{0},t_{1})\cup\lbrack t_{0}%
+T,t_{1}+T)\cup\lbrack t_{0}+2T,t_{1}+2T)\cup...)$

Then $t\in\mathbf{T}_{\mu}^{x}\cap\lbrack t^{\prime},\infty)$ and
$k\in\mathbf{N}$ exists such that $t\in\lbrack t_{0}+kT,t_{1}+kT).$ We have,
since $t+(-k-1)T<t_{1}-T,$ that%
\[
\{t+zT|z\in\mathbf{Z}\}\cap\lbrack t_{1}-T,\infty
)=\{t+(-k)T,t+(-k+1)T,t+(-k+2)T,...\}
\]%
\[
=\{t+zT|z\in\mathbf{Z}\}\cap\lbrack t^{\prime},\infty)\overset{(\ref{pre60}%
)}{\subset}\mathbf{T}_{\mu}^{x}.
\]

This ends proving the truth of (\ref{pre987}). For any $t^{\prime\prime}%
\in\lbrack t_{0}^{\prime},t_{0}),$ we have that $t^{\prime\prime}\in I^{x},$
(\ref{pre62}) are fulfilled, see also Lemma \ref{Lem30}, page \pageref{Lem30}
(the statement $\mu\in\omega(x)$ from the hypothesis of the Lemma results from
$t^{\prime}\in I^{x}$, giving $\mathbf{T}_{\mu}^{x}\cap\lbrack t^{\prime
},\infty)\neq\varnothing,$ and from (\ref{pre60})).

In order to prove the last statement of the Theorem, let $t^{\prime\prime}\in
I^{x},$ (\ref{pre62}) be true with arbitrary, fixed $t^{\prime\prime}$. We
suppose against all reason that $t^{\prime\prime}<t_{1}-T$ and let
$\varepsilon>0$ with the property that%
\begin{equation}
\forall\xi\in(t_{1}-\varepsilon,t_{1}),x(\xi)=x(t_{1}-0). \label{pre67}%
\end{equation}
We take an arbitrary $t\in(\max\{t^{\prime\prime},t_{1}-T-\varepsilon
\},t_{1}-T)$ for which we can write that $x(t)=\mu$ and, on the other hand,%
\[
t+T\in\{t+zT|z\in\mathbf{Z}\}\cap\lbrack t^{\prime\prime},\infty
)\overset{(\ref{pre62})}{\subset}\mathbf{T}_{\mu}^{x},
\]
thus $x(t+T)=\mu.$ But $t+T\in(t_{1}-\varepsilon,t_{1}),$ wherefrom
$x(t+T)\overset{(\ref{pre67})}{=}x(t_{1}-0)$ and finally $x(t_{1}-0)=\mu,$
contradiction with (\ref{pre65}). We have obtained that $L_{\mu}^{x}%
=[t_{1}-T,\infty).$ The supposition that $t^{\prime\prime}\geq t_{0}$ is in
contradiction with the hypothesis $t^{\prime\prime}\in I^{x},$ since
$I^{x}=(-\infty,t_{0}).$

b) Case $\mu\neq x(-\infty+0)$

We show that $[t^{\prime}+T,t_{0}+T)\cap\mathbf{T}_{\mu}^{x}=\varnothing$ and
we suppose against all reason that $t\in\lbrack t^{\prime}+T,t_{0}%
+T)\cap\mathbf{T}_{\mu}^{x}$ exists, thus $t-T\in\lbrack t^{\prime},t_{0})$
and $t\geq t^{\prime}$ hold$.$ We infer%
\begin{equation}
t-T\in\{t+zT|z\in\mathbf{Z}\}\cap\lbrack t^{\prime},\infty)\overset
{(\ref{pre60})}{\subset}\mathbf{T}_{\mu}^{x} \label{pre75}%
\end{equation}
wherefrom the contradiction%
\[
x(-\infty+0)\overset{(\ref{pre63})}{=}x(t-T)\overset{(\ref{pre75})}{=}\mu\neq
x(-\infty+0).
\]
We infer from here, taking into account Theorem \ref{Lem1}, page
\pageref{Lem1} also (the statement $\mu\in\omega(x)$ from the hypothesis of
the Theorem follows, like previously, from $t^{\prime}\in I^{x}$, implying
that $\mathbf{T}_{\mu}^{x}\cap\lbrack t^{\prime},\infty)\neq\varnothing,$ and
from (\ref{pre60})), written for $t=t_{0},$ stating that $\mathbf{T}_{\mu}%
^{x}\cap\lbrack t_{0},t_{0}+T)\neq\varnothing,$ the existence of $t_{1}%
,t_{2}\in\mathbf{R}$ with%
\begin{equation}
t_{2}-T\leq t^{\prime}<t_{0}\leq t_{1}<t_{2}\leq t^{\prime}+T<t_{0}+T,
\label{pre76}%
\end{equation}%
\begin{equation}
x(t_{1}-0)\neq\mu, \label{pre77}%
\end{equation}%
\begin{equation}
\lbrack t_{1},t_{2})\subset\mathbf{T}_{\mu}^{x}, \label{pre78}%
\end{equation}%
\begin{equation}
\lbrack t_{2},t_{0}+T)\cap\mathbf{T}_{\mu}^{x}=\varnothing. \label{pre79}%
\end{equation}
We claim that the statement of the Theorem is fulfilled by $t_{0}^{\prime
}=t_{2}-T$ and in particular that
\begin{equation}
(-\infty,t_{2}-T]\subset\mathbf{T}_{x(-\infty+0)}^{x}, \label{p62}%
\end{equation}%
\begin{equation}
\forall t\in\mathbf{T}_{\mu}^{x}\cap\lbrack t_{2}-T,\infty),\{t+zT|z\in
\mathbf{Z}\}\cap\lbrack t_{2}-T,\infty)\subset\mathbf{T}_{\mu}^{x} \label{p63}%
\end{equation}
hold. We notice that (\ref{p62}) results from (\ref{pre63}) and (\ref{pre76}).
Let $t\in\mathbf{T}_{\mu}^{x}\cap\lbrack t_{2}-T,\infty)$ arbitrary. We easily
see that $((-\infty,t_{0})\cup\lbrack t_{2},t_{0}+T)\cup\lbrack t_{2}%
+T,t_{0}+2T)\cup\lbrack t_{2}+2T,t_{0}+3T)\cup...)\cap\mathbf{T}_{\mu}%
^{x}=\varnothing$ since by supposing against all reason that this is not true
we get a contradiction, thus $\mathbf{T}_{\mu}^{x}\subset\lbrack t_{0}%
,t_{2})\cup\lbrack t_{0}+T,t_{2}+T)\cup\lbrack t_{0}+2T,t_{2}+2T)\cup...$ and
let $k\in\mathbf{N}$ with $t\in\lbrack t_{0}+kT,t_{2}+kT).$ This means that,
on one hand $t\geq t_{0}>t^{\prime}$ and on the other hand%
\[
t+(-k-1)T<t_{2}-T\leq t^{\prime}<t_{0}\leq t-kT<t_{2},
\]
thus%
\[
\{t+zT|z\in\mathbf{Z}\}\cap\lbrack t_{2}-T,\infty
)=\{t+(-k)T,t+(-k+1)T,t+(-k+2)T,...\}
\]%
\[
=\{t+zT|z\in\mathbf{Z}\}\cap\lbrack t^{\prime},\infty)\overset{(\ref{pre60}%
)}{\subset}\mathbf{T}_{\mu}^{x}.
\]
This ends proving (\ref{p63}). For any $t^{\prime\prime}\in\lbrack
t_{0}^{\prime},t_{0}),$ we have that $t^{\prime\prime}\in I^{x},$
(\ref{pre62}) are true, see also Lemma \ref{Lem30}, page \pageref{Lem30}.

The supposition that $t^{\prime\prime}<t_{2}-T$ makes, from (\ref{pre78}),
that (\ref{pre62}) is false, thus $L_{\mu}^{x}=[t_{2}-T,\infty).$ The
supposition that $t^{\prime\prime}\geq t_{0}$ makes $t^{\prime\prime}\in
I^{x}$ be false.
\end{proof}

\begin{remark}
We use to think that the property of periodicity of $\mu\in Or(x)$ is
independent on the choice of the initial time=limit of periodicity in the
terms given by Theorem \ref{The5}.
\end{remark}

\begin{corollary}
\label{Cor3}We suppose that $\mu$ is a periodic point of the non constant
signal $x,$ that $T>0$ is its period and that $t^{\prime}$ is the initial time
of $x$ and the limit of periodicity of $\mu$ at the same time.

a) If $\mu=x(-\infty+0)$ and $t_{0}<t_{1}$ are defined by%
\begin{equation}
\forall t<t_{0},x(t)=x(-\infty+0), \label{pre80}%
\end{equation}%
\begin{equation}
x(t_{0})\neq x(-\infty+0), \label{pre81}%
\end{equation}%
\begin{equation}
\lbrack t_{1},t_{0}+T)\subset\mathbf{T}_{\mu}^{x}, \label{pre93}%
\end{equation}%
\begin{equation}
x(t_{1}-0)\neq\mu, \label{pre94}%
\end{equation}
then $t^{\prime}\in\lbrack t_{1}-T,t_{0});$

b) if $\mu\neq x(-\infty+0)$ and $t_{0}<t_{2}$ are defined by (\ref{pre80}),
(\ref{pre81}),
\begin{equation}
x(t_{2}-0)=\mu, \label{pre95}%
\end{equation}%
\begin{equation}
\lbrack t_{2},t_{0}+T)\cap\mathbf{T}_{\mu}^{x}=\varnothing, \label{pre96}%
\end{equation}
then $t^{\prime}\in\lbrack t_{2}-T,t_{0}).$
\end{corollary}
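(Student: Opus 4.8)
The plan is to obtain the Corollary directly from Theorem \ref{The5}, and in particular from its second proof, since the hypotheses here are precisely those of that Theorem specialised to the situation in which $t^{\prime}$ is taken to be simultaneously an initial time of $x$ and a limit of periodicity of $\mu$. First I would record that, because $\mu$ is a periodic point of the non constant signal $x$ with period $T$ and $t^{\prime}\in I^{x}$ is a limit of periodicity, the hypothesis of Theorem \ref{The5} is fulfilled: $t^{\prime}\in I^{x}$ and
\[
\forall t\in\mathbf{T}_{\mu}^{x}\cap\lbrack t^{\prime},\infty),\{t+zT|z\in\mathbf{Z}\}\cap\lbrack t^{\prime},\infty)\subset\mathbf{T}_{\mu}^{x}
\]
hold. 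Theorem \ref{The5} then yields numbers $t_{0}^{\prime}<t_{0}$ with $[t_{0}^{\prime},t_{0})=I^{x}\cap L_{\mu}^{x}$, and since $x$ is not constant the $t_{0}$ so produced is exactly the one determined by $I^{x}=(-\infty,t_{0})$, i.e. the $t_{0}$ of (\ref{pre80}), (\ref{pre81}). In particular $t^{\prime}\in I^{x}\cap L_{\mu}^{x}=[t_{0}^{\prime},t_{0})$, so it remains only to identify the left endpoint $t_{0}^{\prime}$ in each of the two cases.

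For that identification I would revisit the second proof of Theorem \ref{The5}. In the case $\mu=x(-\infty+0)$ that proof starts from the inclusion $[t^{\prime}+T,t_{0}+T)\subset\mathbf{T}_{\mu}^{x}$ and produces a number $t_{1}\leq t^{\prime}+T$ characterised precisely by $[t_{1},t_{0}+T)\subset\mathbf{T}_{\mu}^{x}$ and $x(t_{1}-0)\neq\mu$, that is, the $t_{1}$ of (\ref{pre93}), (\ref{pre94}); it then shows $L_{\mu}^{x}=[t_{1}-T,\infty)$, hence $t_{0}^{\prime}=t_{1}-T$. In the case $\mu\neq x(-\infty+0)$ the same proof shows $[t^{\prime}+T,t_{0}+T)\cap\mathbf{T}_{\mu}^{x}=\varnothing$ and, via Theorem \ref{Lem1} applied at $t=t_{0}$, produces $t_{2}$ with $x(t_{2}-0)=\mu$ and $[t_{2},t_{0}+T)\cap\mathbf{T}_{\mu}^{x}=\varnothing$, i.e. the $t_{2}$ of (\ref{pre95}), (\ref{pre96}), and establishes $L_{\mu}^{x}=[t_{2}-T,\infty)$, so $t_{0}^{\prime}=t_{2}-T$. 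Substituting these values of $t_{0}^{\prime}$ into $t^{\prime}\in[t_{0}^{\prime},t_{0})$ gives $t^{\prime}\in[t_{1}-T,t_{0})$ in case a) and $t^{\prime}\in[t_{2}-T,t_{0})$ in case b), which is the assertion.

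The only genuine work, and the point to be careful about, is checking that the conditions (\ref{pre93})--(\ref{pre96}) single out the same $t_{1},t_{2}$ as those constructed inside the second proof of Theorem \ref{The5}: namely that $t_{1}$ must be the left endpoint of the component of $\mathbf{T}_{\mu}^{x}$ containing $[t^{\prime}+T,t_{0}+T)$, and that $t_{2}$ must be the right endpoint of the last component of $\mathbf{T}_{\mu}^{x}$ lying before $t_{0}+T$. This is forced by the piecewise constant, right continuous structure of $x$ (Theorem \ref{The1} together with (\ref{per426})) combined with the inclusion/disjointness requirements, which make $t_{1}$ and $t_{2}$ uniquely determined. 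Once that matching is in place nothing further is needed. Alternatively, if a self contained argument is preferred, one can bypass Theorem \ref{The5} altogether: show $t_{1}-T\in L_{\mu}^{x}$ (resp. $t_{2}-T\in L_{\mu}^{x}$) using Lemma \ref{Lem28}, show that no smaller value lies in $L_{\mu}^{x}$ by the left-limit contradiction at $t_{1}$ (resp. $t_{2}$) exactly as in that proof, and intersect with $I^{x}=(-\infty,t_{0})$.
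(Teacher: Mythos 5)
Your proposal is correct and follows essentially the same route as the paper, whose entire proof of this Corollary is the single remark that it is a consequence of the second proof of Theorem \ref{The5}. Your additional care in checking that the conditions (\ref{pre93})--(\ref{pre94}) and (\ref{pre95})--(\ref{pre96}) pin down the same $t_{1}$ and $t_{2}$ as those constructed inside that proof (so that $t_{0}^{\prime}=t_{1}-T$, respectively $t_{0}^{\prime}=t_{2}-T$, and $t^{\prime}\in I^{x}\cap L_{\mu}^{x}=[t_{0}^{\prime},t_{0})$) is exactly the elaboration that the paper's terse citation leaves implicit.
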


\begin{proof}
These are consequences of the second proof of Theorem \ref{The5}, page
\pageref{The5}.
\end{proof}

\section{A property of constancy}

\begin{theorem}
\label{The32}The signals $\widehat{x}\in\widehat{S}^{(n)},x\in S^{(n)}$ are considered.

a) If $\mu\in\widehat{Or}(\widehat{x})$ and the statement%
\begin{equation}
\forall k\in\widehat{\mathbf{T}}_{\mu}^{\widehat{x}},\{k+zp|z\in
\mathbf{Z}\}\cap\mathbf{N}_{\_}\subset\widehat{\mathbf{T}}_{\mu}^{\widehat{x}}
\label{per367}%
\end{equation}
is true for $p=1,$ then we have%
\begin{equation}
\forall k\in\mathbf{N}_{\_},\widehat{x}(k)=\mu\label{per369}%
\end{equation}
and (\ref{per367}) is true for any $p\geq1$.

b) Let $\mu\in Or(x)$ be some point and we suppose that $t_{0}\in
\mathbf{R},h>0$ exist such that $x$ has the form%
\begin{equation}%
\begin{array}
[c]{c}%
x(t)=x(-\infty+0)\cdot\chi_{(-\infty,t_{0})}(t)\oplus x(t_{0})\cdot
\chi_{\lbrack t_{0},t_{0}+h)}(t)\oplus...\\
...\oplus x(t_{0}+kh)\cdot\chi_{\lbrack t_{0}+kh,t_{0}+(k+1)h)}(t)\oplus...
\end{array}
\label{per484_}%
\end{equation}
If the statement%
\begin{equation}
\forall t\in\mathbf{T}_{\mu}^{x}\cap\lbrack t^{\prime},\infty),\{t+zT|z\in
\mathbf{Z}\}\cap\lbrack t^{\prime},\infty)\subset\mathbf{T}_{\mu}^{x}
\label{per368}%
\end{equation}
is true for some $t^{\prime}\in I^{x}$, $T\in(0,h)\cup(h,2h)\cup
...\cup(qh,(q+1)h)\cup...,$ then%
\begin{equation}
\forall t\in\mathbf{R},x(t)=\mu\label{per370}%
\end{equation}
holds and (\ref{per368}) is true for any $t^{\prime}\in\mathbf{R}$ and any
$T>0$.

c) If (\ref{per484_}) is true under the form%
\begin{equation}%
\begin{array}
[c]{c}%
x(t)=\widehat{x}(-1)\cdot\chi_{(-\infty,t_{0})}(t)\oplus\widehat{x}%
(0)\cdot\chi_{\lbrack t_{0},t_{0}+h)}(t)\oplus...\\
...\oplus\widehat{x}(k)\cdot\chi_{\lbrack t_{0}+kh,t_{0}+(k+1)h)}(t)\oplus...
\end{array}
\label{per366}%
\end{equation}
and $\mu\in\widehat{Or}(\widehat{x})=Or(x)$\footnote{The fact that
(\ref{per366}) implies $\widehat{Or}(\widehat{x})=Or(x)$ is proved at Theorem
\ref{The114} a), page \pageref{The114}.} is arbitrary, then

c.1) the satisfaction of (\ref{per367}) for $p=1$ implies that (\ref{per369}),
(\ref{per370}) are true, (\ref{per367}) holds for any $p\geq1$ and
(\ref{per368}) holds for any $t^{\prime}\in\mathbf{R}$ and any $T>0$;

c.2) the satisfaction of (\ref{per368}) for some $t^{\prime}\in I^{x}$,
$T\in(0,h)\cup(h,2h)\cup...\cup(qh,(q+1)h)\cup...$ implies also that
(\ref{per369}), (\ref{per370}) are true, (\ref{per367}) holds for any $p\geq1
$ and (\ref{per368}) holds for any $t^{\prime}\in\mathbf{R}$ and any $T>0.$
\end{theorem}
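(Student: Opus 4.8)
The plan is to handle the three parts in the order a), b), c), since c) is just a)$+$b) combined with the identification $\widehat{Or}(\widehat{x})=Or(x)$ from Theorem \ref{The114} a). For part a) the only point is that $\mu\in\widehat{Or}(\widehat{x})$ produces some $k_{1}$ with $\widehat{x}(k_{1})=\mu$, i.e.\ $k_{1}\in\widehat{\mathbf{T}}_{\mu}^{\widehat{x}}$, and that $\{k_{1}+z\mid z\in\mathbf{Z}\}\cap\mathbf{N}_{\_}=\mathbf{N}_{\_}$; feeding this single $k$ into hypothesis (\ref{per367}) with $p=1$ forces $\mathbf{N}_{\_}\subset\widehat{\mathbf{T}}_{\mu}^{\widehat{x}}$, which is exactly (\ref{per369}). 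Once $\widehat{\mathbf{T}}_{\mu}^{\widehat{x}}=\mathbf{N}_{\_}$, (\ref{per367}) for an arbitrary $p\geq1$ is immediate because every set $\{k+zp\mid z\in\mathbf{Z}\}\cap\mathbf{N}_{\_}$ lies in $\mathbf{N}_{\_}$. No difficulty here.

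Part b) is the substance. First I would dispose of the case in which $x$ is already constant: then $Or(x)=\{\mu\}$ forces $x\equiv\mu$ and there is nothing to prove, so afterwards I may assume $I^{x}=(-\infty,t_{0})$ for some $t_{0}$. Second, I would upgrade ``$\mu\in Or(x)$'' to ``$\mu\in\omega(x)$'': since $t'\in I^{x}$, Lemma \ref{Lem36} gives $\mathbf{T}_{\mu}^{x}\cap[t',\infty)\neq\varnothing$; choosing $t^{*}$ in this set and applying (\ref{per368}) yields $\{t^{*}+kT\mid k\in\mathbf{N}\}\subset\mathbf{T}_{\mu}^{x}$, so $\mathbf{T}_{\mu}^{x}$ is unbounded from above and $\mu\in\omega(x)$ by Theorem \ref{The12}. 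Third, since now $\mu\in\omega(x)$, $x$ has the form (\ref{per484_}) (which is precisely the form required by Theorem \ref{The19} b)), and (\ref{per368}) holds with a period $T$ that is not a multiple of $h$, Theorem \ref{The19} b) applies verbatim and delivers $\forall t\geq t',\ x(t)=\mu$ together with the validity of (\ref{per368}) for every $T>0$ (with the same $t'$).

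The genuinely new step — and the one I would flag as the crux, even though it is short — is the promotion of this eventual constancy to full constancy, which is exactly where the hypothesis ``$t'\in I^{x}$'' (rather than just $t'\in\mathbf{R}$, as in Theorem \ref{The19}) is used. From $t'\in I^{x}$ we have $x(t)=x(-\infty+0)$ for all $t\leq t'$, so in particular $x(t')=x(-\infty+0)$; but Theorem \ref{The19} b) gave $x(t')=\mu$, whence $\mu=x(-\infty+0)$. Therefore $x(t)=\mu$ for $t\leq t'$ as well as for $t\geq t'$, i.e.\ (\ref{per370}) holds; and with $\mathbf{T}_{\mu}^{x}=\mathbf{R}$, property (\ref{per368}) then holds trivially for every $t'\in\mathbf{R}$ and every $T>0$. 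All the real analytic work is thus packaged inside Theorem \ref{The19} b); the only extra ideas needed are the two short arguments just described.

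Finally, part c): when $\widehat{x}$ and $x$ are related by (\ref{per366}), Theorem \ref{The114} a) gives $\widehat{Or}(\widehat{x})=Or(x)$. In case c.1), part a) yields (\ref{per369}) and (\ref{per367}) for all $p$; substituting $\widehat{x}\equiv\mu$ into (\ref{per366}) shows $x$ takes only the value $\mu$ on $(-\infty,t_{0})$ and on each $[t_{0}+kh,t_{0}+(k+1)h)$, hence $x\equiv\mu$, which is (\ref{per370}), and (\ref{per368}) becomes trivial. In case c.2), part b) yields (\ref{per370}) and (\ref{per368}) for all $t',T$; substituting $x\equiv\mu$ into (\ref{per366}) gives $\widehat{x}(-1)=\mu$ and $\widehat{x}(k)=\mu$ for all $k\geq0$, i.e.\ (\ref{per369}), and (\ref{per367}) becomes trivial for all $p$. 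So c) requires no work beyond this bookkeeping.
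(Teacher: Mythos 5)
Your proposal is correct and follows essentially the same route as the paper: part a) is the same direct argument, part b) reduces to Theorem \ref{The19} b) after upgrading $\mu\in Or(x)$ to $\mu\in\omega(x)$ via Lemma \ref{Lem36}, and the passage from eventual constancy to constancy via $x(t')=x(-\infty+0)=\mu$ is exactly the step the paper compresses into ``on the other hand we have $\forall t\leq t',x(t)=\mu$''. Your extra bookkeeping in c) and the preliminary disposal of the constant case in b) are harmless redundancies.
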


\begin{proof}
a) The statement (\ref{per367}) written for $p=1,$%
\begin{equation}
\forall k,\forall z,(k\in\widehat{\mathbf{T}}_{\mu}^{\widehat{x}}\text{ and
}z\in\mathbf{Z}\text{ and }k+z\geq-1)\Longrightarrow k+z\in\widehat
{\mathbf{T}}_{\mu}^{\widehat{x}}%
\end{equation}
together with $\widehat{\mathbf{T}}_{\mu}^{\widehat{x}}\neq\varnothing$ (since
$\mu\in\widehat{Or}(\widehat{x})$) implies that $\widehat{\mathbf{T}}_{\mu
}^{\widehat{x}}=\mathbf{N}_{\_},$ meaning the truth of (\ref{per369}). In
these circumstances (\ref{per367}) is true for any $p\geq1.$

b) If $\mu\in Or(x)$ and $t^{\prime}\in I^{x},$ then $\mathbf{T}_{\mu}^{x}%
\cap\lbrack t^{\prime},\infty)\neq\varnothing,$ and from (\ref{per368}) we
have $\mu\in\omega(x).$ The hypothesis asks furthermore that $T\in
(0,h)\cup(h,2h)\cup...\cup(qh,(q+1)h)\cup...$ and $t_{0}\in\mathbf{R},h>0$
exist making (\ref{per484_}) true. In this situation, Theorem \ref{The19},
page \pageref{The19} states that%
\begin{equation}
\forall t\geq t^{\prime},x(t)=\mu,
\end{equation}
and on the other hand we have%
\begin{equation}
\forall t\leq t^{\prime},x(t)=\mu.
\end{equation}
The statement (\ref{per370}) is true. In these conditions $I^{x}=\mathbf{R},$
$P_{\mu}^{x}=(0,\infty),$ thus (\ref{per368}) holds for any $t^{\prime}%
\in\mathbf{R}$ and any $T>0.$

c) This is a consequence of a) and b).
\end{proof}

\section{Discrete time vs real time}

\begin{theorem}
\label{The20_}Let the non constant signals $\widehat{x}\in\widehat{S}%
^{(n)},x\in S^{(n)}$ be related by%
\begin{equation}%
\begin{array}
[c]{c}%
x(t)=\widehat{x}(-1)\cdot\chi_{(-\infty,t_{0})}(t)\oplus\widehat{x}%
(0)\cdot\chi_{\lbrack t_{0},t_{0}+h)}(t)\oplus...\\
...\oplus\widehat{x}(k)\cdot\chi_{\lbrack t_{0}+kh,t_{0}+(k+1)h)}(t)\oplus...
\end{array}
\label{per141}%
\end{equation}
where $t_{0}\in\mathbf{R},h>0$ and let $\mu\in\widehat{Or}(\widehat
{x})=Or(x).$

a) If $p\geq1$ exists such that%
\begin{equation}
\forall k\in\widehat{\mathbf{T}}_{\mu}^{\widehat{x}},\{k+zp|z\in
\mathbf{Z}\}\cap\mathbf{N}_{\_}\subset\widehat{\mathbf{T}}_{\mu}^{\widehat{x}}
\label{per144*}%
\end{equation}
is true, then%
\begin{equation}
\exists t^{\prime}\in I^{x},\forall t\in\mathbf{T}_{\mu}^{x}\cap\lbrack
t^{\prime},\infty),\{t+zT|z\in\mathbf{Z}\}\cap\lbrack t^{\prime}%
,\infty)\subset\mathbf{T}_{\mu}^{x} \label{per145*}%
\end{equation}
holds for $T=ph.$

b) We presume that (\ref{per145*}) is true for some $T>0$. Then $\frac{T}%
{h}\in\{1,2,3,...\}$ and $k^{\prime}\in\mathbf{N}_{\_}$ exists such that%
\begin{equation}
\widehat{\mathbf{T}}_{\mu}^{\widehat{x}}\cap\{k^{\prime},k^{\prime
}+1,k^{\prime}+2,...\}\neq\varnothing, \label{p96}%
\end{equation}%
\begin{equation}
\left\{
\begin{array}
[c]{c}%
\forall k\in\widehat{\mathbf{T}}_{\mu}^{\widehat{x}}\cap\{k^{\prime}%
,k^{\prime}+1,k^{\prime}+2,...\},\\
\{k+zp|z\in\mathbf{Z}\}\cap\{k^{\prime},k^{\prime}+1,k^{\prime}+2,...\}\subset
\widehat{\mathbf{T}}_{\mu}^{\widehat{x}}%
\end{array}
\right.  \label{per527}%
\end{equation}
hold for $p=\frac{T}{h}.$
\end{theorem}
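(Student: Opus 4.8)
The plan is to exploit the block structure of $x$ coming from (\ref{per141}), which converts time–shifts by $T=ph$ on the real line into index–shifts by $p$ on $\widehat{x}$, and to import the heavy periodicity facts already proved for $\widehat{x}$ and $x$ separately. For part a), I would first fix $t'\in I^x$ by setting $t'=t_0-\varepsilon$ for a small $\varepsilon>0$ if $\mu=x(-\infty+0)$, or, more simply, note $\mu\in\widehat{Or}(\widehat x)=Or(x)$ forces $\mathbf T_\mu^x\neq\varnothing$ but I actually want a limit of periodicity, so I would instead take $k'\in\mathbf N_{\_}$ witnessing the non-triviality and then localize. Concretely: from (\ref{per144*}) and Theorem \ref{The49} a) (the equivalence of (\ref{per144}) with (\ref{pre525})), the point $\mu$ is a periodic point of $\widehat x$, hence in particular eventually periodic with period $p$ and some limit of periodicity — but since $\widehat L_\mu^{\widehat x}=\mathbf N_{\_}$ for periodic points, I get (\ref{pre525}) with $k'=-1$. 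Then set $t'=t_0-\varepsilon$ where $\varepsilon\in(0,h)$ is arbitrary, so $t'\in I^x=(-\infty,t_0)$; and set $T=ph$. Given $t\in\mathbf T_\mu^x\cap[t',\infty)$ and $z\in\mathbf Z$ with $t+zT\geq t'$, there is $k\geq -1$ with $t\in[t_0+kh,t_0+(k+1)h)$, whence $x(t)=\widehat x(k)=\mu$, so $k\in\widehat{\mathbf T}_\mu^{\widehat x}$; moreover $t+zT\in[t_0+(k+zp)h,t_0+(k+1+zp)h)$ and $t+zT\geq t'=t_0-\varepsilon>t_0-h$ gives $k+zp\geq -1$. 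Applying (\ref{per144*}) (the form with $\mathbf N_{\_}$, valid precisely because $k'=-1$) yields $k+zp\in\widehat{\mathbf T}_\mu^{\widehat x}$, i.e. $\widehat x(k+zp)=\mu$, i.e. $x(t+zT)=\mu$. This is exactly (\ref{per145*}). I would cross-check this against Theorem \ref{The24} a), whose proof is essentially identical in structure.

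For part b), the argument splits on the size of $T$ relative to $h$. First, suppose $T\in(0,h)\cup(h,2h)\cup\cdots\cup(qh,(q+1)h)\cup\cdots$, i.e. $T/h\notin\{1,2,3,\dots\}$. Since $\mu\in Or(x)$ and $t'\in I^x$ (which (\ref{per145*}) supplies), we have $\mathbf T_\mu^x\cap[t',\infty)\neq\varnothing$, and (\ref{per145*}) together with Theorem \ref{The32} b) forces $x$ to be constant, contradicting the hypothesis that $x$ is not constant. (Alternatively Theorem \ref{The19} b) or Theorem \ref{The20} b) give the same conclusion; I would cite Theorem \ref{The32} b) as it is the periodicity-specific version and matches the $t'\in I^x$ hypothesis.) Hence $T\in\{h,2h,3h,\dots\}$ and we may define $p=T/h\geq 1$. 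Next, using Theorem \ref{The5} (or just Lemma \ref{Lem30} as invoked in the proof of Theorem \ref{The24} b)), I may shrink down and assume without loss of generality that the $t'$ in (\ref{per145*}) satisfies $t'\geq t_0-h$; let $k'\in\mathbf N_{\_}$ be the unique integer with $t'\in[t_0+k'h,t_0+(k'+1)h)$. The non-triviality (\ref{p96}) then follows: $\mu\in\widehat\omega(\widehat x)=\omega(x)$ (periodic points are omega-limit points, Remark \ref{Rem27}), so $\widehat{\mathbf T}_\mu^{\widehat x}$ is infinite and meets $\{k',k'+1,\dots\}$. For (\ref{per527}): take $k\in\widehat{\mathbf T}_\mu^{\widehat x}$ with $k\geq k'$ and $z\in\mathbf Z$ with $k+zp\geq k'$; then $t'+(k-k')h\geq t'$ and $t'+(k-k'+zp)h\geq t'$, and these points lie in $[t_0+kh,t_0+(k+1)h)$ and $[t_0+(k+zp)h,t_0+(k+zp+1)h)$ respectively, so $\widehat x(k)=x(t'+(k-k')h)=x(t'+(k-k')h+zT)=x(t'+(k-k'+zp)h)=\widehat x(k+zp)$ by (\ref{per145*}). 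Since $\widehat x(k)=\mu$, this gives $k+zp\in\widehat{\mathbf T}_\mu^{\widehat x}$, proving (\ref{per527}).

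The main obstacle — really the only delicate point — is getting the bookkeeping around the limit of periodicity exactly right: in part a) one must be careful that the chosen $t'$ is genuinely in $I^x$ and that the inequality $t+zT\geq t'$ translates into $k+zp\geq -1$ (not merely $k+zp\geq k'$ for some larger $k'$), which is what lets us apply the strong form (\ref{per144*}) with the set $\mathbf N_{\_}$ rather than a truncated tail; and in part b) one must justify the reduction $t'\geq t_0-h$ via the independence of the limit of periodicity on its precise value (Theorem \ref{The5} / Lemma \ref{Lem30}), and handle the degenerate case $T/h\notin\mathbb N$ by appeal to the constancy dichotomy. All the genuinely hard analytic content — that $x$ eventually constant forces full constancy under a non-integer period ratio, and that periodic points are omega-limit — is already available in Theorems \ref{The32}, \ref{The19}, \ref{The5} and the earlier remarks, so the proof is a careful assembly rather than a new idea. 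I would in fact phrase the whole thing as "this is the non-eventually-constant specialization of Theorem \ref{The24}, using Theorem \ref{The32} b) in place of Theorem \ref{The19} b)", and then include the short index computations above for completeness.
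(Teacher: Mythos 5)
Your proposal is correct and follows essentially the same route as the paper: part a) reduces to the block-index computation showing $[t_0-h,t_0)\subset I^{x}\cap L_{\mu}^{x}$ (the paper reaches this via Theorem \ref{The24} a) and Theorem \ref{The117} plus a contradiction argument, you verify it directly with $t^{\prime}=t_0-\varepsilon$), and part b) is the paper's own reduction to the argument of Theorem \ref{The24} b). Your one refinement --- invoking Theorem \ref{The32} b) rather than Theorem \ref{The19} b) to exclude $\frac{T}{h}\notin\{1,2,3,...\}$ --- is in fact the more precise citation here, since this theorem assumes only that $x$ is not constant, whereas Theorem \ref{The24} requires the stronger hypothesis that the signals are not eventually constant.
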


\begin{proof}
a) The existence of $p\geq1$ such that (\ref{per144*}) is true shows that
$\mu\in\widehat{\omega}(\widehat{x}),$ thus, as far as $\widehat{\omega
}(\widehat{x})=\omega(x),$ we infer $\mu\in\omega(x).$ The fact that $\mu
\in\widehat{\omega}(\widehat{x})$ is eventually periodic with the period $p $
implies, from Theorem \ref{The24}, page \pageref{The24}, that $\mu$ is
eventually periodic with the period $T=ph.$ As $x$ is not constant and
$L_{\mu}^{x}\neq\varnothing,$ we have, see Theorem \ref{The117}, page
\pageref{The117} the existence of $t_{0}^{\prime}\in\mathbf{R}$ with $L_{\mu
}^{x}=[t_{0}^{\prime},\infty).$ We claim that $t_{0}-h\geq t_{0}^{\prime}.$

Let us suppose against all reason that this is not the case, i.e. that%
\begin{equation}
\forall t\in\mathbf{T}_{\mu}^{x}\cap\lbrack t_{0}-h,\infty),\{t+zT|z\in
\mathbf{Z}\}\cap\lbrack t_{0}-h,\infty)\subset\mathbf{T}_{\mu}^{x}%
\end{equation}
is false. This means the existence of $t_{1}\in\mathbf{T}_{\mu}^{x},z_{1}%
\in\mathbf{Z}$ with $t_{1}\geq t_{0}-h,$ $t_{1}+z_{1}ph\geq t_{0}-h$ and
$t_{1}+z_{1}ph\notin\mathbf{T}_{\mu}^{x}.$ Then $k_{1}\in\mathbf{N}_{\_}$
exists such that $t_{1}\in\lbrack t_{0}+k_{1}h,t_{0}+(k_{1}+1)h),$
$t_{1}+z_{1}ph\in\lbrack t_{0}+(k_{1}+z_{1}p)h,t_{0}+(k_{1}+z_{1}p+1)h).$ We
have, as far as $k_{1}+z_{1}p\geq-1:$%
\[
\mu=x(t_{1})=\widehat{x}(k_{1})\overset{(\ref{per144*})}{=}\widehat{x}%
(k_{1}+z_{1}p)=x(t_{1}+z_{1}ph)=x(t_{1}+z_{1}T),
\]
contradiction with the way that $t_{1}$ was defined.

As $t_{0}-h\geq t_{0}^{\prime}$ we get the truth of a), since $(-\infty
,t_{0})\subset I^{x},[t_{0}-h,\infty)\subset L_{\mu}^{x}$ and $\varnothing
\neq\lbrack t_{0}-h,t_{0})\subset I^{x}\cap L_{\mu}^{x}.$

b) If $\mu\in Or(x)$ satisfies (\ref{per145*}), then $\mu\in\omega
(x)=\widehat{\omega}(\widehat{x}).$ The fact that $\frac{T}{h}\in
\{1,2,3,...\}$ and the existence of $k^{\prime}\in\mathbf{N}_{\_}$ such that
(\ref{p96}), (\ref{per527}) are fulfilled for $p=\frac{T}{h}$ result from
Theorem \ref{The24}, page \pageref{The24}.
\end{proof}

\begin{remark}
Theorem \ref{The20_} states, in a manner that updates Theorem \ref{The24} to
periodic points, that the discrete time and the real time periodicity of the
points are not equivalent when (\ref{per141}) is true: if $\mu\in\widehat
{Or}(\widehat{x})$ is periodic with the period $p$, then $\mu\in Or(x)$ is
periodic with the period $ph,$ while the converse implication takes place
under the form: if $\mu\in Or(x)$ is periodic with the period $T$, then
$\frac{T}{h}\in\{1,2,3,...\}$ and $\mu\in\widehat{Or}(\widehat{x})$ is
eventually periodic with the period $\frac{T}{h}.$
\end{remark}

\section{Support sets vs sets of periods}

\begin{remark}
Let the signals $\widehat{x},\widehat{y}\in\widehat{S}^{(n)},x,y\in S^{(n)}$
with $\mu\in\widehat{Or}(\widehat{x})\cap\widehat{Or}(\widehat{y}).$ The
implications%
\begin{equation}
\widehat{\mathbf{T}}_{\mu}^{\widehat{x}}=\widehat{\mathbf{T}}_{\mu}%
^{\widehat{y}}\Longrightarrow\widehat{P}_{\mu}^{\widehat{x}}=\widehat{P}_{\mu
}^{\widehat{y}},
\end{equation}%
\begin{equation}
\mathbf{T}_{\mu}^{x}=\mathbf{T}_{\mu}^{y}\Longrightarrow P_{\mu}^{x}=P_{\mu
}^{y}%
\end{equation}
are not true, in the sense given by Example \ref{Exa14}, page \pageref{Exa14}
and its discrete time counterpart: $\widehat{\mathbf{T}}_{\mu}^{\widehat{x}%
}=\widehat{\mathbf{T}}_{\mu}^{\widehat{y}}$ may take place and $\mu$ may be a
periodic point of $\widehat{x}$ and an eventually periodic point of
$\widehat{y}.$ If so, the equality $\widehat{P}_{\mu}^{\widehat{x}}%
=\widehat{P}_{\mu}^{\widehat{y}}$ refers to eventual periodicity, not to periodicity.
\end{remark}

\begin{remark}
Similarly, the implications%
\begin{equation}
\widehat{P}_{\mu}^{\widehat{x}}=\widehat{P}_{\mu}^{\widehat{y}}\Longrightarrow
\widehat{\mathbf{T}}_{\mu}^{\widehat{x}}=\widehat{\mathbf{T}}_{\mu}%
^{\widehat{y}}, \label{p226}%
\end{equation}%
\begin{equation}
P_{\mu}^{x}=P_{\mu}^{y}\Longrightarrow\mathbf{T}_{\mu}^{x}=\mathbf{T}_{\mu
}^{y} \label{p227}%
\end{equation}
are not true. For this, we take $x,y\in S^{(1)},$%
\[
x(t)=\chi_{\lbrack4,5)}(t)\oplus\chi_{\lbrack9,10)}(t)\oplus\chi
_{\lbrack14,15)}(t)\oplus...
\]%
\[
y(t)=\chi_{\lbrack2,3)}(t)\oplus\chi_{\lbrack4,5)}(t)\oplus\chi_{\lbrack
7,8)}(t)\oplus\chi_{\lbrack9,10)}(t)\oplus\chi_{\lbrack12,13)}(t)\oplus...
\]
$\mu=1$ is a periodic point of both $x,y$ with $I^{x}=(-\infty,4),I^{y}%
=(-\infty,2),P_{\mu}^{x}=P_{\mu}^{y}=\{5,10,15,...\}$ and $L_{\mu}^{x}=L_{\mu
}^{y}=[0,\infty).$ In $\mathbf{T}_{\mu}^{x}$ the interval $[4,5)$ repeats
within a period and in $\mathbf{T}_{\mu}^{y}$ the intervals $[2,3),[4,5)$
repeat within a period. The periods $T$ coincide for $x$ and $y$ and
(\ref{p226}), (\ref{p227}) are false in general.
\end{remark}

\section{Sums, differences and multiples of periods}

\begin{theorem}
\label{The3}The signals $\widehat{x},x$ are considered.

a) Let $p,p^{\prime}\geq1,$ $\mu\in\widehat{Or}(\widehat{x})$ and we ask that%
\begin{equation}
\forall k\in\widehat{\mathbf{T}}_{\mu}^{\widehat{x}},\{k+zp|z\in
\mathbf{Z}\}\cap\mathbf{N}_{\_}\subset\widehat{\mathbf{T}}_{\mu}^{\widehat{x}%
}, \label{per977}%
\end{equation}%
\begin{equation}
\forall k\in\widehat{\mathbf{T}}_{\mu}^{\widehat{x}},\{k+zp^{\prime}%
|z\in\mathbf{Z}\}\cap\mathbf{N}_{\_}\subset\widehat{\mathbf{T}}_{\mu
}^{\widehat{x}} \label{per978}%
\end{equation}
hold. We have $p+p^{\prime}\geq1,$%
\begin{equation}
\forall k\in\widehat{\mathbf{T}}_{\mu}^{\widehat{x}},\{k+z(p+p^{\prime}%
)|z\in\mathbf{Z}\}\cap\mathbf{N}_{\_}\subset\widehat{\mathbf{T}}_{\mu
}^{\widehat{x}} \label{per970}%
\end{equation}
and if $p>p^{\prime},$ then $p-p^{\prime}\geq1,$%
\begin{equation}
\forall k\in\widehat{\mathbf{T}}_{\mu}^{\widehat{x}},\{k+z(p-p^{\prime}%
)|z\in\mathbf{Z}\}\cap\mathbf{N}_{\_}\subset\widehat{\mathbf{T}}_{\mu
}^{\widehat{x}}. \label{per971}%
\end{equation}

b) Let $T,T^{\prime}>0,$ $t^{\prime}\in I^{x}\mathbf{,}$ $\mu\in Or(x)$ be
arbitrary with%
\begin{equation}
\forall t\in\mathbf{T}_{\mu}^{x}\cap\lbrack t^{\prime},\infty),\{t+zT|z\in
\mathbf{Z}\}\cap\lbrack t^{\prime},\infty)\subset\mathbf{T}_{\mu}^{x},
\label{per972}%
\end{equation}%
\begin{equation}
\forall t\in\mathbf{T}_{\mu}^{x}\cap\lbrack t^{\prime},\infty),\{t+zT^{\prime
}|z\in\mathbf{Z}\}\cap\lbrack t^{\prime},\infty)\subset\mathbf{T}_{\mu}^{x}
\label{per973}%
\end{equation}
fulfilled. We have on one hand that $T+T^{\prime}>0$ and
\begin{equation}
\forall t\in\mathbf{T}_{\mu}^{x}\cap\lbrack t^{\prime},\infty
),\{t+z(T+T^{\prime})|z\in\mathbf{Z}\}\cap\lbrack t^{\prime},\infty
)\subset\mathbf{T}_{\mu}^{x}%
\end{equation}
and on the other hand that $T>T^{\prime}$ implies $T-T^{\prime}>0$ and
\begin{equation}
\forall t\in\mathbf{T}_{\mu}^{x}\cap\lbrack t^{\prime},\infty
),\{t+z(T-T^{\prime})|z\in\mathbf{Z}\}\cap\lbrack t^{\prime},\infty
)\subset\mathbf{T}_{\mu}^{x}.
\end{equation}

\end{theorem}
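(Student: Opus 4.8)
The statement is the ``sums, differences and multiples'' result for periodic points, Theorem \ref{The3}, and its structure is entirely parallel to Theorem \ref{The68} (the same statement for eventually periodic points). The natural plan is to reduce everything to a direct, case-by-case ``$z>0$, $z=0$, $z<0$'' argument exactly as in the proof of Theorem \ref{The68}, with the only change being that the cutoff set $\{k',k'+1,\dots\}$ is replaced by $\mathbf{N}_{\_}$ in part a) and by $[t',\infty)$ with $t'\in I^{x}$ in part b). In fact the cleanest route is to observe that a periodic point is a special case of an eventually periodic point: in part a), hypotheses (\ref{per977}), (\ref{per978}) are exactly (\ref{p228}), (\ref{p229}) of Theorem \ref{The68} written with $k'=-1$ (so that $\{k',k'+1,k'+2,\dots\}=\mathbf{N}_{\_}$), and likewise in part b) the hypotheses (\ref{per972}), (\ref{per973}) are (\ref{p232}), (\ref{p233}) of Theorem \ref{The68}. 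So both parts of Theorem \ref{The3} follow from Theorem \ref{The68} by specialization, provided one checks that $\mu$ genuinely lies in $\widehat{\omega}(\widehat{x})$ (resp. $\omega(x)$), which Theorem \ref{The68} requires in its hypothesis.

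That membership check is the one genuinely new small step. For part a): from $\mu\in\widehat{Or}(\widehat{x})$ we have $\widehat{\mathbf{T}}_{\mu}^{\widehat{x}}\neq\varnothing$, and (\ref{per977}) together with Theorem \ref{Lem1} (applied at $k=-1$, so $\widehat{\mathbf{T}}_{\mu}^{\widehat{x}}\cap\{-1,0,\dots,p-1\}\neq\varnothing$) and then iterating upward by multiples of $p$ forces $\widehat{\mathbf{T}}_{\mu}^{\widehat{x}}$ to be infinite; hence $\mu\in\widehat{\omega}(\widehat{x})$ by Theorem \ref{The12} a). Alternatively one can quote Remark \ref{The142}. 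For part b): from $\mu\in Or(x)$ and $t'\in I^{x}$ we get $\mathbf{T}_{\mu}^{x}\cap[t',\infty)\neq\varnothing$ (Lemma \ref{Lem36}), and then (\ref{per972}) gives $\{t,t+T,t+2T,\dots\}\subset\mathbf{T}_{\mu}^{x}$ for any such $t$, so $\mathbf{T}_{\mu}^{x}$ is unbounded from above, i.e. $\mu\in\omega(x)$ by Theorem \ref{The12} a). With these verified, Theorem \ref{The68} applies verbatim and yields (\ref{per970}), (\ref{per971}) and the two real-time conclusions.

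If one prefers a self-contained argument rather than a reference to Theorem \ref{The68}, the plan is the same three-case split carried out directly. I would prove the difference statement first (the sum statement is easier and strictly analogous, indeed is the ``$z<0$ replaced by $z>0$'' mirror). Fix $k\in\widehat{\mathbf{T}}_{\mu}^{\widehat{x}}$ and $z\in\mathbf{Z}$ with $k+z(p-p')\geq-1$. If $z=0$ the conclusion is trivial. If $z>0$: first $k+zp\geq k\geq-1$, so (\ref{per977}) gives $k+zp\in\widehat{\mathbf{T}}_{\mu}^{\widehat{x}}$; then $k+zp-zp'=k+z(p-p')\geq-1$ by hypothesis, so (\ref{per978}) gives $k+z(p-p')\in\widehat{\mathbf{T}}_{\mu}^{\widehat{x}}$. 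If $z<0$: first $k-zp'=k+|z|p'\geq k\geq-1$, so (\ref{per978}) gives $k-zp'\in\widehat{\mathbf{T}}_{\mu}^{\widehat{x}}$; then $k-zp'+zp=k+z(p-p')\geq-1$, so (\ref{per977}) gives $k+z(p-p')\in\widehat{\mathbf{T}}_{\mu}^{\widehat{x}}$. The real-time part b) is identical with $\mathbf{N}_{\_}$ replaced by $[t',\infty)$ and $k$ by $t$. There is essentially no obstacle here; the only thing to be careful about is the bookkeeping of which auxiliary point ($k+zp$ versus $k-zp'$) one passes through in each sign case, so that both appeals to the hypotheses are legitimate (i.e. the intermediate argument stays $\geq-1$, resp. $\geq t'$). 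I expect the ``main obstacle'', such as it is, to be purely presentational: making the $z<0$ case read cleanly, since that is where one must route through the \emph{other} period first.
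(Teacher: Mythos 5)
Your proposal is correct and takes essentially the same route as the paper, whose entire proof is the one-line remark that Theorem \ref{The3} is the special case of Theorem \ref{The68} in which the eventually periodic points are periodic (i.e. $k'=-1$, resp. $t'\in I^{x}$). Your additional verification that $\mu\in\widehat{\omega}(\widehat{x})$, resp. $\mu\in\omega(x)$ — needed because Theorem \ref{The68} is stated for omega limit points while Theorem \ref{The3} only assumes $\mu$ is in the orbit — is a point the paper silently elides, and your argument for it (via Lemma \ref{Lem36} and the unboundedness of the support set) is the right one.
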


\begin{proof}
This is the special case of Theorem \ref{The68}, page \pageref{The68} when the
eventually periodic points are periodic.
\end{proof}

\begin{theorem}
\label{The2}We consider the signals $\widehat{x},x.$

a) Let $p,k^{\prime}\geq1$ and $\mu\in\widehat{Or}(\widehat{x}).$ Then
$p^{\prime}=k^{\prime}p$ fulfills $p^{\prime}\geq1$ and%
\begin{equation}
\forall k\in\widehat{\mathbf{T}}_{\mu}^{\widehat{x}},\{k+zp|z\in
\mathbf{Z}\}\cap\mathbf{N}_{\_}\subset\widehat{\mathbf{T}}_{\mu}^{\widehat{x}}
\label{pre967}%
\end{equation}
implies%
\begin{equation}
\forall k\in\widehat{\mathbf{T}}_{\mu}^{\widehat{x}},\{k+zp^{\prime}%
|z\in\mathbf{Z}\}\cap\mathbf{N}_{\_}\subset\widehat{\mathbf{T}}_{\mu
}^{\widehat{x}}.
\end{equation}

b) Let $T>0,t^{\prime}\in I^{x}\mathbf{,}$ $k^{\prime}\geq1$ and $\mu\in
Or(x)$ be arbitrary. Then $T^{\prime}=k^{\prime}T$ fulfills $T^{\prime}>0$ and%
\[
\forall t\in\mathbf{T}_{\mu}^{x}\cap\lbrack t^{\prime},\infty),\{t+zT|z\in
\mathbf{Z}\}\cap\lbrack t^{\prime},\infty)\subset\mathbf{T}_{\mu}^{x}%
\]
implies%
\[
\forall t\in\mathbf{T}_{\mu}^{x}\cap\lbrack t^{\prime},\infty),\{t+zT^{\prime
}|z\in\mathbf{Z}\}\cap\lbrack t^{\prime},\infty)\subset\mathbf{T}_{\mu}^{x}.
\]

\end{theorem}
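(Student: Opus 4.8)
The plan is to derive this theorem as the obvious specialization of Theorem \ref{The68}, page \pageref{The68}, exactly as the proof text announces with the phrase ``This is a consequence of Theorem \ref{The68}.'' The key observation is that Theorem \ref{The68} is stated for an eventually periodic point $\mu\in\widehat{\omega}(\widehat{x})$ (respectively $\mu\in\omega(x)$) with an arbitrary limit of periodicity $k^{\prime}\in\mathbf{N}_{\_}$ (respectively $t^{\prime}\in\mathbf{R}$), whereas Theorem \ref{The2} deals with a periodic point $\mu\in\widehat{Or}(\widehat{x})$ (respectively $\mu\in Or(x)$) with the limit of periodicity pinned to the initial time. So the first step is to reconcile the hypotheses: in part a), a periodic point of $\widehat{x}$ satisfies $\mu\in\widehat{\omega}(\widehat{x})$ (this was noted in Remark \ref{Rem18} and again in the Discussion following Theorem \ref{The49}, since $\widehat{\mathbf{T}}_{\mu}^{\widehat{x}}$ is infinite), and the periodicity condition (\ref{pre967}) is precisely the $k^{\prime}=-1$ instance of the hypothesis of Theorem \ref{The68} a); similarly in part b), $\mu\in Or(x)$ periodic gives $\mu\in\omega(x)$ and $t^{\prime}\in I^{x}$ makes (\ref{per972}) the required hypothesis of Theorem \ref{The68} b).

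Second, I would handle the multiplication by $k^{\prime}$ inductively, which is the genuine content hidden behind ``consequence of Theorem \ref{The68}.'' Write $p^{\prime}=k^{\prime}p$. The case $k^{\prime}=1$ is trivial. For the inductive step from $(j-1)p$ to $jp$, observe that by the induction hypothesis both $(j-1)p$ and $p$ serve as valid periods of $\mu$ with the same limit of periodicity, namely $-1$ in the discrete case and $t^{\prime}$ in the real case; then apply the sum part of Theorem \ref{The68} (inequality (\ref{p230}), resp. (\ref{p234})) with $p$ and $(j-1)p$ to conclude that $(j-1)p+p=jp$ is again a period with the same limit of periodicity. After $k^{\prime}-1$ such steps we reach $p^{\prime}=k^{\prime}p$, which is the claimed conclusion. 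In fact this argument is already packaged as Theorem \ref{The25}, page \pageref{The25}, for the eventually periodic setting, so the cleanest presentation is: note that a periodic point is an eventually periodic point with limit of periodicity $-1$ (resp.\ $t^{\prime}\in I^{x}$), invoke Theorem \ref{The25} a) (resp.\ b)) with that choice of limit, and read off the result.

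I do not expect any serious obstacle here; the only thing requiring a line of care is making explicit that the limit of periodicity in Theorem \ref{The25} / Theorem \ref{The68} may be taken to be $-1$ in discrete time and any fixed $t^{\prime}\in I^{x}$ in real time, so that the output inclusion is again a statement of \emph{periodicity} of $\mu$ (not merely eventual periodicity). For part b) one should also recall that $\mathbf{T}_{\mu}^{x}\cap[t^{\prime},\infty)\neq\varnothing$ automatically, which follows from $\mu\in Or(x)$ and $t^{\prime}\in I^{x}$ via Lemma \ref{Lem36}, page \pageref{Lem36} (as used in Remark \ref{Rem20}); this guarantees the hypotheses of Theorem \ref{The68} b) / Theorem \ref{The25} b) are genuinely met and the conclusion is non-trivial. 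With these remarks in place, the proof is a two-sentence reduction, which matches the terse ``consequence of'' style the author uses here.
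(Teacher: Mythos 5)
Your proof is correct and follows essentially the same route as the paper: the paper's own proof reads ``This is a direct consequence of Theorem \ref{The3},'' where Theorem \ref{The3} is precisely the periodic-point specialization of Theorem \ref{The68} that you describe, so reducing instead to Theorem \ref{The25} (or iterating the sum part of Theorem \ref{The68} directly) is the same argument with a different intermediate citation. Your extra care about the limit of periodicity being $-1$ (resp.\ $t^{\prime}\in I^{x}$) and about $\mathbf{T}_{\mu}^{x}\cap[t^{\prime},\infty)\neq\varnothing$ via Lemma \ref{Lem36} only makes explicit what the paper leaves implicit.
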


\begin{proof}
This is a direct consequence of Theorem \ref{The3}.
\end{proof}

\begin{remark}
Another way of expressing the statements of Theorem \ref{The2} is: if
$p\in\widehat{P}_{\mu}^{\widehat{x}},$ then $\{p,2p,3p,...\}\subset\widehat
{P}_{\mu}^{\widehat{x}}$ and if $T\in P_{\mu}^{x},$ then
$\{T,2T,3T,...\}\subset P_{\mu}^{x}.$
\end{remark}

\section{The set of the periods}

\begin{theorem}
\label{The26_}a) Let the signal $\widehat{x}\in\widehat{S}^{(n)}$ and $\mu
\in\widehat{Or}(\widehat{x}).$ We ask that $\mu$ is a periodic point of
$\widehat{x}$. Then some $\widetilde{p}\geq1$ exists such that%
\[
\widehat{P}_{\mu}^{\widehat{x}}=\{\widetilde{p},2\widetilde{p},3\widetilde
{p},...\}.
\]

b) We suppose that the signal $x\in S^{(n)}$ is not constant and we take some
$\mu\in$ $Or(x).$ We ask that $\mu$ is a periodic point of $x$. Then there is
$\widetilde{T}>0$ such that%
\[
P_{\mu}^{x}=\{\widetilde{T},2\widetilde{T},3\widetilde{T},...\}.
\]

\end{theorem}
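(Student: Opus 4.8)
The plan is to reduce both parts to results already
established in this chapter, mirroring exactly the structure of Theorem
\ref{The70}, page \pageref{The70}, which is the eventually-periodic analogue of
the present statement. The key point is that a periodic point is in particular
an eventually periodic point, so most of the machinery is already available;
what is special here is that the limit of periodicity can be taken to be
$k^{\prime}=-1$ (discrete time) or $t^{\prime}\in I^{x}$ (real time).

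For part a), first I would observe that $\mu\in\widehat{Or}(\widehat{x})$
periodic implies $\mu\in\widehat{\omega}(\widehat{x})$ (Remark \ref{Rem18} and
the remarks following it guarantee $\widehat{\mathbf{T}}_{\mu}^{\widehat{x}}$
is infinite), so $\widehat{P}_{\mu}^{\widehat{x}}\neq\varnothing$ and we may
set $\widetilde{p}=\min\widehat{P}_{\mu}^{\widehat{x}}$, which exists because
$\widehat{P}_{\mu}^{\widehat{x}}\subset\{1,2,3,\ldots\}$. Corollary
\ref{Cor6} a), page \pageref{Cor6}, gives the inclusion
$\{\widetilde{p},2\widetilde{p},3\widetilde{p},\ldots\}\subset\widehat{P}_{\mu
}^{\widehat{x}}$. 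For the reverse inclusion I would argue by contradiction: if
$p^{\prime}\in\widehat{P}_{\mu}^{\widehat{x}}$ is not a multiple of
$\widetilde{p}$, pick $k_{1}\geq1$ with $k_{1}\widetilde{p}<p^{\prime
}<(k_{1}+1)\widetilde{p}$, so that $1\leq p^{\prime}-k_{1}\widetilde{p}
<\widetilde{p}$; applying Theorem \ref{The3}, page \pageref{The3} (difference of
periods of a periodic point is again a period, with the same limit of
periodicity) together with Theorem \ref{The2} (multiples of periods are
periods) yields $p^{\prime}-k_{1}\widetilde{p}\in\widehat{P}_{\mu}^{\widehat
{x}}$, contradicting minimality of $\widetilde{p}$. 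Note that here I invoke
\emph{Theorem \ref{The3}}, not Theorem \ref{The68}, because we need to preserve
periodicity (limit $-1$), not merely eventual periodicity; this is the one place
where the present theorem is not a literal corollary of Theorem \ref{The70}.

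For part b), the structure is the same but with two genuine steps, exactly as in
Theorem \ref{The70} b). Since $x$ is not constant, $\mu$ periodic implies
$\mu\in\omega(x)$, and $P_{\mu}^{x}\neq\varnothing$ with $t^{\prime}\in I^{x}
\cap L_{\mu}^{x}\neq\varnothing$. Step b.1): show $\min P_{\mu}^{x}$ exists. I
would argue by contradiction assuming a strictly decreasing sequence $T_{k}\in
P_{\mu}^{x}$ converging to $T=\inf P_{\mu}^{x}$; using that $x$ is not constant
(hence not eventually constant, so Lemma \ref{Lem38} applies), that $\mu
\in\omega(x)$, and Lemma \ref{Lem10} and Lemma \ref{Lem4_} as in the proof of
Theorem \ref{The70} b.1), one reaches a contradiction, so $\widetilde{T}=\min
P_{\mu}^{x}$ exists. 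Step b.2): $\{\widetilde{T},2\widetilde{T},\ldots\}\subset
P_{\mu}^{x}$ by Corollary \ref{Cor6} b); and if $T^{\prime}\in P_{\mu}^{x}$
were not a multiple of $\widetilde{T}$, pick $k_{1}\geq1$ with $k_{1}
\widetilde{T}<T^{\prime}<(k_{1}+1)\widetilde{T}$, so $0<T^{\prime}-k_{1}
\widetilde{T}<\widetilde{T}$, and Theorems \ref{The3} and \ref{The2} give
$T^{\prime}-k_{1}\widetilde{T}\in P_{\mu}^{x}$, contradicting minimality. Thus
$P_{\mu}^{x}=\{\widetilde{T},2\widetilde{T},3\widetilde{T},\ldots\}$.

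The main obstacle is Step b.1), the existence of the minimum period, which is
the only part requiring real analysis rather than bookkeeping; but since $x$
non-constant implies $x$ non-eventually-constant, this step is word-for-word the
same as the corresponding step in the proof of Theorem \ref{The70} b), so the
needed lemmas (\ref{Lem38}, \ref{Lem10}, \ref{Lem4_}) are exactly the ones
already cited there. Everything else is a routine transcription of the
eventually-periodic proofs with ``eventually periodic point'' replaced by
``periodic point'' and the period-arithmetic theorems replaced by their
periodic-point versions \ref{The3} and \ref{The2}. In fact, the cleanest
write-up simply remarks that the argument of Theorem \ref{The70} carries over
verbatim, using \ref{The3}, \ref{The2} in place of \ref{The68}, \ref{The25}.
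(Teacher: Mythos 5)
Your approach coincides with the paper's: the printed proof of this theorem is the single line ``this is a special case of Theorem \ref{The70}'', and your proposal is exactly that reduction written out, with Corollary \ref{Cor6} and Theorems \ref{The3}, \ref{The2} in place of Theorems \ref{The68}, \ref{The25}, and the same Lemmas \ref{Lem38}, \ref{Lem10}, \ref{Lem4_} in step b.1. Part a) is fine; note only that $\widehat{P}_{\mu}^{\widehat{x}}$ is by definition the set of periods of \emph{eventual} periodicity, and for a periodic point $\widehat{L}_{\mu}^{\widehat{x}}=\mathbf{N}_{\_}$ (Theorem \ref{The137}), so your insistence on Theorem \ref{The3} rather than Theorem \ref{The68} is harmless but not actually necessary: part a) is a literal corollary of Theorem \ref{The70} a).

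The one genuine flaw is the assertion that ``$x$ non-constant implies $x$ non-eventually-constant''. This implication is false as stated: the Heaviside step $x(t)=\chi_{[0,\infty)}(t)$ is non-constant yet eventually constant. It is precisely what you use to license Lemma \ref{Lem38} in step b.1 and, more fundamentally, to make Theorem \ref{The70} b) applicable at all, since that theorem assumes ``not eventually constant'' while the present one assumes only ``not constant''. The correct statement, which needs the periodicity of $\mu$, is that a non-constant $x$ possessing a periodic point cannot be eventually constant. Indeed, if $x$ were eventually constant then $\omega(x)=\{\mu\}$ and $[t_{1},t_{1}+T)\subset\mathbf{T}_{\mu}^{x}$ for some $t_{1}$ that we may take $\geq t^{\prime}$, where $t^{\prime}\in I^{x}=(-\infty,t_{0})$ is the limit of periodicity; applying the periodicity condition to all points of $[t_{1},t_{1}+T)$ and all $z\in\mathbf{Z}$ yields $[t^{\prime},\infty)\subset\mathbf{T}_{\mu}^{x}$, whence either $t_{0}\in\mathbf{T}_{\mu}^{x}$ (if $\mu=x(-\infty+0)$, contradicting $x(t_{0})\neq x(-\infty+0)$) or $\varnothing\neq[t^{\prime},t_{0})\subset\mathbf{T}_{\mu}^{x}\cap\mathbf{T}_{x(-\infty+0)}^{x}$ (if $\mu\neq x(-\infty+0)$, again a contradiction). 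With that one-paragraph repair your reduction to Theorem \ref{The70} is complete; the paper itself leaves this point tacit.
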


\begin{proof}
This is a special case of Theorem \ref{The70}, page \pageref{The70}.
\end{proof}

\begin{remark}
An asymmetry occurs here, we have not asked in the hypothesis of Theorem
\ref{The26_}, item a) that $\widehat{x}$ is not constant; when $\widehat{x}$
is constant and equal with $\mu$ we have $\widetilde{p}=1$ and $\widehat
{P}_{\mu}^{\widehat{x}}=\{1,2,3,...\},$ thus the Theorem is still true. Like
in the case of the eventually periodic points, item b) of the Theorem does not
hold if $x$ is constant and equal with $\mu$, since in that case $P_{\mu}%
^{x}=(0,\infty).$
\end{remark}

\begin{theorem}
\label{The115}We suppose that the relation between $\widehat{x}$ and $x$ is
given by%
\begin{equation}%
\begin{array}
[c]{c}%
x(t)=\widehat{x}(-1)\cdot\chi_{(-\infty,t_{0})}(t)\oplus\widehat{x}%
(0)\cdot\chi_{\lbrack t_{0},t_{0}+h)}(t)\oplus\widehat{x}(1)\cdot\chi_{\lbrack
t_{0}+h,t_{0}+2h)}(t)\oplus...\\
...\oplus\widehat{x}(k)\cdot\chi_{\lbrack t_{0}+kh,t_{0}+(k+1)h)}(t)\oplus...
\end{array}
\end{equation}
where $t_{0}\in\mathbf{R}$ and $h>0$ and that $\mu\in\widehat{\omega}%
(\widehat{x})=\omega(x)$ is a periodic point of any of $\widehat{x},x.$ Then
two possibilities exist:

a) $\widehat{x},$ $x$ are both constant, $\widehat{P}_{\mu}^{\widehat{x}%
}=\{1,2,3,...\}$ and $P_{\mu}^{x}=(0,\infty);$

b) none of $\widehat{x},$ $x$ is constant, $\min\widehat{P}_{\mu}^{\widehat
{x}}=p>1$ and $\min P_{\mu}^{x}=T=ph$.
\end{theorem}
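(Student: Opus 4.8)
The plan is to run the argument for Theorem~\ref{The115} in exact parallel with the proof of its eventually‑periodic counterpart, Theorem~\ref{The87}, but feeding in the periodic‑point transfer theorem (Theorem~\ref{The20_}) in place of Theorems~\ref{The21} and~\ref{The118}, and the structure‑of‑periods theorem for periodic points (Theorem~\ref{The26_}, or Theorem~\ref{The70}) in place of Theorem~\ref{The124}. First I would note that the given relation between $\widehat{x}$ and $x$ is the special case $t_{k}=t_{0}+kh$ of the hypothesis of Theorem~\ref{The112}, so the constancy of $\widehat{x}$ is equivalent with the constancy of $x$; hence exactly one of a), b) occurs, and also $\widehat{Or}(\widehat{x})=Or(x)$ and $\widehat{\omega}(\widehat{x})=\omega(x)$ by Theorem~\ref{The114}~a). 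In case a) both orbits reduce to $\{\mu\}$, so $\widehat{\mathbf{T}}_{\mu}^{\widehat{x}}=\mathbf{N}_{\_}$ and $\mathbf{T}_{\mu}^{x}=\mathbf{R}$; then the periodicity conditions (\ref{pre737}), (\ref{pre739}) hold trivially for every $p\geq1$, respectively for every $T>0$, whence $\widehat{P}_{\mu}^{\widehat{x}}=\{1,2,3,\dots\}$ and $P_{\mu}^{x}=(0,\infty)$; the hypothesis is then automatically satisfied, so a) is complete.

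From now on assume $\widehat{x},x$ are not constant; we must establish b). The main line handles the case where $\mu$ is a periodic point of $\widehat{x}$. Then Theorem~\ref{The26_}~a) gives $\widehat{P}_{\mu}^{\widehat{x}}=\{p,2p,3p,\dots\}$ with $p=\min\widehat{P}_{\mu}^{\widehat{x}}$. To see $p>1$: a periodic point admits $-1$ as a limit of periodicity, so if $1\in\widehat{P}_{\mu}^{\widehat{x}}$ then Theorem~\ref{The137}~a) combines the period $1$ with the limit $-1$ to make (\ref{per372}) hold for $p=1$, $k^{\prime}=-1$, and Theorem~\ref{The19}~a) then forces $\widehat{\mathbf{T}}_{\mu}^{\widehat{x}}=\mathbf{N}_{\_}$, i.e. $\widehat{x}$ constant (Theorem~\ref{The13}) --- a contradiction. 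Next, Theorem~\ref{The20_}~a) applied to the period $p$ yields $ph\in P_{\mu}^{x}$, so $\mu$ is a periodic point of $x$; since $x$ is not constant, Theorem~\ref{The26_}~b) gives $P_{\mu}^{x}=\{\widetilde{T},2\widetilde{T},3\widetilde{T},\dots\}$ with $\widetilde{T}=\min P_{\mu}^{x}$. Finally, Theorem~\ref{The20_}~b) applied to $T=\widetilde{T}\in P_{\mu}^{x}$ gives $\widetilde{T}/h\in\{1,2,3,\dots\}$ and $\widetilde{T}/h\in\widehat{P}_{\mu}^{\widehat{x}}=\{p,2p,3p,\dots\}$, so $\widetilde{T}=mph$ for some $m\geq1$; together with $ph\in P_{\mu}^{x}=\{\widetilde{T},2\widetilde{T},\dots\}$, i.e. $ph=k_{1}\widetilde{T}$, this forces $k_{1}=m=1$, hence $\widetilde{T}=ph$ and $\min\widehat{P}_{\mu}^{\widehat{x}}=p>1$, $\min P_{\mu}^{x}=T=ph$, which is b).

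It remains to reduce the other case of the hypothesis --- $\mu$ periodic only for $x$ --- to the main line. Here Theorem~\ref{The20_}~b) immediately gives $T/h\in\{1,2,3,\dots\}$ and eventual periodicity of $\mu$ for $\widehat{x}$ with period $T/h$, so $\widehat{P}_{\mu}^{\widehat{x}}\neq\varnothing$; what one still needs is genuine periodicity of $\mu$ for $\widehat{x}$ (Theorem~\ref{The20_}~b) as stated only records an eventual limit $k^{\prime}\in\mathbf{N}_{\_}$). For this I would use that, $x$ being non‑constant, $I^{x}=(-\infty,t_{0})$, and, by Theorems~\ref{The5} and~\ref{The117}~b), the limit of periodicity of $\mu$ for $x$ may be taken inside $(-\infty,t_{0})$ --- the region on which $x$ coincides with $\widehat{x}(-1)$; pushing the periodicity of $x$ through the relation (\ref{per141}) with such a limit then yields the discrete periodicity condition (\ref{pre737}) with limit $k^{\prime}=-1$, i.e. $\mu$ periodic for $\widehat{x}$. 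This tracking of the limit of periodicity is essentially the ``$t_{0}-h\geq t_{0}^{\prime}$'' step in the proof of Theorem~\ref{The20_}~a) run in the reverse direction, and it is the one genuinely non‑routine point of the whole argument; everything else is a bookkeeping recombination of the quoted results. Once $\mu$ is known periodic for $\widehat{x}$, the main line above applies verbatim and the theorem follows.
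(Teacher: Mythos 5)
Your main line (both signals non constant, $\mu$ periodic for $\widehat{x}$) is correct and is essentially the paper's argument: transfer the period both ways through Theorem \ref{The20_}, invoke the arithmetic-progression structure of the period sets, and match the minima; your explicit derivation of $p>1$ via Theorem \ref{The137} and Theorem \ref{The19} is a detail the paper omits but is sound.

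The gap is in your last paragraph. The reduction you propose --- upgrading the eventual periodicity of $\mu$ for $\widehat{x}$ (delivered by Theorem \ref{The20_} b)) to genuine periodicity by pushing the limit of periodicity of $x$ into $I^{x}$ --- is not available, because the implication ``$\mu$ periodic for $x$ $\Longrightarrow$ $\mu$ periodic for $\widehat{x}$'' is false. Take $h=1,$ $t_{0}=0,$ $\nu\neq\mu$ and $\widehat{x}=\nu,\nu,\mu,\nu,\mu,\nu,\mu,\ldots$, so that $\widehat{\mathbf{T}}_{\mu}^{\widehat{x}}=\{1,3,5,\ldots\}$ and $\mathbf{T}_{\mu}^{x}=[1,2)\cup[3,4)\cup\ldots$ Here $I^{x}=(-\infty,1)$ and one checks directly that $t^{\prime}=0\in I^{x}$ satisfies (\ref{pre739}) with $T=2$, so $\mu$ is a periodic point of $x$; but $\mu$ is not a periodic point of $\widehat{x}$ for any $p$, since $1\in\widehat{\mathbf{T}}_{\mu}^{\widehat{x}}$ (for $p$ even, $p-1\in\widehat{\mathbf{T}}_{\mu}^{\widehat{x}}$; for $p$ odd, $p\in\widehat{\mathbf{T}}_{\mu}^{\widehat{x}}$) forces $\widehat{x}(-1)=\mu$ or $\widehat{x}(0)=\mu$, both false. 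The obstruction is exactly the $k+zp=-1$ residue: the interval $[t_{0}-h,t_{0})$ may lie below every admissible limit of periodicity of $x$, so the real-time periodicity says nothing about $\widehat{x}(-1)$. The repair is cheap and is what the paper implicitly does: $\widehat{P}_{\mu}^{\widehat{x}}$ and $P_{\mu}^{x}$ are by definition sets of \emph{eventual} periods, so in this case one applies Theorem \ref{The70} a) (valid for eventually periodic points) instead of Theorem \ref{The26_} a) to get $\widehat{P}_{\mu}^{\widehat{x}}=\{p,2p,3p,\ldots\}$, uses Theorem \ref{The24} for the two-way transfer of eventual periods, and runs your minima-matching computation unchanged; the inequality $p>1$ then follows because $1\in\widehat{P}_{\mu}^{\widehat{x}}$ would make $\widehat{x}$, hence $x$, eventually constant, and an eventually constant non-constant $x$ admits no periodic point in $\omega(x)$.
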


\begin{proof}
The fact that $\widehat{x},x$ are simultaneously constant or non constant is
obvious. We suppose that they are both non constant and we prove b). From
Theorem \ref{The20_}, page \pageref{The20_} we have that $p\in\widehat{P}%
_{\mu}^{\widehat{x}}\Longrightarrow T=ph\in P_{\mu}^{x}$ and conversely, $T\in
P_{\mu}^{x}\Longrightarrow p=\frac{T}{h}\in\widehat{P}_{\mu}^{\widehat{x}}.$
In particular, if $\widehat{P}_{\mu}^{\widehat{x}}=\{p,2p,3p,...\}$ and
$P_{\mu}^{x}=\{T,2T,3T,...\}$ (from Theorem \ref{The26_}, page
\pageref{The26_}), then $T=ph.$
\end{proof}

\section{Necessity conditions of periodicity}

\begin{theorem}
\label{The54}Let $\widehat{x}\in\widehat{S}^{(n)}$ non constant. For $\mu
\in\widehat{Or}(\widehat{x}),p\geq1$ we suppose that%
\begin{equation}
\forall k\in\widehat{\mathbf{T}}_{\mu}^{\widehat{x}},\{k+zp|z\in
\mathbf{Z}\}\cap\mathbf{N}_{\_}\subset\widehat{\mathbf{T}}_{\mu}^{\widehat{x}}
\label{pre97}%
\end{equation}
takes place. Then $n_{1},n_{2},...,n_{k_{1}}\in\{-1,0,...,p-2\},$ $k_{1}%
\geq1,$ exist such that%
\begin{equation}
\widehat{\mathbf{T}}_{\mu}^{\widehat{x}}=\underset{k\in\mathbf{N}}{%
{\displaystyle\bigcup}
}\{n_{1}+kp,n_{2}+kp,...,n_{k_{1}}+kp\} \label{pre99}%
\end{equation}
holds.
\end{theorem}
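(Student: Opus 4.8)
The plan is to recognize that this is the "periodic point" analogue of Theorem~\ref{The69} (page~\pageref{The69}), which handled the eventually periodic case with an arbitrary limit of periodicity $k^{\prime}$. Here the hypothesis (\ref{pre97}) is exactly the periodicity condition (\ref{per144}) of $\mu$ with period $p$, and by Remark~\ref{Rem18} (page~\pageref{Rem18}) this is nothing but eventual periodicity with the distinguished limit of periodicity $k^{\prime}=-1$. So I would reduce to Theorem~\ref{The69} applied with $k^{\prime}=-1$: that theorem produces $n_{1},\dots,n_{k_{1}}\in\{-1,0,\dots,-1+p-1\}=\{-1,0,\dots,p-2\}$, $k_{1}\geq 1$, such that
\[
\widehat{\mathbf{T}}_{\mu}^{\widehat{x}}\cap\{-1,0,1,2,\dots\}=\underset{k\in\mathbf{N}}{\bigcup}\{n_{1}+kp,n_{2}+kp,\dots,n_{k_{1}}+kp\},
\]
and since $\{-1,0,1,2,\dots\}=\mathbf{N}_{\_}$ and $\widehat{\mathbf{T}}_{\mu}^{\widehat{x}}\subset\mathbf{N}_{\_}$ always, the left-hand side is just $\widehat{\mathbf{T}}_{\mu}^{\widehat{x}}$, giving (\ref{pre99}) at once.

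There is one hypothesis mismatch to address: Theorem~\ref{The69} requires $\mu\in\widehat{\omega}(\widehat{x})$ and $\widehat{x}$ not eventually constant, whereas here we only assume $\mu\in\widehat{Or}(\widehat{x})$ and $\widehat{x}$ non constant. The first gap is harmless: by Remark~\ref{The142} (or the discussion after Theorem~\ref{The49}) the periodicity condition (\ref{pre97}) forces $\widehat{\mathbf{T}}_{\mu}^{\widehat{x}}$ to be infinite, hence $\mu\in\widehat{\omega}(\widehat{x})$, so $\mu\in\widehat{Or}(\widehat{x})$ already entails $\mu\in\widehat{\omega}(\widehat{x})$. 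The second gap needs a short separate argument: if $\widehat{x}$ is non constant but \emph{eventually} constant, Theorem~\ref{The69} does not apply directly. I would dispose of this case by hand using Theorem~\ref{The32} (page~\pageref{The32}): if $\widehat{x}$ is eventually constant then $\widehat{\omega}(\widehat{x})=\{\mu'\}$ for a unique $\mu'$, and any periodic $\mu\in\widehat{Or}(\widehat{x})$ must equal $\mu'$ since periodic points are omega-limit points; but then (\ref{pre97}) for $\mu=\mu'$ combined with (\ref{pre49}) being a special case of (\ref{pre536}) written for $p=1$ — or directly via Theorem~\ref{The32} a) — forces $\widehat{\mathbf{T}}_{\mu}^{\widehat{x}}=\mathbf{N}_{\_}$, i.e. $\widehat{x}$ is constant, contradicting the hypothesis. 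Hence under "$\widehat{x}$ non constant" we are automatically in the non-eventually-constant case, and the reduction to Theorem~\ref{The69} is legitimate.

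Alternatively, if one prefers a self-contained argument not routing through Theorem~\ref{The69}, I would mimic its proof: apply Theorem~\ref{Lem1} (page~\pageref{Lem1}) with $k^{\prime}=-1$ and $k=-1$ to get $\widehat{\mathbf{T}}_{\mu}^{\widehat{x}}\cap\{-1,0,\dots,p-2\}\neq\varnothing$, write this finite set as $\{n_{1},\dots,n_{k_{1}}\}$ with $k_{1}\geq 1$, then prove the two inclusions in (\ref{pre99}) by the same translation bookkeeping: for $\subseteq$, given $k''\in\widehat{\mathbf{T}}_{\mu}^{\widehat{x}}$ use (\ref{pre97}) to descend in steps of $p$ until landing in $\{-1,\dots,p-2\}$ at some $n_{j}$, so $k''=n_{j}+\overline{k}p$; for $\supseteq$, given $n_{j}+kp$ with $k\in\mathbf{N}$, note $n_{j}+kp\geq -1$ and apply (\ref{pre97}) to $n_{j}\in\widehat{\mathbf{T}}_{\mu}^{\widehat{x}}$. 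The main (and only genuine) obstacle is the bookkeeping of the index range $\{-1,0,\dots,p-2\}$ — making sure the $p$ consecutive integers starting at $-1$ are correctly $\{-1,\dots,p-2\}$ and that the descent in steps of $p$ terminates inside exactly this window — which is precisely the content already verified in Theorem~\ref{The69}, so I expect no real difficulty.
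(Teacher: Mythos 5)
Your main route is exactly the paper's proof: it observes that $\mu\in\widehat{Or}(\widehat{x})$ together with (\ref{pre97}) implies $\mu\in\widehat{\omega}(\widehat{x})$, and then applies Theorem \ref{The69} with $k^{\prime}=-1$. Your extra treatment of the eventually-constant-but-non-constant case is a refinement the paper silently skips, and it is correct in substance (eventual constancy at value $\mu$ plus (\ref{pre97}) forces $\widehat{\mathbf{T}}_{\mu}^{\widehat{x}}=\mathbf{N}_{\_}$ by descending from the constant tail in steps of $p$), though note that Theorem \ref{The32} a) as stated only applies when the periodicity hypothesis holds for $p=1$, so it cannot be cited ``directly'' for general $p$; the descent argument you sketch is the correct justification.
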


\begin{proof}
$\mu\in\widehat{Or}(\widehat{x})$ and (\ref{pre97}) imply $\mu\in
\widehat{\omega}(\widehat{x}).$ We apply Theorem \ref{The69}, page
\pageref{The69} written for $k^{\prime}=-1.$
\end{proof}

\begin{remark}
If $\widehat{x}$ is constant, then the previous Theorem takes the form
$\widehat{Or}(\widehat{x})=\{\mu\},p=1,k_{1}=1,n_{1}=-1$ and (\ref{pre99})
becomes $\widehat{\mathbf{T}}_{\mu}^{\widehat{x}}=\underset{k\in\mathbf{N}}{%
{\displaystyle\bigcup}
}\{-1+k\}=\mathbf{N}_{\_}.$
\end{remark}

\begin{theorem}
\label{The51_}The non constant signal $x\in S^{(n)}$ is considered and let the
point $\mu=x(-\infty+0)$ be given, together with $T>0,t^{\prime}\in I^{x}$
such that
\begin{equation}
\forall t\in\mathbf{T}_{\mu}^{x}\cap\lbrack t^{\prime},\infty),\{t+zT|z\in
\mathbf{Z}\}\cap\lbrack t^{\prime},\infty)\subset\mathbf{T}_{\mu}^{x}
\label{per8741}%
\end{equation}
holds. Then $t_{0},a_{1},b_{1},a_{2},b_{2},...,a_{k_{1}},b_{k_{1}}%
\in\mathbf{R,}$ $k_{1}\geq1$ exist such that%
\begin{equation}
\forall t<t_{0},x(t)=\mu, \label{p52}%
\end{equation}%
\begin{equation}
x(t_{0})\neq\mu, \label{p53}%
\end{equation}%
\begin{equation}
t_{0}<a_{1}<b_{1}<a_{2}<b_{2}<...<a_{k_{1}}<b_{k_{1}}=t_{0}+T, \label{p51}%
\end{equation}%
\begin{equation}
\lbrack a_{1},b_{1})\cup\lbrack a_{2},b_{2})\cup...\cup\lbrack a_{k_{1}%
},b_{k_{1}})=\mathbf{T}_{\mu}^{x}\cap\lbrack t_{0},t_{0}+T), \label{per8791}%
\end{equation}%
\begin{equation}
\mathbf{T}_{\mu}^{x}=%
\begin{array}
[c]{c}%
(-\infty,t_{0})\cup\underset{k\in\mathbf{N}}{%
{\displaystyle\bigcup}
}([a_{1}+kT,b_{1}+kT)\cup\lbrack a_{2}+kT,b_{2}+kT)\cup...\\
...\cup\lbrack a_{k_{1}}+kT,b_{k_{1}}+kT))
\end{array}
\label{p54}%
\end{equation}
hold.
\end{theorem}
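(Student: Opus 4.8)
The plan is to reduce the statement to Theorem \ref{The71} after re-anchoring the limit of periodicity at the natural instant $t_0$ and checking the hypotheses of that theorem. First I would locate $t_0$: since $x$ is not constant and $\mu=x(-\infty+0)$, Theorem \ref{The143} b) gives $I^{x}=(-\infty,t_0)$ for a unique $t_0\in\mathbf{R}$, whence $\forall t<t_0,x(t)=\mu$ and $x(t_0)\neq\mu$. This already yields (\ref{p52}), (\ref{p53}), and the two facts $t_0\notin\mathbf{T}_{\mu}^{x}$ and $(-\infty,t_0)\subset\mathbf{T}_{\mu}^{x}$; moreover $t^{\prime}\in I^{x}$ forces $t^{\prime}<t_0$. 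Next I would upgrade the periodicity hypothesis (\ref{per8741}) from the anchor $t^{\prime}$ to the anchor $t_0$: for $t\in\mathbf{T}_{\mu}^{x}\cap\lbrack t_0,\infty)$ and $z\in\mathbf{Z}$ with $t+zT\geq t_0$ one has $t\geq t_0>t^{\prime}$ and $t+zT\geq t_0>t^{\prime}$, so $t+zT\in\{t+z^{\prime}T\mid z^{\prime}\in\mathbf{Z}\}\cap\lbrack t^{\prime},\infty)\subset\mathbf{T}_{\mu}^{x}$ by (\ref{per8741}); this is essentially Lemma \ref{Lem30}. Thus periodicity of $\mu$ with period $T$ holds from the limit $t_0$. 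Applying this with $z=-k$ to a hypothetical point $t_0+kT\in\mathbf{T}_{\mu}^{x}$ ($k\geq1$) would return $t_0$ to $\mathbf{T}_{\mu}^{x}$, a contradiction; hence $x(t_0+kT)\neq\mu$ for all $k\in\mathbf{N}$. Since $\mu$ is periodic it lies in $\omega(x)$, so if $x$ were eventually constant its final value would be $\mu$, contradicting $x(t_0+kT)\neq\mu$ for arbitrarily large $k$; therefore $x$ is not eventually constant.

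With these preliminaries settled, I would apply Theorem \ref{The71} to the non eventually constant signal $x$, the point $\mu\in\omega(x)$, the period $T$ and the limit of periodicity $t_0$. This produces $a_1,b_1,\dots,a_{k_1},b_{k_1}\in\mathbf{R}$, $k_1\geq1$, with $t_0\leq a_1<b_1<\dots<a_{k_1}<b_{k_1}\leq t_0+T$, with $\mathbf{T}_{\mu}^{x}\cap\lbrack t_0,t_0+T)=[a_1,b_1)\cup\dots\cup\lbrack a_{k_1},b_{k_1})$ (the $k=0$ slice, as in equation (\ref{p71}) of the proof of Theorem \ref{The71}), which is (\ref{per8791}), and with $\mathbf{T}_{\mu}^{x}\cap\lbrack t_0,\infty)=\bigcup_{k\in\mathbf{N}}([a_1+kT,b_1+kT)\cup\dots\cup\lbrack a_{k_1}+kT,b_{k_1}+kT))$. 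Combining this with $\mathbf{T}_{\mu}^{x}\cap(-\infty,t_0)=(-\infty,t_0)$ and the trivial splitting $\mathbf{T}_{\mu}^{x}=(\mathbf{T}_{\mu}^{x}\cap(-\infty,t_0))\cup(\mathbf{T}_{\mu}^{x}\cap\lbrack t_0,\infty))$ gives (\ref{p54}).

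It remains to sharpen the two endpoints required by (\ref{p51}). The inequality $a_1>t_0$ is immediate: $a_1\geq t_0$ from Theorem \ref{The71}, and $a_1=t_0$ would give $t_0\in\lbrack a_1,b_1)\subset\mathbf{T}_{\mu}^{x}$, contradicting $x(t_0)\neq\mu$. For $b_{k_1}=t_0+T$ I would translate the constant initial stretch forward by one period: for $s\in\lbrack t^{\prime}+T,t_0+T)$ write $s=t+T$ with $t=s-T\in\lbrack t^{\prime},t_0)\subset\mathbf{T}_{\mu}^{x}\cap\lbrack t^{\prime},\infty)$, so $s\geq t^{\prime}$ and (\ref{per8741}) gives $s\in\mathbf{T}_{\mu}^{x}$; hence $[t^{\prime}+T,t_0+T)\subset\mathbf{T}_{\mu}^{x}$. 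If $t^{\prime}+T\leq t_0$ this would force $t_0\in\mathbf{T}_{\mu}^{x}$, a contradiction, so $t_0<t^{\prime}+T<t_0+T$ and therefore $[t^{\prime}+T,t_0+T)\subset\mathbf{T}_{\mu}^{x}\cap\lbrack t_0,t_0+T)=[a_1,b_1)\cup\dots\cup\lbrack a_{k_1},b_{k_1})$. Taking suprema, $t_0+T=\sup[t^{\prime}+T,t_0+T)\leq b_{k_1}\leq t_0+T$, i.e. $b_{k_1}=t_0+T$. I expect the two points needing the most care to be the reduction that $x$ is not eventually constant (so that Theorem \ref{The71} is applicable) and this last endpoint identification, where one must first establish $t^{\prime}+T>t_0$ before placing the translated interval inside $[t_0,t_0+T)$; the rest is bookkeeping that parallels the second proof of Theorem \ref{The5} and the proof of Theorem \ref{The71}.
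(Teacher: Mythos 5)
Your proof is correct, but it is organized differently from the paper's. The paper proves Theorem \ref{The51_} from scratch: it locates $t_{0}$, shows $a_{1}=\min\mathbf{T}_{\mu}^{x}\cap\lbrack t_{0},t_{0}+T)>t_{0}$ via right-continuity at $t_{0}$, establishes $b_{k_{1}}=t_{0}+T$ by the same forward translation of $[t^{\prime},t_{0})$ that you use (phrased as a contradiction with $[b_{k_{1}},t_{0}+T)\cap\mathbf{T}_{\mu}^{x}=\varnothing$), and then verifies both inclusions of (\ref{p54}) directly by walking up and down in steps of $T$ -- essentially repeating the argument of Theorem \ref{The71} rather than citing it. You instead execute the reduction that the paper only hints at in a later remark (``the proofs \dots are similar with the proof of Theorem \ref{The71}''): you re-anchor the periodicity at $t_{0}$ via Lemma \ref{Lem30}, invoke Theorem \ref{The71} for the whole interval decomposition, and confine the new work to the two endpoint identifications. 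The price of that reduction is the hypothesis of Theorem \ref{The71} that $x$ be not eventually constant, which the paper never needs to check; your verification (periodicity at $t_{0}$ forces $x(t_{0}+kT)\neq\mu$ for all $k$, while eventual constancy together with $\mu\in\omega(x)$ would force $x(t)=\mu$ for all large $t$) is correct and is the one genuinely new ingredient of your route. Both arguments are sound; yours buys brevity and reuse at the cost of one extra verification, the paper's buys self-containment at the cost of repeating the inclusion chase.
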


\begin{proof}
A $t_{0}$ like at (\ref{p52}), (\ref{p53}) exists because $x$ is not constant
and we infer $I^{x}=(-\infty,t_{0}),$ $t^{\prime}<t_{0}.$ We have from Lemma
\ref{Lem36}, page \pageref{Lem36} that $\mathbf{T}_{\mu}^{x}\cap\lbrack
t^{\prime},\infty)\neq\varnothing,$ thus $\mu\in\omega(x)$ from (\ref{per8741}%
) and the fact that $\mathbf{T}_{\mu}^{x}\cap\lbrack t_{0},t_{0}%
+T)\neq\varnothing$ follows from Theorem \ref{Lem1}, page \pageref{Lem1}.

We have on one hand the existence of $\varepsilon>0$ with
\begin{equation}
\forall t\in\lbrack t_{0},t_{0}+\varepsilon),x(t)=x(t_{0})\overset
{(\ref{p53})}{\neq}\mu, \label{p58}%
\end{equation}
showing that $a_{1}=\min\mathbf{T}_{\mu}^{x}\cap\lbrack t_{0},t_{0}+T)>t_{0}.$
On the other hand we must show the existence of $b_{k_{1}}$ like at
(\ref{p51}), (\ref{per8791}). Indeed, we suppose against all reason that such
a $b_{k_{1}}$ does not exist and consequently that $a_{k_{1}}<b_{k_{1}}%
<t_{0}+T,$ $[a_{k_{1}},b_{k_{1}})\subset\mathbf{T}_{\mu}^{x}$ and $[b_{k_{1}%
},t_{0}+T)\cap\mathbf{T}_{\mu}^{x}=\varnothing.$ Let then $t\in\lbrack
\max\{b_{k_{1}},t^{\prime}+T\},t_{0}+T)$ arbitrary. We get
\[
b_{k_{1}}\leq\max\{b_{k_{1}},t^{\prime}+T\}\leq t<t_{0}+T
\]
i.e. $t\notin\mathbf{T}_{\mu}^{x}.$ We have also $t>t-T\geq t^{\prime}$ and
$t-T\in\lbrack t^{\prime},t_{0})\subset\mathbf{T}_{\mu}^{x},$ thus%
\[
t\in\{t-T+zT|z\in\mathbf{Z}\}\cap\lbrack t^{\prime},\infty)\overset
{(\ref{per8741})}{\subset}\mathbf{T}_{\mu}^{x},
\]
contradiction. The existence of $t_{0},a_{1},b_{1},a_{2},b_{2},...,a_{k_{1}%
},b_{k_{1}}$ like at (\ref{p52}),...,(\ref{per8791}) is proved.

We prove $\mathbf{T}_{\mu}^{x}\subset(-\infty,t_{0})\cup\underset
{k\in\mathbf{N}}{%
{\displaystyle\bigcup}
}([a_{1}+kT,b_{1}+kT)\cup\lbrack a_{2}+kT,b_{2}+kT)\cup...\cup\lbrack
a_{k_{1}}+kT,b_{k_{1}}+kT))$ and let $t\in\mathbf{T}_{\mu}^{x}$ arbitrary. If
$t<t_{0}$ the inclusion is obvious (from (\ref{p52})), so we can suppose now
that $t\geq t_{0}.$ We get from (\ref{per8741}) the existence of a finite
sequence $t,t-T,...,t-\overline{k}T\in\mathbf{T}_{\mu}^{x},\overline{k}\geq0$
with the property that $t-\overline{k}T\in\lbrack t_{0},t_{0}+T).$ We infer
from (\ref{per8791}) the existence of $j\in\{1,...,k_{1}\}$ with
$t-\overline{k}T\in\lbrack a_{j},b_{j})$ and we conclude that $t\in\lbrack
a_{j}+\overline{k}T,b_{j}+\overline{k}T)\in(-\infty,t_{0})\cup\underset
{k\in\mathbf{N}}{%
{\displaystyle\bigcup}
}([a_{1}+kT,b_{1}+kT)\cup\lbrack a_{2}+kT,b_{2}+kT)\cup...\cup\lbrack
a_{k_{1}}+kT,b_{k_{1}}+kT)).$

We prove $(-\infty,t_{0})\cup\underset{k\in\mathbf{N}}{%
{\displaystyle\bigcup}
}([a_{1}+kT,b_{1}+kT)\cup\lbrack a_{2}+kT,b_{2}+kT)\cup...\cup\lbrack
a_{k_{1}}+kT,b_{k_{1}}+kT))\subset\mathbf{T}_{\mu}^{x}$. The fact that
$(-\infty,t_{0})\subset\mathbf{T}_{\mu}^{x}$ coincides with (\ref{p52}) and we
take an arbitrary $t\in\underset{k\in\mathbf{N}}{%
{\displaystyle\bigcup}
}([a_{1}+kT,b_{1}+kT)\cup\lbrack a_{2}+kT,b_{2}+kT)\cup...\cup\lbrack
a_{k_{1}}+kT,b_{k_{1}}+kT)).$ Some $k\in\mathbf{N}$ and some $j\in
\{1,...,k_{1}\}$ exist with $t\in\lbrack a_{j}+kT,b_{j}+kT),$ thus
$t-kT\in\lbrack a_{j},b_{j})\subset\mathbf{T}_{\mu}^{x}\cap\lbrack t_{0}%
,t_{0}+T)\subset\mathbf{T}_{\mu}^{x}\cap\lbrack t^{\prime},\infty). $ In
particular we can see that $t\geq t-kT\geq t^{\prime}.$ We have%
\[
t\in\{t-kT+zT|z\in\mathbf{Z}\}\cap\lbrack t^{\prime},\infty)\overset
{(\ref{per8741})}{\subset}\mathbf{T}_{\mu}^{x},
\]
wherefrom we get $t\in\mathbf{T}_{\mu}^{x}.$ (\ref{p54}) is proved.
\end{proof}

\begin{theorem}
\label{The52 copy(1)}The signal $x\in S^{(n)}$ is not constant and let the
point $\mu\in Or(x),$ $\mu\neq x(-\infty+0)$, as well as $T>0,t^{\prime}\in
I^{x}$ with%
\begin{equation}
\forall t\in\mathbf{T}_{\mu}^{x}\cap\lbrack t^{\prime},\infty),\{t+zT|z\in
\mathbf{Z}\}\cap\lbrack t^{\prime},\infty)\subset\mathbf{T}_{\mu}^{x}
\label{pre30}%
\end{equation}
fulfilled. Then $t_{0},a_{1},b_{1},a_{2},b_{2},...,a_{k_{1}},b_{k_{1}}%
\in\mathbf{R,}$ $k_{1}\geq1$ exist such that%
\begin{equation}
\forall t<t_{0},x(t)=x(-\infty+0), \label{p55}%
\end{equation}%
\begin{equation}
x(t_{0})\neq x(-\infty+0), \label{p56}%
\end{equation}%
\begin{equation}
t_{0}\leq a_{1}<b_{1}<a_{2}<b_{2}<...<a_{k_{1}}<b_{k_{1}}<t_{0}+T,
\label{pre31}%
\end{equation}%
\begin{equation}
\lbrack a_{1},b_{1})\cup\lbrack a_{2},b_{2})\cup...\cup\lbrack a_{k_{1}%
},b_{k_{1}})=\mathbf{T}_{\mu}^{x}\cap\lbrack t_{0},t_{0}+T), \label{p57}%
\end{equation}%
\begin{equation}
\mathbf{T}_{\mu}^{x}=%
\begin{array}
[c]{c}%
\underset{k\in\mathbf{N}}{%
{\displaystyle\bigcup}
}([a_{1}+kT,b_{1}+kT)\cup\lbrack a_{2}+kT,b_{2}+kT)\cup...\\
...\cup\lbrack a_{k_{1}}+kT,b_{k_{1}}+kT))
\end{array}
\label{pre34}%
\end{equation}
are fulfilled.
\end{theorem}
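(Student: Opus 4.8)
The plan is to treat this statement as the $\mu\neq x(-\infty+0)$ companion of Theorem \ref{The51_} and, essentially, as the specialization of Theorem \ref{The71} (necessity of eventual periodicity, real time) to the periodic case, where the limit of periodicity coincides with an initial time of $x$. First I would set up the ambient data. Since $x$ is not constant, there is a $t_{0}\in\mathbf{R}$ satisfying (\ref{p55}), (\ref{p56}); this forces $I^{x}=(-\infty,t_{0})$, hence $t^{\prime}<t_{0}$. Because $t^{\prime}\in I^{x}$, Lemma \ref{Lem36} gives $\mathbf{T}_{\mu}^{x}\cap\lbrack t^{\prime},\infty)\neq\varnothing$, so together with (\ref{pre30}) the point $\mu$ is eventually periodic with period $T$ and limit of periodicity $t^{\prime}$, and in particular $\mu\in\omega(x)$. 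Finally, from (\ref{p55}) and the assumption $\mu\neq x(-\infty+0)$ I would note that $(-\infty,t_{0})\cap\mathbf{T}_{\mu}^{x}=\varnothing$, i.e. $\mathbf{T}_{\mu}^{x}\subset\lbrack t_{0},\infty)$.

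Next I would produce the intervals on the first period window $[t_{0},t_{0}+T)$. By Theorem \ref{Lem1} b) written for $t=t_{0}$, $\mathbf{T}_{\mu}^{x}\cap\lbrack t_{0},t_{0}+T)\neq\varnothing$; and since $x$ is piecewise constant with only finitely many jump points in the bounded interval $[t_{0},t_{0}+T]$ (Proposition \ref{Lem3} and Theorem \ref{The1}), this set is a finite union $[a_{1},b_{1})\cup\dots\cup[a_{k_{1}},b_{k_{1}})$ with $k_{1}\geq 1$ and $t_{0}\leq a_{1}<b_{1}<\dots<a_{k_{1}}<b_{k_{1}}\leq t_{0}+T$, where $a_{1}\geq t_{0}$ is automatic from $\mathbf{T}_{\mu}^{x}\subset[t_{0},\infty)$. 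This already gives (\ref{p57}), and (\ref{pre31}) will follow once I rule out $b_{k_{1}}=t_{0}+T$. Suppose that equality held; then $[a_{k_{1}},t_{0}+T)\subset\mathbf{T}_{\mu}^{x}$, and choosing any $\xi\in(\max\{a_{k_{1}},t^{\prime}+T\},t_{0}+T)$ (nonempty since $a_{k_{1}}<t_{0}+T$ and $t^{\prime}<t_{0}$) we have $\xi\in\mathbf{T}_{\mu}^{x}\cap[t^{\prime},\infty)$ and $\xi-T\geq t^{\prime}$, so by (\ref{pre30}) $\xi-T\in\{\xi+zT\mid z\in\mathbf{Z}\}\cap[t^{\prime},\infty)\subset\mathbf{T}_{\mu}^{x}$; but $\xi-T<t_{0}$ forces $x(\xi-T)=x(-\infty+0)\neq\mu$, a contradiction. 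Hence $b_{k_{1}}<t_{0}+T$.

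Then I would prove (\ref{pre34}) by the two inclusions, mirroring the argument of Theorem \ref{The71}. For "$\subset$": given $t\in\mathbf{T}_{\mu}^{x}$, we have $t\geq t_{0}>t^{\prime}$, so $t\in\mathbf{T}_{\mu}^{x}\cap[t^{\prime},\infty)$; letting $\overline{k}\in\mathbf{N}$ be the largest integer with $t-\overline{k}T\geq t_{0}$, one gets $t-\overline{k}T\in[t_{0},t_{0}+T)$ and, since $t-\overline{k}T=t+(-\overline{k})T\geq t_{0}>t^{\prime}$, (\ref{pre30}) yields $t-\overline{k}T\in\mathbf{T}_{\mu}^{x}\cap[t_{0},t_{0}+T)$, hence $t-\overline{k}T\in[a_{j},b_{j})$ for some $j$ and $t\in[a_{j}+\overline{k}T,b_{j}+\overline{k}T)$. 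For "$\supset$": given $t\in[a_{j}+kT,b_{j}+kT)$ we have $t-kT\in[a_{j},b_{j})\subset\mathbf{T}_{\mu}^{x}$ with $t-kT\geq t_{0}>t^{\prime}$, and $t\geq t-kT\geq t^{\prime}$, so again by (\ref{pre30}) $t\in\{(t-kT)+zT\mid z\in\mathbf{Z}\}\cap[t^{\prime},\infty)\subset\mathbf{T}_{\mu}^{x}$.

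I expect the routine parts (the two inclusions in (\ref{pre34}) and the piecewise-constant decomposition) to go through exactly as in Theorem \ref{The71} and Theorem \ref{The51_}. The one genuinely case-sensitive point — and the only place where the hypothesis $\mu\neq x(-\infty+0)$ is used — is the strict inequality $b_{k_{1}}<t_{0}+T$ in (\ref{pre31}); this is the analogue of how, in Theorem \ref{The51_}, the hypothesis $\mu=x(-\infty+0)$ forced $b_{k_{1}}=t_{0}+T$. So the main obstacle is simply carrying out the boundary analysis near $t_{0}+T$ carefully, making sure the chosen $\xi$ lies simultaneously in $(a_{k_{1}},t_{0}+T)$ and above $t^{\prime}+T$, so that the left translation by $T$ stays admissible yet lands below $t_{0}$ where $x$ equals $x(-\infty+0)\neq\mu$.
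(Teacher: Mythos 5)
Your proposal is correct and follows essentially the same route as the paper's proof: the same contradiction argument (left-translating a point of $[\max\{a_{k_{1}},t^{\prime}+T\},t_{0}+T)$ by $T$ into $[t^{\prime},t_{0})$ where $x=x(-\infty+0)\neq\mu$) to force $b_{k_{1}}<t_{0}+T$, and the same two-inclusion argument for (\ref{pre34}) borrowed from Theorem \ref{The51_}/Theorem \ref{The71}. The only difference is cosmetic: you make the observation $\mathbf{T}_{\mu}^{x}\subset[t_{0},\infty)$ explicit and spell out the inclusions that the paper merely references.
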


\begin{proof}
As $x$ is not constant we get the existence of $t_{0}$ like in (\ref{p55}),
(\ref{p56}) and if we take in consideration that $I^{x}=(-\infty,t_{0})$, we
get $t^{\prime}<t_{0}.$

We have from Lemma \ref{Lem36}, page \pageref{Lem36} that $\mathbf{T}_{\mu
}^{x}\cap\lbrack t^{\prime},\infty)\neq\varnothing$ and, as $\mu\in\omega(x)$
(from (\ref{pre30})), the fact that $\mathbf{T}_{\mu}^{x}\cap\lbrack
t_{0},t_{0}+T)\neq\varnothing$ results from Theorem \ref{Lem1}, page
\pageref{Lem1}. We show that $b_{k_{1}}<t_{0}+T$ and for this we suppose
against all reason that $b_{k_{1}}=t_{0}+T.$ Let $t\in\lbrack\max\{a_{k_{1}%
},t^{\prime}+T\},t_{0}+T)$ arbitrary, fixed. We have $t>t-T\geq t^{\prime}$
and $t\in\lbrack a_{k_{1}},t_{0}+T)\subset\mathbf{T}_{\mu}^{x},$ thus we can
apply (\ref{pre30}):%
\[
t-T\in\{t+zT|z\in\mathbf{Z}\}\cap\lbrack t^{\prime},\infty)\subset
\mathbf{T}_{\mu}^{x}.
\]
Since $t-T\in\lbrack t^{\prime},t_{0}),$ we have reached the contradiction%
\[
\mu=x(t)=x(t-T)=x(-\infty+0).
\]

The fact that $a_{1},b_{1},a_{2},b_{2},...,a_{k_{1}},b_{k_{1}}$ exist making
(\ref{pre31}), (\ref{p57}) true is proved.

The proof of the equation (\ref{pre34}) is made like in the proof of Theorem
\ref{The51_}.
\end{proof}

\begin{remark}
The proofs of Theorem \ref{The51_} and Theorem \ref{The52 copy(1)} are similar
with the proof of Theorem \ref{The71}, page \pageref{The71} stating necessary
conditions of eventual periodicity of the points $\mu\in\omega(x),\mu
\in\widehat{\omega}(\widehat{x}).$
\end{remark}

\begin{remark}
Theorem \ref{The51_} and Theorem \ref{The52 copy(1)} are not special cases,
written for periodicity, of Theorem \ref{The71}, but rather versions of that
Theorem. To be compared (\ref{p50})$_{page\;\pageref{p50}}$ with
(\ref{p51})$_{page\;\pageref{p51}}$ and (\ref{pre31})$_{page\;\pageref{pre31}%
}$.
\end{remark}

\begin{example}
\label{Exa1}We take $x\in S^{(1)},$%
\[
x(t)=\chi_{(-\infty,0)}(t)\oplus\chi_{\lbrack1,2)}(t)\oplus\chi_{\lbrack
3,5)}(t)\oplus\chi_{\lbrack6,7)}(t)\oplus\chi_{\lbrack8,10)}(t)\oplus
\chi_{\lbrack11,12)}(t)\oplus...
\]
In this example, see Theorem \ref{The51_}, $\mu=1,t_{0}=0,k_{1}=2,T=5$ and
$t^{\prime}\in\lbrack-2,0).$
\end{example}

\section{Sufficiency conditions of periodicity}

\begin{theorem}
\label{The55}Let $\widehat{x}\in\widehat{S}^{(n)}$, $\mu\in\widehat
{Or}(\widehat{x}),$ $p\geq1$ and $n_{1},n_{2},...,n_{k_{1}}\in
\{-1,0,...,p-2\},$ $k_{1}\geq1,$ such that%
\begin{equation}
\widehat{\mathbf{T}}_{\mu}^{\widehat{x}}=\underset{k\in\mathbf{N}}{%
{\displaystyle\bigcup}
}\{n_{1}+kp,n_{2}+kp,...,n_{k_{1}}+kp\}. \label{pre102}%
\end{equation}
We have%
\begin{equation}
\forall k\in\widehat{\mathbf{T}}_{\mu}^{\widehat{x}},\{k+zp|z\in
\mathbf{Z}\}\cap\mathbf{N}_{\_}\subset\widehat{\mathbf{T}}_{\mu}^{\widehat{x}%
}. \label{pre100}%
\end{equation}

\end{theorem}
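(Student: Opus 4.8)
The statement is essentially the converse direction of Theorem~\ref{The54}, and it is the periodic (i.e. $k^{\prime}=-1$) analogue of Theorem~\ref{The72}, page~\pageref{The72}. My plan is simply to reduce to Theorem~\ref{The72}. First I would observe that since $\mu\in\widehat{Or}(\widehat{x})$, the set $\widehat{\mathbf{T}}_{\mu}^{\widehat{x}}$ is non-empty, and the representation (\ref{pre102}) together with $n_{1},\dots,n_{k_{1}}\in\{-1,0,\dots,p-2\}\subset\{-1,0,1,\dots\}$ forces $\widehat{\mathbf{T}}_{\mu}^{\widehat{x}}$ to be infinite, hence $\mu\in\widehat{\omega}(\widehat{x})$. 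Then with the choice $k^{\prime}=-1$ we have $\{k^{\prime},k^{\prime}+1,k^{\prime}+2,\dots\}=\mathbf{N}_{\_}$, so (\ref{pre102}) is exactly the hypothesis (\ref{pre180}) of Theorem~\ref{The72} with that value of $k^{\prime}$, and the numbers $n_{1},\dots,n_{k_{1}}$ lie in $\{k^{\prime},k^{\prime}+1,\dots,k^{\prime}+p-1\}=\{-1,0,\dots,p-2\}$ as required. Applying Theorem~\ref{The72} then yields
\[
\forall k\in\widehat{\mathbf{T}}_{\mu}^{\widehat{x}}\cap\{k^{\prime},k^{\prime}+1,k^{\prime}+2,\dots\},\ \{k+zp\mid z\in\mathbf{Z}\}\cap\{k^{\prime},k^{\prime}+1,k^{\prime}+2,\dots\}\subset\widehat{\mathbf{T}}_{\mu}^{\widehat{x}},
\]
which, upon substituting $k^{\prime}=-1$, becomes precisely (\ref{pre100}).

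\textbf{Alternatively}, if one prefers a self-contained argument rather than invoking Theorem~\ref{The72}, I would argue directly: take $k\in\widehat{\mathbf{T}}_{\mu}^{\widehat{x}}$ and $z\in\mathbf{Z}$ arbitrary with $k+zp\geq-1$. By (\ref{pre102}) there are $j\in\{1,\dots,k_{1}\}$ and $\overline{k}\in\mathbf{N}$ with $k=n_{j}+\overline{k}p$, so $k+zp=n_{j}+(z+\overline{k})p$. Since $k+zp\geq-1$ and $n_{j}-p\leq -1$ (because $n_{j}\leq p-2$), we get $z+\overline{k}\geq 0$, i.e. $z+\overline{k}\in\mathbf{N}$; hence $k+zp=n_{j}+(z+\overline{k})p\in\underset{\ell\in\mathbf{N}}{\bigcup}\{n_{1}+\ell p,\dots,n_{k_{1}}+\ell p\}=\widehat{\mathbf{T}}_{\mu}^{\widehat{x}}$. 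This gives (\ref{pre100}).

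\textbf{Main obstacle.} There is essentially no obstacle; the only thing to be careful about is the bookkeeping on the index ranges — namely that the condition $n_{j}\in\{-1,\dots,p-2\}$ (equivalently $n_{j}-p<-1$, i.e. $n_{j}-p\le -1$... more precisely $n_j - p \leq p-2-p = -2 < -1$) is exactly what is needed to rule out $z+\overline{k}<0$ once we know $k+zp\geq-1$. So the cleanest write-up is the one-line reduction to Theorem~\ref{The72} with $k^{\prime}=-1$; I would present that, and perhaps add the direct computation above as a remark for the reader who wants to see the mechanism explicitly.
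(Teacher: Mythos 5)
Your proposal is correct and takes exactly the paper's route: the paper's own proof of Theorem~\ref{The55} is precisely the one-line reduction to Theorem~\ref{The72} with $k^{\prime}=-1$. Your preliminary observation that (\ref{pre102}) forces $\widehat{\mathbf{T}}_{\mu}^{\widehat{x}}$ to be infinite (so that $\mu\in\widehat{\omega}(\widehat{x})$, as Theorem~\ref{The72} requires) and your alternative direct computation are both sound, the latter simply reproducing the mechanism inside the proof of Theorem~\ref{The72}.
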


\begin{proof}
This is a special case of Theorem \ref{The72}, page \pageref{The72} written
for $k^{\prime}=-1.$
\end{proof}

\begin{theorem}
\label{The11}The signal $x\in S^{(n)}$ is given with $\mu=x(-\infty+0),$ $T>0$
and the numbers $t_{0},a_{1},b_{1},a_{2},b_{2},...,a_{k_{1}},b_{k_{1}}%
\in\mathbf{R}$, $k_{1}\geq1$ that fulfill
\begin{equation}
t_{0}<a_{1}<b_{1}<a_{2}<b_{2}<...<a_{k_{1}}<b_{k_{1}}=t_{0}+T, \label{p126}%
\end{equation}%
\begin{equation}
\mathbf{T}_{\mu}^{x}=%
\begin{array}
[c]{c}%
(-\infty,t_{0})\cup\underset{k\in\mathbf{N}}{%
{\displaystyle\bigcup}
}([a_{1}+kT,b_{1}+kT)\cup\lbrack a_{2}+kT,b_{2}+kT)\cup...\\
...\cup\lbrack a_{k_{1}}+kT,b_{k_{1}}+kT)).
\end{array}
\label{p64}%
\end{equation}
For any $t^{\prime}\in\lbrack a_{k_{1}}-T,t_{0}),$ the properties $t^{\prime
}\in I^{x},$%
\begin{equation}
\forall t\in\mathbf{T}_{\mu}^{x}\cap\lbrack t^{\prime},\infty),\{t+zT|z\in
\mathbf{Z}\}\cap\lbrack t^{\prime},\infty)\subset\mathbf{T}_{\mu}^{x}
\label{pre42}%
\end{equation}
hold.
\end{theorem}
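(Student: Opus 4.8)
The plan is to verify directly the two conclusions, namely that any $t^{\prime}\in[a_{k_{1}}-T,t_{0})$ is an initial time instant of $x$ and that it is a limit of periodicity for $\mu$ with the period $T$. The first part is immediate: from $(\ref{p64})$ we read off that $(-\infty,t_{0})\subset\mathbf{T}_{\mu}^{x}$, and since $t^{\prime}<t_{0}$ we have $(-\infty,t^{\prime}]\subset(-\infty,t_{0})\subset\mathbf{T}_{\mu}^{x}=\mathbf{T}_{x(-\infty+0)}^{x}$, which is exactly the condition $t^{\prime}\in I^{x}$ (see the Remark following Definition \ref{Not5}). Note also that $a_{k_{1}}-T<b_{k_{1}}-T=t_{0}$, so the interval $[a_{k_{1}}-T,t_{0})$ is nonempty and the hypothesis on $t^{\prime}$ is not vacuous.

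For the second part I would fix $t^{\prime}\in[a_{k_{1}}-T,t_{0})$ and an arbitrary $t\in\mathbf{T}_{\mu}^{x}\cap[t^{\prime},\infty)$ together with $z\in\mathbf{Z}$ such that $t+zT\geq t^{\prime}$, and show $t+zT\in\mathbf{T}_{\mu}^{x}$. The structure of $\mathbf{T}_{\mu}^{x}$ from $(\ref{p64})$ is periodic "to the right of $t_{0}$" with period $T$, plus the tail $(-\infty,t_{0})$. So I would split into cases on where $t$ sits. If $t\in[t^{\prime},t_{0})$, then $t\in(-\infty,t_{0})\subset\mathbf{T}_{\mu}^{x}$ trivially, and I would argue that any $t+zT\geq t^{\prime}$ again lands either in $(-\infty,t_{0})$ or, using $t^{\prime}\geq a_{k_{1}}-T$, in one of the translated blocks $[a_{i}+kT,b_{i}+kT)$; the key numerical fact to check is that the gap $[t_{0},a_{1})$, when translated down by $T$, sits inside $(-\infty,t^{\prime})$ because $a_{1}-T>t_{0}-T\geq a_{k_{1}}-T-T$... more cleanly: $t^{\prime}\geq a_{k_{1}}-T$ and $b_{k_{1}}-T=t_{0}$ mean that on $[t^{\prime},\infty)$ the set $\mathbf{T}_{\mu}^{x}$ agrees with a genuinely $T$-periodic set, so downward and upward shifts by $T$ that stay in $[t^{\prime},\infty)$ preserve membership. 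If instead $t\geq t_{0}$, then $t$ lies in some block $[a_{i}+kT,b_{i}+kT)$, and $t+zT\in[a_{i}+(k+z)T,b_{i}+(k+z)T)$; since $t+zT\geq t^{\prime}\geq a_{k_{1}}-T$, one checks $k+z\geq 0$ unless $t+zT$ falls in the tail $(-\infty,t_{0})$, and in either case $t+zT\in\mathbf{T}_{\mu}^{x}$.

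The cleanest route, rather than re-deriving everything by hand, is probably to invoke Theorem \ref{The76} (or its special-case relatives) to get $(\ref{pre42})$ for the specific choice $t^{\prime}=a_{k_{1}}-T$ directly from the normal form $(\ref{p64})$ — one sets $a_i':=a_i+(\text{shift})$ so that the representation $(\ref{p64})$ matches the hypothesis $(\ref{p48})$ of Theorem \ref{The76} with limit of periodicity $a_{k_{1}}-T$ — and then use Lemma \ref{Lem30} (monotonicity of the limit-of-periodicity condition) to push the limit of periodicity up to any $t^{\prime}\in[a_{k_{1}}-T,t_{0})$, since increasing $t^{\prime}$ only shrinks the relevant intersections. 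I expect the main obstacle to be purely bookkeeping: verifying that $a_{k_{1}}-T$ really is a valid limit of periodicity, i.e. that translating a point of $\mathbf{T}_{\mu}^{x}\cap[a_{k_{1}}-T,\infty)$ by a negative multiple of $T$ never drops it into one of the gaps $[b_i+kT,a_{i+1}+kT)$ or into the "seam" between the tail $(-\infty,t_{0})$ and the first block $[a_1,b_1)$. This is exactly controlled by the endpoint conditions $t_{0}<a_{1}$ and $b_{k_{1}}=t_{0}+T$ in $(\ref{p126})$, which guarantee the translated-down copy of the structure matches up with $(-\infty,t_0)$ without overlap or gap at the seam.
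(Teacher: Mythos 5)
Your proposal is correct, and your preferred route is genuinely different from the paper's. The paper proves (\ref{pre42}) by a direct, fully explicit case analysis on the location of $t$: the tail $[t^{\prime},t_{0})$, the interior blocks $[a_{j}+kT,b_{j}+kT)$ with $j<k_{1}$, and the last block $[a_{k_{1}}+kT,b_{k_{1}}+kT)$, the latter split into two sub-cases according to whether $t+(-k-1)T\geq t^{\prime}$ or not; in each case it exhibits exactly which translates survive the intersection with $[t^{\prime},\infty)$ and which member of (\ref{p64}) they land in. Your reduction instead rewrites $\mathbf{T}_{\mu}^{x}\cap\lbrack a_{k_{1}}-T,\infty)$ in the normal form (\ref{p48}) of Theorem \ref{The76}: the correct relabelling is not a uniform shift of all the $a_{i}$ but rather replacing the last block by its downward translate $[a_{k_{1}}-T,b_{k_{1}}-T)=[a_{k_{1}}-T,t_{0})$, which glues exactly onto the tail $(-\infty,t_{0})$ precisely because $b_{k_{1}}=t_{0}+T$, and keeping $[a_{1},b_{1}),\ldots,[a_{k_{1}-1},b_{k_{1}-1})$ as they are; one checks the ordering $a_{k_{1}}-T<t_{0}<a_{1}<\cdots<b_{k_{1}-1}\leq a_{k_{1}}=(a_{k_{1}}-T)+T$ required by Theorem \ref{The76}. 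Since $\mathbf{T}_{\mu}^{x}$ is superiorly unbounded, $\mu\in\omega(x)$, so Theorem \ref{The76} yields (\ref{pre42}) for $t^{\prime}=a_{k_{1}}-T$ and Lemma \ref{Lem30} pushes the limit of periodicity up to any $t^{\prime}\in\lbrack a_{k_{1}}-T,t_{0})$; the $t^{\prime}\in I^{x}$ part is immediate from $I^{x}=(-\infty,t_{0})$, exactly as in the paper. Your reduction buys brevity and makes clear that Theorem \ref{The11} is essentially the eventual-periodicity sufficiency condition specialized to a limit of periodicity lying in $I^{x}$; the paper's hand computation buys the explicit landing intervals, which it reuses in later arguments. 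One caution: your first, direct sketch is loose at the point where $k+z<0$ --- you need to observe that the constraint $t+zT\geq t^{\prime}\geq a_{k_{1}}-T$ forces $i=k_{1}$ and $k+z=-1$, so that $t+zT$ falls in $[a_{k_{1}}-T,t_{0})\subset(-\infty,t_{0})$ rather than in a gap --- but this is exactly the seam condition you identify at the end, and the reduction route does not depend on that sketch.
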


\begin{proof}
Let $t^{\prime}\in\lbrack a_{k_{1}}-T,t_{0})$ arbitrary. From (\ref{p126}),
(\ref{p64}) we get $I^{x}=(-\infty,t_{0}),$ thus $t^{\prime}\in I^{x}.$

We infer%
\[
\mathbf{T}_{\mu}^{x}\cap\lbrack t^{\prime},\infty)=[t^{\prime},t_{0}%
)\cup\lbrack a_{1},b_{1})\cup...\cup\lbrack a_{k_{1}},b_{k_{1}})\cup\lbrack
a_{1}+T,b_{1}+T)\cup...
\]
and we take an arbitrary $t\in\mathbf{T}_{\mu}^{x}\cap\lbrack t^{\prime
},\infty).$ We have several possibilities.

a) Case $t\in\lbrack t^{\prime},t_{0}),$ when%
\[
\{t+zT|z\in\mathbf{Z}\}\cap\lbrack t^{\prime},\infty
)=\{t,t+T,t+2T,...\}\subset
\]%
\[
\subset\lbrack t^{\prime},t_{0})\cup\lbrack t^{\prime}+T,b_{k_{1}})\cup\lbrack
t^{\prime}+2T,b_{k_{1}}+T)\cup...\subset
\]%
\[
\subset(-\infty,t_{0})\cup\lbrack a_{k_{1}},b_{k_{1}})\cup\lbrack a_{k_{1}%
}+T,b_{k_{1}}+T)\cup...\subset\mathbf{T}_{\mu}^{x}.
\]
We have used the fact that%
\[
t-T<t_{0}-T<a_{k_{1}}-T\leq t^{\prime}\leq t<t_{0}.
\]

b) Case $t\in\lbrack a_{j}+kT,b_{j}+kT),k\geq0,j\in\{1,2,...,k_{1}-1\},$%
\[
\{t+zT|z\in\mathbf{Z}\}\cap\lbrack t^{\prime},\infty
)=\{t+(-k)T,t+(-k+1)T,t+(-k+2)T,...\}\subset
\]%
\[
\subset\lbrack a_{j},b_{j})\cup\lbrack a_{j}+T,b_{j}+T)\cup\lbrack
a_{j}+2T,b_{j}+2T)\cup...\subset\mathbf{T}_{\mu}^{x}%
\]
and we have used%
\[
t+(-k-1)T<t^{\prime}<t_{0}<a_{j}\leq t+(-k)T<b_{j}<t^{\prime}+T.
\]

c) Case $t\in\lbrack a_{k_{1}}+kT,b_{k_{1}}+kT),$ $k\geq0$ when there are two sub-cases,

c.1) Case $t\in\lbrack t^{\prime}+(k+1)T,b_{k_{1}}+kT),$%
\[
\{t+zT|z\in\mathbf{Z}\}\cap\lbrack t^{\prime},\infty
)=\{t+(-k-1)T,t+(-k)T,t+(-k+1)T,...\}\subset
\]%
\[
\subset\lbrack t^{\prime},t_{0})\cup\lbrack t^{\prime}+T,b_{k_{1}})\cup\lbrack
t^{\prime}+2T,b_{k_{1}}+T)\cup...\subset
\]%
\[
\subset(-\infty,t_{0})\cup\lbrack a_{k_{1}},b_{k_{1}})\cup\lbrack a_{k_{1}%
}+T,b_{k_{1}}+T)\cup...\subset\mathbf{T}_{\mu}^{x}%
\]
and we have used the fact that%
\[
t+(-k-2)T<t_{0}-T<a_{k_{1}}-T\leq t^{\prime}\leq t+(-k-1)T<t_{0}.
\]

c.2) Case $t\in\lbrack a_{k_{1}}+kT,t^{\prime}+(k+1)T),$%
\[
\{t+zT|z\in\mathbf{Z}\}\cap\lbrack t^{\prime},\infty
)=\{t+(-k)T,t+(-k+1)T,t+(-k+2)T,...\}\subset
\]%
\[
\subset\lbrack a_{k_{1}},t^{\prime}+T)\cup\lbrack a_{k_{1}}+T,t^{\prime
}+2T)\cup\lbrack a_{k_{1}}+2T,t^{\prime}+3T)\cup...\subset
\]%
\[
\subset\lbrack a_{k_{1}},b_{k_{1}})\cup\lbrack a_{k_{1}}+T,b_{k_{1}}%
+T)\cup\lbrack a_{k_{1}}+2T,b_{k_{1}}+2T)\cup...\subset\mathbf{T}_{\mu}^{x}%
\]
and we have used%
\[
t+(-k-1)T<t^{\prime}<t_{0}<a_{k_{1}}\leq t+(-k)T<t^{\prime}+T.
\]

(\ref{pre42}) holds.
\end{proof}

\begin{theorem}
\label{The11_}Let $x,$ $\mu\in Or(x),$ $\mu\neq x(-\infty+0),$ $T>0$ and the
numbers $t_{0},a_{1},b_{1},a_{2},b_{2},...,a_{k_{1}},b_{k_{1}}\in\mathbf{R,}$
$k_{1}\geq1,$ with the property that%
\begin{equation}
\forall t<t_{0},x(t)=x(-\infty+0), \label{p127}%
\end{equation}%
\begin{equation}
x(t_{0})\neq x(-\infty+0), \label{p128}%
\end{equation}%
\begin{equation}
b_{k_{1}}-T<t_{0}\leq a_{1}<b_{1}<a_{2}<b_{2}<...<a_{k_{1}}<b_{k_{1}},
\label{pre45}%
\end{equation}%
\begin{equation}
\mathbf{T}_{\mu}^{x}=\underset{k\in\mathbf{N}}{%
{\displaystyle\bigcup}
}([a_{1}+kT,b_{1}+kT)\cup\lbrack a_{2}+kT,b_{2}+kT)\cup...\cup\lbrack
a_{k_{1}}+kT,b_{k_{1}}+kT)). \label{pre46}%
\end{equation}
For any $t^{\prime}\in\lbrack b_{k_{1}}-T,t_{0}),$ we have $t^{\prime}\in
I^{x},$%
\begin{equation}
\forall t\in\mathbf{T}_{\mu}^{x}\cap\lbrack t^{\prime},\infty),\{t+zT|z\in
\mathbf{Z}\}\cap\lbrack t^{\prime},\infty)\subset\mathbf{T}_{\mu}^{x}.
\label{pre48}%
\end{equation}

\end{theorem}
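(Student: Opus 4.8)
The plan is to mimic the proof of Theorem \ref{The11}, adapting the interval bookkeeping to the case $\mu\neq x(-\infty+0)$, where $\mathbf{T}_{\mu}^{x}$ contains no half-line $(-\infty,t_0)$ component. First I would fix an arbitrary $t^{\prime}\in\lbrack b_{k_{1}}-T,t_{0})$. From (\ref{p127}), (\ref{p128}) we read off $I^{x}=(-\infty,t_{0})$, and since $t^{\prime}<t_{0}$ this gives $t^{\prime}\in I^{x}$ immediately. Using (\ref{pre45}) and (\ref{pre46}), I would write down explicitly
\[
\mathbf{T}_{\mu}^{x}\cap\lbrack t^{\prime},\infty)=[a_{1},b_{1})\cup...\cup\lbrack a_{k_{1}},b_{k_{1}})\cup\lbrack a_{1}+T,b_{1}+T)\cup...,
\]
the point being that $b_{k_{1}}-T<t_0\le a_1$ forces $a_1\ge t^{\prime}$, so the first full period block $[a_1,b_1)\cup\dots\cup[a_{k_1},b_{k_1})$ is already entirely to the right of $t^{\prime}$ and no block is "cut".

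Next I would take an arbitrary $t\in\mathbf{T}_{\mu}^{x}\cap\lbrack t^{\prime},\infty)$ and split into cases according to which interval $[a_j+kT,b_j+kT)$ contains $t$. For each such $t$, writing $t=\tau+kT$ with $\tau\in[a_j,b_j)\subset\lbrack t^{\prime},\infty)$ and $k\ge 0$, I would observe that
\[
\{t+zT\mid z\in\mathbf{Z}\}\cap\lbrack t^{\prime},\infty)=\{\tau+kT,\tau+(k-1)T,\dots,\tau\},
\]
because $\tau-T<t_0-T\le a_{k_1}-T\le b_{k_1}-2T<\dots$ — more precisely, the key inequality is $\tau-T<b_{k_1}-T-(\text{something})<t^{\prime}$, which follows from $\tau<b_{k_1}$ together with $b_{k_1}-T\le t^{\prime}$. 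Hence every element of $\{t+zT\mid z\in\mathbf{Z}\}\cap\lbrack t^{\prime},\infty)$ is of the form $\tau+\ell T$ with $0\le\ell\le k$, and each such number lies in $[a_j+\ell T,b_j+\ell T)\subset\mathbf{T}_{\mu}^{x}$ by (\ref{pre46}). This establishes (\ref{pre48}).

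The main obstacle, as in Theorem \ref{The11}, is getting the interval inequalities exactly right so that the downward translates $t-T,t-2T,\dots$ never land strictly below $t^{\prime}$ while still belonging to $\mathbf{T}_{\mu}^{x}$, and so that the upward translates stay in the union of translated blocks. Here the delicate point is that, unlike in Theorem \ref{The11}, there is no $(-\infty,t_0)$ piece to absorb translates that fall just below $t_0$; the hypothesis $b_{k_1}-T<t_0$ is precisely what guarantees that the smallest element of $\mathbf{T}_{\mu}^{x}\cap\lbrack t^{\prime},\infty)$, namely $a_1$, satisfies $a_1-T<t^{\prime}$, so that $a_1$ cannot be reached by a downward $T$-step from within $\lbrack t^{\prime},\infty)$ lying outside $\mathbf{T}_{\mu}^{x}$. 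Alternatively — and this may be the cleanest route — I would simply invoke Theorem \ref{The52 copy(1)} in reverse: that theorem shows the form (\ref{pre34}) is \emph{necessary} for periodicity, and here (\ref{pre45}), (\ref{pre46}) assert that $\mathbf{T}_{\mu}^{x}$ has exactly that form; combined with Theorem \ref{The76} (sufficiency of the union-of-translated-blocks form for $t^{\prime}=a_1$, say) and then Theorem \ref{The141} / Lemma \ref{Lem30} to slide the limit of periodicity down to any $t^{\prime}\in I^{x}\cap L_{\mu}^{x}=\lbrack b_{k_1}-T,t_0)$, one obtains (\ref{pre48}) with minimal extra computation. I would present the direct case analysis as the primary argument and mention the shortcut via Theorem \ref{The76} as a remark.
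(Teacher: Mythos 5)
Your direct argument is the paper's own proof: locate $t\in[a_{j}+kT,b_{j}+kT)$, observe that the next downward translate satisfies $t-(k+1)T<b_{j}-T\leq b_{k_{1}}-T\leq t'$ and so drops out of $[t',\infty)$, and check that the surviving translates land in the translated blocks of (\ref{pre46}); your closing remark that $b_{k_{1}}-T<t_{0}$ is exactly what compensates for the absence of a $(-\infty,t_{0})$ component is also the right diagnosis, and the shortcut via Theorem \ref{The76} with $t'=b_{k_{1}}-T$ followed by Lemma \ref{Lem30} is a legitimate alternative. One slip to repair: $\{t+zT\mid z\in\mathbf{Z}\}\cap[t',\infty)$ is the upward-infinite set $\{\tau,\tau+T,\tau+2T,\dots\}$, not the finite set $\{\tau,\dots,\tau+kT\}$, so the translates with $\ell>k$ must also be accounted for — they are, trivially, since $\tau+\ell T\in[a_{j}+\ell T,b_{j}+\ell T)\subset\mathbf{T}_{\mu}^{x}$ for every $\ell\geq 0$ by (\ref{pre46}).
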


\begin{proof}
Let $t^{\prime}\in\lbrack b_{k_{1}}-T,t_{0})$ be arbitrary. From (\ref{p127}),
(\ref{p128}) we infer $I^{x}=(-\infty,t_{0}),$ thus $t^{\prime}\in I^{x}.$

We get $\mathbf{T}_{\mu}^{x}\cap\lbrack t^{\prime},\infty)=\mathbf{T}_{\mu
}^{x}$ and we take an arbitrary $t\in\mathbf{T}_{\mu}^{x}\cap\lbrack
t^{\prime},\infty).$ Then $k\geq0$ and $j\in\{1,2,...,k_{1}\}$ exist such that
$t\in\lbrack a_{j}+kT,b_{j}+kT).$ We have:%
\[
\{t+zT|z\in\mathbf{Z}\}\cap\lbrack t^{\prime},\infty
)=\{t+(-k)T,t+(-k+1)T,t+(-k+2)T,...\}\subset
\]%
\[
\subset\lbrack a_{j},b_{j})\cup\lbrack a_{j}+T,b_{j}+T)\cup\lbrack
a_{j}+2T,b_{j}+2T)\cup...\subset\mathbf{T}_{\mu}^{x},
\]
where%
\[
t+(-k-1)T<t^{\prime}<t_{0}\leq a_{j}\leq t+(-k)T<b_{j}\leq t^{\prime}+T.
\]
(\ref{pre48}) holds.
\end{proof}

\begin{remark}
The proofs of Theorem \ref{The55}, page \pageref{The55}, Theorem \ref{The11},
page \pageref{The11} and Theorem \ref{The11_}, page \pageref{The11_} are
similar with the proofs of Theorem \ref{The69}, page \pageref{The69} and
Theorem \ref{The71}, page \pageref{The71} that state sufficient conditions of
eventual periodicity of the points $\mu\in\widehat{\omega}(\widehat{x}),\mu
\in\omega(x).$
\end{remark}

\section{A special case}

\begin{theorem}
\label{The120}Let $\widehat{x}\in\widehat{S}^{(n)}$, $\mu\in\widehat
{Or}(\widehat{x}),$ $p\geq1$ and $n_{1}\in\{-1,0,...,p-2\}$ such that%
\begin{equation}
\widehat{\mathbf{T}}_{\mu}^{\widehat{x}}=\{n_{1},n_{1}+p,n_{1}+2p,n_{1}%
+3p,...\}. \label{p15}%
\end{equation}
Then

a) $\mu$ is a periodic point of $\widehat{x}$ with the period $p:$%
\begin{equation}
\forall k\in\widehat{\mathbf{T}}_{\mu}^{\widehat{x}},\{k+zp|z\in
\mathbf{Z}\}\cap\mathbf{N}_{\_}\subset\widehat{\mathbf{T}}_{\mu}^{\widehat{x}%
};
\end{equation}

b) $p$ is the prime period of $\mu:$ for any $p^{\prime}$ with%
\begin{equation}
\forall k\in\widehat{\mathbf{T}}_{\mu}^{\widehat{x}},\{k+zp^{\prime}%
|z\in\mathbf{Z}\}\cap\mathbf{N}_{\_}\subset\widehat{\mathbf{T}}_{\mu
}^{\widehat{x}},
\end{equation}
we infer $p^{\prime}\in\{p,2p,3p,...\}.$
\end{theorem}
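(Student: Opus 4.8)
The plan is to recognize Theorem \ref{The120} as the discrete-time analogue (written for the limit of periodicity $k^{\prime}=-1$) of the special case treated in Theorem \ref{The122}, page \pageref{The122}, and to reduce it directly to that result together with Theorem \ref{The70}. First I would observe that the hypothesis (\ref{p15}) says precisely that
\[
\widehat{\mathbf{T}}_{\mu}^{\widehat{x}}\cap\{-1,0,1,2,...\}=\widehat{\mathbf{T}}_{\mu}^{\widehat{x}}=\{n_{1},n_{1}+p,n_{1}+2p,n_{1}+3p,...\},
\]
since $\widehat{\mathbf{T}}_{\mu}^{\widehat{x}}\subset\mathbf{N}_{\_}=\{-1,0,1,...\}$ always holds, and that $n_{1}\in\{-1,0,...,p-2\}=\{k^{\prime},k^{\prime}+1,...,k^{\prime}+p-1\}$ when $k^{\prime}=-1$. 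Thus the hypothesis of Theorem \ref{The122} is met with $k^{\prime}=-1$ and $k_{1}=1$.

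For part a), I would simply invoke Theorem \ref{The122} a) with $k^{\prime}=-1$, which gives
\[
\forall k\in\widehat{\mathbf{T}}_{\mu}^{\widehat{x}}\cap\{-1,0,1,...\},\{k+zp|z\in\mathbf{Z}\}\cap\{-1,0,1,...\}\subset\widehat{\mathbf{T}}_{\mu}^{\widehat{x}},
\]
i.e. $\forall k\in\widehat{\mathbf{T}}_{\mu}^{\widehat{x}},\{k+zp|z\in\mathbf{Z}\}\cap\mathbf{N}_{\_}\subset\widehat{\mathbf{T}}_{\mu}^{\widehat{x}}$, which is exactly the claimed periodicity of $\mu$ with period $p$. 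Note that $\mu\in\widehat{Or}(\widehat{x})$ together with this property forces $\widehat{\mathbf{T}}_{\mu}^{\widehat{x}}$ infinite, hence $\mu\in\widehat{\omega}(\widehat{x})$, so the hypothesis $\mu\in\widehat{\omega}(\widehat{x})$ needed by Theorem \ref{The122} is automatic.

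For part b), I would argue by contradiction exactly as in the proof of Theorem \ref{The122} b): suppose $p^{\prime}\in\widehat{P}_{\mu}^{\widehat{x}}$ with $p^{\prime}<p$. Since $n_{1}\in\widehat{\mathbf{T}}_{\mu}^{\widehat{x}}$ and $n_{1}+p^{\prime}\geq n_{1}-1\geq-1$, part a) applied with period $p^{\prime}$ (that is, the periodicity property guaranteed by $p^{\prime}\in\widehat{P}_{\mu}^{\widehat{x}}$, using $k^{\prime}=-1$, which is legitimate because $\widehat{L}_{\mu}^{\widehat{x}}=\mathbf{N}_{\_}$ for periodic points) yields $n_{1}+p^{\prime}\in\widehat{\mathbf{T}}_{\mu}^{\widehat{x}}$; but $n_{1}<n_{1}+p^{\prime}<n_{1}+p$ and $(\ref{p15})$ shows no element of $\widehat{\mathbf{T}}_{\mu}^{\widehat{x}}$ lies strictly between $n_{1}$ and $n_{1}+p$, a contradiction. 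Hence every $p^{\prime}\in\widehat{P}_{\mu}^{\widehat{x}}$ satisfies $p^{\prime}\geq p$, so $p=\min\widehat{P}_{\mu}^{\widehat{x}}$, and Theorem \ref{The70} a) then gives $\widehat{P}_{\mu}^{\widehat{x}}=\{p,2p,3p,...\}$, which is the desired conclusion. I do not anticipate a genuine obstacle here; the only point requiring a little care is making sure the reduction to Theorem \ref{The122} with $k^{\prime}=-1$ is done cleanly and that the non-triviality condition $\widehat{\mathbf{T}}_{\mu}^{\widehat{x}}\cap\{-1,0,1,...\}\neq\varnothing$ is noted (it holds since $\mu\in\widehat{Or}(\widehat{x})$).
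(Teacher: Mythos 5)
Your proposal is correct and follows essentially the same route as the paper: part a) is obtained by specializing the general sufficiency condition (you cite Theorem \ref{The122} with $k_{1}=1$, the paper cites Theorem \ref{The55}, both being instances of Theorem \ref{The72} with $k^{\prime}=-1$ and $k_{1}=1$), and part b) is the identical contradiction via $n_{1}+p^{\prime}$ followed by the structure theorem for the set of periods (Theorem \ref{The70}, of which the paper's Theorem \ref{The26_} is the periodic-point specialization). Your explicit remarks that $\widehat{\mathbf{T}}_{\mu}^{\widehat{x}}$ is infinite and that $\widehat{L}_{\mu}^{\widehat{x}}=\mathbf{N}_{\_}$ justifies taking $k^{\prime}=-1$ are correct and in fact make the argument slightly more careful than the paper's.
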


\begin{proof}
a) This is a special case of Theorem \ref{The55}, page \pageref{The55},
written for $k_{1}=1.$

b) We suppose against all reason that $p^{\prime}\in\widehat{P}_{\mu
}^{\widehat{x}}$ exists with $p^{\prime}<p.$ As $n_{1}\in\widehat{\mathbf{T}%
}_{\mu}^{\widehat{x}},$ we obtain that $n_{1}+p^{\prime}\in\widehat
{\mathbf{T}}_{\mu}^{\widehat{x}},$ contradiction with (\ref{p15}). Thus any
$p^{\prime}\in\widehat{P}_{\mu}^{\widehat{x}}$ fulfills $p^{\prime}\geq p. $
We apply Theorem \ref{The26_}, page \pageref{The26_}.
\end{proof}

\begin{theorem}
\label{The75}Let $x\in S^{(n)},$ $\mu=x(-\infty+0),$ $T>0$ and the points
$t_{0},a_{1},b_{1}\in\mathbf{R}$ having the property that%
\begin{equation}
t_{0}<a_{1}<b_{1}=t_{0}+T, \label{per902}%
\end{equation}%
\begin{equation}
\mathbf{T}_{\mu}^{x}=(-\infty,t_{0})\cup\lbrack a_{1},b_{1})\cup\lbrack
a_{1}+T,b_{1}+T)\cup\lbrack a_{1}+2T,b_{1}+2T)\cup... \label{per501}%
\end{equation}
hold$.$

a) For any $t^{\prime}\in\lbrack a_{1}-T,t_{0}),$ the properties $t^{\prime
}\in I^{x},$%
\begin{equation}
\forall t\in\mathbf{T}_{\mu}^{x}\cap\lbrack t^{\prime},\infty),\{t+zT|z\in
\mathbf{Z}\}\cap\lbrack t^{\prime},\infty)\subset\mathbf{T}_{\mu}^{x}
\label{per483}%
\end{equation}
are fulfilled.

b) Let $t^{\prime\prime}\in\lbrack a_{1}-T,t_{0})$ arbitrary. For any
$T^{\prime}>0$ such that%
\begin{equation}
\forall t\in\mathbf{T}_{\mu}^{x}\cap\lbrack t^{\prime\prime},\infty
),\{t+zT^{\prime}|z\in\mathbf{Z}\}\cap\lbrack t^{\prime\prime},\infty
)\subset\mathbf{T}_{\mu}^{x} \label{per504}%
\end{equation}
holds, we have $T^{\prime}\in\{T,2T,3T,...\}.$
\end{theorem}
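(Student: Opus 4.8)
The plan is to obtain part a) immediately from an already established sufficiency result, and to handle part b) by reducing to the known shape of $P_{\mu}^{x}$ and then excluding periods shorter than $T$ by a boundary argument. For a): the hypotheses (\ref{per902}), (\ref{per501}) are precisely the hypotheses of Theorem \ref{The11} in the degenerate case $k_{1}=1$, where the chain (\ref{p126}) collapses to $t_{0}<a_{1}<b_{1}=t_{0}+T$ and (\ref{p64}) collapses to (\ref{per501}) (so $a_{k_{1}}=a_{1}$, $b_{k_{1}}=b_{1}$). Hence Theorem \ref{The11} applies verbatim and yields, for every $t^{\prime}\in[a_{k_{1}}-T,t_{0})=[a_{1}-T,t_{0})$, that $t^{\prime}\in I^{x}$ and that (\ref{per483}) holds; that is exactly the claim of a).

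For b) I would first record that $x$ is not constant: since $t_{0}<a_{1}$, the description (\ref{per501}) shows $[t_{0},a_{1})\cap\mathbf{T}_{\mu}^{x}=\varnothing$, so $x(t_{0})\neq\mu=x(-\infty+0)$ and $I^{x}=(-\infty,t_{0})$. By a) the point $\mu$ is periodic, so Theorem \ref{The26_} b) provides $\widetilde{T}>0$ with $P_{\mu}^{x}=\{\widetilde{T},2\widetilde{T},3\widetilde{T},\dots\}$. Now fix $t^{\prime\prime}\in[a_{1}-T,t_{0})$ as in the statement; by a) this $t^{\prime\prime}$ lies in $I^{x}$ and is a limit of periodicity for the period $T$, so by Theorem \ref{The137} (the set of limits of periodicity is independent of the period) the condition imposed on $T^{\prime}$ is just $T^{\prime}\in P_{\mu}^{x}$. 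Since $T\in P_{\mu}^{x}$ by a), it therefore suffices to prove $\widetilde{T}=T$, i.e. that no $T^{\prime}\in P_{\mu}^{x}$ satisfies $0<T^{\prime}<T$; then $T^{\prime}\in P_{\mu}^{x}=\{T,2T,3T,\dots\}$, as required.

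To that end I would argue by contradiction: suppose $T^{\prime}\in P_{\mu}^{x}$ with $0<T^{\prime}<T$. By Theorem \ref{The137} again, $t^{\prime\prime}$ is a limit of periodicity for $T^{\prime}$ as well, so $\{t+zT^{\prime}\mid z\in\mathbf{Z}\}\cap[t^{\prime\prime},\infty)\subset\mathbf{T}_{\mu}^{x}$ for every $t\in\mathbf{T}_{\mu}^{x}\cap[t^{\prime\prime},\infty)$. The interval $J=[\max\{a_{1},b_{1}-T^{\prime}\},\min\{b_{1},a_{1}+T-T^{\prime}\})$ is non-empty, because $a_{1}<b_{1}$, $0<T^{\prime}<T$ and $b_{1}=t_{0}+T<a_{1}+T$ (the latter from $t_{0}<a_{1}$) give $\max\{a_{1},b_{1}-T^{\prime}\}<\min\{b_{1},a_{1}+T-T^{\prime}\}$. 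Choose $t\in J$: then $a_{1}\le t<b_{1}$ gives $x(t)=\mu$ by (\ref{per501}) and $t>t_{0}>t^{\prime\prime}$, while $b_{1}\le t+T^{\prime}<a_{1}+T$ places $t+T^{\prime}$ in the gap $[b_{1},a_{1}+T)$, which by (\ref{per501}) (using $b_{1}=t_{0}+T$) is disjoint from $\mathbf{T}_{\mu}^{x}$; yet $t+T^{\prime}>t^{\prime\prime}$, so periodicity of $T^{\prime}$ forces $t+T^{\prime}\in\mathbf{T}_{\mu}^{x}$, a contradiction. Hence $\widetilde{T}=T$ and $P_{\mu}^{x}=\{T,2T,3T,\dots\}$, completing b).

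The only genuine work is this last paragraph — verifying that $J$ is non-empty and that $t+T^{\prime}$ lands in an ``off'' gap of $\mathbf{T}_{\mu}^{x}$ — and it is a short, routine manipulation of $t_{0}<a_{1}<b_{1}=t_{0}+T$, essentially the $k_{1}=1$, $\mu=x(-\infty+0)$ specialization of the argument already carried out in the proof of Theorem \ref{The116} b); everything else is citation of Theorems \ref{The11}, \ref{The26_} and \ref{The137}.
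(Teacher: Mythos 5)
Your proposal is correct and follows essentially the same route as the paper: part a) is the $k_{1}=1$ specialization of Theorem \ref{The11}, and part b) uses the identical interval $[\max\{a_{1},b_{1}-T^{\prime}\},\min\{b_{1},a_{1}+T-T^{\prime}\})$ to land $t+T^{\prime}$ in the gap $[b_{1},a_{1}+T)$, followed by the appeal to Theorem \ref{The26_}. Your explicit invocation of Theorem \ref{The137} to identify the condition on $T^{\prime}$ with membership in $P_{\mu}^{x}$ only makes precise a step the paper leaves implicit.
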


\begin{proof}
a) This is a special case of Theorem \ref{The11}, page \pageref{The11},
written for $k_{1}=1.$

b) We suppose against all reason that $T^{\prime}<T.$ Let us note in the
beginning that%
\[
\max\{a_{1},b_{1}-T^{\prime}\}<\min\{b_{1},a_{1}+T-T^{\prime}\}
\]
is true, since all of $a_{1}<b_{1},a_{1}<a_{1}+T-T^{\prime},b_{1}-T^{\prime
}<b_{1},b_{1}-T^{\prime}<a_{1}+T-T^{\prime}$ hold. We infer that any
$t\in\lbrack\max\{a_{1},b_{1}-T^{\prime}\},\min\{b_{1},a_{1}+T-T^{\prime}\})$
fulfills $t\in\lbrack a_{1},b_{1})\subset\mathbf{T}_{\mu}^{x}\cap\lbrack
t^{\prime\prime},\infty)$ and%
\[
t+T^{\prime}\in\{t+zT^{\prime}|z\in\mathbf{Z}\}\cap\lbrack t^{\prime\prime
},\infty)\overset{(\ref{per504})}{\subset}\mathbf{T}_{\mu}^{x},
\]
and on the other hand we have%
\[
b_{1}\leq\max\{a_{1}+T^{\prime},b_{1}\}\leq t+T^{\prime}<\min\{b_{1}%
+T^{\prime},a_{1}+T\}\leq a_{1}+T,
\]
meaning that $t+T^{\prime}\notin\mathbf{T}_{\mu}^{x},$ contradiction. We
conclude that $T^{\prime}\geq T.$

We get $T=\min P_{\mu}^{x}$ and, as $P_{\mu}^{x}=\{T,2T,3T,...\}$ from Theorem
\ref{The26_}, page \pageref{The26_}, we have that $T^{\prime}\in
\{T,2T,3T,...\}.$
\end{proof}

\begin{theorem}
\label{The75__}Let $x\in S^{(n)},$ $\mu\in Or(x),$ $\mu\neq x(-\infty+0),$
$T>0$ and the points $t_{0},a_{1},b_{1}\in\mathbf{R}$ with the property that%
\begin{equation}
\forall t<t_{0},x(t)=x(-\infty+0), \label{per894}%
\end{equation}%
\begin{equation}
x(t_{0})\neq x(-\infty+0), \label{per895}%
\end{equation}%
\begin{equation}
b_{1}-T<t_{0}\leq a_{1}<b_{1}, \label{per896}%
\end{equation}%
\begin{equation}
\mathbf{T}_{\mu}^{x}=[a_{1},b_{1})\cup\lbrack a_{1}+T,b_{1}+T)\cup\lbrack
a_{1}+2T,b_{1}+2T)\cup... \label{per897}%
\end{equation}
hold$.$

a) For any $t^{\prime}\in\lbrack b_{1}-T,t_{0}),$ the following properties:
$t^{\prime}\in I^{x},$%
\begin{equation}
\forall t\in\mathbf{T}_{\mu}^{x}\cap\lbrack t^{\prime},\infty),\{t+zT|z\in
\mathbf{Z}\}\cap\lbrack t^{\prime},\infty)\subset\mathbf{T}_{\mu}^{x}
\label{per899}%
\end{equation}
are fulfilled.

b) Let $t^{\prime\prime}\in\lbrack b_{1}-T,t_{0})$ arbitrary. For any
$T^{\prime}>0$ such that%
\begin{equation}
\forall t\in\mathbf{T}_{\mu}^{x}\cap\lbrack t^{\prime\prime},\infty
),\{t+zT^{\prime}|z\in\mathbf{Z}\}\cap\lbrack t^{\prime\prime},\infty
)\subset\mathbf{T}_{\mu}^{x} \label{per901}%
\end{equation}
is true, we have $T^{\prime}\in\{T,2T,3T,...\}.$
\end{theorem}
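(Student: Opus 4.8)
\textbf{Proof plan for Theorem \ref{The75__}.}

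The plan is to recognize that this statement is the ``$\mu\neq x(-\infty+0)$'' special case of Theorem \ref{The11_}, page \pageref{The11_}, written with $k_{1}=1$, exactly as Theorem \ref{The75} is the special case of Theorem \ref{The11} for $\mu=x(-\infty+0)$, and as part b) of Theorem \ref{The116} and Theorem \ref{The122} handle the primality of the period. So the proof splits into the two items, and each item reduces to an already-proved result plus one short primality argument.

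For item a), I would simply observe that the hypotheses (\ref{per894})--(\ref{per897}) are precisely the hypotheses (\ref{p127})--(\ref{pre46}) of Theorem \ref{The11_} specialized to a single interval $[a_1,b_1)$, i.e.\ with $k_{1}=1$. Indeed (\ref{per894}), (\ref{per895}) match (\ref{p127}), (\ref{p128}); (\ref{per896}) matches (\ref{pre45}) (the chain $b_{k_1}-T<t_0\le a_1<b_1$ with $k_1=1$); and (\ref{per897}) matches (\ref{pre46}). Hence Theorem \ref{The11_} yields directly that for any $t^{\prime}\in[b_{1}-T,t_{0})$ we have $t^{\prime}\in I^{x}$ and the periodicity relation (\ref{per899}) holds. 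Nothing further is needed.

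For item b), I would argue by contradiction in the standard way used in the primality proofs of Theorem \ref{The116} b) and Theorem \ref{The122} b): suppose $T^{\prime}<T$ satisfies (\ref{per901}) for some $t^{\prime\prime}\in[b_{1}-T,t_{0})$. First note $b_1<a_1+T$ is forced — otherwise $\mathbf{T}_{\mu}^{x}$ would contain $[a_1,\infty)$ and $x$ would be eventually constant, and in fact from (\ref{per896}), $b_1 - T < t_0 \le a_1$ already gives $b_1 < a_1 + T$. Then I would check that $\max\{a_{1},b_{1}-T^{\prime}\}<\min\{b_{1},a_{1}+T-T^{\prime}\}$, using $a_1<b_1$, $a_1<a_1+T-T^{\prime}$, $b_1-T^{\prime}<b_1$, and $b_1-T^{\prime}<a_1+T-T^{\prime}$ (the last from $b_1<a_1+T$). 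Picking $t$ in that nonempty interval, one has $t\in[a_1,b_1)\subset\mathbf{T}_{\mu}^{x}$, so by (\ref{per901}) $t+T^{\prime}\in\mathbf{T}_{\mu}^{x}$; but the inequalities $b_{1}\le\max\{a_{1}+T^{\prime},b_{1}\}\le t+T^{\prime}<\min\{b_{1}+T^{\prime},a_{1}+T\}\le a_{1}+T$ place $t+T^{\prime}$ in the gap $[b_1,a_1+T)$, which is disjoint from $\mathbf{T}_{\mu}^{x}$ by (\ref{per897}) — a contradiction. Hence every $T^{\prime}\in P_{\mu}^{x}$ satisfies $T^{\prime}\ge T$, so $T=\min P_{\mu}^{x}$, and since $x$ is not constant (it has the genuine jump at $t_0$ by (\ref{per894}), (\ref{per895})), Theorem \ref{The26_} b), page \pageref{The26_}, gives $P_{\mu}^{x}=\{T,2T,3T,\ldots\}$, whence $T^{\prime}\in\{T,2T,3T,\ldots\}$.

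I do not anticipate a genuine obstacle here: the whole content is bookkeeping to see that the interval inequalities line up correctly. The one place to be slightly careful is the boundary case $t_0=a_1$ allowed by (\ref{per896}): then $[a_1,b_1)$ starts exactly at the jump point, and one must make sure the ``gap'' argument in b) and the reduction in a) still go through — but since Theorem \ref{The11_} already allows $t_0\le a_1$ and the strict inequality $b_1-T<t_0$ keeps $t^{\prime}$ strictly below $t_0$, no difficulty arises. If anything, I would double-check that $x$ being non-constant is correctly inferred before invoking Theorem \ref{The26_} b), which is exactly guaranteed by (\ref{per894})--(\ref{per895}).
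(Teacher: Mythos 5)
Your proposal matches the paper's own proof: item a) is handled exactly as a special case of Theorem \ref{The11_} with $k_{1}=1$, and item b) uses the same contradiction with $T^{\prime}<T$, the same interval $[\max\{a_{1},b_{1}-T^{\prime}\},\min\{b_{1},a_{1}+T-T^{\prime}\})$, and the same appeal to Theorem \ref{The26_} b) to conclude $P_{\mu}^{x}=\{T,2T,3T,\ldots\}$. The argument is correct and no gaps remain.
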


\begin{proof}
a) This is a special case of Theorem \ref{The11_}, page \pageref{The11_},
written for $k_{1}=1.$

b) We suppose against all reason now that $T^{\prime}<T.$ Let us notice the
truth of%
\[
\max\{a_{1},b_{1}-T^{\prime}\}<\min\{b_{1},a_{1}+T-T^{\prime}\}.
\]
We infer that $t\in\lbrack\max\{a_{1},b_{1}-T^{\prime}\},\min\{b_{1}%
,a_{1}+T-T^{\prime}\})$ satisfies $t\in\lbrack a_{1},b_{1})\subset
\mathbf{T}_{\mu}^{x}\cap\lbrack t^{\prime\prime},\infty)$ and%
\[
t+T^{\prime}\in\{t+zT^{\prime}|z\in\mathbf{Z}\}\cap\lbrack t^{\prime\prime
},\infty)\overset{(\ref{per901})}{\subset}\mathbf{T}_{\mu}^{x},
\]
thus $t+T^{\prime}\in\mathbf{T}_{\mu}^{x};$ on the other hand%
\[
b_{1}\leq\max\{a_{1}+T^{\prime},b_{1}\}\leq t+T^{\prime}<\min\{b_{1}%
+T^{\prime},a_{1}+T\}\leq a_{1}+T,
\]
wherefrom $t+T^{\prime}\notin\mathbf{T}_{\mu}^{x},$ contradiction. We have
proved that $T^{\prime}\geq T.$

We get that $T=\min P_{\mu}^{x}.$ Theorem \ref{The26_}, page \pageref{The26_}
shows that $P_{\mu}^{x}=\{T,2T,3T,...\},$ wherefrom $T^{\prime}\in
\{T,2T,3T,...\}.$
\end{proof}

\begin{remark}
Theorems \ref{The75}, \ref{The75__} represent the same phenomenon and their
proof is formally the same: when $\mathbf{T}_{\mu}^{x}$ has one of the forms
(\ref{per501}), (\ref{per897}), the prime period of $\mu$ is $T$. The
difference between the Theorems is given by the fact that $\mu=x(-\infty+0) $
in the first case and $\mu\neq x(-\infty+0)$ in the second case.
\end{remark}

\section{Periodic points vs. eventually periodic points}

\begin{theorem}
\label{The84}a) Let $\widehat{x}$ and the periodic point $\mu\in\widehat
{Or}(\widehat{x});$ for any $\widetilde{k}\in\mathbf{N,}$ we have $\widehat
{P}_{\mu}^{\widehat{x}}=\widehat{P}_{\mu}^{\widehat{\sigma}^{\widetilde{k}%
}(\widehat{x})}.$

b) We consider $x$ and the periodic point $\mu\in Or(x);$ for any
$\widetilde{t}\in\mathbf{R,}$ we have $P_{\mu}^{x}=P_{\mu}^{\sigma
^{\widetilde{t}}(x)}.$
\end{theorem}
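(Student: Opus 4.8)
The claim is that forgetting some first values of a signal does not change the set of periods of a periodic point. I would prove the two inclusions $\widehat{P}_{\mu}^{\widehat{x}}\subset\widehat{P}_{\mu}^{\widehat{\sigma}^{\widetilde{k}}(\widehat{x})}$ and $\widehat{P}_{\mu}^{\widehat{\sigma}^{\widetilde{k}}(\widehat{x})}\subset\widehat{P}_{\mu}^{\widehat{x}}$ directly, using the support-set characterization of periodicity from Theorem \ref{The49} a), namely $p\in\widehat{P}_{\mu}^{\widehat{x}}$ iff $\forall k\in\widehat{\mathbf{T}}_{\mu}^{\widehat{x}},\{k+zp\mid z\in\mathbf{Z}\}\cap\mathbf{N}_{\_}\subset\widehat{\mathbf{T}}_{\mu}^{\widehat{x}}$, together with the key observation (already used repeatedly in the excerpt, e.g. in the proof of Theorem \ref{The12} c)) that $\widehat{\mathbf{T}}_{\mu}^{\widehat{\sigma}^{\widetilde{k}}(\widehat{x})}=\{k\in\mathbf{N}_{\_}\mid k+\widetilde{k}\in\widehat{\mathbf{T}}_{\mu}^{\widehat{x}}\}$, i.e. $k\in\widehat{\mathbf{T}}_{\mu}^{\widehat{\sigma}^{\widetilde{k}}(\widehat{x})}\Longleftrightarrow k+\widetilde{k}\in\widehat{\mathbf{T}}_{\mu}^{\widehat{x}}$. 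I would first dispose of the trivial case: if $\mu\notin\widehat{Or}(\widehat{\sigma}^{\widetilde{k}}(\widehat{x}))$ then $\mu$ is not a periodic point of $\widehat{\sigma}^{\widetilde{k}}(\widehat{x})$, but since $\mu$ is periodic for $\widehat{x}$ it lies in $\widehat{\omega}(\widehat{x})$ (Remark around \ref{Rem27}), so $\widehat{\mathbf{T}}_{\mu}^{\widehat{x}}$ is infinite, hence $\widehat{\mathbf{T}}_{\mu}^{\widehat{\sigma}^{\widetilde{k}}(\widehat{x})}$ is infinite too and $\mu\in\widehat{Or}(\widehat{\sigma}^{\widetilde{k}}(\widehat{x}))$ — so this case cannot occur and both sides are sets of genuine periods.

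\textbf{Key steps.} For the inclusion $\widehat{P}_{\mu}^{\widehat{x}}\subset\widehat{P}_{\mu}^{\widehat{\sigma}^{\widetilde{k}}(\widehat{x})}$: fix $p\in\widehat{P}_{\mu}^{\widehat{x}}$, so $\forall k\in\widehat{\mathbf{T}}_{\mu}^{\widehat{x}},\{k+zp\mid z\in\mathbf{Z}\}\cap\mathbf{N}_{\_}\subset\widehat{\mathbf{T}}_{\mu}^{\widehat{x}}$. Take any $k\in\widehat{\mathbf{T}}_{\mu}^{\widehat{\sigma}^{\widetilde{k}}(\widehat{x})}$ and any $z\in\mathbf{Z}$ with $k+zp\geq-1$; then $k+\widetilde{k}\in\widehat{\mathbf{T}}_{\mu}^{\widehat{x}}$ and $(k+\widetilde{k})+zp=(k+zp)+\widetilde{k}\geq-1+\widetilde{k}\geq-1$, hence $(k+zp)+\widetilde{k}\in\widehat{\mathbf{T}}_{\mu}^{\widehat{x}}$ by the periodicity of $p$ for $\widehat{x}$, which translates back to $k+zp\in\widehat{\mathbf{T}}_{\mu}^{\widehat{\sigma}^{\widetilde{k}}(\widehat{x})}$. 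So $p\in\widehat{P}_{\mu}^{\widehat{\sigma}^{\widetilde{k}}(\widehat{x})}$. For the reverse inclusion, fix $p\in\widehat{P}_{\mu}^{\widehat{\sigma}^{\widetilde{k}}(\widehat{x})}$ and take $k\in\widehat{\mathbf{T}}_{\mu}^{\widehat{x}}$, $z\in\mathbf{Z}$ with $k+zp\geq-1$. Choose $z_1\in\mathbf{Z}$ large enough that $k+z_1 p\geq\widetilde{k}-1$ and also $k+(z_1+z)p\geq\widetilde{k}-1$; using that $p$ is periodic for $\widehat{x}$ on the whole of $\widehat{\mathbf{T}}_{\mu}^{\widehat{x}}$... actually the cleaner route is: since $p\in\widehat{P}_{\mu}^{\widehat{\sigma}^{\widetilde{k}}(\widehat{x})}$ and (as shown above $\widehat{P}_{\mu}^{\widehat{x}}\subset\widehat{P}_{\mu}^{\widehat{\sigma}^{\widetilde{k}}(\widehat{x})}$ and by Theorem \ref{The26_} both sets are $\{\widetilde{p},2\widetilde{p},\dots\}$, $\{\widetilde{q},2\widetilde{q},\dots\}$), so it suffices to show the two prime periods coincide; alternatively, argue directly: from $k+z_1 p\geq\widetilde{k}-1$ we get $k+z_1p-\widetilde{k}\geq-1$, apply the relation and the periodicity of $\widehat{\sigma}^{\widetilde{k}}(\widehat{x})$ to walk from $k+z_1p$ down to $k$ and across to $k+zp$ while keeping arguments $\geq-1$, concluding $k+zp\in\widehat{\mathbf{T}}_{\mu}^{\widehat{x}}$.

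\textbf{Main obstacle.} The delicate point is the reverse inclusion: an element $k\in\widehat{\mathbf{T}}_{\mu}^{\widehat{x}}$ may have $k<\widetilde{k}-1$, so it has no direct image in $\widehat{\mathbf{T}}_{\mu}^{\widehat{\sigma}^{\widetilde{k}}(\widehat{x})}$, and the periodicity hypothesis for $\widehat{\sigma}^{\widetilde{k}}(\widehat{x})$ only constrains indices $\geq-1$, i.e. original indices $\geq\widetilde{k}-1$. The fix is the standard two-sided-walk argument (exactly the pattern in the proof of the implication $(\ref{pre536})\Longrightarrow(\ref{per144})$ in Theorem \ref{The49}): translate $k$ upward by a multiple of $p$ into the range $\geq\widetilde{k}-1$, where periodicity applies, and note all intermediate indices stay $\geq\widetilde{k}-1$; then translate the target $k+zp$ the same way. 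Since $\widehat{\mathbf{T}}_{\mu}^{\widehat{x}}$ is infinite (as $\mu\in\widehat{\omega}(\widehat{x})$), such translates always exist. The continuous-time part b) is entirely analogous, replacing $\mathbf{N}_{\_}$ by an initial-time half-line: one uses $\mathbf{T}_{\mu}^{\sigma^{\widetilde{t}}(x)}\cap[\,t^{\prime},\infty)=\mathbf{T}_{\mu}^{x}\cap[\,t^{\prime},\infty)$ for $t^{\prime}\geq\widetilde{t}$ (from the definition of $\sigma^{\widetilde{t}}$) plus the characterization (\ref{per145}) of periodicity of a real-time point and Theorem \ref{The117}/Theorem \ref{The5} on the form of $L_{\mu}^{x}$; the forgetful composition law in Theorem \ref{Pro2} b) handles bookkeeping when $\widetilde{t}$ is compared with candidate limits of periodicity. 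I would also remark that b) can be seen as a restatement of Theorem \ref{The137}/\ref{The141} on the independence of the period sets from the limit of periodicity.
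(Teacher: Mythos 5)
Your first inclusion $\widehat{P}_{\mu}^{\widehat{x}}\subset\widehat{P}_{\mu}^{\widehat{\sigma}^{\widetilde{k}}(\widehat{x})}$ is correct and is exactly the paper's argument (translate by $\widetilde{k}$, apply (\ref{pre344}), translate back). The reverse inclusion, however, has a genuine gap. You take $k\in\widehat{\mathbf{T}}_{\mu}^{\widehat{x}}$ with possibly $k<\widetilde{k}-1$ and propose to ``translate $k$ upward by a multiple of $p$ into the range $\geq\widetilde{k}-1$'' --- but the only periodicity you have at that moment is that of $\widehat{\sigma}^{\widetilde{k}}(\widehat{x})$, which constrains nothing below $\widetilde{k}-1$; so you cannot conclude that $k+z_{1}p$ is still in $\widehat{\mathbf{T}}_{\mu}^{\widehat{x}}$, and the infinitude of $\widehat{\mathbf{T}}_{\mu}^{\widehat{x}}$ does not supply this. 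The same problem reappears when you try to ``walk down'' from $k+z_{1}p$ to $k$. Your ``cleaner route'' via Theorem \ref{The26_} does not close either: it only gives $\widehat{P}_{\mu}^{\widehat{x}}=\{\widetilde{p},2\widetilde{p},\dots\}\subset\{\widetilde{q},2\widetilde{q},\dots\}=\widehat{P}_{\mu}^{\widehat{\sigma}^{\widetilde{k}}(\widehat{x})}$, i.e.\ $\widetilde{q}\mid\widetilde{p}$, and showing $\widetilde{q}=\widetilde{p}$ is precisely the reverse inclusion again.

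The missing idea is that the hypothesis ``$\mu$ is a \emph{periodic} point of $\widehat{x}$'' must be used a second time, to conjugate. Fix some $p\in\widehat{P}_{\mu}^{\widehat{x}}$ (nonempty by hypothesis) and suppose $p^{\prime}\in\widehat{P}_{\mu}^{\widehat{\sigma}^{\widetilde{k}}(\widehat{x})}$ together with a witness $k_{1}\in\widehat{\mathbf{T}}_{\mu}^{\widehat{x}}$, $z_{1}\in\mathbf{Z}$, $k_{1}+z_{1}p^{\prime}\geq-1$. Choose $\overline{k}\in\mathbf{N}$ with $k_{1}+\overline{k}p\geq\widetilde{k}-1$ and $k_{1}+\overline{k}p+z_{1}p^{\prime}\geq\widetilde{k}-1$; then $\widehat{x}(k_{1})=\widehat{x}(k_{1}+\overline{k}p)$ by $p$-periodicity of $\widehat{x}$ (valid on all of $\mathbf{N}_{\_}$), $=\widehat{x}(k_{1}+\overline{k}p+z_{1}p^{\prime})$ by $p^{\prime}$-periodicity of the shifted signal (valid in the upper range), $=\widehat{x}(k_{1}+z_{1}p^{\prime})$ by $p$-periodicity of $\widehat{x}$ again. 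This is how the paper argues. The hypothesis is not cosmetic: if $\mu$ is only eventually periodic the inclusion can be strict (e.g.\ $\widehat{y}=0,0,0,1,0,1,\dots$ has $\widehat{P}_{0}^{\widehat{y}}=\varnothing$ while $\widehat{P}_{0}^{\widehat{\sigma}^{2}(\widehat{y})}=\{2,4,\dots\}$), which is why the paper's companion statement for eventually periodic points (Theorem \ref{The131}) asserts only $\widehat{P}_{\mu}^{\widehat{\sigma}^{k^{\prime}}(\widehat{x})}=\widehat{P}_{\mu}^{\widehat{\sigma}^{k^{\prime\prime}}(\widehat{x})}$. The same conjugation by a known period $T\in P_{\mu}^{x}$ is needed in part b), where your sketch inherits the identical gap.
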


\begin{proof}
a) The hypothesis states that $\widehat{P}_{\mu}^{\widehat{x}}\neq\varnothing$
and let $\widetilde{k}\in\mathbf{N}$ arbitrary.

We prove $\widehat{P}_{\mu}^{\widehat{x}}\subset\widehat{P}_{\mu}%
^{\widehat{\sigma}^{\widetilde{k}}(\widehat{x})}.$ Let $p\in\widehat{P}_{\mu
}^{\widehat{x}}$ arbitrary, thus%
\begin{equation}
\forall k\in\widehat{\mathbf{T}}_{\mu}^{\widehat{x}},\{k+zp|z\in
\mathbf{Z}\}\cap\mathbf{N}_{\_}\subset\widehat{\mathbf{T}}_{\mu}^{\widehat{x}}
\label{pre344}%
\end{equation}
holds and we must show that $\mu\in\widehat{Or}(\widehat{\sigma}%
^{\widetilde{k}}(\widehat{x}))$ and%
\begin{equation}
\forall k\in\widehat{\mathbf{T}}_{\mu}^{\widehat{\sigma}^{\widetilde{k}%
}(\widehat{x})},\{k+zp|z\in\mathbf{Z}\}\cap\mathbf{N}_{\_}\subset
\widehat{\mathbf{T}}_{\mu}^{\widehat{\sigma}^{\widetilde{k}}(\widehat{x})}.
\label{pre345}%
\end{equation}

If $\mu\in\widehat{Or}(\widehat{x}),$ then $\widehat{\mathbf{T}}_{\mu
}^{\widehat{x}}\neq\varnothing$ and from (\ref{pre344}) we infer that $\mu
\in\widehat{\omega}(\widehat{x}).$ Theorem \ref{The12}, page \pageref{The12}
shows that $\widehat{\omega}(\widehat{\sigma}^{\widetilde{k}}(\widehat
{x}))=\widehat{\omega}(\widehat{x}),$ hence $\mu\in\widehat{\omega}%
(\widehat{\sigma}^{\widetilde{k}}(\widehat{x}))\subset\widehat{Or}%
(\widehat{\sigma}^{\widetilde{k}}(\widehat{x}))$ and $\widehat{\mathbf{T}%
}_{\mu}^{\widehat{\sigma}^{\widetilde{k}}(\widehat{x})}\neq\varnothing.$

Let now $k\in\widehat{\mathbf{T}}_{\mu}^{\widehat{\sigma}^{\widetilde{k}%
}(\widehat{x})}$ and $z\in\mathbf{Z}$ with $k+zp\geq-1,$ meaning that
$\widehat{x}(k+\widetilde{k})=\mu.$ We have $k+\widetilde{k}\in\widehat
{\mathbf{T}}_{\mu}^{\widehat{x}}$ and $k+\widetilde{k}+zp\geq-1,$ thus we can
apply (\ref{pre344}). We infer that $k+\widetilde{k}+zp\in\widehat{\mathbf{T}%
}_{\mu}^{\widehat{x}},$ wherefrom $\mu=\widehat{x}(k+\widetilde{k}%
+zp)=\widehat{\sigma}^{\widetilde{k}}(\widehat{x})(k+zp)$ and, finally,
$k+zp\in\widehat{\mathbf{T}}_{\mu}^{\widehat{\sigma}^{\widetilde{k}}%
(\widehat{x})}.$

We prove $\widehat{P}_{\mu}^{\widehat{\sigma}^{\widetilde{k}}(\widehat{x}%
)}\subset\widehat{P}_{\mu}^{\widehat{x}}.$ Let $p\in\widehat{P}_{\mu
}^{\widehat{x}},$ thus (\ref{pre344}) is true. We suppose against all reason
that $\widehat{P}_{\mu}^{\widehat{\sigma}^{\widetilde{k}}(\widehat{x})}%
\subset\widehat{P}_{\mu}^{\widehat{x}}$ is false, i.e. some $p^{\prime}%
\in\widehat{P}_{\mu}^{\widehat{\sigma}^{\widetilde{k}}(\widehat{x}%
)}\smallsetminus\widehat{P}_{\mu}^{\widehat{x}}$ exists. This means the truth
of%
\begin{equation}
\forall k\in\widehat{\mathbf{T}}_{\mu}^{\widehat{x}}\cap\{\widetilde
{k}-1,\widetilde{k},\widetilde{k}+1,...\},\{k+zp^{\prime}|z\in\mathbf{Z}%
\}\cap\{\widetilde{k}-1,\widetilde{k},\widetilde{k}+1,...\}\subset
\widehat{\mathbf{T}}_{\mu}^{\widehat{x}}, \label{p84}%
\end{equation}%
\begin{equation}
\exists k_{1}\in\widehat{\mathbf{T}}_{\mu}^{\widehat{x}},\exists z_{1}%
\in\mathbf{Z},k_{1}+z_{1}p^{\prime}\geq-1\text{ and }k_{1}+z_{1}p^{\prime
}\notin\widehat{\mathbf{T}}_{\mu}^{\widehat{x}}. \label{p85}%
\end{equation}
Let $\overline{k}\in\mathbf{N}$ having the property that $k_{1}+\overline
{k}p\geq\widetilde{k}-1,$ $k_{1}+\overline{k}p+z_{1}p^{\prime}\geq
\widetilde{k}-1.$ We have%
\begin{equation}
\mu=\widehat{x}(k_{1})\overset{(\ref{pre344})}{=}\widehat{x}(k_{1}%
+\overline{k}p)\overset{(\ref{p84})}{=}\widehat{x}(k_{1}+\overline{k}%
p+z_{1}p^{\prime})\overset{(\ref{pre344})}{=}\widehat{x}(k_{1}+z_{1}p^{\prime
}). \label{p86}%
\end{equation}
The statements (\ref{p85}), (\ref{p86}) are contradictory.

b) We suppose that $P_{\mu}^{x}\neq\varnothing$ and let $\widetilde{t}%
\in\mathbf{R}$ arbitrary, fixed.

We prove $P_{\mu}^{x}\subset P_{\mu}^{\sigma^{\widetilde{t}}(x)}.$ Let $T\in
P_{\mu}^{x}$ arbitrary$,$ thus $t^{\prime}\in I^{x}$ exists such that%
\begin{equation}
\forall t\in\mathbf{T}_{\mu}^{x}\cap\lbrack t^{\prime},\infty),\{t+zT|z\in
\mathbf{Z}\}\cap\lbrack t^{\prime},\infty)\subset\mathbf{T}_{\mu}^{x}.
\label{pre346}%
\end{equation}
We must show that $\mu\in Or(\sigma^{\widetilde{t}}(x))$ and $t^{\prime\prime
}\in I^{\sigma^{\widetilde{t}}(x)}$ exists such that%
\begin{equation}
\forall t\in\mathbf{T}_{\mu}^{\sigma^{\widetilde{t}}(x)}\cap\lbrack
t^{\prime\prime},\infty),\{t+zT|z\in\mathbf{Z}\}\cap\lbrack t^{\prime\prime
},\infty)\subset\mathbf{T}_{\mu}^{\sigma^{\widetilde{t}}(x)}. \label{pre347}%
\end{equation}

From $\mu\in Or(x),$ $t^{\prime}\in I^{x}$ and Lemma \ref{Lem36}, page
\pageref{Lem36} we have that $\mathbf{T}_{\mu}^{x}\cap\lbrack t^{\prime
},\infty)\neq\varnothing;$ from (\ref{pre346}) we infer that $\mathbf{T}_{\mu
}^{x}$ is superiorly unbounded, wherefrom we have that $\mu\in\omega(x).$

Theorem \ref{The12}, page \pageref{The12} shows that $\omega(x)=\omega
(\sigma^{\widetilde{t}}(x))$ and we infer $\mu\in\omega(\sigma^{\widetilde{t}%
}(x))\subset Or(\sigma^{\widetilde{t}}(x)).$ In particular $\mathbf{T}_{\mu
}^{\sigma^{\widetilde{t}}(x)}$ is superiorly unbounded and $\mathbf{T}_{\mu
}^{\sigma^{\widetilde{t}}(x)}\cap\lbrack t^{\prime\prime},\infty
)\neq\varnothing$ is true for any $t^{\prime\prime}\in\mathbf{R}.$

If $x$ is constant, then $\sigma^{\widetilde{t}}(x)=x$ and $t^{\prime\prime
}\in I^{\sigma^{\widetilde{t}}(x)}$, (\ref{pre347}) take place trivially for
any $t^{\prime\prime},$ thus we shall suppose from now that $x$ is not
constant and consequently some $t_{0}\in\mathbf{R}$ exists with%
\begin{equation}
\forall t<t_{0},x(t)=x(-\infty+0), \label{p93}%
\end{equation}%
\begin{equation}
x(t_{0})\neq x(-\infty+0). \label{p94}%
\end{equation}
From (\ref{p93}), (\ref{p94}) we have $I^{x}=(-\infty,t_{0})$ and since
$t^{\prime}\in I^{x},$ we get $t^{\prime}<t_{0}.$ Two possibilities exist.

Case $\widetilde{t}\leq t_{0}$

In this situation $\sigma^{\widetilde{t}}(x)=x$ and $t^{\prime\prime}\in
I^{\sigma^{\widetilde{t}}(x)}$, (\ref{pre347}) take place with $t^{\prime
\prime}=t^{\prime}.$

Case $\widetilde{t}>t_{0}$

Some $\varepsilon>0$ exists with $\forall t\in(\widetilde{t}-\varepsilon
,\widetilde{t}),x(t)=x(\widetilde{t}-0)$ and we infer from here that
$\widetilde{t}-\varepsilon\geq t_{0}>t^{\prime}.$ We take $t^{\prime\prime}%
\in(\widetilde{t}-\varepsilon,\widetilde{t})$ arbitrary, fixed. We have%
\[
\sigma^{\widetilde{t}}(x)(t)=\left\{
\begin{array}
[c]{c}%
x(\widetilde{t}-0),t<\widetilde{t}\\
x(t),t\geq t^{\prime\prime}%
\end{array}
\right.  .
\]
The statement $t^{\prime\prime}\in I^{\sigma^{\widetilde{t}}(x)}$ is true. In
order to prove the fulfillment of (\ref{pre347}), let $t\in\mathbf{T}_{\mu
}^{\sigma^{\widetilde{t}}(x)}\cap\lbrack t^{\prime\prime},\infty)$ and
$z\in\mathbf{Z}$ arbitrary, with $t+zT\geq t^{\prime\prime}.$ We have
$t\in\mathbf{T}_{\mu}^{x},$ $t\geq t^{\prime\prime}>t_{0}>t^{\prime}$ and
$t+zT\geq t^{\prime\prime}>t_{0}>t^{\prime}$ thus (\ref{pre346}) can be
applied. We get $t+zT\in\mathbf{T}_{\mu}^{x}.$ As far as $\mu=x(t+zT)=\sigma
^{\widetilde{t}}(x)(t+zT),$ we conclude that $t+zT\in\mathbf{T}_{\mu}%
^{\sigma^{\widetilde{t}}(x)}.$

We prove $P_{\mu}^{\sigma^{\widetilde{t}}(x)}\subset P_{\mu}^{x}.$ Let $T\in
P_{\mu}^{x}$ arbitrary$,$ thus $t^{\prime}\in I^{x}$ exists with
(\ref{pre346}) fulfilled. We suppose against all reason that $P_{\mu}%
^{\sigma^{\widetilde{t}}(x)}\subset P_{\mu}^{x}$ is false, i.e. $T^{\prime}\in
P_{\mu}^{\sigma^{\widetilde{t}}(x)}\smallsetminus P_{\mu}^{x}$ exists. This
means the existence of $t^{\prime\prime}\in I^{\sigma^{\widetilde{t}}(x)}$
with%
\begin{equation}
\forall t\in\mathbf{T}_{\mu}^{\sigma^{\widetilde{t}}(x)}\cap\lbrack
t^{\prime\prime},\infty),\{t+zT^{\prime}|z\in\mathbf{Z}\}\cap\lbrack
t^{\prime\prime},\infty)\subset\mathbf{T}_{\mu}^{\sigma^{\widetilde{t}}(x)},
\label{p90}%
\end{equation}%
\begin{equation}
\forall t^{\prime\prime\prime}\in I^{x},\exists t_{1}\in\mathbf{T}_{\mu}%
^{x}\cap\lbrack t^{\prime\prime\prime},\infty),\exists z_{1}\in\mathbf{Z}%
,t_{1}+z_{1}T^{\prime}\geq t^{\prime\prime\prime}\text{ and }t_{1}%
+z_{1}T^{\prime}\notin\mathbf{T}_{\mu}^{x}. \label{p91}%
\end{equation}
Let $t^{\prime\prime\prime}\geq t^{\prime}$ arbitrary such that $(-\infty
,t^{\prime\prime\prime}]\subset\mathbf{T}_{x(-\infty+0)}^{x}$ and
$\overline{k}\in N$ with the property that $t_{1}+\overline{k}T\geq
\max\{t^{\prime\prime},\widetilde{t}\},t_{1}+\overline{k}T+z_{1}T^{\prime}%
\geq\max\{t^{\prime\prime},\widetilde{t}\}.$ We have%
\begin{equation}%
\begin{array}
[c]{c}%
\mu=x(t_{1})\overset{(\ref{pre346})}{=}x(t_{1}+\overline{k}T)=\sigma
^{\widetilde{t}}(x)(t_{1}+\overline{k}T)\\
\overset{(\ref{p90})}{=}\sigma^{\widetilde{t}}(x)(t_{1}+\overline{k}%
T+z_{1}T^{\prime})=x(t_{1}+\overline{k}T+z_{1}T^{\prime})\overset
{(\ref{pre346})}{=}x(t_{1}+z_{1}T^{\prime}).
\end{array}
\label{p92}%
\end{equation}
The statements (\ref{p91}), (\ref{p92}) are contradictory.
\end{proof}

\begin{remark}
\label{Cor5}In Theorem \ref{The84}, the statements about the eventual
periodicity of $\mu\in\widehat{Or}(\widehat{x}),\mu\in Or(x)$ are in fact
statements about the periodicity of $\mu\in\widehat{Or}(\widehat{\sigma
}^{\widetilde{k}}(\widehat{x})),\mu\in Or(\sigma^{\widetilde{t}}(x)).$
\end{remark}

\begin{theorem}
\label{The131}a) If $\mu\in\widehat{\omega}(\widehat{x})$ is an eventually
periodic point of $\widehat{x}:\exists p\geq1,\exists p^{\prime}\geq1,\exists
k^{\prime}\in\mathbf{N},\exists k^{\prime\prime}\in\mathbf{N}$ with%
\begin{equation}
\forall k\in\widehat{\mathbf{T}}_{\mu}^{\widehat{\sigma}^{k^{\prime}}%
(\widehat{x})},\{k+zp|z\in\mathbf{Z}\}\cap\mathbf{N}_{\_}\subset
\widehat{\mathbf{T}}_{\mu}^{\widehat{\sigma}^{k^{\prime}}(\widehat{x})},
\label{p18}%
\end{equation}%
\begin{equation}
\forall k\in\widehat{\mathbf{T}}_{\mu}^{\widehat{\sigma}^{k^{\prime\prime}%
}(\widehat{x})},\{k+zp^{\prime}|z\in\mathbf{Z}\}\cap\mathbf{N}_{\_}%
\subset\widehat{\mathbf{T}}_{\mu}^{\widehat{\sigma}^{k^{\prime\prime}%
}(\widehat{x})} \label{p19}%
\end{equation}
fulfilled, then $\widehat{P}_{\mu}^{\widehat{\sigma}^{k^{\prime}}(\widehat
{x})}=\widehat{P}_{\mu}^{\widehat{\sigma}^{k^{\prime\prime}}(\widehat{x})}.$

b) If $\mu\in\omega(x)$ is an eventually periodic point of $x:\exists
T>0,\exists T^{\prime}>0,\exists t^{\prime}\in\mathbf{R,}\exists
t^{\prime\prime}\in\mathbf{R}$ such that%
\begin{equation}
\forall t\in\mathbf{T}_{\mu}^{x}\cap\lbrack t^{\prime},\infty),\{t+zT|z\in
\mathbf{Z}\}\cap\lbrack t^{\prime},\infty)\subset\mathbf{T}_{\mu}^{x},
\end{equation}%
\begin{equation}
\forall t\in\mathbf{T}_{\mu}^{x}\cap\lbrack t^{\prime\prime},\infty
),\{t+zT^{\prime}|z\in\mathbf{Z}\}\cap\lbrack t^{\prime\prime},\infty
)\subset\mathbf{T}_{\mu}^{x}%
\end{equation}
are true, then $P_{\mu}^{\sigma^{t^{\prime}}(x)}=P_{\mu}^{\sigma
^{t^{\prime\prime}}(x)}.$
\end{theorem}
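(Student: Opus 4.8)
The statement asserts, for an eventually periodic point $\mu$, that the set of periods of $\mu$ does not depend on which "forgotten-values" shift $\widehat\sigma^{k^\prime}(\widehat x)$ (resp. $\sigma^{t^\prime}(x)$) is used to realize the periodicity: $\widehat P_\mu^{\widehat\sigma^{k^\prime}(\widehat x)}=\widehat P_\mu^{\widehat\sigma^{k^{\prime\prime}}(\widehat x)}$ and $P_\mu^{\sigma^{t^\prime}(x)}=P_\mu^{\sigma^{t^{\prime\prime}}(x)}$. The natural approach is to reduce this to the periodic-point machinery already developed, exactly in the spirit of Remark \ref{Cor5}: the eventual periodicity of $\mu$ with respect to $\widehat x,x$ is precisely the \emph{periodicity} of $\mu$ with respect to a suitable shifted signal. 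So the first move is to observe that, under the hypotheses (\ref{p18}), (\ref{p19}) (resp. the real-time analogues), $\mu$ is a periodic point of $\widehat\sigma^{k^\prime}(\widehat x)$ and of $\widehat\sigma^{k^{\prime\prime}}(\widehat x)$ — this is just the definition (\ref{pre737}) of a periodic point read off from (\ref{p18}), (\ref{p19}). Hence $\widehat P_\mu^{\widehat\sigma^{k^\prime}(\widehat x)}\neq\varnothing$ and $\widehat P_\mu^{\widehat\sigma^{k^{\prime\prime}}(\widehat x)}\neq\varnothing$.

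\textbf{Discrete-time part.} Write, without loss of generality, $k^\prime\le k^{\prime\prime}$ and put $\widetilde k=k^{\prime\prime}-k^\prime\in\mathbf N$. Then $\widehat\sigma^{k^{\prime\prime}}(\widehat x)=\widehat\sigma^{\widetilde k}(\widehat\sigma^{k^\prime}(\widehat x))$ by Theorem \ref{Pro2} b). Now apply Theorem \ref{The84} a) to the signal $\widehat y=\widehat\sigma^{k^\prime}(\widehat x)$ and its periodic point $\mu$: for any $\widetilde k\in\mathbf N$ we have $\widehat P_\mu^{\widehat y}=\widehat P_\mu^{\widehat\sigma^{\widetilde k}(\widehat y)}$, which is exactly $\widehat P_\mu^{\widehat\sigma^{k^\prime}(\widehat x)}=\widehat P_\mu^{\widehat\sigma^{k^{\prime\prime}}(\widehat x)}$. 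This settles a). For b), argue symmetrically: set $\widetilde t$ so that $\sigma^{t^{\prime\prime}}(x)=\sigma^{\widetilde t}(\sigma^{t^\prime}(x))$ — concretely, by Theorem \ref{Pro2} b), $(\sigma^{t^\prime}\circ\sigma^{t^{\prime\prime}})(x)=\sigma^{\max\{t^\prime,t^{\prime\prime}\}}(x)$, so taking $\widetilde t=\max\{t^\prime,t^{\prime\prime}\}$ one has both $\sigma^{\widetilde t}(x)=\sigma^{\widetilde t}(\sigma^{t^\prime}(x))$ and $\sigma^{\widetilde t}(x)=\sigma^{\widetilde t}(\sigma^{t^{\prime\prime}}(x))$. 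Then $\mu$ is a periodic point of $\sigma^{t^\prime}(x)$ (from the first hypothesis) and of $\sigma^{t^{\prime\prime}}(x)$ (from the second), and Theorem \ref{The84} b) applied to $y_1=\sigma^{t^\prime}(x)$ with shift $\widetilde t$, and to $y_2=\sigma^{t^{\prime\prime}}(x)$ with shift $\widetilde t$, gives $P_\mu^{\sigma^{t^\prime}(x)}=P_\mu^{\sigma^{\widetilde t}(x)}=P_\mu^{\sigma^{t^{\prime\prime}}(x)}$.

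\textbf{The delicate point.} The one step that needs genuine care is the claim that $\mu\in Or(\sigma^{t^\prime}(x))$ and that the hypothesis $\forall t\in\mathbf T_\mu^x\cap[t^\prime,\infty),\{t+zT\mid z\in\mathbf Z\}\cap[t^\prime,\infty)\subset\mathbf T_\mu^x$ really translates into the periodicity condition (\ref{pre739}) \emph{with the same limit of periodicity}: one needs $t^\prime\in I^{\sigma^{t^\prime}(x)}$, and $\mathbf T_\mu^{\sigma^{t^\prime}(x)}\cap[t^\prime,\infty)=\mathbf T_\mu^x\cap[t^\prime,\infty)$. The first holds because $\sigma^{t^\prime}(x)$ is, by construction, constant on $(-\infty,t^\prime]$ with value $x(t^\prime-0)=\sigma^{t^\prime}(x)(-\infty+0)$; the second because $\sigma^{t^\prime}(x)$ agrees with $x$ on $[t^\prime,\infty)$ (indeed on a slightly larger set). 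Once this bookkeeping is done, $\mu$ is genuinely a periodic point of $\sigma^{t^\prime}(x)$ in the sense of Definition \ref{Def47}, and similarly for $\sigma^{t^{\prime\prime}}(x)$, so the invocation of Theorem \ref{The84} is legitimate. The remaining verifications — that $\mu\in\omega(x)$, that $\mathbf T_\mu^x$ is superiorly unbounded, that the shift identities hold — are all already available (Theorems \ref{The12}, \ref{Pro2}, \ref{The67}) and require no new work. I therefore expect the entire proof to be a short chain of citations, with the only real content being the translation of the "$\sigma^{t^{\prime\prime}}$-eventual-periodicity of $x$" hypotheses into "periodicity of $\mu$ relative to $\sigma^{t^\prime}(x)$ and $\sigma^{t^{\prime\prime}}(x)$", after which Theorem \ref{The84} does everything.
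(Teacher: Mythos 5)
Your part a) is correct and is in substance the paper's own argument: the paper pivots through $\widehat{P}_{\mu}^{\widehat{x}}$, you pivot through Theorem \ref{The84} applied to $\widehat{\sigma}^{k^{\prime}}(\widehat{x})$ together with the semigroup law $\widehat{\sigma}^{k^{\prime\prime}}=\widehat{\sigma}^{k^{\prime\prime}-k^{\prime}}\circ\widehat{\sigma}^{k^{\prime}}$; both reductions are legitimate because (\ref{p18}) literally \emph{is} the periodicity condition (\ref{pre737}) for the shifted signal. Part b), however, has a genuine gap, and it sits exactly where you placed your ``delicate point'': the real-time hypotheses are stated in terms of $\mathbf{T}_{\mu}^{x}\cap[t^{\prime},\infty)$, not in terms of $\mathbf{T}_{\mu}^{\sigma^{t^{\prime}}(x)}$, and they do \emph{not} make $\mu$ a periodic point of $\sigma^{t^{\prime}}(x)$, so Theorem \ref{The84} b) cannot be invoked. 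Your justification already misfires on the first claim: $\sigma^{t^{\prime}}(x)(t^{\prime})=x(t^{\prime})$, not $x(t^{\prime}-0)$, so when $x(t^{\prime})\neq x(t^{\prime}-0)$ one has $I^{\sigma^{t^{\prime}}(x)}=(-\infty,t^{\prime})$ and $t^{\prime}$ is not an initial time of $\sigma^{t^{\prime}}(x)$. Retreating to some $\tau<t^{\prime}$ does not repair this, because $\sigma^{t^{\prime}}(x)$ equals the constant $x(t^{\prime}-0)$ on all of $[\tau,t^{\prime})$: if $\mu=x(t^{\prime}-0)$ this creates new $\mu$-times whose right translates must land in $\mathbf{T}_{\mu}^{x}\cap[t^{\prime},\infty)$, and the hypothesis says nothing about that; if $\mu\neq x(t^{\prime}-0)$ the left translates of points of $\mathbf{T}_{\mu}^{x}\cap[t^{\prime},\infty)$ landing in $[\tau,t^{\prime})$ must be $\mu$-times but are not.

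Concretely, let $x(-\infty+0)=1$ and $\mathbf{T}_{0}^{x}=[-1,0)\cup[\tfrac{1}{5},1)\cup[2+\tfrac{1}{5},3)\cup[4+\tfrac{1}{5},5)\cup\ldots$, and take $\mu=0$, $T=2$, $t^{\prime}=0$. The hypothesis (\ref{pre740})-type condition holds with limit $0$, and $x(0-0)=0=\mu$, so $\sigma^{0}(x)$ is identically $0$ on $(-\infty,0)$ and $I^{\sigma^{0}(x)}=(-\infty,0)$. If $\mu$ were a periodic point of $\sigma^{0}(x)$ with some period $T^{\prime}$, then $T^{\prime}$ would be an eventual period of $\mu$, hence $T^{\prime}\in\{2,4,6,\ldots\}$, and letting $t\uparrow 0$ in the periodicity condition would force $x(T^{\prime}-0)=0$; but $x(2k-0)=1$ for every $k\geq 1$. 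So $\mu$ is a periodic point of no $\sigma$-shift at $t^{\prime}=0$, although it \emph{is} one of $\sigma^{1/5}(x)$ -- which also shows the conclusion of the theorem can only be read with $P_{\mu}^{\sigma^{t^{\prime}}(x)}$ as in Notation \ref{Not12}, i.e.\ as sets of eventual periods. Under that reading the correct route for b) is not Theorem \ref{The84} but the direct pivot through $P_{\mu}^{x}$: since $\omega(\sigma^{t^{\prime}}(x))=\omega(x)$ and $\mathbf{T}_{\mu}^{\sigma^{t^{\prime}}(x)}\cap[s,\infty)=\mathbf{T}_{\mu}^{x}\cap[s,\infty)$ for every $s\geq t^{\prime}$, Lemma \ref{Lem30} gives $P_{\mu}^{\sigma^{t^{\prime}}(x)}=P_{\mu}^{x}=P_{\mu}^{\sigma^{t^{\prime\prime}}(x)}$, which is exactly the shape of the paper's one-line proof of a).
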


\begin{proof}
a) Both (\ref{p18}) and (\ref{p19}) are equivalent with $\widehat{P}_{\mu
}^{\widehat{x}}\neq\varnothing.$ If they are fulfilled, then $\widehat{P}%
_{\mu}^{\widehat{\sigma}^{k^{\prime}}(\widehat{x})}=\widehat{P}_{\mu
}^{\widehat{x}}=\widehat{P}_{\mu}^{\widehat{\sigma}^{k^{\prime\prime}%
}(\widehat{x})}.$
\end{proof}

\section{Further research}

\begin{remark}
Item b) in Theorem \ref{The75}, page \pageref{The75} does not work in the
general case, when $\mathbf{T}_{\mu}^{x}$ is given by (\ref{p64}%
)$_{page\;\pageref{p64}},$ instead of (\ref{per501})$_{page\;\pageref{per501}%
}.$ In order to understand the phenomenon, one may consider the case of $x$
from Example \ref{Exa1}, page \pageref{Exa1}: $x\in S^{(1)},$%
\[
x(t)=\chi_{(-\infty,0)}(t)\oplus\chi_{\lbrack1,2)}(t)\oplus\chi_{\lbrack
3,5)}(t)\oplus\chi_{\lbrack6,7)}(t)\oplus\chi_{\lbrack8,10)}(t)\oplus
\chi_{\lbrack11,12)}(t)\oplus...
\]
with the periodic point $\mu=1,$ $p\in\{5,10,15,...\}$ and $k_{1}%
\in\{2,4,6,...\}.$ The same is true for item b) in Theorem \ref{The75__}, page
\pageref{The75__} as we can see by observing the behavior of the periodic
point $\mu=0$ of the previous function for $p\in\{5,10,15,...\}$ and $k_{1}%
\in\{2,4,6,...\}.$ A generalization of Theorem \ref{The75} b) and Theorem
\ref{The75__} b) is required.
\end{remark}

\begin{remark}
Theorem \ref{The20_}, page \pageref{The20_} referring to the periodic point
$\mu\in Or(x)$ is continued by Theorem \ref{The43}, page \pageref{The43} and
Theorem \ref{The18} to follow. The statement (\ref{per435}%
)$_{page\;\pageref{per435}}$ of Theorem \ref{The18} suggests that Theorem
\ref{The20_} b) can be strengthened.
\end{remark}

\begin{remark}
\label{Rem21}Let the non constant signals $\widehat{x},x$ and we think if the
compatibility properties%
\[
\forall\mu\in\widehat{Or}(\widehat{x}),\forall\mu^{\prime}\in\widehat
{Or}(\widehat{x}),(\widehat{P}_{\mu}^{\widehat{x}}\neq\varnothing\text{ and
}\widehat{P}_{\mu^{\prime}}^{\widehat{x}}\neq\varnothing)\Longrightarrow
\widehat{P}_{\mu}^{\widehat{x}}\cap\widehat{P}_{\mu^{\prime}}^{\widehat{x}%
}\neq\varnothing,
\]%
\[
\forall\mu\in Or(x),\forall\mu^{\prime}\in Or(x),(P_{\mu}^{x}\neq
\varnothing\text{ and }P_{\mu^{\prime}}^{x}\neq\varnothing)\Longrightarrow
P_{\mu}^{x}\cap P_{\mu^{\prime}}^{x}\neq\varnothing
\]
hold, see Remark \ref{Rem23}, page \pageref{Rem23}. Proving the first one is
trivial, while the second one has no proof so far. Taking into account the
form of the sets of periods, the above statements give the suggestions that,
see Theorem \ref{The26}, page \pageref{The26}:

a) $\widehat{P}_{\mu}^{\widehat{x}}=\{p,2p,3p,...\},\widehat{P}_{\mu^{\prime}%
}^{\widehat{x}}=\{p^{\prime},2p^{\prime},3p^{\prime},...\}$ imply the
existence of $n_{1},n_{2}\geq1$ relatively prime such that $n_{1}%
p=n_{2}p^{\prime}$ and if we denote this value with $p^{\prime\prime},$ then
$\widehat{P}_{\mu}^{\widehat{x}}\cap\widehat{P}_{\mu^{\prime}}^{\widehat{x}%
}=\{p^{\prime\prime},2p^{\prime\prime},3p^{\prime\prime},...\};$

b) $P_{\mu}^{x}=\{T,2T,3T,...\},P_{\mu\prime}^{x}=\{T^{\prime},2T^{\prime
},3T^{\prime},...\}\Longrightarrow\exists n_{1}\geq1,\exists n_{2}\geq1$
relatively prime with $n_{1}T=n_{2}T^{\prime}$ and for $T^{\prime\prime}$
equal with the previous value we get $P_{\mu}^{x}\cap P_{\mu\prime}%
^{x}=\{T^{\prime\prime},2T^{\prime\prime},3T^{\prime\prime},...\}.$

The limit case consists in signals that have all their points periodic, the
periodic signals.
\end{remark}

\chapter{\label{Cha5}Periodic signals}

We give in Section 1 and Section 2 properties that are equivalent with the
periodicity of the signals, structured in two groups.

The purpose of Section 3 is that of showing that all the values of the orbit
of a periodic signal are accessible in an interval with the length of a period.

Section 4 proves the independence of periodicity on the choice of $t^{\prime
}=$initial time of $x$ and limit of periodicity of $x$ and gives the bounds of
$t^{\prime}$.

The property of constancy from Section 5 is interesting by itself and it is
also a useful result in the exposure. The discussion from Section 6 shows the
relation between stating the constancy of a signal and the corresponding
statement that refers to the periodicity of its points.

When the relation between $\widehat{x}$ and $x\,$\ is
\begin{align*}
x(t)  &  =\widehat{x}(-1)\cdot\chi_{(-\infty,t_{0})}(t)\oplus\widehat
{x}(0)\cdot\chi_{\lbrack t_{0},t_{0}+h)}(t)\oplus...\\
&  ...\oplus\widehat{x}(k)\cdot\chi_{\lbrack t_{0}+kh,t_{0}+(k+1)h)}%
(t)\oplus...
\end{align*}
we are interested to see how the periodicity of $\widehat{x}$ determines the
periodicity of $x$ and vice versa. This is made in Section 7.

The fact that the sums, the differences and the multiples of the periods are
periods is proved in Section 8.

Section 9 characterizes the form of $\widehat{P}^{\widehat{x}},P^{x},$ in
particular the existence of the prime period is proved.

Sections 10, 11 give necessary and sufficient conditions of periodicity,
stated in terms of support sets. These conditions are inspired by those of the
periodic points and use the fact that if all the values of a signal are
periodic with the same period, then the signal is periodic.

A special case of periodicity is presented in Section 12. In this case the
exact value of the prime period is known.

By forgetting some first values of the periodic signals $\widehat{x},x$ we get
signals with the same period. This is the topic of Section 13.

In Section 14 we put the problem of changing the order of some quantifiers in
stating the periodicity of the signals.

\section{The first group of periodicity properties}

\begin{remark}
These properties involve the periodicity and the eventual periodicity of all
the points $\mu\in\widehat{Or}(\widehat{x}),\mu\in Or(x).$ The properties
(\ref{per152}),...,(\ref{pre543}) are associated with the periodicity
properties (\ref{per144})$_{page\;\pageref{per144}}$,...,(\ref{pre536}%
)$_{page\;\pageref{pre536}}$ and the properties (\ref{pre563}%
),...,(\ref{pre568}) are associated with (\ref{per145}%
)$_{page\;\pageref{per145}}$,...,(\ref{pre562})$_{page\;\pageref{pre562}}$
from Theorem \ref{The49}, page \pageref{The49}. One should compare also these
properties with (\ref{per185})$_{page\;\pageref{per185}}$,...,(\ref{pre537}%
)$_{page\;\pageref{pre537}}$ and (\ref{per186})$_{page\;\pageref{per186}}%
$,...,(\ref{pre552})$_{page\;\pageref{pre552}}$ from Theorem \ref{The95}, page
\pageref{The95}.
\end{remark}

\begin{theorem}
\label{The23}The signals $\widehat{x}\in\widehat{S}^{(n)},x\in S^{(n)}$ are given.

a) The following statements are equivalent for any $p\geq1$:%
\begin{equation}
\forall\mu\in\widehat{Or}(\widehat{x}),\forall k\in\widehat{\mathbf{T}}_{\mu
}^{\widehat{x}},\{k+zp|z\in\mathbf{Z}\}\cap\mathbf{N}_{\_}\subset
\widehat{\mathbf{T}}_{\mu}^{\widehat{x}}, \label{per152}%
\end{equation}%
\begin{equation}
\left\{
\begin{array}
[c]{c}%
\forall\mu\in\widehat{Or}(\widehat{x}),\forall k^{\prime}\in\mathbf{N}%
_{\_},\forall k\in\widehat{\mathbf{T}}_{\mu}^{\widehat{x}}\cap\{k^{\prime
},k^{\prime}+1,k^{\prime}+2,...\},\\
\{k+zp|z\in\mathbf{Z}\}\cap\{k^{\prime},k^{\prime}+1,k^{\prime}+2,...\}\subset
\widehat{\mathbf{T}}_{\mu}^{\widehat{x}},
\end{array}
\right.  \label{pre539}%
\end{equation}%
\begin{equation}
\forall\mu\in\widehat{Or}(\widehat{x}),\forall k^{\prime\prime}\in
\mathbf{N},\forall k\in\widehat{\mathbf{T}}_{\mu}^{\widehat{\sigma}%
^{k^{\prime\prime}}(\widehat{x})},\{k+zp|z\in\mathbf{Z}\}\cap\mathbf{N}%
_{\_}\subset\widehat{\mathbf{T}}_{\mu}^{\widehat{\sigma}^{k^{\prime\prime}%
}(\widehat{x})}, \label{pre540}%
\end{equation}%
\begin{equation}
\left\{
\begin{array}
[c]{c}%
\forall\mu\in\widehat{Or}(\widehat{x}),\forall k\in\mathbf{N}_{\_},\widehat
{x}(k)=\mu\Longrightarrow\\
\Longrightarrow(\widehat{x}(k)=\widehat{x}(k+p)\text{ and }k-p\geq
-1\Longrightarrow\widehat{x}(k)=\widehat{x}(k-p)),
\end{array}
\right.  \label{pre541}%
\end{equation}%
\begin{equation}
\left\{
\begin{array}
[c]{c}%
\forall\mu\in\widehat{Or}(\widehat{x}),\forall k^{\prime}\in\mathbf{N}%
_{\_},\forall k\geq k^{\prime},\widehat{x}(k)=\mu\Longrightarrow\\
\Longrightarrow(\widehat{x}(k)=\widehat{x}(k+p)\text{ and }k-p\geq k^{\prime
}\Longrightarrow\widehat{x}(k)=\widehat{x}(k-p)),
\end{array}
\right.  \label{pre542}%
\end{equation}%
\begin{equation}
\left\{
\begin{array}
[c]{c}%
\forall\mu\in\widehat{Or}(\widehat{x}),\forall k^{\prime\prime}\in
\mathbf{N},\forall k\in\mathbf{N}_{\_},\widehat{\sigma}^{k^{\prime\prime}%
}(\widehat{x})(k)=\mu\Longrightarrow\\
\Longrightarrow(\widehat{\sigma}^{k^{\prime\prime}}(\widehat{x})(k)=\widehat
{\sigma}^{k^{\prime\prime}}(\widehat{x})(k+p)\text{ and }\\
\text{and }k-p\geq-1\Longrightarrow\widehat{\sigma}^{k^{\prime\prime}%
}(\widehat{x})(k)=\widehat{\sigma}^{k^{\prime\prime}}(\widehat{x}%
)(k-p))\text{.}%
\end{array}
\right.  \label{pre543}%
\end{equation}

b) The following statements are also equivalent for any $T>0$:%
\begin{equation}
\forall\mu\in Or(x),\exists t^{\prime}\in I^{x},\forall t\in\mathbf{T}_{\mu
}^{x}\cap\lbrack t^{\prime},\infty),\{t+zT|z\in\mathbf{Z}\}\cap\lbrack
t^{\prime},\infty)\subset\mathbf{T}_{\mu}^{x}, \label{pre563}%
\end{equation}%
\begin{equation}
\left\{
\begin{array}
[c]{c}%
\forall\mu\in Or(x),\exists t^{\prime}\in I^{x},\\
\forall t_{1}^{\prime}\geq t^{\prime},\forall t\in\mathbf{T}_{\mu}^{x}%
\cap\lbrack t_{1}^{\prime},\infty),\{t+zT|z\in\mathbf{Z}\}\cap\lbrack
t_{1}^{\prime},\infty)\subset\mathbf{T}_{\mu}^{x},
\end{array}
\right.  \label{pre564}%
\end{equation}%
\begin{equation}
\left\{
\begin{array}
[c]{c}%
\forall\mu\in Or(x),\forall t^{\prime\prime}\in\mathbf{R},\exists t^{\prime
}\in I^{\sigma^{t^{\prime\prime}}(x)},\\
\forall t\in\mathbf{T}_{\mu}^{\sigma^{t^{\prime\prime}}(x)}\cap\lbrack
t^{\prime},\infty),\{t+zT|z\in\mathbf{Z}\}\cap\lbrack t^{\prime}%
,\infty)\subset\mathbf{T}_{\mu}^{\sigma^{t^{\prime\prime}}(x)},
\end{array}
\right.  \label{pre565}%
\end{equation}%
\begin{equation}
\left\{
\begin{array}
[c]{c}%
\forall\mu\in Or(x),\exists t^{\prime}\in I^{x},\forall t\geq t^{\prime},\\
x(t)=\mu\Longrightarrow(x(t)=x(t+T)\text{ and }t-T\geq t^{\prime
}\Longrightarrow x(t)=x(t-T)),
\end{array}
\right.  \label{pre566}%
\end{equation}%
\begin{equation}
\left\{
\begin{array}
[c]{c}%
\forall\mu\in Or(x),\exists t^{\prime}\in I^{x},\forall t_{1}^{\prime}\geq
t^{\prime},\forall t\geq t_{1}^{\prime},x(t)=\mu\Longrightarrow\\
\Longrightarrow(x(t)=x(t+T)\text{ and }t-T\geq t_{1}^{\prime}\Longrightarrow
x(t)=x(t-T)),
\end{array}
\right.  \label{pre567}%
\end{equation}%
\begin{equation}
\left\{
\begin{array}
[c]{c}%
\forall\mu\in Or(x),\forall t^{\prime\prime}\in\mathbf{R},\exists t^{\prime
}\in I^{\sigma^{t^{\prime\prime}}(x)},\\
\forall t\geq t^{\prime},\sigma^{t^{\prime\prime}}(x)(t)=\mu\Longrightarrow
(\sigma^{t^{\prime\prime}}(x)(t)=\sigma^{t^{\prime\prime}}(x)(t+T)\text{
and}\\
\text{and }t-T\geq t^{\prime}\Longrightarrow\sigma^{t^{\prime\prime}%
}(x)(t)=\sigma^{t^{\prime\prime}}(x)(t-T))\text{.}%
\end{array}
\right.  \label{pre568}%
\end{equation}

\end{theorem}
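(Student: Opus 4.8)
The plan is to prove Theorem \ref{The23} in exactly the same spirit as the analogous results for eventually constant signals (Theorem \ref{The97}) and eventually periodic signals (Theorem \ref{The28}), namely by establishing the two chains of implications

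\[
(\ref{per152})\Longrightarrow(\ref{pre539})\Longrightarrow(\ref{pre540})\Longrightarrow(\ref{pre541})\Longrightarrow(\ref{pre542})\Longrightarrow(\ref{pre543})\Longrightarrow(\ref{per152})
\]

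in part a) and

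\[
(\ref{pre563})\Longrightarrow(\ref{pre564})\Longrightarrow(\ref{pre565})\Longrightarrow(\ref{pre566})\Longrightarrow(\ref{pre567})\Longrightarrow(\ref{pre568})\Longrightarrow(\ref{pre563})
\]

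in part b). Each individual statement here is the ``$\forall\mu\in\widehat{Or}(\widehat{x})$'' (resp. ``$\forall\mu\in Or(x)$'') quantified version of a statement appearing in Theorem \ref{The49}. So the simplest route is: fix an arbitrary $\mu\in\widehat{Or}(\widehat{x})$ (resp. $\mu\in Or(x)$), observe that the statement under consideration, read for that single $\mu$, is precisely one of (\ref{per144})--(\ref{pre536}) (resp. (\ref{per145})--(\ref{pre562})), and invoke the corresponding implication from Theorem \ref{The49}. Since the implications of Theorem \ref{The49} hold ``for any $p\geq1$ and $\mu\in\widehat{Or}(\widehat{x})$'' (resp. ``for any $T>0$ and $\mu\in Or(x)$''), quantifying universally over $\mu$ on both sides of each implication preserves it verbatim. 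Thus all the forward implications in both chains are immediate from Theorem \ref{The49}, and likewise the closing implication $(\ref{pre543})\Longrightarrow(\ref{per152})$ and $(\ref{pre568})\Longrightarrow(\ref{pre563})$ follow by applying, for each fixed $\mu$, the implication $(\ref{pre536})\Longrightarrow(\ref{per144})$ (resp. $(\ref{pre562})\Longrightarrow(\ref{per145})$) of Theorem \ref{The49}.

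First I would handle part a). For $(\ref{per152})\Longrightarrow(\ref{pre539})$ I fix $p$, $\mu\in\widehat{Or}(\widehat{x})$, $k^{\prime}\in\mathbf{N}_{\_}$, take $k\in\widehat{\mathbf{T}}_{\mu}^{\widehat{x}}\cap\{k^{\prime},k^{\prime}+1,\dots\}$ and $z\in\mathbf{Z}$ with $k+zp\geq k^{\prime}\geq-1$, and apply (\ref{per152}) to get $k+zp\in\widehat{\mathbf{T}}_{\mu}^{\widehat{x}}$; this is literally the argument in the proof of Theorem \ref{The95} for $(\ref{per185})\Longrightarrow(\ref{pre506})$. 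The intermediate steps $(\ref{pre539})\Longrightarrow(\ref{pre540})\Longrightarrow(\ref{pre541})\Longrightarrow(\ref{pre542})\Longrightarrow(\ref{pre543})$ are copied, $\mu$-by-$\mu$, from the corresponding steps of Theorem \ref{The95} (with the shift bookkeeping $k^{\prime\prime}=k^{\prime}+1$, $k^{\prime}=k^{\prime\prime}-1$ exactly as there). For $(\ref{pre543})\Longrightarrow(\ref{per152})$ I fix $\mu\in\widehat{Or}(\widehat{x})$, use (\ref{pre543}) with $k^{\prime\prime}=0$ (so $\widehat{\sigma}^{0}(\widehat{x})=\widehat{x}$) to obtain the single-period statement (\ref{pre49}) of Theorem \ref{The49}, and then iterate it upward and downward, splitting into the cases $z>0$, $z=0$, $z<0$, exactly as in the proof of $(\ref{pre536})\Longrightarrow(\ref{per144})$ in Theorem \ref{The49}. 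This yields $k+zp\in\widehat{\mathbf{T}}_{\mu}^{\widehat{x}}$ for all admissible $z$, which is (\ref{per152}).

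For part b) the structure is identical, with $\widehat{\mathbf{T}}_{\mu}^{\widehat{x}}$ replaced by $\mathbf{T}_{\mu}^{x}$, discrete shifts by $\sigma^{t^{\prime\prime}}$, and Lemma \ref{Lem30} used to push limits of periodicity forward whenever I need $t^{\prime}\geq t_1^{\prime}$; the forward implications $(\ref{pre563})\Longrightarrow\dots\Longrightarrow(\ref{pre568})$ come from the corresponding implications in Theorem \ref{The95} b) applied for each fixed $\mu\in Or(x)$, and $(\ref{pre568})\Longrightarrow(\ref{pre563})$ comes from $(\ref{pre562})\Longrightarrow(\ref{per145})$ of Theorem \ref{The49} applied for each $\mu$. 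I do not expect any genuine obstacle: the only thing that requires care is the real-time step in which one has to choose $t^{\prime\prime}$ small enough (or inside a suitable interval $(t^{\prime\prime}-\varepsilon,t^{\prime\prime})$) so that $\sigma^{t^{\prime\prime}}(x)=x$ or $\sigma^{t^{\prime\prime}}(x)$ agrees with $x$ on $[t^{\prime},\infty)$ while $t^{\prime}\in I^{\sigma^{t^{\prime\prime}}(x)}$, using right continuity of $x$ (property (\ref{per741})) and Theorem \ref{Pro2} a); but this is exactly the bookkeeping already carried out in the proofs of Theorems \ref{The95} and \ref{The49}, so the present proof is essentially a transcription. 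I would explicitly note this and refer the reader to those proofs rather than reproducing every case.
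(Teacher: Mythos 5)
Your proposal is correct and matches the paper's proof: the paper likewise disposes of the forward chains $(\ref{per152})\Rightarrow\dots\Rightarrow(\ref{pre543})$ and $(\ref{pre563})\Rightarrow\dots\Rightarrow(\ref{pre568})$ by citing the arguments of Theorem \ref{The95}, and closes each loop with the same case analysis on $z>0$, $z=0$, $z<0$ (taking $k''=0$, resp. $t''$ small enough that $\sigma^{t''}(x)=x$) that you invoke via Theorem \ref{The49}. The only cosmetic difference is that the paper writes out the closing iterations explicitly instead of citing $(\ref{pre536})\Rightarrow(\ref{per144})$ and $(\ref{pre562})\Rightarrow(\ref{per145})$, but the content is identical.
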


\begin{proof}
a) The proof of the implications%
\[
(\ref{per152})\Longrightarrow(\ref{pre539})\Longrightarrow(\ref{pre540}%
)\Longrightarrow(\ref{pre541})\Longrightarrow(\ref{pre542})\Longrightarrow
(\ref{pre543})
\]
follows from Theorem \ref{The95}.

(\ref{pre543})$\Longrightarrow$(\ref{per152}) Let $\mu\in\widehat{Or}%
(\widehat{x}),k\in\widehat{\mathbf{T}}_{\mu}^{\widehat{x}}$ and $z\in
\mathbf{Z}$ arbitrary such that $k+zp\geq-1.$ (\ref{pre543}) written for
$k^{\prime\prime}=0$ gives%
\begin{equation}
\left\{
\begin{array}
[c]{c}%
\forall k_{1}\in\mathbf{N}_{\_},\widehat{x}(k_{1})=\mu\Longrightarrow\\
\Longrightarrow(\widehat{x}(k_{1})=\widehat{x}(k_{1}+p)\text{ and }k_{1}%
-p\geq-1\Longrightarrow\widehat{x}(k_{1})=\widehat{x}(k_{1}-p)).
\end{array}
\right.  \label{pre989}%
\end{equation}

Case $z<0,$%
\[
\mu=\widehat{x}(k)\overset{(\ref{pre989})}{=}\widehat{x}(k-p)\overset
{(\ref{pre989})}{=}\widehat{x}(k-2p)\overset{(\ref{pre989})}{=}...\overset
{(\ref{pre989})}{=}\widehat{x}(k+zp);
\]

Case $z=0,$%
\[
\mu=\widehat{x}(k+zp);
\]

Case $z>0,$%
\[
\mu=\widehat{x}(k)\overset{(\ref{pre989})}{=}\widehat{x}(k+p)\overset
{(\ref{pre989})}{=}\widehat{x}(k+2p)\overset{(\ref{pre989})}{=}...\overset
{(\ref{pre989})}{=}\widehat{x}(k+zp).
\]
We infer in all the three cases that $k+zp\in\widehat{\mathbf{T}}_{\mu
}^{\widehat{x}}.$

b) The proof of the following implications%
\[
(\ref{pre563})\Longrightarrow(\ref{pre564})\Longrightarrow(\ref{pre565}%
)\Longrightarrow(\ref{pre566})\Longrightarrow(\ref{pre567})\Longrightarrow
(\ref{pre568})
\]
follows from Theorem \ref{The95}.

(\ref{pre568})$\Longrightarrow$(\ref{pre563}) Let $\mu\in Or(x).$
(\ref{pre568}) written for $t^{\prime\prime}$ sufficiently small in order that
$\sigma^{t^{\prime\prime}}(x)=x$ gives the existence of $t^{\prime}\in I^{x}$
with%
\begin{equation}
\left\{
\begin{array}
[c]{c}%
\forall t_{1}\geq t^{\prime},x(t_{1})=\mu\Longrightarrow\\
\Longrightarrow(x(t_{1})=x(t_{1}+T)\text{ and }t_{1}-T\geq t^{\prime
}\Longrightarrow x(t_{1})=x(t_{1}-T)).
\end{array}
\right.  \label{pre990}%
\end{equation}
From Lemma \ref{Lem36}, page \pageref{Lem36} we have $\mathbf{T}_{\mu}^{x}%
\cap\lbrack t^{\prime},\infty)\neq\varnothing.$ We take $t\in\mathbf{T}_{\mu
}^{x}\cap\lbrack t^{\prime},\infty)$ and $z\in\mathbf{Z}$ arbitrary with
$t+zT\geq t^{\prime}$ and we have the following possibilities.

Case $z<0,$%
\[
\mu=x(t)\overset{(\ref{pre990})}{=}x(t-T)\overset{(\ref{pre990})}%
{=}x(t-2T)\overset{(\ref{pre990})}{=}...\overset{(\ref{pre990})}{=}x(t+zT);
\]

Case $z=0,$%
\[
\mu=x(t+zT);
\]

Case $z>0,$%
\[
\mu=x(t)\overset{(\ref{pre990})}{=}x(t+T)\overset{(\ref{pre990})}%
{=}x(t+2T)\overset{(\ref{pre990})}{=}...\overset{(\ref{pre990})}{=}x(t+zT).
\]
We have obtained in all these cases that $t+zT\in\mathbf{T}_{\mu}^{x}.$ We
infer the truth of (\ref{pre563}).
\end{proof}

\section{The second group of periodicity properties}

\begin{remark}
The properties (\ref{per50}),...,(\ref{pre545}) and (\ref{pre569}%
),...,(\ref{pre571}) from this group have occurred for the first time as
(\ref{per75})$_{page\;\pageref{per75}}$,...,(\ref{pre509}%
)$_{page\;\pageref{pre509}}$ and (\ref{per76})$_{page\;\pageref{per76}}$,...,
(\ref{pre513})$_{page\;\pageref{pre513}}$ in Theorem \ref{The96}, page
\pageref{The96}. These properties refer to the signals themselves, and not to
their values.
\end{remark}

\begin{theorem}
\label{The101}The signals $\widehat{x}\in\widehat{S}^{(n)},x\in S^{(n)}$ are given.

a) The following properties are equivalent, for any $p\geq1$, with any of
(\ref{per152}),..., (\ref{pre543})$:$%
\begin{equation}
\forall k\in\mathbf{N}_{\_},\widehat{x}(k)=\widehat{x}(k+p), \label{per50}%
\end{equation}%
\begin{equation}
\forall k^{\prime}\in\mathbf{N}_{\_},\forall k\geq k^{\prime},\widehat
{x}(k)=\widehat{x}(k+p), \label{pre544}%
\end{equation}%
\begin{equation}
\forall k^{\prime\prime}\in\mathbf{N},\forall k\in\mathbf{N}_{\_}%
,\widehat{\sigma}^{k^{\prime\prime}}(\widehat{x})(k)=\widehat{\sigma
}^{k^{\prime\prime}}(\widehat{x})(k+p). \label{pre545}%
\end{equation}

b) For any $T>0,$ the following properties are equivalent with any of
(\ref{pre563}),..., (\ref{pre568}):%
\begin{equation}
\exists t^{\prime}\in I^{x}\text{,}\forall t\geq t^{\prime},x(t)=x(t+T),
\label{pre569}%
\end{equation}%
\begin{equation}
\exists t^{\prime}\in I^{x},\forall t_{1}^{\prime}\geq t^{\prime},\forall
t\geq t_{1}^{\prime},x(t)=x(t+T), \label{pre570}%
\end{equation}%
\begin{equation}
\forall t^{\prime\prime}\in\mathbf{R},\exists t^{\prime}\in I^{\sigma
^{t^{\prime\prime}}(x)},\forall t\geq t^{\prime},\sigma^{t^{\prime\prime}%
}(x)(t)=\sigma^{t^{\prime\prime}}(x)(t+T). \label{pre571}%
\end{equation}

\end{theorem}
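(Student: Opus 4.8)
\textbf{Proof proposal for Theorem \ref{The101}.}

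The plan is to prove part a) by treating the two directions of equivalence with the first group of properties separately, and part b) analogously. For part a), the key observation is that the statements (\ref{per50}), (\ref{pre544}), (\ref{pre545}) speak about the signal $\widehat{x}$ itself (right time shifts only), while (\ref{per152}),\dots,(\ref{pre543}) speak about all values $\mu\in\widehat{Or}(\widehat{x})$ (with both left and right shifts). First I would show that the constancy-flavoured implications already done in Theorem \ref{The96}, page \pageref{The96}, namely $(\ref{per50})\Longrightarrow(\ref{pre544})\Longrightarrow(\ref{pre545})$, carry over verbatim since they do not mention $\mu$ at all. Then I would close the loop by proving $(\ref{pre545})\Longrightarrow(\ref{per50})$ exactly as in the proof of Theorem \ref{The96} a): write (\ref{pre545}) for $p$ fixed and $k''=0$, obtaining $\forall k\in\mathbf{N}_{\_},\widehat{x}(k)=\widehat{x}(k+p)$, which is (\ref{per50}). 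So the three signal-statements are mutually equivalent for each fixed $p\geq1$.

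The heart of the matter is connecting this cluster to the value-cluster. I would prove $(\ref{per50})\Longrightarrow(\ref{per152})$ and $(\ref{per152})\Longrightarrow(\ref{per50})$. For the forward direction: assume $\forall k\in\mathbf{N}_{\_},\widehat{x}(k)=\widehat{x}(k+p)$; take any $\mu\in\widehat{Or}(\widehat{x})$, any $k\in\widehat{\mathbf{T}}_{\mu}^{\widehat{x}}$ and any $z\in\mathbf{Z}$ with $k+zp\geq-1$; iterating the hypothesis (upward if $z>0$, downward if $z<0$, trivially if $z=0$, exactly the three-case split used repeatedly in the excerpt, e.g. in the proof of (\ref{pre543})$\Longrightarrow$(\ref{per152})) gives $\widehat{x}(k+zp)=\mu$, so $k+zp\in\widehat{\mathbf{T}}_{\mu}^{\widehat{x}}$. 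For the reverse direction: assume (\ref{per152}); given any $k\in\mathbf{N}_{\_}$, put $\mu=\widehat{x}(k)$, so $k\in\widehat{\mathbf{T}}_{\mu}^{\widehat{x}}$ and $\mu\in\widehat{Or}(\widehat{x})$; since $k+1\cdot p\geq-1$, (\ref{per152}) gives $k+p\in\widehat{\mathbf{T}}_{\mu}^{\widehat{x}}$, i.e. $\widehat{x}(k+p)=\mu=\widehat{x}(k)$, which is (\ref{per50}). Combining with Theorem \ref{The23} a) (which already gives the equivalence of (\ref{per152}),\dots,(\ref{pre543}) for fixed $p$) closes part a).

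Part b) is structurally identical. The implications $(\ref{pre569})\Longrightarrow(\ref{pre570})\Longrightarrow(\ref{pre571})$ and $(\ref{pre571})\Longrightarrow(\ref{pre569})$ are exactly the content of Theorem \ref{The96} b) and require no change. To link with the value-cluster I would prove $(\ref{pre569})\Longrightarrow(\ref{pre563})$ and $(\ref{pre563})\Longrightarrow(\ref{pre569})$. The first is handled like $(\ref{pre568})\Longrightarrow(\ref{pre563})$ in the proof of Theorem \ref{The23}: from $t'\in I^{x}$ with $\forall t\geq t',x(t)=x(t+T)$, for any $\mu\in Or(x)$ one has $\mathbf{T}_{\mu}^{x}\cap[t',\infty)\neq\varnothing$ by Lemma \ref{Lem36}, page \pageref{Lem36}, and the three-case iteration gives the periodicity of $\mu$ with limit of periodicity $t'$ (the \emph{same} $t'$ works for every $\mu$ because the hypothesis does not depend on $\mu$). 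The reverse: assume (\ref{pre563}); since $Or(x)$ is finite, say $Or(x)=\{\mu^{1},\dots,\mu^{s}\}$, pick for each $i$ an initial-time-and-limit-of-periodicity $t^{i}\in I^{x}$, let $t'=\max\{t^{1},\dots,t^{s}\}$ (still in $I^{x}$, being one of the $t^{i}$), use Lemma \ref{Lem30}, page \pageref{Lem30} to shift all these properties to the common $t'$, and then for arbitrary $t\geq t'$ apply the property to $\mu=x(t)$ to get $x(t+T)=x(t)$, which is (\ref{pre569}). Then invoke Theorem \ref{The23} b) for the equivalence among (\ref{pre563}),\dots,(\ref{pre568}).

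The main obstacle, such as it is, is the reverse direction in part b): one must be careful that the existential $\exists t'\in I^{x}$ can be made uniform over the finitely many orbit values. This is the only place where finiteness of $Or(x)$ and the period-independence of the limit of periodicity (Theorem \ref{The141}, page \pageref{The141}) and Lemma \ref{Lem30} genuinely enter; everything else is the standard three-case shift argument already rehearsed several times in the preceding chapters. In particular, no analogue of the unresolved ``order of quantifiers'' difficulty arises here, because $k''$ and $t''$ in (\ref{pre545}), (\ref{pre571}) and $t_1'$ in (\ref{pre570}) are all universally quantified, so there is no obstruction of the kind discussed in Section \ref{Sec1}.
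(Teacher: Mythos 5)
Your proposal is correct and follows essentially the same route as the paper: the signal-level implications are imported from Theorem \ref{The96}, and the bridge to the value-level cluster is made by the same three-case shift argument, with the same $\max\{t^{1},\dots,t^{s}\}$ uniformization over the finite orbit via Lemma \ref{Lem30} in part b). One small caveat: the implication (\ref{pre571})$\Longrightarrow$(\ref{pre569}) is not literally contained in Theorem \ref{The96} b) (which closes its cycle only through constancy, relying on the quantifier $\forall T$), but it follows in one line by taking $t^{\prime\prime}$ small enough that $\sigma^{t^{\prime\prime}}(x)=x$ --- the same reduction the paper uses when proving (\ref{pre571})$\Longrightarrow$(\ref{pre563}).
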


\begin{proof}
a) The proof of (\ref{per50})$\Longrightarrow$(\ref{pre544})$\Longrightarrow
$(\ref{pre545}) follows from Theorem \ref{The96}.

(\ref{per152})$\Longrightarrow$(\ref{per50}) Let $k\in\mathbf{N}_{\_}$
arbitrary, fixed and we choose $\mu\in\widehat{Or}(\widehat{x})$ with the
property $\widehat{x}(k)=\mu.$ We infer%
\[
k+p\in\{k+zp|z\in\mathbf{Z}\}\cap\mathbf{N}_{\_}\overset{(\ref{per152}%
)}{\subset}\widehat{\mathbf{T}}_{\mu}^{\widehat{x}},
\]
thus $\widehat{x}(k+p)=\mu=\widehat{x}(k).$

(\ref{pre545})$\Longrightarrow$(\ref{per152}) Let $\mu\in\widehat{Or}%
(\widehat{x}),k\in\widehat{\mathbf{T}}_{\mu}^{\widehat{x}}$ and $z\in
\mathbf{Z}$ arbitrary with $k+zp\geq-1.$ We apply (\ref{pre545}) written for
$k^{\prime\prime}=0,$%
\begin{equation}
\forall k_{1}\in\mathbf{N}_{\_},\widehat{x}(k_{1})=\widehat{x}(k_{1}+p)
\label{pre994}%
\end{equation}
and we have the following cases:

Case $z>0,$%
\[
\mu=\widehat{x}(k)\overset{(\ref{pre994})}{=}\widehat{x}(k+p)\overset
{(\ref{pre994})}{=}\widehat{x}(k+2p)\overset{(\ref{pre994})}{=}%
\]%
\[
\overset{(\ref{pre994})}{=}...\overset{(\ref{pre994})}{=}\widehat{x}(k+zp);
\]

Case $z=0,$%
\[
\mu=\widehat{x}(k)=\widehat{x}(k+zp);
\]

Case $z<0,$%
\[
\widehat{x}(k+zp)\overset{(\ref{pre994})}{=}\widehat{x}(k+(z+1)p)\overset
{(\ref{pre994})}{=}%
\]%
\[
\overset{(\ref{pre994})}{=}\widehat{x}(k+(z+2)p)\overset{(\ref{pre994})}%
{=}...\overset{(\ref{pre994})}{=}\widehat{x}(k)=\mu.
\]
In all these cases we have obtained that $k+zp\in\widehat{\mathbf{T}}_{\mu
}^{\widehat{x}}.$

b) The proof of the implications (\ref{pre569})$\Longrightarrow$%
(\ref{pre570})$\Longrightarrow$(\ref{pre571}) follows from Theorem \ref{The96}.

(\ref{pre563})$\Longrightarrow$(\ref{pre569}) We suppose that $Or(x)=\{\mu
^{1},...,\mu^{s}\}$ and from (\ref{pre563}) we have the existence $\forall
i\in\{1,...,s\}$ of $t_{i}^{\prime}\in I^{x}$ with%
\begin{equation}
\forall t_{1}\in\mathbf{T}_{\mu^{i}}^{x}\cap\lbrack t_{i}^{\prime}%
,\infty),\{t_{1}+zT|z\in\mathbf{Z}\}\cap\lbrack t_{i}^{\prime},\infty
)\subset\mathbf{T}_{\mu^{i}}^{x} \label{pre876}%
\end{equation}
fulfilled. With the notation $t^{\prime}=\max\{t_{1}^{\prime},...,t_{s}%
^{\prime}\},$ we get the truth of $t^{\prime}\in I^{x},$%
\begin{equation}
\forall t\in\mathbf{T}_{\mu^{i}}^{x}\cap\lbrack t^{\prime},\infty
),\{t+zT|z\in\mathbf{Z}\}\cap\lbrack t^{\prime},\infty)\subset\mathbf{T}%
_{\mu^{i}}^{x} \label{pre878}%
\end{equation}
for any $i\in\{1,...,s\},$ see Lemma \ref{Lem30}, page \pageref{Lem30}$.$ Let
now $t\geq t^{\prime}$ arbitrary. Some $i\in\{1,...,s\}$ exists such that
$x(t)=\mu^{i},$ for which we can write%
\[
t+T\in\{t+zT|z\in\mathbf{Z}\}\cap\lbrack t^{\prime},\infty)\overset
{(\ref{pre878})}{\subset}\mathbf{T}_{\mu^{i}}^{x},
\]
in other words $x(t+T)=\mu^{i}=x(t).$

(\ref{pre571})$\Longrightarrow$(\ref{pre563}) We take in (\ref{pre571})
$t^{\prime\prime}\in\mathbf{R}$ sufficiently small so that $\sigma
^{t^{\prime\prime}}(x)=x$ and the existence of $t^{\prime}\in I^{x}$ results
with%
\begin{equation}
\forall t_{1}\geq t^{\prime},x(t_{1})=x(t_{1}+T). \label{pre883}%
\end{equation}
Let $\mu\in Or(x)$ arbitrary. We have from Lemma \ref{Lem36}, page
\pageref{Lem36} that $\mathbf{T}_{\mu}^{x}\cap\lbrack t^{\prime},\infty
)\neq\varnothing$ and we take $t\in\mathbf{T}_{\mu}^{x}\cap\lbrack t^{\prime
},\infty),z\in\mathbf{Z}$ arbitrary such that $t+zT\geq t^{\prime}.$ The
following possibilities exist:

Case $z>0,$%
\[
\mu=x(t)\overset{(\ref{pre883})}{=}x(t+T)\overset{(\ref{pre883})}%
{=}x(t+2T)\overset{(\ref{pre883})}{=}...\overset{(\ref{pre883})}{=}x(t+zT);
\]

Case $z=0,$%
\[
\mu=x(t)=x(t+zT);
\]

Case $z<0,$%
\[
x(t+zT)\overset{(\ref{pre883})}{=}x(t+(z+1)T)\overset{(\ref{pre883})}%
{=}x(t+(z+2)T)\overset{(\ref{pre883})}{=}...\overset{(\ref{pre883})}%
{=}x(t)=\mu.
\]
In all these cases, the satisfaction of $t+zT\in\mathbf{T}_{\mu}^{x}$ is proved.
\end{proof}

\begin{remark}
All the points of the orbit of a periodic signal are periodic and they have a
common period $p,T$ and vice versa, if all the points of the orbit of a signal
are periodic and have a common period $p,T$, then the signal is periodic:
\[
(\ref{per152})_{page\;\pageref{per152}}\Longleftrightarrow\forall\mu
\in\widehat{Or}(\widehat{x}),(\ref{per144})_{page\;\pageref{per144}},
\]%
\[
(\ref{pre563})_{page\;\pageref{pre563}}\Longleftrightarrow\forall\mu\in
Or(x),(\ref{per145})_{page\;\pageref{per145}}%
\]
hold.
\end{remark}

\begin{remark}
(\ref{per152}),...,(\ref{pre543}) and (\ref{pre563}),...,(\ref{pre568}) refer
to left-and-right time shifts, (\ref{per50}),...,(\ref{pre545}) and
(\ref{pre569}),...,(\ref{pre571}) refer to right time shifts only.
\end{remark}

\begin{theorem}
\label{The130}Let the periodic signals $\widehat{x}\in\widehat{S}^{(n)},x\in
S^{(n)}.$ We have $\widehat{\omega}(\widehat{x})=\widehat{Or}(\widehat
{x}),\omega(x)=Or(x).$
\end{theorem}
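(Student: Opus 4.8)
The claim is that for a periodic signal we have $\widehat{\omega}(\widehat{x})=\widehat{Or}(\widehat{x})$ and $\omega(x)=Or(x)$. Since $\widehat{\omega}(\widehat{x})\subset\widehat{Or}(\widehat{x})$ and $\omega(x)\subset Or(x)$ always hold (this is Theorem \ref{The12} c), page \pageref{The12}), the only thing to prove is the reverse inclusions $\widehat{Or}(\widehat{x})\subset\widehat{\omega}(\widehat{x})$ and $Or(x)\subset\omega(x)$. The plan is to show that every point of the orbit has an infinite (resp.\ superiorly unbounded) support set, using Theorem \ref{The12} a), page \pageref{The12}, which characterizes the omega limit set precisely as the set of $\mu$ for which $\widehat{\mathbf{T}}_{\mu}^{\widehat{x}}$ is infinite (resp.\ $\mathbf{T}_{\mu}^{x}$ is unbounded from above).

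For the discrete time part, let $\mu\in\widehat{Or}(\widehat{x})$ be arbitrary, so $\widehat{\mathbf{T}}_{\mu}^{\widehat{x}}\neq\varnothing$ and some $k_{0}\in\mathbf{N}_{\_}$ satisfies $\widehat{x}(k_{0})=\mu$. By hypothesis $\widehat{x}$ is periodic with some period $p\geq 1$; using Definition \ref{Def18}, page \pageref{Def18}, this means $\forall k\in\mathbf{N}_{\_},\widehat{x}(k)=\widehat{x}(k+p)$. Iterating forward from $k_{0}$ gives $\widehat{x}(k_{0})=\widehat{x}(k_{0}+p)=\widehat{x}(k_{0}+2p)=\ldots=\mu$, so $\{k_{0},k_{0}+p,k_{0}+2p,\ldots\}\subset\widehat{\mathbf{T}}_{\mu}^{\widehat{x}}$, which shows $\widehat{\mathbf{T}}_{\mu}^{\widehat{x}}$ is infinite. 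By Theorem \ref{The12} a) this yields $\mu\in\widehat{\omega}(\widehat{x})$. The real time argument is parallel: for $\mu\in Or(x)$ there is $t_{0}\in\mathbf{R}$ with $x(t_{0})=\mu$; periodicity of $x$ means there is $t^{\prime}\in I^{x}$ with $\forall t\geq t^{\prime},x(t)=x(t+T)$ for some $T>0$ (Definition \ref{Def18}). Since $t^{\prime}$ is an initial time, for any $t<t^{\prime}$ we have $x(t)=x(-\infty+0)$, and we may replace $t_{0}$ by a value $\geq t^{\prime}$ accessing $\mu$ — either $t_{0}\geq t^{\prime}$ already, or $\mu=x(-\infty+0)$ in which case any $t\geq t^{\prime}$ works since $x$ is right continuous and eventually-periodic behavior starting at $t^{\prime}$ forces $x(t^{\prime})$ into the periodic pattern; more cleanly, one invokes Theorem \ref{The140}, page \pageref{The140} (or directly iterates $x(t_{0})=x(t_{0}+T)=x(t_{0}+2T)=\ldots$ once $t_{0}\geq t^{\prime}$) to conclude $\{t_{0},t_{0}+T,t_{0}+2T,\ldots\}\subset\mathbf{T}_{\mu}^{x}$, hence $\mathbf{T}_{\mu}^{x}$ is unbounded above and $\mu\in\omega(x)$ by Theorem \ref{The12} a).

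The only mild subtlety — and the step I would be most careful about — is the real time case when the accessing time $t_{0}$ might lie below $t^{\prime}$: one must argue that $\mu$ is still accessed at some time $\geq t^{\prime}$. If $t_{0}<t^{\prime}$ then $\mu=x(t_{0})=x(-\infty+0)$ by the definition of initial time, and since $x$ is periodic the final-value/omega structure (Theorem \ref{The140}) guarantees $x(-\infty+0)\in\omega(x)$ directly, so $\mu\in\omega(x)$ without further work. Alternatively, in the discrete case one observes there is no such issue at all, since iterating forward from any $k_{0}\in\mathbf{N}_{\_}$ stays in $\mathbf{N}_{\_}$. Either way the argument closes, and in fact the whole statement can be packaged as an immediate corollary: the remarks following Definition \ref{Def18} already note that periodicity of $\widehat{x},x$ implies $\widehat{Or}(\widehat{x})=\widehat{\omega}(\widehat{x})$, $Or(x)=\omega(x)$, so this Theorem merely records that observation with its proof. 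I would present the forward-iteration argument as the main line and cite Theorem \ref{The12} a) and Theorem \ref{The140} for the two directions.
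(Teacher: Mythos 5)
Your proof is correct and follows essentially the same route as the paper's: both reduce the statement to showing that every orbit point has an infinite (resp. superiorly unbounded) support set by iterating the period forward, and then invoke the characterization of the omega limit set via support sets. The paper handles your "subtlety" (the accessing time possibly lying below $t^{\prime}$) by citing Lemma \ref{Lem36} to get $\mathbf{T}_{\mu}^{x}\cap[t^{\prime},\infty)\neq\varnothing$, which is the same observation you make via $\mu=x(-\infty+0)=x(t^{\prime})$.
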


\begin{proof}
In order to prove the real time statement, we suppose that $T>0,t^{\prime}\in
I^{x}$ exist such that%
\[
\forall\mu\in Or(x),\forall t\in\mathbf{T}_{\mu}^{x}\cap\lbrack t^{\prime
},\infty),\{t+zT|z\in\mathbf{Z}\}\cap\lbrack t^{\prime},\infty)\subset
\mathbf{T}_{\mu}^{x}%
\]
is true and let $\mu\in Or(x)$ arbitrary. The fact that $\mathbf{T}_{\mu}^{x}$
is superiorly unbounded shows that $\mu\in\omega(x),$ wherefrom the conclusion
that $Or(x)\subset\omega(x).$ As far as the inclusion $\omega(x)\subset Or(x)$
is always true, we infer that $\omega(x)=Or(x).$
\end{proof}

\section{The accessibility of the orbit}

\begin{theorem}
a) If $\widehat{x}\in\widehat{S}^{(n)},$ then%
\begin{equation}
\widehat{P}^{\widehat{x}}\neq\varnothing\Longrightarrow\forall k^{\prime}%
\in\mathbf{N}_{\_},\widehat{Or}(\widehat{x})=\{\widehat{x}(k)|k\geq k^{\prime
}\}.
\end{equation}

b) For $x\in S^{(n)}$ we have%
\begin{equation}
P^{x}\neq\varnothing\Longrightarrow\forall t^{\prime}\in\mathbf{R}%
,Or(x)=\{x(t)|t\geq t^{\prime}\}. \label{p263}%
\end{equation}

\end{theorem}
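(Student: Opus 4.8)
The plan is to prove the two implications separately, each by proving the two inclusions between the orbit $\widehat{Or}(\widehat{x})$ (respectively $Or(x)$) and the "tail" set $\{\widehat{x}(k)\mid k\geq k^{\prime}\}$ (respectively $\{x(t)\mid t\geq t^{\prime}\}$). The hypothesis $\widehat{P}^{\widehat{x}}\neq\varnothing$ (respectively $P^{x}\neq\varnothing$) says exactly that $\widehat{x}$ (respectively $x$) is periodic in the sense of Definition \ref{Def28}, i.e.\ there is $p\geq1$ with $\forall k\in\mathbf{N}_{\_},\widehat{x}(k)=\widehat{x}(k+p)$ (the limit of periodicity is $-1$ for a periodic signal), respectively a $T>0$ with $\exists t^{\prime\prime}\in I^{x},\forall t\geq t^{\prime\prime},x(t)=x(t+T)$, and using Theorem \ref{The109} and Theorem \ref{The49} we may freely pass between these equivalent formulations.

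First I would handle the discrete time statement. Fix $k^{\prime}\in\mathbf{N}_{\_}$ arbitrary and $p\in\widehat{P}^{\widehat{x}}$. The inclusion $\{\widehat{x}(k)\mid k\geq k^{\prime}\}\subset\widehat{Or}(\widehat{x})$ is trivial by definition. For the reverse inclusion, take $\mu\in\widehat{Or}(\widehat{x})$, so $\widehat{x}(k_{0})=\mu$ for some $k_{0}\in\mathbf{N}_{\_}$; since $\widehat{x}(k)=\widehat{x}(k+p)$ for all $k\in\mathbf{N}_{\_}$, iterating gives $\mu=\widehat{x}(k_{0})=\widehat{x}(k_{0}+p)=\widehat{x}(k_{0}+2p)=\dots$, and choosing $z\in\mathbf{N}$ large enough that $k_{0}+zp\geq k^{\prime}$ exhibits $\mu$ as a value of $\widehat{x}$ at an index $\geq k^{\prime}$. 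Hence $\widehat{Or}(\widehat{x})=\{\widehat{x}(k)\mid k\geq k^{\prime}\}$. Alternatively one can invoke Theorem \ref{The140}(b), equation (\ref{p253}): since $\widehat{P}^{\widehat{x}}\neq\varnothing$ it gives $\widehat{\omega}(\widehat{x})=\{\widehat{x}(k)\mid k\geq k^{\prime}\}$ for every $k^{\prime}\in\widehat{L}^{\widehat{x}}$, and by Theorem \ref{The77} $\widehat{L}^{\widehat{x}}=\{k_{1}^{\prime},k_{1}^{\prime}+1,\dots\}$ with $k_{1}^{\prime}\geq-1$; combined with Theorem \ref{The130} ($\widehat{\omega}(\widehat{x})=\widehat{Or}(\widehat{x})$ for periodic $\widehat{x}$) this gives the claim for $k^{\prime}\in\widehat{L}^{\widehat{x}}$, and the periodicity relation $\widehat{x}(k)=\widehat{x}(k+p)$ holding for all $k\in\mathbf{N}_{\_}$ (limit of periodicity $-1$) propagates it to every $k^{\prime}\in\mathbf{N}_{\_}$.

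For the real time statement (\ref{p263}), fix $t^{\prime}\in\mathbf{R}$ arbitrary and $T\in P^{x}$. Again $\{x(t)\mid t\geq t^{\prime}\}\subset Or(x)$ is immediate. For the reverse inclusion, let $\mu\in Or(x)$, so $x(t_{0})=\mu$ for some $t_{0}\in\mathbf{R}$. Here the subtlety is that the periodicity relation $x(t)=x(t+T)$ is only guaranteed for $t\geq t^{\prime\prime}$ for some limit of periodicity $t^{\prime\prime}\in I^{x}$, not for all $t\in\mathbf{R}$, so I cannot simply translate $t_{0}$ backwards. The remedy is to translate \emph{forward}: pick $z\in\mathbf{N}$ with $t_{0}+zT\geq\max\{t^{\prime},t^{\prime\prime}\}$. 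Then, since $\mu=x(t_{0})$ and $x(s)=x(s+T)$ for all $s\geq t^{\prime\prime}$, I would first need $x(t_{0})=x(t_{0}+T)=\dots=x(t_{0}+zT)$; but this chain also requires $t_{0},t_{0}+T,\dots\geq t^{\prime\prime}$, which may fail for small $t_{0}$. The correct argument: use Theorem \ref{The140}(b), equation (\ref{p256}), which states $P^{x}\neq\varnothing\Longrightarrow\forall t^{\prime}\in L^{x},\ \omega(x)=\{x(t)\mid t\geq t^{\prime}\}$, together with Theorem \ref{The130} ($\omega(x)=Or(x)$ for periodic $x$). If $x$ is constant this is trivial ($Or(x)$ is a singleton, $I^{x}=\mathbf{R}$). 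If $x$ is not constant, Theorem \ref{The77}(b) gives $L^{x}=[t_{1}^{\prime},\infty)$ for some $t_{1}^{\prime}$; for $t^{\prime}\geq t_{1}^{\prime}$ the conclusion follows directly from (\ref{p256}), and for $t^{\prime}<t_{1}^{\prime}$ I observe that $Or(x)=\{x(t)\mid t\geq t_{1}^{\prime}\}\subset\{x(t)\mid t\geq t^{\prime}\}\subset Or(x)$, forcing equality. The main obstacle is precisely this last bookkeeping for arbitrary $t^{\prime}$ — one must be careful that enlarging the half-line only enlarges the value set, so the already-established equality at the canonical limit of periodicity forces equality everywhere — but it dissolves once one routes the argument through Theorems \ref{The140}, \ref{The77} and \ref{The130} rather than trying to manipulate individual time shifts by hand.
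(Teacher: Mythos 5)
Your proof is correct and follows essentially the same route as the paper's: the real-time case is handled exactly as in the text (Theorem \ref{The140} together with $\omega(x)=Or(x)$ for periodic signals gives the equality for $t^{\prime}\in L^{x}$, and the sandwich $Or(x)=\{x(t)|t\geq t^{\prime\prime}\}\subset\{x(t)|t\geq t^{\prime}\}\subset Or(x)$ extends it to all $t^{\prime}\in\mathbf{R}$), while your discrete-time half adds a harmless elementary variant by translating forward with multiples of $p$. Only trivial citation slips (Definition \ref{Def18} rather than \ref{Def28} for periodic signals, and (\ref{p253}) is part a) of Theorem \ref{The140}, not part b)) would need fixing.
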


\begin{proof}
In the case of the periodicity of $\widehat{x},x$ we have $\widehat{\omega
}(\widehat{x})=\widehat{Or}(\widehat{x}),\omega(x)=Or(x).$ These statements
follow from Theorem \ref{The140}, page \pageref{The140} where $\widehat
{L}^{\widehat{x}}=\underset{\mu\in\widehat{\omega}(\widehat{x})}{%
{\displaystyle\bigcap}
}\widehat{L}_{\mu}^{\widehat{x}}=\mathbf{N}_{\_}$ at a) and at b) notice that
$P^{x}\neq\varnothing,$ $L^{x}=\underset{\mu\in\omega(x)}{%
{\displaystyle\bigcap}
}L_{\mu}^{x}$ and
\begin{equation}
\forall t^{\prime}\in L^{x},Or(x)=\{x(t)|t\geq t^{\prime}\} \label{p305}%
\end{equation}
imply
\[
\forall t^{\prime}\in\mathbf{R},Or(x)=\{x(t)|t\geq t^{\prime}\}.
\]
Indeed, let $t^{\prime}\in\mathbf{R}$ arbitrary. If $t^{\prime}\in L^{x}$ then
(\ref{p263}) is true from (\ref{p305}) and if $t^{\prime}\in\mathbf{R}%
\setminus L^{x}$ then for any $t^{\prime\prime}\in L^{x}$ we have $t^{\prime
}<t^{\prime\prime}$ and we can write that
\[
Or(x)=\{x(t)|t\geq t^{\prime\prime}\}\subset\{x(t)|t\geq t^{\prime}\}\subset
Or(x).
\]

\end{proof}

\begin{theorem}
\label{The126}a) We suppose that $\widehat{x}$ is periodic, with the period
$p\geq1:$%
\begin{equation}
\forall k\in\mathbf{N}_{\_},\widehat{x}(k)=\widehat{x}(k+p).
\end{equation}
Then%
\begin{equation}
\forall k\in\mathbf{N}_{\_},\widehat{Or}(\widehat{x})=\{\widehat{x}%
(i)|i\in\{k,k+1,...,k+p-1\}\}.
\end{equation}

b) If $x$ is periodic with the period $T>0:$ $t^{\prime}\in I^{x}$ exists with%
\begin{equation}
\forall t\geq t^{\prime},x(t)=x(t+T),
\end{equation}
then%
\begin{equation}
\forall t\geq t^{\prime},Or(x)=\{x(\xi)|\xi\in\lbrack t,t+T)\}.
\end{equation}

\end{theorem}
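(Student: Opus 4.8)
The statement is the periodic analogue of Theorem \ref{The125} (the accessibility of the omega limit set for eventually periodic signals), and the plan is to reduce it to that Theorem together with Theorem \ref{The130}. In part a), the hypothesis $\forall k\in\mathbf{N}_{\_},\widehat{x}(k)=\widehat{x}(k+p)$ is exactly eventual periodicity of $\widehat{x}$ with limit of periodicity $k^{\prime}=-1$ (Definition \ref{Def28}), so Theorem \ref{The125} a) applies and gives $\forall k\geq-1,\widehat{\omega}(\widehat{x})=\{\widehat{x}(i)|i\in\{k,k+1,...,k+p-1\}\}$, i.e. the asserted equality holds for all $k\in\mathbf{N}_{\_}$. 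But Theorem \ref{The130} states that for a periodic signal $\widehat{\omega}(\widehat{x})=\widehat{Or}(\widehat{x})$, so $\widehat{\omega}(\widehat{x})$ can be replaced by $\widehat{Or}(\widehat{x})$ in the conclusion. This yields $\forall k\in\mathbf{N}_{\_},\widehat{Or}(\widehat{x})=\{\widehat{x}(i)|i\in\{k,k+1,...,k+p-1\}\}$, which is the claim.

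For part b), the argument is the same in the real time setting. The hypothesis is that $t^{\prime}\in I^{x}$ exists with $\forall t\geq t^{\prime},x(t)=x(t+T)$, which is precisely periodicity of $x$ with period $T$ (Definition \ref{Def18}); in particular $x$ is eventually periodic with the period $T$ and the limit of periodicity $t^{\prime}$, so Theorem \ref{The125} b) gives $\forall t\geq t^{\prime},\omega(x)=\{x(\xi)|\xi\in[t,t+T)\}$. Theorem \ref{The130} says $\omega(x)=Or(x)$ for a periodic signal, so we may substitute $Or(x)$ for $\omega(x)$, obtaining $\forall t\geq t^{\prime},Or(x)=\{x(\xi)|\xi\in[t,t+T)\}$, which is exactly the stated conclusion. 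Since the limit of periodicity $t^{\prime}$ is precisely the $t^{\prime}$ appearing in the statement, there is nothing further to adjust.

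I do not expect a genuine obstacle here: both halves are immediate consequences of Theorem \ref{The125} and Theorem \ref{The130}, which are already established earlier in the excerpt. The only point to be careful about is bookkeeping — checking that the hypothesis of the current theorem really does supply the hypotheses of Theorem \ref{The125} (namely that $\widehat{x},x$ are eventually periodic with the stated period and limit of periodicity), and that the quantifier ranges match up ($k\in\mathbf{N}_{\_}$ versus $k\geq-1$, which coincide; and $t\geq t^{\prime}$ in both). Given that, the proof is a two-line citation in each part.
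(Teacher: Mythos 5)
Your proof is correct and follows exactly the route the paper takes: the paper's own proof is a one-line application of Theorem \ref{The125} with $k^{\prime}=-1$, respectively $t^{\prime}\in I^{x}$, using the identities $\widehat{\omega}(\widehat{x})=\widehat{Or}(\widehat{x})$ and $\omega(x)=Or(x)$ (your explicit citation of Theorem \ref{The130} for these is the only, harmless, difference).
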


\begin{proof}
We apply Theorem \ref{The125}, page \pageref{The125} with $\widehat{\omega
}(\widehat{x})=\widehat{Or}(\widehat{x}),$ $k^{\prime}=-1$ and $\omega
(x)=Or(x),$ $t^{\prime}\in I^{x}.$
\end{proof}

\begin{remark}
The previous Theorem states the property that, in the case of the periodic
signals, all the points of the orbit are accessible in a time interval with
the length of a period.
\end{remark}

\section{The limit of periodicity}

\begin{theorem}
\label{The144}If $\widehat{x}$ is periodic, then%
\begin{equation}
\forall\mu\in\widehat{Or}(\widehat{x}),\widehat{L}^{\widehat{x}}=\widehat
{L}_{\mu}^{\widehat{x}}=\mathbf{N}_{\_}. \label{p264}%
\end{equation}

\end{theorem}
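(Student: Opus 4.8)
\textbf{Proof plan for Theorem \ref{The144}.}
The statement to prove is that if $\widehat{x}$ is periodic, then $\widehat{L}^{\widehat{x}}=\widehat{L}_{\mu}^{\widehat{x}}=\mathbf{N}_{\_}$ for every $\mu\in\widehat{Or}(\widehat{x})$. The plan is to reduce everything to the fact that periodicity of $\widehat{x}$ means, by Definition \ref{Def18}, that (\ref{pre741})$_{page\;\pageref{pre741}}$ holds, i.e. $\forall k\in\mathbf{N}_{\_},\widehat{x}(k)=\widehat{x}(k+p)$ for some $p\geq1$; in other words $-1\in\widehat{L}^{\widehat{x}}$ with period $p$, so $\widehat{L}^{\widehat{x}}\neq\varnothing$. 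First I would invoke Theorem \ref{The130}, which gives $\widehat{\omega}(\widehat{x})=\widehat{Or}(\widehat{x})$, so that every $\mu\in\widehat{Or}(\widehat{x})$ is in fact an omega limit point, and hence the notations $\widehat{L}_{\mu}^{\widehat{x}}$ make sense nontrivially for each such $\mu$.

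Next I would use Theorem \ref{The139} a): since $\widehat{x}$ is eventually periodic (periodicity is a special case), we have $\widehat{L}^{\widehat{x}}=\bigcap_{\mu\in\widehat{\omega}(\widehat{x})}\widehat{L}_{\mu}^{\widehat{x}}$. Combined with Theorem \ref{The130}, this becomes $\widehat{L}^{\widehat{x}}=\bigcap_{\mu\in\widehat{Or}(\widehat{x})}\widehat{L}_{\mu}^{\widehat{x}}$. It therefore suffices to show two things: (i) $-1\in\widehat{L}^{\widehat{x}}$, which gives $\widehat{L}^{\widehat{x}}=\mathbf{N}_{\_}$ by Theorem \ref{The117} a) (since $\widehat{L}^{\widehat{x}}$ has the form $\{k^{\prime},k^{\prime}+1,\dots\}$ and contains $-1$, forcing $k^{\prime}=-1$); and (ii) $-1\in\widehat{L}_{\mu}^{\widehat{x}}$ for each individual $\mu$, again giving $\widehat{L}_{\mu}^{\widehat{x}}=\mathbf{N}_{\_}$ by Theorem \ref{The117} a). For (i), the defining relation $\forall k\in\mathbf{N}_{\_},\widehat{x}(k)=\widehat{x}(k+p)$ together with the equivalence of (\ref{per163})$_{page\;\pageref{per163}}$ and (\ref{per164})$_{page\;\pageref{per164}}$ in Theorem \ref{The109} shows that for every $\mu\in\widehat{\omega}(\widehat{x})$ the eventual periodicity holds with limit of periodicity $k^{\prime}=-1$; this is exactly $-1\in\widehat{L}_{\mu}^{\widehat{x}}$ for all $\mu$, hence $-1\in\bigcap_{\mu}\widehat{L}_{\mu}^{\widehat{x}}=\widehat{L}^{\widehat{x}}$, settling both (i) and (ii) simultaneously.

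Concretely, the argument runs: for a fixed $\mu\in\widehat{Or}(\widehat{x})=\widehat{\omega}(\widehat{x})$ and the period $p$ witnessing periodicity of $\widehat{x}$, and for any $k\in\widehat{\mathbf{T}}_{\mu}^{\widehat{x}}$ and any $z\in\mathbf{Z}$ with $k+zp\in\mathbf{N}_{\_}$, one iterates $\widehat{x}(j)=\widehat{x}(j+p)$ (for $j\geq-1$) upward or downward to get $\widehat{x}(k+zp)=\widehat{x}(k)=\mu$, so $k+zp\in\widehat{\mathbf{T}}_{\mu}^{\widehat{x}}$; this is precisely (\ref{pre738}) with $k^{\prime}=-1$, i.e. $-1\in\widehat{L}_{\mu}^{\widehat{x}}$. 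Then $\widehat{L}_{\mu}^{\widehat{x}}$ is of the form $\{k^{\prime},k^{\prime}+1,\dots\}$ by Theorem \ref{The117} a) and contains $-1$, hence equals $\mathbf{N}_{\_}$; intersecting over all $\mu$ and using Theorem \ref{The139} a) gives $\widehat{L}^{\widehat{x}}=\mathbf{N}_{\_}$ as well. I do not anticipate a genuine obstacle here — the content is entirely bookkeeping that has already been packaged into Theorems \ref{The109}, \ref{The117}, \ref{The130}, \ref{The139}. The only mild subtlety is making sure the downward iteration $\widehat{x}(k)=\widehat{x}(k-p)$ is applied only while the argument stays $\geq-1$, which is automatic from the constraint $k+zp\in\mathbf{N}_{\_}$; this is the same three-case ($z>0$, $z=0$, $z<0$) verification used repeatedly earlier in the excerpt, so it can be cited rather than redone.
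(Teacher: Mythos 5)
Your proof is correct and follows essentially the same route as the paper: periodicity gives $\widehat{L}^{\widehat{x}}=\mathbf{N}_{\_}$ directly from the definition, and Theorem \ref{The139} (with $\widehat{Or}(\widehat{x})=\widehat{\omega}(\widehat{x})$) transfers this to each $\widehat{L}_{\mu}^{\widehat{x}}$. The only difference is that the paper closes with the one-line sandwich $\mathbf{N}_{\_}=\widehat{L}^{\widehat{x}}\subset\widehat{L}_{\mu}^{\widehat{x}}\subset\mathbf{N}_{\_}$, which makes your separate verification that $-1\in\widehat{L}_{\mu}^{\widehat{x}}$ and the appeal to Theorem \ref{The117} (which in any case concerns $\widehat{L}_{\mu}^{\widehat{x}}$ rather than $\widehat{L}^{\widehat{x}}$; the signal-level analogue is Theorem \ref{The77}) unnecessary.
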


\begin{proof}
The fact that the periodicity of $\widehat{x}$ implies $\widehat{L}%
^{\widehat{x}}=\mathbf{N}_{\_}$ is obvious and the fact that $\forall\mu
\in\widehat{Or}(\widehat{x}),\widehat{L}^{\widehat{x}}\subset\widehat{L}_{\mu
}^{\widehat{x}}$ results from Theorem \ref{The139}, page \pageref{The139},
where $\widehat{Or}(\widehat{x})=\widehat{\omega}(\widehat{x}).$ For any
$\mu\in\widehat{Or}(\widehat{x}),$ we infer%
\[
\mathbf{N}_{\_}=\widehat{L}^{\widehat{x}}\subset\widehat{L}_{\mu}^{\widehat
{x}}\subset\mathbf{N}_{\_}.
\]

\end{proof}

\begin{example}
Let $\mu,\mu^{\prime},\mu^{\prime\prime}\in\mathbf{B}^{n}$ distinct and $x\in
S^{(n)}$ defined this way:%
\[
x(t)=\mu\cdot\chi_{(-\infty,0)}(t)\oplus\mu^{\prime\prime}\cdot\chi
_{\lbrack0,1)}(t)\oplus\mu^{\prime}\cdot\chi_{\lbrack1,2)}(t)\oplus\mu
^{\prime\prime}\cdot\chi_{\lbrack2,4)}(t)\oplus\mu^{\prime}\cdot\chi
_{\lbrack4,5)}(t)
\]%
\[
\oplus\mu\cdot\chi_{\lbrack5,6)}(t)\oplus\mu^{\prime\prime}\cdot\chi
_{\lbrack6,7)}(t)\oplus\mu^{\prime}\cdot\chi_{\lbrack7,8)}(t)\oplus\mu
^{\prime\prime}\cdot\chi_{\lbrack8,10)}(t)\oplus\mu^{\prime}\cdot\chi
_{\lbrack10,11)}(t)
\]%
\[
\oplus\mu\cdot\chi_{\lbrack11,12)}(t)\oplus\mu^{\prime\prime}\cdot
\chi_{\lbrack12,13)}(t)\oplus\mu^{\prime}\cdot\chi_{\lbrack13,14)}(t)\oplus
\mu^{\prime\prime}\cdot\chi_{\lbrack14,16)}(t)\oplus\mu^{\prime}\cdot
\chi_{\lbrack16,17)}(t)\oplus...
\]
In this example $Or(x)=\{\mu,\mu^{\prime},\mu^{\prime\prime}\};$ $L_{\mu}%
^{x}=[-1,\infty),L_{\mu^{\prime}}^{x}=[-1,\infty),L_{\mu^{\prime\prime}}%
^{x}=[-2,\infty),$ $L^{x}=[-1,\infty);$ $P_{\mu}^{x}=\{6,12,18,...\},P_{\mu
^{\prime}}^{x}=\{3,6,9,...\},P_{\mu^{\prime\prime}}^{x}=\{6,12,18,...\},P^{x}%
=\{6,12,18,...\}.$ We notice the falsity of (\ref{p264}) in the real time
case, expressed under the form $L^{x}\neq L_{\mu^{\prime\prime}}^{x}.$
\end{example}

\begin{theorem}
\label{The53}The non constant signal $x$ is given, together with $T>0$ and we
suppose that $t^{\prime}\in I^{x}$ exists with the property that%
\begin{equation}
\forall t\geq t^{\prime},x(t)=x(t+T). \label{per989}%
\end{equation}
Then $t_{0}^{\prime},t_{0}\in\mathbf{R}$ exist, $t_{0}^{\prime}<t_{0}$ such
that $\forall t^{\prime\prime}\in\lbrack t_{0}^{\prime},t_{0}),$ we have
$t^{\prime\prime}\in I^{x},$%
\begin{equation}
\forall t\geq t^{\prime\prime},x(t)=x(t+T) \label{per991}%
\end{equation}
and if $t^{\prime\prime}\notin\lbrack t_{0}^{\prime},t_{0}),$ then at least
one of $t^{\prime\prime}\in I^{x},$ (\ref{per991}) is false. In other words
$[t_{0}^{\prime},t_{0})=I^{x}\cap L^{x}.$
\end{theorem}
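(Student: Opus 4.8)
The plan is to reduce the statement to two structural facts about our non constant, eventually periodic $x$: that its initial-time set has the shape $I^{x}=(-\infty,t_{0})$ and that its limit-of-periodicity set has the shape $L^{x}=[t_{0}^{\prime},\infty)$; once these are in hand, the asserted conclusion is nothing more than the observation that $(-\infty,t_{0})\cap[t_{0}^{\prime},\infty)=[t_{0}^{\prime},t_{0})$ whenever this intersection is non-empty, and that non-emptiness is guaranteed by the hypothesis.

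First I would note that $x$ is eventually periodic. Indeed, taking $t_{1}^{\prime}=t^{\prime}$ the hypothesis is exactly statement (\ref{pre580}) of Theorem \ref{The109}, page \pageref{The109}, so $T\in P^{x}$ and in particular $P^{x}\neq\varnothing$. Since $x$ is also not constant, Theorem \ref{The77} b), page \pageref{The77}, applies and produces some $t_{0}^{\prime}\in\mathbf{R}$ with $L^{x}=[t_{0}^{\prime},\infty)$. Separately, $x$ not constant gives, via Theorem \ref{The143} b), page \pageref{The143} (equivalence (\ref{p259}), together with the Remark that follows it), a unique $t_{0}\in\mathbf{R}$ with $I^{x}=(-\infty,t_{0})$.

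Next I would identify $L^{x}$ with the set $A_{T}:=\{t^{\prime\prime}\in\mathbf{R}\mid\forall t\geq t^{\prime\prime},x(t)=x(t+T)\}$ belonging to our fixed period $T$. The inclusion $A_{T}\subset L^{x}$ is immediate from the definition of $L^{x}$. For the converse, let $t^{\prime\prime}\in L^{x}$, so that some $T^{\prime\prime}>0$ satisfies $\forall t\geq t^{\prime\prime},x(t)=x(t+T^{\prime\prime})$; applying Theorem \ref{The141} b), page \pageref{The141}, to this periodicity (period $T^{\prime\prime}$ from $t^{\prime\prime}$) and to the hypothesis (period $T$ from $t^{\prime}$) — in the notation of that Theorem, with its $t^{\prime}:=t^{\prime\prime}$, $T:=T^{\prime\prime}$, $t^{\prime\prime}:=t^{\prime}$, $T^{\prime}:=T$ — yields $\forall t\geq t^{\prime\prime},x(t)=x(t+T)$, i.e. $t^{\prime\prime}\in A_{T}$. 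Hence $L^{x}=A_{T}=[t_{0}^{\prime},\infty)$.

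Finally I would assemble the pieces. The hypothesis supplies $t^{\prime}$ with $t^{\prime}\in I^{x}$ and $t^{\prime}\in A_{T}=L^{x}$, so $I^{x}\cap L^{x}\neq\varnothing$; combined with $I^{x}=(-\infty,t_{0})$ and $L^{x}=[t_{0}^{\prime},\infty)$ this forces $t_{0}^{\prime}<t_{0}$ and $I^{x}\cap L^{x}=[t_{0}^{\prime},t_{0})$. For $t^{\prime\prime}\in[t_{0}^{\prime},t_{0})$ we then have $t^{\prime\prime}\in I^{x}$ and $t^{\prime\prime}\in L^{x}=A_{T}$, the latter being exactly (\ref{per991}); for $t^{\prime\prime}\notin[t_{0}^{\prime},t_{0})$ we have $t^{\prime\prime}\notin I^{x}\cap L^{x}$, hence $t^{\prime\prime}\notin I^{x}$ or $t^{\prime\prime}\notin A_{T}$, and the latter is precisely the failure of (\ref{per991}). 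This gives both the pointwise statement and the displayed identity $[t_{0}^{\prime},t_{0})=I^{x}\cap L^{x}$. The one place calling for a little care is the identification $L^{x}=A_{T}$, i.e. the independence of the limit-of-periodicity set on the choice of period, which is where Theorem \ref{The141} carries the load; the rest is bookkeeping with two half-lines. (Alternatively, one could reprove the result from scratch by mimicking the second proof of Theorem \ref{The5}, page \pageref{The5}, splitting into the cases $x(-\infty+0)$ is or is not an accessed value and constructing $t_{0}^{\prime}$ explicitly, but invoking the already established Theorems \ref{The77} and \ref{The141} avoids this.)
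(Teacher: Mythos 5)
Your proof is correct and follows essentially the same route as the paper's first proof: establish $I^{x}=(-\infty,t_{0})$ from non-constancy, establish $L^{x}=[t_{0}^{\prime},\infty)$ from eventual periodicity (you via Theorem \ref{The77}, the paper via Theorems \ref{The117} and \ref{The139} from which \ref{The77} is itself derived), and intersect the two half-lines, with non-emptiness coming from the hypothesis. Your explicit appeal to Theorem \ref{The141} to identify $L^{x}$ with the set of limits of periodicity for the \emph{specific} period $T$ is a point the paper's first proof leaves implicit, and is a welcome clarification rather than a deviation.
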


\begin{proof}
The first proof. As $x$ is not constant, $t_{0}\in\mathbf{R}$ exists with
$I^{x}=(-\infty,t_{0}).$ We suppose that $Or(x)=\{\mu^{1},...,\mu^{s}%
\},s\geq2$ and from the periodicity of $x$ we get $\omega(x)=\{\mu^{1}%
,...,\mu^{s}\}.$ From Theorem \ref{The117}, page \pageref{The117} we infer the
existence of $t_{1}^{\prime},...,t_{s}^{\prime}$ with $L_{\mu^{1}}^{x}%
=[t_{1}^{\prime},\infty),...,L_{\mu^{s}}^{x}=[t_{s}^{\prime},\infty).$ The
periodicity of $\mu^{1},...,\mu^{s}$ implies $I^{x}\cap L_{\mu^{1}}^{x}%
\neq\varnothing,...,I^{x}\cap L_{\mu^{s}}^{x}\neq\varnothing$ and the eventual
periodicity of $x$ shows, from Theorem \ref{The139}, page \pageref{The139}
that $L^{x}=L_{\mu^{1}}^{x}\cap...\cap L_{\mu^{s}}^{x}.$ It has resulted the
fact that $t_{0}^{\prime}=\max\{t_{1}^{\prime},...,t_{s}^{\prime}\}$ satisfies
$L^{x}=[t_{0}^{\prime},\infty),I^{x}\cap L^{x}=[t_{0}^{\prime},t_{0}%
)\neq\varnothing.$
\end{proof}

\begin{proof}
The second proof. We define $t_{0}$ in the following way:%
\begin{equation}
\forall t<t_{0},x(t)=x(-\infty+0), \label{per992}%
\end{equation}%
\begin{equation}
x(t_{0})\neq x(-\infty+0) \label{per993}%
\end{equation}
and this is possible since $x$ is not constant. From (\ref{per992}),
(\ref{per993}) we have $I^{x}=(-\infty,t_{0})$ and since $t^{\prime}\in
I^{x},$ we infer that $t^{\prime}<t_{0}.$ We have from (\ref{per989}):%
\begin{equation}
\forall t\in\lbrack t^{\prime},t_{0}),x(-\infty+0)=x(t)=x(t+T), \label{per994}%
\end{equation}
thus%
\begin{equation}
\forall t\in\lbrack t^{\prime}+T,t_{0}+T),x(t)=x(-\infty+0), \label{per995}%
\end{equation}%
\begin{equation}
x(t_{0}+T)=x(t_{0})\overset{(\ref{per993})}{\neq}x(-\infty+0). \label{per996}%
\end{equation}
We can see that $(-\infty,t_{0})\cup\lbrack t^{\prime}+T,t_{0}+T)\subset
\mathbf{T}_{x(-\infty+0)}^{x},$ $t_{0},t_{0}+T\notin\mathbf{T}_{x(-\infty
+0)}^{x},$ where $t_{0}<t^{\prime}+T$ is the only possibility, thus%
\[
t^{\prime}<t_{0}<t^{\prime}+T<t_{0}+T
\]
is true. Then $t_{0}^{\prime}\leq t^{\prime}$ exists such that $t_{0}^{\prime
}+T>t_{0}$ and%
\begin{equation}
\forall t\in\lbrack t_{0}^{\prime}+T,t_{0}+T),x(t)=x(-\infty+0),
\label{per997}%
\end{equation}%
\begin{equation}
x(t_{0}^{\prime}+T-0)\neq x(-\infty+0). \label{per998}%
\end{equation}
We take some arbitrary $t^{\prime\prime}\in\lbrack t_{0}^{\prime},t_{0}), $
some arbitrary $t\in\mathbf{R}$ and we have the following possibilities.

a) Case $t^{\prime\prime}\in\lbrack t^{\prime},t_{0})$

a.1) Case $t\leq t^{\prime\prime},$ when $x(t)=x(-\infty+0),$

a.2) Case $t\geq t^{\prime\prime},$ when $t\geq t^{\prime}$ and $x(t)\overset
{(\ref{per989})}{=}x(t+T);$

b) Case $t^{\prime\prime}\in\lbrack t_{0}^{\prime},t^{\prime})$

b.1) Case $t\leq t^{\prime\prime},$ when $x(t)=x(-\infty+0),$

b.2) Case $t\geq t^{\prime\prime}$

b.2.1) Case $t\in\lbrack t^{\prime\prime},t^{\prime}),$ when%
\[
t_{0}^{\prime}+T\leq t^{\prime\prime}+T\leq t+T<t^{\prime}+T<t_{0}+T
\]
and $x(t)=x(-\infty+0)\overset{(\ref{per997})}{=}x(t+T),$

b.2.2) Case $t\geq t^{\prime},$ when $x(t)\overset{(\ref{per989})}{=}x(t+T). $

In all these cases $t^{\prime\prime}\in I^{x}$ and (\ref{per991}) hold.

We suppose now, against all reason, that $t^{\prime\prime}\in I^{x}$,
(\ref{per991}) hold and $t^{\prime\prime}\notin\lbrack t_{0}^{\prime},t_{0}).$
The following possibilities exist.

i) Case $t^{\prime\prime}<t_{0}^{\prime}$

Some $\varepsilon_{1}>0$ exists such that%
\begin{equation}
\forall t\in(t_{0}^{\prime}+T-\varepsilon_{1},t_{0}^{\prime}+T),x(t)=x(t_{0}%
^{\prime}+T-0) \label{per999}%
\end{equation}
and let $\varepsilon\in(0,\min\{t_{0}^{\prime}-t^{\prime\prime},\varepsilon
_{1}\}),$ for which%
\begin{equation}
(t_{0}^{\prime}-\varepsilon,t_{0}^{\prime})\subset\lbrack t^{\prime\prime
},\infty), \label{pre1}%
\end{equation}%
\begin{equation}
(t_{0}^{\prime}+T-\varepsilon,t_{0}^{\prime}+T)\subset(t_{0}^{\prime
}+T-\varepsilon_{1},t_{0}^{\prime}+T). \label{pre2}%
\end{equation}
We take an arbitrary $t\in(t_{0}^{\prime}-\varepsilon,t_{0}^{\prime}).$ We
have the contradiction%
\[
x(-\infty+0)=x(t)\overset{(\ref{per991}),(\ref{pre1})}{=}x(t+T)\overset
{(\ref{per999}),(\ref{pre2})}{=}x(t_{0}^{\prime}+T-0)\overset{(\ref{per998}%
)}{\neq}x(-\infty+0).
\]

ii) Case $t^{\prime\prime}\geq t_{0}$%
\[
x(-\infty+0)=x(t^{\prime\prime})=x(t_{0})\overset{(\ref{per993})}{\neq
}x(-\infty+0),
\]
contradiction.
\end{proof}

\begin{corollary}
\label{Cor4}Let $x$ be not constant, $Or(x)=\{\mu^{1},\mu^{2},...,\mu
^{s}\},s\geq2,$ with $\mu^{1}=x(-\infty+0).$ We suppose that $x$ has the
period $T>0$ and we consider the statements%
\begin{equation}
\forall t<t_{0},x(t)=x(-\infty+0), \label{pre82}%
\end{equation}%
\begin{equation}
x(t_{0})\neq x(-\infty+0), \label{pre83}%
\end{equation}%
\begin{equation}
\forall t\in\lbrack t_{0}^{\prime}+T,t_{0}+T),x(t)=x(-\infty+0), \label{pre84}%
\end{equation}%
\begin{equation}
x(t_{0}^{\prime}+T-0)\neq x(-\infty+0), \label{pre85}%
\end{equation}%
\begin{equation}
\lbrack t_{1},t_{0}+T)\subset\mathbf{T}_{\mu^{1}}^{x}, \label{pre86}%
\end{equation}%
\begin{equation}
x(t_{1}-0)\neq\mu^{1}, \label{pre87}%
\end{equation}%
\begin{equation}
\lbrack t_{2},t_{0}+T)\cap\mathbf{T}_{\mu^{2}}^{x}=\varnothing, \label{pre88}%
\end{equation}%
\begin{equation}
x(t_{2}-0)=\mu^{2}, \label{pre89}%
\end{equation}%
\[
...
\]%
\begin{equation}
\lbrack t_{s},t_{0}+T)\cap\mathbf{T}_{\mu^{s}}^{x}=\varnothing, \label{pre90}%
\end{equation}%
\begin{equation}
x(t_{s}-0)=\mu^{s}; \label{pre91}%
\end{equation}
(\ref{pre82}), (\ref{pre83}) define $t_{0}$, (\ref{pre84}), (\ref{pre85})
define $t_{0}^{\prime}$, (\ref{pre86}), (\ref{pre87}) define $t_{1}$,
(\ref{pre88}), (\ref{pre89}) define $t_{2}$,..., (\ref{pre90}), (\ref{pre91})
define $t_{s}.$ The bounds $[t_{0}^{\prime},t_{0})$ of the initial time= limit
of periodicity of $x$ and the bounds $[t_{1}-T,t_{0}),[t_{2}-T,t_{0}%
),...,[t_{s}-T,t_{0})$ of the initial time of $x$=limits of periodicity of
$\mu^{1},\mu^{2},...,\mu^{s},$ considered as periodic points of $x$ with the
period $T$ fulfill%
\begin{equation}
\lbrack t_{0}^{\prime},t_{0})=[t_{1}-T,t_{0})=[t_{1}-T,t_{0})\cap\lbrack
t_{2}-T,t_{0})\cap...\cap\lbrack t_{s}-T,t_{0}). \label{pre92}%
\end{equation}

\end{corollary}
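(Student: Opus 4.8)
\textbf{Proof proposal for Corollary \ref{Cor4}.}

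The plan is to derive (\ref{pre92}) by combining the description of the initial-time=limit-of-periodicity interval for the signal $x$ (Theorem \ref{The53}) with the corresponding description for each periodic point $\mu^{i}$ (Theorem \ref{The5} together with Corollary \ref{Cor3}), and finally with the identity $L^{x}=\underset{\mu\in\omega(x)}{\bigcap}L_{\mu}^{x}$ from Theorem \ref{The139}. First I would note that $x$ is periodic with period $T$, hence eventually periodic, and not constant, so Theorem \ref{The53} applies: $[t_{0}^{\prime},t_{0})=I^{x}\cap L^{x}$, where $t_{0}$ is exactly the number defined by (\ref{pre82}), (\ref{pre83}) (since $I^{x}=(-\infty,t_{0})$) and $t_{0}^{\prime}$ is the left endpoint of $L^{x}=[t_{0}^{\prime},\infty)$; I would then identify this $t_{0}^{\prime}$ with the number defined by (\ref{pre84}), (\ref{pre85}) by re-reading the second proof of Theorem \ref{The53}, where $t_{0}^{\prime}$ is characterized precisely by those two conditions. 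This gives the first equality block in (\ref{pre92}) once we show $[t_{0}^{\prime},t_{0})=[t_{1}-T,t_{0})$.

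Next I would handle each point $\mu^{i}\in Or(x)=\omega(x)$ separately. Since $\mu^{i}$ is a periodic point of the non constant signal $x$ with period $T$ and $t^{\prime}\in I^{x}$ is a common limit of periodicity, Theorem \ref{The5} gives $I^{x}\cap L_{\mu^{i}}^{x}=[t_{i}^{\prime},t_{0})$ for some $t_{i}^{\prime}<t_{0}$, i.e. $L_{\mu^{i}}^{x}=[t_{i}^{\prime},\infty)$. For $\mu^{1}=x(-\infty+0)$ I would invoke Corollary \ref{Cor3} a): the number $t_{1}$ defined by (\ref{pre86}), (\ref{pre87}) satisfies $I^{x}\cap L_{\mu^{1}}^{x}=[t_{1}-T,t_{0})$, so $t_{1}^{\prime}=t_{1}-T$. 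For $i\geq2$, since $\mu^{i}\neq x(-\infty+0)$, I would invoke Corollary \ref{Cor3} b): the number $t_{i}$ defined by (\ref{pre88})--(\ref{pre89}), ..., (\ref{pre90})--(\ref{pre91}) (the conditions $x(t_{i}-0)=\mu^{i}$ and $[t_{i},t_{0}+T)\cap\mathbf{T}_{\mu^{i}}^{x}=\varnothing$ match exactly (\ref{pre95}), (\ref{pre96}) of that Corollary) satisfies $I^{x}\cap L_{\mu^{i}}^{x}=[t_{i}-T,t_{0})$, hence $t_{i}^{\prime}=t_{i}-T$.

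Finally I would assemble the pieces. From Theorem \ref{The139} we have $L^{x}=\bigcap_{i=1}^{s}L_{\mu^{i}}^{x}=\bigcap_{i=1}^{s}[t_{i}-T,\infty)=[\max_{i}(t_{i}-T),\infty)$, and intersecting with $I^{x}=(-\infty,t_{0})$ yields $I^{x}\cap L^{x}=[\max_{i}(t_{i}-T),t_{0})=\bigcap_{i=1}^{s}[t_{i}-T,t_{0})$, which is the rightmost equality in (\ref{pre92}). Comparing with $[t_{0}^{\prime},t_{0})=I^{x}\cap L^{x}$ gives $t_{0}^{\prime}=\max_{i}(t_{i}-T)$; it remains to see that this maximum is attained at $i=1$, i.e. $t_{1}-T\geq t_{i}-T$ for all $i$, equivalently $L^{x}=L_{\mu^{1}}^{x}$. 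I expect this last point to be the main obstacle: it is the claim that for the special value $\mu^{1}=x(-\infty+0)$ the limit of periodicity is the latest one. The cleanest route is to show directly that $t_{1}-T\in L_{\mu^{i}}^{x}$ for every $i$, by using the periodicity hypothesis (\ref{per989})-type statement for $x$ itself (each $\mu^{i}$ has $t^{\prime}$ as a limit of periodicity and $t_{1}-T\geq t^{\prime}$ since $t^{\prime}\in I^{x}\cap L_{\mu^{1}}^{x}=[t_{1}-T,t_{0})$ forces $t^{\prime}\geq t_{1}-T$); then $[t_{1}-T,t_{0})\subset I^{x}\cap L_{\mu^{i}}^{x}=[t_{i}-T,t_{0})$ gives $t_{i}-T\leq t_{1}-T$. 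Wait — $t^{\prime}\geq t_{1}-T$ is what we get, but we need $t_{1}-T$ itself to lie in $L_{\mu^{i}}^{x}$; here I would instead argue that since $[t_{0}^{\prime},t_{0})=I^{x}\cap L^{x}\subset I^{x}\cap L_{\mu^{1}}^{x}=[t_{1}-T,t_{0})$ we get $t_{0}^{\prime}\geq t_{1}-T$, while $[t_{1}-T,t_{0})\subset I^{x}\cap L_{\mu^{1}}^{x}$ and $t_{1}-T$ being a genuine limit of periodicity of $\mu^{1}$ forces (by the accessibility result Theorem \ref{Lem1} applied to the interval just below $t_{0}$) that it is also a limit of periodicity of every other $\mu^{i}$, whence $t_{0}^{\prime}\leq t_{1}-T$. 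Combining the two inequalities closes the argument and establishes (\ref{pre92}).
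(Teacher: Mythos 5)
Your overall architecture is sound and uses the same ingredients as the paper: you reduce everything to $[t_{0}^{\prime},t_{0})=I^{x}\cap L^{x}$ (Theorem \ref{The53}), $[t_{i}-T,t_{0})=I^{x}\cap L_{\mu^{i}}^{x}$ (Theorem \ref{The5} and Corollary \ref{Cor3}), and $L^{x}=\bigcap_{i}L_{\mu^{i}}^{x}$ (Theorem \ref{The139}), which correctly yields the rightmost equality of (\ref{pre92}). The gap is exactly where you sense it: the equality $[t_{0}^{\prime},t_{0})=[t_{1}-T,t_{0})$, i.e.\ $t_{1}=\max\{t_{1},\dots,t_{s}\}$. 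Your first attempt only shows that \emph{some} $t^{\prime}\geq t_{1}-T$ lies in each $L_{\mu^{i}}^{x}$, which bounds $t_{i}-T$ by $t^{\prime}$, not by $t_{1}-T$; you notice this. Your fallback, that $t_{1}-T\in L_{\mu^{1}}^{x}$ ``forces, by Theorem \ref{Lem1}, that it is also a limit of periodicity of every other $\mu^{i}$,'' is asserted rather than proved, and as a general principle it is false: in the example following Theorem \ref{The144} one has $L_{\mu^{\prime\prime}}^{x}=[-2,\infty)\neq[-1,\infty)=L_{\mu}^{x}$ for two points of the same periodic signal, so a limit of periodicity of one point need not be one for another. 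What you actually need is the special fact that the initial value $\mu^{1}=x(-\infty+0)$ has the \emph{latest} limit of periodicity, and Theorem \ref{Lem1} alone does not deliver that.

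The missing step has two short proofs, and the paper uses the first. (a) Since $x(-\infty+0)=\mu^{1}$, conditions (\ref{pre84})--(\ref{pre85}) say precisely $[t_{0}^{\prime}+T,t_{0}+T)\subset\mathbf{T}_{\mu^{1}}^{x}$ and $x(t_{0}^{\prime}+T-0)\neq\mu^{1}$, which are exactly conditions (\ref{pre86})--(\ref{pre87}) with $t_{1}=t_{0}^{\prime}+T$; since these conditions determine their defining point uniquely, $t_{0}^{\prime}=t_{1}-T$, and then $[t_{0}^{\prime},t_{0})=I^{x}\cap L^{x}\subset[t_{i}-T,t_{0})$ forces $t_{i}\leq t_{1}$ for every $i$. (b) Alternatively, for $i\geq2$ one has $x(t_{i}-0)=\mu^{i}\neq\mu^{1}$ while $x\equiv\mu^{1}$ on $[t_{1},t_{0}+T)$ by (\ref{pre86}); if $t_{i}>t_{1}$ then the left limit at $t_{i}$ is taken from inside $[t_{1},t_{0}+T)$ and equals $\mu^{1}$, a contradiction, so $t_{i}\leq t_{1}$. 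Either observation closes your argument; without one of them the proposal is incomplete.
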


\begin{proof}
(\ref{pre82}), (\ref{pre83}) coincide with (\ref{per992}%
)$_{page\;\pageref{per992}}$, (\ref{per993})$_{page\;\pageref{per993}}$ and
(\ref{pre84}), (\ref{pre85}) coincide with (\ref{per997}%
)$_{page\;\pageref{per997}}$, (\ref{per998})$_{page\;\pageref{per998}}$ from
Theorem \ref{The53}, page \pageref{The53}. From the Theorem we have that
$[t_{0}^{\prime},t_{0})$ gives the bounds of the initial time=limit of
periodicity of $x$. The periodicity of $x $ with the period $T$ implies the
periodicity of its values $\mu^{1},\mu^{2},...,\mu^{s}$ with the period $T$
and the bounds of the initial time=limits of periodicity are given by
$[t_{1}-T,t_{0}),[t_{2}-T,t_{0}),...,[t_{s}-T,t_{0}),$ where (\ref{pre86}),
(\ref{pre87}) coincide with (\ref{pre93})$_{page\;\pageref{pre93}}$,
(\ref{pre94})$_{page\;\pageref{pre94}}$, while (\ref{pre88}), (\ref{pre89}%
),...,(\ref{pre90}), (\ref{pre91}) coincide with (\ref{pre96}%
)$_{page\;\pageref{pre96}}$, (\ref{pre95})$_{page\;\pageref{pre95}}$ from
Corollary \ref{Cor3}, page \pageref{Cor3}. We note also the coincidence of
(\ref{pre82}), (\ref{pre83}) with (\ref{pre80})$_{page\;\pageref{pre80}}$,
(\ref{pre81})$_{page\;\pageref{pre81}}.$ The truth of (\ref{pre92}) results
from the remark that $t_{1}=\max\{t_{1},t_{2},...,t_{s}\}$ and $t_{0}^{\prime
}+T=t_{1}$ (the last equality results from (\ref{pre84}), (\ref{pre85}) and
(\ref{pre86}), (\ref{pre87})).
\end{proof}

\begin{remark}
When we state the property of periodicity of a non constant signal $x,$ the
initial time=limit of periodicity $t^{\prime}$ belongs to some interval
$[t_{0}^{\prime},t_{0});$ outside this interval, any choice of $t^{\prime}$
makes the periodicity property of $x$ be false.
\end{remark}

\section{A property of constancy}

\begin{theorem}
\label{The31}Let the signals $\widehat{x},x.$

a) If the statement%
\begin{equation}
\forall k\in\mathbf{N}_{\_},\widehat{x}(k)=\widehat{x}(k+p) \label{per339}%
\end{equation}
is true for $p=1,$ then $\mu\in\widehat{Or}(\widehat{x})$ exists such that
\begin{equation}
\forall k\in\mathbf{N}_{\_},\widehat{x}(k)=\mu\label{per341}%
\end{equation}
and (\ref{per339}) is true for any $p\geq1$.

b) We suppose that $t_{0}\in\mathbf{R},h>0$ exist such that $x$ is of the
form
\begin{equation}%
\begin{array}
[c]{c}%
x(t)=x(-\infty+0)\cdot\chi_{(-\infty,t_{0})}(t)\oplus x(t_{0})\cdot
\chi_{\lbrack t_{0},t_{0}+h)}(t)\oplus...\\
...\oplus x(t_{0}+kh)\cdot\chi_{\lbrack t_{0}+kh,t_{0}+(k+1)h)}(t)\oplus...
\end{array}
\label{per485}%
\end{equation}
If the statement%
\begin{equation}
\forall t\geq t^{\prime},x(t)=x(t+T) \label{per340}%
\end{equation}
is true for some $t^{\prime}\in I^{x}\mathbf{,}$ $T\in(0,h)\cup(h,2h)\cup
...\cup(qh,(q+1)h)\cup...,$ then some $\mu\in Or(x)$ exists such that%
\begin{equation}
\forall t\in\mathbf{R},x(t)=\mu\label{per342}%
\end{equation}
and (\ref{per340}) is true for any $t^{\prime}\in\mathbf{R,}$ $T>0.$

c) We presume that (\ref{per485}) is true under the form%
\begin{equation}%
\begin{array}
[c]{c}%
x(t)=\widehat{x}(-1)\cdot\chi_{(-\infty,t_{0})}(t)\oplus\widehat{x}%
(0)\cdot\chi_{\lbrack t_{0},t_{0}+h)}(t)\oplus...\\
...\oplus\widehat{x}(k)\cdot\chi_{\lbrack t_{0}+kh,t_{0}+(k+1)h)}(t)\oplus...
\end{array}
\label{per338}%
\end{equation}
Then:

c.1) the fulfillment of (\ref{per339}) for $p=1$ implies that $\mu\in
\widehat{Or}(\widehat{x})=Or(x)$ exists such that (\ref{per341}),
(\ref{per342}) are true, (\ref{per339}) holds for any $p\geq1$ and
(\ref{per340}) holds for any $t^{\prime}\in\mathbf{R}$ and any $T>0;$

c.2) the satisfaction of the statement (\ref{per340}) for some $t^{\prime}\in
I^{x}\mathbf{,}T\in(0,h)\cup(h,2h)\cup...\cup(qh,(q+1)h)\cup...$ implies the
existence of $\mu\in\widehat{Or}(\widehat{x})=Or(x)$ such that (\ref{per341}),
(\ref{per342}) are true, (\ref{per339}) holds for any $p\geq1$ and
(\ref{per340}) holds for any $t^{\prime}\in\mathbf{R}$ and any $T>0. $
\end{theorem}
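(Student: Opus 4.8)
The plan is to reduce everything to the already–established discrete‑time and real‑time "property of constancy'' results, namely Theorem \ref{The32} (page \pageref{The32}), since the present statement is precisely the signal‑level version of that point‑level theorem, and the bridge between them is supplied by Theorem \ref{The101} together with Theorem \ref{The130}.

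For part a), first I would observe that (\ref{per339}) written for $p=1$ says $\forall k\in\mathbf{N}_{\_},\widehat{x}(k)=\widehat{x}(k+1),$ i.e. $\widehat{x}$ is constant; denoting by $\mu$ this common value we get $\mu\in\widehat{Or}(\widehat{x})$ (in fact $\widehat{Or}(\widehat{x})=\{\mu\}$) and (\ref{per341}). Once $\widehat{x}$ is constant, $\widehat{x}(k)=\mu=\widehat{x}(k+p)$ for every $p\geq1,$ so (\ref{per339}) holds for all $p\geq1.$ Alternatively, and in the spirit of the paper, I would note that (\ref{per339}) for $p=1$ forces $\widehat{\mathbf{T}}_{\mu}^{\widehat{x}}=\mathbf{N}_{\_}$ for $\mu=\widehat{x}(-1),$ which is exactly hypothesis (\ref{per367}) of Theorem \ref{The32} a), and then invoke that theorem directly.

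For part b), the plan is: from $t^{\prime}\in I^{x}$ and $T\in(0,h)\cup(h,2h)\cup\dots$ and (\ref{per340}) true, pick $\mu:=x(t^{\prime})\in Or(x).$ Because $t^{\prime}\in I^{x}$ we have $(-\infty,t^{\prime}]\subset\mathbf{T}_{\mu}^{x},$ and (\ref{per340}) together with Theorem \ref{The20} b) (page \pageref{The20}) — whose hypothesis (\ref{per344}) is literally (\ref{per340}) and whose form hypothesis (\ref{per487}) is literally (\ref{per485}) — yields $\forall t\geq t^{\prime},x(t)=\mu.$ Combining with $\forall t\leq t^{\prime},x(t)=\mu$ gives (\ref{per342}). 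Once $x$ is constant, $I^{x}=\mathbf{R},$ $P_{\mu}^{x}=(0,\infty),$ so (\ref{per340}) holds for every $t^{\prime}\in\mathbf{R}$ and every $T>0.$ I would present this either by direct appeal to Theorem \ref{The20} or, equivalently, by appeal to Theorem \ref{The32} b) after noting that (\ref{per340}) for the given $T$ implies hypothesis (\ref{per368}) there (for the point $\mu=x(t^{\prime})$, using Theorem \ref{The101} to pass from the signal‑level property (\ref{per340}) to the point‑level property $\forall t\in\mathbf{T}_{\mu}^{x}\cap[t^{\prime},\infty),\{t+zT\mid z\in\mathbf{Z}\}\cap[t^{\prime},\infty)\subset\mathbf{T}_{\mu}^{x}$).

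Part c) is then immediate from a) and b): under (\ref{per338}) Theorem \ref{The114} a) gives $\widehat{Or}(\widehat{x})=Or(x),$ so the constant $\mu$ produced in a) and the constant $\mu$ produced in b) lie in the same orbit, and the hypothesis of c.1) (resp. c.2)) is exactly the hypothesis of a) (resp. b)); moreover, by Theorem \ref{The112} constancy of $\widehat{x}$ and constancy of $x$ are equivalent under (\ref{per338}), so either hypothesis forces both (\ref{per341}) and (\ref{per342}), after which all the quantified versions of (\ref{per339}) and (\ref{per340}) follow as in a) and b). I do not expect a genuine obstacle here; the only point requiring mild care is keeping track that $I^{x}=\mathbf{R}$ once $x$ is constant, so that the clause $\exists t^{\prime}\in I^{x}$ in (\ref{per340}) is satisfied by every real $t^{\prime}$ — everything else is a direct citation of Theorems \ref{The20}, \ref{The32}, \ref{The101}, \ref{The112}, \ref{The114}.
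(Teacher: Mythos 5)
Your proposal is correct and follows essentially the same route as the paper: part a) by direct inspection of the constant signal, part b) by reduction to Theorem \ref{The20} b), and part c) as a combination of a) and b). The only cosmetic difference is that the paper phrases b) as a proof by contradiction (assuming $x$ non constant and deriving $\mu=x(-\infty+0)$), whereas you argue directly that $\mu=x(t^{\prime})=x(-\infty+0)$ and glue the two half-lines together, which is if anything slightly cleaner.
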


\begin{proof}
a) From (\ref{per339}) written for $p=1$ we get the existence of $\mu
=\widehat{x}(-1)$ such that (\ref{per341}) is true. Moreover, as far as
$\widehat{x}$ is the constant function, (\ref{per339}) holds for any $p\geq1.
$

b) We suppose against all reason that $x$ is not constant, thus $t_{0}%
^{\prime}\in\mathbf{R}$ exists such that $I^{x}=(-\infty,t_{0}^{\prime}).$ The
hypothesis states the existence of $t^{\prime}\in I^{x}$ with the property
that (\ref{per340}) is true for $T\in(0,h)\cup(h,2h)\cup...\cup(qh,(q+1)h)\cup
...$ In these conditions, Theorem \ref{The20}, page \pageref{The20} shows the
existence of $\mu\in\omega(x)$ such that%
\[
\forall t\geq t^{\prime},x(t)=\mu,
\]%
\[
\forall t\leq t^{\prime},x(t)=x(-\infty+0)
\]
are true. We have obtained that $\mu=x(-\infty+0),$ contradiction with our
supposition that $x$ is not constant. (\ref{per342}) holds. As in this
situation $I^{x}=L^{x}=\mathbf{R}$ and $P^{x}=(0,\infty)$ are true, b) is proved.

c) This is a consequence of a) and b).
\end{proof}

\section{Discussion on constancy}

\begin{remark}
Theorem \ref{The32}, page \pageref{The32} (concerning the periodic points) and
Theorem \ref{The31}, page \pageref{The31} (concerning the periodic signals)
express essentially the same idea, namely that in the situation when
$\widehat{x},x$ are related by%
\[%
\begin{array}
[c]{c}%
x(t)=\widehat{x}(-1)\cdot\chi_{(-\infty,t_{0})}(t)\oplus\widehat{x}%
(0)\cdot\chi_{\lbrack t_{0},t_{0}+h)}(t)\oplus...\\
...\oplus\widehat{x}(k)\cdot\chi_{\lbrack t_{0}+kh,t_{0}+(k+1)h)}(t)\oplus...
\end{array}
\]
any of a)
\[
\forall k\in\widehat{\mathbf{T}}_{\mu}^{\widehat{x}},\{k+zp|z\in
\mathbf{Z}\}\cap\mathbf{N}_{\_}\subset\widehat{\mathbf{T}}_{\mu}^{\widehat{x}}%
\]
or%
\[
\forall k\in\mathbf{N}_{\_},\widehat{x}(k)=\widehat{x}(k+p)
\]
true for $p=1,$

b)
\[
\exists t^{\prime}\in I^{x}\mathbf{,}\forall t\in\mathbf{T}_{\mu}^{x}%
\cap\lbrack t^{\prime},\infty),\{t+zT|z\in\mathbf{Z}\}\cap\lbrack t^{\prime
},\infty)\subset\mathbf{T}_{\mu}^{x}%
\]
or%
\[
\exists t^{\prime}\in I^{x}\mathbf{,}\forall t\geq t^{\prime},x(t)=x(t+T)
\]
true for $T\in(0,h)\cup(h,2h)\cup...\cup(qh,(q+1)h)\cup...$

implies the truth of
\[
\forall k\in\mathbf{N}_{\_},\widehat{x}(k)=\mu,
\]%
\[
\forall t\in\mathbf{R},x(t)=\mu
\]
thus $\widehat{x},x$ are equal with the same constant $\mu$. The validity
$\forall p\geq1,$ of%
\[
\forall\mu\in\widehat{Or}(\widehat{x}),\forall k\in\widehat{\mathbf{T}}_{\mu
}^{\widehat{x}},\{k+zp|z\in\mathbf{Z}\}\cap\mathbf{N}_{\_}\subset
\widehat{\mathbf{T}}_{\mu}^{\widehat{x}}%
\]%
\[
\overset{\text{Theorem \ref{The101}, page\ \pageref{The101}}}%
{\Longleftrightarrow}\forall k\in\mathbf{N}_{\_},\widehat{x}(k)=\widehat
{x}(k+p)
\]
and $\forall T>0,$ of%
\[
\exists t^{\prime}\in I^{x},\forall\mu\in Or(x),\forall t\in\mathbf{T}_{\mu
}^{x}\cap\lbrack t^{\prime},\infty),\{t+zT|z\in\mathbf{Z}\}\cap\lbrack
t^{\prime},\infty)\subset\mathbf{T}_{\mu}^{x}%
\]%
\[
\overset{\text{Theorem \ref{The101}, page \pageref{The101}}}%
{\Longleftrightarrow}\exists t^{\prime}\in I^{x},\forall t\geq t^{\prime
},x(t)=x(t+T)
\]
shows the fact that the common conclusion of Theorem \ref{The32}, page
\pageref{The32}, and Theorem \ref{The31}, page \pageref{The31}, is not
surprising.\footnote{Note that at (\ref{pre563}) we have the order of the
quantifiers $\forall\mu\in Or(x),\exists t^{\prime}\in I^{x}$ but in the proof
(\ref{pre563})$\Longrightarrow$(\ref{pre569}) from Theorem \ref{The101}, page
\pageref{The101} we could make use of $\exists t^{\prime}\in I^{x}%
\mathbf{,}\forall\mu\in Or(x)$, thus the previous argument is correct. We
shall refer again to the possibility of changing the order of some quantifiers
in stating periodicity properties in Section 14 of this Chapter.}
\end{remark}

\section{Discrete time vs real time}

\begin{theorem}
\label{The43}We presume that $\widehat{x},x$ satisfy%
\begin{equation}%
\begin{array}
[c]{c}%
x(t)=\widehat{x}(-1)\cdot\chi_{(-\infty,t_{0})}(t)\oplus\widehat{x}%
(0)\cdot\chi_{\lbrack t_{0},t_{0}+h)}(t)\oplus...\\
...\oplus\widehat{x}(k)\cdot\chi_{\lbrack t_{0}+kh,t_{0}+(k+1)h)}(t)\oplus...
\end{array}
\label{per350}%
\end{equation}
for some $t_{0}\in\mathbf{R}$ and $h>0.$ Then the existence of $p\geq1$ such
that%
\begin{equation}
\forall k\in\mathbf{N}_{\_},\widehat{x}(k)=\widehat{x}(k+p) \label{per596}%
\end{equation}
implies that, for $T=ph$ we have $\exists t^{\prime}\in I^{x},$%
\begin{equation}
\forall t\geq t^{\prime},x(t)=x(t+T). \label{per598}%
\end{equation}

\end{theorem}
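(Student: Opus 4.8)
The statement to prove (Theorem \ref{The43}) is the discrete-to-real-time transfer of periodicity: given the standard ``sampling'' relation \eqref{per350} between $\widehat{x}$ and $x$ with step $h>0$ starting at $t_{0}$, the hypothesis $\forall k\in\mathbf{N}_{\_},\widehat{x}(k)=\widehat{x}(k+p)$ should yield $\exists t^{\prime}\in I^{x},\forall t\geq t^{\prime},x(t)=x(t+T)$ for $T=ph$. The plan is to mimic closely the proof of Theorem \ref{The21}, page \pageref{The21}, which handled exactly the same implication but with the \emph{eventual} periodicity hypothesis $\forall k\geq k^{\prime},\widehat{x}(k)=\widehat{x}(k+p)$; here the hypothesis is stronger (periodicity, i.e. $k^{\prime}=-1$), so the conclusion should come out with a concrete small value of $t^{\prime}$.

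First I would set $T=ph$ and choose the candidate limit of periodicity. Since $\widehat{x}(k)=\widehat{x}(k+p)$ holds for all $k\in\mathbf{N}_{\_}$, in particular from $k=-1$, the natural choice is $t^{\prime}=t_{0}-\varepsilon$ for an arbitrary $\varepsilon>0$, or more simply any $t^{\prime}<t_{0}$ — actually, recalling Theorem \ref{The113}'s bookkeeping, $t^{\prime}$ corresponding to $k^{\prime}=-1$ is $t_{0}-\varepsilon$. I would then verify $t^{\prime}\in I^{x}$: by \eqref{per350}, $\forall t<t_{0},x(t)=\widehat{x}(-1)=x(-\infty+0)$, so $(-\infty,t_{0})\subset I^{x}$ and hence $t^{\prime}=t_{0}-\varepsilon\in I^{x}$. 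Next, fix an arbitrary $t\geq t^{\prime}$. If $t<t_{0}$, then $t+T\geq t_{0}-\varepsilon+ph$; I would need to check that on the relevant intervals both sides agree — but this is cleaner if instead I just split: for $t<t_{0}$ we have $x(t)=x(-\infty+0)=\widehat{x}(-1)=\widehat{x}(p-1)$ by applying the hypothesis, and $x(t+T)$ lies in some interval $[t_{0}+kh,t_{0}+(k+1)h)$ — here a short index computation is needed. For $t\geq t_{0}$, there is a unique $k\in\mathbf{N}$ with $t\in[t_{0}+kh,t_{0}+(k+1)h)$, whence $t+T=t+ph\in[t_{0}+(k+p)h,t_{0}+(k+1+p)h)$, so
\[
x(t)=\widehat{x}(k)\overset{\eqref{per596}}{=}\widehat{x}(k+p)=x(t+T),
\]
exactly as in the proof of Theorem \ref{The21}. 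Since $t$ was arbitrary, $\forall t\geq t^{\prime},x(t)=x(t+T)$, which is \eqref{per598}.

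The only mildly delicate point — and the ``main obstacle'' such as it is — is the case $t\in[t^{\prime},t_{0})$, i.e. the stretch of time below $t_{0}$ where $x$ is constant equal to $\widehat{x}(-1)$ but $t+T$ may already have entered the ``sampled'' region $[t_{0},\infty)$; one must confirm that the value there is still $\widehat{x}(-1)$. This works because for such $t$, writing $t=t_{0}-\delta$ with $0<\delta\leq\varepsilon$, we get $t+T=t_{0}+ph-\delta\in[t_{0}+(p-1)h,t_{0}+ph)$ (using $\delta\leq\varepsilon<h$, which we may assume by shrinking $\varepsilon$), so $x(t+T)=\widehat{x}(p-1)\overset{\eqref{per596}}{=}\widehat{x}(-1)=x(t)$. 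Alternatively — and this is the slickest route — I would simply invoke Theorem \ref{The21} directly with $k^{\prime}=-1$ to obtain \emph{some} $t^{\prime}\in\mathbf{R}$ with \eqref{per598}, then use that $\widehat{x}$ periodic forces $\widehat{\omega}(\widehat{x})=\widehat{Or}(\widehat{x})$ (Theorem \ref{The130}) together with Theorem \ref{The53} / the limit-of-periodicity bounds to push $t^{\prime}$ into $I^{x}$; but since the hypothesis already gives $k^{\prime}=-1$, the direct index computation above is shorter and self-contained, and I would present that.
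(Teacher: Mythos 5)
Your proposal is correct and follows essentially the same route as the paper: the paper takes $t^{\prime}=t_{0}-h$ (so that the stretch $[t^{\prime},t_{0})$ is exactly the $k=-1$ cell and the single index computation $x(t)=\widehat{x}(k)=\widehat{x}(k+p)=x(t+T)$ covers all $t\geq t^{\prime}$ uniformly), while you take $t^{\prime}=t_{0}-\varepsilon$ with $\varepsilon<h$ and treat the sub-$t_{0}$ case separately via $\widehat{x}(-1)=\widehat{x}(p-1)$, which amounts to the same verification. No gap.
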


\begin{proof}
The hypothesis states that $t_{0}\in\mathbf{R},h>0$ and $p\geq1$ exist such
that (\ref{per350}), (\ref{per596}) hold and we denote $t^{\prime}=t_{0}-h.$
We have%
\begin{equation}
x(-\infty+0)=\widehat{x}(-1), \label{per323}%
\end{equation}%
\begin{equation}
\forall t\leq t^{\prime},x(t)=x(-\infty+0). \label{per324}%
\end{equation}
We fix some arbitrary $t\geq t^{\prime}.$ Then $k\in\mathbf{N}_{\_}$ exists
with $t\in\lbrack t_{0}+kh,t_{0}+(k+1)h)$ wherefrom, for $T=ph$ we infer%
\[
t+T\in\lbrack t_{0}+kh+T,t_{0}+(k+1)h+T)=[t_{0}+(k+p)h,t_{0}+(k+p+1)h).
\]
We finally get%
\[
x(t)=\widehat{x}(k)\overset{(\ref{per596})}{=}\widehat{x}(k+p)=x(t+T)
\]
and, by taking into account (\ref{per324}) also, we infer that $t^{\prime}\in
I^{x}$ exists such that (\ref{per598}) is true.
\end{proof}

\begin{theorem}
\label{The18}If $\widehat{x},x$ are not constant and

i) $t_{0}\in\mathbf{R,}$ $h>0$ exist such that $\widehat{x},x$ fulfill
(\ref{per350}),

ii) $T>0,$ $t^{\prime}\in I^{x}$ exist such that $x$ fulfills (\ref{per598})

then $\frac{T}{h}\in\{1,2,3,...\}$ and $k^{\prime}\in\mathbf{N}_{\_}$ exists
making%
\begin{equation}
\forall k\geq k^{\prime},\widehat{x}(k)=\widehat{x}(k+p), \label{per434}%
\end{equation}%
\begin{equation}
\forall k\in\{-1,0,...,k^{\prime}\},\widehat{x}(k)=\widehat{x}(k^{\prime})
\label{per435}%
\end{equation}
true for $p=\frac{T}{h}.$
\end{theorem}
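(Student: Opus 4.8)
\textbf{Proof plan for Theorem \ref{The18}.}

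The plan is to reduce this statement to the periodic-point version, Theorem \ref{The20_}, which is already available and handles exactly this discrete/continuous correspondence for support sets. First I would apply Theorem \ref{The110} (or rather argue directly with $\omega(x)$): since $\widehat{x},x$ are not constant but $x$ satisfies (\ref{per598}), $x$ is eventually periodic, hence by Theorem \ref{The21} and Theorem \ref{The18}'s hypothesis the signals are \emph{not} eventually constant either (if they were, the relation (\ref{per350}) together with Theorem \ref{The112} would force $\widehat{x},x$ constant, contradiction; more carefully, non-constancy plus eventual periodicity with a period that is not a multiple of $h$ would, via Theorem \ref{The20}, yield eventual constancy). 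This lets me exclude the degenerate case and conclude from Theorem \ref{The118} that $\frac{T}{h}\in\{1,2,3,\dots\}$; set $p=\frac{T}{h}\geq 1$. The point here is that (\ref{per598}) is precisely the eventual periodicity of $x$, and Theorem \ref{The118} (which itself rests on Theorem \ref{The20}) already delivers both $\frac{T}{h}\in\mathbf{N}^{\ast}$ and the existence of $k'\in\mathbf{N}_{\_}$ with $\forall k\geq k',\widehat{x}(k)=\widehat{x}(k+p)$, i.e.\ (\ref{per434}).

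It then remains to obtain (\ref{per435}), the statement that $\widehat{x}$ is \emph{constant on the initial segment} $\{-1,0,\dots,k'\}$, equal to the common value $\widehat{x}(k')$. This is the genuinely new content. The plan is: having (\ref{per434}) for $p$, I would choose $k'$ to be the \emph{least} limit of periodicity, i.e.\ $\widehat{L}^{\widehat{x}}=\{k',k'+1,k'+2,\dots\}$ by Theorem \ref{The77}; and I would show that for this minimal $k'$, the values $\widehat{x}(-1),\dots,\widehat{x}(k')$ are all equal. Translate to real time: (\ref{per350}) and (\ref{per598}) give, by the argument in the second proof of Theorem \ref{The53} (or Theorem \ref{The5}), that the exact bounds of the initial time $=$ limit of periodicity of $x$ form an interval $[t_0',t_0)$ with $I^{x}=(-\infty,t_0)$ and $L^{x}=[t_0',\infty)$. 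The minimal $t'$ is $t_0'$, and $[t_0',t_0)=I^{x}\cap L^{x}$. Now $t_0'\leq t_0-h$ is impossible to improve past a multiple of $h$ (since $x$ is piecewise constant on the grid $t_0+kh$), so $t_0'=t_0-k''h$ for some $k''\geq 1$, and $x$ is constant $=x(-\infty+0)$ on $(-\infty,t_0)$ by definition of $t_0$. Reading this back through (\ref{per350}): $x(-\infty+0)=\widehat{x}(-1)$ and $x$ constant on $[t_0+jh,t_0+(j+1)h)$ for $-1\leq j\leq k'-1$ forces $\widehat{x}(-1)=\widehat{x}(0)=\dots=\widehat{x}(k'-1)=\widehat{x}(k')$ where $k'$ is the index with $t_0+k'h=t_0'+T$... — here I must be careful that the grid index matching $t_0'$ is exactly the $k'$ from (\ref{per434}).

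The main obstacle I anticipate is precisely this index bookkeeping: verifying that the minimal real-time limit of periodicity $t_0'$, when written as $t_0-k''h$, corresponds under (\ref{per350}) to the minimal discrete limit of periodicity $k'$ in such a way that the block of constancy $(-\infty,t_0)$ of $x$ translates to the block $\{-1,0,\dots,k'\}$ of constancy of $\widehat{x}$. The cleanest route is probably to bypass the real-time detour: work directly with (\ref{per434}) for the minimal $k'=\min\widehat{L}^{\widehat{x}}$, suppose toward a contradiction that $\widehat{x}(k'-1)\neq\widehat{x}(k')$ — wait, that is automatic from minimality of $k'$ only if $k'\geq 0$ — and instead argue: if $k'\geq 0$, minimality of $k'$ gives $\widehat{x}(k'-1)\neq\widehat{x}(k'-1+p)$; combine with the structure from Theorem \ref{The132} describing $\widehat{\mathbf{T}}_{\mu}^{\widehat{x}}\cap\{k',k'+1,\dots\}$ as unions of arithmetic progressions of step $p$, plus the fact (Theorem \ref{The24} b) / Theorem \ref{The20_} b)) that $p=\frac{T}{h}$ is forced, to derive a contradiction with $x$ being constant on $(-\infty,t_0)$. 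If $k'=-1$ then (\ref{per435}) is the trivial statement $\widehat{x}(-1)=\widehat{x}(-1)$ and (\ref{per434}) is periodicity of $\widehat{x}$ — but that case is excluded since $\widehat{x}$ is not constant and (by Theorem \ref{The130}-type reasoning) periodicity would be compatible with non-constancy, so in fact $k'=-1$ simply means $\widehat{x}$ periodic and (\ref{per435}) holds vacuously. So the only real work is the arithmetic-progression contradiction for $k'\geq 0$, which I expect to be a short computation once the minimality of $k'$ and the value $p=T/h$ are both in hand.
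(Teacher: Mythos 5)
Your route to the first conclusion is workable but one justification is off: Theorem \ref{The112} only says that constancy of $\widehat{x}$ and of $x$ are equivalent, so it cannot convert ``eventually constant'' into ``constant'' for you, and Theorem \ref{The118} needs non-eventual-constancy as a hypothesis, which you never actually establish (the correct quick argument would be that a non-constant signal satisfying (\ref{per598}) with $t^{\prime}\in I^{x}$ cannot be eventually constant). The paper avoids this detour entirely: since hypothesis ii) supplies $t^{\prime}\in I^{x}$, Theorem \ref{The31} b) applies directly and shows that $T\notin\{h,2h,3h,\dots\}$ would force $x$ (hence $\widehat{x}$) constant, a contradiction; then (\ref{per434}) is verified by a direct grid computation (a sample point $t\geq t^{\prime}$ in the cell $[t_{0}+kh,t_{0}+(k+1)h)$ is mapped by $t\mapsto t+T$ into the cell $[t_{0}+(k+p)h,t_{0}+(k+p+1)h)$), with no appeal to Theorem \ref{The118}.

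The genuine gap is in (\ref{per435}). You propose two routes (one via $k^{\prime}=\min\widehat{L}^{\widehat{x}}$ and the real-time interval $[t_{0}^{\prime},t_{0})=I^{x}\cap L^{x}$ of Theorem \ref{The53}, one via an arithmetic-progression contradiction) and complete neither; you yourself flag the index bookkeeping as the obstacle, and the second sketch is confused about what is being contradicted: minimality of $k^{\prime}$ gives $\widehat{x}(k^{\prime}-1)\neq\widehat{x}(k^{\prime}-1+p)$, which by itself says nothing about $\widehat{x}(k^{\prime}-1)$ versus $\widehat{x}(k^{\prime})$ until you have imported the fact that $\min L^{x}$ lies in $I^{x}$ and matched it to the grid. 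The observation that makes the paper's proof a one-liner is that no minimal limit of periodicity is needed at all: take $k^{\prime}=\max\{k_{1},-1\}$, where $k_{1}$ is the grid index of the $t^{\prime}$ given in hypothesis ii). Then (\ref{per434}) holds for this $k^{\prime}$ by the computation above, and (\ref{per435}) is immediate because $t^{\prime}\in I^{x}$ means $x(t)=x(-\infty+0)$ for all $t\leq t^{\prime}$, hence $\widehat{x}(-1)=\widehat{x}(0)=\dots=\widehat{x}(k^{\prime})=x(-\infty+0)$. Your minimal-$k^{\prime}$ version of (\ref{per435}) is in fact true, but it is strictly harder to prove, and as written it is not proved.
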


\begin{proof}
We presume that $t_{0}\in\mathbf{R},$ $h>0$ exist such that (\ref{per350})
holds. We have also the existence of $T>0,$ $t^{\prime}\in I^{x}$ such that
(\ref{per598}) is true.

If $T\in(0,h)\cup(h,2h)\cup...\cup(qh,(q+1)h)\cup...$ then $\widehat{x},x$ are
both constant from Theorem \ref{The31} b), page \pageref{The31}, contradiction
with the hypothesis. We suppose at this moment that $T\in\{h,2h,3h,...\}$ and
let $p=\frac{T}{h},$ $p\geq1.$ Let $k_{1}\in\mathbf{Z} $ be the number that
fulfills $t^{\prime}\in\lbrack t_{0}+k_{1}h,t_{0}+(k_{1}+1)h)$ and we define
$k^{\prime}=\max\{k_{1},-1\}. $ We have the following possibilities.

Case $k_{1}\leq-2.$

We have $k^{\prime}=-1.$ Let an arbitrary $k\geq k^{\prime}$ for which an
arbitrary, fixed $t\in\lbrack t_{0}+kh,t_{0}+(k+1)h)$ fulfills%
\[
t+T\in\lbrack t_{0}+kh+T,t_{0}+(k+1)h+T)=[t_{0}+(k+p)h,t_{0}+(k+p+1)h)
\]
and moreover%
\begin{equation}
t\geq t_{0}+kh\geq t_{0}+k^{\prime}h=t_{0}-h>t^{\prime}. \label{per745}%
\end{equation}
We can write that%
\[
\widehat{x}(k)=x(t)\overset{(\ref{per598}),(\ref{per745})}{=}x(t+T)=x(t_{0}%
+(k+p)h)=\widehat{x}(k+p),
\]
thus (\ref{per434}) is true and (\ref{per435}) is also true.

Case $k_{1}\geq-1.$

In this situation we have $k^{\prime}=k_{1}.$ Let us take some arbitrary,
fixed $k\geq k^{\prime}.$ We can write%
\begin{equation}
t_{0}+kh\leq t^{\prime}+(k-k_{1})h<t_{0}+(k+1)h, \label{per746}%
\end{equation}%
\begin{equation}
t_{0}+(k+p)h\leq t^{\prime}+(k-k_{1}+p)h<t_{0}+(k+p+1)h, \label{per747}%
\end{equation}
where%
\begin{equation}
t^{\prime}+(k-k_{1})h\geq t^{\prime}. \label{per748}%
\end{equation}
We infer%
\[
\widehat{x}(k)=x(t_{0}+kh)\overset{(\ref{per746})}{=}x(t^{\prime}%
+(k-k_{1})h)\overset{(\ref{per598}),(\ref{per748})}{=}x(t^{\prime}%
+(k-k_{1})h+T)
\]%
\[
=x(t^{\prime}+(k-k_{1}+p)h)\overset{(\ref{per747})}{=}x(t_{0}+(k+p)h)=\widehat
{x}(k+p).
\]
We have proved the truth of (\ref{per434}) and the truth of (\ref{per435})
results from the fact that%
\[
\forall t\leq t^{\prime},\forall k\in\{-1,0,...,k^{\prime}\},\widehat
{x}(k)=x(t)=x(-\infty+0).
\]

\end{proof}

\begin{example}
We define $\widehat{x}\in\widehat{S}^{(1)}$ by
\[
\forall k\in\mathbf{N}_{\_},\widehat{x}(k)=\left\{
\begin{array}
[c]{c}%
1,if\;k\in\{2,4,6,8,...\}\\
0,otherwise
\end{array}
\right.
\]
and $x\in S^{(1)}$ respectively by%
\[
x(t)=\widehat{x}(-1)\cdot\chi_{(-\infty,-4)}(t)\oplus\widehat{x}(0)\cdot
\chi_{\lbrack-4,-2)}(t)\oplus
\]%
\[
\oplus\widehat{x}(1)\cdot\chi_{\lbrack-2,0)}(t)\oplus\widehat{x}(2)\cdot
\chi_{\lbrack0,2)}(t)\oplus...
\]
We have $I^{x}=(-\infty,0),L^{x}=[-2,\infty),$%
\[
\forall t\leq-2,x(t)=0,
\]%
\[
\forall t\geq-2,x(t)=x(t+4),
\]%
\[
\forall k\geq1,\widehat{x}(k)=\widehat{x}(k+2),
\]%
\[
\forall k\in\{-1,0,1\},\widehat{x}(k)=0
\]
thus (\ref{per598})$_{page\;\pageref{per598}}$ is fulfilled with
$T=4,t^{\prime}=-2$ and (\ref{per434}), (\ref{per435}) are true with
$p=2,k^{\prime}=1.$ Furthermore, in this example $h=2.$
\end{example}

\begin{remark}
In Theorem \ref{The18}, the conjunction of (\ref{per434}) with (\ref{per435})
gives a special case of eventual periodicity.
\end{remark}

\begin{remark}
Theorem \ref{The43} and Theorem \ref{The18} represent the periodic version of
Theorem \ref{The21}, page \pageref{The21} and Theorem \ref{The118}, page
\pageref{The118} that refer to eventual periodicity.
\end{remark}

\section{Sums, differences and multiples of periods}

\begin{theorem}
\label{The17_}Let the signals $\widehat{x},x.$

a) We suppose that $\widehat{x}$ has the periods $p,p^{\prime}\geq1,$%
\begin{equation}
\forall k\in\mathbf{N}_{\_},\widehat{x}(k)=\widehat{x}(k+p), \label{pre9}%
\end{equation}%
\begin{equation}
\forall k\in\mathbf{N}_{\_},\widehat{x}(k)=\widehat{x}(k+p^{\prime}).
\label{pre10}%
\end{equation}
Then $p+p^{\prime}\geq1$, $\widehat{x}$ has the period $p+p^{\prime},$%
\begin{equation}
\forall k\in\mathbf{N}_{\_},\widehat{x}(k)=\widehat{x}(k+p+p^{\prime})
\label{pre11}%
\end{equation}
and if $p>p^{\prime},$ then $p-p^{\prime}\geq1$ and $\widehat{x}$ has the
period $p-p^{\prime},$%
\begin{equation}
\forall k\in\mathbf{N}_{\_},\widehat{x}(k)=\widehat{x}(k+p-p^{\prime}).
\label{pre12}%
\end{equation}

b) Let $T,T^{\prime}>0,$ $t^{\prime}\in I^{x}$ arbitrary with%
\begin{equation}
\forall t\geq t^{\prime},x(t)=x(t+T), \label{pre13}%
\end{equation}%
\begin{equation}
\forall t\geq t^{\prime},x(t)=x(t+T^{\prime}) \label{pre14}%
\end{equation}
fulfilled. We have on one hand that $T+T^{\prime}>0$ and%
\begin{equation}
\forall t\geq t^{\prime},x(t)=x(t+T+T^{\prime}), \label{pre15}%
\end{equation}
and on the other hand that $T>T^{\prime}$ implies $T-T^{\prime}>0$ and%
\begin{equation}
\forall t\geq t^{\prime},x(t)=x(t+T-T^{\prime}). \label{pre16}%
\end{equation}

\end{theorem}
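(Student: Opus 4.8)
The statement is the periodic-signal analogue of Theorem~\ref{The79} (and, at the level of points, of Theorem~\ref{The68}). Since the two items are completely parallel, the plan is to prove item~b) in full detail and then obtain item~a) by the same computation with $\mathbf{N}_{\_}$ and integer shifts in place of $\mathbf{R}$ and real shifts; in fact the cleanest route is to observe that b) with the relation
\[
x(t)=\widehat{x}(-1)\cdot\chi_{(-\infty,t_{0})}(t)\oplus\widehat{x}(0)\cdot\chi_{\lbrack t_{0},t_{0}+1)}(t)\oplus\dots
\]
and Theorem~\ref{The43}, Theorem~\ref{The18} specializes to a), but to keep the argument self-contained I would just repeat the short direct computation. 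So the core is b).

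\textbf{Main steps for b).} First I would fix an arbitrary $t\geq t^{\prime}$ and keep it fixed throughout. For the sum $T+T^{\prime}$: apply (\ref{p243}) at the point $t$ to get $x(t)=x(t+T)$, then note $t+T\geq t^{\prime}$ (because $T>0$) and apply (\ref{p244}) at the point $t+T$ to get $x(t+T)=x(t+T+T^{\prime})$; chaining these yields $x(t)=x(t+T+T^{\prime})$, which is (\ref{p245}), and $T+T^{\prime}>0$ is immediate. For the difference, assume $T>T^{\prime}$, so $T-T^{\prime}>0$; the point of the argument is to produce the value $x(t+T-T^{\prime})$ by a detour through $t+T$. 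Concretely: since $t+T-T^{\prime}\geq t^{\prime}$ (as $T>T^{\prime}$ and $t\geq t^{\prime}$), we may apply (\ref{p244}) at the point $t+T-T^{\prime}$ to obtain $x(t+T-T^{\prime})=x(t+T-T^{\prime}+T^{\prime})=x(t+T)$; then $x(t+T)=x(t)$ by (\ref{p243}) applied at $t$. Hence $x(t)=x(t+T-T^{\prime})$, which is (\ref{p246}). Since $t\geq t^{\prime}$ was arbitrary, (\ref{p245}) and (\ref{p246}) hold; the limit of periodicity $t^{\prime}$ is unchanged in both conclusions.

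\textbf{Main steps for a).} Fix an arbitrary $k\geq k^{\prime}$. By (\ref{p239}), $\widehat{x}(k)=\widehat{x}(k+p)$; since $k+p\geq k^{\prime}$, (\ref{p240}) gives $\widehat{x}(k+p)=\widehat{x}(k+p+p^{\prime})$, so $\widehat{x}(k)=\widehat{x}(k+p+p^{\prime})$, proving (\ref{p241}), and $p+p^{\prime}\geq 1$ trivially. If $p>p^{\prime}$ then $p-p^{\prime}\geq 1$ and $k+p-p^{\prime}\geq k^{\prime}$; applying (\ref{p240}) at $k+p-p^{\prime}$ gives $\widehat{x}(k+p-p^{\prime})=\widehat{x}(k+p)$, and (\ref{p239}) at $k$ gives $\widehat{x}(k+p)=\widehat{x}(k)$, so $\widehat{x}(k)=\widehat{x}(k+p-p^{\prime})$, which is (\ref{p242}).

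\textbf{Expected obstacle.} There is essentially no obstacle: the only thing to be careful about is bookkeeping the inequalities $k+p-p^{\prime}\geq k^{\prime}$ (resp. $t+T-T^{\prime}\geq t^{\prime}$) so that each invocation of the hypotheses is legitimate — i.e. making sure we always evaluate the periodicity relations at arguments that are $\geq$ the limit of periodicity. This is exactly why the detour through $t+T$ (rather than trying to "subtract $T^{\prime}$ directly at $t$") is needed, and it is the same device already used in the proof of Theorem~\ref{The68}. Accordingly I would simply write: \emph{This is the special case of Theorem~\ref{The68}, page~\pageref{The68}, and of Theorem~\ref{The79}, page~\pageref{The79}, obtained by taking the limit of periodicity $k^{\prime}=-1$, $t^{\prime}\in I^{x}$; the direct verification above is included for completeness.}
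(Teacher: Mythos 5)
Your proposal is correct and takes essentially the same approach as the paper: the paper's entire proof is the one-line observation that this is the special case of Theorem \ref{The79} with limit of periodicity $k^{\prime}=-1$ (discrete time) and $t^{\prime}\in I^{x}$ (real time), and the explicit chaining computation you spell out is exactly the computation the paper gives when proving Theorem \ref{The79} itself. The only cosmetic slip is that you cite the equation labels of Theorem \ref{The79} rather than (\ref{pre9})--(\ref{pre16}); the argument is unaffected.
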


\begin{proof}
This is a special case of Theorem \ref{The79}, page \pageref{The79} with the
limit of periodicity $k^{\prime}=-1$ at a) and $t^{\prime}\in I^{x}$ at b).
\end{proof}

\begin{theorem}
\label{The17}We consider the signals $\widehat{x},x.$

a) Let $p,k_{1}\geq1.$ We have that $p^{\prime}=k_{1}p$ fulfills $p^{\prime
}\geq1$ and%
\begin{equation}
\forall k\in\mathbf{N}_{\_},\widehat{x}(k)=\widehat{x}(k+p) \label{per635}%
\end{equation}
implies%
\begin{equation}
\forall k\in\mathbf{N}_{\_},\widehat{x}(k)=\widehat{x}(k+p^{\prime}).
\label{per636}%
\end{equation}

b) We suppose that $T>0,$ $t^{\prime}\in I^{x}\mathbf{,}$ $k_{1}\geq1$ are
given. We infer that $T^{\prime}=k_{1}T$ fulfills $T^{\prime}>0$ and%
\begin{equation}
\forall t\geq t^{\prime},x(t)=x(t+T) \label{per639}%
\end{equation}
implies%
\begin{equation}
\forall t\geq t^{\prime},x(t)=x(t+T^{\prime}). \label{per640}%
\end{equation}

\end{theorem}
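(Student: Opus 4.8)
\textbf{Proof plan for Theorem \ref{The17}.}

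The plan is to derive this statement directly from Theorem \ref{The17_}, page \pageref{The17_}, by induction on $k_{1}$, exactly in the spirit in which Theorem \ref{The33}, page \pageref{The33} was derived from Theorem \ref{The79}. First I would observe that $p\geq 1$ and $k_{1}\geq 1$ give $p^{\prime}=k_{1}p\geq 1$, and likewise $T>0$ and $k_{1}\geq 1$ give $T^{\prime}=k_{1}T>0$, so the claims about $p^{\prime},T^{\prime}$ being admissible periods are immediate. The substance is the two implications (\ref{per635})$\Longrightarrow$(\ref{per636}) and (\ref{per639})$\Longrightarrow$(\ref{per640}).

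For part a), I would argue by induction on $k_{1}$. The base case $k_{1}=1$ is trivial since then $p^{\prime}=p$ and (\ref{per636}) coincides with (\ref{per635}). For the inductive step, suppose $\forall k\in\mathbf{N}_{\_},\widehat{x}(k)=\widehat{x}(k+k_{1}p)$ holds together with the hypothesis $\forall k\in\mathbf{N}_{\_},\widehat{x}(k)=\widehat{x}(k+p)$. Both $k_{1}p\geq 1$ and $p\geq 1$ are periods of $\widehat{x}$ with limit of periodicity $-1$, so Theorem \ref{The17_} a) applied with $p$ replaced by $k_{1}p$ and $p^{\prime}$ replaced by $p$ (using (\ref{pre11})) yields $\forall k\in\mathbf{N}_{\_},\widehat{x}(k)=\widehat{x}(k+k_{1}p+p)=\widehat{x}(k+(k_{1}+1)p)$, which is (\ref{per636}) for $k_{1}+1$. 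Part b) is entirely parallel: the base case $k_{1}=1$ is trivial, and in the inductive step one applies Theorem \ref{The17_} b) with the two periods $k_{1}T$ and $T$ (both admissible with the common limit of periodicity $t^{\prime}\in I^{x}$, using the hypothesis $\forall t\geq t^{\prime},x(t)=x(t+T)$ and the inductive hypothesis $\forall t\geq t^{\prime},x(t)=x(t+k_{1}T)$) to conclude from (\ref{pre15}) that $\forall t\geq t^{\prime},x(t)=x(t+(k_{1}+1)T)$.

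There is essentially no obstacle here: the only point requiring a moment of care is that at each inductive step the two periods being added share the \emph{same} limit of periodicity ($-1$ in the discrete case, the fixed $t^{\prime}\in I^{x}$ in the real case), which is precisely the form in which Theorem \ref{The17_} is stated, so the hypotheses of that theorem are met verbatim. Alternatively, and perhaps more economically, one may simply remark that Theorem \ref{The17} is the special case of Theorem \ref{The33}, page \pageref{The33} in which the limit of periodicity is taken to be $k^{\prime}=-1$ for $\widehat{x}$ and $t^{\prime}\in I^{x}$ for $x$; since (\ref{per163})-type eventual periodicity with limit $-1$ (resp.\ $t^{\prime}\in I^{x}$) is exactly periodicity, the conclusion transfers directly. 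I would present the short inductive argument as the proof and note this reduction as a remark.
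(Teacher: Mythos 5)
Your proposal is correct and matches the paper's approach: the paper's proof is the one-line remark that the theorem is a consequence of the addition assertion of Theorem \ref{The17_}, which is exactly the repeated-addition induction you spell out. Making the induction on $k_{1}$ explicit, and noting the alternative reduction to Theorem \ref{The33} with limit of periodicity $-1$ (resp. $t^{\prime}\in I^{x}$), only elaborates what the paper leaves implicit.
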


\begin{proof}
This is a consequence of Theorem \ref{The17_}, the first assertion from a), b)
that refers to the addition.
\end{proof}

\begin{remark}
We can express the statements of Theorem \ref{The17} in an equivalent way
under the form: if $p\in\widehat{P}^{\widehat{x}}$ then
$\{p,2p,3p,...\}\subset\widehat{P}^{\widehat{x}}$ and if $T\in P^{x}$ then
$\{T,2T,3T,...\}\subset P^{x}.$
\end{remark}

\section{The set of the periods}

\begin{theorem}
\label{The44}a) We suppose that for $\widehat{x}\in\widehat{S}^{(n)},$ the set
$\widehat{P}^{\widehat{x}}$ is non empty. Some $\widetilde{p}\geq1$ exists
then with the property%
\begin{equation}
\widehat{P}^{\widehat{x}}=\{\widetilde{p},2\widetilde{p},3\widetilde{p},...\}.
\label{per222}%
\end{equation}

b) Let $x\in S^{(n)}$ be not constant and we suppose that the set $P^{x}$ is
not empty. Then $\widetilde{T}>0$ exists such that%
\begin{equation}
P^{x}=\{\widetilde{T},2\widetilde{T},3\widetilde{T},...\}. \label{per224}%
\end{equation}

\end{theorem}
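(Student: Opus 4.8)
\textbf{Proof proposal for Theorem \ref{The44}.}

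The plan is to mirror exactly the structure of the proof of Theorem \ref{The124}, page \pageref{The124}, since Theorem \ref{The44} is simply its specialization to the periodic case (limit of periodicity $k^{\prime}=-1$ in the discrete case, $t^{\prime}\in I^{x}$ in the continuous case). For part a), I would let $\widetilde{p}$ denote the minimum of the non-empty set $\widehat{P}^{\widehat{x}}$ (the minimum exists because $\widehat{P}^{\widehat{x}}\subset\{1,2,3,\dots\}$ is a non-empty set of positive integers). The inclusion $\{\widetilde{p},2\widetilde{p},3\widetilde{p},\dots\}\subset\widehat{P}^{\widehat{x}}$ is immediate from Corollary \ref{Cor7}, page \pageref{Cor7}. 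For the reverse inclusion, suppose against all reason that some $p^{\prime}\in\widehat{P}^{\widehat{x}}$ exists with $p^{\prime}\notin\{\widetilde{p},2\widetilde{p},3\widetilde{p},\dots\}$; then there is $k\geq1$ with $k\widetilde{p}<p^{\prime}<(k+1)\widetilde{p}$, so $1\leq p^{\prime}-k\widetilde{p}<\widetilde{p}$, and by Theorem \ref{The17_}, page \pageref{The17_}, and Theorem \ref{The17}, page \pageref{The17} (the difference of periods is a period, and multiples of periods are periods, all with limit of periodicity $k^{\prime}=-1$), we get $p^{\prime}-k\widetilde{p}\in\widehat{P}^{\widehat{x}}$, contradicting the minimality of $\widetilde{p}$. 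Hence $\widehat{P}^{\widehat{x}}=\{\widetilde{p},2\widetilde{p},3\widetilde{p},\dots\}$.

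For part b), I would proceed in two steps as in Theorem \ref{The124} b). First, I would show that $\min P^{x}$ exists. Suppose against all reason it does not; then since $P^{x}$ is non-empty there is a strictly decreasing sequence $T_{k}\in P^{x},k\in\mathbf{N}$ converging to $T=\inf P^{x}$. As $x$ is not constant, by Lemma \ref{Lem38}, page \pageref{Lem38} the property $\forall t\in\mathbf{R},\exists t^{\prime\prime}>t,x(t^{\prime\prime}-0)\neq x(t^{\prime\prime})$ holds. The periodicity hypothesis gives, for each $k$, a limit of periodicity, and these may be taken to be a common $t^{\prime}$ by Theorem \ref{The141}, page \pageref{The141}. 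Pick $t^{\prime\prime}>t^{\prime}$ with $x(t^{\prime\prime}-0)\neq x(t^{\prime\prime})$; then Lemma \ref{Lem10}, page \pageref{Lem10} gives $x(t^{\prime\prime}+T_{k}-0)\neq x(t^{\prime\prime}+T_{k})$ for all $k$, while Lemma \ref{Lem4_}, page \pageref{Lem4_} gives an $N$ with $x(t^{\prime\prime}+T_{k}-0)=x(t^{\prime\prime}+T_{k})=x(t^{\prime\prime}+T)$ for all $k\geq N$, a contradiction. So $\widetilde{T}=\min P^{x}>0$ exists.

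Second, I would establish $P^{x}=\{\widetilde{T},2\widetilde{T},3\widetilde{T},\dots\}$. The inclusion $\supseteq$ follows from Corollary \ref{Cor7}. For $\subseteq$, suppose $T^{\prime}\in P^{x}\setminus\{\widetilde{T},2\widetilde{T},3\widetilde{T},\dots\}$; then $k_{1}\geq1$ exists with $k_{1}\widetilde{T}<T^{\prime}<(k_{1}+1)\widetilde{T}$, hence $0<T^{\prime}-k_{1}\widetilde{T}<\widetilde{T}$, and by Theorems \ref{The17_} and \ref{The17} (difference and multiples of periods are periods, with $t^{\prime}\in I^{x}$) we get $T^{\prime}-k_{1}\widetilde{T}\in P^{x}$, contradicting the minimality of $\widetilde{T}$. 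I do not anticipate a serious obstacle here since every ingredient is already available; the only point requiring care is the continuous-time step showing $\min P^{x}$ exists, where one must correctly invoke Theorem \ref{The141} to make the limit of periodicity uniform over the sequence $T_{k}$ before applying Lemmas \ref{Lem10} and \ref{Lem4_}.
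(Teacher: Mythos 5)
Your argument is correct and is, in substance, the paper's own: the paper disposes of Theorem \ref{The44} in one line by observing that it is the special case of Theorem \ref{The124} in which the limit of periodicity is $-1$ (discrete time), respectively lies in $I^{x}$ (real time), whereas you unroll the proof of Theorem \ref{The124} and re-execute it in this special case. Nothing in your unrolled version fails, and the ingredients you cite (Corollary \ref{Cor7} or Theorems \ref{The17_}, \ref{The17} for closure under multiples and differences, Theorem \ref{The141} to make the limit of periodicity uniform over the sequence $T_{k}$, then Lemmas \ref{Lem10} and \ref{Lem4_}) are exactly those used in the proof of Theorem \ref{The124}. The one place where you should be more careful is the appeal to Lemma \ref{Lem38}: that lemma (like Theorem \ref{The124} b) itself) is stated for signals that are not \emph{eventually} constant, while Theorem \ref{The44} b) only assumes that $x$ is not constant. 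The gap is easily bridged --- a periodic signal satisfies $Or(x)=\omega(x)$ (Theorem \ref{The130}), so if it were eventually constant its orbit would be a singleton and it would be constant --- but this implication should be stated explicitly, since it is precisely what legitimizes both your use of Lemma \ref{Lem38} and the paper's reduction to Theorem \ref{The124}.
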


\begin{proof}
This is a special case of Theorem \ref{The124}, page \pageref{The124} with
$\widehat{L}^{\widehat{x}}=\mathbf{N}_{\_}$ at a) and $I^{x}\cap L^{x}%
\neq\varnothing$ at b).
\end{proof}

\begin{remark}
If in Theorem \ref{The44}, item a) $\widehat{x}$ is constant, then
$\widetilde{p}=1$ and $\widehat{P}^{\widehat{x}}=\{1,2,3,...\},$ thus
(\ref{per222}) is still true. Unlike this situation, if $x$ is constant, then
(\ref{per224}) is false and we get $P^{x}=(0,\infty)$ instead.
\end{remark}

\begin{theorem}
We suppose that the relation between $\widehat{x}$ and $x$ is given by%
\begin{equation}%
\begin{array}
[c]{c}%
x(t)=\widehat{x}(-1)\cdot\chi_{(-\infty,t_{0})}(t)\oplus\widehat{x}%
(0)\cdot\chi_{\lbrack t_{0},t_{0}+h)}(t)\oplus\\
\oplus\widehat{x}(1)\cdot\chi_{\lbrack t_{0}+h,t_{0}+2h)}(t)\oplus
...\oplus\widehat{x}(k)\cdot\chi_{\lbrack t_{0}+kh,t_{0}+(k+1)h)}(t)\oplus...
\end{array}
\end{equation}
where $t_{0}\in\mathbf{R}$ and $h>0.$ If $\widehat{x},$ $x$ are periodic, two
possibilities exist:

a) $\widehat{x},$ $x$ are both constant, $\widehat{P}^{\widehat{x}%
}=\{1,2,3,...\}$ and $P^{x}=(0,\infty)$;

b) $\widehat{x},$ $x$ are both non-constant, $\min\widehat{P}^{\widehat{x}%
}=p>1$ is the prime period of $\widehat{x}$ and $\min P^{x}=T=ph$ is the prime
period of $x$.
\end{theorem}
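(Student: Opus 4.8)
The plan is to reduce this statement to the already-established machinery for the sets of periods, exactly as was done in Theorem~\ref{The87} (the eventually-periodic analogue) and Theorem~\ref{The115} (the periodic-points analogue). First I would observe that, under the stated relation between $\widehat{x}$ and $x$, the two signals are constant simultaneously: indeed $\widehat{x}$ is constant iff $\widehat{Or}(\widehat{x})=\{\mu\}$ for some $\mu$, and by Theorem~\ref{The112} (page~\pageref{The112}) this is equivalent to $Or(x)=\{\mu\}$, i.e. to $x$ constant. So the dichotomy ``both constant / both non-constant'' is legitimate, and it remains to treat the two cases separately.

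In case a), $\widehat{x}$ constant forces $\widehat{P}^{\widehat{x}}=\{1,2,3,\dots\}$ (every $p\geq1$ is a period, and $1$ is the prime period), which is exactly the content already noted in the remark following Theorem~\ref{The44}. Likewise $x$ constant gives $P^{x}=(0,\infty)$: any $T>0$ satisfies $\forall t\geq t^{\prime},x(t)=x(t+T)$ with $t^{\prime}\in I^{x}=\mathbf{R}$, and there is no prime period. This settles a) with no real work.

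For case b), both $\widehat{x}$ and $x$ are periodic and non-constant. Theorem~\ref{The43} (page~\pageref{The43}) gives $p\in\widehat{P}^{\widehat{x}}\Longrightarrow ph\in P^{x}$, and Theorem~\ref{The18} (page~\pageref{The18}), applied in the contrapositive-free direction available to us since non-constancy rules out the ``$T\notin h\mathbf{Z}$'' branch, gives the converse $T\in P^{x}\Longrightarrow \frac{T}{h}\in\widehat{P}^{\widehat{x}}$. Thus $p\mapsto ph$ is a bijection between $\widehat{P}^{\widehat{x}}$ and $P^{x}$. By Theorem~\ref{The44} (for $\widehat{x}$, which is non-constant here) we have $\widehat{P}^{\widehat{x}}=\{\widetilde{p},2\widetilde{p},3\widetilde{p},\dots\}$ for some $\widetilde{p}\geq1$, and since $\widehat{x}$ is \emph{not} constant, $\widetilde{p}>1$ (if $\widetilde{p}=1$, then $\forall k\in\mathbf{N}_{\_},\widehat{x}(k)=\widehat{x}(k+1)$, making $\widehat{x}$ constant, contradiction). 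Applying Theorem~\ref{The44} again to $x$ (non-constant) gives $P^{x}=\{\widetilde{T},2\widetilde{T},3\widetilde{T},\dots\}$ for some $\widetilde{T}>0$; matching the two descriptions through the bijection $p\mapsto ph$ forces $\widetilde{T}=\widetilde{p}h$.

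The main obstacle I anticipate is not in the algebra of the period sets but in making the ``both constant or both non-constant'' reduction fully airtight, and in verifying that the hypotheses of Theorems~\ref{The43} and~\ref{The18} are met — in particular that one may legitimately invoke Theorem~\ref{The18} once non-constancy has excluded the degenerate case $T\in(0,h)\cup(h,2h)\cup\dots$ (via Theorem~\ref{The31}~b)), so that $\frac{T}{h}$ is a positive integer and $\widehat{x}$ inherits periodicity with that period (using that the auxiliary conjunction (\ref{per434})--(\ref{per435}) collapses to genuine periodicity with limit of periodicity $-1$ when $\widehat{x}$ is already periodic). Everything past that point is a direct citation of Theorem~\ref{The44}, exactly parallel to the proof of Theorem~\ref{The87}.
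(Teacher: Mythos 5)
Your proposal is correct and follows essentially the same route as the paper: the paper's proof likewise cites Theorem \ref{The43} for $p\in\widehat{P}^{\widehat{x}}\Longrightarrow ph\in P^{x}$, Theorem \ref{The18} for the converse, and Theorem \ref{The44} for the arithmetic-progression form of both period sets, concluding $\widetilde{T}=\widetilde{p}h$. The only point you elaborate beyond the paper is the upgrade from the eventual periodicity delivered by Theorem \ref{The18} to genuine periodicity of $\widehat{x}$ with period $\frac{T}{h}$ (which is cleanest via Theorem \ref{The141}, since $\widehat{x}$ is already periodic with limit of periodicity $-1$); the paper passes over this silently, so your extra care is welcome but does not change the argument.
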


\begin{proof}
The proof is analogue with the proof of Theorem \ref{The115}, page
\pageref{The115}.

b) Theorem \ref{The43}, page \pageref{The43} shows that $p\in\widehat
{P}^{\widehat{x}}\Longrightarrow T=ph\in P^{x}$ and from Theorem \ref{The18},
page \pageref{The18} we get that $T\in P^{x}\Longrightarrow p=\frac{T}{h}%
\in\widehat{P}^{\widehat{x}}.$ We suppose, see Theorem \ref{The44}, that
$\widehat{P}^{\widehat{x}}=\{\widetilde{p},2\widetilde{p},3\widetilde{p},...\}
$ and $P^{x}=\{\widetilde{T},2\widetilde{T},3\widetilde{T},...\}.$ Then
$\widetilde{T}=\widetilde{p}h.$
\end{proof}

\section{Necessity conditions of periodicity}

\begin{theorem}
\label{The58}Let $\widehat{x}\in\widehat{S}^{(n)}$ with $\widehat{Or}%
(\widehat{x})=\{\mu^{1},...,\mu^{s}\},s\geq2$ be periodic with the period
$p\geq1.$ We have the existence, for any $i\in\{1,...,s\},$ of $n_{1}%
^{i},n_{2}^{i},...,n_{k_{i}}^{i}\in\{-1,0,...,p-2\}$, $k_{i}\geq1,$ such that%
\begin{equation}
\widehat{\mathbf{T}}_{\mu^{i}}^{\widehat{x}}=\underset{k\in\mathbf{N}}{%
{\displaystyle\bigcup}
}\{n_{1}^{i}+kp,n_{2}^{i}+kp,...,n_{k_{i}}^{i}+kp\}. \label{pre105}%
\end{equation}

\end{theorem}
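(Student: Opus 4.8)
The plan is to reduce Theorem \ref{The58} to the already-established necessity result for individual periodic points, namely Theorem \ref{The54}, page \pageref{The54}. The key observation is that if $\widehat{x}$ is periodic with the period $p\geq1$, then by the equivalence of \eqref{per50} and \eqref{per152} (Theorem \ref{The101}, page \pageref{The101}) every value $\mu\in\widehat{Or}(\widehat{x})$ satisfies
\[
\forall k\in\widehat{\mathbf{T}}_{\mu}^{\widehat{x}},\{k+zp|z\in\mathbf{Z}\}\cap\mathbf{N}_{\_}\subset\widehat{\mathbf{T}}_{\mu}^{\widehat{x}},
\]
that is, each $\mu$ is a periodic point of $\widehat{x}$ with the period $p$. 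So the first step is to fix an arbitrary $i\in\{1,\dots,s\}$, set $\mu=\mu^{i}$, and record that the periodicity hypothesis on $\widehat{x}$ hands us exactly the hypothesis \eqref{pre97} of Theorem \ref{The54} for this $\mu^{i}$.

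Next I would address the non-constancy assumption needed to invoke Theorem \ref{The54}. That theorem requires $\widehat{x}$ non constant, which here is guaranteed since $s\geq2$ (if $\widehat{x}$ were constant we would have $\widehat{Or}(\widehat{x})=\{\mu\}$, contradicting $s\geq2$; see the Remark following Theorem \ref{The54}). With $\mu^{i}\in\widehat{Or}(\widehat{x})$ and \eqref{pre97} in hand, Theorem \ref{The54} produces $n_{1},n_{2},\dots,n_{k_{1}}\in\{-1,0,\dots,p-2\}$, $k_{1}\geq1$, with
\[
\widehat{\mathbf{T}}_{\mu^{i}}^{\widehat{x}}=\underset{k\in\mathbf{N}}{\bigcup}\{n_{1}+kp,n_{2}+kp,\dots,n_{k_{1}}+kp\}.
\]
Relabelling these numbers as $n_{1}^{i},n_{2}^{i},\dots,n_{k_{i}}^{i}$ (and $k_{1}$ as $k_{i}$) gives \eqref{pre105} for this particular $i$.

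The final step is simply to note that the index $i$ was arbitrary, so the conclusion holds for all $i\in\{1,\dots,s\}$, which is precisely the statement of Theorem \ref{The58}. I do not anticipate any genuine obstacle here: the content is entirely in Theorem \ref{The54}, and the present theorem is the ``signal-level'' repackaging obtained by running that point-level result once for each of the finitely many values of the orbit — exactly parallel to how Theorem \ref{The132} was deduced from Theorem \ref{The69} in the eventually periodic setting. The only points requiring a word of care are the invocation of Theorem \ref{The101} to pass from periodicity of $\widehat{x}$ to periodicity of each $\mu^{i}$, and the remark that $s\geq2$ supplies the non-constancy hypothesis of Theorem \ref{The54}.
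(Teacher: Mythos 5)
Your proposal is correct and follows essentially the same route as the paper: the paper's own proof is the one-line observation that every $\mu^{i}$ is periodic with the period $p$, so Theorem \ref{The54} applies for each $i$. Your additional remarks — invoking Theorem \ref{The101} to pass from the periodicity of $\widehat{x}$ to that of each point, and noting that $s\geq2$ supplies the non-constancy hypothesis of Theorem \ref{The54} — merely make explicit what the paper leaves implicit.
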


\begin{proof}
If $\widehat{x}$ is periodic with the period $p,$ then we can apply for any
$i\in\{1,...,s\}$ Theorem \ref{The54}, page \pageref{The54}, since any
$\mu^{i}$ is periodic with the period $p$.
\end{proof}

\begin{theorem}
\label{The57}Let $x\in S^{(n)}$ non constant with $Or(x)=\{\mu^{1},...,\mu
^{s}\}$ and we denote $\mu^{1}=x(-\infty+0).$ We suppose that $x$ is periodic
with the period $T>0$. Then $t_{0}$ and $a_{1}^{i},b_{1}^{i},a_{2}^{i}%
,b_{2}^{i},...,a_{k_{i}}^{i},b_{k_{i}}^{i}\in\mathbf{R}$ exist, $k_{i}\geq1,$
$i\in\{1,...,s\}$ such that%
\begin{equation}
\forall t<t_{0},x(t)=\mu^{1}, \label{pre109}%
\end{equation}%
\begin{equation}
x(t_{0})\neq\mu^{1}, \label{pre110}%
\end{equation}%
\begin{equation}
t_{0}<a_{1}^{1}<b_{1}^{1}<a_{2}^{1}<b_{2}^{1}<...<a_{k_{1}}^{1}<b_{k_{1}}%
^{1}=t_{0}+T, \label{pre108}%
\end{equation}%
\begin{equation}
\lbrack a_{1}^{1},b_{1}^{1})\cup\lbrack a_{2}^{1},b_{2}^{1})\cup...\cup\lbrack
a_{k_{1}}^{1},b_{k_{1}}^{1})=\mathbf{T}_{\mu^{1}}^{x}\cap\lbrack t_{0}%
,t_{0}+T),
\end{equation}%
\begin{equation}
\mathbf{T}_{\mu^{1}}^{x}=%
\begin{array}
[c]{c}%
(-\infty,t_{0})\cup\underset{k\in\mathbf{N}}{%
{\displaystyle\bigcup}
}([a_{1}^{1}+kT,b_{1}^{1}+kT)\cup\lbrack a_{2}^{1}+kT,b_{2}^{1}+kT)\cup...\\
...\cup\lbrack a_{k_{1}}^{1}+kT,b_{k_{1}}^{1}+kT))
\end{array}
\label{pre111}%
\end{equation}
and for any $i\in\{2,...,s\},$%
\begin{equation}
t_{0}\leq a_{1}^{i}<b_{1}^{i}<a_{2}^{i}<b_{2}^{i}<...<a_{k_{i}}^{i}<b_{k_{i}%
}^{i}<t_{0}+T, \label{pre114}%
\end{equation}%
\begin{equation}
\lbrack a_{1}^{i},b_{1}^{i})\cup\lbrack a_{2}^{i},b_{2}^{i})\cup...\cup\lbrack
a_{k_{i}}^{i},b_{k_{i}}^{i})=\mathbf{T}_{\mu^{i}}^{x}\cap\lbrack t_{0}%
,t_{0}+T),
\end{equation}%
\begin{equation}
\mathbf{T}_{\mu^{i}}^{x}=\underset{k\in\mathbf{N}}{%
{\displaystyle\bigcup}
}([a_{1}^{i}+kT,b_{1}^{i}+kT)\cup\lbrack a_{2}^{i}+kT,b_{2}^{i}+kT)\cup
...\cup\lbrack a_{k_{1}}^{i}+kT,b_{k_{1}}^{i}+kT)) \label{pre117}%
\end{equation}
are fulfilled.
\end{theorem}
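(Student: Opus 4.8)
The statement is the real-time analogue of Theorem \ref{The58}, and the natural approach is to reduce it, value by value, to the necessity results already proved for periodic \emph{points}, namely Theorem \ref{The51_} (for the value $\mu^{1}=x(-\infty+0)$) and Theorem \ref{The52 copy(1)} (for the values $\mu^{i}\neq x(-\infty+0)$). First I would use Theorem \ref{The49}, page \pageref{The49}, together with the definition of a periodic signal (Definition \ref{Def18}) to note that if $x$ is periodic with the period $T$ and the initial time $=$ limit of periodicity $t^{\prime}\in I^{x}$, then every value $\mu\in Or(x)$ is a periodic point of $x$ with the same period $T$ and the same $t^{\prime}$; concretely, $\forall t\in\mathbf{T}_{\mu}^{x}\cap[t^{\prime},\infty),\{t+zT|z\in\mathbf{Z}\}\cap[t^{\prime},\infty)\subset\mathbf{T}_{\mu}^{x}$. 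This is exactly the hypothesis needed to invoke the two cited theorems.

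\textbf{Key steps.} Since $x$ is not constant, there is a unique $t_{0}$ with $\forall t<t_{0},x(t)=x(-\infty+0)$ and $x(t_{0})\neq x(-\infty+0)$; this gives (\ref{pre109}), (\ref{pre110}) and also $I^{x}=(-\infty,t_{0})$, hence $t^{\prime}<t_{0}$. Next I would split $Or(x)$ into $\mu^{1}=x(-\infty+0)$ and $\mu^{2},\dots,\mu^{s}\neq x(-\infty+0)$. Applying Theorem \ref{The51_} to $\mu^{1}$ yields numbers $t_{0},a_{1}^{1},b_{1}^{1},\dots,a_{k_{1}}^{1},b_{k_{1}}^{1}$ satisfying (\ref{pre108}), the decomposition of $\mathbf{T}_{\mu^{1}}^{x}\cap[t_{0},t_{0}+T)$ into the half-open intervals, and (\ref{pre111}) — note that Theorem \ref{The51_} is stated precisely for the point $\mu=x(-\infty+0)$, and that its conclusion (\ref{p51}) is $t_{0}<a_{1}<\dots<b_{k_{1}}=t_{0}+T$, which is (\ref{pre108}). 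For each $i\in\{2,\dots,s\}$, applying Theorem \ref{The52 copy(1)} to $\mu^{i}$ (whose hypothesis $\mu^{i}\neq x(-\infty+0)$ holds) yields $a_{1}^{i},b_{1}^{i},\dots,a_{k_{i}}^{i},b_{k_{i}}^{i}$ with $t_{0}\le a_{1}^{i}<b_{1}^{i}<\dots<a_{k_{i}}^{i}<b_{k_{i}}^{i}<t_{0}+T$, which is (\ref{pre114}), together with the decomposition of $\mathbf{T}_{\mu^{i}}^{x}\cap[t_{0},t_{0}+T)$ and (\ref{pre117}). Assembling all of these families over $i=1,\dots,s$ gives exactly the asserted conclusion. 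One small bookkeeping point: Theorem \ref{The52 copy(1)} produces its own $t_{0}$ defined by (\ref{p55}), (\ref{p56}); since that $t_{0}$ is characterized by the same two conditions as the one obtained for $\mu^{1}$, the $t_{0}$'s coincide, so the single $t_{0}$ in the statement is well defined.

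\textbf{Main obstacle.} The genuinely nontrivial work is already hidden inside Theorems \ref{The51_} and \ref{The52 copy(1)}, so the remaining difficulty here is mostly organizational: verifying that the $t_{0}$ furnished by the two source theorems is literally the same real number for all values (immediate, as noted), and that the hypotheses of those theorems — in particular $t^{\prime}\in I^{x}$ and the point-periodicity relation (\ref{pre60})/(\ref{per8741})/(\ref{pre30}) — are supplied by the periodicity of $x$ via Theorem \ref{The49} b). The one place requiring a touch of care is the boundary behavior at $t_{0}+T$: for $\mu^{1}$ we must get $b_{k_{1}}^{1}=t_{0}+T$ (the last interval closes exactly at $t_{0}+T$), whereas for the other values we get the strict inequality $b_{k_{i}}^{i}<t_{0}+T$; both are already the precise conclusions of Theorems \ref{The51_} and \ref{The52 copy(1)} respectively, so no new argument is needed, but the plan should make explicit that this asymmetry is inherited rather than re-derived. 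I would therefore present the proof as: "This follows by applying Theorem \ref{The54} (via Theorem \ref{The49}) to set up point periodicity, then Theorem \ref{The51_} to $\mu^{1}$ and Theorem \ref{The52 copy(1)} to $\mu^{2},\dots,\mu^{s}$, and collecting the resulting interval families."
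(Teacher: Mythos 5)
Your proposal is correct and follows essentially the same route as the paper: the periodicity of $x$ with period $T$ makes every value $\mu^{i}\in Or(x)$ a periodic point with the same period, after which Theorem \ref{The51_} handles $\mu^{1}=x(-\infty+0)$ and Theorem \ref{The52 copy(1)} handles $\mu^{2},\dots,\mu^{s}$. Your extra remarks on the shared $t_{0}$ and the boundary asymmetry at $t_{0}+T$ are sound but add nothing beyond what the cited theorems already guarantee.
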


\begin{proof}
As $x$ is non constant, periodic with the period $T,$ all of $\mu^{1}%
,...,\mu^{s}$ are periodic, with the period $T$. Theorem \ref{The51_}, page
\pageref{The51_} shows then the existence of $t_{0},a_{1}^{1},b_{1}^{1}%
,a_{2}^{1},b_{2}^{1},...,a_{k_{1}}^{1},b_{k_{1}}^{1}\in\mathbf{R}$, $k_{1}%
\geq1$ such that (\ref{pre109}),...,(\ref{pre111}) are true. The existence of
$t_{0},a_{1}^{i},b_{1}^{i},a_{2}^{i},b_{2}^{i},...,$ $a_{k_{i}}^{i},b_{k_{i}%
}^{i}\in\mathbf{R}$, $k_{i}\geq1$ for $i\in\{2,...,s\}$ such that
(\ref{pre109}), (\ref{pre110}), (\ref{pre114}),..., (\ref{pre117}) hold
results from Theorem \ref{The52 copy(1)}, page \pageref{The52 copy(1)}.
\end{proof}

\begin{example}
\label{Exa11}The periodic signal $x\in S^{(1)},$%
\[
x(t)=\chi_{(-\infty,0)}(t)\oplus\chi_{\lbrack1,2)}(t)\oplus\chi_{\lbrack
3,5)}(t)\oplus\chi_{\lbrack6,7)}(t)\oplus\chi_{\lbrack8,10)}(t)\oplus
\chi_{\lbrack11,12)}(t)\oplus...
\]
fulfills $\mu^{1}=1,\mu^{2}=0,k_{1}=k_{2}=2,T=5,t_{0}=a_{1}^{2}=0,a_{1}%
^{1}=b_{1}^{2}=1,b_{1}^{1}=a_{2}^{2}=2,a_{2}^{1}=b_{2}^{2}=3,b_{2}^{1}%
=t_{0}+5=5. $
\end{example}

\section{Sufficiency conditions of periodicity}

\begin{theorem}
\label{The59}Let $\widehat{x}\in\widehat{S}^{(n)}$, $\widehat{Or}(\widehat
{x})=\{\mu^{1},...,\mu^{s}\}$ and $p\geq1.$ We suppose that $\forall
i\in\{1,...,s\},$ $n_{1}^{i},n_{2}^{i},...,n_{k_{i}}^{i}\in
\{-1,0,...,p-2\},k_{i}\geq1$ exist such that%
\begin{equation}
\widehat{\mathbf{T}}_{\mu^{i}}^{\widehat{x}}=\underset{k\in\mathbf{N}}{%
{\displaystyle\bigcup}
}\{n_{1}^{i}+kp,n_{2}^{i}+kp,...,n_{k_{i}}^{i}+kp\}. \label{pre119}%
\end{equation}
Then $\forall i\in\{1,...,s\},$
\begin{equation}
\forall k\in\widehat{\mathbf{T}}_{\mu^{i}}^{\widehat{x}},\{k+zp|z\in
\mathbf{Z}\}\cap\mathbf{N}_{\_}\subset\widehat{\mathbf{T}}_{\mu^{i}}%
^{\widehat{x}}. \label{pre120}%
\end{equation}

\end{theorem}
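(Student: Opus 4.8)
The statement asserts that if each $\mu^i\in\widehat{Or}(\widehat{x})$ has support set of the special periodic shape \eqref{pre119}, then each $\mu^i$ is a periodic point with period $p$. My plan is to reduce this immediately to a result already proved for a single point. Fix an arbitrary $i\in\{1,\dots,s\}$; we must verify \eqref{pre120} for $\mu^i$.

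The key observation is that \eqref{pre119} is exactly hypothesis \eqref{pre102} of Theorem~\ref{The55}, page~\pageref{The55}, for the point $\mu=\mu^i$, with the same $p$ and the same numbers $n_1^i,\dots,n_{k_i}^i\in\{-1,0,\dots,p-2\}$. Indeed, Theorem~\ref{The55} takes a single point $\mu\in\widehat{Or}(\widehat{x})$, a period $p\geq1$ and integers $n_1,\dots,n_{k_1}\in\{-1,0,\dots,p-2\}$ with $k_1\geq1$ satisfying $\widehat{\mathbf{T}}_{\mu}^{\widehat{x}}=\bigcup_{k\in\mathbf{N}}\{n_1+kp,\dots,n_{k_1}+kp\}$, and concludes $\forall k\in\widehat{\mathbf{T}}_{\mu}^{\widehat{x}},\{k+zp\mid z\in\mathbf{Z}\}\cap\mathbf{N}_{\_}\subset\widehat{\mathbf{T}}_{\mu}^{\widehat{x}}$. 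This is precisely the assertion \eqref{pre120} for $\mu=\mu^i$. So the plan is simply: for each $i$, apply Theorem~\ref{The55} to $\mu^i$, obtaining \eqref{pre120}; since $i$ was arbitrary, the statement holds for all $i\in\{1,\dots,s\}$.

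There is essentially no obstacle here, since the decomposition of $\widehat{Or}(\widehat{x})$ into finitely many distinct points $\mu^1,\dots,\mu^s$ is given in the hypothesis, and the per-point statement is already available; the only thing to check is the trivial bookkeeping that the index ranges and the constraints $k_i\geq1$, $n_j^i\in\{-1,0,\dots,p-2\}$ match those in Theorem~\ref{The55} verbatim, which they do. One may note in passing that, since each $\widehat{\mathbf{T}}_{\mu^i}^{\widehat{x}}$ in \eqref{pre119} is infinite (being a nonempty union of arithmetic progressions with common difference $p$), each $\mu^i$ in fact lies in $\widehat{\omega}(\widehat{x})$, so that \eqref{pre120} indeed expresses genuine (non-trivial) periodicity of $\mu^i$ with period $p$; this matches Remark~\ref{Rem18}, page~\pageref{Rem18}, and the general observation that periodic points are omega limit points. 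Thus the proof is just an $s$-fold invocation of Theorem~\ref{The55} written for $k^{\prime}=-1$, exactly as in the analogous argument for Theorem~\ref{The119}, page~\pageref{The119}, where Theorem~\ref{The72} played the role that Theorem~\ref{The55} plays here. $\blacksquare$
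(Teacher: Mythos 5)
Your proof is correct and coincides with the paper's own argument: the paper likewise disposes of Theorem \ref{The59} by applying Theorem \ref{The55} to each $\mu^{i}$, $i\in\{1,...,s\}$, since the hypothesis \eqref{pre119} is exactly the hypothesis of that per-point result. The additional observations about $\mu^{i}\in\widehat{\omega}(\widehat{x})$ are accurate but not needed.
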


\begin{proof}
If $\forall i\in\{1,...,s\},$ $n_{1}^{i},n_{2}^{i},...,n_{k_{i}}^{i}%
\in\{-1,0,...,p-2\},k_{i}\geq1$ exist such that (\ref{pre119}) holds, we infer
from Theorem \ref{The55}, page \pageref{The55} that all of $\mu^{1}%
,...,\mu^{s}$ are periodic with the period $p$, i.e. $\widehat{x}$ is periodic
with the period $p$.
\end{proof}

\begin{theorem}
\label{The60}The signal $x\in S^{(n)}$ is given, such that $Or(x)=\{\mu
^{1},...,\mu^{s}\},s\geq2$ and we suppose that the initial value of $x$ is
$\mu^{1}.$ We ask that $T>0$ and the points $t_{0},a_{1}^{i},b_{1}^{i}%
,a_{2}^{i},b_{2}^{i},...,a_{k_{i}}^{i},b_{k_{i}}^{i}\in\mathbf{R,}$ $k_{i}%
\geq1$ exist, $i\in\{1,...,s\}$ such that%
\begin{equation}
t_{0}<a_{1}^{1}<b_{1}^{1}<a_{2}^{1}<b_{2}^{1}<...<a_{k_{1}}^{1}<b_{k_{1}}%
^{1}=t_{0}+T, \label{pre121}%
\end{equation}%
\begin{equation}
\mathbf{T}_{\mu^{1}}^{x}=%
\begin{array}
[c]{c}%
(-\infty,t_{0})\cup\underset{k\in\mathbf{N}}{%
{\displaystyle\bigcup}
}([a_{1}^{1}+kT,b_{1}^{1}+kT)\cup\lbrack a_{2}^{1}+kT,b_{2}^{1}+kT)\cup...\\
...\cup\lbrack a_{k_{1}}^{1}+kT,b_{k_{1}}^{1}+kT))
\end{array}
\label{pre122}%
\end{equation}
and for any $i\in\{2,...,s\},$%
\begin{equation}
b_{k_{i}}^{i}-T<t_{0}\leq a_{1}^{i}<b_{1}^{i}<a_{2}^{i}<b_{2}^{i}%
<...<a_{k_{i}}^{i}<b_{k_{i}}^{i}, \label{pre125}%
\end{equation}%
\begin{equation}
\mathbf{T}_{\mu^{i}}^{x}=\underset{k\in\mathbf{N}}{%
{\displaystyle\bigcup}
}([a_{1}^{i}+kT,b_{1}^{i}+kT)\cup\lbrack a_{2}^{i}+kT,b_{2}^{i}+kT)\cup
...\cup\lbrack a_{k_{1}}^{i}+kT,b_{k_{1}}^{i}+kT)). \label{pre126}%
\end{equation}
For any $t^{\prime}\in\lbrack a_{k_{1}}^{1}-T,t_{0}),$ we have $t^{\prime}\in
I^{x}$ and%
\begin{equation}
\forall i\in\{1,...,s\},\forall t\in\mathbf{T}_{\mu^{i}}^{x}\cap\lbrack
t^{\prime},\infty),\{t+zT|z\in\mathbf{Z}\}\cap\lbrack t^{\prime}%
,\infty)\subset\mathbf{T}_{\mu^{i}}^{x}. \label{pre128}%
\end{equation}

\end{theorem}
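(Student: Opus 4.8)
\textbf{Proof plan for Theorem \ref{The60}.} The plan is to deduce the periodicity of $x$ from the periodicity of each of its orbit points $\mu^1,\dots,\mu^s$, using the already-established equivalence from Theorem \ref{The101}, page \pageref{The101}, between (\ref{pre563}) and the conjunction ``each $\mu\in Or(x)$ satisfies (\ref{per145})'' (and the other forms in Theorem \ref{The49}). Concretely, I would first fix an arbitrary $t'\in[a_{k_1}^1-T,t_0)$ and note that from (\ref{pre121}) and (\ref{pre122}) we get $I^x=(-\infty,t_0)$, hence $t'\in I^x$; this handles the first half of the conclusion. For the second half, the idea is to treat $\mu^1$ separately from the remaining $\mu^i$, $i\in\{2,\dots,s\}$, since $\mu^1=x(-\infty+0)$ while $\mu^i\neq x(-\infty+0)$ for $i\geq 2$, and the support-set descriptions in the hypothesis are correspondingly of the two shapes appearing in Theorem \ref{The11}, page \pageref{The11} (for $\mu^1$), and Theorem \ref{The11_}, page \pageref{The11_} (for $\mu^i$, $i\geq 2$).

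The core of the argument is therefore to apply, for $\mu^1$, Theorem \ref{The11} with the given $t_0,a_1^1,b_1^1,\dots,a_{k_1}^1,b_{k_1}^1$ and $T$: its hypotheses (\ref{p126}) and (\ref{p64}) are exactly (\ref{pre121}) and (\ref{pre122}), and the conclusion (\ref{pre42}) gives, for any $t'\in[a_{k_1}^1-T,t_0)$,
\[
\forall t\in\mathbf{T}_{\mu^1}^{x}\cap\lbrack t',\infty),\{t+zT|z\in\mathbf{Z}\}\cap\lbrack t',\infty)\subset\mathbf{T}_{\mu^1}^{x}.
\]
Then, for each $i\in\{2,\dots,s\}$, I would apply Theorem \ref{The11_} with the given $t_0,a_1^i,b_1^i,\dots,a_{k_i}^i,b_{k_i}^i$ and $T$; its hypotheses (\ref{p127}), (\ref{p128}), (\ref{pre45}), (\ref{pre46}) match (\ref{pre109})--(\ref{pre110}) of our setting (the statements $\forall t<t_0,x(t)=x(-\infty+0)$ and $x(t_0)\neq x(-\infty+0)$, which are forced by $\mu^1=x(-\infty+0)$ together with (\ref{pre121})--(\ref{pre122})), together with (\ref{pre125}) and (\ref{pre126}). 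Its conclusion (\ref{pre48}) gives, for any $t'\in[b_{k_i}^i-T,t_0)$,
\[
\forall t\in\mathbf{T}_{\mu^i}^{x}\cap\lbrack t',\infty),\{t+zT|z\in\mathbf{Z}\}\cap\lbrack t',\infty)\subset\mathbf{T}_{\mu^i}^{x}.
\]

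The delicate point — and the one I expect to be the main obstacle — is the \emph{uniformity} of the limit of periodicity: I must check that a single $t'$ works simultaneously for all $s$ points. Theorem \ref{The11} gives the range $[a_{k_1}^1-T,t_0)$ for $\mu^1$, while Theorem \ref{The11_} gives $[b_{k_i}^i-T,t_0)$ for $\mu^i$, $i\geq 2$. From the constraints $b_{k_i}^i-T<t_0$ in (\ref{pre125}) and from $b_{k_i}^i\leq b_{k_1}^1=t_0+T$ (which follows since all intervals $[a_j^i,b_j^i)$ for $i\geq 2$ lie in $[t_0,t_0+T)$ by (\ref{pre125}), while $b_{k_1}^1=t_0+T$), one gets $b_{k_i}^i-T\leq a_{k_1}^1-T$ is not automatic, so I would instead argue that $a_{k_1}^1-T\geq t_0-T$ while $b_{k_i}^i-T< t_0$, and verify directly that $[a_{k_1}^1-T,t_0)\subset[b_{k_i}^i-T,t_0)$ for each $i$, i.e. $b_{k_i}^i-T\le a_{k_1}^1-T$; this reduces to $b_{k_i}^i\le a_{k_1}^1$, which holds because for $i\ge 2$ the last interval $[a_{k_i}^i,b_{k_i}^i)$ sits inside $[t_0,t_0+T)$ and, by (\ref{pre121}), $a_{k_1}^1<b_{k_1}^1=t_0+T$ with $a_{k_1}^1$ being the last left endpoint of $\mathbf{T}_{\mu^1}^x\cap[t_0,t_0+T)$ — here I would use that $[b_{k_1-1}^1,a_{k_1}^1)$ and complementary intervals partition $[t_0,t_0+T)$ between $\mathbf{T}_{\mu^1}^x$ and its complement, so every $b_{k_i}^i$ for $i\ge2$ lies in the complement of $[a_{k_1}^1,b_{k_1}^1)$, forcing $b_{k_i}^i\le a_{k_1}^1$. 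Once $[a_{k_1}^1-T,t_0)\subset\bigcap_{i=1}^s(\text{valid ranges})$ is confirmed, the chosen $t'$ satisfies all $s$ displayed inclusions, and together with $t'\in I^x$ this is precisely (\ref{pre128}). If the inclusion of ranges turns out to need a slightly different common sub-interval, the fallback is to invoke Lemma \ref{Lem30}, page \pageref{Lem30}, to enlarge each individual limit of periodicity to their common maximum and then note that this maximum still lies below $t_0$, hence in $I^x$; but I expect the direct interval computation above to suffice.
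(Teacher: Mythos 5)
Your proposal is correct and follows essentially the same route as the paper: apply Theorem \ref{The11} to $\mu^1$ and Theorem \ref{The11_} to each $\mu^i$, $i\ge 2$, then verify that $[a_{k_1}^1-T,t_0)$ is contained in every individual range of admissible limits of periodicity. The only difference is that the paper delegates the uniformity step ($b_{k_i}^i\le a_{k_1}^1$ for $i\ge 2$) to Corollary \ref{Cor4}, whereas you derive it directly from the disjointness of the support sets inside $[t_0,t_0+T)$; both arguments are sound.
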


\begin{proof}
We suppose that $T$ and $t_{0},a_{1}^{1},b_{1}^{1},a_{2}^{1},b_{2}%
^{1},...,a_{k_{1}}^{1},b_{k_{1}}^{1}, $ $k_{1}\geq1$ exist such that
(\ref{pre121}), (\ref{pre122}) are true and let $t^{\prime}\in\lbrack
a_{k_{1}}^{1}-T,t_{0})$ arbitrary, fixed. From Theorem \ref{The11}, page
\pageref{The11}, we have that for an arbitrary $t_{1}^{\prime}\in\lbrack
a_{k_{1}}^{1}-T,t_{0}),$ the statements $t_{1}^{\prime}\in I^{x},$%
\begin{equation}
\forall t\in\mathbf{T}_{\mu^{1}}^{x}\cap\lbrack t_{1}^{\prime},\infty
),\{t+zT|z\in\mathbf{Z}\}\cap\lbrack t_{1}^{\prime},\infty)\subset
\mathbf{T}_{\mu^{1}}^{x} \label{p129}%
\end{equation}
hold. We suppose furthermore that $\forall i\in\{2,...,s\},$ $a_{1}^{i}%
,b_{1}^{i},a_{2}^{i},b_{2}^{i},...,a_{k_{i}}^{i},b_{k_{i}}^{i},$ $k_{i}\geq1$
exist with the property that (\ref{pre125}), (\ref{pre126}) are true. As far
as%
\[
\forall t<t_{0},x(t)=\mu^{1},
\]%
\[
x(t_{0})\overset{(\ref{pre121}),(\ref{pre122})}{\neq}\mu^{1},
\]
Theorem \ref{The11_}, page \pageref{The11_} shows that for any $t_{i}^{\prime
}\in\lbrack b_{k_{i}}^{i}-T,t_{0}),$ the statements $t_{i}^{\prime}\in I^{x},$%
\begin{equation}
\forall t\in\mathbf{T}_{\mu^{i}}^{x}\cap\lbrack t_{i}^{\prime},\infty
),\{t+zT|z\in\mathbf{Z}\}\cap\lbrack t_{i}^{\prime},\infty)\subset
\mathbf{T}_{\mu^{i}}^{x} \label{p70}%
\end{equation}
hold also, i.e. $\mu^{1},...,\mu^{s}$ are all periodic with the same period
$T.$ Corollary \ref{Cor4}, page \pageref{Cor4}, states basically that%
\[
\lbrack a_{k_{1}}^{1}-T,t_{0})=[a_{k_{1}}^{1}-T,t_{0})\cap\lbrack b_{k_{2}%
}^{2}-T,t_{0})\cap...\cap\lbrack b_{k_{s}}^{s}-T,t_{0}),
\]
i.e. in (\ref{p129}),(\ref{p70}) we can make the choice $t_{i}^{\prime
}=t^{\prime},i\in\{1,...,s\}.$ In such circumstances $t^{\prime}\in I^{x}$ and
(\ref{pre128}) holds$.$
\end{proof}

\section{A special case}

\begin{theorem}
\label{The59 copy(1)}Let $\widehat{x}\in\widehat{S}^{(n)}$ with $\widehat
{Or}(\widehat{x})=\{\mu^{1},...,\mu^{s}\}$ and $p\geq1.$ We suppose that
$\forall i\in\{1,...,s\},$ $n^{i}\in\{-1,0,...,p-2\}$ exists such that%
\begin{equation}
\widehat{\mathbf{T}}_{\mu^{i}}^{\widehat{x}}=\{n^{i},n^{i}+p,n^{i}+2p,...\}.
\end{equation}

a) We have $\forall i\in\{1,...,s\},$
\begin{equation}
\forall k\in\widehat{\mathbf{T}}_{\mu^{i}}^{\widehat{x}},\{k+zp|z\in
\mathbf{Z}\}\cap\mathbf{N}_{\_}\subset\widehat{\mathbf{T}}_{\mu^{i}}%
^{\widehat{x}}.
\end{equation}

b) Any $p^{\prime}\geq1$ making $\forall i\in\{1,...,s\},$
\begin{equation}
\forall k\in\widehat{\mathbf{T}}_{\mu^{i}}^{\widehat{x}},\{k+zp^{\prime}%
|z\in\mathbf{Z}\}\cap\mathbf{N}_{\_}\subset\widehat{\mathbf{T}}_{\mu^{i}%
}^{\widehat{x}}%
\end{equation}
true fulfills $p^{\prime}\in\{p,2p,3p,...\},$ i.e. $p$ is the prime period of
$\widehat{x}$.
\end{theorem}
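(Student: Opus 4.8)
\textbf{Proof plan for Theorem \ref{The59 copy(1)}.}
The strategy is to recognize this as the discrete-time ``single-block'' special case of the general sufficiency and necessity results already established, applied componentwise to each omega limit point. First I would prove part a). Fix $i\in\{1,...,s\}$. The hypothesis $\widehat{\mathbf{T}}_{\mu^{i}}^{\widehat{x}}=\{n^{i},n^{i}+p,n^{i}+2p,...\}$ is precisely the hypothesis of Theorem \ref{The120} a), page \pageref{The120} (or equivalently of Theorem \ref{The55}, page \pageref{The55}, with $k_{1}=1$ and $n_{1}=n^{i}$), which yields $\forall k\in\widehat{\mathbf{T}}_{\mu^{i}}^{\widehat{x}},\{k+zp|z\in\mathbf{Z}\}\cap\mathbf{N}_{\_}\subset\widehat{\mathbf{T}}_{\mu^{i}}^{\widehat{x}}$. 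Since $i$ was arbitrary, a) follows immediately. (One might also invoke Theorem \ref{The134}, page \pageref{The134}, which is itself the multi-point version, but the cleanest route is to apply the single-point result $s$ times.)

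For part b), the approach mirrors the ``prime period'' arguments of Theorems \ref{The120} b) and \ref{The134} b). Suppose $p^{\prime}\geq1$ satisfies the displayed periodicity condition for every $i$. Pick any index $i$; since $n^{i}\in\widehat{\mathbf{T}}_{\mu^{i}}^{\widehat{x}}$, applying the hypothesis on $p^{\prime}$ with $k=n^{i}$ and $z=1$ gives $n^{i}+p^{\prime}\in\widehat{\mathbf{T}}_{\mu^{i}}^{\widehat{x}}=\{n^{i},n^{i}+p,n^{i}+2p,...\}$, hence $p^{\prime}$ is a positive multiple of $p$ — in particular $p^{\prime}\geq p$. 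Thus every $p^{\prime}$ in the stated family is $\geq p$. Combining this with part a) (which shows $p\in\widehat{P}^{\widehat{x}}$, since by Theorem \ref{The101}, page \pageref{The101}, the componentwise periodicity of all points of $\widehat{Or}(\widehat{x})$ with common period $p$ is equivalent to $\widehat{x}$ being periodic with period $p$) and with Theorem \ref{The44}, page \pageref{The44} (which gives $\widehat{P}^{\widehat{x}}=\{\widetilde{p},2\widetilde{p},3\widetilde{p},...\}$ for some $\widetilde{p}\geq1$), we conclude $\widetilde{p}=\min\widehat{P}^{\widehat{x}}=p$ and therefore $p^{\prime}\in\{p,2p,3p,...\}$ as claimed.

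I do not anticipate a genuine obstacle here, since all the substantive work has been done in the cited theorems; the only point requiring slight care is the bookkeeping in b) — specifically, one must note that the condition ``$p^{\prime}$ works for \emph{every} $i$'' is used twice: once (for a single convenient $i$, using the element $n^{i}$) to force $p^{\prime}$ to be a multiple of $p$, and implicitly via part a) together with Theorem \ref{The101} to know that $p$ itself lies in $\widehat{P}^{\widehat{x}}$ so that Theorem \ref{The44} applies and $p=\min\widehat{P}^{\widehat{x}}$. An alternative, even shorter, packaging of the whole proof is simply to observe that the hypothesis is the $k_{1}=1$, $k^{\prime}=-1$ instance of Theorem \ref{The134}, page \pageref{The134}, from which both a) and b) are direct specializations; I would present the proof in that compressed form, noting the reduction to Theorem \ref{The134} with $k^{\prime}=-1$.
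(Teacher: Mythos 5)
Your proposal is correct and follows essentially the same route as the paper, which disposes of a) as the $k_{i}=1$ instance of Theorem \ref{The59} (equivalently, Theorem \ref{The55} applied to each $\mu^{i}$) and of the whole statement as the $k^{\prime}=-1$ instance of Theorem \ref{The134} — exactly the ``compressed packaging'' you suggest at the end. Your explicit argument for b) (taking $k=n^{i}$, $z=1$ to force $n^{i}+p^{\prime}\in\{n^{i},n^{i}+p,n^{i}+2p,...\}$) is the same computation the paper performs inside Theorem \ref{The120} b), and it in fact already yields $p^{\prime}\in\{p,2p,3p,...\}$ directly, so your subsequent detour through Theorems \ref{The101} and \ref{The44} is harmless but redundant.
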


\begin{proof}
Item a) is a special case of Theorem \ref{The59}, page \pageref{The59} with
$k_{i}=1,i=\overline{1,s}.$ The theorem is also a special case of Theorem
\ref{The134}, page \pageref{The134}.
\end{proof}

\begin{theorem}
\label{The75 copy(1)}Let $x\in S^{(n)}$ with $Or(x)=\{\mu^{1},...,\mu^{s}\},$
$s\geq2,$ $T>0$ and the points $t_{0},t_{1},...,t_{s-1}\in\mathbf{R}$ with the
following property:%
\begin{equation}
t_{0}<t_{1}<...<t_{s-1}<t_{0}+T, \label{pre3}%
\end{equation}%
\begin{equation}
\mathbf{T}_{\mu^{1}}^{x}=(-\infty,t_{0})\cup\lbrack t_{s-1},t_{0}%
+T)\cup\lbrack t_{s-1}+T,t_{0}+2T)\cup\lbrack t_{s-1}+2T,t_{0}+3T)\cup...
\label{pre4}%
\end{equation}%
\begin{equation}
\mathbf{T}_{\mu^{2}}^{x}=[t_{0},t_{1})\cup\lbrack t_{0}+T,t_{1}+T)\cup\lbrack
t_{0}+2T,t_{1}+2T)\cup... \label{pre5}%
\end{equation}%
\[
...
\]%
\begin{equation}
\mathbf{T}_{\mu^{s}}^{x}=[t_{s-2},t_{s-1})\cup\lbrack t_{s-2}+T,t_{s-1}%
+T)\cup\lbrack t_{s-2}+2T,t_{s-1}+2T)\cup... \label{pre6}%
\end{equation}

a) For any $t^{\prime}\in\lbrack t_{s-1}-T,t_{0}),$ the properties $t^{\prime
}\in I^{x},$%
\begin{equation}
\forall i\in\{1,...,s\},\forall t\in\mathbf{T}_{\mu^{i}}^{x}\cap\lbrack
t^{\prime},\infty),\{t+zT|z\in\mathbf{Z}\}\cap\lbrack t^{\prime}%
,\infty)\subset\mathbf{T}_{\mu^{i}}^{x} \label{pre8}%
\end{equation}
are fulfilled.

b) Let $t^{\prime\prime}\in\lbrack t_{s-1}-T,t_{0})$ arbitrary. For any
$T^{\prime}>0$ such that%
\begin{equation}
\forall i\in\{1,...,s\},\forall t\in\mathbf{T}_{\mu^{i}}^{x}\cap\lbrack
t^{\prime\prime},\infty),\{t+zT^{\prime}|z\in\mathbf{Z}\}\cap\lbrack
t^{\prime\prime},\infty)\subset\mathbf{T}_{\mu^{i}}^{x},
\end{equation}
we have $T^{\prime}\in\{T,2T,3T,...\}.$
\end{theorem}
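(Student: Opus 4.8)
The statement is Theorem~\ref{The75 copy(1)}, giving both a sufficiency criterion for periodicity of $x$ (part a) and a prime-period identification (part b) in the special case where each support set $\mathbf{T}_{\mu^{i}}^{x}$ is a single interval repeated with period $T$ (so that each $k_{i}=1$). The plan is to derive part a) directly from Theorem~\ref{The60} and to derive part b) by combining Theorem~\ref{The44} with an elementary ``short-gap'' argument of exactly the kind used in Theorem~\ref{The75__} b) and Theorem~\ref{The116} b).

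For part a), first I would match the hypotheses (\ref{pre3})--(\ref{pre6}) to those of Theorem~\ref{The60}. The point is that (\ref{pre4})--(\ref{pre6}) are precisely (\ref{pre122}) and (\ref{pre126}) with all $k_{i}=1$: take $a_{1}^{1}=t_{s-1}$, $b_{1}^{1}=t_{0}+T$ for $\mu^{1}$, and $a_{1}^{i}=t_{i-1}$, $b_{1}^{i}=t_{i}$ for $i\in\{2,\dots,s\}$. One must check the ordering requirements: $t_{0}<a_{1}^{1}<b_{1}^{1}=t_{0}+T$ follows from $t_{0}<t_{s-1}<t_{0}+T$ in (\ref{pre3}); and $b_{1}^{i}-T<t_{0}\le a_{1}^{i}<b_{1}^{i}$ for $i\ge 2$ reduces to $t_{i}-T<t_{0}\le t_{i-1}<t_{i}$, which again follows from (\ref{pre3}) since $t_{i}<t_{0}+T$ gives $t_{i}-T<t_{0}$ and $t_{i-1}\ge t_{0}$. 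With these identifications Theorem~\ref{The60} applies and yields: for any $t^{\prime}\in[a_{1}^{1}-T,t_{0})=[t_{s-1}-T,t_{0})$ we have $t^{\prime}\in I^{x}$ and (\ref{pre8}) holds. That is exactly part a).

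For part b), fix an arbitrary $t^{\prime\prime}\in[t_{s-1}-T,t_{0})$ and suppose $T^{\prime}>0$ satisfies the displayed periodicity condition for all $\mu^{i}$. I would argue by contradiction, assuming $T^{\prime}<T$. Looking at the single interval $[t_{s-1},t_{0}+T)\subset\mathbf{T}_{\mu^{1}}^{x}$ (the ``generating'' interval of $\mu^{1}$, of length $t_{0}+T-t_{s-1}$), exactly as in the proof of Theorem~\ref{The75__} b) one checks that $\max\{t_{s-1},\,(t_{0}+T)-T^{\prime}\}<\min\{t_{0}+T,\,t_{s-1}+T-T^{\prime}\}$ — each of the four required strict inequalities ($t_{s-1}<t_{0}+T$, $t_{s-1}<t_{s-1}+T-T^{\prime}$, $(t_{0}+T)-T^{\prime}<t_{0}+T$, $(t_{0}+T)-T^{\prime}<t_{s-1}+T-T^{\prime}$) is immediate from $t_{s-1}<t_{0}+T$ and $T^{\prime}>0$. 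Pick $t$ in that nonempty interval; then $t\in[t_{s-1},t_{0}+T)\subset\mathbf{T}_{\mu^{1}}^{x}\cap[t^{\prime\prime},\infty)$, so $t+T^{\prime}\in\mathbf{T}_{\mu^{1}}^{x}$ by the hypothesis on $T^{\prime}$; but the second inequality places $t+T^{\prime}$ in $[t_{0}+T,\,t_{s-1}+T)$, which by (\ref{pre4}) (the gap between $[t_{s-1}+T,t_{0}+2T)$ and the previous block ends at $t_{0}+T$, and $[t_{0}+T,t_{s-1}+T)$ lies in other $\mathbf{T}_{\mu^{i}}^{x}$'s, hence not in $\mathbf{T}_{\mu^{1}}^{x}$) is disjoint from $\mathbf{T}_{\mu^{1}}^{x}$ — a contradiction. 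Hence $T^{\prime}\ge T$, so $T=\min P^{x}$. Since $x$ is not constant here (the orbit has $s\ge 2$ points), Theorem~\ref{The44} b) gives $P^{x}=\{T,2T,3T,\dots\}$, whence $T^{\prime}\in\{T,2T,3T,\dots\}$.

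\textbf{Main obstacle.} The conceptual content is light — both parts are ``special case'' corollaries — so the only real care needed is bookkeeping: verifying that the interval data in (\ref{pre3})--(\ref{pre6}) literally satisfy the ordering hypotheses of Theorem~\ref{The60} (in particular the $b_{k_{i}}^{i}-T<t_{0}$ conditions for $i\ge 2$), and, in part b), choosing the right generating interval and keeping straight which half-open block of $\mathbf{T}_{\mu^{1}}^{x}$ the shifted point lands in. I would double-check the edge case $s=2$ separately to be sure the chain $t_{0}<t_{1}<t_{0}+T$ still supplies everything needed, and I would note (as the theorem tacitly requires) that $x$ being non-constant is what licenses the use of Theorem~\ref{The44} b) rather than the constant-signal exception $P^{x}=(0,\infty)$.
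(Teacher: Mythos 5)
Your proposal is correct and follows essentially the same route as the paper: part a) is obtained by matching the data to Theorem \ref{The60} (the paper's proof is exactly this reduction, and also cites Theorem \ref{The135} for part b), whose proof is the same short-gap argument you carry out explicitly, followed by the arithmetic-progression structure of the set of periods). One bookkeeping slip: with the support sets as written in (\ref{pre5})--(\ref{pre6}), the correct labels for $i\in\{2,\dots,s\}$ are $a_{1}^{i}=t_{i-2}$ and $b_{1}^{i}=t_{i-1}$ (your choice $a_{1}^{i}=t_{i-1}$, $b_{1}^{i}=t_{i}$ would require a nonexistent $t_{s}$), but the inequality checks go through identically after this relabeling.
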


\begin{proof}
Item a) is a special case of Theorem \ref{The60}, page \pageref{The60}. By
definition we have $t_{0}=a_{1}^{s},t_{1}=a_{1}^{s-1}=b_{1}^{s},...,t_{s-2}%
=a_{1}^{2}=b_{1}^{3},t_{s-1}=a_{1}^{1}=b_{1}^{2},t_{0}+T=b_{1}^{1}. $ The
Theorem is also a special case of Theorem \ref{The135}, page \pageref{The135}.
\end{proof}

\section{Periodicity vs eventual periodicity}

\begin{theorem}
\label{The121}a) If $\widehat{x}$ is periodic, then for any $\widetilde{k}%
\in\mathbf{N}$, $\widehat{\sigma}^{\widetilde{k}}(\widehat{x})$ is periodic
and $\widehat{P}^{\widehat{x}}=\widehat{P}^{\widehat{\sigma}^{\widetilde{k}%
}(\widehat{x})}.$

b) We suppose that $x$ is periodic. For arbitrary $\widetilde{t}\in
\mathbf{R,}$ we have that $\sigma^{\widetilde{t}}(x)$ is periodic and
$P^{x}=P^{\sigma^{\widetilde{t}}(x)}.$
\end{theorem}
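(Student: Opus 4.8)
\textbf{Proof plan for Theorem \ref{The121}.}

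The plan is to reduce both parts to statements already proved about periodic signals. For part a), periodicity of $\widehat{x}$ with period $p$ means $\forall k\in\mathbf{N}_{\_},\widehat{x}(k)=\widehat{x}(k+p)$, i.e. $p\in\widehat{P}^{\widehat{x}}$. First I would fix $\widetilde{k}\in\mathbf{N}$ arbitrarily and show $\widehat{P}^{\widehat{x}}\subset\widehat{P}^{\widehat{\sigma}^{\widetilde{k}}(\widehat{x})}$: given $p\in\widehat{P}^{\widehat{x}}$, for any $k\in\mathbf{N}_{\_}$ we have $\widehat{\sigma}^{\widetilde{k}}(\widehat{x})(k)=\widehat{x}(k+\widetilde{k})=\widehat{x}(k+\widetilde{k}+p)=\widehat{\sigma}^{\widetilde{k}}(\widehat{x})(k+p)$, using $k+\widetilde{k}\in\mathbf{N}_{\_}$; hence $\widehat{\sigma}^{\widetilde{k}}(\widehat{x})$ is periodic with period $p$, so $p\in\widehat{P}^{\widehat{\sigma}^{\widetilde{k}}(\widehat{x})}$ and in particular $\widehat{\sigma}^{\widetilde{k}}(\widehat{x})$ is periodic. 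For the reverse inclusion I would use that $\widehat{x}$ is periodic and invoke Theorem \ref{The130} and Theorem \ref{The84}: since periodicity implies $\widehat{\omega}(\widehat{x})=\widehat{Or}(\widehat{x})$, every $\mu\in\widehat{Or}(\widehat{x})$ is a periodic point, so Theorem \ref{The84} a) gives $\widehat{P}_{\mu}^{\widehat{x}}=\widehat{P}_{\mu}^{\widehat{\sigma}^{\widetilde{k}}(\widehat{x})}$ for all such $\mu$; then by Theorem \ref{The110} a) (applicable since $\widehat{x}$ and $\widehat{\sigma}^{\widetilde{k}}(\widehat{x})$ are both periodic, hence eventually periodic, with the same omega limit set by Theorem \ref{The12} c)) we get $\widehat{P}^{\widehat{x}}=\bigcap_{\mu}\widehat{P}_{\mu}^{\widehat{x}}=\bigcap_{\mu}\widehat{P}_{\mu}^{\widehat{\sigma}^{\widetilde{k}}(\widehat{x})}=\widehat{P}^{\widehat{\sigma}^{\widetilde{k}}(\widehat{x})}$.

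For part b) the argument runs in parallel. First I would fix $\widetilde{t}\in\mathbf{R}$ and show that $\sigma^{\widetilde{t}}(x)$ is periodic: if $x$ is periodic with period $T$, some $t^{\prime}\in I^{x}$ satisfies $\forall t\geq t^{\prime},x(t)=x(t+T)$. By Theorem \ref{The12} c) we have $\omega(\sigma^{\widetilde{t}}(x))=\omega(x)$, and since periodicity gives $Or(x)=\omega(x)$ (Theorem \ref{The130}), every $\mu\in Or(\sigma^{\widetilde{t}}(x))$ lies in $Or(x)$ and is a periodic point of $x$; applying Theorem \ref{The84} b) to each such $\mu$ yields $P_{\mu}^{x}=P_{\mu}^{\sigma^{\widetilde{t}}(x)}$. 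If $x$ is constant, then $\sigma^{\widetilde{t}}(x)=x$ by Theorem \ref{Pro2} a) and the statement is trivial, so I would assume $x$ non-constant. Using Theorem \ref{The110} b) for $x$ (which is eventually periodic) and for $\sigma^{\widetilde{t}}(x)$ — the latter being eventually periodic because the equality $P_{\mu}^{\sigma^{\widetilde{t}}(x)}=P_{\mu}^{x}\neq\varnothing$ holds for each of its orbit points, combined with $P^{\sigma^{\widetilde{t}}(x)}=\bigcap_{\mu}P_{\mu}^{\sigma^{\widetilde{t}}(x)}$ — we conclude $P^{x}=\bigcap_{\mu\in\omega(x)}P_{\mu}^{x}=\bigcap_{\mu\in\omega(\sigma^{\widetilde{t}}(x))}P_{\mu}^{\sigma^{\widetilde{t}}(x)}=P^{\sigma^{\widetilde{t}}(x)}$. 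Note this last chain does not require knowing in advance that $\sigma^{\widetilde{t}}(x)$ is periodic; rather, the common non-empty intersection of all $P_{\mu}^{\sigma^{\widetilde{t}}(x)}$ together with Theorem \ref{The110} b) forces $P^{\sigma^{\widetilde{t}}(x)}\neq\varnothing$, which (by the equivalence in Theorem \ref{The101}, using a limit of periodicity obtained as in the proof of Theorem \ref{The23}) gives periodicity of $\sigma^{\widetilde{t}}(x)$.

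The main obstacle I anticipate is the bookkeeping around the \emph{limit of periodicity} in the real time case: Theorem \ref{The84} b) only asserts equality of the \emph{sets} $P_{\mu}^{x}$ and $P_{\mu}^{\sigma^{\widetilde{t}}(x)}$, and to promote "each orbit point is periodic with a common period" to "the signal is periodic" one needs a common $t^{\prime}\in I^{\sigma^{\widetilde{t}}(x)}$ serving as limit of periodicity for all of them simultaneously. This is exactly the content of the implication chain proved in Theorem \ref{The101} b) (specifically (\ref{pre571})$\Rightarrow$(\ref{pre569})) together with Theorem \ref{The139} and Theorem \ref{The77}, so the step is available, but care is needed that the hypotheses of those theorems (non-constancy, eventual periodicity) are met for $\sigma^{\widetilde{t}}(x)$ and not merely for $x$. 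The discrete-time case is comparatively painless because the limit of periodicity is always $-1$ and Theorem \ref{The109} a) / Theorem \ref{The101} a) handle the passage from pointwise to signal periodicity directly.
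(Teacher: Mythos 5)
Your proposal is correct in outline but follows a genuinely different route from the paper's. The paper proves both inclusions by hand: the forward one by the same direct computation you give, and the reverse one by a self-contained contradiction argument (assume $p^{\prime}$ is an eventual period of the shifted signal but not a period of $\widehat{x}$, transport the failure instant forward by a multiple of $p$ into the range where $p^{\prime}$-periodicity holds, and contradict), with an analogous case analysis on $\widetilde{t}\leq t_{0}$ versus $\widetilde{t}>t_{0}$ and constant versus non-constant in real time. You instead reduce everything to the pointwise result of Theorem \ref{The84} together with the pointwise-to-signal equivalences (Theorems \ref{The130}, \ref{The12}, \ref{The110}, \ref{The101}). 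This buys modularity, since the contradiction argument has already been carried out once at the level of points inside Theorem \ref{The84}, at the cost of extra bookkeeping about which hypotheses ($\omega=Or$, eventual periodicity, non-constancy) are actually in force for the shifted signal.

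One step of part b) needs repair. You justify the eventual periodicity of $\sigma^{\widetilde{t}}(x)$ by combining "each orbit point satisfies $P_{\mu}^{\sigma^{\widetilde{t}}(x)}=P_{\mu}^{x}\neq\varnothing$" with the identity $P^{\sigma^{\widetilde{t}}(x)}=\bigcap_{\mu}P_{\mu}^{\sigma^{\widetilde{t}}(x)}$. That identity is the conclusion of Theorem \ref{The110} b), whose hypothesis is precisely that the signal is eventually periodic, so as written the step is circular; and "every omega limit point is eventually periodic" does not by itself yield eventual periodicity of a real-time signal --- that is exactly the paper's unproven hypothesis $P$. The repair is the one you gesture at in your closing paragraph and should be made the primary argument: fix $T\in P^{x}$; by Theorem \ref{The84} b) every $\mu\in Or(\sigma^{\widetilde{t}}(x))=Or(x)$ is a periodic point of $\sigma^{\widetilde{t}}(x)$ with the \emph{common} period $T$, and the implication (\ref{pre563})$\Longrightarrow$(\ref{pre569}) of Theorems \ref{The23}/\ref{The101}, applied to the signal $\sigma^{\widetilde{t}}(x)$ itself, gives that $\sigma^{\widetilde{t}}(x)$ is periodic with period $T$; the common $T$ is what dispenses with hypothesis $P$. (Note also that for the inclusion $P^{x}\subset P^{\sigma^{\widetilde{t}}(x)}$ the useful direction of Theorem \ref{The101} b) is (\ref{pre569})$\Longrightarrow$(\ref{pre571}), not the one you cite: instantiating (\ref{pre571}) at $t^{\prime\prime}=\widetilde{t}$ yields the periodicity of $\sigma^{\widetilde{t}}(x)$ with period $T$ in one line.) With that adjustment your argument closes.
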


\begin{proof}
a) We suppose that $\widehat{P}^{\widehat{x}}\neq\varnothing$ and let
$\widetilde{k}\in\mathbf{N}$ arbitrary.

We prove $\widehat{P}^{\widehat{x}}\subset\widehat{P}^{\widehat{\sigma
}^{\widetilde{k}}(\widehat{x})}.$ We take an arbitrary $p\in\widehat
{P}^{\widehat{x}},$ meaning that
\begin{equation}
\forall k\in\mathbf{N}_{\_},\widehat{x}(k)=\widehat{x}(k+p) \label{pre995}%
\end{equation}
holds and we show that%
\begin{equation}
\forall k\in\mathbf{N}_{\_},\widehat{\sigma}^{\widetilde{k}}(\widehat
{x})(k)=\widehat{\sigma}^{\widetilde{k}}(\widehat{x})(k+p) \label{p3}%
\end{equation}
is true. Indeed, for any $k\in\mathbf{N}_{\_}$ we get%
\[
\widehat{\sigma}^{\widetilde{k}}(\widehat{x})(k)=\widehat{x}(k+\widetilde
{k})\overset{(\ref{pre995})}{=}\widehat{x}(k+\widetilde{k}+p)=\widehat{\sigma
}^{\widetilde{k}}(\widehat{x})(k+p).
\]

We prove $\widehat{P}^{\widehat{\sigma}^{\widetilde{k}}(\widehat{x})}%
\subset\widehat{P}^{\widehat{x}}.$ Let $p\in\widehat{P}^{\widehat{x}},$ thus
(\ref{pre995}) is true. We suppose against all reason that $\widehat
{P}^{\widehat{\sigma}^{\widetilde{k}}(\widehat{x})}\subset\widehat
{P}^{\widehat{x}}$ is false, i.e. some $p^{\prime}\in\widehat{P}%
^{\widehat{\sigma}^{\widetilde{k}}(\widehat{x})}\smallsetminus\widehat
{P}^{\widehat{x}}$ exists. This means, by rewriting (\ref{p3}) under the form%
\begin{equation}
\forall k\geq\widetilde{k}-1,\widehat{x}(k)=\widehat{x}(k+p^{\prime}),
\label{p76}%
\end{equation}
that%
\begin{equation}
\exists k_{1}\in\{-1,0,...,\widetilde{k}-2\},\widehat{x}(k_{1})\neq\widehat
{x}(k_{1}+p^{\prime}). \label{p77}%
\end{equation}
Let $\overline{k}\in\mathbf{N}$ with the property that $k_{1}+\overline
{k}p\geq\widetilde{k}-1.$ We infer:%
\begin{equation}
\widehat{x}(k_{1})\overset{(\ref{pre995})}{=}\widehat{x}(k_{1}+\overline
{k}p)\overset{(\ref{p76})}{=}\widehat{x}(k_{1}+\overline{k}p+p^{\prime
})\overset{(\ref{pre995})}{=}\widehat{x}(k_{1}+p^{\prime}). \label{p78}%
\end{equation}
The statements (\ref{p77}) and (\ref{p78}) are contradictory.

b) We suppose that $P^{x}\neq\varnothing$ and we take $\widetilde{t}%
\in\mathbf{R}$ arbitrarily.

We prove $P^{x}\subset P^{\sigma^{\widetilde{t}}(x)}.$ Let $T\in P^{x}$
arbitrary$,$ for which $t^{\prime}\in I^{x}$ exists such that%
\begin{equation}
\forall t\geq t^{\prime},x(t)=x(t+T) \label{pre998}%
\end{equation}
holds. We must prove the existence of $t^{\prime\prime}\in I^{\sigma
^{\widetilde{t}}(x)}$ making%
\begin{equation}
\forall t\geq t^{\prime\prime},\sigma^{\widetilde{t}}(x)(t)=\sigma
^{\widetilde{t}}(x)(t+T) \label{pre996}%
\end{equation}
true. If $x$ is constant, then $\sigma^{\widetilde{t}}(x)=x$ and
$t^{\prime\prime}\in I^{\sigma^{\widetilde{t}}(x)}$, (\ref{pre996}) are
trivially true for any $t^{\prime\prime}\in R$ so that we shall suppose from
now that $x$ is not constant. Some $t_{0}\in\mathbf{R}$ exists with%
\begin{equation}
\forall t<t_{0},x(t)=x(-\infty+0), \label{pre999}%
\end{equation}%
\begin{equation}
x(t_{0})\neq x(-\infty+0). \label{p1}%
\end{equation}
From (\ref{pre999}), (\ref{p1}) we get $I^{x}=(-\infty,t_{0})$ and since
$t^{\prime}\in I^{x},$ we infer $t^{\prime}<t_{0}.$ Two possibilities exist.

Case $\widetilde{t}\leq t_{0}$

In this case $\sigma^{\widetilde{t}}(x)=x$ and $t^{\prime\prime}\in
I^{\sigma^{\widetilde{t}}(x)}$, (\ref{pre996}) are true again for
$t^{\prime\prime}=t^{\prime}.$

Case $\widetilde{t}>t_{0}$

Some $\varepsilon>0$ exists with the property $\forall\xi\in(\widetilde
{t}-\varepsilon,\widetilde{t}),x(\xi)=x(\widetilde{t}-0).$ We infer from here
that $\widetilde{t}-\varepsilon\geq t_{0}>t^{\prime}$ and for $t^{\prime
\prime}\in(\widetilde{t}-\varepsilon,\widetilde{t})$ arbitrary, fixed we have%
\begin{equation}
\sigma^{\widetilde{t}}(x)(t)=\left\{
\begin{array}
[c]{c}%
x(t),t\geq t^{\prime\prime}\\
x(\widetilde{t}-0),t<\widetilde{t}%
\end{array}
\right.  . \label{p2}%
\end{equation}
We notice that $t^{\prime\prime}\in I^{\sigma^{\widetilde{t}}(x)}$ and, on the
other hand, we can write for any $t\geq t^{\prime\prime}$ that%
\[
\sigma^{\widetilde{t}}(x)(t)\overset{(\ref{p2})}{=}x(t)\overset{(\ref{pre998}%
)}{=}x(t+T)\overset{(\ref{p2})}{=}\sigma^{\widetilde{t}}(x)(t+T),
\]
wherefrom the truth of (\ref{pre996}).

We prove $P^{\sigma^{\widetilde{t}}(x)}\subset P^{x}.$ Let $T\in P^{x}$
arbitrary$,$ thus $t^{\prime}\in I^{x}$ exists such that (\ref{pre998}) takes
place. We suppose against all reason that $P^{\sigma^{\widetilde{t}}%
(x)}\subset P^{x}$ is false, i.e. some $T^{\prime}\in P^{\sigma^{\widetilde
{t}}(x)}\smallsetminus P^{x}$ exists. This means the existence of
$t^{\prime\prime}\in I^{\sigma^{\widetilde{t}}(x)}$ with%
\begin{equation}
\forall t\geq t^{\prime\prime},\sigma^{\widetilde{t}}(x)(t)=\sigma
^{\widetilde{t}}(x)(t+T^{\prime}), \label{p81}%
\end{equation}%
\begin{equation}
\forall t^{\prime\prime\prime}\in I^{x},\exists t_{1}\geq t^{\prime
\prime\prime},x(t_{1})\neq x(t_{1}+T^{\prime}). \label{p82}%
\end{equation}
Let $t^{\prime\prime\prime}\in I^{x}\cap\lbrack t^{\prime},\infty)$ arbitrary
and we take $\overline{k}\in\mathbf{N}$ such that for $t_{1}\geq
t^{\prime\prime\prime}$ whose existence is stated in (\ref{p82}), we have
$t_{1}+\overline{k}T\geq\max\{t^{\prime\prime},\widetilde{t}\}.$ We get%
\begin{equation}%
\begin{array}
[c]{c}%
x(t_{1})\overset{(\ref{pre998})}{=}x(t_{1}+\overline{k}T)=\sigma
^{\widetilde{t}}(x)(t_{1}+\overline{k}T)\\
\overset{(\ref{p81})}{=}\sigma^{\widetilde{t}}(x)(t_{1}+\overline
{k}T+T^{\prime})=x(t_{1}+\overline{k}T+T^{\prime})\overset{(\ref{pre998})}%
{=}x(t_{1}+T^{\prime}),
\end{array}
\label{p83}%
\end{equation}
and (\ref{p82}), (\ref{p83}) are contradictory.
\end{proof}

\begin{remark}
In Theorem \ref{The121}, the statements about the eventual periodicity of
$\widehat{x},x$ are statements about the periodicity of $\widehat{\sigma
}^{\widetilde{k}}(\widehat{x}),\sigma^{\widetilde{t}}(x).$
\end{remark}

\section{Changing the order of the quantifiers}

\begin{theorem}
\label{The31_}a) The statements%
\begin{equation}
\exists p\geq1,\forall\mu\in\widehat{Or}(\widehat{x}),\forall k\in
\widehat{\mathbf{T}}_{\mu}^{\widehat{x}},\{k+zp|z\in\mathbf{Z}\}\cap
\mathbf{N}_{\_}\subset\widehat{\mathbf{T}}_{\mu}^{\widehat{x}}, \label{pre991}%
\end{equation}%
\begin{equation}
\forall\mu\in\widehat{Or}(\widehat{x}),\exists p\geq1,\forall k\in
\widehat{\mathbf{T}}_{\mu}^{\widehat{x}},\{k+zp|z\in\mathbf{Z}\}\cap
\mathbf{N}_{\_}\subset\widehat{\mathbf{T}}_{\mu}^{\widehat{x}} \label{per148}%
\end{equation}
are equivalent.

b) The real time statements%
\begin{equation}
\left\{
\begin{array}
[c]{c}%
\exists T>0,\exists t^{\prime}\in I^{x},\forall\mu\in Or(x),\\
\forall t\in\mathbf{T}_{\mu}^{x}\cap\lbrack t^{\prime},\infty),\{t+zT|z\in
\mathbf{Z}\}\cap\lbrack t^{\prime},\infty)\subset\mathbf{T}_{\mu}^{x},
\end{array}
\right.  \label{pre992}%
\end{equation}%
\begin{equation}
\left\{
\begin{array}
[c]{c}%
\exists T>0,\forall\mu\in Or(x),\exists t^{\prime}\in I^{x},\\
\forall t\in\mathbf{T}_{\mu}^{x}\cap\lbrack t^{\prime},\infty),\{t+zT|z\in
\mathbf{Z}\}\cap\lbrack t^{\prime},\infty)\subset\mathbf{T}_{\mu}^{x},
\end{array}
\right.  \label{per154}%
\end{equation}%
\begin{equation}
\left\{
\begin{array}
[c]{c}%
\exists t^{\prime}\in I^{x},\forall\mu\in Or(x),\exists T>0,\\
\forall t\in\mathbf{T}_{\mu}^{x}\cap\lbrack t^{\prime},\infty),\{t+zT|z\in
\mathbf{Z}\}\cap\lbrack t^{\prime},\infty)\subset\mathbf{T}_{\mu}^{x},
\end{array}
\right.  \label{per749}%
\end{equation}%
\begin{equation}
\left\{
\begin{array}
[c]{c}%
\forall\mu\in Or(x),\exists T>0,\exists t^{\prime}\in I^{x},\\
\forall t\in\mathbf{T}_{\mu}^{x}\cap\lbrack t^{\prime},\infty),\{t+zT|z\in
\mathbf{Z}\}\cap\lbrack t^{\prime},\infty)\subset\mathbf{T}_{\mu}^{x}%
\end{array}
\right.  \label{per149}%
\end{equation}
are also equivalent.
\end{theorem}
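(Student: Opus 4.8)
\textbf{Proof plan for Theorem \ref{The31_}.}
The plan is to prove the two chains of equivalences exactly as the chains of equivalences for eventual periodicity were proved in Theorem \ref{The136}, the only difference being that here the universal quantifier $\forall\mu\in\widehat{Or}(\widehat{x})$ (resp. $\forall\mu\in Or(x)$) replaces $\forall\mu\in\widehat{\omega}(\widehat{x})$ (resp. $\forall\mu\in\omega(x)$), and the existential quantifier $\exists k^{\prime}\in\mathbf{N}_{\_}$ in Theorem \ref{The136} is absent because periodicity fixes the limit of periodicity at $-1$ in discrete time and at $t^{\prime}\in I^{x}$ in real time. In part a), the implications $(\ref{pre991})\Longrightarrow(\ref{per148})$ is immediate, since a period $p$ that works for all $\mu\in\widehat{Or}(\widehat{x})$ in particular works for each single $\mu$. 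So the only content is $(\ref{per148})\Longrightarrow(\ref{pre991})$.

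For $(\ref{per148})\Longrightarrow(\ref{pre991})$ I would proceed as in the proof of $(\ref{per148_})\Longrightarrow(\ref{p103})$ inside Theorem \ref{The136}: write $\widehat{Or}(\widehat{x})=\{\mu^{1},\ldots,\mu^{s}\}$ (a finite set, since $\mathbf{B}^{n}$ is finite). From (\ref{per148}), for each $i\in\{1,\ldots,s\}$ there is $p_{i}\geq1$ with $\forall k\in\widehat{\mathbf{T}}_{\mu^{i}}^{\widehat{x}},\{k+zp_{i}\mid z\in\mathbf{Z}\}\cap\mathbf{N}_{\_}\subset\widehat{\mathbf{T}}_{\mu^{i}}^{\widehat{x}}$. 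Set $p=p_{1}\cdots p_{s}\geq1$. Then each $p$ is a multiple of $p_{i}$, so by Theorem \ref{The2} (or Corollary to it: $p_{i}\in\widehat{P}_{\mu^{i}}^{\widehat{x}}$ implies $\{p_{i},2p_{i},3p_{i},\ldots\}\subset\widehat{P}_{\mu^{i}}^{\widehat{x}}$) the period $p$ also works for $\mu^{i}$, i.e. $\forall k\in\widehat{\mathbf{T}}_{\mu^{i}}^{\widehat{x}},\{k+zp\mid z\in\mathbf{Z}\}\cap\mathbf{N}_{\_}\subset\widehat{\mathbf{T}}_{\mu^{i}}^{\widehat{x}}$ for every $i$. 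Since every $\mu\in\widehat{Or}(\widehat{x})$ equals some $\mu^{i}$, this is exactly (\ref{pre991}). No limit-of-periodicity merging is needed here, which makes the discrete-time argument cleaner than the one in Theorem \ref{The136}.

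For part b) the implications $(\ref{pre992})\Longrightarrow(\ref{per154})\Longrightarrow(\ref{per149})$ and $(\ref{pre992})\Longrightarrow(\ref{per749})\Longrightarrow(\ref{per149})$ are obvious by weakening the quantifier position, so again the only nontrivial step is $(\ref{per149})\Longrightarrow(\ref{pre992})$. Here I would write $Or(x)=\{\mu^{1},\ldots,\mu^{s}\}$ and, from (\ref{per149}), obtain for each $i$ a period $T_{i}>0$ and an initial time $t^{i}\in I^{x}$ with $\forall t\in\mathbf{T}_{\mu^{i}}^{x}\cap[t^{i},\infty),\{t+zT_{i}\mid z\in\mathbf{Z}\}\cap[t^{i},\infty)\subset\mathbf{T}_{\mu^{i}}^{x}$. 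Set $t^{\prime}=\max\{t^{1},\ldots,t^{s}\}$, which still lies in $I^{x}$ since it equals one of the $t^{i}$. By Theorem \ref{The5} (independence of periodicity of $\mu^{i}$ on the choice of the initial time=limit of periodicity, together with the form $L_{\mu^{i}}^{x}=[t_{0}^{\prime,i},\infty)$) the period $T_{i}$ still works for $\mu^{i}$ with the common limit $t^{\prime}$; equivalently one can invoke Theorem \ref{The117}/Theorem \ref{The2} to enlarge $t^{i}$ to $t^{\prime}$. Then the difficulty is that the $T_{i}$ need not be equal. The main obstacle is precisely this: to produce a single common period $T$. In discrete time this is trivial because any common multiple of integer periods is a period, but in real time one must know that $\mu^{i}\in\omega(x)$ (which follows from $\mathbf{T}_{\mu^{i}}^{x}$ being superiorly unbounded, guaranteed by the periodicity hypothesis via Theorem \ref{Lem1} / Lemma \ref{Lem36}) and then use the structure of $P_{\mu^{i}}^{x}$ from Theorem \ref{The26_}, namely $P_{\mu^{i}}^{x}=\{\widetilde{T}_{i},2\widetilde{T}_{i},3\widetilde{T}_{i},\ldots\}$ when $x$ is not constant. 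If $x$ is constant, (\ref{pre992}) holds trivially with any $T$ and any $t^{\prime}\in I^{x}=\mathbf{R}$; if $x$ is not constant, the claim is that $\bigcap_{i}P_{\mu^{i}}^{x}\neq\varnothing$. This is the real-time analogue of the compatibility property discussed in Remark \ref{Rem21}, and it is the hard part: it requires showing that finitely many arithmetic-progression sets $\{\widetilde{T}_{i},2\widetilde{T}_{i},\ldots\}$ with possibly incommensurable ratios have a common element, which forces the ratios $\widetilde{T}_{i}/\widetilde{T}_{j}$ to be rational. I would establish this by taking $t^{\prime}$ as above, picking two distinct values $\mu^{i},\mu^{j}$ and two times where they are attained past $t^{\prime}$, forming the difference, and deriving (as in the proof of $(\ref{pre600})\Longrightarrow(\ref{per79})$ in Theorem \ref{The97} and in the proof of Theorem \ref{The94} $(\ref{pre763})\Longrightarrow(\ref{per47})$, using Lemmas \ref{Lem25}, \ref{Lem28}, \ref{Lem29}) that the two periods must be rationally related; then $T=\operatorname{lcm}$ of the finitely many $\widetilde{T}_{i}$ (in the sense of Theorem \ref{The26}) is a common period, and with the common limit $t^{\prime}\in I^{x}$ this yields (\ref{pre992}). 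Since the excerpt flags (compare the footnotes to Theorem \ref{The136} and the \textbf{further research} remarks) that the implication toward a common period in the real-time case has no known proof, I would expect to be unable to complete this last step in full generality and would instead either restrict to signals fulfilling the hypothesis $P$ of Definition \ref{Def48}, or present part b) with the same caveat the author attaches to Theorem \ref{The136}.
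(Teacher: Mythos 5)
Your part a) is exactly the paper's proof: the product $p=p_{1}\cdots p_{s}$ of the individual periods serves as the common period via $\{k+zp\,|\,z\in\mathbf{Z}\}\cap\mathbf{N}_{\_}\subset\{k+zp_{i}\,|\,z\in\mathbf{Z}\}\cap\mathbf{N}_{\_}$, and the converse direction is immediate. For part b) you have diagnosed the situation correctly: the paper proves only the obvious weakening implications and then states outright that $(\ref{per149})\Longrightarrow(\ref{pre992})$ ``has no proof,'' which is precisely the common-period (rational compatibility of the $\widetilde{T}_{i}$) obstacle you identify, so your anticipated inability to close that step, and your fallback to the hypothesis $P$ of Definition \ref{Def48}, mirror the paper's own position.
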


\begin{proof}
a) Since (\ref{pre991})$\Longrightarrow$(\ref{per148}) is obvious, we prove
(\ref{per148})$\Longrightarrow$(\ref{pre991}). We denote $\widehat
{Or}(\widehat{x})=\{\mu^{1},...,\mu^{s}\}.$ From (\ref{per148}) we get that
for any $i\in\{1,...,s\},$ some $p_{i}\geq1$ exists such that%
\[
\forall k\in\widehat{\mathbf{T}}_{\mu^{i}}^{\widehat{x}},\{k+zp_{i}%
|z\in\mathbf{Z}\}\cap\mathbf{N}_{\_}\subset\widehat{\mathbf{T}}_{\mu^{i}%
}^{\widehat{x}}.
\]
The number $p=p_{1}\cdot...\cdot p_{s}\geq1$ fulfills the property that
$\forall i\in\{1,...,s\},$ we have%
\begin{equation}
\forall k\in\widehat{\mathbf{T}}_{\mu^{i}}^{\widehat{x}},\{k+zp|z\in
\mathbf{Z}\}\cap\mathbf{N}_{\_}\subset\{k+zp_{i}|z\in\mathbf{Z}\}\cap
\mathbf{N}_{\_}\subset\widehat{\mathbf{T}}_{\mu^{i}}^{\widehat{x}},
\label{per167}%
\end{equation}
i.e. (\ref{pre991}) is true.

b) The implications (\ref{pre992})$\Longrightarrow$(\ref{per154}%
)$\Longrightarrow$(\ref{per149}) and (\ref{pre992})$\Longrightarrow
$(\ref{per749})$\Longrightarrow$(\ref{per149}) are obvious.

The implication (\ref{per149})$\Longrightarrow$(\ref{pre992}) has no proof.
\end{proof}

\begin{remark}
In the previous Theorem, where the proof of the implication (\ref{per149}%
)$\Longrightarrow$(\ref{pre992}) is missing, we address the problem of
changing the order of the quantifiers in stating periodicity properties of the
signals. The importance of this aspect is given by the fact that we are
tempted to define the periodic signals by (\ref{per148}), (\ref{per149}) (all
the points of $\widehat{Or}(\widehat{x}),$ $Or(x)$ are periodic) and to use
(\ref{pre991}), (\ref{pre992}) instead.
\end{remark}

\section{Further research}

\begin{remark}
Remarks \ref{Rem22} and \ref{Rem23}, page \pageref{Rem22} from the eventually
periodic signals, as well as Section \ref{Sec1}, page \pageref{Sec1} may be
restated in the case of the periodic signals also, where they are still interesting.
\end{remark}

\chapter{Examples}

We sketch in this Chapter some constructions that either weaken, in discrete
time and real time, the periodicity of the points to eventual periodicity, or
change the sets of periods.

\section{Discrete time, periodic points}

\begin{remark}
In the following, the signal $\widehat{x}\in\widehat{S}^{(n)}$ and the
periodic point $\mu\in\widehat{Or}(\widehat{x})$ with the period
$\widetilde{p}\geq1$ are considered. The sets $\widehat{\mathbf{T}}_{\mu
}^{\widehat{x}}$ and $\widehat{P}_{\mu}^{\widehat{x}}$ are given by%
\[
\widehat{\mathbf{T}}_{\mu}^{\widehat{x}}=\{k_{0},k_{0}+\widetilde{p}%
,k_{0}+2\widetilde{p},...\},k_{0}\in\{-1,0,...,\widetilde{p}-2\},
\]%
\[
\widehat{P}_{\mu}^{\widehat{x}}=\{\widetilde{p},2\widetilde{p},3\widetilde
{p},...\}.
\]
This simple form of $\widehat{\mathbf{T}}_{\mu}^{\widehat{x}},$ corresponding
to the special case from Theorem \ref{The120}, page \pageref{The120} does not
restrict the generality of the exposure.
\end{remark}

\begin{example}
Let $\mu^{\prime}\in\mathbf{B}^{n},\mu^{\prime}\neq\mu$ and the time instant
$k^{\prime}\in\widehat{\mathbf{T}}_{\mu}^{\widehat{x}}.$ We define
$\widehat{y}\in\widehat{S}^{(n)}$ by $\forall k\in\mathbf{N}_{\_},$%
\[
\widehat{y}(k)=\left\{
\begin{array}
[c]{c}%
\widehat{x}(k),k\neq k^{\prime},\\
\mu^{\prime},k=k^{\prime}.
\end{array}
\right.
\]
We say that $\widehat{y}$ is obtained by removing from $\widehat{x}$ the
instant $k^{\prime}$ of periodicity of $\mu$ and we interpret the fact that
$\widehat{y}(k^{\prime})=\mu^{\prime}\neq\mu=\widehat{x}(k^{\prime})$ instead
of $\widehat{y}(k^{\prime})=\mu=\widehat{x}(k^{\prime})$ as representing an
error, or a perturbation of the periodicity of $\mu$. We have: after removing
from $\widehat{x}$ the instant $k^{\prime}$ of periodicity of $\mu$ we loose
the periodicity of $\mu$, but we still have eventual periodicity with the same
sets of periods $\widehat{P}_{\mu}^{\widehat{y}}=\widehat{P}_{\mu}%
^{\widehat{x}}$ and with the limit of periodicity $k^{\prime}+1.$
\end{example}

\begin{example}
The previous example is continued by taking the points $\mu^{1},...,\mu^{s}%
\in\mathbf{B}^{n}$ that are not necessarily distinct, but they differ from
$\mu$: $\mu^{1}\neq\mu,...,\mu^{s}\neq\mu$ and also the distinct time instants
$k^{1},...,k^{s}\in\widehat{\mathbf{T}}_{\mu}^{\widehat{x}}.$ The signal
$\widehat{y}\in\widehat{S}^{(n)}$ is defined in the following way: $\forall
k\in\mathbf{N}_{\_},$%
\[
\widehat{y}(k)=\left\{
\begin{array}
[c]{c}%
\widehat{x}(k),k\notin\{k^{1},...,k^{s}\},\\
\mu^{j},\exists j\in\{1,...,s\},k=k^{j}.
\end{array}
\right.
\]
We use to say that $\widehat{y}$ is obtained by the removal from $\widehat{x}
$ of the instants $k^{1},...,k^{s}$ of periodicity of $\mu.$ Then $\widehat
{P}_{\mu}^{\widehat{y}}=\widehat{P}_{\mu}^{\widehat{x}}$ again, where $\mu$ is
an eventually periodic point of $\widehat{y}$ and $1+\max\{k^{1},...,k^{s}\}$
is its limit of periodicity.
\end{example}

\begin{example}
\label{Exa9}We give the countable version of the construction. The sequence
$\mu^{j}\in\mathbf{B}^{n},j\in\mathbf{N}_{\_}$ is considered with $\forall
j\in\mathbf{N}_{\_}\mathbf{,}\mu^{j}\neq\mu$ and also the sequence $k^{j}%
\in\widehat{\mathbf{T}}_{\mu}^{\widehat{x}},j\in\mathbf{N}_{\_}$ of distinct
time instants. We define $\widehat{y}\in\widehat{S}^{(n)}$: $\forall
k\in\mathbf{N}_{\_},$%
\[
\widehat{y}(k)=\left\{
\begin{array}
[c]{c}%
\widehat{x}(k),\forall j\in\mathbf{N}_{\_},k\neq k^{j},\\
\mu^{j},\exists j\in\mathbf{N}_{\_},k=k^{j},
\end{array}
\right.
\]
thus $\widehat{y}$ is obtained by removing from $\widehat{x}$ the instants
$(k^{j})$ of periodicity of $\mu.$ We get several possibilities that result
from this construction, we give here only one of these possibilities:%
\[
\mu^{j}=\mu^{\prime}\in\mathbf{B}^{n}\setminus\widehat{Or}(\widehat{x}%
),j\in\mathbf{N}_{\_},
\]%
\[
(k^{-1},k^{0},k^{1},...)=(k_{0},k_{0}+2\widetilde{p},k_{0}+4\widetilde
{p},...)
\]
and the periodic point $\mu$ gives birth to two periodic points, $\mu$ and
$\mu^{\prime},$ with%
\[
\widehat{P}_{\mu}^{\widehat{y}}=\widehat{P}_{\mu^{\prime}}^{\widehat{y}%
}=\{2\widetilde{p},4\widetilde{p},6\widetilde{p},...\},
\]%
\[
\widehat{\mathbf{T}}_{\mu}^{\widehat{y}}=\{k_{0}+\widetilde{p},k_{0}%
+3\widetilde{p},k_{0}+5\widetilde{p},...\},
\]%
\[
\widehat{\mathbf{T}}_{\mu^{\prime}}^{\widehat{y}}=\{k_{0},k_{0}+2\widetilde
{p},k_{0}+4\widetilde{p},...\}.
\]

\end{example}

\begin{example}
For the time instant $k^{\prime}\in\mathbf{N}_{\_}\setminus\widehat
{\mathbf{T}}_{\mu}^{\widehat{x}},$ we define $\widehat{y}\in\widehat{S}^{(n)}$
like that: $\forall k\in\mathbf{N}_{\_},$%
\[
\widehat{y}(k)=\left\{
\begin{array}
[c]{c}%
\widehat{x}(k),k\neq k^{\prime},\\
\mu,k=k^{\prime}.
\end{array}
\right.
\]
We say that $\widehat{y}$ is obtained by adding to $\widehat{x}$ the instant
$k^{\prime}$ of equality with $\mu.$ We notice that after adding to
$\widehat{x}$ the instant $k^{\prime}$ of equality with $\mu$ we loose the
periodicity of $\mu$, but we still have eventual periodicity with the same
sets of periods and with the limit of periodicity $k^{\prime}+1.$
\end{example}

\begin{example}
We take now the distinct time instants $k^{1},...,k^{s}\in\mathbf{N}%
_{\_}\setminus\widehat{\mathbf{T}}_{\mu}^{\widehat{x}}$ and we define
$\widehat{y}\in\widehat{S}^{(n)}$ by: $\forall k\in\mathbf{N}_{\_},$%
\[
\widehat{y}(k)=\left\{
\begin{array}
[c]{c}%
\widehat{x}(k),k\notin\{k^{1},...,k^{s}\},\\
\mu,\exists j\in\{1,...,s\},k=k^{j}.
\end{array}
\right.
\]
In this situation we say that $\widehat{y}$ is obtained from $\widehat{x}$ by
addition of the instants $k^{1},...,k^{s}$ of equality with $\mu.$ We get the
same sets of periods but eventual periodicity of $\mu~$only, with the limit of
periodicity $1+\max\{k^{1},...,k^{s}\}.$
\end{example}

\begin{example}
We consider the sequence of distinct time instants $k^{j}\in\mathbf{N}%
_{\_}\setminus\widehat{\mathbf{T}}_{\mu}^{\widehat{x}},j\in\mathbf{N}_{\_} $.
We define $\widehat{y}\in\widehat{S}^{(n)}$ by $\forall k\in\mathbf{N}_{\_},$%
\[
\widehat{y}(k)=\left\{
\begin{array}
[c]{c}%
\widehat{x}(k),\forall j\in\mathbf{N}_{\_},k\neq k^{j},\\
\mu,\exists j\in\mathbf{N}_{\_},k=k^{j}%
\end{array}
\right.
\]
meaning that we have constructed $\widehat{y}$ by addition to $\widehat{x}$ of
the instants $(k^{j})$ of equality with $\mu.$ Several possibilities exist in
this construction, we point out the following two situations only, given by%
\[
(k^{-1},k^{0},k^{1},...)=(k_{0}+2,k_{0}+2+\widetilde{p},k_{0}+2+2\widetilde
{p},...),k_{0}+2\in\{-1,0,...,\widetilde{p}-2\},
\]
when, after the passage from $\widehat{x}$ to $\widehat{y}$, the point $\mu$
is still periodic and

i) it keeps its prime period if $\widetilde{p}=5,k_{0}=-1;$ during a period
interval, by adding instants of equality with $\mu,$ instead of one occurrence
of $\mu$ at $k_{0}+k5,$ we have two occurrences, $k_{0}+k5$ and $k_{0}%
+2+k5,k\in\mathbf{N};$

ii) it doubles its prime period if $\widetilde{p}=4,k_{0}=-1,$ with one
occurrence of $\mu$ only during a period interval, at $k_{0}+k2,k\in
\mathbf{N}.$
\end{example}

\section{Real time, periodic points}

\begin{remark}
In this Section $x\in S^{(n)},$ $t_{0},t_{1}\in\mathbf{R,}$ $\widetilde{T}>0 $
and the periodic point $\mu\in Or(x)$ are given, such that%
\[
t_{0}<t_{1}<t_{0}+\widetilde{T},
\]%
\[
\mathbf{T}_{\mu}^{x}=(-\infty,t_{0})\cup\lbrack t_{1},t_{0}+\widetilde{T}%
)\cup\lbrack t_{1}+\widetilde{T},t_{0}+2\widetilde{T})\cup\lbrack
t_{1}+2\widetilde{T},t_{0}+3\widetilde{T})\cup...
\]%
\[
P_{\mu}^{x}=\{\widetilde{T},2\widetilde{T},3\widetilde{T},...\}.
\]
We notice that $\mu=x(-\infty+0)$ and we are in the special case of
periodicity from Theorem \ref{The75}, page \pageref{The75} but a different
choice of $\mu~$or of $\mathbf{T}_{\mu}^{x}$ does not change things significantly.
\end{remark}

\begin{definition}
We define the function $t\longmapsto\underline{t}$ that associates to each
real number $t\in\mathbf{R}$ an interval $\underline{t}\subset\mathbf{R}$ in
the following way:%
\[
\underline{t}=\left\{
\begin{array}
[c]{c}%
I^{x},\text{ if }t\in I^{x},\\
\lbrack a,b),\text{ if }t\in\lbrack a,b),[a,b)\subset\mathbf{T}_{x(t)}%
^{x},x(a-0)\neq x(t),x(b)\neq x(t).
\end{array}
\right.
\]

\end{definition}

\begin{remark}
We notice that $\underline{t}$ is the greatest interval that contains $t$ and
where $x$ has the constant value $x(t)$.
\end{remark}

\begin{remark}
The definition of $\underline{t}$ is possible since $x$ is not constant; the
non constancy of $x$ is inferred from the form of $\mathbf{T}_{\mu}^{x}.$
\end{remark}

\begin{example}
Let now $\mu^{\prime}\in\mathbf{B}^{n}$ with $\mu^{\prime}\neq\mu$ and let
also the time instant $t^{\prime\prime}\in\mathbf{T}_{\mu}^{x}.$ We define the
signal%
\[
y(t)=\left\{
\begin{array}
[c]{c}%
x(t),t\notin\underline{t^{\prime\prime}},\\
\mu^{\prime},t\in\underline{t^{\prime\prime}}%
\end{array}
\right.  .
\]
We say that $y$ is obtained by removing from $x$ the interval $\underline
{t^{\prime\prime}}$ of periodicity of $\mu.$ After the removal of
$\underline{t^{\prime\prime}},$ the periodicity of $\mu$ is lost, but eventual
periodicity still holds; the set of the periods is the same $P_{\mu}%
^{y}=P_{\mu}^{x}$ and the limit of periodicity is $\sup$ $\underline
{t^{\prime\prime}}$.
\end{example}

\begin{example}
The points $\mu^{1},...,\mu^{s}\in\mathbf{B}^{n}$ are taken and they are not
required to be distinct, but we ask that they are distinct from $\mu:\mu
^{1}\neq\mu,...,\mu^{s}\neq\mu$ and we also take the time instants
$t_{1}^{\prime},...,t_{s}^{\prime}\in\mathbf{T}_{\mu}^{x}$ with the property
that the intervals $\underline{t_{1}^{\prime}},...,\underline{t_{s}^{\prime}}$
are disjoint. We define%
\[
y(t)=\left\{
\begin{array}
[c]{c}%
x(t),t\notin\underline{t_{1}^{\prime}}\cup...\cup\underline{t_{s}^{\prime},}\\
\mu^{j},\exists j\in\{1,...,s\},t\in\underline{t_{j}^{\prime}}.
\end{array}
\right.
\]
Obviously the phenomenon is the same, $\mu$ is not periodic any longer, but it
is eventually periodic with $P_{\mu}^{y}=P_{\mu}^{x}$ and the limit of
periodicity is $max\{\sup$ $\underline{t_{1}^{\prime}},...,\sup\underline
{t_{s}^{\prime}}\}.$
\end{example}

\begin{example}
We consider the sequence $\mu^{j}\in\mathbf{B}^{n}\setminus\{\mu
\},j\in\mathbf{N}$ and also the time instants $t_{j}^{\prime}\in
\mathbf{T}_{\mu}^{x},j\in\mathbf{N}$ having the property that $\underline
{t_{0}^{\prime}},\underline{t_{1}^{\prime}},\underline{t_{2}^{\prime}},...$
are disjoint. We define%
\[
y(t)=\left\{
\begin{array}
[c]{c}%
x(t),t\notin\underline{t_{0}^{\prime}}\cup t_{1}^{\prime}\cup t_{2}^{\prime
}\cup...\\
\mu^{j},\exists j\in\mathbf{N},t\in\underline{t_{j}^{\prime}}%
\end{array}
\right.  .
\]
Similarly with Example \ref{Exa9}, page \pageref{Exa9} several possibilities
may occur, for example the periodic point $\mu$ gives birth to the periodic
points $\mu,\mu^{\prime},\mu^{\prime\prime}$ with%
\[
P_{\mu}^{y}=P_{\mu\prime}^{y}=P_{\mu^{\prime\prime}}^{y}=\{3\widetilde
{T},6\widetilde{T},9\widetilde{T},...\},
\]%
\[
\mathbf{T}_{\mu}^{y}=(-\infty,t_{0})\cup\lbrack t_{1}+2\widetilde{T}%
,t_{0}+3\widetilde{T})\cup\lbrack t_{1}+5\widetilde{T},t_{0}+6\widetilde
{T})\cup\lbrack t_{1}+8\widetilde{T},t_{0}+9\widetilde{T})\cup...
\]%
\[
\mathbf{T}_{\mu^{\prime}}^{y}=[t_{1},t_{0}+\widetilde{T})\cup\lbrack
t_{1}+3\widetilde{T},t_{0}+4\widetilde{T})\cup\lbrack t_{1}+6\widetilde
{T},t_{0}+7\widetilde{T})\cup...
\]%
\[
\mathbf{T}_{\mu^{\prime\prime}}^{y}=[t_{1}+\widetilde{T},t_{0}+2\widetilde
{T})\cup\lbrack t_{1}+4\widetilde{T},t_{0}+5\widetilde{T})\cup\lbrack
t_{1}+7\widetilde{T},t_{0}+8\widetilde{T})\cup...
\]

\end{example}

\begin{example}
Let us increase now the initial time instant $t_{0}$ to $t_{0}+\varepsilon,$
where $\varepsilon\in(0,t_{1}-t_{0}).$ We have:%
\[
\mathbf{T}_{\mu}^{y}=(-\infty,t_{0}+\varepsilon)\cup\lbrack t_{1}%
,t_{0}+\widetilde{T})\cup\lbrack t_{1}+\widetilde{T},t_{0}+2\widetilde{T}%
)\cup\lbrack t_{1}+2\widetilde{T},t_{0}+3\widetilde{T})\cup...
\]
The periodicity of $\mu$ has become eventual periodicity, the two sets of
periods are equal $P_{\mu}^{y}=P_{\mu}^{x}$ and the limit of periodicity is
$t_{0}+\varepsilon.$

If this increase with $\varepsilon\in(0,t_{1}-t_{0})$ is applied at the time
instant $t_{0}+k\widetilde{T},$ then%
\[
\mathbf{T}_{\mu}^{y}=(-\infty,t_{0})\cup\lbrack t_{1},t_{0}+\widetilde{T}%
)\cup...\cup\lbrack t_{1}+(k-1)\widetilde{T},t_{0}+k\widetilde{T}%
+\varepsilon)\cup\lbrack t_{1}+k\widetilde{T},t_{0}+(k+1)\widetilde{T}%
)\cup...,
\]
$P_{\mu}^{y}=P_{\mu}^{x}$ and the limit of periodicity is $t_{0}%
+k\widetilde{T}+\varepsilon.$ In this construction we say that we have added
the intervals $[t_{0},t_{0}+\varepsilon),[t_{0}+k\widetilde{T},t_{0}%
+k\widetilde{T}+\varepsilon)$ of equality with $\mu.$
\end{example}

\begin{example}
Let $\varepsilon>0$ be arbitrary and we decrease the initial time instant from
$t_{0}$ to $t_{0}-\varepsilon.$ Then%
\[
\mathbf{T}_{\mu}^{y}=(-\infty,t_{0}-\varepsilon)\cup\lbrack t_{1}%
,t_{0}+\widetilde{T})\cup\lbrack t_{1}+\widetilde{T},t_{0}+2\widetilde{T}%
)\cup\lbrack t_{1}+2\widetilde{T},t_{0}+3\widetilde{T})\cup...
\]
$\mu~$is eventually periodic, $P_{\mu}^{y}=P_{\mu}^{x}$ and the limit of
periodicity is $t_{0}.$ If we decrease however $t_{0}+k\widetilde{T}$ to
$t_{0}+k\widetilde{T}-\varepsilon,$ with $\varepsilon\in(0,t_{0}%
-t_{1}+\widetilde{T})$, we see that%
\[
\mathbf{T}_{\mu}^{y}=(-\infty,t_{0})\cup\lbrack t_{1},t_{0}+\widetilde{T}%
)\cup...\cup\lbrack t_{1}+(k-1)\widetilde{T},t_{0}+k\widetilde{T}%
-\varepsilon)\cup\lbrack t_{1}+k\widetilde{T},t_{0}+(k+1)\widetilde{T}%
)\cup...
\]
$\mu$ is eventually periodic, $P_{\mu}^{y}=P_{\mu}^{x}$ and the limit of
periodicity is $t_{0}+k\widetilde{T}.$ We have removed from $x$ the time
intervals $[t_{0}-\varepsilon,t_{0}),[t_{0}+k\widetilde{T}-\varepsilon
,t_{0}+k\widetilde{T})$ of equality with $\mu.$
\end{example}

\backmatter\appendix

\chapter{Notations}

$\mathbf{B},$ Notation \ref{Not1}, page \pageref{Not1}

$\mathbf{N},$ $\mathbf{Z},$ $\mathbf{R}$, $\mathbf{N}_{\_},$ Notation
\ref{Not2}, page \pageref{Not2}

$\widehat{Seq},Seq,$ Notation \ref{Not3}, page \pageref{Not3}

$\chi_{A},$ Notation \ref{Not4}, page \pageref{Not4}

$\widehat{S}^{(n)},S^{(n)},$ Notation \ref{Def2}, page \pageref{Def2}

$x(t-0),x(t+0),$ Notation \ref{Def8}, page \pageref{Def8}

$x(-\infty+0),\underset{t\rightarrow-\infty}{\lim}x(t),$ Notation \ref{Not14},
page \pageref{Not14}

$I^{x},$ Notation \ref{Not15}, page \pageref{Not15}

$\widehat{x}(\infty-0),\underset{k\rightarrow\infty}{\lim}\widehat
{x}(k),x(\infty-0),\underset{t\rightarrow\infty}{\lim}x(t),$ Notation
\ref{Not16}, page \pageref{Not16}

$\widehat{F}^{\widehat{x}},F^{x},$ Notation \ref{Not17}, page \pageref{Not17}

$\widehat{\sigma}^{k^{\prime}}$,$\sigma^{t^{\prime}},$ Definition \ref{Def26},
page \pageref{Def26}

$\widehat{Or}(\widehat{x}),Or(x),$ Definition \ref{Def9}, page \pageref{Def9}

$\widehat{\omega}(\widehat{x}),\omega(x),$ Definition \ref{Def10_}, page
\pageref{Def10_}

$\widehat{\mathbf{T}}_{\mu}^{\widehat{x}}$,$\mathbf{T}_{\mu}^{x},$ Notation
\ref{Not5}, page \pageref{Not5}

$\widehat{P}_{\mu}^{\widehat{x}},P_{\mu}^{x},$ Notation \ref{Not12}, page
\pageref{Not12}

$\widehat{L}_{\mu}^{\widehat{x}},L_{\mu}^{x},$ Notation \ref{Not18}, page
\pageref{Not18}

$\widehat{P}^{\widehat{x}}$,$P^{x},$ Notation \ref{Not19}, page
\pageref{Not19}

$\widehat{L}^{\widehat{x}},L^{x},$ Notation \ref{Not20}, page \pageref{Not20}

$\widehat{\Pi}_{n}^{\prime},\Pi_{n}^{\prime},$ Definition \ref{Def38}, page
\pageref{Def38}

$I^{\rho},$ Definition \ref{Def40}, page \pageref{Def40}

$\widehat{Seq^{\prime}},$ Notation \ref{Not8}, page \pageref{Not8}

$\widehat{Or}(\alpha)$,$Or(\rho),$ Definition \ref{Def9_}, page
\pageref{Def9_}

$\widehat{\omega}(\alpha)$,$\omega(\rho),$ Definition \ref{Def41}, page
\pageref{Def41}

$\widehat{P}^{\alpha},P^{\rho},$ Notation \ref{Not21}, page \pageref{Not21}

$\widehat{L}^{\alpha},L^{\rho},$ Notation \ref{Not22}, page \pageref{Not22}

$\widehat{\sigma}^{k^{\prime}},\sigma^{t^{\prime}},$ Definition \ref{Def27},
page \pageref{Def27}

$\widehat{\Pi}_{n},\Pi_{n},$ Definition \ref{Def42}, page \pageref{Def42}

$\Phi^{\lambda},$ Definition \ref{Def41}, page \pageref{Def41}

$\Phi^{\alpha^{0}...\alpha^{k}},$ Definition \ref{Def12}, page \pageref{Def12}

$\widehat{\Phi}^{\alpha}(\mu,\cdot)$,$\Phi^{\rho}(\mu,\cdot),$ Definition
\ref{Def13}, page \pageref{Def13}

$I_{\mu}^{\rho},$ Notation \ref{Not13}, page \pageref{Not13}

$\widehat{Or}^{\alpha}(\mu),Or^{\rho}(\mu),$ Notation \ref{Not9}, page
\pageref{Not9}

$\widehat{\omega}^{\alpha}(\mu),$ $\omega^{\rho}(\mu),$ Notation \ref{Not10},
page \pageref{Not10}

$\widehat{\mathbf{T}}_{\mu,\mu^{\prime}}^{\alpha},\mathbf{T}_{\mu,\mu^{\prime
}}^{\rho},$ Notation \ref{Not11}, page \pageref{Not11}

$supp$ $\alpha,supp$ $\rho,$ Notation \ref{Not23}, page \pageref{Not23}

$\Psi^{\lambda},$ Definition \ref{Def43}, page \pageref{Def43}

$\widehat{\Psi}^{-\alpha}(\mu,\widehat{u},k),\widehat{\Psi}^{\alpha}%
(\mu,\widehat{u},k)$,$\Psi^{-\rho}(\mu,u,t),\Psi^{\rho}(\mu,u,t),$ Definition
\ref{Def36}, page \pageref{Def36}

$P^{\ast}(H),$ Notation \ref{Not24}, page \pageref{Not24}

$\widehat{\Xi}_{\Psi}$,$\Xi_{\Psi},$ Definition \ref{Def31}, page
\pageref{Def31}

$\widehat{i}_{0}(\widehat{u})$,$i_{0}(u),$ Definition \ref{Def32}, page
\pageref{Def32}

$\widehat{P}_{\mu,\mu^{\prime}}^{\alpha},$ $P_{\mu,\mu^{\prime}}^{\rho},$
Notation \ref{Not25}, page \pageref{Not25}

$\widehat{P}_{\mu}^{\alpha},P_{\mu}^{\rho},$ Notation \ref{Not26}, page
\pageref{Not26}

\chapter{Index}

\textbf{A}

asynchronous system, Definition \ref{Def30}, page \pageref{Def30}

autonomous system, Definition \ref{Def35}, page \pageref{Def35}

\textbf{C}

computation function, Definition \ref{Def38}, page \pageref{Def38}

\textbf{D}

deterministic system, Definition \ref{Def34}, page \pageref{Def34}

double eventually periodic flow, Definition \ref{Def5_}, page \pageref{Def5_}

double periodic flow, Definition \ref{Def5}, page \pageref{Def5}

\textbf{E}

equilibrium point (of a flow), Definition \ref{Def53}, page \pageref{Def53}

equivalent computation functions, Definition \ref{Def51}, page \pageref{Def51}

eventually constant signal, Definition \ref{Def15}, page \pageref{Def15}

eventually equilibrium point (of a flow), Definition \ref{Def4}, page
\pageref{Def4}

eventually fixed point (of a Boolean function), Definition \ref{Def4}, page
\pageref{Def4}

eventually periodic computation function, Definition \ref{Def25}, page
\pageref{Def25}

eventually periodic point, Definition \ref{Def19}, page \pageref{Def19}

eventually periodic signal, Definition \ref{Def28}, page \pageref{Def28}

eventually rest point (of a flow), Definition \ref{Def4}, page \pageref{Def4}

evolution function, Definition \ref{Def13}, page \pageref{Def13}

evolution function, Definition \ref{Def36}, page \pageref{Def36}

\textbf{F}

final time instant (of a signal), Definition \ref{Def44}, page \pageref{Def44}

final value (of a signal), Definition \ref{Def7}, page \pageref{Def7}

fixed point (of a Boolean function), Definition \ref{Def53}, page
\pageref{Def53}

flow, Definition \ref{Def50}, page \pageref{Def50} and Definition \ref{Def52},
page \pageref{Def52}

forgetful function, Definition \ref{Def26}, page \pageref{Def26}

forgetful function, Definition \ref{Def27}, page \pageref{Def27}

\textbf{G}

generator function (of a system), Definition \ref{Def33}, page \pageref{Def33}

\textbf{H}

hypothesis $P$, Definition \ref{Def48}, page \pageref{Def48}

\textbf{I}

initial value (of a computation function), Definition \ref{Def39}, page
\pageref{Def39}

initial value (of a signal), Definition \ref{Def6}, page \pageref{Def6}

initial state function (of a system), Definition \ref{Def32}, page
\pageref{Def32}

initial time instant (of a computation function), Definition \ref{Def40}, page
\pageref{Def40}

initial time instant (of a signal), Definition \ref{Def29}, page
\pageref{Def29}

initialized system, Definition \ref{Def54}, page \pageref{Def54}

\textbf{L}

(left) limit function (of a real time signal), Definition \ref{Def8}, page
\pageref{Def8}

(right) limit function (of a real time signal), Definition \ref{Def8}, page
\pageref{Def8}

limit of constancy, Definition \ref{Def15}, page \pageref{Def15}

limit of equilibrium, Definition \ref{Def15}, page \pageref{Def15}

limit of periodicity (of a computation function), Definition \ref{Def25}, page
\pageref{Def25}

limit of periodicity (of a point), Definition \ref{Def19}, page
\pageref{Def19}

limit of periodicity (of a signal), Definition \ref{Def28}, page
\pageref{Def28}

\textbf{N}

next state function, Definition \ref{Def13}, page \pageref{Def13}

\textbf{O}

orbit (of a computation function), Definition \ref{Def9_}, page
\pageref{Def9_}

orbit (of a signal), Definition \ref{Def9}, page \pageref{Def9}

omega limit point, Definition \ref{Def10_}, page \pageref{Def10_}

omega limit set, Definition \ref{Def10_}, page \pageref{Def10_}

omega limit set (of a computation function), Definition \ref{Def41}, page
\pageref{Def41}

\textbf{P}

period (of a computation function), Definition \ref{Def25}, page
\pageref{Def25}

period (of a point), Definition \ref{Def19}, page \pageref{Def19}

period (of a signal), Definition \ref{Def28}, page \pageref{Def28}

periodic computation function, Definition \ref{Def49}, page \pageref{Def49}

periodic point, Definition \ref{Def47}, page \pageref{Def47}

periodic signal, Definition \ref{Def18}, page \pageref{Def18}

prime limit of periodicity (of a computation function), Definition
\ref{Def37}, page \pageref{Def37}

prime limit of periodicity (of a point), Definition \ref{Def45}, page
\pageref{Def45}

prime limit of periodicity (of a signal), Definition \ref{Def46}, page
\pageref{Def46}

prime period (of a computation function), Definition \ref{Def37}, page
\pageref{Def37}

prime period (of a point), Definition \ref{Def45}, page \pageref{Def45}

prime period (of a signal), Definition \ref{Def46}, page \pageref{Def46}

progressive computation function, Definition \ref{Def42}, page \pageref{Def42}

\textbf{R}

regular system, Definition \ref{Def33}, page \pageref{Def33}

rest point (of a flow), Definition \ref{Def53}, page \pageref{Def53}

\textbf{S}

semi-flow, Definition \ref{Def50}, page \pageref{Def50}

signal, Definition \ref{Def2}, page \pageref{Def2}

support set, Definition \ref{Not5}, page \pageref{Not5}

\textbf{T}

(state) transition function, Definition \ref{Def13}, page \pageref{Def13}

\textbf{U}

universal regular asynchronous system, Definition \ref{Def31}, page
\pageref{Def31}

\chapter{Lemmas}

The purpose of this Appendix is that of presenting results that are necessary
in the proofs of some Theorems. Several Lemmas are interesting by themselves too.

\begin{lemma}
\label{Lem37}Let the signals $\widehat{x}\in\widehat{S}^{(n)},x\in S^{(n)}$
and we consider the following statements:%
\begin{equation}
\mu\in\widehat{Or}(\widehat{x}), \label{p108}%
\end{equation}%
\begin{equation}
\mu\in\widehat{\omega}(\widehat{x}), \label{p109}%
\end{equation}%
\begin{equation}
\widehat{\mathbf{T}}_{\mu}^{\widehat{x}}\cap\{k^{\prime},k^{\prime
}+1,k^{\prime}+2,...\}\neq\varnothing, \label{p110}%
\end{equation}%
\begin{equation}
\forall k\in\widehat{\mathbf{T}}_{\mu}^{\widehat{x}}\cap\{k^{\prime}%
,k^{\prime}+1,k^{\prime}+2,...\},\{k+zp|z\in\mathbf{Z}\}\cap\{k^{\prime
},k^{\prime}+1,k^{\prime}+2,...\}\subset\widehat{\mathbf{T}}_{\mu}%
^{\widehat{x}}, \label{p111}%
\end{equation}%
\begin{equation}
\mu\in Or(x), \label{p112}%
\end{equation}%
\begin{equation}
\mu\in\omega(x), \label{p113}%
\end{equation}%
\begin{equation}
\mathbf{T}_{\mu}^{x}\cap\lbrack t^{\prime},\infty)\neq\varnothing,
\label{p114}%
\end{equation}%
\begin{equation}
\forall t\in\mathbf{T}_{\mu}^{x}\cap\lbrack t^{\prime},\infty),\{t+zT|z\in
\mathbf{Z}\}\cap\lbrack t^{\prime},\infty)\subset\mathbf{T}_{\mu}^{x},
\label{p115}%
\end{equation}
where $k^{\prime}\in\mathbf{N}_{\_}$ and $t^{\prime}\in\mathbf{R}.$ We have
the equivalencies:%
\begin{equation}
((\ref{p108})\text{ and }(\ref{p110})\text{ and }(\ref{p111}%
))\Longleftrightarrow((\ref{p109})\text{ and }(\ref{p111})), \label{p116}%
\end{equation}%
\begin{equation}
((\ref{p112})\text{ and }(\ref{p114})\text{ and }(\ref{p115}%
))\Longleftrightarrow((\ref{p113})\text{ and }(\ref{p115})). \label{p117}%
\end{equation}

\end{lemma}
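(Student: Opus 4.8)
The plan is to prove the two equivalencies (\ref{p116}) and (\ref{p117}) by a symmetric argument, treating the discrete time case in detail and noting that the real time case is handled the same way. Since the two implications in each equivalence go in opposite directions, I would split each proof into a forward and a backward part.

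First, for (\ref{p116}), the backward implication is the easy one: assume $\mu\in\widehat{\omega}(\widehat{x})$ and (\ref{p111}). By Theorem \ref{The12} a), page \pageref{The12}, $\mu\in\widehat{\omega}(\widehat{x})$ means exactly that $\widehat{\mathbf{T}}_{\mu}^{\widehat{x}}$ is infinite. Then on one hand $\widehat{\mathbf{T}}_{\mu}^{\widehat{x}}\neq\varnothing$ gives $\mu\in\widehat{Or}(\widehat{x})$, i.e.\ (\ref{p108}); on the other hand, $\widehat{\mathbf{T}}_{\mu}^{\widehat{x}}$ infinite means it is not contained in any finite set $\{-1,0,\ldots,k^{\prime}-1\}$, hence $\widehat{\mathbf{T}}_{\mu}^{\widehat{x}}\cap\{k^{\prime},k^{\prime}+1,k^{\prime}+2,\ldots\}\neq\varnothing$, which is (\ref{p110}). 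Thus (\ref{p109}) and (\ref{p111}) yield (\ref{p108}), (\ref{p110}) and (\ref{p111}).

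For the forward implication of (\ref{p116}): assume (\ref{p108}), (\ref{p110}) and (\ref{p111}); I must derive (\ref{p109}) (keeping (\ref{p111}), which is carried along trivially). From (\ref{p110}) pick $k_{0}\in\widehat{\mathbf{T}}_{\mu}^{\widehat{x}}$ with $k_{0}\geq k^{\prime}$. Applying (\ref{p111}) to $k_{0}$ and to the multiples $z=1,2,3,\ldots$ gives $k_{0}+zp\geq k^{\prime}$ for all $z\geq 1$, hence $\{k_{0},k_{0}+p,k_{0}+2p,\ldots\}\subset\widehat{\mathbf{T}}_{\mu}^{\widehat{x}}$; this set is infinite, so $\widehat{\mathbf{T}}_{\mu}^{\widehat{x}}$ is infinite, and by Theorem \ref{The12} a) we get $\mu\in\widehat{\omega}(\widehat{x})$, i.e.\ (\ref{p109}). (This is essentially the argument already used in Theorem \ref{Lem1}, page \pageref{Lem1}, or in the non-triviality Remark following Definition \ref{Def19}.)

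The real time equivalence (\ref{p117}) is proved by the exact analogue: the backward direction uses Theorem \ref{The12} a), page \pageref{The12}, which says $\mu\in\omega(x)$ iff $\mathbf{T}_{\mu}^{x}$ is unbounded from above, hence $\mathbf{T}_{\mu}^{x}\neq\varnothing$ giving $\mu\in Or(x)$ and $\mathbf{T}_{\mu}^{x}\cap[t^{\prime},\infty)\neq\varnothing$ giving (\ref{p114}); the forward direction picks $t_{0}\in\mathbf{T}_{\mu}^{x}\cap[t^{\prime},\infty)$ from (\ref{p114}), applies (\ref{p115}) with $z=1,2,3,\ldots$ to obtain $\{t_{0},t_{0}+T,t_{0}+2T,\ldots\}\subset\mathbf{T}_{\mu}^{x}$, which is unbounded above, hence $\mu\in\omega(x)$. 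I do not anticipate a genuine obstacle here; the only point requiring a little care is making sure the "carried-along" hypothesis (\ref{p111}) (resp.\ (\ref{p115})) is restated in the conclusion of each implication, and that the use of Theorem \ref{The12} a) is correctly invoked in the direction $\mu\in\widehat\omega(\widehat x)\Rightarrow \widehat{\mathbf T}_\mu^{\widehat x}$ infinite. The whole argument is elementary once Theorem \ref{The12} is in hand.
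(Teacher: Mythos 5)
Your proposal is correct and follows essentially the same route as the paper's own proof: the forward direction generates the infinite set $\{k,k+p,k+2p,\ldots\}\subset\widehat{\mathbf{T}}_{\mu}^{\widehat{x}}$ from (\ref{p110}) and (\ref{p111}) to conclude $\mu\in\widehat{\omega}(\widehat{x})$, and the backward direction uses that an infinite support set is non-empty and meets every tail $\{k^{\prime},k^{\prime}+1,\ldots\}$. Your explicit appeal to Theorem \ref{The12} a) only makes visible what the paper uses implicitly, and your treatment of the real time case mirrors the paper's "the proof of (\ref{p117}) is similar."
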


\begin{proof}
If (\ref{p108}) and (\ref{p110}) and (\ref{p111}) hold, we can take some
$k\in\widehat{\mathbf{T}}_{\mu}^{\widehat{x}}\cap\{k^{\prime},k^{\prime
}+1,k^{\prime}+2,...\}$ arbitrarily. From (\ref{p111}) we get
$k,k+p,k+2p,...\in\widehat{\mathbf{T}}_{\mu}^{\widehat{x}},$ thus
$\widehat{\mathbf{T}}_{\mu}^{\widehat{x}}$ is infinite and $\mu\in
\widehat{\omega}(\widehat{x}).$

Conversely, we suppose that (\ref{p109}) and (\ref{p111}) hold. (\ref{p108})
is trivially fulfilled and as far as $\widehat{\mathbf{T}}_{\mu}^{\widehat{x}%
}$ is infinite, (\ref{p110}) is true also.

(\ref{p116}) is proved and the proof of (\ref{p117}) is similar.
\end{proof}

\begin{remark}
Equivalent forms of Lemma \ref{Lem30}, Lemma \ref{Lem28}, Lemma \ref{Lem25},
Lemma \ref{Lem10}, and Lemma \ref{Lem35} exist, due to Lemma \ref{Lem37}; we
can replace in their hypothesis $\mu\in\widehat{\omega}(\widehat{x})$ with
$\mu\in\widehat{Or}(\widehat{x}),$ $\widehat{\mathbf{T}}_{\mu}^{\widehat{x}%
}\cap\{k^{\prime},k^{\prime}+1,k^{\prime}+2,...\}\neq\varnothing$ and we can
also replace $\mu\in\omega(x)$ with $\mu\in Or(x),$ $\mathbf{T}_{\mu}^{x}%
\cap\lbrack t^{\prime},\infty)\neq\varnothing.$
\end{remark}

\begin{lemma}
\label{Lem36}Let $\mu\in Or(x)$ and $t^{\prime}\in I^{x}.$ Then $\mathbf{T}%
_{\mu}^{x}\cap\lbrack t^{\prime},\infty)\neq\varnothing.$
\end{lemma}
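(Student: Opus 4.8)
\textbf{Proof plan for Lemma \ref{Lem36}.} The statement to establish is: if $\mu\in Or(x)$ and $t^{\prime}\in I^{x}$, then $\mathbf{T}_{\mu}^{x}\cap[t^{\prime},\infty)\neq\varnothing$. The plan is to split into two cases according to whether $\mu$ coincides with the initial value $x(-\infty+0)$ or not.

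First I would treat the case $\mu=x(-\infty+0)$. Here the argument is immediate from the definition of the initial time instant, Definition \ref{Def29}: since $t^{\prime}\in I^{x}$ we have $\forall t\leq t^{\prime},x(t)=x(-\infty+0)=\mu$. In particular $x(t^{\prime})=\mu$, so $t^{\prime}\in\mathbf{T}_{\mu}^{x}\cap[t^{\prime},\infty)$ and the intersection is nonempty. Equivalently one may invoke the remark following Definition \ref{Not5} that $t^{\prime}\in I^{x}$ means $(-\infty,t^{\prime}]\subset\mathbf{T}_{x(-\infty+0)}^{x}$.

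Next I would handle the case $\mu\neq x(-\infty+0)$. Since $\mu\in Or(x)$, some $t_{1}\in\mathbf{R}$ exists with $x(t_{1})=\mu$. I claim $t_{1}\geq t^{\prime}$: if instead $t_{1}<t^{\prime}$, then from $t^{\prime}\in I^{x}$ and $t_{1}\leq t^{\prime}$ (Definition \ref{Def29}) we would get $x(t_{1})=x(-\infty+0)$, hence $\mu=x(-\infty+0)$, contradicting the case assumption. Actually one must be slightly careful: $t^{\prime}\in I^{x}$ gives $x(t)=x(-\infty+0)$ for all $t\leq t^{\prime}$, which does cover $t=t_{1}$ when $t_{1}\le t^\prime$. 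So indeed $t_1>t^{\prime}$ (strict, since $x(t_1)=\mu\ne x(-\infty+0)$ rules out $t_1=t^\prime$ too), and therefore $t_{1}\in\mathbf{T}_{\mu}^{x}\cap[t^{\prime},\infty)$, which is thus nonempty.

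Combining the two cases gives the conclusion. I do not expect any real obstacle here; the only point needing mild attention is to make sure, in the second case, that the witness $t_{1}$ for $\mu\in Or(x)$ actually lies in $[t^{\prime},\infty)$, which is exactly where the hypothesis $t^{\prime}\in I^{x}$ and the defining property \eqref{p121} of initial time instants are used. The lemma is a basic non-triviality statement of the kind flagged in Remark \ref{Rem18} and the remark following Lemma \ref{Lem37}, so the proof is short and elementary.
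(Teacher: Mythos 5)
Your proposal is correct and follows essentially the same route as the paper: a case split on whether $\mu=x(-\infty+0)$ (in which case $t^{\prime}$ itself lies in $\mathbf{T}_{\mu}^{x}\cap[t^{\prime},\infty)$) or $\mu\neq x(-\infty+0)$ (in which case any witness of $\mu\in Or(x)$ must lie strictly to the right of $t^{\prime}$, since $(-\infty,t^{\prime}]\subset\mathbf{T}_{x(-\infty+0)}^{x}$). No gaps; the argument matches the paper's proof.
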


\begin{proof}
The hypothesis states that $(-\infty,t^{\prime}]\subset\mathbf{T}%
_{x(-\infty+0)}^{x}$ is true. If $\mu=x(-\infty+0),$ when $t^{\prime}%
\in\mathbf{T}_{\mu}^{x},$ we have $\mathbf{T}_{\mu}^{x}\cap\lbrack t^{\prime
},\infty)\neq\varnothing$ true. And if $\mu\neq x(-\infty+0),$ when
$\mathbf{T}_{\mu}^{x}\cap(-\infty,t^{\prime}]=\varnothing,\mathbf{T}_{\mu}%
^{x}\neq\varnothing,$ we get $\mathbf{T}_{\mu}^{x}\subset(t^{\prime},\infty),$
thus $\mathbf{T}_{\mu}^{x}\cap\lbrack t^{\prime},\infty)\neq\varnothing.$
\end{proof}

\begin{lemma}
\label{Lem30}a) $\widehat{x}\in\widehat{S}^{(n)}$ is given and we suppose that
$\mu\in\widehat{\omega}(\widehat{x})$ is eventually periodic with the period
$p\geq1$ and with the limit of periodicity $k^{\prime}\in\mathbf{N}_{\_}.$ If
$k^{\prime\prime}\geq k^{\prime},$ then $\mu$ is eventually periodic with the
period $p$ and with the limit of periodicity $k^{\prime\prime}.$

b) Let $x\in S^{(n)}$ and we suppose that $\mu\in\omega(x)$ is eventually
periodic with the period $T>0$ and with the limit of periodicity $t^{\prime
}\in\mathbf{R.}$ If $t^{\prime\prime}\geq t^{\prime},$ then $\mu$ is
eventually periodic with the period $T$ and with the limit of periodicity
$t^{\prime\prime}.$
\end{lemma}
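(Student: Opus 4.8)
The plan is to unwind Definition \ref{Def19} and exploit the fact that enlarging the limit of periodicity only shrinks the ``final'' time set $\{k',k'+1,k'+2,\ldots\}$ (resp. $[t',\infty)$), so every instance of the defining condition at level $k''$ is inherited from the corresponding instance at level $k'$. Thus I would argue that both clauses of eventual periodicity (the non-triviality clause and the translation-closure clause (\ref{pre738}), resp. (\ref{pre740})) transfer from $k'$ to $k''\ge k'$.

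First I would dispose of the non-triviality clause. In part a), since $\mu\in\widehat{\omega}(\widehat{x})$, Theorem \ref{The12} says $\widehat{\mathbf{T}}_{\mu}^{\widehat{x}}$ is infinite, hence $\widehat{\mathbf{T}}_{\mu}^{\widehat{x}}\cap\{k'',k''+1,k''+2,\ldots\}\neq\varnothing$ for every $k''\in\mathbf{N}_{\_}$; this is exactly why the statement is phrased with $\mu\in\widehat{\omega}(\widehat{x})$ rather than $\mu\in\widehat{Or}(\widehat{x})$, cf. Lemma \ref{Lem37}. Symmetrically, in part b) $\mu\in\omega(x)$ forces $\mathbf{T}_{\mu}^{x}$ to be unbounded from above, so $\mathbf{T}_{\mu}^{x}\cap[t'',\infty)\neq\varnothing$ for all $t''$. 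Next I would verify the translation-closure clause. For a), fix $k''\ge k'$ and let $k\in\widehat{\mathbf{T}}_{\mu}^{\widehat{x}}\cap\{k'',k''+1,\ldots\}$; since $k\ge k''\ge k'$ we have $k\in\widehat{\mathbf{T}}_{\mu}^{\widehat{x}}\cap\{k',k'+1,\ldots\}$, so the hypothesis at level $k'$ applies to $k$. For any $z\in\mathbf{Z}$ with $k+zp\ge k''$ we also have $k+zp\ge k'$, hence $k+zp\in\{k+zp\mid z\in\mathbf{Z}\}\cap\{k',k'+1,\ldots\}\subset\widehat{\mathbf{T}}_{\mu}^{\widehat{x}}$, and $k+zp\in\{k'',k''+1,\ldots\}$; this gives (\ref{pre738}) at level $k''$. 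The real-time case b) is the identical argument with $\{k',\ldots\}$ replaced by $[t',\infty)$, $p$ by $T$, and integer translations $t+zT$: for $t\in\mathbf{T}_{\mu}^{x}\cap[t'',\infty)$ we have $t\in\mathbf{T}_{\mu}^{x}\cap[t',\infty)$ because $t''\ge t'$, and any $z$ with $t+zT\ge t''$ satisfies $t+zT\ge t'$, so $t+zT\in\mathbf{T}_{\mu}^{x}\cap[t'',\infty)$ by (\ref{pre740}).

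There is essentially no obstacle here: the entire content is the set inclusion $\{k'',k''+1,\ldots\}\subseteq\{k',k'+1,\ldots\}$ (resp. $[t'',\infty)\subseteq[t',\infty)$) together with the observation that being an omega-limit point supplies the non-triviality clause for free. The only point worth stating with care is why $\widehat{\mathbf{T}}_{\mu}^{\widehat{x}}$ (resp. $\mathbf{T}_{\mu}^{x}$) meets every final segment, which is immediate from $\mu\in\widehat{\omega}(\widehat{x})$ (resp. $\mu\in\omega(x)$) via Theorem \ref{The12}.
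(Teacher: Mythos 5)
Your proof is correct and takes essentially the same approach as the paper: the whole argument is the inclusion $\{k'',k''+1,\ldots\}\subseteq\{k',k'+1,\ldots\}$ (resp. $[t'',\infty)\subseteq[t',\infty)$), with the non-triviality clause supplied automatically by $\mu\in\widehat{\omega}(\widehat{x})$ (resp. $\mu\in\omega(x)$). The paper writes out only the real-time case b) and leaves the non-triviality remark implicit, so your version is if anything slightly more complete.
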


\begin{proof}
b) The hypothesis states that%
\begin{equation}
\forall t\in\mathbf{T}_{\mu}^{x}\cap\lbrack t^{\prime},\infty),\{t+zT|z\in
\mathbf{Z}\}\cap\lbrack t^{\prime},\infty)\subset\mathbf{T}_{\mu}^{x}
\label{pre952}%
\end{equation}
is true and we must prove%
\begin{equation}
\forall t\in\mathbf{T}_{\mu}^{x}\cap\lbrack t^{\prime\prime},\infty
),\{t+zT|z\in\mathbf{Z}\}\cap\lbrack t^{\prime\prime},\infty)\subset
\mathbf{T}_{\mu}^{x} \label{pre953}%
\end{equation}
for an arbitrary $t^{\prime\prime}\geq t^{\prime}.$ Indeed, we take some
arbitrary $t\in\mathbf{T}_{\mu}^{x}\cap\lbrack t^{\prime\prime},\infty) $ and
$z\in\mathbf{Z}$ such that $t+zT\geq t^{\prime\prime}$ holds. Then
$t\in\mathbf{T}_{\mu}^{x}\cap\lbrack t^{\prime},\infty)$ and $t+zT\geq
t^{\prime}$ are true, thus we can apply (\ref{pre952}). We have obtained that
$t+zT\in\mathbf{T}_{\mu}^{x},$ i.e. (\ref{pre953}) is fulfilled.
\end{proof}

\begin{lemma}
\label{Lem28}a) Let $\widehat{x},\mu\in\widehat{\omega}(\widehat{x})$ that is
an eventually periodic point of $\widehat{x}$ with the period $p\geq1$ and the
limit of periodicity $k^{\prime}\in\mathbf{N}_{\_}$ and let also $k\in
\widehat{\mathbf{T}}_{\mu}^{\widehat{x}}\cap\{k^{\prime},k^{\prime
}+1,k^{\prime}+2,...\}.$ Then $\{k,k+p,k+2p,...\}\subset\widehat{\mathbf{T}%
}_{\mu}^{\widehat{x}}.$

b) Let $x\in S^{(n)},\mu\in\omega(x)$ that is eventually periodic with the
period $T>0$ and the limit of periodicity $t^{\prime}\in\mathbf{R}$ and we
suppose that $t_{1}<t_{2}$ fulfill $[t_{1},t_{2})\subset\mathbf{T}_{\mu}%
^{x}\cap\lbrack t^{\prime},\infty\dot{)}$. Then%
\begin{equation}
\lbrack t_{1},t_{2})\cup\lbrack t_{1}+T,t_{2}+T)\cup\lbrack t_{1}%
+2T,t_{2}+2T)\cup...\subset\mathbf{T}_{\mu}^{x}. \label{pre947}%
\end{equation}

\end{lemma}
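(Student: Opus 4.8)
\textbf{Proof plan for Lemma \ref{Lem28}.}

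The plan is to prove the discrete time statement a) directly from the definition of eventual periodicity, and then to prove the real time statement b) by a translation argument that reduces it to repeated application of the eventual periodicity property together with a constancy argument on the intervals. First I would record what the hypotheses say: in a), that $\mu\in\widehat{\omega}(\widehat{x})$ is eventually periodic with period $p$ and limit of periodicity $k^{\prime}$ means (by Definition \ref{Def19}, using that $\mu\in\widehat{\omega}(\widehat{x})$ forces $\widehat{\mathbf{T}}_{\mu}^{\widehat{x}}\cap\{k^{\prime},k^{\prime}+1,k^{\prime}+2,\dots\}\neq\varnothing$, see Lemma \ref{Lem37}) that
\[
\forall k\in\widehat{\mathbf{T}}_{\mu}^{\widehat{x}}\cap\{k^{\prime},k^{\prime}+1,k^{\prime}+2,\dots\},\ \{k+zp\mid z\in\mathbf{Z}\}\cap\{k^{\prime},k^{\prime}+1,k^{\prime}+2,\dots\}\subset\widehat{\mathbf{T}}_{\mu}^{\widehat{x}}.
\]
Then for a given $k\in\widehat{\mathbf{T}}_{\mu}^{\widehat{x}}\cap\{k^{\prime},k^{\prime}+1,\dots\}$, the numbers $k,k+p,k+2p,\dots$ all lie in $\{k+zp\mid z\in\mathbf{Z}\}\cap\{k^{\prime},k^{\prime}+1,\dots\}$ since each is $\geq k\geq k^{\prime}$, so the displayed inclusion immediately gives $\{k,k+p,k+2p,\dots\}\subset\widehat{\mathbf{T}}_{\mu}^{\widehat{x}}$. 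That finishes a).

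For b) the hypothesis gives, by Definition \ref{Def19} together with Lemma \ref{Lem37} (which allows passing from $\mu\in\omega(x)$ to the nonemptiness condition),
\[
\forall t\in\mathbf{T}_{\mu}^{x}\cap\lbrack t^{\prime},\infty),\ \{t+zT\mid z\in\mathbf{Z}\}\cap\lbrack t^{\prime},\infty)\subset\mathbf{T}_{\mu}^{x}.
\]
Now fix $k\in\mathbf{N}$ and let $t\in\lbrack t_{1}+kT,t_{2}+kT)$ be arbitrary. Then $t-kT\in\lbrack t_{1},t_{2})\subset\mathbf{T}_{\mu}^{x}\cap\lbrack t^{\prime},\infty)$, and $t=(t-kT)+kT$ with $t\geq t_{1}+kT\geq t_{1}\geq t^{\prime}$, so $t\in\{(t-kT)+zT\mid z\in\mathbf{Z}\}\cap\lbrack t^{\prime},\infty)$, which by the displayed property is contained in $\mathbf{T}_{\mu}^{x}$. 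Hence $\lbrack t_{1}+kT,t_{2}+kT)\subset\mathbf{T}_{\mu}^{x}$ for every $k\in\mathbf{N}$, and taking the union over $k$ gives (\ref{pre947}).

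I do not expect a serious obstacle here: both parts are essentially unwinding the quantifier structure of Definition \ref{Def19} and choosing the right representative $t-kT$ (resp.\ $k$) to which to apply the one-step translation closure. The only point requiring a little care is making sure that the translated argument still exceeds the limit of periodicity $t^{\prime}$ (resp.\ $k^{\prime}$), which is automatic since we translate upward from a point already in $[t^{\prime},\infty)$ (resp.\ $\{k^{\prime},k^{\prime}+1,\dots\}$); this is exactly why the hypothesis $[t_{1},t_{2})\subset\mathbf{T}_{\mu}^{x}\cap[t^{\prime},\infty)$ (and not merely $[t_{1},t_{2})\subset\mathbf{T}_{\mu}^{x}$) is needed. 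One could alternatively deduce b) from Theorem \ref{Lem1} and Theorem \ref{The71}, but the direct argument above is shorter and self-contained.
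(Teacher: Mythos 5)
Your proof is correct and follows essentially the same route as the paper: part a) is the immediate unwinding of Definition \ref{Def19}, and for part b) you pick $t\in[t_{1}+kT,t_{2}+kT)$, observe $t-kT\in[t_{1},t_{2})\subset\mathbf{T}_{\mu}^{x}\cap[t^{\prime},\infty)$, and apply the translation-closure property to the representative $t-kT$, which is exactly the paper's argument. Your remark that the translated point automatically stays above $t^{\prime}$ is the same observation the paper relies on implicitly.
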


\begin{proof}
b) We have the truth of%
\begin{equation}
\forall t\in\mathbf{T}_{\mu}^{x}\cap\lbrack t^{\prime},\infty),\{t+zT|z\in
\mathbf{Z}\}\cap\lbrack t^{\prime},\infty)\subset\mathbf{T}_{\mu}^{x}.
\label{pre946}%
\end{equation}
Let $k\in\mathbf{N}$ and $t\in\lbrack t_{1}+kT,t_{2}+kT)$ arbitrary. We infer
$t-kT\in\lbrack t_{1},t_{2}),$ thus%
\[
\mu=x(t-kT)\overset{(\ref{pre946})}{=}x(t),
\]
wherefrom $t\in\mathbf{T}_{\mu}^{x}.$ (\ref{pre947}) is proved.
\end{proof}

\begin{lemma}
\label{Lem25}a) $\widehat{x}\in\widehat{S}^{(n)},\mu\in\widehat{\omega
}(\widehat{x})$ are given with the property that $\mu$ is eventually periodic
with the period $p\geq1$ and the limit of periodicity $k^{\prime}\in
\mathbf{N}_{\_}.$ If $k_{1}\geq k^{\prime}$ and%
\begin{equation}
\widehat{x}(k_{1})\neq\mu, \label{p105}%
\end{equation}
then $\forall k\in\mathbf{N},$%
\begin{equation}
\widehat{x}(k_{1}+kp)\neq\mu. \label{p106}%
\end{equation}

b) We suppose that $x\in S^{(n)},$ $\mu\in\omega(x)$ are given and $\mu$ is
eventually periodic with the period $T>0$ and the limit of periodicity
$t^{\prime}\in\mathbf{R.}$ If $t_{1}>t^{\prime}$ and%
\begin{equation}
x(t_{1}-0)\neq\mu, \label{pre770}%
\end{equation}
then $\forall k\in\mathbf{N},$%
\begin{equation}
x(t_{1}+kT-0)\neq\mu; \label{pre772}%
\end{equation}
if $t_{2}\geq t^{\prime}$ and%
\begin{equation}
x(t_{2})\neq\mu, \label{pre771}%
\end{equation}
then $\forall k\in\mathbf{N},$%
\begin{equation}
x(t_{2}+kT)\neq\mu. \label{pre773}%
\end{equation}

\end{lemma}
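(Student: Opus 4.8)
The plan is to treat the discrete time part a) and the two real time parts of b) separately, each by the same ``pigeonhole along the period'' idea that drives Theorem \ref{Lem1}, page \pageref{Lem1}, and Theorem \ref{The69}, page \pageref{The69}. For a), the hypothesis is that $\mu \in \widehat{\omega}(\widehat{x})$ is eventually periodic with period $p$ and limit of periodicity $k^{\prime}$, meaning in particular that
\[
\forall k\in\widehat{\mathbf{T}}_{\mu}^{\widehat{x}}\cap\{k^{\prime},k^{\prime}+1,k^{\prime}+2,...\},\{k+zp|z\in\mathbf{Z}\}\cap\{k^{\prime},k^{\prime}+1,k^{\prime}+2,...\}\subset\widehat{\mathbf{T}}_{\mu}^{\widehat{x}}
\]
holds. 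I would argue by contradiction: suppose $k_{1}\geq k^{\prime}$, $\widehat{x}(k_{1})\neq\mu$, but $\widehat{x}(k_{1}+k p)=\mu$ for some $k\in\mathbf{N}$. Then $k_{1}+kp\in\widehat{\mathbf{T}}_{\mu}^{\widehat{x}}\cap\{k^{\prime},k^{\prime}+1,k^{\prime}+2,...\}$ and $k_{1}+kp\geq k^{\prime}$, while $k_{1}=(k_{1}+kp)+(-k)p$ satisfies $k_{1}\geq k^{\prime}$, so applying the displayed periodicity property with $k$ replaced by $k_{1}+kp$ and $z=-k$ gives $k_{1}\in\widehat{\mathbf{T}}_{\mu}^{\widehat{x}}$, i.e. $\widehat{x}(k_{1})=\mu$, a contradiction. (A small care point: I should double check the direction of the shift, $z=-k\le 0$, stays inside $\{k^{\prime},k^{\prime}+1,\dots\}$, which it does since $k_1\ge k^{\prime}$.)

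For the two real-time implications in b), the argument is entirely parallel but I must track the left-limit values correctly. The hypothesis gives
\[
\forall t\in\mathbf{T}_{\mu}^{x}\cap\lbrack t^{\prime},\infty),\{t+zT|z\in\mathbf{Z}\}\cap\lbrack t^{\prime},\infty)\subset\mathbf{T}_{\mu}^{x}.
\]
For the statement about $x(t_{2})$ with $t_{2}\geq t^{\prime}$ and $x(t_{2})\neq\mu$: if $x(t_{2}+kT)=\mu$ for some $k\in\mathbf{N}$, then $t_{2}+kT\in\mathbf{T}_{\mu}^{x}\cap[t^{\prime},\infty)$ and, taking $z=-k$ (so $t_{2}+kT+zT=t_{2}\geq t^{\prime}$), the periodicity property forces $t_{2}\in\mathbf{T}_{\mu}^{x}$, contradicting $x(t_{2})\neq\mu$. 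For the statement about $x(t_{1}-0)$, the extra ingredient is that left limits are shift-invariant under the period: concretely, I would first use the right-continuity and the structure of signals (Proposition \ref{Lem3}, Theorem \ref{The1}) to pick $\varepsilon>0$ with $x(\xi)=x(t_{1}-0)$ for all $\xi\in(t_{1}-\varepsilon,t_{1})$, and similarly $x(\xi)=x(t_{1}+kT-0)$ on $(t_{1}+kT-\varepsilon',t_{1}+kT)$; if $x(t_{1}+kT-0)=\mu$, then on a small interval just left of $t_{1}+kT$ we have $x\equiv\mu$, and applying the already-established ``accessibility/translation'' reasoning to a point $t\in(t_{1}+kT-\min\{\varepsilon,\varepsilon'\},t_{1}+kT)$ with $t-kT\in(t_{1}-\varepsilon,t_{1})$ gives $x(t-kT)=\mu$ as well, i.e. $x(t_{1}-0)=\mu$, a contradiction. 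This is the same device used in the proof of Theorem \ref{The117}, page \pageref{The117}.

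The main obstacle I anticipate is purely bookkeeping in part b): making sure the $\varepsilon$'s chosen near $t_{1}$ and near $t_{1}+kT$ are compatible so that a single point $t$ lies simultaneously in the ``constant $\mu$'' neighborhood of $t_{1}+kT$ and has $t-kT$ in the ``constant $x(t_{1}-0)$'' neighborhood of $t_{1}$ — this is where one must take $\varepsilon$ to be the minimum of the two and invoke Proposition \ref{Lem3} to guarantee such neighborhoods exist. There is no real mathematical difficulty; the heart of every case is the single observation that, once $t$ (resp. $k$) is at least the limit of periodicity, the backward translate by a multiple of the period stays admissible, so membership (resp. non-membership) in $\mathbf{T}_{\mu}^{x}$ (resp. $\widehat{\mathbf{T}}_{\mu}^{\widehat{x}}$) propagates both forward and backward. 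I would also note, as the remark following Lemma \ref{Lem37} suggests, that the hypothesis $\mu\in\widehat{\omega}(\widehat{x})$, $\mu\in\omega(x)$ could be weakened to $\mu\in\widehat{Or}(\widehat{x})$, $\mu\in Or(x)$ together with a nontriviality condition, but for the proof as stated the $\omega$-limit hypothesis is harmless and slightly streamlines the bookkeeping.
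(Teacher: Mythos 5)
Your proposal is correct and follows essentially the same route as the paper's own proof: part a) and the $x(t_{2})$ case are handled by the identical backward translation $z=-k$ inside the admissible set, and the left-limit case is handled exactly as in the paper, by choosing $\varepsilon_{1},\varepsilon_{2}$ (with $t_{1}-\varepsilon_{1}\geq t^{\prime}$, which is where the strict inequality $t_{1}>t^{\prime}$ is used) and translating a point of the left neighborhood of $t_{1}+kT$ back to the left neighborhood of $t_{1}$. The only cosmetic difference is that the paper shows directly that no point of $(t_{1}+kT-\varepsilon,t_{1}+kT)$ lies in $\mathbf{T}_{\mu}^{x}$, whereas you phrase the same computation as a contradiction from $x(t_{1}+kT-0)=\mu$.
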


\begin{proof}
a) The hypothesis states the truth of%
\begin{equation}
\forall k\in\widehat{\mathbf{T}}_{\mu}^{\widehat{x}}\cap\{k^{\prime}%
,k^{\prime}+1,k^{\prime}+2,...\},\{k+zp|z\in\mathbf{Z}\}\cap\{k^{\prime
},k^{\prime}+1,k^{\prime}+2,...\}\subset\widehat{\mathbf{T}}_{\mu}%
^{\widehat{x}}. \label{p107}%
\end{equation}
Let $k\in\mathbf{N}$ arbitrary and we suppose against all reason that
(\ref{p106}) is false. We obtain the contradiction:%
\[
\mu=\widehat{x}(k_{1}+kp)\overset{(\ref{p107})}{=}\widehat{x}(k_{1}%
)\overset{(\ref{p105})}{\neq}\mu.
\]

b) We have from the hypothesis that%
\begin{equation}
\forall t\in\mathbf{T}_{\mu}^{x}\cap\lbrack t^{\prime},\infty),\{t+zT|z\in
\mathbf{Z}\}\cap\lbrack t^{\prime},\infty)\subset\mathbf{T}_{\mu}^{x}
\label{pre769}%
\end{equation}
holds. Let $k\in\mathbf{N}$ arbitrary$.$ We get the existence of
$\varepsilon_{1}>0$ such that $t_{1}-\varepsilon_{1}\geq t^{\prime}$ and
\begin{equation}
\forall t\in(t_{1}-\varepsilon_{1},t_{1}),x(t)=x(t_{1}-0) \label{pre774}%
\end{equation}
and respectively the existence of $\varepsilon_{2}>0$ such that $t_{1}%
+kT-\varepsilon_{2}\geq t^{\prime}$ and%
\begin{equation}
\forall t\in(t_{1}+kT-\varepsilon_{2},t_{1}+kT),x(t)=x(t_{1}+kT-0).
\label{pre775}%
\end{equation}
We denote $\varepsilon=\min\{\varepsilon_{1},\varepsilon_{2}\}$ and we suppose
against all reason that $t^{\prime\prime}\in(t_{1}+kT-\varepsilon,t_{1}+kT)$
exists with $x(t^{\prime\prime})=\mu.$ We infer%
\begin{equation}
t^{\prime}\leq t_{1}+kT-\varepsilon_{2}\leq t_{1}+kT-\varepsilon
<t^{\prime\prime}<t_{1}+kT, \label{pre942}%
\end{equation}%
\begin{equation}
\mu=x(t^{\prime\prime})\overset{(\ref{pre775}),(\ref{pre942})}{=}%
x(t_{1}+kT-0), \label{pre944}%
\end{equation}%
\begin{equation}
t^{\prime}\leq t_{1}-\varepsilon_{1}\leq t_{1}-\varepsilon<t^{\prime\prime
}-kT<t_{1}, \label{pre943}%
\end{equation}%
\begin{equation}
\mu=x(t^{\prime\prime})\overset{(\ref{pre769}),(\ref{pre942}),(\ref{pre943}%
)}{=}x(t^{\prime\prime}-kT)\overset{(\ref{pre774}),(\ref{pre943})}{=}%
x(t_{1}-0)\overset{(\ref{pre770})}{\neq}\mu, \label{pre945}%
\end{equation}
contradiction. We have obtained that $\forall t\in(t_{1}+kT-\varepsilon
,t_{1}+kT),x(t)\neq\mu,$ i.e. (\ref{pre772}) holds.

Furthermore, if (\ref{pre773}) is false, against all reason, then we get%
\[
\mu=x(t_{2}+kT)\overset{(\ref{pre769})}{=}x(t_{2})\overset{(\ref{pre771}%
)}{\neq}\mu,
\]
contradiction. (\ref{pre773}) holds.
\end{proof}

\begin{remark}
Lemma \ref{Lem25} refers to eventually periodic points $\mu$ and makes a
weaker statement than the appropriate one of the eventually periodic signals.
We cannot draw the conclusion here, like at the eventually periodic signals,
that $\widehat{x}(k_{1})=\widehat{x}(k_{1}+kp),x(t_{1}-0)=x(t_{1}%
+kT-0),x(t_{2})=x(t_{2}+kT),$ but we can state that $\widehat{x}(k_{1})\neq
\mu,x(t_{1}-0)\neq\mu,x(t_{2})\neq\mu$ imply $\widehat{x}(k_{1}+kp)\neq
\mu,x(t_{1}+kT-0)\neq\mu,x(t_{2}+kT)\neq\mu.$
\end{remark}

\begin{lemma}
\label{Lem29}Let $t_{1}<t_{2},t_{1}^{\prime}<t_{2}^{\prime},T>0$ and
$T^{\prime}\in(0,t_{2}-t_{1}).$ Then
\[
([t_{1},t_{2})\cup\lbrack t_{1}+T,t_{2}+T)\cup\lbrack t_{1}+2T,t_{2}%
+2T)\cup...)\cap
\]%
\[
\cap([t_{1}^{\prime},t_{2}^{\prime})\cup\lbrack t_{1}^{\prime}+T^{\prime
},t_{2}^{\prime}+T^{\prime})\cup\lbrack t_{1}^{\prime}+2T^{\prime}%
,t_{2}^{\prime}+2T^{\prime})\cup...)\neq\varnothing.
\]

\end{lemma}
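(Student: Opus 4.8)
The statement is a purely elementary fact about two arithmetic progressions of half-open intervals, so the plan is to reduce it to a statement about sets of the form $\bigcup_{k\in\mathbf{N}}[t_1+kT, t_2+kT)$ and to exploit the fact that the second progression has step $T'$ strictly less than the length $t_2-t_1$ of the blocks of the first progression. First I would observe that the union $A=\bigcup_{k\in\mathbf{N}}[t_1+kT,t_2+kT)$ and the union $B=\bigcup_{k\in\mathbf{N}}[t_1'+kT',t_2'+kT')$ are both unbounded from above. For $B$ this is clear since $T'>0$; for $A$ it is clear since $T>0$. The intuition is: the blocks of $B$ advance in steps of $T'$, and since $T'<t_2-t_1$, consecutive left endpoints $t_1'+kT'$ cannot "jump over" a whole block $[t_1+jT,t_2+jT)$ of $A$ without landing inside it; so for $j$ large enough some $t_1'+kT'$ must fall inside $[t_1+jT,t_2+jT)$.

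The key step is to make this "cannot jump over" argument precise. I would fix $j$ large enough that $t_1+jT > t_1'$ and $t_1+jT \geq t_1'$, and consider the smallest $k\in\mathbf{N}$ with $t_1'+kT' \geq t_1+jT$ (such a $k$ exists since $T'>0$ and the left-hand side is eventually large). For this minimal $k$ we have on the one hand $t_1'+kT' \geq t_1+jT$, and on the other hand, by minimality, $t_1'+(k-1)T' < t_1+jT$ (if $k\geq 1$; if $k=0$, choose $j$ larger so that $k\geq 1$ is forced, or handle $k=0$ separately by noting $t_1'\le t_1+jT$ can be arranged). Then
\[
t_1'+kT' = \bigl(t_1'+(k-1)T'\bigr) + T' < (t_1+jT) + T' < (t_1+jT) + (t_2-t_1) = t_2+jT,
\]
using $T'<t_2-t_1$. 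Hence $t_1'+kT' \in [t_1+jT, t_2+jT)$, which exhibits a point lying in a block of $A$ and simultaneously being a left endpoint of a block of $B$, so it lies in $A\cap B$, and the intersection is nonempty.

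I expect the only real care needed — the "main obstacle", though it is minor — is bookkeeping about whether the minimal $k$ is $0$ or positive, and ensuring $j$ is chosen large enough that the chain of inequalities $t_1'+(k-1)T' < t_1+jT \le t_1'+kT' < t_2+jT$ is valid. One clean way to avoid the $k=0$ edge case entirely is to pick $j$ so large that $t_1+jT > t_2'$, which guarantees that the minimal $k$ with $t_1'+kT'\ge t_1+jT$ satisfies $k\ge 1$ automatically (since $t_1'+0\cdot T' = t_1' < t_2' < t_1+jT$). With that choice the displayed computation goes through verbatim, and no further structure of the signals (or of $x$) is used — this is a self-contained lemma about real arithmetic progressions, consistent with its placement in the Appendix as a tool for the periodicity proofs.
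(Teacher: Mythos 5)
Your proposal is correct and is essentially the paper's own argument: the paper picks $k_{1}$ with $t_{1}+k_{1}T>t_{1}^{\prime}$, takes the largest $k_{2}$ with $t_{1}^{\prime}+k_{2}T^{\prime}<t_{1}+k_{1}T$, and shows $t_{1}^{\prime}+(k_{2}+1)T^{\prime}\in[t_{1}+k_{1}T,t_{2}+k_{1}T)$ using $T^{\prime}<t_{2}-t_{1}$ — which is exactly your minimal-$k$ argument with $k=k_{2}+1$. Your handling of the $k=0$ edge case (ensuring $t_{1}+jT>t_{1}^{\prime}$) matches the paper's choice of $k_{1}$.
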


\begin{proof}
Let $k_{1}\in\mathbf{N}$ such that $t_{1}+k_{1}T>t_{1}^{\prime}.$ In the
sequence $t_{1}^{\prime},t_{1}^{\prime}+T^{\prime},t_{1}^{\prime}+2T^{\prime
},...$ some $k_{2}\in\mathbf{N}$ exists with%
\begin{equation}
t_{1}^{\prime}+k_{2}T^{\prime}<t_{1}+k_{1}T, \label{pre948}%
\end{equation}%
\begin{equation}
t_{1}^{\prime}+(k_{2}+1)T^{\prime}\geq t_{1}+k_{1}T. \label{pre949}%
\end{equation}
We get from here that%
\begin{equation}
t_{1}^{\prime}+(k_{2}+1)T^{\prime}\overset{(\ref{pre948})}{<}t_{1}%
+k_{1}T+T^{\prime}<t_{1}+k_{1}T+t_{2}-t_{1}=t_{2}+k_{1}T. \label{pre950}%
\end{equation}
From (\ref{pre949}) and (\ref{pre950}) we infer that $t_{1}^{\prime}%
+(k_{2}+1)T^{\prime}\in\lbrack t_{1}+k_{1}T,t_{2}+k_{1}T).$
\end{proof}

\begin{lemma}
\label{Lem38}a) Let $\widehat{x}$ that is not eventually constant and $\mu
\in\widehat{\omega}(\widehat{x}).$ Then%
\[
\forall k\in\mathbf{N},\exists k^{\prime}>k,\widehat{x}(k^{\prime}%
-1)\neq\widehat{x}(k^{\prime})=\mu.
\]

b) We suppose that $x$ is not eventually constant and we take $\mu\in
\omega(x).$ We have%
\[
\forall t\in\mathbf{R},\exists t^{\prime}>t,x(t^{\prime}-0)\neq x(t^{\prime
})=\mu.
\]

\end{lemma}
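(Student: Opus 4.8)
\textbf{Proof proposal for Lemma \ref{Lem38}.}

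The plan is to prove both statements by contradiction, exploiting the finiteness of $\mathbf{B}^{n}$ and the characterizations of $\widehat{\omega}(\widehat{x})$, $\omega(x)$ as the sets of points whose support sets are infinite, respectively unbounded from above (Theorem \ref{The12} a)). First I would handle b), since the discrete time case a) is formally analogous and slightly simpler. So suppose, against all reason, that b) fails: there exists $t\in\mathbf{R}$ such that for every $t^{\prime}>t$, either $x(t^{\prime}-0)=x(t^{\prime})$ or $x(t^{\prime})\neq\mu$. The first step is to reformulate this: it says that $x$ has no "jump up to $\mu$" after time $t$, i.e. there is no $t^{\prime}>t$ with $x(t^{\prime}-0)\neq\mu$ and $x(t^{\prime})=\mu$.

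The key step is then to show that under this assumption $\mathbf{T}_{\mu}^{x}\cap(t,\infty)$ is either empty or of the form $[s,\infty)$ for some $s$. Here I would use the structure of real time signals from Definition \ref{Def2}: $x$ is piecewise constant on intervals $[t_{k},t_{k+1})$ determined by a sequence $(t_{k})\in Seq$. Since $\mu\in\omega(x)$, the set $\mathbf{T}_{\mu}^{x}$ is unbounded from above, so $\mathbf{T}_{\mu}^{x}\cap(t,\infty)\neq\varnothing$; pick some $\xi>t$ with $x(\xi)=\mu$, lying in some interval $[t_{k},t_{k+1})$. If $t_{k}>t$, then $x(t_{k}-0)\neq\mu$ would contradict our assumption (taking $t^{\prime}=t_{k}$), unless $x(t_{k}-0)=\mu$, i.e. the value $\mu$ extends leftward past $t_{k}$; iterating this backward through finitely many breakpoints down to $t$ (or to the first breakpoint after $t$) shows that once $\mu$ is attained after $t$, it has been attained on all of $(t,\xi]$ back to some point, and in fact the "left endpoint" of the $\mu$-region cannot be a genuine jump-up to $\mu$. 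Combining this with the forward direction (once $x=\mu$ on an interval $[t_{k},t_{k+1})$, the next breakpoint $t_{k+1}$, if the value changes, would... actually this is where one must be careful), I would argue that the assumption forces $[s,\infty)\subset\mathbf{T}_{\mu}^{x}$ for some $s$, which is precisely property (\ref{per128}) and hence, by Theorem \ref{The15} b), makes $x$ eventually constant — contradicting the hypothesis. The main obstacle is making this "propagation" argument airtight: one must carefully track that a breakpoint $t_{k}$ at which $x$ leaves the value $\mu$ (i.e. $x(t_{k}-0)=\mu\neq x(t_{k})$) is permitted by the assumption, so forbidding jumps \emph{up} to $\mu$ alone does not immediately forbid the signal from later returning to $\mu$; the return, however, would require a jump up to $\mu$, which is exactly what is forbidden. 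So after the last time $x$ leaves $\mu$, it can never come back, meaning $\mathbf{T}_{\mu}^{x}$ is bounded above — contradicting $\mu\in\omega(x)$ — \emph{unless} it never leaves, i.e. $[s,\infty)\subset\mathbf{T}_{\mu}^{x}$. Either way we reach a contradiction with the non-eventual-constancy of $x$ together with $\mu\in\omega(x)$.

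For a), the argument is the discrete analogue and I expect it to be cleaner: assume there is $k$ such that for all $k^{\prime}>k$, $\widehat{x}(k^{\prime}-1)=\widehat{x}(k^{\prime})$ or $\widehat{x}(k^{\prime})\neq\mu$. The first disjunct applied repeatedly shows that $\widehat{x}$ is eventually constant past $k$ (once it equals $\mu$ it stays, once it leaves it can't return to $\mu$); since $\mu\in\widehat{\omega}(\widehat{x})$ forces $\widehat{\mathbf{T}}_{\mu}^{\widehat{x}}$ infinite, the only consistent possibility is that $\widehat{x}(k^{\prime})=\mu$ for all $k^{\prime}\geq$ some $k^{\prime\prime}$, i.e. $\widehat{x}$ is eventually constant equal to $\mu$, contradicting the hypothesis. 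I would present a) first as the template, then indicate that b) follows the same scheme with $\widehat{x}(k^{\prime}-1)$ replaced by the left limit $x(t^{\prime}-0)$ and with the piecewise-constant structure of Definition \ref{Def2} supplying the breakpoints, invoking Theorem \ref{The15} to convert "$[s,\infty)\subset\mathbf{T}_{\mu}^{x}$" into "eventually constant." The hard part, as noted, is the bookkeeping in b) around left limits at breakpoints, so I would isolate that as a small sub-claim: \emph{if for all $t^{\prime}>t$ we have $x(t^{\prime}-0)=x(t^{\prime})$ or $x(t^{\prime})\neq\mu$, and $\mathbf{T}_{\mu}^{x}$ is unbounded above, then $[s,\infty)\subset\mathbf{T}_{\mu}^{x}$ for some $s>t$}, and prove it by choosing $s$ minimal such that $x$ is constant equal to $\mu$ on $[s,t_{k+1})$ for the relevant breakpoint and pushing the argument forward.
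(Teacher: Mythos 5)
Your proof is correct, but it runs in the opposite logical direction from the paper's. The paper argues directly: since $\widehat{x}$ is not eventually constant, Theorem \ref{The15} gives $\widehat{\omega}(\widehat{x})=\{\mu^{1},\dots,\mu^{s}\}$ with $s\geq2$ and $\mu=\mu^{1}$; all the support sets $\widehat{\mathbf{T}}_{\mu^{i}}^{\widehat{x}}$ are infinite, so one sets $k_{1}=\min(\widehat{\mathbf{T}}_{\mu^{2}}^{\widehat{x}}\cup\dots\cup\widehat{\mathbf{T}}_{\mu^{s}}^{\widehat{x}})\cap\{k,k+1,\dots\}$ and then $k^{\prime}=\min\widehat{\mathbf{T}}_{\mu}^{\widehat{x}}\cap\{k_{1},k_{1}+1,\dots\}$; minimality forces $k^{\prime}>k_{1}$ and $\widehat{x}(k^{\prime}-1)\neq\mu=\widehat{x}(k^{\prime})$, and the real time case is identical with $\min$ taken over sets of the form $[a,b)\cup[c,d)\cup\dots$. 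You instead assume no jump up to $\mu$ occurs after time $t$ and propagate: backward propagation shows that any late occurrence of $\mu$ forces $\mu$ all the way back, and the "cannot return after leaving" observation shows $\mathbf{T}_{\mu}^{x}$ is either cofinal-free (contradicting $\mu\in\omega(x)$) or contains a half-line (contradicting non-eventual-constancy via Theorem \ref{The15}). Both arguments rest on the same two facts ($\widehat{\omega}(\widehat{x})$ has at least two points; support sets of omega limit points are infinite/unbounded), but the paper's explicit two-step minimum construction exhibits the jump point outright and thereby sidesteps entirely the left-limit and interval-endpoint bookkeeping that you correctly identify as the delicate part of your route; what your contrapositive buys in exchange is the slightly stronger structural statement that forbidding jumps up to $\mu$ past $t$ forces $\mathbf{T}_{\mu}^{x}\cap(t,\infty)$ to be a single half-line or bounded. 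One small caution: your phrase "once it equals $\mu$ it stays" is not what your hypothesis gives (it yields backward, not forward, propagation); the argument survives because you immediately fall back on the correct mechanism, but that clause should be dropped or rephrased in a final write-up.
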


\begin{proof}
a) We suppose that $\widehat{\omega}(\widehat{x})=\{\mu^{1},...,\mu
^{s}\},s\geq2,$ that $\mu=\mu^{1}$ and let $k\in\mathbf{N}_{\_}$ arbitrary.
$\widehat{\mathbf{T}}_{\mu^{1}}^{\widehat{x}},...,\widehat{\mathbf{T}}%
_{\mu^{s}}^{\widehat{x}}$ are all infinite and we define%
\[
k_{1}=\min(\widehat{\mathbf{T}}_{\mu^{2}}^{\widehat{x}}\cup...\cup
\widehat{\mathbf{T}}_{\mu^{s}}^{\widehat{x}})\cap\{k,k+1,k+2,...\},
\]%
\[
k^{\prime}=\min\widehat{\mathbf{T}}_{\mu^{1}}^{\widehat{x}}\cap\{k_{1}%
,k_{1}+1,k_{1}+2,...\}.
\]
We have $k^{\prime}>k_{1}\geq k$ and%
\[
\{\mu^{2},...,\mu^{s}\}\ni\widehat{x}(k^{\prime}-1)\neq\widehat{x}(k^{\prime
})=\mu^{1}.
\]

b) We put $\omega(x)$ under the form $\omega(x)=\{\mu^{1},...,\mu^{2}\},$
where $s\geq2$ and $\mu=\mu^{1}.$ Let $t\in\mathbf{R}$ arbitrary. The support
sets $\mathbf{T}_{\mu^{1}}^{x},...,\mathbf{T}_{\mu^{s}}^{x}$ are all
superiorly unbounded and we define:%
\[
t_{1}=\min(\mathbf{T}_{\mu^{2}}^{x}\cup...\cup\mathbf{T}_{\mu^{s}}^{x}%
)\cap\lbrack t,\infty),
\]%
\[
t^{\prime}=\min\mathbf{T}_{\mu^{1}}^{x}\cap\lbrack t_{1},\infty).
\]
The sets $(\mathbf{T}_{\mu^{2}}^{x}\cup...\cup\mathbf{T}_{\mu^{s}}^{x}%
)\cap\lbrack t,\infty),\mathbf{T}_{\mu^{1}}^{x}\cap\lbrack t_{1},\infty)$ are
of the form $[a,b)\cup\lbrack c,d)\cup...$, thus their minimum exists. We have
$t^{\prime}>t_{1}\geq t$ and%
\[
\{\mu^{2},...,\mu^{s}\}\ni x(t^{\prime}-0)\neq x(t^{\prime})=\mu^{1}.
\]

\end{proof}

\begin{lemma}
\label{Lem10}Let $x\in S^{(n)},\mu\in\omega(x)$ be an eventually periodic
point with the period $T>0$ and the limit of periodicity $t^{\prime}%
\in\mathbf{R.}$ For any $t_{1}>t^{\prime},$%
\begin{equation}
x(t_{1}-0)\neq x(t_{1})=\mu\label{per937}%
\end{equation}
implies%
\begin{equation}
x(t_{1}+T-0)\neq x(t_{1}+T)=\mu. \label{per938}%
\end{equation}

\end{lemma}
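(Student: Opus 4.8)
The statement to prove is: if $\mu\in\omega(x)$ is eventually periodic with period $T$ and limit of periodicity $t^{\prime}$, then for any $t_{1}>t^{\prime}$, the condition $x(t_{1}-0)\neq x(t_{1})=\mu$ implies $x(t_{1}+T-0)\neq x(t_{1}+T)=\mu$. The hypothesis unpacks into
\begin{equation}
\forall t\in\mathbf{T}_{\mu}^{x}\cap\lbrack t^{\prime},\infty),\{t+zT|z\in\mathbf{Z}\}\cap\lbrack t^{\prime},\infty)\subset\mathbf{T}_{\mu}^{x}. \label{planhyp}
\end{equation}
The plan is to split the conclusion into its two halves: first $x(t_{1}+T)=\mu$, and second $x(t_{1}+T-0)\neq\mu$.

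For the first half, I would note that $x(t_{1})=\mu$ means $t_{1}\in\mathbf{T}_{\mu}^{x}$, and since $t_{1}>t^{\prime}$ we have $t_{1}\in\mathbf{T}_{\mu}^{x}\cap\lbrack t^{\prime},\infty)$. Applying \eqref{planhyp} with $z=1$ (legitimate because $t_{1}+T>t^{\prime}$), we get $t_{1}+T\in\{t_{1}+zT|z\in\mathbf{Z}\}\cap\lbrack t^{\prime},\infty)\subset\mathbf{T}_{\mu}^{x}$, i.e. $x(t_{1}+T)=\mu$. That is the easy half.

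For the second half, $x(t_{1}+T-0)\neq\mu$, the natural route is to invoke the left-limit part of Lemma \ref{Lem25} b): since $\mu$ is eventually periodic with period $T$ and limit $t^{\prime}$, and $t_{1}>t^{\prime}$ with $x(t_{1}-0)\neq\mu$, that Lemma (with $k=1$) gives exactly $x(t_{1}+T-0)\neq\mu$. So the proof essentially becomes a two-line citation: the first half is a direct application of \eqref{planhyp}, the second half is Lemma \ref{Lem25} b) read for $k=1$. I would also remark that the right-continuity property $x(t+0)=x(t)$ from \eqref{per426} (or \eqref{per741}) is what lets us identify $x(t_{1})$ with the right value at $t_{1}$, keeping the notation consistent, though this is not strictly needed since $x(t_{1})=\mu$ is given directly.

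\textbf{Main obstacle.} There is no real obstacle provided Lemma \ref{Lem25} is available as stated; the content of this Lemma is essentially a repackaging of Lemma \ref{Lem25} b) together with one application of the support-set periodicity condition. The only thing to be careful about is the strict inequality $t_{1}>t^{\prime}$ (needed so that the small neighborhood to the left of $t_{1}$ still lies in $[t^{\prime},\infty)$, which is precisely the hypothesis of Lemma \ref{Lem25} b)), and checking that $t_{1}+T-0$ genuinely refers to a left limit that the periodicity argument controls — again handled inside Lemma \ref{Lem25}. I expect the actual written proof to be only a few lines: establish $t_1\in\mathbf{T}_\mu^x\cap[t',\infty)$, apply \eqref{planhyp} for the value statement, and cite Lemma \ref{Lem25} b) for the left-limit statement.
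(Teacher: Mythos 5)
Your proposal is correct and matches the paper's approach: the paper's entire proof is the one-line remark that this is the special case of Lemma \ref{Lem25} b) with $x(t_{1})=\mu$ and $k=1$, which covers the left-limit half, while the value half $x(t_{1}+T)=\mu$ is the same direct application of the support-set condition you spell out. You have merely made explicit what the paper leaves implicit.
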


\begin{proof}
This is a special case of Lemma \ref{Lem25} b) when $x(t_{1})=\mu$ and $k=1. $
\end{proof}

\begin{lemma}
\label{Lem4_}Let $x\in S^{(n)}$ and the sequence $T_{k}\in\mathbf{R}%
,k\in\mathbf{N}$ that is strictly decreasingly convergent to $T\in\mathbf{R.}$
Then $\exists N\in\mathbf{N},\forall k\geq N,$%
\begin{equation}
x(T_{k}-0)=x(T_{k})=x(T). \label{per156}%
\end{equation}

\end{lemma}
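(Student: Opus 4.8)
\textbf{Proof plan for Lemma \ref{Lem4_}.}

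The statement is a purely topological fact about the piecewise constant signal $x$, so the plan is to use the finiteness of the orbit $Or(x)$ together with the right-continuity of $x$ expressed in \eqref{per741} (equivalently, property \eqref{per426} from Theorem \ref{The1}). First I would apply Theorem \ref{The1} at the point $T$: there exists $\varepsilon>0$ such that $x$ is constant on $[T,T+\varepsilon)$, with value $x(T)$; that is, for all $\xi\in[T,T+\varepsilon)$ we have $x(\xi)=x(T)$. This $\varepsilon$ is the one witnessing right-continuity of $x$ at $T$, coming from the representation \eqref{per189} of $x$ as a piecewise constant function with jump points in some $(t_k)\in Seq$: locally to the right of $T$ there are no jump points, so $x$ is constant there.

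Next I would use the hypothesis that $T_k$ decreases strictly to $T$. Since $T_k\downarrow T$, there exists $N\in\mathbf{N}$ such that for all $k\geq N$ we have $T_k\in(T,T+\varepsilon)$. For such $k$, the interval $[T,T_k]$ is contained in $[T,T+\varepsilon)$, so $x$ takes the constant value $x(T)$ on all of $[T,T_k]$. In particular $x(T_k)=x(T)$. Moreover, for the left limit: since $x$ is constant on $(T-0\text{-neighborhood})$... more precisely, $x$ restricted to $(T,T_k)$ is constant equal to $x(T)$, and $(T,T_k)\subset(T,T+\varepsilon)$, so by the definition of the left limit function (Definition \ref{Def8}, property \eqref{per488}) applied at the point $T_k$ — using a witness $\varepsilon'$ small enough that $(T_k-\varepsilon',T_k)\subset(T,T+\varepsilon)$ — we get $x(T_k-0)=x(T)$ as well. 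Thus \eqref{per156} holds for all $k\geq N$.

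The only mild subtlety is making sure the left limit $x(T_k-0)$ genuinely equals $x(T)$ rather than just $x(T_k)$: this is where one must note that the open interval $(T,T_k)$ lies inside the constancy region $[T,T+\varepsilon)$, so approaching $T_k$ from the left we stay in that region. Since $x$ has no jump points in $(T,T+\varepsilon)$ (again from \eqref{per189}), both the left and the right limit of $x$ at any point of $(T,T+\varepsilon)$ coincide with $x(T)$. I do not expect any real obstacle here; the statement is essentially "a piecewise constant right-continuous function is locally constant to the right of any point, and a strictly decreasing sequence eventually enters any right-neighborhood." The main step to state carefully is the extraction of the single $\varepsilon$ from Theorem \ref{The1} and then the choice of $N$ from the convergence $T_k\downarrow T$; everything else is immediate.
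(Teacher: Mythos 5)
Your argument is correct and is essentially the paper's own proof: extract the right-constancy window $[T,T+\delta)$ from right-continuity at $T$, choose $N$ so that $T_k$ falls inside it for $k\geq N$, and then read off both $x(T_k)=x(T)$ and $x(T_k-0)=x(T)$ by shrinking to a left-neighborhood $(T_k-\varepsilon',T_k)$ still contained in the window. No differences worth noting.
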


\begin{proof}
Some $\delta>0$ exists with the property that%
\begin{equation}
\forall\xi\in\lbrack T,T+\delta),x(\xi)=x(T). \label{per157}%
\end{equation}
As $T_{k}\rightarrow T$ strictly decreasingly, $N_{\delta}\in\mathbf{N}$
exists such that%
\begin{equation}
\forall k\geq N_{\delta},T<T_{k}<T+\delta. \label{per158}%
\end{equation}
We fix an arbitrary $k\geq N_{\delta}.$ If we take $\varepsilon\in(0,T_{k}-T)$
arbitrary also$,$ we have%
\begin{equation}
T-T_{k}<-\varepsilon<0. \label{per159}%
\end{equation}
We add $T_{k}$ to the terms of (\ref{per159}) and we obtain, taking into
account (\ref{per158}) too:%
\begin{equation}
T<T_{k}-\varepsilon<T_{k}<T+\delta. \label{per160}%
\end{equation}
We conclude on one hand that%
\[
\forall\xi\in(T_{k}-\varepsilon,T_{k}),x(\xi)\overset{(\ref{per157}%
),(\ref{per160})}{=}x(T),
\]
thus%
\begin{equation}
x(T_{k}-0)=x(T) \label{per161}%
\end{equation}
and on the other hand that%
\begin{equation}
x(T_{k})\overset{(\ref{per157}),(\ref{per160})}{=}x(T). \label{per162}%
\end{equation}
By comparing (\ref{per161}) with (\ref{per162}) we infer (\ref{per156}).
\end{proof}

\begin{remark}
In Lemma \ref{Lem4_} $T_{k}>0,k\in\mathbf{N}$ and $T\geq0$ are not necessarily
related with any property of periodicity of $x$.
\end{remark}

\begin{lemma}
\label{Lem35}a) We consider $\widehat{x},$ $p\geq1,$ $k^{\prime}\in
\mathbf{N}_{\_}$ and $\mu\in\widehat{\omega}(\widehat{x})$ such that%
\begin{equation}
\forall k\in\widehat{\mathbf{T}}_{\mu}^{\widehat{x}}\cap\{k^{\prime}%
,k^{\prime}+1,k^{\prime}+2,...\},\{k+zp|z\in\mathbf{Z}\}\cap\{k^{\prime
},k^{\prime}+1,k^{\prime}+2,...\}\subset\widehat{\mathbf{T}}_{\mu}%
^{\widehat{x}} \label{p37}%
\end{equation}
holds. We define $n_{1},n_{2},...,n_{k_{1}}\in\mathbf{N}_{\_},k_{1}\geq1$ by%
\begin{equation}
\{n_{1},n_{2},...,n_{k_{1}}\}=\widehat{\mathbf{T}}_{\mu}^{\widehat{x}}%
\cap\{k^{\prime},k^{\prime}+1,...,k^{\prime}+p-1\}. \label{p38}%
\end{equation}
For any $k^{\prime\prime}\geq k^{\prime},$ with $n_{1}^{\prime},n_{2}^{\prime
},...,n_{p_{1}}^{\prime}\in\mathbf{N}_{\_},p_{1}\geq1$ defined by%
\begin{equation}
\{n_{1}^{\prime},n_{2}^{\prime},...,n_{p_{1}}^{\prime}\}=\widehat{\mathbf{T}%
}_{\mu}^{\widehat{x}}\cap\{k^{\prime\prime},k^{\prime\prime}+1,...,k^{\prime
\prime}+p-1\}, \label{p39}%
\end{equation}
we have $k_{1}=p_{1}$ and
\[%
\begin{array}
[c]{c}%
\underset{k\in\mathbf{N}}{%
{\displaystyle\bigcup}
}\{n_{1}+kp,n_{2}+kp,...,n_{k_{1}}+kp\}=\\
=\underset{z\in\mathbf{Z}}{%
{\displaystyle\bigcup}
}\{n_{1}^{\prime}+zp,n_{2}^{\prime}+zp,...,n_{k_{1}}^{\prime}+zp\}\cap
\{k^{\prime},k^{\prime}+1,k^{\prime}+2,...\}.
\end{array}
\]

b) Let $x,T>0,t^{\prime}\in\mathbf{R}$ and $\mu\in\omega(x)$ with%
\begin{equation}
\forall t\in\mathbf{T}_{\mu}^{x}\cap\lbrack t^{\prime},\infty),\{t+zT|z\in
\mathbf{Z}\}\cap\lbrack t^{\prime},\infty)\subset\mathbf{T}_{\mu}^{x}.
\label{p44}%
\end{equation}
The disjoint intervals $[a_{1},b_{1}),[a_{2},b_{2}),...,[a_{k_{1}},b_{k_{1}%
}),$ $k_{1}\geq1$ are defined by%
\begin{equation}
\lbrack a_{1},b_{1})\cup\lbrack a_{2},b_{2})\cup...\cup\lbrack a_{k_{1}%
},b_{k_{1}})=\mathbf{T}_{\mu}^{x}\cap\lbrack t^{\prime},t^{\prime}+T).
\label{p45}%
\end{equation}
For any $t^{\prime\prime}\geq t^{\prime},$ we define the disjoint intervals
$[a_{1}^{\prime},b_{1}^{\prime}),[a_{2}^{\prime},b_{2}^{\prime}),...,[a_{p_{1}%
}^{\prime},b_{p_{1}}^{\prime}),$ $p_{1}\geq1,$ by%
\begin{equation}
\lbrack a_{1}^{\prime},b_{1}^{\prime})\cup\lbrack a_{2}^{\prime},b_{2}%
^{\prime})\cup...\cup\lbrack a_{p_{1}}^{\prime},b_{p_{1}}^{\prime}%
)=\mathbf{T}_{\mu}^{x}\cap\lbrack t^{\prime\prime},t^{\prime\prime}+T).
\label{p46}%
\end{equation}
Then we have%
\[%
\begin{array}
[c]{c}%
\underset{k\in\mathbf{N}}{%
{\displaystyle\bigcup}
}([a_{1}+kT,b_{1}+kT)\cup\lbrack a_{2}+kT,b_{2}+kT)\cup...\cup\lbrack
a_{k_{1}}+kT,b_{k_{1}}+kT))=\\
=\underset{z\in\mathbf{Z}}{%
{\displaystyle\bigcup}
}([a_{1}^{\prime}+zT,b_{1}^{\prime}+zT)\cup\lbrack a_{2}^{\prime}%
+zT,b_{2}^{\prime}+zT)\cup...\cup\lbrack a_{p_{1}}^{\prime}+zT,b_{p_{1}%
}^{\prime}+zT))\cap\lbrack t^{\prime},\infty).
\end{array}
\]

\end{lemma}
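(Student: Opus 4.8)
\textbf{Proof strategy for Lemma \ref{Lem35}.}

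The plan is to prove part b) in detail and to note that part a) is the discrete-time analogue with an identical structure, so I would present b) and then simply remark that a) follows by the same argument with $\mathbf{T}_{\mu}^{x}$ replaced by $\widehat{\mathbf{T}}_{\mu}^{\widehat{x}}$, intervals replaced by singletons, and $T$ replaced by $p$. For b), the key observation is that the hypothesis (\ref{p44}) together with $\mu\in\omega(x)$ places us exactly in the setting of Theorem \ref{The71}, page \pageref{The71}: that Theorem already tells us that $\mathbf{T}_{\mu}^{x}\cap\lbrack t^{\prime},\infty)$ has the global form
\[
\underset{k\in\mathbf{N}}{\bigcup}([a_{1}+kT,b_{1}+kT)\cup\ldots\cup\lbrack a_{k_{1}}+kT,b_{k_{1}}+kT)),
\]
where $[a_{1},b_{1}),\ldots,[a_{k_{1}},b_{k_{1}})$ are the disjoint intervals from (\ref{p45}). (One must first check that $x$ is not eventually constant, or handle that degenerate case separately — if $x$ is eventually constant and equal to $\mu$ the statement is trivial since everything collapses, and if eventually constant $\neq\mu$ near the relevant range the support sets are bounded and Theorem \ref{Lem1} forbids the situation; I would dispose of this at the start.) So the left-hand side of the claimed identity is precisely $\mathbf{T}_{\mu}^{x}\cap\lbrack t^{\prime},\infty)$.

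Next I would show the right-hand side also equals $\mathbf{T}_{\mu}^{x}\cap\lbrack t^{\prime},\infty)$. For this, first note that by Lemma \ref{Lem30} the eventual periodicity of $\mu$ with limit of periodicity $t^{\prime}$ upgrades to limit of periodicity $t^{\prime\prime}$ for any $t^{\prime\prime}\geq t^{\prime}$; hence (\ref{p44}) holds with $t^{\prime}$ replaced by $t^{\prime\prime}$, and Theorem \ref{The71} applies again to give
\[
\mathbf{T}_{\mu}^{x}\cap\lbrack t^{\prime\prime},\infty)=\underset{k\in\mathbf{N}}{\bigcup}([a_{1}^{\prime}+kT,b_{1}^{\prime}+kT)\cup\ldots\cup\lbrack a_{p_{1}}^{\prime}+kT,b_{p_{1}}^{\prime}+kT)),
\]
with $[a_{j}^{\prime},b_{j}^{\prime})$ the disjoint intervals of (\ref{p46}). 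Now $\{[a_{j}^{\prime}+zT,b_{j}^{\prime}+zT):1\leq j\leq p_{1},\ z\in\mathbf{Z}\}$ is a $T$-periodic family of intervals, and its union is a $T$-periodic subset $S$ of $\mathbf{R}$ satisfying $S\cap\lbrack t^{\prime\prime},t^{\prime\prime}+T)=\mathbf{T}_{\mu}^{x}\cap\lbrack t^{\prime\prime},t^{\prime\prime}+T)$. Using (\ref{p44}) once more (the $t+zT$ closure inside $[t^{\prime},\infty)$), one checks that $S\cap\lbrack t^{\prime},\infty)=\mathbf{T}_{\mu}^{x}\cap\lbrack t^{\prime},\infty)$: an element of $\mathbf{T}_{\mu}^{x}\cap\lbrack t^{\prime},\infty)$ can be shifted by a multiple of $T$ into $[t^{\prime\prime},t^{\prime\prime}+T)$ where it lands in $S$, hence lies in $S$; conversely an element of $S\cap\lbrack t^{\prime},\infty)$ shifts into $\mathbf{T}_{\mu}^{x}\cap\lbrack t^{\prime\prime},t^{\prime\prime}+T)$ and then back into $\mathbf{T}_{\mu}^{x}$ by (\ref{p44}). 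This is precisely the right-hand side of the asserted identity, so both sides equal $\mathbf{T}_{\mu}^{x}\cap\lbrack t^{\prime},\infty)$ and the identity follows. Along the way the equality $k_{1}=p_{1}$ (in part a)) drops out because both count the number of "blocks" of $\mathbf{T}_{\mu}^{\widehat{x}}$ per period, which is a $\mathbf{Z}$-shift invariant of the periodic set; in the real-time case one does not need a cardinality statement, only the set identity.

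The main obstacle I anticipate is purely bookkeeping: verifying carefully that the $\mathbf{Z}$-indexed union on the right, intersected with $[t^{\prime},\infty)$, genuinely reproduces $\mathbf{T}_{\mu}^{x}\cap\lbrack t^{\prime},\infty)$ rather than something larger (the intervals $[a_j^{\prime}+zT,b_j^{\prime}+zT)$ for very negative $z$ could in principle stick out below $t^{\prime}$, but they are cut off by the intersection, and for $z$ making them lie above $t^{\prime}$ one must invoke the closure property (\ref{p44}) with limit of periodicity $t^{\prime}$, using that $b_j^{\prime}-t^{\prime\prime}<T$ so each such interval is reachable from $\mathbf{T}_{\mu}^{x}\cap\lbrack t^{\prime},t^{\prime}+T)$ by a nonnegative number of $T$-steps). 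Once the interval arithmetic is set up — essentially the same computation as in the proof of Theorem \ref{The5} and Theorem \ref{The71} — the argument is routine, which is why the excerpt defers it to the Appendix.
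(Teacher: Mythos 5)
Your proposal is correct, but it takes a genuinely different route from the paper. The paper's proof is self-contained in the appendix: it constructs an explicit bijection $\widehat{\Lambda}$ (resp. $\Lambda$) between the two fundamental windows $\widehat{\mathbf{T}}_{\mu}^{\widehat{x}}\cap\{k^{\prime},...,k^{\prime}+p-1\}$ and $\widehat{\mathbf{T}}_{\mu}^{\widehat{x}}\cap\{k^{\prime\prime},...,k^{\prime\prime}+p-1\}$ by showing that exactly one term of each progression $n_{j},n_{j}+p,n_{j}+2p,...$ lands in the second window, proves injectivity and surjectivity directly (which is where $k_{1}=p_{1}$ comes from), and then obtains the set identity by element-chasing through $\Lambda$; the only external input is Theorem \ref{Lem1} for nonemptiness of the windows. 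You instead identify both sides of the identity with $\mathbf{T}_{\mu}^{x}\cap[t^{\prime},\infty)$ by applying Theorem \ref{The71} twice (at $t^{\prime}$ and, after relocating the limit of periodicity via Lemma \ref{Lem30}, at $t^{\prime\prime}$) and then exploiting the $T$-periodicity of the $\mathbf{Z}$-indexed union together with the closure property (\ref{p44}); the equality $k_{1}=p_{1}$ then falls out as a counting consequence of the set identity. This is legitimate and non-circular, since Theorems \ref{The69} and \ref{The71} are proved without Lemma \ref{Lem35} (the lemma is only cited in the remarks that follow them), and it is shorter, at the price of importing the structure theorems of Chapters 6 and 8 into an appendix lemma and of having to dispose of the eventually constant case up front (note that your sub-case ``eventually constant $\neq\mu$'' cannot actually occur: $\mu\in\omega(x)$ and eventual constancy force $\omega(x)$ to be the singleton consisting of the final value, so only the trivial case $\mathbf{T}_{\mu}^{x}\cap[t^{\prime},\infty)=[t^{\prime},\infty)$ needs separate treatment). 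The paper's bijection argument buys independence from the main text and an explicit matching of blocks; your argument buys brevity and makes the ``$\mathbf{Z}$-shift invariance'' of the block count transparent.
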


\begin{proof}
a) Let $k^{\prime\prime}\geq k^{\prime}$ arbitrary. As $\mu\in\widehat{\omega
}(\widehat{x}),$ we get that $\widehat{\mathbf{T}}_{\mu}^{\widehat{x}}$ is
infinite, thus $\widehat{\mathbf{T}}_{\mu}^{\widehat{x}}\cap\{k^{\prime
},k^{\prime}+1,k^{\prime}+2,...\}\neq\varnothing$ and we can apply Theorem
\ref{Lem1}, page \pageref{Lem1}, wherefrom $\widehat{\mathbf{T}}_{\mu
}^{\widehat{x}}\cap\{k^{\prime},k^{\prime}+1,...,k^{\prime}+p-1\}\neq
\varnothing,$ $\widehat{\mathbf{T}}_{\mu}^{\widehat{x}}\cap\{k^{\prime\prime
},k^{\prime\prime}+1,...,k^{\prime\prime}+p-1\}\neq\varnothing$ hence the
definitions (\ref{p38}), (\ref{p39}) of $n_{1},n_{2},...,n_{k_{1}}$ and
$n_{1}^{\prime},n_{2}^{\prime},...,n_{p_{1}}^{\prime}$ make sense.

Let $j\in\{1,...,k_{1}\}$ arbitrary. We claim that exactly one term of the
sequence $n_{j},n_{j}+p,n_{j}+2p,...$ belongs to $\widehat{\mathbf{T}}_{\mu
}^{\widehat{x}}\cap\{k^{\prime\prime},k^{\prime\prime}+1,...,k^{\prime\prime
}+p-1\}.$ Indeed, let us suppose against all reason that no term belongs to
$\widehat{\mathbf{T}}_{\mu}^{\widehat{x}}\cap\{k^{\prime\prime},k^{\prime
\prime}+1,...,k^{\prime\prime}+p-1\}.$ As (\ref{p37}) implies $\forall
k\in\mathbf{N},n_{j}+kp\in\widehat{\mathbf{T}}_{\mu}^{\widehat{x}},$ we infer
the existence of $k$ having the property that%
\[
n_{j}+kp\leq k^{\prime\prime}-1,
\]%
\[
n_{j}+(k+1)p\geq k^{\prime\prime}+p.
\]
We infer the contradiction
\[
k^{\prime\prime}+p\leq n_{j}+(k+1)p\leq k^{\prime\prime}-1+p.
\]

We suppose against all reason that several terms of the sequence $n_{j}%
,n_{j}+p,n_{j}+2p,...$ belong to $\widehat{\mathbf{T}}_{\mu}^{\widehat{x}}%
\cap\{k^{\prime\prime},k^{\prime\prime}+1,...,k^{\prime\prime}+p-1\}.$ This
fact implies the existence of $k\in\mathbf{N}$ with%
\[
n_{j}+kp\geq k^{\prime\prime},
\]%
\[
n_{j}+(k+1)p\leq k^{\prime\prime}+p-1,
\]
wherefrom we get the contradiction
\[
k^{\prime\prime}+p\leq n_{j}+(k+1)p\leq k^{\prime\prime}+p-1.
\]

We have shown the existence of the function $\widehat{\Lambda}:\widehat
{\mathbf{T}}_{\mu}^{\widehat{x}}\cap\{k^{\prime},k^{\prime}+1,...,k^{\prime
}+p-1\}\longrightarrow\widehat{\mathbf{T}}_{\mu}^{\widehat{x}}\cap
\{k^{\prime\prime},k^{\prime\prime}+1,...,k^{\prime\prime}+p-1\},$
$\widehat{\mathbf{T}}_{\mu}^{\widehat{x}}\cap\{k^{\prime},k^{\prime
}+1,...,k^{\prime}+p-1\}\ni n_{j}\longrightarrow n_{j}+kp\in\widehat
{\mathbf{T}}_{\mu}^{\widehat{x}}\cap\{k^{\prime\prime},k^{\prime\prime
}+1,...,k^{\prime\prime}+p-1\},$ where $k$ may depend on $j$ and it is chosen conveniently.

We show that $\widehat{\Lambda}$ is injective and we suppose for this, against
all reason, that $j_{1},j_{2}\in\{1,...,k_{1}\},j_{1}\neq j_{2},$ $k_{1}%
,k_{2}\in\mathbf{N}$ exist such that $n_{j_{1}}+k_{1}p=n_{j_{2}}+k_{2}p $
where, without loss of generality, we have%
\[
k^{\prime}\leq n_{j_{1}}<n_{j_{2}}\leq k^{\prime}+p-1.
\]
On one hand we have $n_{j_{2}}-n_{j_{1}}=(k_{1}-k_{2})p\in\{p,2p,3p,...\},$
thus $n_{j_{2}}-n_{j_{1}}\geq p$ and on the other hand we obtain $n_{j_{2}%
}-n_{j_{1}}\leq k^{\prime}+p-1-k^{\prime},$ thus $n_{j_{2}}-n_{j_{1}}\leq
p-1.$ The contradiction that we have obtained completes the proof that
$\widehat{\Lambda}$ is injective.

We show that $\widehat{\Lambda}$ is surjective and let $j^{\prime}%
\in\{1,...,p_{1}\}$ arbitrary. The fact that exactly one term of the sequence
$n_{j^{\prime}}^{\prime},n_{j^{\prime}}^{\prime}-p,n_{j^{\prime}}^{\prime
}-2p,...$ belongs to $\widehat{\mathbf{T}}_{\mu}^{\widehat{x}}\cap\{k^{\prime
},k^{\prime}+1,...,k^{\prime}+p-1\}$ is proved similarly with the proof of
existence of $\widehat{\Lambda}$ and let this term be $n_{j^{\prime}}^{\prime
}-\overline{k}p=n_{j}.$ Obviously $\widehat{\Lambda}(n_{j})=n_{j^{\prime}%
}^{\prime}.$

It has resulted that $\widehat{\Lambda}$ is bijective and $k_{1}=p_{1}.$

We prove $\underset{k\in\mathbf{N}}{%
{\displaystyle\bigcup}
}\{n_{1}+kp,n_{2}+kp,...,n_{k_{1}}+kp\}\subset\underset{z\in\mathbf{Z}}{%
{\displaystyle\bigcup}
}\{n_{1}^{\prime}+zp,n_{2}^{\prime}+zp,...,n_{k_{1}}^{\prime}+zp\}\cap
\{k^{\prime},k^{\prime}+1,k^{\prime}+2,...\}$ and let $\widetilde{k}%
\in\underset{k\in\mathbf{N}}{%
{\displaystyle\bigcup}
}\{n_{1}+kp,n_{2}+kp,...,n_{k_{1}}+kp\}$ arbitrary. Some $k\in\mathbf{N}$ and
some $j\in\{1,...,k_{1}\}$ exist with $\widetilde{k}=n_{j}+kp.$ But $kp\geq0$
and $\widetilde{k}\geq n_{j}\geq k^{\prime}.$ Some $j^{\prime}\in
\{1,...,k_{1}\}$ and some $\overline{k}\in\mathbf{N}$ exist such that
$\widehat{\Lambda}(n_{j})=n_{j}+\overline{k}p=n_{j^{\prime}}^{\prime},$ in
other words $\widetilde{k}=n_{j^{\prime}}^{\prime}-\overline{k}p+kp.$ We have
proved that $\widetilde{k}\in\underset{z\in\mathbf{Z}}{%
{\displaystyle\bigcup}
}\{n_{1}^{\prime}+zp,n_{2}^{\prime}+zp,...,n_{k_{1}}^{\prime}+zp\}\cap
\{k^{\prime},k^{\prime}+1,k^{\prime}+2,...\}.$

We prove that $\underset{z\in\mathbf{Z}}{%
{\displaystyle\bigcup}
}\{n_{1}^{\prime}+zp,n_{2}^{\prime}+zp,...,n_{k_{1}}^{\prime}+zp\}\cap
\{k^{\prime},k^{\prime}+1,k^{\prime}+2,...\}\subset\underset{k\in\mathbf{N}}{%
{\displaystyle\bigcup}
}\{n_{1}+kp,n_{2}+kp,...,n_{k_{1}}+kp\}$ and let for this $\widetilde{k}%
\in\underset{z\in\mathbf{Z}}{%
{\displaystyle\bigcup}
}\{n_{1}^{\prime}+zp,n_{2}^{\prime}+zp,...,n_{k_{1}}^{\prime}+zp\}\cap
\{k^{\prime},k^{\prime}+1,k^{\prime}+2,...\}$ arbitrary. Some $j^{\prime}%
\in\{1,...,k_{1}\}$ and some $z\in\mathbf{Z}$ exist with $\widetilde
{k}=n_{j^{\prime}}^{\prime}+zp\geq k^{\prime}.$ We have the existence of
$j\in\{1,...,k_{1}\}$ and $\overline{k}\in\mathbf{N}$ with $n_{j^{\prime}%
}^{\prime}-\overline{k}p=\widehat{\Lambda}^{-1}(n_{j^{\prime}}^{\prime}%
)=n_{j},$ thus $\widetilde{k}=n_{j}+(\overline{k}+z)p.$ As $n_{j}-p\leq
k^{\prime}-1,$ the condition $n_{j}+(\overline{k}+z)p\geq k^{\prime}$ implies
$\overline{k}+z\in\mathbf{N},$ in other words $\widetilde{k}\in\underset
{k\in\mathbf{N}}{%
{\displaystyle\bigcup}
}\{n_{1}+kp,n_{2}+kp,...,n_{k_{1}}+kp\}.$

b) We take an arbitrary $t^{\prime\prime}\geq t^{\prime}.$ As far as $\mu
\in\omega(x),$ we have that $\mathbf{T}_{\mu}^{x}$ is superiorly unbounded and
consequently $\mathbf{T}_{\mu}^{x}\cap\lbrack t^{\prime},\infty)\neq
\varnothing.$ We can apply Theorem \ref{Lem1}, page \pageref{Lem1} and we get
$\mathbf{T}_{\mu}^{x}\cap\lbrack t^{\prime},t^{\prime}+T)\neq\varnothing
,\mathbf{T}_{\mu}^{x}\cap\lbrack t^{\prime\prime},t^{\prime\prime}%
+T)\neq\varnothing,$ hence the definitions (\ref{p45}) of the disjoint
intervals $[a_{1},b_{1}),[a_{2},b_{2}),...,[a_{k_{1}},b_{k_{1}})$ and
(\ref{p46}) of the disjoint intervals $[a_{1}^{\prime},b_{1}^{\prime}%
),[a_{2}^{\prime},b_{2}^{\prime}),...,[a_{p_{1}}^{\prime},b_{p_{1}}^{\prime})$
make sense.

Let $t\in\mathbf{T}_{\mu}^{x}\cap\lbrack t^{\prime},t^{\prime}+T)$
arbitrary$.$ We have from (\ref{p44}) that $\forall k\in\mathbf{N}%
,t+kT\in\mathbf{T}_{\mu}^{x}$ and we claim that exactly one term of the
sequence $t,t+T,t+2T,...$ belongs to $\mathbf{T}_{\mu}^{x}\cap\lbrack
t^{\prime\prime},t^{\prime\prime}+T).$ This is proved similarly with a), the
supposition that no term of the sequence belongs to $\mathbf{T}_{\mu}^{x}%
\cap\lbrack t^{\prime\prime},t^{\prime\prime}+T)$ and the supposition that
several terms of the sequence belong to $\mathbf{T}_{\mu}^{x}\cap\lbrack
t^{\prime\prime},t^{\prime\prime}+T)$ give contradictions. The reasoning shows
the existence of a function $\Lambda:\mathbf{T}_{\mu}^{x}\cap\lbrack
t^{\prime},t^{\prime}+T)\longrightarrow\mathbf{T}_{\mu}^{x}\cap\lbrack
t^{\prime\prime},t^{\prime\prime}+T),\mathbf{T}_{\mu}^{x}\cap\lbrack
t^{\prime},t^{\prime}+T)\ni t\longrightarrow t+kT\in\mathbf{T}_{\mu}^{x}%
\cap\lbrack t^{\prime\prime},t^{\prime\prime}+T),$ where $k$ may depend on $t$
and it is chosen conveniently.

We prove that $\Lambda$ is injective and let us suppose against all reason
that $t_{1},t_{2}\in\mathbf{T}_{\mu}^{x}\cap\lbrack t^{\prime},t^{\prime
}+T),k_{1},k_{2}\in\mathbf{N}$ exist with the property that $t_{1}%
+k_{1}T=t_{2}+k_{2}T.$ We can suppose without loosing the generality that%
\[
t^{\prime}\leq t_{1}<t_{2}<t^{\prime}+T.
\]
On one hand $t_{2}-t_{1}=(k_{1}-k_{2})T\in\{T,2T,3T,...\},$ thus $t_{2}%
-t_{1}\geq T$ and on the other hand $t_{2}-t_{1}<t^{\prime}+T-t^{\prime}=T,$ contradiction.

The proof that $\Lambda$ is surjective is made by taking $\widetilde{t}%
\in\mathbf{T}_{\mu}^{x}\cap\lbrack t^{\prime\prime},t^{\prime\prime}+T)$
arbitrarily and showing, by making use of (\ref{p44}), that exactly one term
of the sequence $\widetilde{t},\widetilde{t}-T,\widetilde{t}-2T,...$ belongs
to $\mathbf{T}_{\mu}^{x}\cap\lbrack t^{\prime},t^{\prime}+T).$

The conclusion is that $\Lambda$ is bijective.

We prove $\underset{k\in\mathbf{N}}{%
{\displaystyle\bigcup}
}([a_{1}+kT,b_{1}+kT)\cup\lbrack a_{2}+kT,b_{2}+kT)\cup...\cup\lbrack
a_{k_{1}}+kT,b_{k_{1}}+kT))\subset\underset{z\in\mathbf{Z}}{%
{\displaystyle\bigcup}
}([a_{1}^{\prime}+zT,b_{1}^{\prime}+zT)\cup\lbrack a_{2}^{\prime}%
+zT,b_{2}^{\prime}+zT)\cup...\cup\lbrack a_{p_{1}}^{\prime}+zT,b_{p_{1}%
}^{\prime}+zT))\cap\lbrack t^{\prime},\infty)$ and let $\widetilde{t}%
\in\underset{k\in\mathbf{N}}{%
{\displaystyle\bigcup}
}([a_{1}+kT,b_{1}+kT)\cup\lbrack a_{2}+kT,b_{2}+kT)\cup...\cup\lbrack
a_{k_{1}}+kT,b_{k_{1}}+kT))$ arbitrary. We get the existence of $k\in
\mathbf{N}$ and $j\in\{1,...,k_{1}\}$ such that $\widetilde{t}\in\lbrack
a_{j}+kT,b_{j}+kT),$ thus $\widetilde{t}-kT\in\lbrack a_{j},b_{j})$ and
$\widetilde{t}-kT\geq t^{\prime}.$ Furthermore, a unique $\overline{k}%
\in\mathbf{N}$ exists with the property $\Lambda(\widetilde{t}-kT)=\widetilde
{t}-kT+\overline{k}T\in\mathbf{T}_{\mu}^{x}\cap\lbrack t^{\prime\prime
},t^{\prime\prime}+T)$ and a unique $j^{\prime}\in\{1,...,p_{1}\}$ exists also
with $\widetilde{t}-kT+\overline{k}T\in\lbrack a_{j^{\prime}}^{\prime
},b_{j^{\prime}}^{\prime}),$ i.e. $\widetilde{t}\in\lbrack a_{j^{\prime}%
}^{\prime}+(k-\overline{k})T,b_{j^{\prime}}^{\prime}+(k-\overline{k})T).$ As
$k-\overline{k}\in\mathbf{Z}$ and $\widetilde{t}\geq a_{j}+kT\geq a_{j}\geq
t^{\prime},$ it has resulted that $\widetilde{t}\in\underset{z\in\mathbf{Z}}{%
{\displaystyle\bigcup}
}([a_{1}^{\prime}+zT,b_{1}^{\prime}+zT)\cup\lbrack a_{2}^{\prime}%
+zT,b_{2}^{\prime}+zT)\cup...\cup\lbrack a_{p_{1}}^{\prime}+zT,b_{p_{1}%
}^{\prime}+zT))\cap\lbrack t^{\prime},\infty).$

We prove the inclusion $\underset{z\in\mathbf{Z}}{%
{\displaystyle\bigcup}
}([a_{1}^{\prime}+zT,b_{1}^{\prime}+zT)\cup\lbrack a_{2}^{\prime}%
+zT,b_{2}^{\prime}+zT)\cup...\cup\lbrack a_{p_{1}}^{\prime}+zT,b_{p_{1}%
}^{\prime}+zT))\cap\lbrack t^{\prime},\infty)\subset\underset{k\in\mathbf{N}}{%
{\displaystyle\bigcup}
}([a_{1}+kT,b_{1}+kT)\cup\lbrack a_{2}+kT,b_{2}+kT)\cup...\cup\lbrack
a_{k_{1}}+kT,b_{k_{1}}+kT))$ and let $\widetilde{t}\in\underset{z\in
\mathbf{Z}}{%
{\displaystyle\bigcup}
}([a_{1}^{\prime}+zT,b_{1}^{\prime}+zT)\cup\lbrack a_{2}^{\prime}%
+zT,b_{2}^{\prime}+zT)\cup...\cup\lbrack a_{p_{1}}^{\prime}+zT,b_{p_{1}%
}^{\prime}+zT))\cap\lbrack t^{\prime},\infty)$ arbitrary. We have the
existence of $z\in\mathbf{Z}$ and $j^{\prime}\in\{1,...,p_{1}\}$ such that
$\widetilde{t}\in\lbrack a_{j^{\prime}}^{\prime}+zT,b_{j^{\prime}}^{\prime
}+zT),\widetilde{t}\geq t^{\prime}.$ Thus $\widetilde{t}-zT\in\lbrack
a_{j^{\prime}}^{\prime},b_{j^{\prime}}^{\prime}).$ Some $\overline{k}%
\in\mathbf{N}$ exists with $\Lambda^{-1}(\widetilde{t}-zT)=\widetilde
{t}-zT-\overline{k}T\in\mathbf{T}_{\mu}^{x}\cap\lbrack t^{\prime},t^{\prime
}+T),$ thus $j\in\{1,...,k_{1}\}$ exists with $\widetilde{t}-zT-\overline
{k}T\in\lbrack a_{j},b_{j})$ and finally $\widetilde{t}\in\lbrack
a_{j}+(z+\overline{k})T,b_{j}+(z+\overline{k})T).$ Because $b_{j}-T<t^{\prime
}$ and $\widetilde{t}\geq t^{\prime},$ we infer that $z+\overline{k}%
\in\mathbf{N}.$ It has resulted that $\widetilde{t}\in\underset{k\in
\mathbf{N}}{%
{\displaystyle\bigcup}
}([a_{1}+kT,b_{1}+kT)\cup\lbrack a_{2}+kT,b_{2}+kT)\cup...\cup\lbrack
a_{k_{1}}+kT,b_{k_{1}}+kT)).$
\end{proof}

\begin{example}
We show that in the previous Lemma, item b) we have in general $k_{1}\neq
p_{1}$ (unlike item a)) and we consider $x\in S^{(1)},$%
\[
x(t)=\chi_{\lbrack0,1)}(t)\oplus\chi_{\lbrack2,3)}(t)\oplus\chi_{\lbrack
4,5)}(t)\oplus...
\]
In this case we have $\mu=1,T=2,\mathbf{T}_{\mu}^{x}=[0,1)\cup\lbrack
2,3)\cup\lbrack4,5)\cup...$ and

- for $t\in\lbrack-1,0]\cup\lbrack1,2]\cup\lbrack3,4]\cup...$ we have
$k_{1}=1,$ for example $t=-0.5$ when $[a_{1},b_{1})=[0,1);$

- for $t\in(0,1)\cup(2,3)\cup(4,5)\cup...$ we have $k_{1}=2,$ for example
$t=0.5$ when $[a_{1},b_{1})\cup\lbrack a_{2},b_{2})=[0.5,1)\cup\lbrack2,2.5).$
\end{example}

\begin{lemma}
\label{Lem39}For any $a,b\in\mathbf{R}$ with $a<b$ and any $\rho\in\Pi
_{n}^{\prime},$ the set $\{t|t\in\lbrack a,b),\rho(t)\neq(0,...,0)\}$ is finite.
\end{lemma}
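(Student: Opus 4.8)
The plan is to reduce the statement to an elementary property of the sequences in $Seq$, once the structure of a real-time computation function has been extracted from its definition. First I would recall from Definition \ref{Def38} that, since $\rho\in\Pi_{n}^{\prime}$, the instants at which some coordinate function is actually computed form an ``atomic'' set: concretely, there is a sequence $(t_{k})\in Seq$ (see Notation \ref{Not3}) such that $\rho(t)=(0,\dots,0)$ for every $t$ that does not occur among the $t_{k}$, $k\in\mathbf{N}$. In other words the support set $\{t\mid t\in\mathbf{R},\rho(t)\neq(0,\dots,0)\}$ is contained in $\{t_{k}\mid k\in\mathbf{N}\}$ (the case $\rho\equiv(0,\dots,0)$ being trivial anyway).

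Given $a<b$, this inclusion yields at once
\[
\{t\mid t\in[a,b),\rho(t)\neq(0,\dots,0)\}\subset\{t_{k}\mid k\in\mathbf{N},\,a\leq t_{k}<b\},
\]
so it suffices to prove that the right-hand set is finite. Since $(t_{k})\in Seq$, the sequence $t_{0}<t_{1}<t_{2}<\dots$ is strictly increasing and superiorly unbounded; hence there exists $N\in\mathbf{N}$ with $t_{N}\geq b$, and then $t_{k}\geq t_{N}\geq b$ for every $k\geq N$. Consequently $\{k\mid k\in\mathbf{N},\,t_{k}<b\}\subset\{0,1,\dots,N-1\}$ is finite, and a fortiori so is $\{k\mid k\in\mathbf{N},\,a\leq t_{k}<b\}$, which finishes the argument. (Alternatively, one could cover the compact interval $[a,b]$ by finitely many open intervals each containing at most one $t_{k}$, invoking Proposition \ref{Lem3}, and extract a finite subcover; this gives the same conclusion.)

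The only real obstacle is the first step: the whole content of the lemma is that membership in $\Pi_{n}^{\prime}$ forces the support of $\rho$ to be locally finite — equivalently, indexed along a $Seq$ sequence — and everything afterwards is the routine observation that a strictly increasing, superiorly unbounded sequence has only finitely many terms below any fixed bound. So the proof stands or falls on quoting the correct clause of Definition \ref{Def38}; there is no analytic difficulty beyond that, and in particular the half-open interval $[a,b)$ causes no trouble, since the estimate actually bounds the larger set $\{t_{k}\mid t_{k}<b\}$.
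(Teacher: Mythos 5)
Your proof is correct and follows the same route as the paper's: write $\rho(t)=\alpha^{0}\cdot\chi_{\{t_{0}\}}(t)\oplus\alpha^{1}\cdot\chi_{\{t_{1}\}}(t)\oplus\dots$ with $(t_{k})\in Seq$, note the support of $\rho$ lies in $\{t_{k}\mid k\in\mathbf{N}\}$, and conclude because a strictly increasing, superiorly unbounded sequence has only finitely many terms below $b$. The only cosmetic difference is that you argue directly via an index $N$ with $t_{N}\geq b$ where the paper argues by contradiction.
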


\begin{proof}
Let $\alpha\in\widehat{\Pi}_{n}^{\prime}$ and $(t_{k})\in Seq$ such that%
\[
\rho(t)=\alpha^{0}\cdot\chi_{\{t_{0}\}}(t)\oplus\alpha^{1}\cdot\chi
_{\{t_{1}\}}(t)\oplus...\oplus\alpha^{k}\cdot\chi_{\{t_{k}\}}(t)\oplus...
\]

We notice first that the set $\{t_{k}|k\in\mathbf{N},t_{k}\in\lbrack a,b)\} $
is finite (we consider that the empty set is finite). Indeed, if we suppose
against all reason that $\{t_{k}|k\in\mathbf{N},t_{k}\in\lbrack a,b)\}$ is
infinite, as $(t_{k})$ is strictly increasing, we infer that $\forall
k\in\mathbf{N},t_{k}<b,$ contradiction with the fact that $(t_{k})$ is
superiorly unbounded.

We can infer now, as far as the right hand set of the following inclusion%
\[
\{t|t\in\lbrack a,b),\rho(t)\neq(0,...,0)\}\subset\{t_{k}|k\in\mathbf{N}%
,t_{k}\in\lbrack a,b)\}
\]
is finite, that the left hand set is finite too.
\end{proof}

\end{document}